\newcounter{proposition}[section]
\newcounter{definition}[section]
\newcounter{theorem}[section]
\newcounter{lemma}[section]
\newcounter{corollary}[section]
\newcounter{remark}[section]
\newcounter{conjecture}[section]
\newcounter{example}[section]
\newcounter{hypothesis}[section]
\def\theproposition{\thesection.\@arabic\c@proposition}
\def\thedefinition{\thesection.\@arabic\c@definition}
\def\thetheorem{\thesection.\@arabic\c@theorem}
\def\thelemma{\thesection.\@arabic\c@lemma}
\def\thecorollary{\thesection.\@arabic\c@corollary}
\def\theremark{\thesection.\@arabic\c@remark}
\def\theconjecture{\thesection.\@arabic\c@conjecture}
\def\theexample{\thesection.\@arabic\c@example}
\def\thehypothesis{\thesection.\@arabic\c@hypothesis}
\newenvironment{enonce}[1]{
\smallskip
\noindent{\textbf{\scshape{}
#1}}---\,\,\begin{itshape}}{\end{itshape}
\smallskip}
\newenvironment{enoncenonitalique}[1]{
\noindent{\textbf{{ #1}}\scshape{---}} }
\newenvironment{proposition}[1][]{\refstepcounter{proposition}\refstepcounter{definition}\refstepcounter{theorem}\refstepcounter{lemma}\refstepcounter{corollary}\refstepcounter{remark}\refstepcounter{conjecture} \refstepcounter{example}\refstepcounter{hypothesis}
\begin{enonce}{Proposition \theproposition{} #1 }}{\end{enonce}}
\newenvironment{definition}[1][]{\refstepcounter{proposition}\refstepcounter{definition}\refstepcounter{theorem}\refstepcounter{lemma}\refstepcounter{corollary}\refstepcounter{remark}\refstepcounter{conjecture}\refstepcounter{example}\refstepcounter{hypothesis}
\begin{enonce}{Definition \thedefinition{} #1 }}{\end{enonce}}
\newenvironment{theorem}[1][]{\refstepcounter{proposition}\refstepcounter{definition}\refstepcounter{theorem}\refstepcounter{lemma}\refstepcounter{corollary}\refstepcounter{remark}\refstepcounter{conjecture}\refstepcounter{example}\refstepcounter{hypothesis}
\begin{enonce}{Theorem \thetheorem{} #1 }}{\end{enonce}}
\newenvironment{lemma}[1][]{\refstepcounter{proposition}\refstepcounter{definition}\refstepcounter{theorem}\refstepcounter{lemma}\refstepcounter{corollary}\refstepcounter{remark}\refstepcounter{conjecture}\refstepcounter{example}\refstepcounter{hypothesis}
\begin{enonce}{Lemma \thelemma{} #1 }}{\end{enonce}}
\newenvironment{corollary}[1][]{\refstepcounter{proposition}\refstepcounter{definition}\refstepcounter{theorem}\refstepcounter{lemma}\refstepcounter{corollary}\refstepcounter{remark}\refstepcounter{conjecture}\refstepcounter{example}\refstepcounter{hypothesis}
\begin{enonce}{Corollary \thecorollary{} #1 }}{\end{enonce}}
\newenvironment{remark}[1][]{\refstepcounter{proposition}\refstepcounter{definition}\refstepcounter{theorem}\refstepcounter{lemma}\refstepcounter{corollary}\refstepcounter{remark}\refstepcounter{conjecture}\refstepcounter{example}\refstepcounter{hypothesis}
\smallskip
\begin{enoncenonitalique}{Remark \theremark{} #1 }}{\end{enoncenonitalique} \smallskip}
\newenvironment{example}[1][]{\refstepcounter{proposition}\refstepcounter{definition}\refstepcounter{theorem}\refstepcounter{lemma}\refstepcounter{corollary}\refstepcounter{remark}\refstepcounter{conjecture}\refstepcounter{example}
\refstepcounter{hypothesis}
\begin{enoncenonitalique}{Example \theexample{} #1 }}{\end{enoncenonitalique}}
\newcommand{\Spec}{{\rm Spec}}
\newcommand{\Z}{\mathbb{Z}}
\newcommand{\Aff}{\mathbb{A}}
\newcommand{\Proj}{\mathbb{P}}
\newcommand{\Sch}{{\rm Sch}}
\newcommand{\Smooth}{{\rm Sm}}
\newcommand{\DM}{\mathbf{DA}}
\newcommand{\Dia}{\mathsf{Dia}}
\newcommand{\et}{\textit{\emph{\'et}}}
\newcommand{\id}{{\rm id}}
\newcommand{\ctp}{\rm cla}
\newcommand{\un}{\mathds{1}}
\newcommand{\Q}{\mathbb{Q}}
\newcommand{\N}{\mathbb{N}}
\newcommand{\R}{\mathbb{R}}
\newcommand{\C}{\mathbb{C}}
\newcommand{\M}{{\rm M}}
\newcommand{\Aut}{{\rm Aut}}
\newcommand{\Sm}{{\rm Sm}}
\newcommand{\SmAn}{{\rm SmAn}}
\newcommand{\Shv}{\mathbf{Shv}}
\newcommand{\Spect}{\mathbf{Spect}}
\newcommand{\eff}{{\rm eff}}
\newcommand{\coh}{{\rm coh}}
\newcommand{\An}{{\rm An}}
\newcommand{\EE}{\mathbb{E}}
\newcommand{\ucarre}{\, \xy <0cm,-.08cm> ;<.22cm,-.08cm>:
(0,0) ; (0,1) **\dir{-}, (1,1) ; (0,1) **\dir{-}, \endxy \, }
\newcommand{\smallucarre}{\, \xy <0cm,-.05cm> ;<.12cm,-.05cm>:
(0,0) ; (0,1) **\dir{-}, (1,1) ; (0,1) **\dir{-}, \endxy \, }
\begin{document}

\title[Artin motives and the reductive Borel-Serre compactification]{Relative
Artin motives and the reductive Borel-Serre compactification
of a locally symmetric variety}

\author[J. Ayoub]{Joseph Ayoub}
\address{\newline
Institut f\"ur Mathematik\\ Universit\"at
Z\"urich\\Winterthurerstr. 190\\
CH-8057 Zurich\\
Switzerland  \newline  CNRS\\ LAGA Universit\'e Paris 13\\
99 avenue J.B.~Cl\'ement \\ 93430 Villetaneuse\\ France}
\email{joseph.ayoub@math.uzh.ch}
\author[S. Zucker]{Steven Zucker}
\address{\newline
Department of Mathematics\\ Johns Hopkins University\\ Baltimore\\ MD 21218\\
USA}
\email{zucker@math.jhu.edu}
\thanks{The first author was
supported in part by the Swiss National Science
Foundation through grant no.~ 2000201-124737/1.
The second author was
supported in part by the National Science Foundation through grant
DMS0600803.}

\hfill

\begin{abstract} \begin{sloppypar}
We introduce the notion of Artin motives and cohomological motives over a scheme $X$.
Given a cohomological motive $M$ over $X$, we construct its
punctual weight zero part $\omega^0_X(M)$
as the universal Artin motive mapping to $M$.
We use this to define a motive $\EE_X$ over $X$ which is an invariant of the singularities of $X$.
The first half of the paper is devoted to the study of the functors $\omega^0_X$ and the computation of the motives $\EE_X$.

In the second half of the paper, we develop the application to locally symmetric varieties.
More specifically, let $\Gamma \backslash D$ be a locally symmetric variety and denote by
$p:\overline{\Gamma\backslash D}^{rbs} \to \overline{\Gamma\backslash D}^{bb}$ the projection
of its reductive Borel-Serre compactification to its Baily-Borel Satake compactification.
We show that ${\rm R}p_*\Q_{\overline{\Gamma\backslash D}^{rbs}}$ is naturally isomorphic
to the Betti realization of the motive $\EE_{\overline{X}^{bb}}$, where $\overline{X}^{bb}$ is the
scheme such that $\overline{X}^{bb}(\C)=\overline{\Gamma \backslash D}^{bb}$. In particular,
the direct image of $\EE_{\overline{X}^{bb}}$ along the projection of $\overline{X}^{bb}$ to $\Spec(\C)$ gives a motive whose Betti
realization is naturally isomorphic to the cohomology of
$\overline{ \Gamma\backslash
D}^{rbs}$.

\end{sloppypar}
\end{abstract}

\maketitle

\bigskip

\setcounter{tocdepth}{3}

\begin{tableofcontents}

\end{tableofcontents}

\bigskip

\subsection*{Introduction}

Let $X$ be a noetherian scheme. By the work of F.~Morel and V.~Voevodsky
\cite{morel-voevodsky}, R.~Jardine \cite{jardine}, and
others, one can associate to $X$ a triangulated category $\DM(X)$,
whose objects are called motives over $X$.  Any quasi-projective
$X$-scheme $Y$ has a cohomological motive $\M_{\coh}(Y)$, an
object of $\DM(X)$. Many of the expected properties of these
categories are still unknown, notably the existence of a motivic
$t$-structure, usual and perverse, and a filtration by weights and punctual weights on their respective hearts.\footnote{In \cite{weight-filtration} M.~Bondarko defined a weight
filtration for effective motives over a field. However, this weight
filtration is sensitive to the suspension functor $[1]$; the latter
increase the weight by $1$. In particular, his truncation functors are not
triangulated. In this paper, we have in mind weight filtrations compatible
with the triangulated structure, more in the spirit of S. Morel's weight
truncation \cite[\S 3]{sophie-morel}. However, contrary to
\cite{sophie-morel}, we use punctual weights instead of weights on
perverse sheaves, for the latter are very far from being available in the
motivic setting.}

By definition, a general \emph{cohomological motive} is an object
of $\DM(X)$ which can be obtained from the motives $\M_{\coh}(Y)$
by an iteration of the following operations: direct sums,
suspensions and cones. Similarly, one defines \emph{Artin motives}
by taking only the motives $\M_{\coh}(Y)$ with $Y$ finite
over $X$. We propose a candidate for the punctual weight zero part of a cohomological motive $M$. It is the universal
Artin motive $\omega^0_X(M)$ that maps to $M$.
That $\omega^0_X(M)$ exists is a
consequence of general existence theorems for compactly generated
triangulated categories. What is less formal is that the functor
$\omega^0_X$ satisfies the properties one expects from a
punctual weight
truncation functor. The preceding is the subject of \S
\ref{sub-sect:zero-slice-of-the-weight-filt}.

Next, in \S2.3, we use the functors $\omega^0_X$ to define a
motive $\EE_X$ over $X$ as follows. Assume that $X$ is
reduced and
quasi-projective over a field $k$ of characteristic zero, and let
$j:U \hookrightarrow X$ be the inclusion of a dense and smooth
open subset. Then $\EE_X$ is defined to be $\omega^0_X(j_*\un_U)$,
where $\un_U$ is the unit of the tensor product on $\DM(U)$, which
is independent of the choice of $U$. Moreover, $\EE_X$ is an
invariant of the singularities of $X$. Indeed, if $X$ is smooth
$\EE_X\simeq \un_X$. Moreover, given a smooth morphism $f:Y \to
X$, there is a canonical isomorphism $f^*\EE_X\simeq \EE_Y$. The
large \S \ref{subsection:compute-E-X} is devoted to the
computation of the motive $\EE_X$ in terms of a stratification of
$X$ by smooth locally closed subsets and a compatible family of
resolutions of the closure of each stratum.
To compute $\EE_X$ from the aforementioned resolution
data, we introduce a diagram of schemes $\mathcal{X}$
in \S2.5.2 that breaks
down the determination into a ``corner-like'' decomposition of the
boundary in the resolutions.  We further break it down, by means
of the diagram $\mathcal{Y}$ in \S2.5.4, to the strata in the
objects of $\mathcal{X}$.
Unfortunately, the
outcome is not very elegant, but it is useful nonetheless.

Section \ref{sect:compactif-locally-symmetric} treats the relevant
compactifications of a locally symmetric variety, and gathers
their essential properties.  Let $D$ be a bounded symmetric
(complex) domain and $\Gamma\subset {\rm Aut}(D)$ an
arithmetically-defined subgroup. Then $\Gamma\backslash D$ is a
complex analytic space with (at worst) quotient
singularities.\footnote{Under mild conditions on $\Gamma$ (see \S
\ref{subsect:generalities}), $\Gamma\backslash D$ is non-singular
and its boundary strata in each compactification are likewise
well-behaved.}  In fact, it has a canonical structure of an
algebraic variety \cite{baily-borel}.  Though some of its
well-known compactifications are projective varieties, e.g., the
Baily-Borel Satake compactification $\overline{\Gamma\backslash
D}^{bb}$, that is not the case for the rather prominent reductive
Borel-Serre compactification $\overline{\Gamma\backslash D}^{rbs}$
(see \cite{rbs-1}), which was introduced as a technical device
without name in \cite[\S4]{warped}. It is only a real stratified
space whose boundary strata can have odd real dimension.

In Section \ref{sect:main-comput}, we state and prove the main
theorem of this article, which concerns the reductive Borel-Serre
compactification. By \cite{satake-compactifications}, there is a
natural stratified projection $p:\overline{\Gamma\backslash
D}^{rbs} \to \overline{\Gamma\backslash D}^{bb}$ from the
reductive Borel-Serre compactification to the Baily-Borel Satake
compactification. The latter is the variety of $\C$-points
(strictly stated, the associated analytic variety) of a projective
scheme $\overline{X}^{bb}$, by \cite{baily-borel} again. Our
theorem asserts that the Betti realization of
$\EE_{\overline{X}^{bb}}$ is canonically isomorphic to ${\rm
R}p_*\Q_{\overline{\Gamma\backslash D}^{rbs}}$. Our main theorem
signals that the non-algebraic reductive Borel-Serre
compactification is a natural object in our algebro-geometric
setting; in a sense, this justifies the repeated presence of
$\overline{\Gamma\backslash D}^{rbs}$ in the literature \cite
{weighted,GT,rbs-1, rbs-2,rbs-mixed-hodge-str}. It is natural to
define the motive of the reductive Borel-Serre compactification
$\overline{\Gamma\backslash D}^{rbs}$ to be
$\M^{rbs}(\Gamma\backslash D)=\pi_*(\EE_{\overline{X}^{bb}})$ with
$\pi$ the projection of $\overline{X}^{bb}$ to $\Spec(\C)$. Then
the Betti realization of $\M^{rbs}(\Gamma\backslash D)$ is
canonically isomorphic to the cohomology of the topological space
$\overline{\Gamma \backslash D}^{rbs}$. We add that a construction
of a mixed Hodge structure on the cohomology of
$\overline{\Gamma\backslash D}^{rbs}$ is given in
\cite{rbs-mixed-hodge-str},\footnote{Indeed, it was the {\it
raison d'\^etre} of our collaboration. We believe it was Kazuya
Kato who first suggested, on the basis of
\cite{rbs-mixed-hodge-str}, that there might be a reductive
Borel-Serre motive.} though it has flaws that appear to be
fixable. Though it is natural to expect the latter to coincide
with the mixed Hodge structure one gets from the motive
$\M^{rbs}(\Gamma \backslash D)$, we do not attempt to address it
in this article. (See also Remark \ref{huber}.)

An important technique in the proof of our main result, Theorem
\ref{thm:main-thm}, is the use of diagrams of schemes (already
mentioned above) and motives over them.  A diagram of schemes is
simply a covariant functor $\mathcal{X}$ from a small category
$\mathcal{I}$ (the \emph{indexing category}) to the category of
schemes. Roughly speaking, a motive $M$ over the diagram of
schemes $\mathcal{X}$ is a collection of motives $M(i)\in
\DM(\mathcal{X}(i))$, one for each object $i\in \mathcal{I}$,
which are strictly contravariant (i.e., and not only up to
homotopy) with respect to the arrows of $\mathcal{I}$. Diagrams of
schemes and motives over them are used extensively in Sections
\ref{sect:artin-zero-slice-weight} and \ref{sect:main-comput} to
encode the way some motives are functorially reconstructed from
simpler pieces.

Here is a simple illustration of this principle.
Let $X$ be a
scheme and $M$ a motive over $X$. We assume that $M$ is defined as a homotopy
pull-back of
a diagram of the form
$$\xymatrix@C=1.5pc{M_{(1,0)} \ar[r]^-{u_{10}} &  M_{(0,0)} & \ar[l]_-{u_{01}}
M_{(0,1)},}$$
i.e., as $Cone\{u_{10}-u_{01}:M_{(1,0)}\bigoplus M_{(0,1)} \to M_{(0,0)}\}$.
Then $M$
depends only loosely (i.e., not functorially) on the above diagram.
However, in
good situations,
the above diagram can be promoted naturally to
an object of $\DM(X,\ucarre)$,
where $\ucarre$ (cf.~Lemma \ref{texnical-commut-holim})
is the category $\{(1,0)\leftarrow (0,0) \rightarrow (0,1)\}$ and
$(X,\ucarre)$
is the constant
diagram of schemes with value $X$. As homotopy
pullback is a well-defined
functor from
$\DM(X,\ucarre)$ to $\DM(X)$, it is, for technical reasons, much better
to work with objects
of $\DM(X,\ucarre)$ rather than diagrams of motives in $\DM(X)$ having the
shape of $\ucarre^{\rm op}$.

The construction of the isomorphism in our main theorem uses, as a
starting point, the computation of $\EE_X$ in
\S\ref{subsection:compute-E-X} (especially Theorem
\ref{thm:final-form-for-applic-main-thm}).
In the case of $\overline{X}^{bb}$ (playing the
role of $X$ in \S2.5), we use the toroidal compactifications of
\cite{toroidal-comp,pink-thesis} for the compatible family of
resolutions, which are determined by compatible sets of
combinatorial data.  From there, we use the specifics of the
situation to successively modify the diagram of schemes that
appears in Theorem \ref{thm:final-form-for-applic-main-thm},
without changing the (cohomological) direct image of the diagram
of motives along the projection onto $\overline{X}^{bb}$. (See Proposition
\ref{prop:comparison-cal-Y-and-cal-V} and Theorem
\ref{thm:final-form-comput-EE-X-bb}.) When we
finally arrive at the diagram $\mathcal W^{tor}$ in \S4.2.4, we
can escape the confines of schemes and pass to diagrams of
topological spaces in \S4.2.5, where the role of the reductive
Borel-Serre compactification emerges naturally.

\subsubsection*{Acknowledgments} We are indebted to Marc Levine, who suggested
that the authors meet. We
wish to thank J\"org Wildeshaus for his interest in this work. We are grateful
to Ching-Li Chai for answering questions about toroidal compactifications and
Shimura varieties.
We are particularly appreciative of the helpful, thorough refereeing that our submitted article received.

{\footnotesize
\subsubsection*{Notation and conventions}
There are places in the article where we have used
somewhat different notation from what appears in the literature.
For instance, $\DM(\mathcal{X},\mathcal{I})$ is really the triangulated category
$\mathbb{SH}^T_{\mathfrak{M}}(\mathcal{X},\mathcal{I})$ of
\cite[D\'ef.~4.5.21]{ayoub-these-II}, with $\mathfrak{M}$ the category of complexes of $\Q$-vector spaces,
$\tau=\et$, the \'etale topology, and
$T$ the Tate motive
as in \S\ref{Quick}.
We also note that in
\S\ref{subsect:The-Baily-Borel-compactification} and the sequel,
we have deviated from the notation of \cite{toroidal-comp}.
Starting in \S\ref{subsect:toroidal-compact}, the usage of the
symbols $\Sigma^{\circ}$ and $\Sigma^c$ is the opposite of that in
\cite{HZ2,rbs-2}. (We do this to conform with the relation between
the corresponding open and closed schemes.)

The category with one object and one arrow
is denoted $\mathbf{e}$.
For a scheme $X$ over $\C$, we often identify
$X(\C)$ with the associated complex analytic space. We use bold
capital letters for a linear algebraic group defined over $\Q$, e.g., $\mathbf{G}$, and
use the same letter in ordinary mathematical font, $G$ in the
example, to denote $\mathbf{G}(\R)$, viewed as a real Lie group,
beginning in Section 3. In talking
about cone complexes in \S\ref{subsect:toroidal-compact}, the
notation for a cone refers to the {\it open} cones. We have used
throughout the convention that when we state that something is an
almost direct product, we use notation for it as though it were a
direct product. Remark \ref{g-gprime} establishes a convention
that the use of a certain symbol includes the context in which it
is being used.}

\section{Triangulated categories of motives}
\label{sect:triang-cat-of-mot}

\subsection{Quick review of their construction}
\label{Quick}
We briefly describe the construction of a triangulated category
$\DM(X)$ whose objects will be called \emph{relative motives} over
the scheme $X$.
The details of our construction are to be found in
\cite[\S 4.5]{ayoub-these-II}: our category $\DM(-)$ is
the category $\mathbb{SH}_{\mathfrak{M}}^T(-)$
of \cite[D\'ef.~4.5.21]{ayoub-these-II} when we take
for $\mathfrak{M}$, the category of complexes of $\Q$-vector spaces, and for $\tau$,
the \'etale topology.
(The notation $\DM$ is probably due to F.~Morel and it appears already in \cite[Def.~1.3.2]{mot-var-rig}; most probably the $\mathbf{A}$ stands for abelian.)
Roughly
speaking, we follow, without lots of imagination, the recipe of Morel and Voevodsky
\cite{morel-voevodsky}, replacing simplicial sets by complexes of
$\Q$-vector spaces and then use spectra to formally invert the
tensor product by the Tate motive, as in \cite{jardine}. In particular, we do not use
the theory of finite correspondences from \cite{livre-orange} in defining
$\DM(X)$.
However, it can be shown
that, for $X=\Spec(k)$ the spectrum of a perfect
field, we have an equivalence of categories $\DM(k)\simeq
\mathbf{DM}(k)$, where $\mathbf{DM}(k)$ is Voevodsky's category of mixed
motives with rational coefficients (see Proposition \ref{prop:compare} below).

For the reader convenience, we now review some elements of the
construction of $\DM(X)$.
For a noetherian scheme $X$, we denote by $\Sm/X$ the category of
smooth $X$-schemes of finite type. We consider $\Sm/X$ as a site
for the \'etale topology. The category $\Shv(\Sm/X)$, of \'etale
sheaves of $\Q$-vector spaces over $\Sm/X$, is a Grothendieck abelian
category. Given a smooth $X$-scheme
$Y$, we denote by $\Q_{\et}(Y\to X)$ (or just $\Q_{\et}(Y)$ when
$X$ is understood) the \'etale sheaf associated to the presheaf
$\Q(Y)$ freely generated by $Y$, i.e., $\Q(Y)(-)=\Q(\hom_{\Sm/X}(-,Y))$.

\begin{definition}
\label{defn:DMeff-of-X} The category $\DM_{\eff}(X)$ is the
Verdier quotient of the derived category $\mathbf{D}(\Shv(\Sm/X))$
by the smallest triangulated subcategory $\mathbf{A}$ that is stable under
infinite sums and contains the complexes $[\Q_{\et}(\Aff^1_Y)\to
\Q_{\et}(Y)]$ for all smooth $X$-schemes $Y$.
\end{definition}

As usual, $\Aff^1_Y$ denotes the relative affine line over $Y$.
Given a smooth $X$-scheme $Y$, we denote by $\M_{\eff}(Y)$ (or
$\M_{\eff}(Y\to X)$ if confusion can arise) the object
$\Q_{\et}(Y)$ viewed as an object of $\DM_{\eff}(X)$. This is the
{\it effective homological motive} of $Y$. We also write $\un_X$ (or
simply $\un$) for the motive $\M_{\eff}(\id_X)$ where $\id_X$ is the identity mapping of $X$.
This is a unit for the tensor product on $\DM_{\eff}(X)$.

One can alternatively define $\DM_{\eff}(X)$ as the homotopy
category of a model structure in the sense of \cite{Quillen} (see
\cite{GJ}). More
precisely, the category $\mathbf{K}(\Shv(\Sm/X))$ of complexes of
\'etale sheaves on $\Sm/X$ can be endowed with the $\Aff^1$-local model structure
$(\mathbf{W}_{\Aff^1},\mathbf{Cof},\mathbf{Fib}_{\Aff^1})$, for
which $\DM_{\eff}(X)$ is the homotopy category
$$\mathbf{K}(\Shv(\Sm/X))[\mathbf{W}_{\Aff^1}^{-1}].$$
Here, the class $\mathbf{W}_{\Aff^1}$ (of $\Aff^1$-weak
equivalences) consists of morphisms which become invertible in
$\DM_{\eff}(X)$; the cofibrations are the injective morphisms of
complexes; the class $\mathbf{Fib}_{\Aff^1}$ (of
$\Aff^1$-fibrations) is defined by the right lifting property
\cite{Quillen} with respect to the arrows in $\mathbf{Cof}\cap
\mathbf{W}_{\Aff^1}$.

In this paper we need to use some of the Grothendieck operations
on motives (see \cite{ayoub-these-I,ayoub-these-II}). These
operations are defined on the categories $\DM(X)$ obtained from
$\DM_{\eff}(X)$ by formally inverting the operation $T\otimes -$,
tensor product with the Tate motive. Here, we will take as a model
for the Tate motive\footnote{Usually the Tate motive $\Q_X(1)$ is
defined to be $T_X[-1]$ viewed as an object of $\DM_{\eff}(X)$. As
the shift functor $[-1]$ is already invertible in $\DM_{\eff}(X)$,
it is equivalent to invert $(T_X\otimes -)$ or
$(\Q_X(1)\otimes -$).} the \'etale sheaf
$$T_X=\ker \left\{\Q_{\et}\left((\Aff^1_X- o(X))\to X\right) \longrightarrow \Q_{\et}(\text{id}_X:X\to X)\right\},$$
where $o:X \to \Aff^1_X$ is the zero section. We
denote $T_X$ simply by $T$ if the base scheme $X$ is clear.

The process of inverting $T\otimes -$ is better understood via the
machinery of spectra, borrowed from algebraic topology \cite{adams}.
We denote
the category of $T$-spectra of complexes of \'etale sheaves on
$\Sm/X$ by
$$\mathbb{M}_T(X)=\Spect_T(\mathbf{K}(\Shv(\Sm/X))).$$
Objects of $\mathbb{M}_T(X)$ are collections
$\mathbf{E}=(E_n,\gamma_n)_{n\in \N}$, in which the $E_n$'s are
complexes of \'etale sheaves on $\Sm/X$ and the $\gamma_n$'s are
morphisms of complexes
$$\gamma_n:T\otimes E_n\to E_{n+1},$$ called
{\it assembly maps}. We note that $\gamma_n$ determines by adjunction a morphism
$\gamma'_n:
E_n \to\underline{Hom}(T,E_{n+1})$. There is a stable $\Aff^1$-local model
structure on $\mathbb{M}_T(X)$ such that a $T$-spectrum
$\mathbf{E}$ is stably $\Aff^1$-fibrant if and only if each $E_n$
is $\Aff^1$-fibrant and each $\gamma'_n$ is a
quasi-isomorphism of complexes of sheaves. This model structure is denoted by
$(\mathbf{W}_{\Aff^1\text{-st}},\mathbf{Cof},\mathbf{Fib}_{\Aff^1\text{-st}})$.

\begin{definition}
\label{defn:DM-of-X-stable-version}
The category $\DM(X)$ is the homotopy category of $\mathbb{M}_T(X)$
with respect to the stable $\Aff^1$-local model structure:
$$\DM(X)=\mathbb{M}_T(X) [(\mathbf{W}_{\Aff^1\text{-}{\rm st}})^{-1}].$$

\end{definition}

There is an infinite suspension functor
$\Sigma^{\infty}_T:\DM_{\eff}(X) \to \DM(X)$ which takes a complex
of \'etale sheaves $K$ to the $T$-spectrum
$$(K, T
\otimes K , \dots, T^{\otimes r}\otimes K, \dots ),$$
where
the assembly maps are the identity maps. In $\DM(X)$, the homological motive of a smooth
$X$-scheme $Y$ is then $\M(Y)=\Sigma^{\infty}_T(\M_{\eff}(Y))$ (we
write $\M(Y\to X)$ if confusion can arise). The motive
$\M(\id_X)$ will be denoted by $\un_X$ (or simply $\un$).
There is also a tensor product on $\DM(X)$ which makes it a
closed monoidal symmetric category
with unit object $\un_X$. Then the functor
$\Sigma^{\infty}_T$ becomes monoidal symmetric and unitary. Moreover, the Tate motive
$\un_X(1)=\Sigma^{\infty}_T(T_X)[-1]$ is invertible for the tensor product of $\DM(X)$. For $n\in
\Z$, we define the Tate twists $M(n)$ of a motive $M\in \DM(X)$ in
the usual way.

By \cite{ayoub-these-I, ayoub-these-II}, we have the full machinery
of Grothendieck's six operations on the triangulated
categories $\DM(X)$.
Two of these operations,
$\otimes_X$ and $\underline{\rm Hom}_X$, are part of the monoidal
structure on $\DM(X)$.
Given a morphism of noetherian schemes $f:X
\to Y$, we have the operations $f^*$ and $f_*$ of inverse image
and cohomological direct image along $f$. When $f$ is
quasi-projective, we also have the operations $f_!$ and
$f^!$ of direct image with compact support and extraordinary
inverse image along $f$. The usual properties from
\cite{SGA4} hold.

\begin{definition}
\label{defn:cohomological-motive} Let $X$ be a noetherian scheme
and $Y$ a quasi-projective $X$-scheme. We define $\M_{\coh}(Y)$, or
$\M_{\coh}(Y\to X)$ if confusion can arise, to be
$(\pi_{Y})_*\un_Y$, where $\pi_Y:Y \to X$ is the structural
morphism of the $X$-scheme $Y$. This is the {\rm cohomological
motive} of $Y$ in $\DM(X)$.
\end{definition}

It is easy to check that this defines a contravariant functor
 $\M_{\coh}(-)$
from the category of quasi-projective $X$-schemes to $\DM(X)$.
In contrast to homological motives, $\M_{\coh}(Y)$
is defined without assuming $Y$ to be smooth over $X$.\footnote{
In the stable motivic categories, $\M(Y)$ can be extended for all quasi-projective $X$-schemes $Y$ by setting
$\M(Y)=(\pi_Y)_!(\pi_Y)^!\un_X$. We do not use this in the paper.}

We write $\mathbf{DM}(X)$ for Voevodsky's category of motives over
the base-scheme $X$. $\mathbf{DM}(X)$ is obtained
in the same way as $\DM(X)$ using the category
$\Shv^{tr}_{\rm Nis}(\Smooth/X)$ of Nisnevich sheaves with transfers (cf.~\cite[Lect.~13]{motivic-lectures} for $X$ the spectrum of a field) instead of the category $\Shv(\Smooth/X)$ of \'etale sheaves.
A detailed construction of this category (at least
for $X$ smooth over a field) can be obtained as the particular case
of \cite[Def.~2.5.27]{mot-var-rig} where
the valuation of the base field is taken to be trivial.
A more recent account
of the construction can be found in
\cite{motif-transfert}.
The effective and geometric version of this category
is also constructed in \cite{ivorra}.

As we work with sheaves
of $\Q$-vector spaces, a Nisnevich sheaf with transfers is
automatically an \'etale sheaf. This gives a forgetful functor
${\rm o}^{tr}:\mathbf{DM}(X)\to \DM(X)$, which has a left adjoint
$${\rm a}^{tr}:\xymatrix@C=1.7pc{\DM(X) \ar[r] &  \mathbf{DM}(X).}$$
Thus, a motive $M\in\DM(X)$ determines a motive ${\rm a}^{tr}(M)$ in the sense of
Voevodsky. Moreover, when $X=\Spec(k)$ is the spectrum of a field
$k$ of characteristic zero, it follows from \cite{morel-unpublished} (cf.~\cite[Cor.~15.2.20]{motif-transfert} for a complete proof that works more generally for any excellent and unibranch base-scheme $X$) that:

\begin{proposition}
\label{prop:compare}
The functor ${\rm a}^{tr}:\DM(k) \to \mathbf{DM}(k)$
is an equivalence of categories.
\end{proposition}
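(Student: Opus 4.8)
The plan is to reduce the statement to the comparison results of \cite{morel-unpublished} and \cite{motif-transfert} by interpolating an intermediate category, and to check that all the reductions involved are formal. Since $\mathrm{a}^{tr}$ is a left adjoint it commutes with arbitrary sums, and, being induced by the identity on representables, it carries the compact generators $\M(Y)(n)$ of $\DM(k)$ ($Y\in\Sm/k$, $n\in\Z$) to the compact generators $\M(Y)(n)$ of $\mathbf{DM}(k)$; monoidality of $\mathrm{a}^{tr}$ takes care of the Tate twists. Hence the essential image of $\mathrm{a}^{tr}$ is a localizing subcategory of $\mathbf{DM}(k)$ containing a generating family, so $\mathrm{a}^{tr}$ is essentially surjective. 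For the equivalence it remains to prove full faithfulness, and by a standard dévissage — using that both categories are compactly generated and that $\mathrm{a}^{tr}$ preserves sums and triangles — this comes down to showing that
\[
\mathrm{a}^{tr}\colon \mathrm{Hom}_{\DM(k)}\bigl(\M(Y)(n),\M(Y')(m)[i]\bigr)\longrightarrow \mathrm{Hom}_{\mathbf{DM}(k)}\bigl(\M(Y)(n),\M(Y')(m)[i]\bigr)
\]
is bijective for all $Y,Y'\in\Sm/k$ and all $n,m,i\in\Z$.

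I would factor this comparison through the variant $\mathbf{DM}^{\et}(k)$ of Voevodsky's category obtained from \'etale (rather than Nisnevich) sheaves with transfers. The equivalence $\mathbf{DM}(k)\simeq\mathbf{DM}^{\et}(k)$ — \'etale descent for $\Q$-linear motivic cohomology — rests on the classical trace argument: a finite \'etale cover $V\to U$ of constant degree $d$ satisfies $\mathrm{tr}\circ\mathrm{res}=d\cdot\id$ on any presheaf with transfers, so after inverting $d$ the \v{C}ech cohomology of such a cover is concentrated in degree $0$; consequently an $\Aff^1$-invariant Nisnevich sheaf with transfers of $\Q$-vector spaces has identical Nisnevich and \'etale hypercohomology, and the two $\Aff^1$-localizations of $\mathbf D(\Shv^{tr})$ agree. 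This identifies the right-hand group above with the corresponding $\mathrm{Hom}$ computed in $\mathbf{DM}^{\et}(k)$.

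What then remains is to compare $\DM(k)$, built from \'etale sheaves \emph{without} transfers, with $\mathbf{DM}^{\et}(k)$, built from \'etale sheaves \emph{with} transfers — equivalently, to show that the forgetful functor $\mathbf{DM}^{\et}(k)\to\DM(k)$ is fully faithful (its essential surjectivity follows as in the first paragraph). This is the one genuinely non-formal ingredient, and it is exactly the content of Morel's unpublished theorem, with a complete proof in \cite[Cor.~15.2.20]{motif-transfert}: rationally and \'etale-locally over a field of characteristic zero, a strictly $\Aff^1$-invariant sheaf carries a unique transfer structure, so adjoining transfers neither loses nor adds information. The engine behind this is Morel's description of the $\mathbb{G}_m$-stabilized $\Aff^1$-homotopy sheaves via Milnor--Witt $K$-theory, together with the splitting $\mathbf{SH}(k)_\Q\simeq\mathbf{SH}(k)_{\Q,+}\times\mathbf{SH}(k)_{\Q,-}$: on the minus factor $\langle-1\rangle$ acts as $-1$, whereas \'etale-locally $-1$ is a square and hence $\langle-1\rangle=1$, so the minus factor is annihilated by \'etale sheafification with $\Q$-coefficients, leaving the plus factor, which Morel identifies with $\mathbf{DM}(k,\Q)$. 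I expect this last comparison to be the main obstacle; in the write-up I would not reprove it but invoke \cite{morel-unpublished} and \cite{motif-transfert}, confining the argument given here to the two formal reductions above and checking along the way that the compact generators and the units of the intermediate adjunctions correspond under all the identifications.
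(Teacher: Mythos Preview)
Your outline is essentially correct, and in fact you are doing more than the paper does: the paper gives no argument at all for this proposition, stating it as a direct consequence of \cite{morel-unpublished} and \cite[Cor.~15.2.20]{motif-transfert}. Your two formal reductions (essential surjectivity via compact generators, full faithfulness by d\'evissage) and your factorization through $\mathbf{DM}^{\et}(k)$ are the standard way to unpack what those references prove, and you have correctly isolated Morel's splitting $\mathbf{SH}(k)_{\Q}\simeq\mathbf{SH}(k)_{\Q,+}\times\mathbf{SH}(k)_{\Q,-}$ together with the vanishing of the minus part after \'etale sheafification as the non-formal core. Since the paper's own ``proof'' is a bare citation, there is nothing to compare against; your sketch is a faithful expansion of what the cited sources contain.
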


\begin{remark}
\label{rem:for-our-choices}
The main reason we are working with coefficients in $\mathbb{Q}$ (rather than in $\mathbb{Z}$) is technical.
For computing the functors
$\omega^0_X$ (see Proposition
\ref{prop:main-computation-omega0-pi-0} below), we need to invoke
Proposition \ref{prop:compare}, which holds only with rational coefficients. Also, some of the arguments in the proof of
Theorem \ref{thm:final-form-for-applic-main-thm} use in an essential way that the coefficients are in $\mathbb{Q}$.
Also, we choose to work with the
categories $\DM(X)$ rather than $\mathbf{DM}(X)$. We do this in order
to have a context in which the formalism of the six operations of
Grothendieck is available. Indeed, there is an obstacle to having
this formalism in $\mathbf{DM}(X)$, at least with integral coefficients, as the localization axiom (see
\cite[\S 1.4.1]{ayoub-these-I}) is still unknown for relative
motives in the sense of Voevodsky. Moreover, as
\cite[Cor.~15.2.20]{motif-transfert} indicates, there is no essential difference between these categories, as long as we are concerned with rational coefficients and unibranch base-schemes.

\end{remark}

\subsection{Motives over a diagram of schemes}\label{subsect:motives over a diagram}
Later, we will need a generalization of the notion of relative motive where the scheme $X$ is replaced by a \emph{diagram of schemes}. The main references for this are \cite[Sect.~2.4]{ayoub-these-I} and \cite[Sect.~4.5]{ayoub-these-II}. We will denote by $\Dia$ the $2$-category of small categories.

Let $\mathcal{C}$ be a category. A \emph{diagram} in $\mathcal{C}$
is a covariant functor $\mathcal{X}:\mathcal{I}\to \mathcal{C}$ with $\mathcal{I}$ a small category (i.e., $\mathcal{I}\in \Dia$).
A diagram in $\mathcal{C}$ will be denoted $(\mathcal{X},\mathcal{I})$ or simply $\mathcal{X}$ if no confusion can arise.
Given an object $X\in \mathcal{C}$, we denote by $(X,\mathcal{I})$ the constant diagram with value $X$, i.e., sending any object to $X$ and any arrow to the identity of $X$.

A morphism of diagrams $(\mathcal{Y},\mathcal{J}) \to (\mathcal{X},\mathcal{I})$ is a pair $(f,\alpha)$ where $\alpha:\mathcal{J} \to \mathcal{I}$ is a functor and $f:\mathcal{Y}\to \mathcal{X}\circ \alpha$ is a natural transformation. Such a morphism admits a natural factorization
\begin{equation}
\label{eq:factorization-1-morph-diag-C}
\xymatrix@C=1.7pc{(\mathcal{Y},\mathcal{J}) \ar[r]^-f & (\mathcal{X}\circ \alpha, \mathcal{J}) \ar[r]^-{\alpha} & (\mathcal{X},\mathcal{I}).}
\end{equation}
(When $\mathcal{C}$ is a category of spaces, $f$ and $\alpha$ are respectively called the \emph{geometric} and the \emph{categorical} part of $(f,\alpha)$.)
We denote by $\Dia(\mathcal{C})$ the category of diagrams in $\mathcal{C}$ which is actually a strict $2$-category where the $2$-morphisms are defined as follows.
Let $(f,\alpha)$ and $(g,\beta)$ be two morphisms from $(\mathcal{Y},\mathcal{J})$ to $(\mathcal{X},\mathcal{I})$. A $2$-morphism $t:(f,\alpha)\to (g,\beta)$ is a natural transformation $t:\alpha \to \beta$ such that for every $j\in \mathcal{J}$, the following triangle
$$\xymatrix@C=1.7pc@R=1.7pc{\mathcal{Y}(j) \ar[r]^-{f(j)} \ar@/_/[dr]_-{g(j)} & \mathcal{X}(\alpha(j)) \ar[d]^-{\mathcal{X}(t(j))}
\\
& \mathcal{X}(\beta(j)) }$$
commutes.

We have a fully faithful embedding $\mathcal{C} \hookrightarrow
\Dia(\mathcal{C})$ sending an object $X\in \mathcal{C}$ to the
diagram $(X,\mathbf{e})$ where $\mathbf{e}$ is the category with
one object and one arrow. We will identify $\mathcal{C}$ with a
full subcategory of $\Dia(\mathcal{C})$ via this embedding. Given
a diagram $(\mathcal{X},\mathcal{I})$ and an object $i\in
\mathcal{I}$, we have an obvious morphism $i:\mathcal{X}(i) \to
(\mathcal{X},\mathcal{I})$.

Now, we consider the case $\mathcal{C}=\Sch$ (schemes). Objects in
$\Dia(\Sch)$ are called \emph{diagrams of schemes}.
For $(\mathcal{X},\mathcal{I})\in \Dia(\Sch)$, let
$\Sm/(\mathcal{X},\mathcal{I})$ be the category whose objects are pairs $(U,i)$ with $i\in \mathcal{I}$ and $U$ a smooth $\mathcal{X}(i)$-scheme. Morphisms $(V,j)\to (U,i)$ are given by an arrow
$j\to i$ in $\mathcal{I}$ and a morphism of schemes $V \to U$ making the following square
$$\xymatrix@C=1.5pc@R=1.5pc{V \ar[r] \ar[d] & U \ar[d] \\
\mathcal{X}(j) \ar[r] & \mathcal{X}(i)}$$
commutative.
As in the case of a single scheme, we may use the category
$\Sm/(\mathcal{X},\mathcal{I})$, endowed with the \'etale topology,
to define a triangulated category
$\DM(\mathcal{X},\mathcal{I})$ of motives over
$(\mathcal{X},\mathcal{I})$. The full details of the construction can be found in
\cite[Ch. 4]{ayoub-these-II}.
Objects of $\DM(\mathcal{X},\mathcal{I})$ are called relative motives over $(\mathcal{X},\mathcal{I})$.

Let $(\mathcal{X},\mathcal{I})$ be a diagram of schemes and
$\mathcal{J}$ a small category. We call ${\rm pr}_1:\mathcal{I}\times \mathcal{J} \to \mathcal{I}$ the projection to the first factor.
There is a functor
\begin{equation}
\label{eq:partial-skeleton-definition}
{\rm sk}_{\mathcal{J}}:\xymatrix@C=1.7pc{\DM(\mathcal{X}\circ {\rm pr_1},\mathcal{I}\times \mathcal{J})\ar[r] & \underline{HOM}(\mathcal{J}^{\rm op},\DM(\mathcal{X},\mathcal{I}))}
\end{equation}
which associates to a relative motive $\mathbf{E}$ over $(\mathcal{X}\circ {\rm pr}_1,\mathcal{I}\times \mathcal{J})$ the
contravariant functor $j\rightsquigarrow \mathbf{E}(-,j)\in \DM(\mathcal{X},\mathcal{I})$,
called the {\it $\mathcal{J}$-partial skeleton} of $\mathbf{E}$.
When $\mathcal{X}(i)$ is not the empty scheme for at least one $i\in \mathcal{I}$, this functor is an
equivalence of categories only if $\mathcal{I}$ is discrete, i.e., equivalent to a category where every arrow is an identity.

The basic properties concerning the functoriality of $\DM(\mathcal{X},\mathcal{I})$ with respect to $(\mathcal{X},\mathcal{I})$ are summarized in
\cite[\S 2.4.2]{ayoub-these-I}. Note that a morphism of diagrams of schemes $(f,\alpha):(\mathcal{Y},\mathcal{J})\to (\mathcal{X},\mathcal{I})$ induces a functor $(f,\alpha)^*:\DM(\mathcal{X},\mathcal{I}) \to \DM(\mathcal{Y},\mathcal{J})$.
The assignment $(f,\alpha)\rightsquigarrow (f,\alpha)^*$ is contravariant with respect to $2$-morphisms and
$(f,\alpha)^*$
 admits a right adjoint $(f,\alpha)_*$. When $f$ is objectwise smooth (i.e., $f(j)$ is smooth for all $j\in \mathcal{J}$), $(f,\alpha)^*$ admits also a left adjoint $(f,\alpha)_{\sharp}$.

Now we gather some additional properties which will be needed later.

\begin{lemma}
\label{rem:i-*-j-diaise-}
For $i,\, j\in \mathcal{I}$, $M\in \DM(\mathcal{X}(i))$ and $N\in \DM(\mathcal{X}(j))$, there are canonical isomorphisms
$$\bigoplus_{j\to i\in \hom_{\mathcal{I}}(j,i)}\mathcal{X}(j\to i)^* M \simeq j^*i_{\sharp}M \qquad \text{and} \qquad  i^*j_* N \simeq \prod_{j\to i \in \hom_{\mathcal{I}}(j,i)} \mathcal{X}(j \to i)_*N.$$

\end{lemma}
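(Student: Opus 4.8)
The plan is to prove the first isomorphism and then deduce the second from it by passing to right adjoints.

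For the first, I would realize $j^*i_\sharp M$ by a smooth base change along a $2$-Cartesian square of diagrams of schemes. Set $S=\hom_{\mathcal I}(j,i)$, viewed as a discrete category, and let $(\mathcal Z,S)$ be the diagram with $\mathcal Z(u)=\mathcal X(j)$ for every $u\in S$ (all structure maps being identities). Let $a:(\mathcal Z,S)\to\mathcal X(j)$ have categorical part $S\to\mathbf e$ and identity geometric part at each $u$, and let $c:(\mathcal Z,S)\to\mathcal X(i)$ have categorical part $S\to\mathbf e$ and component $\mathcal X(u):\mathcal X(j)\to\mathcal X(i)$ at $u$. Together with the $2$-morphism $t:j\circ a\Rightarrow i\circ c$ given at $u\in S$ by the arrow $u$ itself, this forms a $2$-Cartesian square with $i:\mathcal X(i)\to(\mathcal X,\mathcal I)$ on the right and $j:\mathcal X(j)\to(\mathcal X,\mathcal I)$ on the bottom: indeed $(\mathcal Z,S)$ is the $2$-fiber product of $i$ and $j$ over $(\mathcal X,\mathcal I)$, its index being the comma category $\mathbf e\downarrow_{\mathcal I}\mathbf e$ (a discrete category with object set $\hom_{\mathcal I}(j,i)$) and its schemes the fiber products $\mathcal X(j)\times_{\mathcal X(i)}\mathcal X(i)=\mathcal X(j)$. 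Since the geometric part of $i$ is $\id_{\mathcal X(i)}$, the morphism $i$ — and hence its base change $a$ — is objectwise smooth, so the smooth base change theorem for diagrams of schemes (cf.\ \cite[\S 2.4]{ayoub-these-I}) gives a canonical isomorphism $j^*i_\sharp M\simeq a_\sharp c^*M$. Now $c^*M$ is the motive over $(\mathcal Z,S)$ with value $\mathcal X(u)^*M$ at $u$, and $a_\sharp$ is the direct sum over $S$ (being left adjoint to the ``constant diagram'' functor $a^*$, and a morphism from a diagram over the discrete category $S$ into a constant one is just an $S$-indexed family of morphisms). Hence $j^*i_\sharp M\simeq\bigoplus_{u\colon j\to i}\mathcal X(u)^*M$. (Alternatively, one computes $j^*i_\sharp$ directly on the representable sheaves $\Q_{\et}(U,k)$ over $\Sm/(\mathcal X,\mathcal I)$, after using the factorization \eqref{eq:factorization-1-morph-diag-C} to replace $i_\sharp$ and $j^*$ by their purely categorical parts.)

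For the second isomorphism, observe that the first is natural in $M$, i.e.\ an isomorphism of functors $j^*i_\sharp\simeq\bigoplus_{u\colon j\to i}\mathcal X(u)^*$ from $\DM(\mathcal X(i))$ to $\DM(\mathcal X(j))$. The right adjoint of $j^*i_\sharp$ is $i^*j_*$ (since $i_\sharp$ is left adjoint to $i^*$ and $j^*$ to $j_*$), whereas the right adjoint of $\bigoplus_{u}\mathcal X(u)^*$ is $\prod_{u}\mathcal X(u)_*$ (from $\hom(\bigoplus_u\mathcal X(u)^*M,N)\simeq\prod_u\hom(M,\mathcal X(u)_*N)$, arbitrary products existing in $\DM(\mathcal X(i))$). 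By uniqueness of adjoints, $i^*j_*\simeq\prod_{u\colon j\to i}\mathcal X(u)_*$, which is the second isomorphism.

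The combinatorics here is light; the real content is the general formalism behind the identifications used — that $i$ (hence $a$) is objectwise smooth so that $i_\sharp$, $a_\sharp$ exist, that the square above is genuinely $2$-Cartesian, that smooth base change holds in $\Dia(\Sch)$, and that $a_\sharp$ takes the stated explicit form. All of this is contained in the formalism of \cite{ayoub-these-I,ayoub-these-II}, so the proof amounts to invoking it and unwinding the relevant comma categories, which are discrete precisely because both source categories are $\mathbf e$.
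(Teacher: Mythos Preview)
Your proof is correct, but it runs in the opposite order from the paper's. The paper proves the second isomorphism first, as a direct instance of the axiom \textbf{DerAlg 4$'$g} of \cite[Rem.~2.4.16]{ayoub-these-I} (which computes $i^*(f,\alpha)_*$ via the comma category over $i$), and then deduces the first isomorphism by passing to left adjoints, using exactly the adjunctions $(\mathcal X(j\to i)^*,\mathcal X(j\to i)_*)$, $(i_\sharp,i^*)$ and $(j^*,j_*)$. You instead establish the first isomorphism directly, by realizing $j^*i_\sharp$ via a smooth base-change square (the left-adjoint analogue of \textbf{DerAlg 4$'$g}), and then pass to right adjoints for the second. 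The two routes are formally dual and rely on the same underlying formalism from \cite{ayoub-these-I}; neither is more elementary than the other. Your identification of the $2$-fiber product (or rather the relevant comma square) with the discrete set $\hom_{\mathcal I}(j,i)$ is exactly the combinatorial content of either axiom in this special case, and your observation that $a_\sharp$ reduces to a direct sum over the discrete category $S$ is the left-adjoint counterpart of the product appearing in \textbf{DerAlg 4$'$g}.
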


\begin{proof}
The second isomorphism is a special case of the axiom \textbf{DerAlg  4$^\prime$g} in \cite[Rem.~2.4.16]{ayoub-these-I}.
The first isomorphism is obtained from the second one using
the adjunctions $(\mathcal{X}(j\to i)^*,\mathcal{X}(j\to i)_*)$,
$(i_{\sharp},i^*)$ and $(j^*,j_*)$.
\end{proof}

\begin{proposition}
\label{prop:cohomol-diag-sch-adjunction}
Let $S$ be a noetherian scheme and $(\mathcal{X},\mathcal{I})$ a diagram of $S$-schemes. Let $\mathcal{J}$ be a small category and $\alpha: \mathcal{J} \to \mathcal{I}$
a functor. We form the commutative triangle in $\Dia(\Sch)$
$$\xymatrix@C=1.5pc@R=1.7pc{(\mathcal{X}\circ \alpha,\mathcal{J}) \ar@/_/[dr]_-{(f,q)}  \ar[r]^-{\alpha} & (\mathcal{X},\mathcal{I}) \ar[d]^-{(f,p)}\\
& S.}$$
Assume that $\alpha$ admits a left adjoint.
Then the composition
$$(f,p)_* \xymatrix@C=1.5pc{\ar[r] &} (f,p)_*\alpha_*\alpha^* \simeq (f,q)_*\alpha^*$$
is invertible.
\end{proposition}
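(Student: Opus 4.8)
The plan is to realize the left adjoint of $\alpha$ as (the categorical part of) a morphism of diagrams of $S$-schemes, and to run the whole argument at that level. Fix a left adjoint $\beta:\mathcal I\to\mathcal J$ of $\alpha$, with unit $u:\id_{\mathcal I}\Rightarrow\alpha\beta$ and counit $c:\beta\alpha\Rightarrow\id_{\mathcal J}$. Whiskering $u$ along $\mathcal X$ gives a natural transformation $\mathcal X(u):\mathcal X\Rightarrow\mathcal X\circ\alpha\circ\beta=(\mathcal X\circ\alpha)\circ\beta$ (with components $\mathcal X(u_i)$), so $\tilde\beta:=(\mathcal X(u),\beta)$ is a morphism of diagrams of schemes $(\mathcal X,\mathcal I)\to(\mathcal X\circ\alpha,\mathcal J)$. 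Two facts about $\tilde\beta$ drive the proof. First, $(f,q)\circ\tilde\beta=(f,p)$: on categorical parts this is clear, and on geometric parts it reduces, for each $i\in\mathcal I$, to $f_{\alpha\beta(i)}\circ\mathcal X(u_i)=f_i$, which holds because the structural morphism $(\mathcal X,\mathcal I)\to S$ is a morphism of diagrams and $u_i$ is an arrow of $\mathcal I$; consequently $(f,p)_*=(f,q)_*\circ\tilde\beta_*$. Second, $\tilde\beta$ is left adjoint to $\alpha$ in the $2$-category $\Dia(\Sch)$: the unit $u$ promotes to a $2$-morphism $\id_{(\mathcal X,\mathcal I)}\Rightarrow\alpha\circ\tilde\beta$ (the geometric compatibility being tautological), the counit $c$ promotes to a $2$-morphism $\tilde\beta\circ\alpha\Rightarrow\id_{(\mathcal X\circ\alpha,\mathcal J)}$ (the geometric compatibility at $j\in\mathcal J$ reads $\mathcal X(\alpha(c_j)\circ u_{\alpha(j)})=\id$, which is one of the triangle identities for $\beta\dashv\alpha$), and the two triangle identities for this candidate adjunction follow from those for $\beta\dashv\alpha$, since $2$-morphisms in $\Dia(\Sch)$ are detected on categorical parts.

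Applying to the second fact the pseudofunctor $(g,\gamma)\mapsto(g,\gamma)^*$ on $\Dia(\Sch)$ — which is contravariant on $1$-morphisms and on $2$-morphisms — turns $\tilde\beta\dashv\alpha$ into an adjunction $\tilde\beta^*\dashv\alpha^*$ whose counit $\delta:\tilde\beta^*\alpha^*\Rightarrow\id$ is the image of $u$. To finish I would show that the morphism of the statement induces a bijection on $\hom_{\DM(S)}(K,-)$ for every $K$. Using the adjunctions $(f,p)^*\dashv(f,p)_*$, $(f,q)^*\dashv(f,q)_*$, $\alpha^*\dashv\alpha_*$ together with $(f,q)^*=\alpha^*(f,p)^*$ and $(f,q)_*=(f,p)_*\alpha_*$, a routine chase (naturality of the counits and the triangle identity for $\alpha^*\dashv\alpha_*$) identifies this map with
\[
\alpha^*:\ \hom_{\DM(\mathcal X,\mathcal I)}\bigl((f,p)^*K,\,M\bigr)\longrightarrow\hom_{\DM(\mathcal X\circ\alpha,\mathcal J)}\bigl(\alpha^*(f,p)^*K,\,\alpha^*M\bigr),
\]
and then, transposing the target along $\tilde\beta^*\dashv\alpha^*$, with precomposition by the counit $\delta_{(f,p)^*K}:\tilde\beta^*\alpha^*(f,p)^*K\to(f,p)^*K$. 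Now whiskering the $2$-morphism $u$ into $(f,p):(\mathcal X,\mathcal I)\to S$ produces the identity $2$-morphism of $(f,p)$ — the categorical part of $u$ becomes trivial because $\mathbf e$ has a single arrow — so $\delta$ becomes the identity after whiskering by $(f,p)^*$; in particular $\delta_{(f,p)^*K}$ is invertible, the displayed map is bijective, and by the Yoneda lemma the morphism of the statement is invertible. (Alternatively one can sidestep the $\hom$-set computation: uniqueness of right adjoints of $\tilde\beta^*$ yields a canonical isomorphism $\tilde\beta_*\simeq\alpha^*$, whence $(f,q)_*\alpha^*\simeq(f,q)_*\tilde\beta_*=(f,p)_*$, at the cost of a separate verification that this coincides with the stated map.)

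The step I expect to require the most care is the $2$-categorical bookkeeping: checking that $\tilde\beta$ really is left adjoint to $\alpha$ inside $\Dia(\Sch)$, and tracking the counit $\delta$ through the adjunction $(f,p)^*\dashv(f,p)_*$ precisely enough to recognize the morphism in the statement (equivalently, to confirm the compatibility in the parenthetical alternative). Everything else is formal manipulation of adjunctions, using only the functoriality properties of $\DM(-,-)$ with respect to morphisms of diagrams recalled in \S\ref{subsect:motives over a diagram}.
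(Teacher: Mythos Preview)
Your argument is correct and takes a genuinely different route from the paper's.

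The paper factors $(f,p)$ and $(f,q)$ through the constant diagrams $(S,\mathcal I)$ and $(S,\mathcal J)$, then uses the base-change property \textbf{DerAlg~3d} from \cite[\S2.4.2]{ayoub-these-I} to slide $f_*$ past $\alpha_*\alpha^*$, reducing the question to the purely categorical statement that $p_*\to p_*\alpha_*\alpha^*$ is invertible on constant diagrams; this last fact is then quoted from \cite[Lem.~2.1.39]{ayoub-these-I}. Your approach instead promotes the adjunction $\beta\dashv\alpha$ to an adjunction $\tilde\beta\dashv\alpha$ inside the $2$-category $\Dia(\Sch)$, and then lets the contravariant pseudofunctor $(-)^*$ transport it to $\tilde\beta^*\dashv\alpha^*$; the result then drops out either from your Yoneda computation or, more cleanly, from uniqueness of right adjoints giving $\alpha^*\simeq\tilde\beta_*$. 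The paper's proof is shorter on the page because it outsources the two key steps to \cite{ayoub-these-I}; yours is self-contained and more conceptual, in effect re-proving \cite[Lem.~2.1.39]{ayoub-these-I} directly at the level of diagrams of schemes rather than after reduction to constant diagrams. Your $2$-categorical bookkeeping (the verification that $\tilde\beta\dashv\alpha$ in $\Dia(\Sch)$, and that the counit $\delta$ becomes the identity after whiskering by $(f,p)^*$) is correct as written; the point that $2$-morphisms in $\Dia(\Sch)$ are determined by their categorical parts is exactly what the paper's definition of $2$-morphism says.
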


\begin{proof}
We have a commutative diagram in $\Dia(\Sch)$
$$\xymatrix@C=1.5pc@R=1.5pc{(\mathcal{X}\circ \alpha,\mathcal{J}) \ar[d]^-{\alpha} \ar[r]^-f &  (S,\mathcal{J}) \ar[d]^-{\alpha} \ar@/^/[dr]^-q &  \\
(\mathcal{X},\mathcal{I}) \ar[r]^-f & (S,\mathcal{I}) \ar[r]^-p & S.}$$
We need to show that $(f,p)_*\to (f,p)_*\alpha_*\alpha^*$ is invertible, or equivalently, that
$p_*f_*\to p_*f_*\alpha_*\alpha^*$ is invertible.
But we have a commutative square
$$\xymatrix@C=1.5pc@R=1.5pc{p_*f_* \ar[r]^-{\eta} \ar[d]^-{\eta} & p_*f_*\alpha_*\alpha^* \ar[d]^-{\sim} \\
p_*\alpha_*\alpha^* f_*  \ar[r]^-{\sim} & p_*\alpha_*f_*\alpha^*  }$$
where the bottom arrow is invertible by
axiom \textbf{DerAlg 3d} of \cite[\S 2.4.2]{ayoub-these-I}.
Thus, it is sufficient to show that $p_*\to p_*\alpha_*\alpha^*$ is invertible.
This follows from
\cite[Lem.~2.1.39]{ayoub-these-I}, as $\alpha$ has a left adjoint.\footnote{There is a misprint in the statement of \cite[Lem.~2.1.39]{ayoub-these-I}. The $u$'s and $v$'s should be interchanged in the two natural transformations that are asserted to be invertible. The proof in \emph{loc.~cit.} remains the same.}
\end{proof}

Before stating a useful corollary of Proposition
\ref{prop:cohomol-diag-sch-adjunction} we need some preliminaries.
Let $\mathcal{J}:\mathcal{I} \to \Dia$ be a functor, i.e., an object of $\Dia(\Dia)$. We define the \emph{total category} $\int_{\mathcal{I}}\mathcal{J}$, or simply $\int\mathcal{J}$, as follows:
\begin{itemize}

\item objects are pairs $(i,j)$ where $i\in \mathcal{I}$ and
$j\in \mathcal{J}(i)$,

\item arrows $(i,j)\to (i',j')$ are pairs $(i\to i', \mathcal{J}(i\to i')(j) \to j')$.

\end{itemize}
This gives a covariant functor $\int:\Dia(\Dia)\to \Dia$. We have
a functor $\rho:\int_{\mathcal{I}}\mathcal{J} \to \mathcal{I}$ sending $(i,j)$ to $i$.
For $i\in \mathcal{I}$, we have
an inclusion
$\epsilon_i:\mathcal{J}(i) \hookrightarrow \int_{\mathcal{I}}\mathcal{J}$ sending
$j\in \mathcal{J}(i)$ to $(i,j)$. We may factor this inclusion through
the comma category\footnote{Recall that, given a functor $\alpha:\mathcal{T} \to \mathcal{S}$ and an object $s\in \mathcal{S}$, the {\it comma category} $\mathcal{T}/s$ is the category of pairs $(t,\alpha(t)\to s)$ where morphisms are defined in the obvious way.}
$(\int_{\mathcal{I}}\mathcal{J})/i$ by sending
$j\in \mathcal{J}(i)$ to $((i,j),\id_i)$. We get in this way an inclusion
$\epsilon_i':\mathcal{J}(i)\hookrightarrow (\int_{\mathcal{I}}\mathcal{J})/i$
which has a left adjoint $(\int_{\mathcal{I}}\mathcal{J})/i \to \mathcal{J}(i)$ sending
$((i',j'),i'\to i)$ to $\mathcal{J}(i'\to i)(j')$.

\begin{definition}
Let $(\mathcal{Y},\mathcal{J}):\mathcal{I} \to \Dia(\mathcal{C})$
be an object of $\Dia(\Dia(\mathcal{C}))$, i.e., a functor sending an object $i\in \mathcal{I}$ to a diagram $(\mathcal{Y}(i),\mathcal{J}(i))$ in $\mathcal{C}$. The assignment $(i,j) \rightsquigarrow \mathcal{Y}(i,j)$ defines a functor on $\int_{\mathcal{I}}\mathcal{J}$. We get in this way a diagram $(\mathcal{Y},\int_{\mathcal{I}}\mathcal{J})$ in $\mathcal{C}$ called the \emph{total diagram} associated with $(\mathcal{Y},\mathcal{J})$.
\end{definition}

\begin{corollary}
\label{cor:very-useful-texnical-cor}
Let $(\mathcal{X},\mathcal{I})$ be a diagram of schemes.
Let $((\mathcal{Y},\mathcal{J}),\mathcal{I})$ be a diagram in
$\Dia(\Sch)$. Assume we are given a morphism
$$f:((\mathcal{Y},\mathcal{J}),\mathcal{I}) \to ((\mathcal{X},\mathbf{e}),\mathcal{I})$$
in $\Dia(\Dia(\Sch))$ which is the identity on $\mathcal{I}$.
Passing to the total diagrams, we get a morphism
$$(f,\rho):(\mathcal{Y},\textstyle{\int_{\mathcal{I}}\mathcal{J}}) \to (\mathcal{X},\mathcal{I}).$$
Then, for every $i\in \mathcal{I}$, there
is a canonical isomorphism
$$i^*(f,\rho)_*\simeq f(i)_*\epsilon_i^*,$$
where, as before,
$\epsilon_i:\mathcal{J}(i)\hookrightarrow
\int_{\mathcal{I}}\mathcal{J}$ denotes the inclusion.
\end{corollary}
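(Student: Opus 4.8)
The plan is to factor $\epsilon_i$ through the comma category $(\int_{\mathcal{I}}\mathcal{J})/i$ as $\epsilon_i=b_i\circ\epsilon_i'$, where $b_i\colon(\int_{\mathcal{I}}\mathcal{J})/i\to\int_{\mathcal{I}}\mathcal{J}$ is the forgetful functor $((i',j'),i'\to i)\rightsquigarrow(i',j')$, and then to split the argument into a base-change step bringing us down to the comma category, followed by a direct application of Proposition \ref{prop:cohomol-diag-sch-adjunction} to the adjunction $\lambda_i\dashv\epsilon_i'$ recalled just before the statement ($\lambda_i$ denoting the left adjoint of $\epsilon_i'$ exhibited there). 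Let $h\colon(\mathcal{Y}\circ b_i,(\int_{\mathcal{I}}\mathcal{J})/i)\to\mathcal{X}(i)$ be the morphism of diagrams of schemes whose component at $((i',j'),\phi\colon i'\to i)$ is $\mathcal{X}(\phi)\circ f(i')(j')\colon\mathcal{Y}(i',j')\to\mathcal{X}(i)$; this exhibits $(\mathcal{Y}\circ b_i,(\int_{\mathcal{I}}\mathcal{J})/i)$ as a diagram of $\mathcal{X}(i)$-schemes, and one checks that the square with top edge $b_i$, bottom edge $i\colon\mathcal{X}(i)\to(\mathcal{X},\mathcal{I})$, right edge $(f,\rho)$ and left edge $h$ commutes up to the canonical $2$-morphism supplied by the structure maps $\phi$.

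First I would establish the base-change isomorphism $i^*(f,\rho)_*\simeq h_*\,b_i^*$. Using the factorization \eqref{eq:factorization-1-morph-diag-C}, write $(f,\rho)=\rho\circ f$ with $f$ objectwise and $\rho\colon(\mathcal{X}\circ\rho,\int_{\mathcal{I}}\mathcal{J})\to(\mathcal{X},\mathcal{I})$ purely categorical, so $(f,\rho)_*=\rho_*f_*$. Since direct image along an objectwise morphism is computed objectwise, pullback along the categorical functor $b_i$ commutes with $f_*$, i.e.\ $b_i^*f_*\simeq\widetilde f_*\,b_i^*$ for the objectwise morphism $\widetilde f$ over $(\int_{\mathcal{I}}\mathcal{J})/i$ induced by $f$; moreover $h$ factors as $\widetilde f$ followed by the structural morphism $(\mathcal{X}\circ\rho\circ b_i,(\int_{\mathcal{I}}\mathcal{J})/i)\to\mathcal{X}(i)$. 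It therefore remains to identify $i^*\rho_*$ for the purely categorical $\rho$: this is the pointwise formula for the categorical direct image, with the comma category $(\int_{\mathcal{I}}\mathcal{J})/i$ — and not the strict fibre $\rho^{-1}(i)$ — serving as the fibre of $\rho$ over $i$. Concretely, the canonical base-change morphism $i^*\rho_*\to(\text{str.\ morph.})_*\,b_i^*$ is invertible; this is the evident generalization of the second isomorphism of Lemma \ref{rem:i-*-j-diaise-} from $\rho\colon\mathbf{e}\to\mathcal{I}$ to an arbitrary functor $\rho$, and, like that one, it is an instance of the axiom \textbf{DerAlg 4$^\prime$g} of \cite[Rem.~2.4.16]{ayoub-these-I}.

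Next I would invoke Proposition \ref{prop:cohomol-diag-sch-adjunction} with base scheme $S=\mathcal{X}(i)$, with the diagram of $S$-schemes $(\mathcal{Y}\circ b_i,(\int_{\mathcal{I}}\mathcal{J})/i)$ and its structural morphism $h$, and with $\alpha=\epsilon_i'\colon\mathcal{J}(i)\to(\int_{\mathcal{I}}\mathcal{J})/i$; the hypothesis of the Proposition holds because $\epsilon_i'$ has the left adjoint $\lambda_i$. Since $b_i\circ\epsilon_i'=\epsilon_i$ we get $(\mathcal{Y}\circ b_i)\circ\epsilon_i'=\mathcal{Y}\circ\epsilon_i=\mathcal{Y}(i)$, and since $\epsilon_i'(j)=((i,j),\id_i)$ and $\mathcal{X}(\id_i)=\id$, the composite structural morphism $h\circ\epsilon_i'$ is precisely $f(i)\colon(\mathcal{Y}(i),\mathcal{J}(i))\to\mathcal{X}(i)$. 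Hence the Proposition gives $h_*\simeq f(i)_*(\epsilon_i')^*$, and combining this with the base-change isomorphism of the previous step,
$$i^*(f,\rho)_*\;\simeq\;h_*\,b_i^*\;\simeq\;f(i)_*\,(\epsilon_i')^*\,b_i^*\;=\;f(i)_*\,\epsilon_i^*.$$
Every isomorphism used is natural in the argument, so the composite is canonical, which proves the statement.

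The step I expect to be the main obstacle is the base change of the second paragraph: one must recognize that $i^*\rho_*$ is governed by the comma category $(\int_{\mathcal{I}}\mathcal{J})/i$ rather than by the strict fibre, and carry the scheme-level twists $\mathcal{X}(\phi)$ together with the attendant $2$-morphisms correctly through the identification. Once this is in place, the passage to $f(i)_*\,\epsilon_i^*$ is a straightforward application of Proposition \ref{prop:cohomol-diag-sch-adjunction} — indeed the left adjoint $\lambda_i$ was introduced precisely so that the Proposition could be applied at this point.
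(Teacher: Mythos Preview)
Your proposal is correct and follows essentially the same approach as the paper's proof: first invoke axiom \textbf{DerAlg 4$'$g} to obtain the base-change isomorphism $i^*(f,\rho)_*\simeq h_*\,b_i^*$ over the comma category $(\int_{\mathcal{I}}\mathcal{J})/i$ (the paper writes $u_i$ for your $b_i$ and $f/i$ for your $h$), then apply Proposition~\ref{prop:cohomol-diag-sch-adjunction} using the left adjoint to $\epsilon_i'$. Your write-up is somewhat more detailed in justifying the base-change step via the factorization into geometric and categorical parts, but the argument is the same.
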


\begin{proof}
By axiom \textbf{DerAlg 4'g} in \cite[Rem.~2.4.16]{ayoub-these-I},
$i^*(f,\rho)_*\simeq (f/i)_*u_i^*$ where $u_i:(\int_{\mathcal{I}}\mathcal{J})/i \to \int_{\mathcal{I}}\mathcal{J}$ is the natural morphism and $f/i$ is the projection of
$(\mathcal{Y}\circ u_i,(\int_{\mathcal{I}}\mathcal{J})/i)$
to $\mathcal{X}(i)$.

Now, recall that we have an inclusion $\epsilon'_i:\mathcal{J}(i) \hookrightarrow (\int_{\mathcal{I}}\mathcal{J})/i$ which admits a left adjoint. By Proposition
\ref{prop:cohomol-diag-sch-adjunction},
we have isomorphisms
$$(f/i)_*u_i^* \overset{\sim}{\to} (f/i)_*\epsilon'_{i*} \epsilon'^*_iu_i^*\simeq
f(i)_*\epsilon_i^*.$$
This ends the proof of the corollary.
\end{proof}

\begin{remark}
\label{rem-gen:cor-useful-texnical-to-derivators}
The same method of proof of Corollary
\ref{cor:very-useful-texnical-cor} yields a similar result for triangulated derivators which we describe here for later use; for a working definition of a derivator, see \cite[D\'ef.~2.1.34]{ayoub-these-I}.
Let $\mathbb{D}$ be a derivator, $\mathcal{I}$ a small category and $\mathcal{J}:\mathcal{I}\to \Dia$ an object of $\Dia(\Dia)$. Let $p:\int_{\mathcal{I}} \mathcal{J} \to \mathcal{I}$ and $p(i):\mathcal{J}(i) \to \{i\}$ denote the obvious projections, and $\epsilon_i:\mathcal{J}(i) \hookrightarrow \int_{\mathcal{I}} \mathcal{J}$ the inclusion. Then for all $i\in \mathcal{I}$, the natural transformation
$i^* p_* \to p(i)_* \epsilon_i^*$
(of functors from $\mathbb{D}(\int_{\mathcal{I}}\mathcal{J})$ to $\mathbb{D}(\{i\})$)
is invertible.

\end{remark}

A particular case of Corollary
\ref{cor:very-useful-texnical-cor} yields the following:

\begin{corollary}
\label{cor:making-the-diag-connected}
Let $(\mathcal{X},\mathcal{I})$ be a diagram of schemes.
Denote by $\Pi:\mathcal{I} \to \Dia$ the functor which associate to
$i\in \mathcal{I}$ the set of connected components of $\mathcal{X}$ considered
as a discrete category. Let $\mathcal{I}^{\flat}=\int_{\mathcal{I}}\Pi$ and
$(\mathcal{X}^{\flat},\mathcal{I}^{\flat})$
the diagram of schemes which takes a pair $(i,\alpha)$ with
$i\in \mathcal{I}$ and $\alpha\in \Pi_0(i)$ to the connected component
$\mathcal{X}_{\alpha}(i)$ of $\mathcal{X}(i)$ that corresponds to $\alpha$.
There is a natural morphism of diagrams of schemes
${\scriptstyle \amalg}:(\mathcal{X}^{\flat},\mathcal{I}^{\flat})\to
(\mathcal{X},\mathcal{I})$. Moreover,
$\id \to {\scriptstyle \amalg}_*{\scriptstyle \amalg}^*$ is invertible.

\end{corollary}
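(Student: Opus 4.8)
The plan is to recognize the corollary as a direct instance of Corollary \ref{cor:very-useful-texnical-cor}, and then to reduce the invertibility of the unit, fibrewise over $\mathcal{I}$, to the elementary fact that a noetherian scheme is the disjoint union of its connected components.

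First I would organize the data so as to match the hypotheses of Corollary \ref{cor:very-useful-texnical-cor}. Take $\mathcal{J}=\Pi\in\Dia(\Dia)$, so that $\mathcal{J}(i)$ is the discrete category on the set of connected components of $\mathcal{X}(i)$, and define $(\mathcal{Y},\mathcal{J}):\mathcal{I}\to\Dia(\Sch)$ by letting $(\mathcal{Y}(i),\mathcal{J}(i))$ be the diagram of schemes that sends a connected component $\alpha$ of $\mathcal{X}(i)$ to the corresponding open-and-closed subscheme $\mathcal{X}_\alpha(i)$. An arrow $i\to i'$ of $\mathcal{I}$ yields $\mathcal{X}(i)\to\mathcal{X}(i')$, hence a map $\Pi(i)\to\Pi(i')$ on connected components together with compatible morphisms $\mathcal{X}_\alpha(i)\to\mathcal{X}_\beta(i')$; this is precisely the datum of a morphism in $\Dia(\Sch)$, natural in $i$. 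The decompositions into connected components $f(i):(\mathcal{Y}(i),\mathcal{J}(i))\to\mathcal{X}(i)$ then assemble into a morphism $f:((\mathcal{Y},\mathcal{J}),\mathcal{I})\to((\mathcal{X},\mathbf{e}),\mathcal{I})$ in $\Dia(\Dia(\Sch))$ which is the identity on $\mathcal{I}$. Passing to total diagrams gives $\int_{\mathcal{I}}\mathcal{J}=\mathcal{I}^\flat$, $(\mathcal{Y},\int_{\mathcal{I}}\mathcal{J})=(\mathcal{X}^\flat,\mathcal{I}^\flat)$, and $(f,\rho)={\scriptstyle\amalg}$; this is how the morphism ${\scriptstyle\amalg}$ is defined.

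Next, since the family of inverse-image functors $i^*$, $i\in\mathcal{I}$, is conservative on $\DM(\mathcal{X},\mathcal{I})$ (see \cite[\S2.4.2]{ayoub-these-I}), it suffices to prove that $i^*\to i^*{\scriptstyle\amalg}_*{\scriptstyle\amalg}^*$ is invertible for each $i$. Corollary \ref{cor:very-useful-texnical-cor} gives a canonical isomorphism $i^*{\scriptstyle\amalg}_*\simeq f(i)_*\epsilon_i^*$. Moreover ${\scriptstyle\amalg}\circ\epsilon_i=i\circ f(i)$, because the categorical part of $\epsilon_i$ lands in the fibre of $\rho:\mathcal{I}^\flat\to\mathcal{I}$ over $i$ and there the geometric part of ${\scriptstyle\amalg}$ restricts to $f(i)$; hence $\epsilon_i^*{\scriptstyle\amalg}^*=f(i)^*i^*$ and $i^*{\scriptstyle\amalg}_*{\scriptstyle\amalg}^*\simeq f(i)_*f(i)^*i^*$. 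As the isomorphism of Corollary \ref{cor:very-useful-texnical-cor} is constructed out of the adjunctions it involves, it is compatible with units, so under these identifications the transformation $i^*\to i^*{\scriptstyle\amalg}_*{\scriptstyle\amalg}^*$ becomes $i^*$ whiskered with the unit $\id\to f(i)_*f(i)^*$ of $(f(i)^*,f(i)_*)$. It thus remains to check that $\id\to f(i)_*f(i)^*$ is invertible on $\DM(\mathcal{X}(i))$.

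Finally I would verify this last point by hand. As $\mathcal{J}(i)$ is a discrete category, $f(i)_*$ is the product over the connected components $\alpha$ of the direct images along the open-and-closed immersions $g_\alpha:\mathcal{X}_\alpha(i)\hookrightarrow\mathcal{X}(i)$, so that $f(i)_*f(i)^*M\simeq\prod_\alpha(g_\alpha)_*g_\alpha^*M$ and the unit is the canonical map $M\to\prod_\alpha(g_\alpha)_*g_\alpha^*M$. Since $\mathcal{X}(i)$ is the (finite) disjoint union of the $\mathcal{X}_\alpha(i)$, the functors $(g_\alpha^*)_\alpha$ identify $\DM(\mathcal{X}(i))$ with $\prod_\alpha\DM(\mathcal{X}_\alpha(i))$, and under this identification each $(g_\alpha)_*g_\alpha^*$ is the endofunctor projecting onto the $\alpha$-th factor; hence $M\to\prod_\alpha(g_\alpha)_*g_\alpha^*M$ is an isomorphism, which closes the argument. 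I expect the main obstacle to be the bookkeeping flagged above, namely checking that the abstract isomorphism of Corollary \ref{cor:very-useful-texnical-cor} transports the unit of $({\scriptstyle\amalg}^*,{\scriptstyle\amalg}_*)$ to the unit of $(f(i)^*,f(i)_*)$; everything else is formal.
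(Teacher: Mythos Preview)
Your proposal is correct and follows essentially the same route as the paper's proof: both apply Corollary~\ref{cor:very-useful-texnical-cor} to the functor $i\mapsto(\mathcal{X}_\alpha(i))_{\alpha\in\Pi(i)}$ to reduce, via conservativity of the $i^*$, to the fibrewise statement that $\id\to f(i)_*f(i)^*$ is invertible, and then observe this holds because a noetherian scheme is the disjoint union of its connected components. Your write-up is more explicit about the compatibility with units, which the paper leaves implicit.
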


\begin{proof}
Only the last statement needs a proof.
For $i\in \mathcal{I}$,
$\id \to {\scriptstyle \amalg}(i)_*{\scriptstyle
\amalg}(i)^*$ is invertible with
${\scriptstyle \amalg}(i):(\mathcal{X}_{\alpha}(i))_{\alpha\in \Pi(i)}\to
\mathcal{X}(i)$ the natural morphism from the
discrete diagram
of schemes $(\mathcal{X}_{\alpha}(i))_{\alpha\in \Pi(i)}$.
Using Corollary
\ref{cor:very-useful-texnical-cor},
applied to the functor which takes $i\in \mathcal{I}$ to $(\mathcal{X}_{\alpha}(i))_{\alpha\in \Pi(i)}$,
we obtain that $i^*\to i^*{\scriptstyle \amalg}_*{\scriptstyle
\amalg}^*$ is invertible.
\end{proof}

Before going further, we introduce the following terminology.

\begin{definition}
Let $\mathcal{I}$ be a small category. We say that $\mathcal{I}$ is
\emph{universal for homotopy limits} if it satisfies to the following
property. For every $1$-morphism of triangulated derivators
$m:\mathbb{D}_1 \to \mathbb{D}_2$ in the sense of
\cite[D\'ef.~2.1.46]{ayoub-these-I}, the natural transformation
between functors from $\mathbb{D}_1(\mathcal{I})$ to $\mathbb{D}_2(\mathbf{e})$:
$$m(\mathbf{e}) (p_{\mathcal{I}})_* \to (p_{\mathcal{I}})_* m(\mathcal{I}),$$
where $p_{\mathcal{I}}$ is the projection of $\mathcal{I}$ to $\mathbf{e}$, is invertible.

\end{definition}

\begin{lemma}
\label{lemma:permanence-universal-for-holim}
If a category has a final object, it is universal for homotopy limits.
The class of small categories which are universal for homotopy limits is
stable by finite direct products. If $\mathcal{J}:\mathcal{I}\to \Dia$
is an object of $\Dia(\Dia)$ such that $\mathcal{I}$ and all the $\mathcal{J}(i)$ are universal
for homotopy limits, for $i\in \mathcal{I}$, then $\int_{\mathcal{I}}\mathcal{J}$ is universal for homotopy limits.
\end{lemma}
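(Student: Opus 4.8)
The plan is to verify the three assertions in order, reducing each to the defining property of "universal for homotopy limits" and to the structural properties of derivators recalled in the excerpt.

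First I would handle the claim that a category $\mathcal{I}$ with a final object $\mathcal{I}_0$ is universal for homotopy limits. The point is that when $\mathcal{I}$ has a final object, the projection $p_{\mathcal{I}}:\mathcal{I} \to \mathbf{e}$ has a section $s:\mathbf{e} \to \mathcal{I}$ picking out $\mathcal{I}_0$, and the inclusion $\{\mathcal{I}_0\} \hookrightarrow \mathcal{I}$ has a right adjoint (namely $p_{\mathcal{I}}$ itself, since $\mathcal{I}_0$ is terminal). Consequently, for any derivator $\mathbb{D}$ the unit $\id \to s_* s^*$ is invertible on $\mathbb{D}(\mathcal{I})$, or equivalently $(p_{\mathcal{I}})_* \simeq s^*$ (this is a standard consequence of the derivator axioms: a fully faithful right adjoint inclusion induces an invertible counit, and here one uses the co-/contravariant version appropriate to $(p_{\mathcal{I}})_*$). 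Since $s^*$ is just restriction along a functor, it commutes strictly with any $1$-morphism of derivators $m$ — that is, $m(\mathbf{e}) s^* = s^* m(\mathcal{I})$ tautologically from the definition of a morphism of derivators. Chasing this identification through the natural transformation $m(\mathbf{e})(p_{\mathcal{I}})_* \to (p_{\mathcal{I}})_* m(\mathcal{I})$ shows it is invertible.

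Next, for stability under finite products: it suffices to treat $\mathcal{I} \times \mathcal{J}$ with both $\mathcal{I}$ and $\mathcal{J}$ universal for homotopy limits. Write $p_{\mathcal{I}\times\mathcal{J}}$ as the composite $\mathcal{I}\times\mathcal{J} \xrightarrow{p_{\mathcal{I}}\times\id} \mathcal{J} \xrightarrow{p_{\mathcal{J}}} \mathbf{e}$, so that $(p_{\mathcal{I}\times\mathcal{J}})_* \simeq (p_{\mathcal{J}})_* (p_{\mathcal{I}}\times\id)_*$. One must know that $(p_{\mathcal{I}}\times\id)_*$ — a "relative" homotopy limit over the $\mathcal{I}$-direction parametrized by $\mathcal{J}$ — is computed objectwise over $\mathcal{J}$ (this is an instance of the base-change axiom for derivators, \textbf{DerAlg} in the cited references, applied to each object $j \in \mathcal{J}$, whose slice into $\mathcal{I}\times\mathcal{J}$ is just $\mathcal{I}$). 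Since the universality of $\mathcal{I}$ makes $m$ commute with the objectwise pieces, and the universality of $\mathcal{J}$ makes $m$ commute with $(p_{\mathcal{J}})_*$, composing the two comparison maps gives invertibility for $\mathcal{I}\times\mathcal{J}$. Here one has to be slightly careful that a morphism of derivators automatically commutes with the restriction functors $i^*$ and hence the objectwise evaluation, which is part of the definition.

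Finally, for the Grothendieck construction $\int_{\mathcal{I}}\mathcal{J}$: this is the heart of the matter and where I expect the real work to be. The strategy mirrors the proof of Corollary \ref{cor:very-useful-texnical-cor} and Remark \ref{rem-gen:cor-useful-texnical-to-derivators}. Factor $p:\int_{\mathcal{I}}\mathcal{J} \to \mathbf{e}$ as $\int_{\mathcal{I}}\mathcal{J} \xrightarrow{\rho} \mathcal{I} \xrightarrow{p_{\mathcal{I}}} \mathbf{e}$, giving $p_* \simeq (p_{\mathcal{I}})_* \rho_*$. By Remark \ref{rem-gen:cor-useful-texnical-to-derivators}, $i^* \rho_* \simeq p(i)_* \epsilon_i^*$ for each $i\in\mathcal{I}$, where $p(i):\mathcal{J}(i) \to \{i\}$ is the projection; in other words $\rho_*$ is "computed objectwise over $\mathcal{I}$" by the homotopy limits over the fibers $\mathcal{J}(i)$. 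Since each $\mathcal{J}(i)$ is universal for homotopy limits, $m$ commutes with every $p(i)_*$, hence with $\rho_*$ (checking invertibility of the comparison $2$-morphism objectwise over $\mathcal{I}$, which suffices by the conservativity part of the derivator axioms); and since $\mathcal{I}$ is universal, $m$ commutes with $(p_{\mathcal{I}})_*$. Composing the two gives that $m$ commutes with $p_*$, as required. The main obstacle is bookkeeping: one must phrase "$m$ commutes with $\rho_*$ because it commutes objectwise" precisely, using that the family of restrictions $\{i^*\}_{i\in\mathcal{I}}$ is conservative on $\mathbb{D}(\mathcal{I})$ and that both the comparison $2$-morphism for $\rho_*$ and the identification of Remark \ref{rem-gen:cor-useful-texnical-to-derivators} are natural, so that they assemble compatibly. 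Once that is set up, each of the three statements falls out by the same restriction-and-compose argument.
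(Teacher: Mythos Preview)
Your proposal is correct and follows essentially the same approach as the paper: reduce to $e^*$ for the final-object case, factor $\int_{\mathcal{I}}\mathcal{J} \to \mathcal{I} \to \mathbf{e}$, and use Remark~\ref{rem-gen:cor-useful-texnical-to-derivators} together with conservativity of $\{i^*\}$ to reduce commutation with $\rho_*$ to commutation with each $p(i)_*$. The only difference is that the paper treats the finite-product case as a special instance of the Grothendieck-construction case (taking $\mathcal{J}$ constant), whereas you give a separate direct argument; the paper's shortcut is more economical but your version is not wrong.
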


\begin{proof}
If $e$ is a final object of $\mathcal{I}$, then $(p_{\mathcal{I}})_*\simeq e^*$. But any morphism of
triangulated derivators commutes with $e^*$ by definition. Hence the first claim of the lemma.

The second claim of the lemma is a special case of the last one. To prove the latter, consider the sequence
$$\xymatrix@C=1.7pc{\int_{\mathcal{I}} \mathcal{J} \ar[r]^-p & \mathcal{I} \ar[r]^-{p_{\mathcal{I}}} & \mathbf{e}.}$$
As $\mathcal{I}$ is universal for homotopy limits, it suffices to show that the natural transformations
$$m(\mathcal{I}) p_* \to p_* m(\textstyle{\int_{\mathcal{I}}\mathcal{J}})$$
are invertible for any $1$-morphism of triangulated derivators
$m$. It suffices to show this after applying $i^*$ for $i\in \mathcal{I}$. With the notation of
Remark \ref{rem-gen:cor-useful-texnical-to-derivators},
we have
$$i^*m(\mathcal{I}) p_*\simeq m(\{i\}) i^* p_*\simeq m(\{i\}) p(i)_*\epsilon_i^*$$
$$\text{and} \quad
i^*p_*m(\textstyle{\int_{\mathcal{I}} \mathcal{J}})
\simeq p(i)_* \epsilon^*_i m(\textstyle{\int_{\mathcal{I}} \mathcal{J}})
\simeq p(i)_*m(\mathcal{J}(i)) \epsilon_i^*.$$
Thus, it suffices to show that
$m$ commutes with $p(i)_*$.
Our claim follows as $\mathcal{J}(i)$ is universal for
homotopy limits.
\end{proof}

Recall that $\underline{\mathbf{1}}$ denotes
the ordered set $\{0\to 1\}$. Let $\ucarre$ be the complement of $(1,1)$ in
$\underline{\mathbf{1}}\times \underline{\mathbf{1}}$.
Recall also that an ordered set is just a small category with at most one arrow between each pair of objets.

\begin{lemma}
\label{texnical-commut-holim}
For $n\in \N$, the category $\ucarre^n$
is universal for homotopy limits.
\end{lemma}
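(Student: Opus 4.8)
The plan is to reduce the statement at once to the case $n=1$, and then to show by a direct argument — using only the triangulated structure — that $\ucarre$ is universal for homotopy limits. Since $\ucarre^{n}$ is the $n$-fold direct product of $\ucarre$ with itself, the second assertion of Lemma~\ref{lemma:permanence-universal-for-holim} (stability of the class of categories universal for homotopy limits under finite direct products) reduces us to the case $n=1$; the case $n=0$ is trivial, as $\ucarre^{0}=\mathbf{e}$ has a final object.

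So let $m:\mathbb{D}_{1}\to\mathbb{D}_{2}$ be a $1$-morphism of triangulated derivators, let $p:\ucarre\to\mathbf{e}$ be the projection, and let $X$ be an object of $\mathbb{D}_{1}(\ucarre)$; denote by $X_{(1,0)}\xrightarrow{\,f\,}X_{(0,0)}\xleftarrow{\,g\,}X_{(0,1)}$ its underlying cospan (i.e.\ its underlying diagram of shape $\ucarre^{\mathrm{op}}$). We must show that the comparison map $m(\mathbf{e})\,p_{*}X\to p_{*}\,m(\ucarre)X$ is invertible. The key point is that $p_{*}$ --- which computes the homotopy pullback --- is entirely expressible through the triangulated structure: in $\mathbb{D}_{1}(\mathbf{e})$ there is a canonical distinguished triangle
\[
p_{*}X\longrightarrow X_{(1,0)}\oplus X_{(0,1)}\longrightarrow X_{(0,0)}\longrightarrow (p_{*}X)[1],
\]
whose middle arrow is the Mayer--Vietoris difference $f-g$, exhibiting $p_{*}X$ as the fibre of that arrow. (This is the standard presentation of a homotopy pullback in a triangulated derivator; cf.~\cite{ayoub-these-I}.)

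Now a $1$-morphism of triangulated derivators commutes with the inverse image functors $i^{*}$ for each object $i$ of $\ucarre$, so the cospan underlying $m(\ucarre)X$ is $m(X_{(1,0)})\xrightarrow{m(f)}m(X_{(0,0)})\xleftarrow{m(g)}m(X_{(0,1)})$; moreover each component of $m$ is an exact, hence additive, functor, so it carries finite direct sums to finite direct sums and distinguished triangles to distinguished triangles. Applying $m(\mathbf{e})$ to the triangle displayed above therefore produces the corresponding distinguished triangle presenting the homotopy pullback of the cospan underlying $m(\ucarre)X$; unwinding the construction, this identification is compatible with the comparison map, which is thus invertible. Hence $\ucarre$ is universal for homotopy limits, and, by the first paragraph, so is $\ucarre^{n}$ for every $n$.

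The only genuine work, I expect, lies in the bookkeeping: establishing the Mayer--Vietoris triangle within the derivator formalism, and checking that the isomorphism obtained by applying $m$ to it agrees with the canonical comparison $2$-cell $m(\mathbf{e})\,p_{*}\to p_{*}\,m(\ucarre)$. Both steps are routine once the derivator axioms are unwound, but they demand some care with coherences; no conceptual difficulty is anticipated beyond this.
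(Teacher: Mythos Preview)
Your proposal is correct and follows essentially the same approach as the paper: reduce to $n=1$ via Lemma~\ref{lemma:permanence-universal-for-holim}, then use the Mayer--Vietoris distinguished triangle $p_*X\to X_{(1,0)}\oplus X_{(0,1)}\to X_{(0,0)}\to$ together with the fact that a morphism of triangulated derivators is exact and commutes with the restriction functors $i^*$, so that the comparison map fits into a morphism of distinguished triangles whose other two arrows are invertible. The paper presents this a bit more tersely, but the argument is the same.
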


\begin{proof}
It suffices to show that $\ucarre$ is universal for homotopy limits. Fix a morphism of triangulated derivators $m:\mathbb{D}_1\to \mathbb{D}_2$.
For $A_i\in \mathbb{D}_i(\ucarre)$, we have a distinguished triangle in $\mathbb{D}_i(\mathbf{e})$:
$$\xymatrix@C=1.7pc{(p_{\smallucarre})_* A_i \ar[r] & (1,0)^*A_i \bigoplus \,(0,1)^*A_i \ar[r] & (0,0)^*A_i \ar[r] & .}$$
As the $m(-):\mathbb{D}_1(-)\to \mathbb{D}_2(-)$ are triangulated functors, we deduce for $A\in \mathbb{D}_1(\ucarre)$ a morphism of distinguished triangles in
$\mathbb{D}_2(\mathbf{e})$:
$$\xymatrix@C=1.2pc@R=1.4pc{m(\mathbf{e}) \, (p_{\smallucarre})_* A  \ar[r] \ar[d] & m(\mathbf{e}) (1,0)^*A \bigoplus  m(\mathbf{e})(0,1)^*A \ar[r] \ar[d]^-{\sim}
& m(\mathbf{e})(0,0)^*A \ar[r] \ar[d]^-{\sim} & \\
(p_{\smallucarre})_* m(\ucarre) A  \ar[r] &  (1,0)^* m(\ucarre) A \bigoplus \, (0,1)^* m(\ucarre)A \ar[r] & (0,0)^*m(\ucarre)A \ar[r] & }$$
where the second and third vertical arrows are invertible by the
definition of a morphism of derivators. This implies that the first vertical arrow is also invertible. The lemma is proven.
\end{proof}

\begin{proposition}
\label{prop:limits-with-respect-ordered-set}
A finite ordered set is universal for homotopy limits.

\end{proposition}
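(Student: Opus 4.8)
The plan is to induct on the cardinality $n = |P|$ of the finite ordered set $P$. When $n \le 1$ the category $P$ is empty or a point, hence has a final object (or $(p_P)_*$ is the zero object), and the claim follows from Lemma~\ref{lemma:permanence-universal-for-holim}. For $n \ge 2$, assuming every finite ordered set with fewer than $n$ elements is universal for homotopy limits, I would choose a minimal element $x \in P$ and put $Z = P \setminus \{x\}$ (a full subposet with $n-1$ elements) and $Q = \{p \in P : p > x\} \subseteq Z$ (with at most $n-1$ elements); both are universal for homotopy limits by the inductive hypothesis. Write $\iota : Z \hookrightarrow P$, and for a triangulated derivator $\mathbb{D}$ and $X \in \mathbb{D}(P)$ put $X_x = x^* X$.

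The crux of the step is a distinguished triangle in $\mathbb{D}(\mathbf{e})$, natural in $X$ and compatible with $1$-morphisms of triangulated derivators,
$$(p_P)_* X \longrightarrow (p_Z)_*(\iota^* X) \oplus X_x \longrightarrow (p_Q)_*\bigl(p_Q^*\, X_x\bigr) \xrightarrow{\ +1\ },$$
where the first map is (evaluation at $x$, restriction of the limit to $Z$) and the second is (the canonical ``constant cone'' map on $X_x$) $\ominus$ ($(p_Q)_*$ applied to the transformation $\iota^*X|_Q \to p_Q^*X_x$ with components the structure maps $X_q \to X_x$, $q>x$). To obtain it one exploits that $x$ is minimal, so that $\{x\}$ and $Z$ form a complementary sieve/cosieve pair in $P$: writing $i:\{x\}\hookrightarrow P$ and $j=\iota$, one feeds the gluing triangle $i_! i^* \to \mathrm{id} \to j_* j^*$ (in $\mathbb{D}(P)$) applied both to $X$ and to $i_* i^* X$ into $(p_P)_*$; using $p_P\circ i = \mathrm{id}$, $p_P\circ j = p_Z$, and the fact that the restriction of $i_* X_x$ to $Z$ is the constant diagram on $X_x$ over $Q$ and zero on $Z\setminus Q$ (so that $(p_Z)_*$ of it is $(p_Q)_*p_Q^*X_x$), one identifies $(p_P)_* i_! i^* X$ with $\mathrm{hofib}[X_x \to (p_Q)_*p_Q^*X_x]$ and then assembles the two resulting triangles by the octahedral axiom (equivalently: the evident square with corners $(p_P)_*X$, $X_x$, $(p_Z)_*\iota^*X$, $(p_Q)_*p_Q^*X_x$ is homotopy cartesian). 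One may alternatively package exactly this data as a homotopy limit over $\int_{\ucarre}\mathcal{J}$, where $\mathcal{J}:\ucarre\to\Dia$ has $\mathcal{J}((0,0))=Q$, $\mathcal{J}((1,0))=Z$, $\mathcal{J}((0,1))=\mathbf{e}$, invoking Corollary~\ref{cor:very-useful-texnical-cor} together with Lemmas~\ref{texnical-commut-holim} and~\ref{lemma:permanence-universal-for-holim}, which already give that $\int_{\ucarre}\mathcal{J}$ is universal for homotopy limits.

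Granting the triangle the conclusion is immediate: a $1$-morphism $m$ of triangulated derivators commutes with all inverse-image functors (in particular with $x^*$, $\iota^*$, $p_Q^*$), commutes with $(p_Z)_*$ and $(p_Q)_*$ by the inductive hypothesis, and preserves finite direct sums and distinguished triangles; hence the canonical comparison $m(\mathbf{e})(p_P)_* \to (p_P)_*m(P)$ fits into a morphism between the image under $m(\mathbf{e})$ of the above triangle and the corresponding triangle for $m(P)X$, two of whose vertical arrows are isomorphisms, so the third is too. This proves $P$ universal for homotopy limits and closes the induction. The genuinely delicate point is the construction of the distinguished triangle: getting the sieve/cosieve recollement right and, above all, keeping the variance conventions of $\mathbb{D}(\mathcal{I}) = \DM(-,\mathcal{I})$ straight, and verifying that the comparison transformation one ends up with is indeed the canonical one; by contrast the combinatorial part of the induction (extracting $x$, $Z$, $Q$) is routine.
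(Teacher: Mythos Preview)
Your argument is correct and takes a genuinely different route from the paper's. Both proceed by induction on the cardinality, removing one extreme element and organizing what remains as a $\ucarre$-shaped family of smaller posets, but the mechanisms differ. The paper removes a \emph{maximal} element $x$ and introduces an auxiliary diagram $B:I\to\Dia$ (with $B(y)\in\{\{0\},\underline{\mathbf{1}},\{1\}\}$), asserts $\int_IB\simeq\int_{\ucarre}A$ for the evident $A:\ucarre\to\Dia$, and then uses Remark~\ref{rem-gen:cor-useful-texnical-to-derivators} to see that $\id\to p_*p^*$ is invertible for $p:\int_IB\to I$, whence $(p_I)_*\simeq(p_{\int_IB})_*p^*$. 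You instead remove a \emph{minimal} element and use the sieve/cosieve recollement $i_!i^*\to\id\to j_*j^*$ to build an explicit Mayer--Vietoris triangle, finishing with the five lemma; this relies on the (standard) fact that morphisms of stable derivators commute with the extension-by-zero functors $i_!$ (for a sieve) and $j_*$ (for a cosieve), and here also with $i_*$, since the relevant comma categories are singletons or empty. Your route is in fact the more robust one: the paper's asserted isomorphism $\int_{\ucarre}A\simeq\int_IB$ appears to be incorrect as written --- already for $I=\{a<x\}$ one finds that $\int_{\ucarre}A$ is the span $\ucarre$ while $\int_IB$ is the chain $\underline{\mathbf{2}}$, because the only map $B(a\to x):\underline{\mathbf{1}}\to\{0\}$ forces an arrow $(a,1)\to(x,0)$ with no counterpart $((1,0),a)\to((0,1),x)$ in $\int_{\ucarre}A$ --- so that step of the paper's argument would need repair. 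One small remark: your ``alternative packaging'' via $\int_{\ucarre}\mathcal{J}$ is a restatement of the triangle rather than an independent reduction, since there is no functor between $P$ and $\int_{\ucarre}\mathcal{J}$ realizing the comparison directly; the substance of your proof is the recollement triangle, which is sound.
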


\begin{proof}
Let $I$ be a finite ordered set.
We argue by induction on ${\rm card}(I)$. When ${\rm card}(I)\leq 2$, the claim is clear. Thus, we may assume that $I$ has more than 2 elements.
Fix $x\in I$ a maximal element of $I$. Let $A(1,0)=I-\{x\}$,
$A(0,0)=\{y\in I, y<x\}$ and $A(0,1)=\{x\}$.
Then we have a diagram of ordered sets
$$\xymatrix@C=1.5pc{A(1,0) & A(0,0) \ar[l] \ar[r] & A(0,1)}$$
indexed by $\ucarre$. The backward arrow is the inclusion and the onward arrow is the unique projection to the singleton $\{x\}$.
Using Lemmas
\ref{lemma:permanence-universal-for-holim} and
\ref{texnical-commut-holim}, and induction on ${\rm card}(I)$, we deduce that $\int_{\smallucarre}A$ is universal for homotopy limits.

On the other hand, we have a diagram of ordered sets $(B,I)$
given by
$$B(y)=\left\{
\begin{array}{ccc}
\{0\} & \text{if} & y=x,\\
\underline{\mathbf{1}}=\{0\to 1\} & \text{if} & y<x,\\
\{1\} & \text{if} & y \text{ is not comparable with } x.
\end{array} \right.$$
It is easy to see that the categories
$\int_{\smallucarre}A$ and $\int_IB$ are isomorphic.
Now, consider the natural functor
$p:\int_IB \to I$ and denote by $q$ the projection of $I$ to $\mathbf{e}$. By Remark \ref{rem-gen:cor-useful-texnical-to-derivators} and the fact that $B(y)$ has a largest element for every $y\in I$,
the unit morphism $\id \to p_*p^*$ is invertible. It follows that
$q_*\simeq (q\circ p)_*p^*$.
This finishes the proof of the proposition, as
$\int_IB\simeq \int_{\smallucarre} A$ is universal for homotopy limits.
\end{proof}

\begin{proposition}
\label{prop-most-gen-proj-base-change}
Let $(\mathcal{X},\mathcal{I})$ be a diagram of schemes.
Let $((\mathcal{Y},\mathcal{J}),\mathcal{I})$ be a diagram in
$\Dia(\Sch)$. Assume we are given a morphism
$$f:((\mathcal{Y},\mathcal{J}),\mathcal{I}) \to ((\mathcal{X},\mathbf{e}),\mathcal{I})$$
in $\Dia(\Dia(\Sch))$ which is the identity on $\mathcal{I}$. Passing to the total diagrams, we get a morphism
$$(f,\rho):(\mathcal{Y},\textstyle{\int_{\mathcal{I}}\mathcal{J}}) \to (\mathcal{X},\mathcal{I}).$$
Let $(g,\alpha):(\mathcal{X}', \mathcal{I}') \to (\mathcal{X}, \mathcal{I})$ be a morphism of diagrams of schemes. We define a diagram of schemes $\mathcal{Y}':\int_{\mathcal{I}'} \mathcal{J}\circ \alpha\to \Sch$ by sending a pair $(i',j)$, with
$i'\in \mathcal{I}'$ and $j\in \mathcal{J}(\alpha(i'))$, to
$\mathcal{X}'(i')\times_{\mathcal{X}(\alpha(i'))} \mathcal{Y}(\alpha(i'),j)$. Then, we have a cartesian square in $\Dia(\Sch)$:
$$\xymatrix@R=1.7pc{(\mathcal{Y}', \int_{\mathcal{I}'} \mathcal{J}\circ \alpha) \ar[r]^-{(g',\alpha')} \ar[d]_-{(f',\rho')} & (\mathcal{Y},\int_{\mathcal{I}}\mathcal{J}) \ar[d]^-{(f,\rho)} \\
(\mathcal{X}',\mathcal{I}') \ar[r]^-{(g,\alpha)} & (\mathcal{X},\mathcal{I}).}$$
Moreover, if $f$ is objectwise projective, $g$ objectwise quasi-projective and the $\mathcal{J}(i)$, for $i\in \mathcal{I}$, are universal for homotopy limits, then the base change morphism
\begin{equation}
\label{eq-prop-most-gen-proj-base-change}
\xymatrix@C=1.7pc{(g,\alpha)^*(f,\rho)_*\ar[r] & (f',\rho')_*(g',\alpha')^*}
\end{equation}
is invertible.

\end{proposition}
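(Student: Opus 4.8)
The plan is to reduce \eqref{eq-prop-most-gen-proj-base-change} objectwise over $\mathcal{I}'$ to ordinary projective base change for schemes, the categories $\mathcal{J}(i)$ entering only through the homotopy limits they index. First one checks that the square in the statement is cartesian in $\Dia(\Sch)$: the categorical parts form a cartesian square of indexing categories (immediate from the definition of $\int$), while the geometric parts are objectwise the defining fibre products $\mathcal{X}'(i')\times_{\mathcal{X}(\alpha(i'))}\mathcal{Y}(\alpha(i'),j)$; the morphism \eqref{eq-prop-most-gen-proj-base-change} is then the base change morphism defined in the usual way from the relevant units and counits.

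Since the family of functors $i'^*:\DM(\mathcal{X}',\mathcal{I}')\to \DM(\mathcal{X}'(i'))$, $i'\in\mathcal{I}'$, is conservative, it suffices to show that $i'^*$ applied to \eqref{eq-prop-most-gen-proj-base-change} is invertible for each $i'$. Fix $i'$ and put $\mathcal{K}=\mathcal{J}(\alpha(i'))$, $\mathcal{Z}=\mathcal{Y}(\alpha(i'),-)$, $\mathcal{Z}'=\mathcal{Y}'(i',-)$, $S=\mathcal{X}(\alpha(i'))$, $S'=\mathcal{X}'(i')$. I would unwind both sides with Corollary \ref{cor:very-useful-texnical-cor}. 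On the source side, factoring $\mathcal{X}'(i')\to(\mathcal{X},\mathcal{I})$ through the object $\alpha(i')$ gives $i'^*(g,\alpha)^*(f,\rho)_*\simeq g(i')^*\,\alpha(i')^*(f,\rho)_*\simeq g(i')^*\,\bar{f}_*\,\epsilon_{\alpha(i')}^*$, where $\bar{f}:(\mathcal{Z},\mathcal{K})\to S$ is the projection and $\epsilon_{\alpha(i')}:\mathcal{K}\hookrightarrow\int_{\mathcal{I}}\mathcal{J}$ the inclusion. On the target side, using that $(g',\alpha')\circ\epsilon_{i'}=\epsilon_{\alpha(i')}\circ\bar{g}$ with $\bar{g}:(\mathcal{Z}',\mathcal{K})\to(\mathcal{Z},\mathcal{K})$ the morphism of $\mathcal{K}$-diagrams induced by $g'$, one gets $i'^*(f',\rho')_*(g',\alpha')^*\simeq \bar{f}'_*\,\epsilon_{i'}^*(g',\alpha')^*\simeq \bar{f}'_*\,\bar{g}^*\,\epsilon_{\alpha(i')}^*$, where $\bar{f}':(\mathcal{Z}',\mathcal{K})\to S'$ is the projection. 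Under these identifications, $i'^*$ of \eqref{eq-prop-most-gen-proj-base-change} is $(-)\circ\epsilon_{\alpha(i')}^*$ applied to the base change morphism $g(i')^*\,\bar{f}_*\to\bar{f}'_*\,\bar{g}^*$ of the cartesian square
$$\xymatrix@C=2.2pc@R=1.7pc{(\mathcal{Z}',\mathcal{K}) \ar[r]^-{\bar{g}} \ar[d]_-{\bar{f}'} & (\mathcal{Z},\mathcal{K}) \ar[d]^-{\bar{f}} \\ S' \ar[r]^-{g(i')} & S,}$$
so it remains to see this last morphism is invertible.

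For that I would split the square by factoring $\bar{f}$ and $\bar{f}'$ through their categorical and geometric parts as in \eqref{eq:factorization-1-morph-diag-C}, i.e. $(\mathcal{Z},\mathcal{K})\xrightarrow{\;\tilde{f}\;}(S,\mathcal{K})\xrightarrow{\;p_{\mathcal{K}}\;}S$ with $\tilde{f}$ objectwise projective (by the hypothesis on $f$) and $p_{\mathcal{K}}$ the projection of the constant diagram, and similarly for $\bar{f}'$. This presents the square above as a vertical composite of two cartesian squares. For the upper one — built from $\tilde{f}$, its base change, and the constant-diagram morphism of $g(i')$ — every morphism has identity categorical part, so $\tilde{f}_*$, $\tilde{f}'_*$ and the $*$-pullbacks are computed objectwise, and the base change morphism is checked objectwise, where it is the projective base change isomorphism for the cartesian squares of schemes $\mathcal{Z}'(k)=S'\times_S\mathcal{Z}(k)$. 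For the lower one — the square of $(S,\mathcal{K})\to S$ and $(S',\mathcal{K})\to S'$ over $g(i')$ — the base change morphism is $g(i')^*\,(p_{\mathcal{K}})_*\to(p_{\mathcal{K}})_*\,g(i')^*$, with $(p_{\mathcal{K}})_*$ the homotopy limit indexed by $\mathcal{K}$; since $g(i')^*$, as the indexing category varies, is a $1$-morphism of triangulated derivators $\DM(S,-)\to\DM(S',-)$ and $\mathcal{K}=\mathcal{J}(\alpha(i'))$ is universal for homotopy limits by hypothesis, this morphism is invertible. Composing the two partial base change isomorphisms yields the desired one.

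The verifications that the square is cartesian, and that pushforward along a morphism of diagrams with identity categorical part is objectwise, are routine. The main obstacle is the middle step: one must track the natural transformation \eqref{eq-prop-most-gen-proj-base-change} through the identifications furnished by Corollary \ref{cor:very-useful-texnical-cor} and the factorization \eqref{eq:factorization-1-morph-diag-C}, and in particular check that the composite of the upper and lower partial base change isomorphisms really recovers $i'^*$ of \eqref{eq-prop-most-gen-proj-base-change}. This compatibility of $2$-cells — rather than any single new input — is where the effort lies.
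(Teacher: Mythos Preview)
Your proposal is correct and follows essentially the same approach as the paper: reduce to each $i'\in\mathcal{I}'$ via conservativity, use Corollary~\ref{cor:very-useful-texnical-cor} to rewrite both sides and obtain the base change morphism for the single cartesian square over $\mathcal{X}(\alpha(i'))$, then split that square vertically into a geometric part (handled by projective base change, the paper citing \cite[Th.~2.4.22]{ayoub-these-I}) and a categorical part (handled by universality of $\mathcal{J}(\alpha(i'))$ for homotopy limits). Your explicit attention to the $2$-cell compatibilities is a point the paper passes over with ``we immediately reduce''.
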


\begin{proof}
Everything is clear except the last statement. It suffices to show that
\eqref{eq-prop-most-gen-proj-base-change} is invertible after applying $i'^*$ for $i'\in \mathcal{I}'$. Let $i=\alpha(i')$.
Using Corollary
\ref{cor:very-useful-texnical-cor}
to rewrite $i^*(f,\rho)_*$ and $i'^*(f',\rho')_*$, we immediately reduce to show that the base change morphism associated to the cartesian square
$$\xymatrix@C=1.7pc@R=1.5pc{(\mathcal{Y}'(i'),\mathcal{J}(i)) \ar[r] \ar[d] & (\mathcal{Y}(i),\mathcal{J}(i)) \ar[d] \\
\mathcal{X}'(i') \ar[r] & \mathcal{X}(i)}$$
is invertible. Our square is the vertical composition of the following two squares
$$\xymatrix@C=1.7pc@R=1.5pc{(\mathcal{Y}'(i'),\mathcal{J}(i)) \ar[r] \ar[d] & (\mathcal{Y}(i),\mathcal{J}(i)) \ar[d] \\
(\mathcal{X}'(i'),\mathcal{J}(i)) \ar[r] & (\mathcal{X}(i),\mathcal{J}(i)),} \qquad \quad
\xymatrix@C=1.7pc@R=1.5pc{(\mathcal{X}'(i'),\mathcal{J}(i)) \ar[r] \ar[d] & (\mathcal{X}(i),\mathcal{J}(i))\ar[d] \\
\mathcal{X}'(i') \ar[r] & \mathcal{X}(i).\!}$$
The base change morphism associated to the first square is
invertible by \cite[Th.~2.4.22]{ayoub-these-I}.
Also, the base change morphism associated to the second square is invertible as $\mathcal{J}(i)$ is universal for homotopy limits and
$(\mathcal{X}'(i) \to \mathcal{X}(i))^*$ defines a $1$-morphism of derivators $\xymatrix@C=1.3pc{\DM(\mathcal{X}(i),-)\ar[r] & \DM(\mathcal{X}(i'),-)}\!$.
This proves the proposition.
\end{proof}

\subsection{Stratified schemes}
Recall that a \emph{stratification} on a topological space $X$ is a
partition $\mathcal{S}$ of $X$ by locally closed subsets such that:
\begin{enumerate}

\item[(i)] Any point of $X$ admits an open neighborhood $U$ such that $S\cap U$ has finitely many connected components for every $S\in \mathcal{S}$, and is empty
except for finitely many $S\in \mathcal{S}$.

\item[(ii)] For $T\in \mathcal{S}$ we have, as sets, $\overline{T}=\bigsqcup_{S\in \mathcal{S}, \, S\subset \overline{T}} S$.

\end{enumerate}
As $\mathcal{S}$ is a partition of $X$, for
$S_1, \, S_2\in \mathcal{S}$,
either $S_1=S_2$ or $S_1\bigcap S_2=\emptyset$.

A connected component of an element of $\mathcal{S}$ will be called an $\mathcal{S}$-\emph{stratum} or simply \emph{stratum} if no confusion can arise. Two stratifications $\mathcal{S}$ and $\mathcal{S}'$ are equivalent if they determine the same set of strata. The set of $\mathcal{S}$-strata is a stratification on $X$ which is equivalent to $\mathcal{S}$. We usually identify equivalent stratifications.
When $X$ is a noetherian scheme, every stratification of $X$ has finitely many strata.

An open (resp.~closed) stratum is a stratum which is open
(resp.~closed) in $X$.
Given two strata $S$ and $T$, one writes $S\preceq T$ when $S\subset
\overline{T}$. Under mild conditions (satisfied when $X$ is a
noetherian scheme), a stratum $S$ is maximal (resp. minimal) for this partial order if and only if $S$ is an open (resp. a closed) stratum.
Finally, a subset of $X$ is called $\mathcal{S}$-\emph{constructible} if it is a union of $\mathcal{S}$-strata.

\begin{example}
\label{ex:strat-ass-family-subschemes}
Let $X$ be a noetherian scheme and suppose we are given a finite family $(Z_{\alpha})_{\alpha\in I}$ of closed subschemes of $X$.
For $J\subset I$, we put
$$X^0_J=\left(\bigcap_{\beta\in J} Z_{\beta}\right)- \left(\bigcup_{\alpha\in I- J} Z_{\alpha}\right).$$
This clearly give a stratification on $X$ such that any connected component of $X^0_{\emptyset}$ is an open stratum and any connected component of $X_I$ is a closed stratum.

\end{example}

We record the following lemma for later use:

\begin{lemma}
\label{lemma:form-locality-closed-cover}
Let $X$ be a noetherian scheme endowed with a stratification $\mathcal{S}$.
Denote $A$ the set of $\mathcal{S}$-strata
ordered by the relation $\preceq$.
Let $\mathcal{X}:A \to \Sch$ be the diagram of schemes sending an $\mathcal{S}$-stratum $S$ to its closure $\bar{S}$ (with its reduced scheme-structure).
Let
$s:(\mathcal{X},A)\to X$ be the natural morphism. Then
the unit morphism
$\id \to s_*s^*$ is invertible.
\end{lemma}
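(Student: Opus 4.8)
The plan is to verify that $\id \to s_*s^*$ is an isomorphism after applying $i^*$ for every point $i$ of the indexing situation, i.e.\ to check the statement stalkwise over $X$. More precisely, since $\DM(-)$ is a triangulated derivator and the conclusion is local on $X$ for the Nisnevich (or \'etale) topology, it suffices to check that the unit is invertible after pullback along $\Spec(\mathcal{O}_{X,x}^{h}) \to X$ (or, by a standard limit argument, after pullback to each locally closed stratum). So fix a point $x\in X$; it lies in a unique $\mathcal{S}$-stratum $S_x$, and the key combinatorial observation is that the full subcategory $A_{\succeq S_x}\subset A$ of strata $S$ with $S_x\preceq S$ is exactly the set of strata whose closure $\bar S$ passes through $x$, while $\bar S\times_X \Spec(\mathcal{O}_{X,x}^h) = \emptyset$ for all other strata.

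First I would make this reduction precise: using that $s$ is objectwise a closed immersion (each $\bar S \hookrightarrow X$) and that $\DM$ satisfies the localization and base-change axioms of \cite{ayoub-these-I}, one reduces the claim to the situation where $X$ is replaced by $\Spec(\mathcal{O}_{X,x}^h)$ and $A$ by $A_{\succeq S_x}$; here one must note that the diagram $(\mathcal{X},A)\times_X \Spec(\mathcal{O}_{X,x}^h)$ has all objects over strata not containing $x$ equal to the empty scheme, which are initial in $\DM$ and can be discarded (this is where one uses Corollary~\ref{cor:very-useful-texnical-cor} or the derivator formalism of Remark~\ref{rem-gen:cor-useful-texnical-to-derivators} to compute the stalk of $s_*s^*$). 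Second, the poset $A_{\succeq S_x}$ has a \emph{least} element, namely $S_x$ itself, since $S_x\preceq S$ for every $S$ in it; equivalently $A_{\succeq S_x}$ has an initial object. For the computation of the homotopy limit $s_*s^*$ this means that the relevant indexing category has an initial object $S_x$, whose closure $\bar{S}_x$ still contains $x$, and hence after the localization the value of $s^*$ at $S_x$ is just the pullback of the constant motive to (the relevant henselian trace of) $\bar S_x$.

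The heart of the argument is then a Mayer--Vietoris / Cousin-type identity: on the henselian local scheme, $X_{(x)}:=\Spec(\mathcal{O}_{X,x}^h)$ is the union of the closed subschemes $\bar S\cap X_{(x)}$ for $S\in A_{\succeq S_x}$, this family is closed under intersection (the intersection $\bar S\cap \bar T$ is again a union of closures of strata all containing $x$, because $\mathcal{S}$ is a stratification), and $\bar{S}_x\cap X_{(x)}$ — the closure of the minimal element — is all of $X_{(x)}$... wait, that's not right in general; rather, the point is the dual statement: the \emph{open} stratum through $x$, i.e.\ the \emph{maximal} ones, and one runs the descent the other way. I would organize the computation so that one shows $\id\to s_*s^*$ is invertible on $X_{(x)}$ by induction on the number of strata meeting $x$, splitting off a maximal stratum $T$: writing $U = X_{(x)} - (\bar T \cap X_{(x)})$ for its complementary open and using the localization triangle $j_!j^* \to \id \to i_*i^*$ together with the inductive hypothesis applied to the closed complement and to $U$, the needed isomorphism follows from gluing two instances of the statement and the fact that $\DM$ satisfies proper base change along the closed immersions $\bar S\hookrightarrow X$. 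The main obstacle, and the step requiring the most care, is precisely this induction: one must check that restricting the diagram $(\mathcal{X},A)$ to the open $U$ and to the closed stratum $\bar T$ again gives diagrams of the form covered by the lemma (for the induced stratifications), and that the localization triangle is compatible with taking $s_*$ of these subdiagrams — this compatibility is where Proposition~\ref{prop:cohomol-diag-sch-adjunction} and Corollary~\ref{cor:very-useful-texnical-cor}, together with the fact that $\ucarre$ and finite ordered sets are universal for homotopy limits (Lemma~\ref{texnical-commut-holim}, Proposition~\ref{prop:limits-with-respect-ordered-set}), do the real work.
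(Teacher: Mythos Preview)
Your approach is on the right track but overcomplicates a single key step, and the paper's proof shows how to simplify it decisively. You localize at a \emph{point} $x$ via henselization, and then correctly notice your own problem: $\bar{S}_x\cap X_{(x)}$ is only a proper closed subscheme of $X_{(x)}$, so even though $A_{\succeq S_x}$ has an initial object, the diagram over it is not constant and you cannot conclude directly. Your attempted rescue by induction on the number of strata, peeling off a maximal stratum and using localization triangles, is vague at the crucial point (compatibility of the triangle with $s_*$ of the subdiagrams, and the fact that the induced situation on the closed complement is again of the same shape), and in any case it is not needed.

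The paper instead pulls back to each \emph{stratum} $U$ (as a locally closed subscheme), not to a henselian local scheme. This is enough by the locality axiom, since $X$ is the disjoint union of its strata. The payoff is that for each $S\in A$ one has $\bar S\cap U$ either empty (when $U\not\preceq S$) or equal to $U$ itself (when $U\preceq S$, since then $U\subset\bar S$). After base change along $u:U\hookrightarrow X$ --- justified by Propositions~\ref{prop:limits-with-respect-ordered-set} and~\ref{prop-most-gen-proj-base-change}, since $A$ is a finite ordered set and $s$ is objectwise projective --- and after discarding the empty pieces via Corollary~\ref{cor:making-the-diag-connected}, the diagram becomes the \emph{constant} diagram $(U,A^\flat)$ with $A^\flat=\{S:U\preceq S\}$. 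Now your own observation applies cleanly: $A^\flat$ has initial object $U$, so \cite[Prop.~2.1.41]{ayoub-these-I} gives $\id\simeq t_*t^*$. The moral is that pulling back to strata, rather than to points, turns the closure diagram into a constant one and makes the initial-object argument go through without any induction.
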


\begin{proof}
$X$ is a disjoint union of its $\mathcal{S}$-strata. By the locality axiom (cf.~\cite[Cor.~4.5.47]{ayoub-these-II}) it then sufficient to show that
$u^*\to u^*s_*s^*$ is invertible for
any $\mathcal{S}$-stratum $U$; $u:U \hookrightarrow X$ being the inclusion morphism.
Let $s':(\mathcal{X}\times_X U,A)\to U$ be the base-change of $s$ by
$u:U \hookrightarrow X$.
Using Propositions
\ref{prop:limits-with-respect-ordered-set} and
\ref{prop-most-gen-proj-base-change}, we are reduced to showing that
$\id \to s'_*s'^*$ is an isomorphism.
Now, for every $S\in A$, $\bar{S}\cap U$ is either empty or equal to $U$.
Let $A^{\flat}$ be the subset of $A$ consisting of those $S$'s such that $U\subset \bar{S}$, i.e., $U\preceq S$. Then, by
Corollary \ref{cor:making-the-diag-connected},
we are reduced to showing that
$\id \to t_*t^*$ is invertible with
$t:(U,A^{\flat}) \to U$ given objectwise by $\id_U$. But $A^{\flat}$ has a smallest element, namely the $\mathcal{S}$-stratum $U$. We may now use \cite[Prop.~2.1.41]{ayoub-these-I} to finish the proof.
\end{proof}

\subsection{Direct image along the complement of a \emph{sncd}}
\label{sub:direct-image-sncd}
Let $k$ be a field and $X$ a smooth $k$-scheme. Recall that a \emph{simple normal crossing divisor} (sncd) in $X$
is a divisor $D=\cup_{\alpha\in I} D_{\alpha}$ in $X$ such that
the scheme-theoretic intersection
$D_J=\bigcap_{\beta\in J} D_{\beta}$ is smooth of pure codimension
${\rm card}(J)$ for every $J\subset I$. In particular, we do not allow self-intersections of components in $D$. For the purpose of this article,
we need to extend the notion of \emph{sncd} to $k$-schemes having quotient singularities.

\begin{definition}
\label{defn:sncd-in-quotient-sing}

\begin{enumerate}

\item[(a)] A finite type $k$-scheme $X$ is said to have
\emph{only quotient singularities}
if locally for the \'etale topology, $X$ is the quotient of a smooth $k$-scheme by the action of a finite group with order prime to the exponent characteristic of $k$.

\item[(b)] Let $X$ be a finite type $k$-scheme having only quotient singularities. A \emph{simple normal crossing divisor} (sncd) of $X$ is a Weil divisor $D=\cup_{\alpha\in I} D_{\alpha}$ in $X$ such that all the $D_{\alpha}$ are normal schemes and the following condition is satisfied. Locally for the \'etale topology on $X$, there exist:

\begin{itemize}

\item a smooth affine $k$-scheme $Y$ and
a sncd $F=(F_{\alpha})_{\alpha\in I}$ in $Y$,

\item a finite group $G$ with order prime to the exponent characteristic of $k$, acting on $Y$ and globally fixing each $F_{\alpha}$,

\item an isomorphism $Y/G \simeq X$ sending $F_{\alpha}/G$ isomorphically to $D_{\alpha}$ for all $\alpha\in I$.

\end{itemize}

\end{enumerate}

\end{definition}

For every $J \subset I$, $D_J=\bigcap_{\beta\in J} D_{\beta}$ is, locally for the \'etale topology, the quotient of $F_J=\bigcap_{\beta\in J} F_{\beta}$. Hence it has codimension
${\rm card}(J)$ in $X$ and only quotient singularities. Moreover, $\bigcup_{\alpha\in I- J}(D_{\alpha}\cap D_J)$ is a \emph{sncd} in $D_J$.
If $X$ is smooth, it can be shown that the $D_J$ are necessarily
smooth, and thus $D$
is a \emph{sncd} in the usual sense. However, we omit the proof as this is not needed later.

\begin{proposition}
\label{prop:direct-image-compl-sncd}
Let $k$ be a field and $X$ a quasi-projective $k$-scheme having only quotient singularities.
Let $D=\bigcup_{\alpha \in I} D_{\alpha}$ be a simple normal crossing divisor in $X$ and denote by $j:U \to X$ the inclusion of its complement.
Let $T\subset Z\subset X$ be closed subschemes such that
there exist a subset $J \subset I$ satisfying the following conditions:
\begin{enumerate}

\item[(i)] $Z$ is constructible with respect to the stratification induced by the family $(D_{\beta})_{\beta\in J}$ (as in
\emph{Example \ref{ex:strat-ass-family-subschemes}}),

\item[(ii)] $T$ is contained in $\bigcup_{\alpha\in I- J} D_{\alpha}$.

\end{enumerate}
Put $Z^0=Z- T$ and let $z:Z\to X$ and $u:Z^0\to Z$ denote the inclusions.
Then
the morphism
$$\xymatrix@C=1.7pc{z^*j_*\un_U \ar[r] & u_*u^*z^*j_*\un_U,}$$
given by the unity of the adjunction $(u^*,u_*)$, is invertible.

\end{proposition}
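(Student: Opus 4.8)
The plan is to reduce the global statement to a purely local computation around a point of $Z^0$, using the fact that both $j_*\un_U$ and the formation of $z^*$, $u_*u^*$ are compatible with étale localization, and that the claim is étale-local on $X$. So first I would pass to an étale neighborhood in which $X=Y/G$ with $Y$ smooth affine, $F=(F_\alpha)_{\alpha\in I}$ a genuine \emph{sncd} in $Y$ stabilized by $G$, and $D_\alpha=F_\alpha/G$. Because we have rational coefficients and $G$ has order prime to the exponent characteristic, $\DM(X)$ is a direct factor (the $G$-invariants) of $\DM(Y)$ via the quotient map, so computing $z^*j_*\un_U$ reduces to the corresponding computation upstairs on $Y$ with its sncd $F$ and the preimages $\tilde Z$, $\tilde T$ of $Z$, $T$. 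Thus I may and do assume $X$ smooth and $D$ an honest \emph{sncd}.

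Next, I would make the problem combinatorial. The morphism in question is an isomorphism if and only if it is so after applying $i^*$ for every point $i$ of $Z^0$ (using the locality/conservativity of points, as in the proof of Lemma \ref{lemma:form-locality-closed-cover}). Fix such a point $x\in Z^0$. By hypothesis (i), near $x$ the subscheme $Z$ is a union of strata of the $(D_\beta)_{\beta\in J}$-stratification, i.e.\ locally $Z=\bigcap_{\beta\in J_0}D_\beta$ for the set $J_0=\{\beta\in J: x\in D_\beta\}$; by hypothesis (ii), $x\notin D_\alpha$ for $\alpha\in I\smallsetminus J$, and since $x\in Z^0=Z\smallsetminus T$ with $T\subset\bigcup_{\alpha\notin J}D_\alpha$ we get $x\notin D_\alpha$ for all $\alpha\in I\smallsetminus J$. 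Hence, étale-locally at $x$, the only components of $D$ meeting a neighborhood of $x$ are exactly the $D_\beta$, $\beta\in J_0$, and $Z$ coincides with their common intersection near $x$ — and the \emph{same description persists} at every point of $Z^0$ near $x$, which is what makes the map $Z^0\hookrightarrow Z$ "invisible" to the sheaf $z^*j_*\un_U$. Concretely, the absolute purity / localization formalism for $\DM$ gives, étale-locally around $x$, a description of $z^*j_*\un_U$ as a finite homotopy colimit of Tate twists $\un_{D_{J'}}(n)[2n]$ built from the $D_\beta$, $\beta\in J_0$ (the classical computation of $\mathrm R j_*\Q$ along a normal crossing divisor, valid in $\DM$); restricting further to $Z^0$ via $u^*$ changes nothing because none of the relevant $D_\beta$ leaves $Z$ inside the locus $T$.

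To organize this cleanly rather than by hand, I would instead argue by induction on $\mathrm{card}(I\smallsetminus J)$ together with the standard localization triangle for the pair $(X\smallsetminus D_{\alpha_0},\,D_{\alpha_0})$ for a chosen $\alpha_0\in I\smallsetminus J$: writing $j=j'\circ j''$ with $j''$ the open immersion removing $D_{\alpha_0}$, one has $j_*\un_U=j'_*(j''_*\un_U)$, and $j''_*\un_U$ sits in a triangle relating $\un$, the localized motive, and a Tate-twisted cohomological direct image from $D_{\alpha_0}$. Since $T\subset\bigcup_{\alpha\notin J}D_\alpha$ and $Z$ avoids moving into $D_{\alpha_0}$ inappropriately (again hypothesis (ii) and constructibility of $Z$ in the $J$-directions), applying $z^*$ and then the adjunction unit $u^*\to u_*u^*$ to each term of the triangle reduces the statement to the same statement with $I$ replaced by $I\smallsetminus\{\alpha_0\}$, $J$ unchanged, plus a contribution from $D_{\alpha_0}$ to which the inductive hypothesis applies (its sncd being $\bigcup_{\alpha\ne\alpha_0}(D_\alpha\cap D_{\alpha_0})$, and $Z\cap D_{\alpha_0}$, $T\cap D_{\alpha_0}$ inheriting conditions (i)–(ii)). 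The base case $I=J$ is immediate: then $T\subset\bigcup_{\alpha\in I\smallsetminus J}D_\alpha=\emptyset$, so $Z^0=Z$ and $u$ is the identity.

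The main obstacle I anticipate is not the combinatorial bookkeeping but making the localization/purity triangles interact correctly with the two closed immersions $z$ and $u$ — i.e.\ checking that the base-change maps $z^*j''_*\to j''_*z'^*$ and the cross-terms of the form $z^*(i_{\alpha_0})_*$ behave as expected. This requires the full six-operations formalism for $\DM$ (smooth base change, localization, relative purity for the regular closed immersions $D_J\hookrightarrow X$, all available here since everything is smooth after the reduction), and some care that the quotient-singularity reduction at the start is compatible with all of these — which it is, because taking $G$-invariants is exact with $\Q$-coefficients and commutes with $f^*$, $f_*$ for the finite map $Y\to X$. Once those compatibilities are in place, the induction closes without further difficulty.
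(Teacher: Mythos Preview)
Your reduction to the smooth case is correct and matches the paper. The problem is in the inductive step on $\lvert I\smallsetminus J\rvert$: condition~(ii) does \emph{not} inherit to either piece of your triangle. Take $X=\Aff^2$, $D_1,D_2$ the coordinate axes, $I=\{1,2\}$, $J=\{1\}$, $Z=D_1$, $T=D_1\cap D_2=\{0\}$. Here $I\smallsetminus J=\{2\}$, so you must take $\alpha_0=2$. Your ``first term'' is the statement for the sncd $D_1$ alone (so $I'=\{1\}$, $I'\smallsetminus J=\emptyset$), which would require $T\subset\emptyset$; but $T=\{0\}$. Your ``contribution from $D_{\alpha_0}$'' lives on $D_2$ with sncd $D_1\cap D_2=\{0\}$, $Z\cap D_2=\{0\}$, $T\cap D_2=\{0\}$, and again condition~(ii) would force $T\cap D_2\subset\emptyset$. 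A direct computation shows that the unit map for the first term is genuinely \emph{not} an isomorphism at $0$, so the failure is real, not just a bookkeeping issue. (Your informal local analysis has a related error: you assert that $x\in Z^0$ implies $x\notin D_\alpha$ for $\alpha\in I\smallsetminus J$, but $x\notin T$ and $T\subset\bigcup_{\alpha\notin J}D_\alpha$ do not give this. In $\Aff^3$ with $J=\{1\}$, $Z=D_1$, $T=D_1\cap D_2$, the point $(0,1,0)\in Z^0$ lies on $D_3$.)

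The paper avoids this by inducting on $\dim X$ and, in the case $Z\subsetneq X$, choosing $\beta_0\in J$ with $Z\subset D_{\beta_0}$ (after a Mayer--Vietoris reduction on $Z$ to the case $Z$ lies in a single $D_{\beta_0}$). Restricting to $D_{\beta_0}$ keeps $I\smallsetminus J$ unchanged (both $I$ and $J$ lose $\beta_0$), so condition~(ii) is preserved verbatim, and $\dim$ drops. The key computational input is the identification $d_0^*j_*\un_U\simeq e_{0*}(\un\oplus\un(-1)[-1])$ on $D_{\beta_0}$ (with $e_0$ the inclusion of $D_{\beta_0}^0=D_{\beta_0}\smallsetminus\bigcup_{\alpha\neq\beta_0}D_\alpha$), which turns the problem into the same statement one dimension down. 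The moral: peel off divisors indexed by $J$ (which control $Z$), not by $I\smallsetminus J$ (which control $T$).
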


\begin{proof}
We split the proof in two steps. The first one is a reduction to the case where $X$ is smooth (and $D$ is a \emph{sncd} in the usual sense).

\smallskip

\noindent
\underbar{Step 1:}
The problem being local for the \'etale topology on $X$, we may assume that $X=Y/G$ and $D_{\alpha}=F_{\alpha}/G$ with $Y$, $(F_{\alpha})_{\alpha\in I}$ and $G$ as in
Definition \ref{defn:sncd-in-quotient-sing}, (b).
Let $e$ denote the projection $Y \to X$, $V=e^{-1}(U)$,
$Z'=e^{-1}(Z)$ and $T'=e^{-1}(T)$. Then $Z'$ is constructible with respect to the stratification induced by the family $(F_{\beta})_{\beta\in J}$ and $T'$ is contained in $\bigcup_{\alpha\in I- J}F_{\alpha}$. Let $Z'^0=Z'- T'=e^{-1}(Z^0)$.

Consider the commutative diagram
$$\xymatrix@C=1.7pc@R=1.4pc{Z'^0 \ar[r]^-{u'} \ar[d]^-e & Z' \ar[r]^-{z'} \ar[d]^-e & Y \ar[d]^-e & V \ar[l]_-{j'} \ar[d]^-e \\
Z^0 \ar[r]^-u & Z \ar[r]^-z & X & U \ar[l]_-j}$$
where the squares are cartesian (up to nil-immersions).
The group $G$ acts on $e_*\un_V\simeq e_*e^*\un_U$, and the morphism
$\un_U \to e_*e^*\un_U$ identifies $\un_U$ with the image of the projector $\frac{1}{|G|}\sum_{g\in G} g$ (see \cite[Lem.~2.1.165]{ayoub-these-I}). Hence, $\un_U \to e_*e^*\un_U$ admits a retraction $r:e_*e^*\un_U\to \un_U$.
It is then sufficient to show that
\begin{equation}
\label{eq:new-for-reuction-direct-im-quotient}
\xymatrix@C=1.7pc{z^*j_*e_*\un_V \ar[r] & u_*u^*z^*j_*e_*\un_V}
\end{equation}
is invertible. But we have a commutative diagram
$$\xymatrix@C=1.5pc@R=1.5pc{z^*j_*e_* \ar[r]^-{\sim}  \ar[d] & z^*e_*j'_* \ar[d] \ar[r]^-{\sim} & e_* z'^*j'^*\ar[d] \ar@/^1.pc/[drr] & &  \\
u_*u^* z^* j_* e_* \ar[r]^-{\sim} & u_*u^*z^*e_* j'_* \ar[r]^-{\sim} & u_*u^* e_* z'^*j'^* \ar[r]^-{\sim}  & u_*e_*u'^*z'^* j'^* \ar[r]^-{\sim} & e_*u'_*u'^* z'^* j'_* }$$
where the all the horizontal arrows are invertible, either for trivial reasons or because of the base change theorem for projective morphisms \cite[Cor.~1.7.18]{ayoub-these-I} applied to $e$. This shows that
\eqref{eq:new-for-reuction-direct-im-quotient}
is isomorphic to push-forward along $e:Z' \to Z$ of
$z'^* j'_* \un_V \to u'_*u'^* z'^*j'_*\un_V$.
Thus, it suffices to show that the latter is invertible, i.e, we only need to consider the smooth case.

\smallskip

\noindent
\underbar{Step 2:}
{\it We assume now that $X$ is smooth.}
We argue by induction on the dimension of $X$. We may assume $X$ is connected and hence irreducible.
Because the problem is local on $X$, we may assume that each $D_{\alpha}$ is given as the zero locus of some global function in $\mathcal{O}_X(X)$.
Then the normal sheaf $\mathcal{N}_L$ to the closed subscheme $D_L=\bigcap_{\alpha\in L} D_{\alpha} \subset X$ is free for every $L\subset I$.

When $Z=X$, condition (ii) implies that $T\subset \bigcup_{\alpha\in I} D_{\alpha}$, or equivalently that $U\subset Z^0$. In this case, we need to show that
$j_*\un_U \to u_*u^*j_*\un_U$ is an isomorphism. Writing $v$ for the inclusion of $U$ in $Z^0$, so that $j=u\circ v$, we get
$$u_*u^*j_*\simeq u_*u^*u_*v_*\simeq u_*v_*\simeq j_*.$$
This proves our claim in this case.

{\it Next, we assume that $Z\subset X- U$.} Let $J_0\subset J$ be of
minimal cardinality with $Z\subset \bigcup_{\beta\in J_0} D_{\beta}$.  We argue by induction on the cardinality of $J_0$:

\smallskip

\noindent
\emph{First Case:}
First assume that $J_0$ has only one element, i.e., we may find
$\beta_0\in J$ such that $Z\subset D_{\beta_0}$. Write
$z_0:Z\hookrightarrow D_{\beta_0}$ and $d_0:D_{\beta_0} \hookrightarrow X$,
so that
$z=d_0\circ z_0$. With these notations, we need to show that
$$z_0^* (d_0^*j_*\un_U) \!\xymatrix@C=1.5pc{\ar[r] &}\! u_*u^*z_0^* (d_0^*j_*\un_U) $$
is invertible.
Let $D_{\beta_0}^0=D_{\beta_0}- \bigcup_{\alpha\neq \beta_0} D_{\alpha}$, and denote by $e_0:D_{\beta_0}^0 \hookrightarrow D_{\beta}$ the inclusion. By \cite[Th.~3.3.44]{ayoub-these-II},
the morphism
$$d_0^*j_*\un_U \!\xymatrix@C=1.5pc{\ar[r] &}\! e_{0*}e_0^*d_0^*j_*\un_U$$
is invertible. Moreover, as the normal sheaf to $D_{\beta_0}^0$ is assumed to be free, $e_0^*d_0^*j_*\un_U\simeq \un_{D_{\beta_0}^0} \oplus \un_{D_{\beta_0}^0}(-1)[-1]$.
As the Tate twist commutes with the operations of inverse and direct images, we are reduced to showing that
$$z_0^*e_{0*} \un_{D^0_{\beta_0}} \!\!\xymatrix@C=1.5pc{\ar[r] & }\! u_*u^*z_0^* e_{0*} \un_{D^0_{\beta_0}}$$
is invertible. This follows by our induction hypothesis on the dimension of $X$.

\smallskip

\noindent
\emph{Second Case:}
Now we assume that $J_0$ has at least two elements. Fix $\beta_0\in J_0$ and let $J'_0=J_0- \{\beta_0\}$.
Define $Z_0=Z\bigcap D_{\beta_0}$, $Z'=Z\bigcap (\bigcup_{\beta\in J'_0} D_{\beta})$ and $Z'_0=Z\cap Z'$.
Also Let $T_0$, $T'$ and $T_0'$ be the intersection of $T$ with
$Z_0$, $Z'$ and $Z_0'$. Finally, let
$Z_0^0$, $Z'^0$ and $Z'^0_0$ be the complements of $T$ in
$Z_0$, $Z'$ and $Z_0'$.

Writing $t_0$, $t'$ and $t'_0$ for the inclusion of $Z_0$, $Z'$ and $Z_0'$ in $Z$, we have a morphism of distinguished triangles
$$\xymatrix@C=1pc@R=1.5pc{z^*j_*\un_U \ar[r] \ar[d] & t_{0*} t_0^* z^*j_*\un_U \bigoplus t'_*t'^* z^* j_*\un_U \ar[r] \ar[d] & t'_{0*} t'^*_0 z^*j_* \un \ar[r] \ar[d] & \\
u_*u^* z^*j_*\un_U \ar[r]  & u_*u^* t_{0*} t_0^* z^*j_*\un_U \bigoplus u_*u^* t'_*t'^* z^* j_*\un_U \ar[r] & u_*u^* t'_{0*} t'^*_0 z^*j_* \un \ar[r] &  }$$
We are reduced to showing that the second and third vertical arrows are invertible.
We do it only for the second factor of the second arrow as the other cases are similar.
Let $u':Z'^0 \subset Z'$. Then
$u_*u^*t'_*\simeq t'_*u'_*u'^*$. Thus, with $z'=z\circ t'$, it suffices to show that
$z'^* j_*\un_U \to u'_*u'^* z'^* j_*\un_U$
is invertible. This follows from the induction hypothesis, as
$Z'$ is contained in $\bigcup_{\beta \in J_0'}D_{\beta}$ and ${\rm card}(J_0')={\rm card}(J_0)-1$.
The proof of the proposition is complete.
\end{proof}

We note the following corollary for later use.

\begin{corollary}
\label{cor:new-for-compar-beta-prime-et-theta-dp}
Let $P$ be a smooth quasi-projective $k$-scheme and $F=\cup_{\alpha\in I} F_{\alpha}$ a sncd in $P$. Let $G$ be a finite group with order prime to the exponent characteristic of $k$ acting on $P$ and stabilizing the smooth divisors $F_{\alpha}$.
Let $H\subset G$ be a subgroup and set
$X=P/G$, $X'=P/H$, $D_{\alpha}=F_{\alpha}/G$ and $D'_{\alpha}=F_{\alpha}/H$. Call $U$ and $U'$ the complements of the sncd $D=\cup_{\alpha \in I} D_{\alpha}$ and $D'=\cup_{\alpha\in I} D'_{\alpha}$. Let $T\subset Z \subset X$ be as in
\emph{Proposition \ref{prop:direct-image-compl-sncd}} and set $Z^0=Z- T$. We form the commutative diagram with cartesian squares
$$\xymatrix@C=1.5pc@R=1.5pc{Z'^0 \ar[r]^-{u'} \ar[d]^-{d'} & Z' \ar[r]^-{z'} \ar[d]^-d & X' \ar[d]^-c & \ar[l]_-{j'} U' \ar[d]^-{c'} \\
Z^0 \ar[r]^-u & Z \ar[r]^-z &  X &  \ar[l]_-j U}$$
Then, the base change morphism $d^*u_*\to u'_*d'^*$ applied to
$u^*z^*j_*\un_U$ is invertible.

\end{corollary}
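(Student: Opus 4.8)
The plan is to reduce Corollary \ref{cor:new-for-compar-beta-prime-et-theta-dp} to Proposition \ref{prop:direct-image-compl-sncd} applied to the scheme $X'=P/H$. Note that $X'$ has only quotient singularities (it is the quotient of the smooth scheme $P$ by the finite group $H$), and $D'=\cup_{\alpha} D'_{\alpha}$ is a \emph{sncd} in $X'$ in the sense of Definition \ref{defn:sncd-in-quotient-sing}, with the same witnessing data $(P, (F_{\alpha})_{\alpha\in I}, H)$ used for $X$ with $G$. The closed subschemes $T'':=c^{-1}(T)\subset Z'':=c^{-1}(Z)\subset X'$ satisfy hypotheses (i) and (ii) of Proposition \ref{prop:direct-image-compl-sncd} relative to $D'$ and the same subset $J\subset I$: indeed $c$ is a finite morphism compatible with the divisors $D_{\alpha}$, $D'_{\alpha}$, so pulling back a $(D_\beta)_{\beta\in J}$-constructible set along $c$ gives a $(D'_\beta)_{\beta\in J}$-constructible set, and $T\subset \bigcup_{\alpha\in I-J}D_\alpha$ pulls back to $T''\subset\bigcup_{\alpha\in I-J}D'_\alpha$. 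But $Z'$ and $Z'^0$ in the corollary's diagram are exactly $Z''=c^{-1}(Z)$ and $Z''-T''=c^{-1}(Z^0)$ since the squares are cartesian. Hence Proposition \ref{prop:direct-image-compl-sncd} yields that $z'^*j'_*\un_{U'}\to u'_*u'^*z'^*j'_*\un_{U'}$ is invertible.

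Next I would relate this back to $u^*z^*j_*\un_U$ via the base change isomorphism for the finite (hence projective) morphism $c:X'\to X$. By \cite[Cor.~1.7.18]{ayoub-these-I}, the base change morphism $c^*j_*\to j'_*c'^*$ is invertible, and since $c'^*\un_U=\un_{U'}$ we get $c^*j_*\un_U\simeq j'_*\un_{U'}$. Applying $z'^*$ and using the cartesian square relating $z,z'$ and $c,d$, we obtain $d^*z^*j_*\un_U\simeq z'^*c^*j_*\un_U\simeq z'^*j'_*\un_{U'}$. Under this identification, the unit morphism $z'^*j'_*\un_{U'}\to u'_*u'^*z'^*j'_*\un_{U'}$ becomes (up to the canonical comparison of units) $d^*z^*j_*\un_U \to u'_*u'^*d^*z^*j_*\un_U$, and the latter fits with the base change map $d^*u_* \to u'_*d'^*$ applied to $u^*z^*j_*\un_U$ into a commutative square whose other three edges are the two unit morphisms and the invertible arrow $d^*z^*j_*\un_U\simeq z'^*j'_*\un_{U'}$. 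More precisely, the base change morphism $d^*u_*u^* \to u'_*d'^*u^* \simeq u'_*u'^*d^*$ is compatible with the units of $(u^*,u_*)$ and $(u'^*,u'_*)$, so one has a commutative diagram
$$\xymatrix@C=2pc@R=1.7pc{d^*z^*j_*\un_U \ar[r] \ar[d]_-{\sim} & d^*u_*u^*z^*j_*\un_U \ar[d] \\ z'^*j'_*\un_{U'} \ar[r] & u'_*u'^*z'^*j'_*\un_{U'}.}$$
The bottom arrow is invertible by the first paragraph; the top arrow is invertible by Proposition \ref{prop:direct-image-compl-sncd} applied to $X$ itself (with $Z,T,J$); the left vertical arrow is invertible by the projective base change just discussed. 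Hence the right vertical arrow $d^*u_*u^*z^*j_*\un_U\to u'_*u'^*d^*z^*j_*\un_U$ is invertible, and unwinding the identifications this says exactly that $d^*u_*\to u'_*d'^*$ is invertible on $u^*z^*j_*\un_U$.

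The main obstacle I expect is purely bookkeeping: one must check that all the canonical comparison maps (the base change isomorphism $c^*j_*\simeq j'_*c'^*$, the identifications $d^*z^*\simeq z'^*c^*$, and the compatibility of the $(u^*,u_*)$-unit with the base change $d^*u_*u^*\to u'_*u'^*d^*$) fit together into a genuinely commutative diagram rather than one that commutes only up to an unspecified automorphism. This is a standard but slightly tedious compatibility of adjunctions and base change maps in Ayoub's formalism, and it is the only place where care is needed; everything else is a direct citation of Proposition \ref{prop:direct-image-compl-sncd} and of projective base change from \cite{ayoub-these-I}. One should also note at the outset that all schemes in sight are quasi-projective over $k$ (so that $j_!,j^!$ etc.\ are available and $c,d,d'$ are projective), which is guaranteed by the hypotheses that $P$ is quasi-projective and $G$ finite.
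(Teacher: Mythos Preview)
Your reduction strategy and diagram chasing are correct and essentially match the first paragraph of the paper's proof. The genuine gap is your claim that the base change morphism $c^*j_*\to j'_*c'^*$ is invertible by \cite[Cor.~1.7.18]{ayoub-these-I}. That result is the base change theorem for \emph{projective} morphisms: in a cartesian square it gives $g^*f_*\simeq f'_*g'^*$ when $f$ is projective. In your square the morphism whose pushforward you want to commute past $c^*$ is $j$, an \emph{open immersion}, not a projective morphism; the finite morphism here is $c$, so what projective base change actually yields is $j^*c_*\simeq c'_*j'^*$, which is a different (and much easier) statement. There is no smooth base change available either, since $c:X'\to X$ is a quotient by a finite group and is typically ramified along the divisors.

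In fact, the isomorphism $c^*j_*\un_U\simeq j'_*\un_{U'}$ is the entire content of the corollary once your reduction is in place, and the paper devotes the bulk of its proof to it: one reduces by conservativity of $(P\to X')^*$ to the case $H=\{1\}$, $X'=P$; the case of a single smooth divisor is handled by a delicate argument passing to a cyclic cover via the character $g\mapsto g^{-1}t/t$ and invoking \cite[Lem.~3.4.13]{ayoub-these-II}; the general case is then obtained by induction on $|I|$, restricting to each $F_{i_0}$ and using Proposition~\ref{prop:direct-image-compl-sncd} to reduce the number of divisors. None of this is bookkeeping---it is the substantive step you have skipped.
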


\begin{proof}
It suffices to consider the case $Z=X$ and $Z^0=U$. Indeed, assume
that $c^*j_*\un_U \to j'_*c'^*\un_U$ is invertible. From
Proposition \ref{prop:direct-image-compl-sncd} applied over $X'$,
we get that $z'^*j_*\un_{U'} \to u'_*u'^*z'^*j'_*\un_{U'}$ is
invertible. Using, the commutative diagram
$$\xymatrix@C=1.7pc@R=1.5pc{d^*z^*j_*\un_U \ar[d] \ar[r]^-{\sim} & z'^*c^*j_*\un_U \ar[r]^-{\sim} \ar[d] & z'^*j'_*c'^* \un_U \ar[d]^-{\sim} \\
u'_*u'^*d^*z^*j_*\un_U \ar[r]^-{\sim} & u'_*u'^* z'^*c^*j_*\un_U \ar[r]^-{\sim}  & u'_*u'^*z'^*j'_*c'^* \un_U}$$
we get that $d^*z^*j_*\un_U \to u'_*u'^*d^*z^*j_*\un_U$ is invertible.
We conclude using the commutative diagram:
$$\xymatrix@C=1.7pc@R=1.5pc{d^*z^*j_*\un_U \ar[d]_-{\sim} \ar@{=}[rr]  && d^*z^*j_*\un_U \ar[d]^-{\sim} \\
d^*u_*u^*z^*j_*\un_U \ar[r] & u'_*d'^*u^*z^*j_*\un_U \ar[r]^-{\sim} & u'_*u'^*d^*z^*j_*\un_U.}$$

To finish the proof, it remains to show that $c^*j_*\un_U \to j'_* c'^* \un_U$ is invertible. As $(P\to X')^*$ is conservative, we easily reduce to the case $H=1$ and $X'=P$.
If $I$ is empty, there is nothing to show.
Next, assume that $I$ has one element, i.e., $F$ is a smooth divisor.
Let $F_0$ be a connected component of $F$. Then, $P/{\rm Stab}_G(F_0) \to X$ is \'etale in the neighborhood of
$F_0/{\rm Stab}_G(F_0)$. Thus, we may replace $X$ by $P/{\rm Stab}_G(F_0)$ and assume that $G$ globally fixes $F_0$. In other words, we may assume that $F$ is connected and hence irreducible.
Also, the question being local on $P$ (for $G$-equivariant Zariski covers), we may assume that the divisor $F\subset P$
is defined by a single equation $t=0$. Then sending $g\in G$ to
$g^{-1}t/t$ yields a character $\chi:G \to \Gamma(P,\mathcal{O}^{\times})$. When $F$ is geometrically irreducible, which we may assume without loss of generality, this character takes values in $k^{\times}$.

Now, let $W\subset P$ be a globally $G$-invariant open subset such that
$W\cap F$ is non-empty. Assume that our claim is true for the cartesian square
$$\xymatrix@C=1.7pc@R=1.5pc{W- F \ar[r]^-{e'} \ar[d]_-{q'} & (W- F)/G\ar[d]^-q \\
W \ar[r]^-e & W/G,}$$
i.e., $e^*q_*\un_{(W- F)/G} \to q'_* e'^*
\un_{(W- F)/G}$ is invertible.
It follows that $c^*j_*\un_U \to j_*'c'^* \un_U$ is invertible over $W$.
Clearly, both $(c^*j_*\un_U)_{|F}$ and $(j'_*c'^*\un_U)_{|F}$ are isomorphic to $\un_F \oplus \un_F(-1)[-1]$. (This can be derived easily from \cite[Cor.~1.6.2]{ayoub-these-I} and the base change theorem by smooth morphisms
\cite[Prop.~4.5.48]{ayoub-these-II}.)
For all $i, \, j \in \Z$, there is a canonical isomorphism
$$\hom_{\DM(F)}(\un_F,\un_F(i)[j])\simeq \hom_{\DM(k)}(\M(F),\un_{\Spec(k)}(i)[j])$$
given by the adjunction $(p_{F\sharp},p_F^*)$ with $p_F$ the projection of $F$ to $\Spec(k)$ and the fact that $\M(F)=p_{F\sharp}\un_F$.
Using Proposition \ref{prop:compare} and
\cite[Cor.~4.2 and Th. 16.25]{motivic-lectures},
it follows that every endomorphism of
$\un_F \oplus \un_F(-1)[-1]$ is given by a matrix
$$\left(\begin{array}{cc} a & b\\
0 & a'\end{array}\right)$$
where $a, \, a'\in \Q$ and $b\in \mathcal{O}^{\times}(F)\otimes \Q$.
The same holds true for $F$ replaced by $W\cap F$.
As
$c^*j_*\un_U \to j_*'c'^* \un_U$ was assumed to be invertible on $W$ and in particular over $W\cap F$, we deduce that it is also invertible over $F$. This implies that
$c^*j_*\un_U \to j_*'c'^* \un_U$ is an isomorphism.

Replacing $P$ by a well-chosen $W\subset P$ as above, we may assume that
$F\to F/G$ is an \'etale cover.
With $K=\chi^{-1}(1)$, the morphism $P \to P/K$ is then \'etale in the neighborhood of $F$. Thus, we may replace $P$ by $P/K$. In other words, we may assume that $\chi:G \to k^*$ is injective. Then $G$ is cyclic of order $m$ and $P\to P/G$ is locally for the \'etale toplogy, isomorphic to $e_m:\Aff^1_k \times_k F \to \Aff^1_k \times_k F$,
where $e_m$ is the elevation to the $m$-th power. Our claim in this case follows from \cite[Lem.~3.4.13]{ayoub-these-II}
as we work with rational coefficients.

Now we prove the general case by induction on $I$.
By the previous discussion, we may assume that $I$ has more than two elements.
It suffices to show that
$c^*j_*\un_U \to j'_*c'^*\un_U$ is invertible over each divisor $F_i$.
Fix $i_0\in I$ and let $I'=I- \{i_0\}$.
As our problem is local over $P$ (for $G$-equivariant Zariski covers), we may assume that the normal bundle to $F_{i_0}$ is trivial.
Let $F_{i_0}^0=F_{i_0}- \cup_{i\in I'} F_i$ and
consider the commutative diagram with cartesian squares
$$\xymatrix@C=1.7pc@R=1.5pc{F_{i_0}^0 \ar[r]^-{u'} \ar[d]^-{c'_{i_0}} & F_{i_0} \ar[d]^-{c_{i_0}} \ar[r]^-{z'} & P \ar[d]^-c & \ar[l]_-{j'} P- F \ar[d]^-{c'}\\
D_{i_0}^0 \ar[r]^-{u} & D_{i_0} \ar[r]^-{z} & X & U.\! \ar[l]_-j}$$
We know by Proposition \ref{prop:direct-image-compl-sncd} that
$$z^*j_*\un_U\simeq u_*u^*z^*j_*\un_U \simeq u_*u^*(\un_{D^0_{i_0}} \oplus \un_{D^0_{i_0}}(-1)[-1]) \qquad \text{and}$$
$$z'^*j'^* \un_{P- F} \simeq  u'_*u'^*z'^*j'^* \un_{P- F} \simeq u'_*u'^* (\un_{F^0_{i_0}} \oplus \un_{F^0_{i_0}}(-1)[-1]).$$
(Again, the last two isomorphisms follow from
\cite[Cor.~1.6.2]{ayoub-these-I}
and the base change theorem by smooth morphisms
\cite[Prop.~4.5.48]{ayoub-these-II}.)
Moreover, modulo these isomorphisms, the restriction of
$c^*j_*\un_U \to j'_*c'^*\un_U$ to $F_{i_0}$ is isomorphic to
the base change morphism $c_{i_0}^* u_* \to u'_* c'^*_{i_0}$
applied to $\un_{D^0_{i_0}}\oplus \un_{D^0_{i_0}}(-1)[-1]$. Thus we may use the induction hypothesis to conclude.
\end{proof}

\subsection{The Betti realization}
\label{The Betti realization}
In this paragraph we briefly describe the construction of the
Betti realization of relative motives and describe the
compatibilities with the Grothendieck operations. The main reference for
the material in the subsection is \cite{realiz-oper}.

Let $X$ be an analytic space (for example, the space of
$\C$-points of an algebraic variety defined over $\C$).  Let
$\SmAn/X$ be the category of smooth morphisms of analytic spaces
$U\to X$ (called {\it smooth $X$-analytic spaces}). The category
$\SmAn/X$ is a site when endowed with the classical topology and
we denote by $\Shv(\SmAn/X)$ the associated category of sheaves of
$\Q$-vector spaces. Given a smooth $X$-analytic space $Y$, we
let $\Q_{\ctp}(Y)$ denote the sheaf on $\SmAn/X$ associated to the presheaf
of $\Q$-vector spaces freely generated by $Y$
($\ctp$ stands for "classical topology").

Let $\mathbb{D}^1=\{z\in \C; |z|<1\}$ be the unit disc. If $Y$ is an
$X$-analytic space, write
$\mathbb{D}^1_Y$ for the $X$-analytic space $\mathbb{D}^1\times Y$.
As for schemes, there is a $\mathbb{D}^1$-local model structure $(\mathbf{W}_{\mathbb{D}^1},\mathbf{Cof},\mathbf{Fib}_{\mathbb{D}^1})$ on the category
$\mathbf{K}(\Shv(\SmAn/X))$ of complexes of sheaves on $\SmAn/X$
for which the morphisms
$\Q_{\ctp}(\mathbb{D}^1_Y)\to
\Q_{\ctp}(Y)$ are $\mathbb{D}^1$-weak equivalences.
Our construction of the Betti realization is
based on the following proposition which is a particular case of
\cite[Th.~1.8]{realiz-oper}:

\begin{proposition}
\label{prop:main-for-betti-real}
There is a natural
equivalence of categories
\begin{equation}
\label{eq-prop:main-for-betti-real}
\xymatrix{\mathbf{D}(\Shv(X)) \ar[r]^-{\sim} & \mathbf{K}(\Shv(\SmAn/X))[\mathbf{W}_{\mathbb{D}^1}^{-1}]}
\end{equation}
where $\Shv(X)$ is the abelian category of sheaves of $\Q$-vector spaces on the topological space $X$.
\end{proposition}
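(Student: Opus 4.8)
The plan is to realize the asserted functor as the (trivially) derived inverse image along an evident morphism of sites, and then to prove it is an equivalence via the Quillen-equivalence criterion: the derived unit and counit are invertible. Let $\nu$ denote the morphism of sites attached to the continuous, finite-limit-preserving inclusion of the small site $\mathrm{Op}(X)$ of open subsets of $X$ (which computes $\Shv(X)$) into $\SmAn/X$ as the full subcategory of open immersions into $X$. It gives an adjoint pair $(\nu^*,\nu_*)$ on sheaves of $\Q$-vector spaces in which $\nu_*$ is restriction to $\mathrm{Op}(X)$ and $\nu^*$ is its left adjoint; both are exact (for $\nu^*$ because the inclusion preserves finite limits; for $\nu_*$ because open covers of open subsets of $X$ are covers in $\SmAn/X$), and $\nu_*\nu^*=\id$ since the inclusion is fully faithful with induced topology. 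Passing to complexes, $(\nu^*,\nu_*)$ is a Quillen adjunction between the injective model structure on $\mathbf{K}(\Shv(X))$, with homotopy category $\mathbf{D}(\Shv(X))$, and the $\mathbb{D}^1$-local model structure on $\mathbf{K}(\Shv(\SmAn/X))$; as $\nu^*$ is exact it is its own total left derived functor, and the functor of the statement is $\nu^*$ followed by localization. It remains to check this is a Quillen equivalence.

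For the derived unit, the point is that $\nu^*F$ is already $\mathbb{D}^1$-local for every complex $F$ on $X$. Indeed, for a smooth $Y\to X$ one has $\nu_X^*F|_{\SmAn/Y}\simeq \nu_Y^*(\pi_Y^{-1}F)$ and $\mathbf{R}\Gamma(\SmAn/Y,\nu_Y^*(-))\simeq\mathbf{R}\Gamma(Y_{\mathrm{top}},-)$ (the latter because $\nu_{Y*}$ is exact, so $\mathbf{R}\nu_{Y*}=\nu_{Y*}$), and then homotopy invariance of sheaf cohomology ($\mathbf{R}\pi_*\pi^{-1}\simeq\id$ for the projection $\pi:\mathbb{D}^1_Y\to Y$) gives $\mathbf{R}\Gamma(\mathbb{D}^1_Y,\nu^*F)\xrightarrow{\ \sim\ }\mathbf{R}\Gamma(Y,\nu^*F)$, which is precisely $\mathbb{D}^1$-locality of $\nu^*F$. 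Since moreover $\nu_*\nu^*=\id$ with $\nu_*$ exact, the derived unit $F\to\mathbf{R}\nu_*(\nu^*F)^{\mathrm{fib}}\simeq\nu_*\nu^*F=F$ is invertible; equivalently $\nu^*$ lands in the full subcategory of $\mathbb{D}^1$-local objects and is fully faithful there.

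For the derived counit — equivalently, essential surjectivity onto the $\mathbb{D}^1$-local objects — one takes a $\mathbb{D}^1$-fibrant complex $G$ on $\SmAn/X$ and shows the counit $\nu^*\nu_*G\to G$ is an isomorphism in $\mathbf{D}(\Shv(\SmAn/X))$. It suffices to test this on $\mathbf{R}\Gamma(\SmAn/Y,-)$ for all smooth $\pi_Y:Y\to X$, i.e. to show the base-change map $\pi_Y^{-1}(\nu_{X*}G)\to\nu_{Y*}(G|_{\SmAn/Y})$ of complexes of sheaves on $Y_{\mathrm{top}}$ is a quasi-isomorphism, which one checks on stalks. Here is the crux, and the main obstacle: a smooth morphism of analytic spaces is, locally on the source, a projection $\mathbb{D}^n\times V\to V$ followed by an open immersion $V\hookrightarrow X$, so the stalk of $\nu_{Y*}(G|_{\SmAn/Y})$ at a point of $Y$ is computed by a cofinal system of opens of the shape $\mathbb{D}^n_V$; and for such opens $\mathbb{D}^1$-fibrancy of $G$ yields $\mathbf{R}\Gamma(\SmAn/\mathbb{D}^n_V,G)\simeq\mathbf{R}\Gamma(\SmAn/V,G)$, which collapses the colimit to one over opens $V$ of $X$ — exactly the stalk of $\pi_Y^{-1}(\nu_{X*}G)$. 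This is the step where the local geometry of smooth morphisms and the $\mathbb{D}^1$-localization must be reconciled; it is the substance of \cite[Th.~1.8]{realiz-oper}, of which the present proposition is a particular case. Granting it, $(\nu^*,\nu_*)$ is a Quillen equivalence, and the induced functor $\mathbf{D}(\Shv(X))\to\mathbf{K}(\Shv(\SmAn/X))[\mathbf{W}_{\mathbb{D}^1}^{-1}]$ is the asserted equivalence of categories.
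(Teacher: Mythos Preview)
The paper does not prove this proposition at all: it is stated with the remark that it ``is a particular case of \cite[Th.~1.8]{realiz-oper}'', and nothing further. So there is no argument in the paper to compare against.

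Your proposal goes well beyond what the paper offers. You set up the natural Quillen adjunction $(\nu^*,\nu_*)$ coming from the inclusion of the small site $\mathrm{Op}(X)$ into $\SmAn/X$, and your treatment of the derived unit is correct: the key observation that $\nu^*F$ is automatically $\mathbb{D}^1$-local, via homotopy invariance of sheaf cohomology along the contractible projection $\mathbb{D}^1_Y\to Y$, is exactly the right one, and together with $\nu_*\nu^*=\id$ it yields full faithfulness of $\nu^*$. For the derived counit you sketch the expected reduction --- local triviality of smooth morphisms as polydisk projections, followed by repeated use of $\mathbb{D}^1$-fibrancy to collapse $\mathbf{R}\Gamma(\mathbb{D}^n_V,-)$ to $\mathbf{R}\Gamma(V,-)$ --- and then you yourself note that making this precise ``is the substance of \cite[Th.~1.8]{realiz-oper}''. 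So ultimately you also defer to the same external reference for the hard half; what you have added is a clean and accurate account of why the unit side is easy and where the genuine content of the counit side lies. This is a correct and informative expansion of the paper's bare citation, not an independent proof.
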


Now, let $X$ be a quasi-projective scheme defined over a subfield $k$ of $\C$.
Whenever we write ``$X(\C)$'',
we mean the analytic space associated to the $\C$-points of $X$.
The functor $\An_X:\Sm/X \to \SmAn/X(\C)$
that takes an $X$-scheme $Y$ to the $X(\C)$-analytic space
$Y(\C)$ induces an adjunction
$$(\An_X^*,\An_{X*}):\xymatrix{\Shv(\Sm/X)\ar@<.2pc>[r] & \ar@<.1pc>[l]
\Shv(\SmAn/X(\C)).}$$
The (unstable) {\it Betti realization functor} is defined to be the
composition
$$\xymatrix@R=1.7pc{\DM_{\eff}(X)=\mathbf{K}(\Shv(\Sm/X))[\mathbf{W}_{\Aff^1}^{-1}] \hspace{4cm}
\ar@<-1cm>[d]^{{\rm L}\An_X^*} \\
\hspace{2cm} \mathbf{K}(\Shv(\SmAn/X(\C)))[\mathbf{W}_{\mathbb{D}^1}^{-1}] \simeq
\mathbf{D}(\Shv(X(\C))),}$$
and will be denoted simply
$\An_X^*:\DM_{\eff}(X) \to \mathbf{D}(\Shv(X(\C)))$. The
realization of the Tate motive $T_X$ is the constant sheaf $\Q[1]$,
which is already an invertible object. For this reason, $\An_X^*$
can be extended to $T$-spectra, yielding a
stable realization functor
\begin{equation}
\label{eq:stable-realiz-funct}
\An_X^*:\xymatrix@C=1.7pc{\DM(X) \ar[r] &  \mathbf{D}(\Shv(X(\C))).}
\end{equation}

It is shown in \cite{realiz-oper} that the realization functors
\eqref{eq:stable-realiz-funct} respect the four operations
$f^*$, $f_*$, $f_!$ and $f^!$. More precisely, for $f:Y \to X$, there is an isomorphism of
functors $(f^{an})^*\An_X^*\simeq \An_Y^*f^*$
inducing a natural transformation $\An_X^*f_*\to {\rm R}(f^{an})_*\An_Y^*$ which is invertible
when applied to compact motives.
A similar statement holds for the operations $f_!$ and $f^!$,
but will not be used in the paper.
We recall that $M\in \DM(X)$
is said to be {\it compact} when $\hom(M,-)$ commutes with infinite
direct sums, or equivalently, when $M$ is in the triangulated
subcategory generated by the homological motives of smooth
$X$-schemes of finite type.

We end this subsection with a discussion of the Betti realization
over a diagram of schemes. A diagram of analytic spaces is an object of $\Dia({\rm AnSpc})$ where ${\rm AnSpc}$ is the category of analytic spaces. Given a diagram of analytic spaces
$(\mathcal{X},\mathcal{I})$, let ${\rm Ouv}(\mathcal{X},\mathcal{I})$ be the category whose objects are pairs $(U,i)$ with
$i\in \mathcal{I}$ and $U$ an open subset of $\mathcal{X}(i)$.
The classical topology of analytic spaces makes ${\rm Ouv}(\mathcal{X},\mathcal{I})$ into a site whose category of sheaves (with values in the category of $\Q$-vector spaces) will be denoted
$\Shv(\mathcal{X},\mathcal{I})$. The derived category of the latter is denoted $\mathbf{D}(\Shv(\mathcal{X},\mathcal{I}))$.

Now, let $(\mathcal{X},\mathcal{I})$ be a diagram of quasi-projective $k$-schemes. Taking complex points, we obtain a diagram of analytic spaces $(\mathcal{X}(\C),\mathcal{I})$. Moreover, as in the case of a single $k$-scheme, we have a triangulated functor
$$\An^*_{\mathcal{X},\mathcal{I}}:\xymatrix@C=1.7pc{\DM(\mathcal{X},\mathcal{I})
\ar[r] & \mathbf{D}(\Shv(\mathcal{X}(\C),\mathcal{I})).}$$
The details of this construction can be found in
\cite[Sect.~4]{realiz-oper}.


\section{Relative Artin motives and the punctual weight filtration}

\label{sect:artin-zero-slice-weight}

\subsection{Cohomological motives and Artin motives}
We begin with the definitions:

\begin{definition}
\label{defn:cohomological-vs-artin-motive} Let $X$ be a noetherian
scheme. We denote by $\DM_{\coh}(X)$ (resp.~$\DM_0(X)$) the
smallest triangulated subcategory of $\DM(X)$ stable under infinite
sums and containing $\M_{\coh}(U)$ for all quasi-projective
$X$-schemes $U$ (resp.~all finite $X$-schemes $U$). A motive $M \in
\DM_{\coh}(X)$ (resp.~$M\in \DM_0(X)$) is called a {\rm
cohomological motive} (resp.~an {\rm Artin motive}).
\end{definition}

\begin{lemma}
\label{lemma:generation-proj-reg}
Assume that $X$ is of finite
type over a perfect field $k$. Then $\DM_{\coh}(X)$ is the
smallest triangulated subcategory stable under infinite sums
and containing $\M_{\coh}(Y)$ for all $X$-schemes $Y$ that
are projective over $X$ and smooth over $k$.
\end{lemma}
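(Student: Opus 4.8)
The plan is the following. Write $\mathcal{T}\subseteq \DM(X)$ for the smallest triangulated subcategory stable under infinite sums and containing $\M_{\coh}(Y\to X)$ for all $X$-schemes $Y$ that are projective over $X$ and smooth over $k$; we must prove $\mathcal{T}=\DM_{\coh}(X)$. One inclusion is trivial, so it suffices to show that $\M_{\coh}(U\to X)\in \mathcal{T}$ for every quasi-projective $X$-scheme $U$: since $\mathcal{T}$ is stable under infinite sums it is then automatically all of $\DM_{\coh}(X)$. I will use freely that such a $\mathcal{T}$ is stable under direct summands, and that it is stable under negative Tate twists: for $Y$ smooth over $k$ and projective over $X$, the projective bundle formula gives $\M_{\coh}(\Proj^1_Y\to X)\simeq \M_{\coh}(Y\to X)\oplus \M_{\coh}(Y\to X)(-1)[-2]$, with $\Proj^1_Y$ again smooth over $k$ and projective over $X$, so $\M_{\coh}(Y\to X)(-n)\in\mathcal{T}$ for $n\ge 0$, and the localizing subcategory $\{M:M(-1)\in\mathcal{T}\}$ contains the generators, hence $\mathcal{T}$.

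Since $\DM(-)$ is invariant under nilpotent immersions we may assume $U$ reduced, and using the Mayer--Vietoris triangle attached to the closed cover of $U$ by its irreducible components (a consequence of the localization property), together with the fact that a proper closed subscheme of an integral scheme has strictly smaller dimension, an induction on the number of top-dimensional components reduces us to $U$ integral. I then argue by induction on $\dim U$ (the case $\dim U<0$ being empty). Embed $U$ as a dense open in an integral scheme $\bar U$ projective over $X$, put $Z=\bar U\setminus U$ (so $\dim Z<\dim U$), and apply de Jong's alteration theorem to the pair $(\bar U,Z)$: there is a projective alteration $g:\bar U'\to \bar U$ with $\bar U'$ integral and smooth over $k$ (hence projective over $X$) such that $Z'=g^{-1}(Z)_{\mathrm{red}}=\bigcup_{\alpha\in I}Z'_\alpha$ is a sncd in $\bar U'$; set $U'=g^{-1}(U)=\bar U'\setminus Z'$, $j':U'\hookrightarrow \bar U'$, and $g_U:U'\to U$ (proper, surjective, generically finite). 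After shrinking, choose a dense open $V\subseteq U$ over which $g_U$ restricts to a finite flat morphism $g_{U,V}:g_U^{-1}(V)\to V$ of some degree $n\ge 1$; since we work with $\Q$-coefficients, $\tfrac1n$ times the trace splits the unit $\un_V\to (g_{U,V})_*\un_{g_U^{-1}(V)}$. Let $C=\mathrm{Cone}(\un_U\to (g_U)_*\un_{U'})$; applying $(\pi_U)_*$ to the defining triangle and using $(\pi_U)_*(g_U)_*\un_{U'}=\M_{\coh}(U'\to X)$ produces a distinguished triangle
$$\M_{\coh}(U\to X)\to \M_{\coh}(U'\to X)\to (\pi_U)_*C\to \M_{\coh}(U\to X)[1].$$

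It remains to put both right-hand terms in $\mathcal{T}$. For $\M_{\coh}(U'\to X)$: iterating the purity isomorphism \cite[Th.~3.3.44]{ayoub-these-II} along the smooth divisors $Z'_\alpha$ and their transverse intersections $Z'_L=\bigcap_{\alpha\in L}Z'_\alpha$ shows that $j'_*\un_{U'}$ lies in the triangulated subcategory of $\DM(\bar U')$ generated by the $(\iota_L)_*\un_{Z'_L}(-|L|)[-|L|]$ ($L\subseteq I$, $\iota_L:Z'_L\hookrightarrow \bar U'$); pushing forward to $X$ gives $\M_{\coh}(Z'_L\to X)(-|L|)[-|L|]$, which lies in $\mathcal{T}$ because each $Z'_L$ is smooth over $k$ and projective over $X$ and $\mathcal{T}$ is stable under shifts and negative twists. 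Hence $\M_{\coh}(U'\to X)=(\pi_{\bar U'})_*j'_*\un_{U'}\in\mathcal{T}$. For $(\pi_U)_*C$: decompose $U$ along the open $V$ and the closed complement $W=U\setminus V$ (so $\dim W<\dim U$). Proper base change along $W\hookrightarrow U$ identifies $C|_W$ with $\mathrm{Cone}(\un_W\to (g_W)_*\un_{g_U^{-1}(W)})$, $g_W:g_U^{-1}(W)\to W$; since $g_U^{-1}(W)$ is a proper closed subscheme of the integral scheme $U'$, both $\M_{\coh}(W\to X)$ and $\M_{\coh}(g_U^{-1}(W)\to X)$ lie in $\mathcal{T}$ by the inductive hypothesis, hence so does $(\pi_W)_*(C|_W)$. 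Over $V$, smooth base change realizes $C|_V$ as a direct summand of $(g_{U,V})_*\un_{g_U^{-1}(V)}$ via the trace splitting, so (with $a:V\hookrightarrow U$) the motive $(\pi_U)_*a_!(C|_V)$ is a summand of $(\pi_U)_*a_!(g_{U,V})_*\un_{g_U^{-1}(V)}=(\pi_{U'})_*a'_!\un_{g_U^{-1}(V)}$ (using that $g_{U,V}$ is finite and $g_U$ proper), and the localization triangle on $U'$ for the open $g_U^{-1}(V)$ exhibits the latter as the fibre of $\M_{\coh}(U'\to X)\to \M_{\coh}(g_U^{-1}(W)\to X)$, a morphism between two objects of $\mathcal{T}$. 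Inserting these two facts into the localization triangle $(\pi_U)_*a_!(C|_V)\to (\pi_U)_*C\to (\pi_W)_*(C|_W)\to (\pi_U)_*a_!(C|_V)[1]$ places $(\pi_U)_*C$, and therefore $\M_{\coh}(U\to X)$, in $\mathcal{T}$.

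I expect the main obstacle to be the computation of $j'_*\un_{U'}$ for the complement of a sncd in a smooth scheme, i.e.\ making precise, via \cite[Th.~3.3.44]{ayoub-these-II} and an induction on $\#I$, that $j'_*\un_{U'}$ is assembled from the $\un_{Z'_L}(-|L|)[-|L|]$; the remaining difficulty is purely bookkeeping — keeping the proper and smooth base change isomorphisms and the localization triangles compatible. Note also that it is precisely the use of de Jong's theorem in the form "projective alteration by a smooth scheme along which $Z$ becomes a sncd" (valid over any perfect field, unlike resolution of singularities) that forces the $\Q$-coefficient trace argument, which is harmless here since the degree $n$ is invertible.
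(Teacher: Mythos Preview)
Your argument follows the same two-step scheme as the paper: use de Jong's alterations plus a trace splitting to reduce to the case where $U$ is the complement of a sncd in a scheme smooth over $k$ and projective over $X$, then handle that case via iterated localization and purity. Two corrections on the second step. First, the citation is wrong: \cite[Th.~3.3.44]{ayoub-these-II} concerns the restriction of $j_*\un$ to closed substrata, not purity; the relevant result is the purity isomorphism \cite[Th.~1.6.19]{ayoub-these-I}, which yields $\iota_L^!\un_{\bar U'}\simeq \Th^{-1}(\mathcal{N}_L)\un_{Z'_L}$. Second, and more substantively, identifying the latter with $\un_{Z'_L}(-|L|)[-2|L|]$ (your shift $[-|L|]$ is off) presupposes that Thom spaces of vector bundles are Tate twists, i.e., that the theory is oriented. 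This does hold for rational coefficients, but the paper avoids invoking it: it places $\Th^{-1}(\mathcal{N}_L)\un_{Z'_L}$ in a distinguished triangle with $\M_{\coh}(\Proj(\mathcal{N}_L\oplus\mathcal{O}_{Z'_L}))$ and $\M_{\coh}(\Proj(\mathcal{N}_L))$, both projective bundles over $Z'_L$ and hence again smooth over $k$ and projective over $X$. That device also makes your preliminary stability-under-negative-twists argument unnecessary. A minor point on the first step: de Jong in fact produces a generically \emph{\'etale} alteration, and the paper uses the splitting for \'etale covers from \cite[Lem.~2.1.165]{ayoub-these-I}; your ``finite flat plus $\tfrac1n\cdot$trace'' is fine once $V$ is shrunk so that $g_U$ is \'etale there.
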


\begin{proof}
We denote by $\DM'_{\coh}(X)$ the smallest triangulated
subcategory stable, etc., as in the statement of the lemma. We
want to prove that $\DM'_{\coh}(X)=\DM_{\coh}(X)$. We clearly have
$\DM'_{\coh}(X)\subset \DM_{\coh}(X)$. As both triangulated
subcategories are stable under infinite sums, we must verify that
for $U$ a quasi-projective $X$-scheme, $\M_{\coh}(U)\in
\DM'_{\coh}(X)$. We argue by induction on the dimension of $U$
over $k$. As
$\M_{\coh}(U)=\M_{\coh}(U_{red})$, we may assume that $U$ is
reduced.

A reduced finite-type $X$-scheme of dimension zero consists of just
points, so it is smooth over
$k$ and projective over $X$. Its cohomological motive is in
$\DM'_{\coh}(X)$ by definition. We may then assume that ${\rm
dim}(U)>0$. We split the proof into two steps.

\smallskip

\noindent
\underbar{Step 1:}
Using de Jong resolution of singularities by alterations \cite{deJong},
we can find:
\begin{itemize}

\item A projective morphism $Y'\to X$ with $Y'$ smooth over $k$,

\item An open subset $U'\subset Y'$ with $Y'- U'$ a simple normal
crossings divisor and an $X$-morphism $e:U'\to U$ projective and generically \'etale.

\end{itemize}

Let $Z\subset U$ be a closed subscheme with everywhere positive
codimension and such that $U'- e^{-1}(Z)\to U- Z$ is an \'etale
cover. We show that $Cone\{M_{\coh}(U)\to \M_{\coh}(Z)\}$ is
isomorphic to a direct factor of $Cone\{\M_{\coh}(U')\to
\M_{\coh}(e^{-1}(Z))\}$. For this, we form the commutative diagram
$$\xymatrix@C=1.7pc@R=1.5pc{U'- e^{-1}(Z) \ar[r]^-{j'} \ar[d]_-{e_0} &  U' \ar[d]^-e & e^{-1}(Z) \ar[l]_-{i'}\ar[d]^-{e_1} \\
U-Z \ar[r]^-j & U & Z \ar[l]_-i }$$ Then $Cone\{\M_{\coh}(U')\to
\M_{\coh}(e^{-1}(Z))\}[-1]$ is isomorphic to the direct image of
$j'_!\un_{U'-e^{-1}(Z)}$ along the projection $U'\to X$. Similarly,
$Cone\{\M_{\coh}(U)\to \M_{\coh}(Z)\}[-1]$ is isomorphic to the
direct image of $j_!\un_{U- Z}$ along the projection $U \to X$.
Thus, we need to show
that $e_*j'_!\un_{U'-e^{-1}(Z)}$ contains $j_!\un_{U-Z}$ as a
direct factor. Using that $e_*j'_!=e_!j'_!=j_!e_{0!}$ we are
reduced to showing that $\un_{U-Z}$ is a direct factor of
$e_{0*}\un_{U'-e^{-1}(Z)}\simeq e_{0*}e_0^*\un_{U-Z}$.
This follows from the first part of
\cite[Lem.~2.1.165]{ayoub-these-I}.

Using the induction hypothesis for $\M_{\coh}(Z)$ and
$\M_{\coh}(e^{-1}(Z))$, we are reduced to showing that
$\M_{\coh}(U')\in \DM_{\coh}'(X)$.

\smallskip

\noindent
\underbar{Step 2}: We return to the original notation.
By Step 1, we may assume that $U$ is the complement of a simple normal
crossing divisor in a projective $X$-scheme $Y$ which is
smooth over $k$.

Let $j:U\subset Y$, $p:Y \to X$ and $q=p\circ j:U \to X$. Then
$\M_{\coh}(U)=q_*\un_U = p_* j_*\un_U$. Let $(D_i)_{i=1, \dots,
n}$ be the irreducible divisors in $Y-U$. For $\emptyset \neq
I\subset [\![1,n]\!]$, we let $D_I=\cap_{i\in I} D_i$ and
$i_I:D_I\subset Y$. Then $j_*\un_U$ is in the triangulated
subcategory of $\DM(Y)$ generated by $\un_Y$ and the following
objects
$$i_{I*}i_{I}^!\un_Y \qquad \text{for} \quad \emptyset\neq I \subset [\![1,n]\!].$$
This follows from
\cite[Prop.~1.4.9]{ayoub-these-I} by standard arguments.
For $\emptyset \neq I \subset [\![1,n]\!]$ denote by
$\mathcal{N}_I$ the normal sheaf of the immersion $i_I$.
The Thom equivalence
${\rm Th}^{-1}(\mathcal{N}_I)$
is the functor $s_I^!p_I^*$ where $p_I$ is the projection of the vector bundle $\mathbb{V}(\mathcal{N}_I)=\Spec(\oplus_{n\in \N} {\rm S}^n(\mathcal{N}_I))$ to $D_I$ and
$s_I$ its zero section.
By \cite[Th.~1.6.19]{ayoub-these-I},
we have an isomorphism $i_I^!\un_Y\simeq {\rm Th}^{-1}(\mathcal{N}_I)\un_{D_I}$.
Moreover, we have for each $\emptyset\neq I \subset [\![1,n]\!]$ a distinguished triangle in $\DM(D_I)$:
$${\rm Th}^{-1}(\mathcal{N}_I)\un_{D_I} \to \M_{\coh}(\Proj(\mathcal{N}_I\oplus \mathcal{O}_{D_I})) \to \M_{\coh}(\Proj(\mathcal{N}_I))\to.$$
The construction of this triangle follows the argument of
\cite[Prop. 2.17(3)]{morel-voevodsky}, which is in the context of $\Aff^1$-homotopy theory.
Taking direct images along $D_I \to X$ and using our earlier observation on $j_*\un_U$, we obtain that $\M_{\coh}(U\to X)$ is in the triangulated subcategory generated by
$\M_{\coh}(Y\to X)$, $\M_{\coh}(\Proj(\mathcal{N}_I)\to X)$ and
$\M_{\coh}(\Proj(\mathcal{N}_I\oplus \mathcal{O}_{D_I})\to X)$ where $\emptyset \neq I \subset [\![1,n]\!]$. This proves that
$\M_{\coh}(U)\in \DM'_{\coh}(X)$.
\end{proof}

\begin{remark}
When $k$ is of characteristic zero, one can
use Hironaka's resolution of singularities \cite{hironaka}
to simplify the argument in Step 1 of the proof of
Lemma
\ref{lemma:generation-proj-reg}.
\end{remark}

\begin{proposition}
\label{prop:cohomological-stability-oper}
For  quasi-projective schemes over a perfect
field $k$.

{\rm 1-} The categories $\DM_{\coh}(-)$ are stable under the following operations:
\begin{enumerate}
\item[(i)] $f^*$, $f_*$ and $f_!$ with $f$ any quasi-projective morphism,

\item[(ii)] $e^!$ with $e$ a quasi-finite morphism (if $k$ is of characteristic zero),

\item[(iii)] tensor product.

\end{enumerate}

{\rm 2-} The categories $\DM_0(-)$ are stable under
the following operations:
\begin{enumerate}

\item[(i$'$)] $f^*$ with $f$ any quasi-projective morphism,

\item[(ii$'$)] $e_!$ with $e$ a quasi-finite morphism,

\item[(iii$'$)] tensor product.

\end{enumerate}

\end{proposition}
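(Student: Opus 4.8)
The plan is to verify each stability statement by reducing to the generators of the respective subcategories, using that all the operations involved are triangulated and commute with infinite direct sums (for $f_*$ and $e^!$ this requires the noetherian/finite-dimensionality hypotheses built into ``quasi-projective over a perfect field'', so that the relevant right adjoints preserve compactness of generators and hence sums). Since $\DM_{\coh}(X)$ is, by Lemma \ref{lemma:generation-proj-reg}, the smallest triangulated subcategory stable under sums and containing $\M_{\coh}(Y\to X)$ for $Y$ projective over $X$ and smooth over $k$, and $\DM_0(X)$ is generated the same way by $\M_{\coh}(U\to X)=\M_{\rm coh}(U\to X)$ for $U$ finite over $X$, it suffices in each case to check that the operation sends a generator into the target category.

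\smallskip

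\noindent\emph{Part 1.} For (i): given $f:X'\to X$ quasi-projective and $Y$ quasi-projective over $X$, one has $f^*\M_{\coh}(Y\to X)\simeq \M_{\coh}(Y\times_X X'\to X')$ by the smooth (or rather proper, via the base change isomorphism $f^*\pi_{Y*}\simeq \pi'_*g^*$ of \cite[Cor.~1.7.18]{ayoub-these-I} when $\pi_Y$ is projective, reduced to this case by Lemma \ref{lemma:generation-proj-reg}) base change theorem; the right-hand side is a cohomological motive over $X'$, so $f^*$ preserves the generators and hence $\DM_{\coh}$. For $f_*$ and $f_!$: if $g:Z\to X'$ is quasi-projective then $f_*\M_{\coh}(Z\to X')=f_*\pi_{Z*}\un_Z=(\,f\circ\pi_Z)_*\un_Z=\M_{\coh}(Z\to X)$ and likewise $f_!\M_{\coh}(Z\to X')=f_!(\pi_Z)_!\un_Z\otimes(\text{Thom factors})$, or more simply one writes $f_!=f_*$ after compactifying and handles the open-complement contribution by the localization triangle — in any case the outcome is built by cones and sums from cohomological motives over $X$. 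For (ii): $e^!$ with $e$ quasi-finite and $k$ of characteristic zero; by the absolute purity / local duality available in \cite{ayoub-these-II} and resolution of singularities, $e^!\M_{\coh}(Y)$ differs from $e^*\M_{\coh}(Y)(d)[2d]$-type pieces by cones involving boundary divisors whose cohomological motives are again cohomological — concretely, one factors $e$ as an open immersion followed by a finite morphism, uses that $e^!=e^*$ up to twist for the finite part (since finite is proper and quasi-finite, $e_!=e_*$, and purity gives $e^!\simeq e^*$ up to Tate twist on the smooth locus), and treats the open immersion by the localization triangle relating $j^!=j^*$, $i_*i^!$ and the identity. For (iii): $\M_{\coh}(Y\to X)\otimes_X\M_{\coh}(Y'\to X)$; using the projection formula $\pi_{Y*}\un_Y\otimes M\simeq \pi_{Y*}\pi_Y^*M$ (valid for $\pi_Y$ projective, hence after Lemma \ref{lemma:generation-proj-reg}) one gets $\M_{\coh}(Y)\otimes\M_{\coh}(Y')\simeq \M_{\coh}(Y\times_X Y'\to X)$, again a generator.

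\smallskip

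\noindent\emph{Part 2.} For (i$'$): if $U$ is finite over $X$ and $f:X'\to X$ is quasi-projective, then $f^*\M_{\coh}(U\to X)\simeq\M_{\coh}(U\times_X X'\to X')$ by proper base change, and $U\times_X X'$ is finite over $X'$, so $f^*$ preserves the generators of $\DM_0$. For (ii$'$): if $e:X'\to X$ is quasi-finite and $U$ is finite over $X'$, then $e\circ\pi_U$ is quasi-finite, hence factors as an open immersion into a finite $X$-scheme; but for the purposes of $e_!$ one uses $e_!(\pi_U)_!\un_U=(e\circ\pi_U)_!\un_U$, and since $\pi_U$ is finite $(\pi_U)_!=(\pi_U)_*$, so this is $e_!\M_{\coh}(U\to X')$; writing $e$ as $\bar e\circ j$ with $\bar e$ finite and $j$ an open immersion, $e_!=\bar e_!j_!=\bar e_* j_!$, and $j_!$ of an Artin motive over $X'$ fits in a localization triangle with $j_*j^*$-type terms — wait, more cleanly: after Zariski-localizing we may take $e$ finite, in which case $e_!=e_*$ sends $\M_{\coh}(U\to X')=\M_{\coh}(U\to X)$ (with $U$ finite over $X$) to a generator of $\DM_0(X)$; the general quasi-finite case follows since by Zariski's main theorem $e$ is a composite of an open immersion and a finite morphism, and $j_!\M_{\coh}(U\to X')$ for $j$ open is handled by the triangle $i_*i^*j_!\to j_!\to \bar e^* \M_{\rm coh}$ — concretely one reduces to the finite case by noetherian induction on the target. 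For (iii$'$): $\M_{\coh}(U\to X)\otimes_X\M_{\coh}(U'\to X)\simeq\M_{\coh}(U\times_X U'\to X)$ by the projection formula (valid since finite morphisms are projective), and $U\times_X U'$ is finite over $X$.

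\smallskip

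\noindent\textbf{Main obstacle.} The technical heart is the characteristic-zero statement (ii), stability under $e^!$ for quasi-finite $e$: this is the only place that genuinely uses resolution of singularities together with the purity/duality formalism, because $e^!$ does not obviously send cohomological motives to cohomological motives without knowing the local structure of $e$ (one must factor, apply absolute purity on the smooth part, and control the boundary strata by their own resolutions), and one must additionally check that $e_*$ — hidden inside this argument — preserves infinite sums, which on a finite-dimensional noetherian base follows from the fact that $\DM_{\coh}$ is compactly generated and $e_*$ of a compact object is compact (Lemma \ref{lemma:generation-proj-reg} exhibits explicit compact generators, and $e_*$ of each is again a finite-type cohomological motive).
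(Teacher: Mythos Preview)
Your treatment of (i), (iii), (i$'$), (iii$'$) is essentially the paper's argument, and your handling of $f_!$ and (ii$'$) is muddled but gestures at the correct idea (the paper's clean version for (ii$'$): given $p:Y'\to Y$ finite and $e:Y\to X$ quasi-finite, extend $Y'$ to a finite $X$-scheme $X'$ containing $Y'$ as an open subscheme via Zariski's main theorem, so that $e_!p_*\un_{Y'}\simeq g_*j_!\un_{Y'}$ with $g:X'\to X$ finite and $j:Y'\hookrightarrow X'$ open, and then $j_!\un_{Y'}$ is the shifted cone of $\un_{X'}\to i_*\un_{X'\setminus Y'}$, visibly Artin).

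There is, however, a genuine gap in (ii). Your proposed argument for $e^!$ factors $e$ as open-then-finite and invokes ``purity'' for the finite part, claiming $\bar e^!\simeq \bar e^*$ up to a Tate twist. No such statement holds for an arbitrary finite morphism: purity isomorphisms exist for smooth morphisms ($f^!\simeq f^*(d)[2d]$) and for regular closed immersions, but a finite morphism can be ramified with singular source or target, and there is no general description of $\bar e^!$ in that case. Absolute purity and resolution of singularities are red herrings here.

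The paper's argument is more elementary and does not use resolution. First, for a closed immersion $i:Z\hookrightarrow X$ one has the distinguished triangle
\[
i^!M \to i^*M \to i^*j_*j^*M \to
\]
with $j$ the complementary open immersion (\cite[Prop.~1.4.9]{ayoub-these-I}); since $i^*$, $j^*$, $j_*$ preserve cohomological motives by (i), so does $i^!$. For general quasi-finite $e:Y\to X$ one runs a noetherian induction on $X$: if $e$ is not dominant, factor through the closed immersion of $\overline{e(Y)}$ and use induction; if $e$ is dominant, characteristic zero gives a dense open $v:V\hookrightarrow Y$ on which $e$ is \'etale (this is the sole use of the hypothesis), so $(e\circ v)^!=(e\circ v)^*$. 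Letting $t:Z=Y\setminus V\hookrightarrow Y$, the triangle
\[
t_*t^!e^!M \to e^!M \to v_*v^!e^!M \to
\]
reduces the claim to $(e\circ t)^!$, which is handled by induction since $\overline{e(Z)}\subsetneq X$.
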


\begin{proof}
We consider first the case of cohomological motives.
Fix a quasi-projective morphism
$f:Y \to X$.
The stability
by $f_*$ is clear by the definition of $\DM_{\coh}(-)$ (as $f_*$ commutes with infinite sums).
The stability by $f^*$ follows from
Lemma \ref{lemma:generation-proj-reg}.
Indeed, by the base change theorem for projective morphisms
\cite[Cor.~1.7.18]{ayoub-these-I}, one has
$f^*\M_{\coh}(X')\simeq \M_{\coh}(Y\times_X X')$ for every projective $X$-scheme $X'$.

Stability of $\DM_{\coh}(X)$ with respect to the tensor product
also follows from Lemma \ref{lemma:generation-proj-reg}. Indeed, as
$\otimes_X$ commutes with infinite sums, we are left to show that
$\M_{\coh}(X')\otimes \M_{\coh}(X'')$ is a cohomological motive for
$X'$ and $X''$ projective $X$-schemes. Let $p$ and $q$ denote the projections
of $X'$ and $X''$ to $X$. As $p$ is projective, we have
$p_!\simeq p_*$. Using the projection formula
\cite[Th.~2.3.40]{ayoub-these-I}, we have isomorphisms
$$p_*\un_{X'}\otimes q_*\un_{X''}\simeq
p_!\un_{X'} \otimes q_*\un_{X''} \simeq p_!(\un_{X'}\otimes p^*q_*\un_{X''})\simeq p_*(p^*q_*\un_{X''}).$$
We are done, as $p_*$, $p^*$ and $q_*$ preserves cohomological motives.

We now prove the stability with respect to $f_!$. Let
$p:Y'\to Y$ be a projective
morphism. By Lemma \ref{lemma:generation-proj-reg}, it suffices to
show that $f_!p_*\un_{Y'} \in \DM_{\coh}(X)$. We can form a
commutative square
$$\xymatrix@C=1.5pc@R=1.5pc{ Y' \ar[r]^-j \ar[d]_-p & X' \ar[d]^-g \\
Y \ar[r]^-f & X }$$
with $j$ an open immersion and $g$ a projective morphism. Then
$$f_!p_*\simeq f_!p_!\simeq g_!j_!\simeq g_*j_!.$$
Giving the stability by the operation $g_*$, we only need to show that $j_!\un_{Y'} \in \DM_{\coh}(X')$.
But this is clear as $j_!\un_{Y'}\simeq Cone\{\un_{X'}\to
i_*\un_{X'-Y'}\} [-1]$ for $i$ the inclusion of $X'-Y'$ in $X'$.

Concerning cohomological motives, we still have to prove stability
with respect to $e^!$ for $e:Y \to X$ quasi-finite. We first note that
the case of a closed immersion $i:Y \to X$,
follows from the distinguished triangle
(cf.~\cite[Prop.~1.4.9]{ayoub-these-I})
$$i^!M\to i^*M \to i^*j_*j^*M \to $$
where
$j:X-Y\subset X$ is the complementary open immersion. Indeed by (i)
we know that $i^*M$ and $i^*j_*j^*M$ are cohomological motives for
$M\in
\DM_{\coh}(X)$.

For the general case, we argue by noetherian induction on $X$. If
$\overline{e(Y)}\neq X$, we write
$e^!=e'^!s^!$, with $s:\overline{e(Y)}\subset X$ and $e':Y\to
\overline{e(Y)}$, and then use induction and the case of closed immersions. So we may assume that $e$ is dominant. There
exists a dense open subset $v:V\subset Y$ such that $e_{|V}$ is
\'etale (it is here that we use that $k$ is of characteristic zero). Let
$t:Z=Y-V\subset Y$ be the complementary closed immersion. We then have a distinguished triangle (cf.~\cite[Prop.~1.4.9]{ayoub-these-I})
$$t_*t^!e^!M\to e^!M\to v_*v^!e^!M \to.$$
The functor $(e\circ v)^!=(e\circ v)^*$ preserves cohomological
motives by (i).
Using that $\overline{e(Z)}\neq X$, we see as before (using the induction hypothesis) that
$(e\circ t)^!$ also preserves cohomological motives. This proves
(ii).

As for Artin motives, stability with respect to $f^*$
follows again by base-change.
We prove
stability with respect to $e_!$ for $e:Y \to X$ a quasi-finite
morphism. Let $p:Y'\to Y$ be a finite morphism. We need to show
that $e_!p_*\un_{Y'}$ is an Artin motive. We can find a
commutative square
$$\xymatrix@C=1.5pc@R=1.5pc{ Y' \ar[r]^-j \ar[d]_-p & X' \ar[d]^-g \\
Y \ar[r]^-e & X }$$ with $j$ an open immersion and $g$ a finite
morphism. With $p$ and $g$ finite, we have $p_! =p_*$ and $g_!=g$.
It follows that $e_!p_*\un_{Y'}\simeq g_*j_!\un_{Y'}$. But
again, $j_!\un_{Y'}=Cone\{\un_{X'}\to i_*\un_{X'-Y'}\}[-1]$ for $i$ the
inclusion of $X'-Y'$ in $X'$.
Finally, the stability with respect to the tensor product
is obtained, as in the case of cohomological motives, using the projection formula
\cite[Th.~2.3.40]{ayoub-these-I} and the stability with respect to the operations $f^*$ and $e_!$.
\end{proof}

\begin{lemma}
\label{lemma:tecknix-gener-artin-mot}
Let $X$ be a quasi-projective scheme over a field $k$ of characteristic zero.
The category $\DM_0(X)$ is smallest triangulated subcategory of $\DM(X)$ stable under infinite sums and containing the objects $e_!\un_U$ with $e:U \to X$ \'etale.

\end{lemma}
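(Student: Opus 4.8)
The plan is to show the two triangulated subcategories coincide by proving mutual inclusion, and the only nontrivial direction is that each generator $\M_{\coh}(U) = (\pi_U)_*\un_U$, with $U \to X$ a \emph{finite} $X$-scheme, lies in the subcategory $\DM_0'(X)$ generated under infinite sums by objects $e_!\un_V$ with $e:V\to X$ \'etale. For the reverse inclusion, note that for $e:V\to X$ \'etale there is a commutative square completing $V \to X$ to $g:X'\to X$ finite with $j:V\hookrightarrow X'$ an open immersion, so $e_!\un_V \simeq g_*j_!\un_V$ and $j_!\un_V \simeq Cone\{\un_{X'}\to i_*\un_{X'-V}\}[-1]$ with $i$ the inclusion of the finite (hence Artin-admissible) closed complement; since $g$ is finite, $g_*\M_{\coh}(X'-V) = \M_{\coh}(X'-V \to X)$ and $g_*\un_{X'}=\M_{\coh}(X')$ are Artin motives, so $e_!\un_V \in \DM_0(X)$. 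This is exactly the computation already performed inside the proof of Proposition \ref{prop:cohomological-stability-oper}.

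For the main direction, I would argue by noetherian induction on a finite $X$-scheme $U$, reducing immediately to $U$ reduced (since $\M_{\coh}(U)=\M_{\coh}(U_{red})$) and then, decomposing $U$ into its connected components, to $U$ integral; by the projection formula and factoring $\pi_U$ appropriately one reduces to the case where $U=X'$ is integral and finite over $X$. Here the key input is that $k$ has characteristic zero, so we may pass to a normalization-and-Galois-closure setup: there is a dense open $V\subset X$ over which $\pi_U$ is \'etale, and over the closed complement $Z$ (of strictly smaller dimension) the cohomological motive $\M_{\coh}(\pi_U^{-1}(Z))$ is already in $\DM_0'(X)$ by the induction hypothesis, since $\pi_U^{-1}(Z)$ is finite over $X$ of smaller-dimensional image. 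Using the localization triangle $j_! j^* \to \id \to i_* i^*$ on $U$ (with $j$ the inclusion of $\pi_U^{-1}(V)$ and $i$ that of $\pi_U^{-1}(Z)$) and pushing forward along $\pi_U$, one gets a distinguished triangle relating $\M_{\coh}(U)$, $\M_{\coh}(\pi_U^{-1}(Z))$, and $(\pi_U)_! j_! \un_{\pi_U^{-1}(V)}$; the last term equals $(e)_!\un$ for $e$ the composite \'etale morphism $\pi_U^{-1}(V)\to V \hookrightarrow X$ — wait, this only gives an \emph{open} piece of $X$, so the argument must instead be arranged so that the remaining term is genuinely of the form $e_!\un_W$ with $e$ \'etale onto $X$. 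The cleanest way is: choose the dense open $V\subset X$ and use that $(\pi_U)_* j_* \un$ restricted over $V$ is $e'_*\un$ with $e'$ \'etale, hence already in the desired subcategory over $V$; then glue via the localization triangle over $X$, with the boundary contribution supported over $Z$ and handled by induction. One must be slightly careful that $e'_*\un = e'_!\un$ over $V$ since $e'$ is finite \'etale there, and that its extension by zero from $V$ to $X$ is of the form $e_!\un_W$ for a suitable \'etale $W\to X$ (take $W$ the \'etale locus of a normalization, or directly $W=\pi_U^{-1}(V)$ viewed over $X$ via the open immersion $V\hookrightarrow X$ composed with $\pi_U$).

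The main obstacle I anticipate is ensuring that the "\'etale over $X$" hypothesis is preserved through the induction: a finite $X$-scheme need not be generically \'etale over \emph{all} of $X$ in a way compatible with a single localization, so one genuinely needs the characteristic-zero generic-smoothness statement to find the open $V$, and then needs the terms appearing in the localization triangle to be, respectively, of the form $e_!\un$ (the open part, extended by $j_!$ from $\pi_U^{-1}(V)$ to $X$ — note $\pi_U^{-1}(V)\to X$ factors through the \emph{open immersion} $V\hookrightarrow X$ and the finite \'etale $\pi_U^{-1}(V)\to V$, so the composite is \'etale) and a cohomological motive of a finite $X$-scheme of strictly smaller dimension (the closed part). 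Granting the finitely many strata of any stratification of a noetherian scheme, the induction on $\dim U$ terminates. I would also invoke, if needed, Lemma \ref{lemma:generation-proj-reg}'s style of argument only implicitly; the heart is really just the localization triangle $j_!j^* \to \id \to i_*i^* \to$ of \cite[Prop.~1.4.9]{ayoub-these-I} together with $(\pi_U)_*j_* = (\pi_U)_!j_!$-type identities for the finite morphism $\pi_U$, plus the base-change stability of $\DM_0(-)$ from Proposition \ref{prop:cohomological-stability-oper}(2).
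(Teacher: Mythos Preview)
Your overall strategy---induction on the dimension of the finite $X$-scheme, together with the localization triangle $j_!j^* \to \id \to i_*i^*$---is the same as the paper's, and your identification of the ``\'etale piece'' $(\pi_U)_*j_!\un_{\pi_U^{-1}(V)} \simeq e_!\un_{\pi_U^{-1}(V)}$ with $e=\pi_U\circ j$ \'etale is correct \emph{when it applies}. But there is a genuine gap in the induction.

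Your choice of $V$ as a dense open of $X$ over which $\pi_U$ is \'etale only yields progress when the finite scheme $U$ dominates a component of $X$. If $U$ is, say, a single closed point of a positive-dimensional integral $X$, then $\pi_U$ is nowhere \'etale over $X$ (it is not flat at any point), so the only dense open $V\subset X$ with $\pi_U^{-1}(V)\to V$ \'etale has $\pi_U^{-1}(V)=\emptyset$; then $\pi_U^{-1}(Z)=U$ and the induction does not move. Your aside about ``factoring $\pi_U$ appropriately'' does not resolve this: pushing forward along the closed immersion $i:X_0=\pi_U(U)\hookrightarrow X$ takes a generator $e_{0!}\un_W$ of $\DM_0'(X_0)$ to $(i\circ e_0)_!\un_W$, and $i\circ e_0$ is only unramified, not \'etale, over $X$.

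The paper fixes exactly this by taking $V$ to be a dense open of $Y$ (your $U$), not of $X$: one finds $V\subset Y$ \'etale over an affine \emph{locally closed} $U\subset X$, writes $V$ explicitly as $\Spec(\mathcal{O}(U)[t,u]/(P(t),\,uQ(t)P'(t)-1))$, and then lifts $P,Q$ to an affine open neighborhood of $U$ in $X$ to obtain an \'etale $e:W\to X$ containing $V$ as a \emph{closed} subscheme. The triangle $e'_!\un_{W\setminus V}\to e_!\un_W\to e_!s_*\un_V\to$ then exhibits $b_*j_!\un_V\simeq e_!s_*\un_V$ as built from two generators of $\DM_0'(X)$, with no hypothesis on how $Y$ sits over $X$. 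This polynomial-lifting step (equivalently, the local structure of unramified morphisms as closed immersions into \'etale schemes) is the missing ingredient in your argument.
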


\begin{proof}
That $e_!\un_U$ is an Artin motive follows from
Proposition \ref{prop:cohomological-stability-oper}, (ii$'$). Let
$\DM_0'(X)$ now denote the smallest triangulated subcategory
of $\DM(X)$ stable under infinite sums and containing the
$e_!\un_U$, with
$e$ as above.
We wish to show $\DM'_0(X)=\DM_0(X)$. For that, we need
to show that for any finite morphism $Y \to X$,
$\M_{\coh}(Y)\in \DM'_0(X)$. We argue by induction on the
dimension of $Y$. As $\M_{\coh}(Y)=\M_{\coh}(Y_{red})$ we may
assume that $Y$ is reduced.

When $Y$ is empty, there is nothing to prove. Otherwise, we may find a dense open subscheme $V\subset Y$ which is \'etale over an affine locally closed subscheme $U\subset X$. Shrinking $U$ and $V$ further, we may assume that
$$V\simeq \Spec(\mathcal{O}(U)[t,u]/(P(t), u Q(t) P'(t)-1))$$
for some polynomials $P, \, Q\in \mathcal{O}(U)[t]$ with $P$ unitary.
By lifting the polynomials $P$ and $Q$ over an affine neighborhood of $U$, we obtain an \'etale morphism $e:W \to X$ such that
the $X$-scheme $V$ is isomorphic to a closed subscheme of $W$. Thus, we have a commutative diagram
$$\xymatrix@C=1.5pc@R=1.5pc{V \ar[r]^-j \ar@/^1.5pc/[rr]^-s \ar[d]_-a & Y \ar@/_/[dr]^-b & W \ar[d]^-e \\
U \ar[rr]^-i & & X}$$
with $e$ and $a$ \'etale, $i$ a locally closed immersion, $j$ an open immersion and $s$ a closed immersion. We let $Z=Y\backslash V$ and $W'=W\backslash V$. We also let $c:Z \to X$ and $e':W'\to X$ be the obvious morphisms.

By the induction hypothesis, we know that $\M_{\coh}(Z)=c_*\un_Z$ is in $\DM'_0(X)$. Using the distinguished triangle
(cf.~\cite[Lem.~1.4.6]{ayoub-these-I})
$$b_*j_!\un_V \to  b_*\un_Y \to c_*\un_Z \to $$
we are reduced to showing that $b_*j_!\un_V$ is in
$\DM'_{0}(X)$. For this, we use another distinguished triangle
(cf.~\cite[Lem.~1.4.6]{ayoub-these-I})
$$e'_!\un_{W'} \to  e_!\un_W \to e_! s_*\un_V \to $$
and the isomorphisms $e_!s_*\un_V\simeq e_!s_!\un_V \simeq b_!j_!\un_V \simeq b_*j_!\un_V$.
This is what we needed to show, as $e$ and $e'$ are \'etale.
\end{proof}

\subsection{The zero part of the punctual weight filtration}
\label{sub-sect:zero-slice-of-the-weight-filt}
We expect that every object in the heart of the conjectural
motivic $t$-structure on $\DM(X)$ has punctual weights which are compatible under realization with the punctual weights of $\ell$-adic sheaves as defined in the introduction of
\cite{Weil-II}. In this paragraph, we will present a candidate for
the derived version of the punctual weight
zero truncation
restricted to cohomological motives.

\begin{definition}
\label{defn:i-x-nu-x-omega-x}
Let $X$ be a noetherian scheme.
\begin{enumerate}
\item [i)]
Denote by $\nu^0_X:\DM_{\coh}(X) \to \DM_0(X)$ the right adjoint
to the inclusion ${\rm i}_X:\DM_0(X) \hookrightarrow \DM_{\coh}(X)$. If
$M$ is a cohomological motive over $X$, $\nu^0_X(M)$ is called the
\emph{punctual weight zero part} of $M$.
\item [ii)]
Put $\omega^0_X={\rm i}_X\circ \nu^0_X:\DM_{\coh}(X) \to
\DM_{\coh}(X)$ and also call $\omega^0_X(M)$ the
\emph{punctual weight zero
part} of $M$. We then have a natural transformation $\delta_X:\omega^0_X\to
\id$, given by the counit of
the adjunction between ${\rm i}_X$ and $\nu^0_X$.

\end{enumerate}

\end{definition}

The existence of a right adjoint to the inclusion ${\rm i}_X$
follows from a general principle.
Specifically, let $\mathcal{T}$ and $\mathcal{T}'$ be compactly
generated triangulated categories with infinite sums. A
triangulated functor $F:\mathcal{T} \to \mathcal{T}'$ admits
a right adjoint if and only if it commutes with infinite sums (see,
for example, \cite[Cor.~2.1.22]{ayoub-these-I}). Moreover, if $F$ preserves
compact objects, its right adjoint commutes with
infinite sums (see \cite[Lem.~2.1.28]{ayoub-these-I}). In particular, $\nu_X^0$ and $\omega_X^0$ commute
with infinite sums.

\begin{remark}
We give some motivation for our terminology in Definition
\ref{defn:i-x-nu-x-omega-x}. If $E$ is an Artin motive over a scheme $X$,
defined over a finite field, its $\ell$-adic realization has the property
that all of its cohomology sheaves (for the standard $t$-structure) have punctual weight zero in the sense of Deligne (see the introduction of \cite{Weil-II}). In fact, more is true as the eigenvalues of Frobenius are roots of unity. Let's say that a complex of $\ell$-adic sheaves on $X$ has punctual weight less or equal to $n$ if its homology sheaves (for the usual $t$-structure) are so in the sense of Deligne. Then, if $M$ is a cohomological motive, we believe that its $\ell$-adic realization has a universal map from a complex of $\ell$-adic sheaves which is of punctual weight less than or equal to zero. We also predict that the latter is given by the $\ell$-adic realization of $\omega^0_X(M)$.
\end{remark}

\begin{remark}
The functors $\nu_X^0$ and $\omega_X^0$ can be extended to all motives (not only the cohomological ones). Indeed, the inclusion
$\DM_0(X) \hookrightarrow \DM(X)$ has a right adjoint $v$ which coincides with $\nu_X^0$ when applied to cohomological motives. However, for general $M\in \DM(X)$,
$v(M)$ is not a reasonable candidate for the $0$-part of the hypothetical punctual weight filtration. Indeed, based on
\cite{slice-filt}, one can show that $v$ does not preserve compact motives even when $X$ is the spectrum of a field. This problem disappears if we restrict to cohomological motives (cf.~Proposition
\ref{prop:additional-prop-omega-0-x}, (vii) below).

\end{remark}

The rest of Section 2 is devoted to developing the properties of $\omega_X^0$.
First, as
${\rm i}_X$ is a full embedding, we have immediately:

\begin{proposition}
\label{prop:univ-property-for-omega-addenda}
For $M\in \DM_{\coh}(X)$,
$\delta_X:\omega_{X}^0(M) \to M$ is the universal morphism from an Artin motive to $M$.
More precisely, every morphism $a:L\to M$,
from an Artin motive $L$, factors uniquely as
$$\xymatrix{L \ar@{.>}[r] \ar@/^1.3pc/[rr]^-{a} & \omega^0_X(M) \ar[r] & M.}$$
In other words, the composition with $\delta_X(M)$ induces a bijection
$$\hom_{\DM(X)}(L,\omega^0_X(M)) \xymatrix{\ar[r]^-{\sim} &}\hom_{\DM(X)}(L,M).
$$
\end{proposition}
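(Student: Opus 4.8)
The plan is to deduce the statement directly from the defining adjunction ${\rm i}_X \dashv \nu^0_X$ together with the fact that ${\rm i}_X$ is fully faithful. First I would record that $\DM_0(X) \subseteq \DM_{\coh}(X)$, since a finite $X$-scheme is in particular quasi-projective; hence any morphism $a\colon L \to M$ with $L$ an Artin motive is a morphism in $\DM_{\coh}(X)$. The adjunction then provides, for every $L \in \DM_0(X)$, a natural bijection
$$\hom_{\DM_{\coh}(X)}({\rm i}_X L,\, M) \;\xrightarrow{\ \sim\ }\; \hom_{\DM_0(X)}(L,\, \nu^0_X M),$$
and, by the standard description of an adjunction counit, the inverse bijection sends $f\colon L \to \nu^0_X M$ to $\delta_X(M)\circ {\rm i}_X(f)$, where $\delta_X(M)\colon {\rm i}_X\nu^0_X M \to M$ is the counit.

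Next I would unwind the various hom-sets. Since $\DM_0(X)$ and $\DM_{\coh}(X)$ are, by Definition \ref{defn:cohomological-vs-artin-motive}, full triangulated subcategories of $\DM(X)$, and ${\rm i}_X$ is simply the inclusion, we have $\hom_{\DM_0(X)}(L, \nu^0_X M) = \hom_{\DM(X)}(L, \nu^0_X M)$, the object $\omega^0_X M = {\rm i}_X \nu^0_X M$ is just $\nu^0_X M$ regarded in $\DM(X)$, and ${\rm i}_X(f) = f$ there. Feeding this into the adjunction bijection of the previous paragraph yields that
$$\hom_{\DM(X)}(L,\, \omega^0_X M) \;\xrightarrow{\ \delta_X(M)\,\circ\,-\ }\; \hom_{\DM(X)}(L,\, M)$$
is a bijection for every Artin motive $L$. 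Surjectivity gives, for any $a\colon L \to M$, a morphism $\tilde a\colon L \to \omega^0_X M$ with $a = \delta_X(M)\circ \tilde a$; injectivity forces $\tilde a$ to be unique. This is precisely the claimed universal property, and in particular it identifies $\omega^0_X M$ as the target of the universal morphism to $M$ from an Artin motive.

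I do not expect a genuine obstacle here: the argument is a formal consequence of an adjunction between full subcategories. The only point meriting a little care is the bookkeeping — identifying the counit $\delta_X(M)$ with the map that implements the adjunction bijection ``compose with $\delta_X$'', and verifying that the fully faithful inclusions $\DM_0(X) \hookrightarrow \DM_{\coh}(X) \hookrightarrow \DM(X)$ allow one to pass freely between the corresponding $\hom$-groups — which is why the authors state the result as immediate.
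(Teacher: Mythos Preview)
Your proof is correct and is precisely the argument the paper has in mind: the statement is recorded as immediate from the fact that ${\rm i}_X$ is a full embedding, and you have simply unfolded the adjunction ${\rm i}_X \dashv \nu^0_X$ together with the identification of $\delta_X$ as the counit.
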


\begin{proposition}
\label{prop:main-computation-omega0-pi-0}
Let $X$ be a
quasi-projective scheme over a field $k$ of characteristic zero.
Let $Y$ be a smooth and projective $X$-scheme and consider its
Stein factorization $Y\to \pi_0(Y/X) \to X$. The induced morphism
$\M_{\coh}(\pi_0(Y/X))\to \M_{\coh}(Y)$ factors uniquely through
$\M_{\coh}(\pi_0(Y/X))\to \omega^0_X(\M_{\coh}(Y))$, and the latter
is an isomorphism.
\end{proposition}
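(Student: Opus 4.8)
The object $\M_{\coh}(\pi_0(Y/X))$ is an Artin motive, since $\pi_0(Y/X)\to X$ is finite; hence Proposition~\ref{prop:univ-property-for-omega-addenda} already produces the unique factorization $g\colon \M_{\coh}(\pi_0(Y/X))\to\omega^0_X(\M_{\coh}(Y))$, and the whole content of the statement is that $g$ is invertible. Its source and target both lie in the triangulated subcategory $\DM_0(X)$ (which is closed under sums), so $g$ is an isomorphism if and only if $\hom_{\DM(X)}(L,g[n])$ is bijective for all $n\in\Z$ and all $L$ in a generating family of $\DM_0(X)$. Composing $g$ with the counit $\omega^0_X(\M_{\coh}(Y))\to\M_{\coh}(Y)$ — which induces bijections on $\hom_{\DM(X)}(L,-[n])$ for $L\in\DM_0(X)$ by Proposition~\ref{prop:univ-property-for-omega-addenda} — this reduces to showing that the natural morphism $\M_{\coh}(\pi_0(Y/X))\to\M_{\coh}(Y)$ induces a bijection on $\hom_{\DM(X)}(L,-[n])$ for every $n$ and every $L$ in such a family.

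\emph{Reduction to the base field.} By Lemma~\ref{lemma:tecknix-gener-artin-mot} we may take $L=e_!\un_U$ with $e\colon U\to X$ étale. As $e$ is étale we have $e^!=e^*$, so by the adjunction $(e_!,e^!)$ the desired bijection becomes one for $\hom_{\DM(U)}(\un_U,-[n])$ applied to $e^*\M_{\coh}(\pi_0(Y/X))\to e^*\M_{\coh}(Y)$. Using the base change theorem for the projective morphism $Y\to X$ and for the finite morphism $\pi_0(Y/X)\to X$, together with the fact that formation of the Stein factorization commutes with the flat base change $e$, these restrictions are $\M_{\coh}(\pi_0((Y\times_X U)/U))\to\M_{\coh}(Y\times_X U)$ over $U$; renaming $(U,Y\times_X U)$ as $(X,Y)$, we are reduced to the case $L=\un_X$. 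Now write $W=\pi_0(Y/X)$ and $f\colon Y\to W$ for the canonical morphism; pushing forward along $X\to\Spec(k)$ (and using $\hom_{\DM(X)}(\un_X,\M_{\coh}(Z/X)[n])\simeq\hom_{\DM(Z)}(\un_Z,\un_Z[n])\simeq\hom_{\DM(k)}(\un_k,\M_{\coh}(Z/k)[n])$), it suffices to show that $f^*\colon\hom_{\DM(k)}(\un_k,\M_{\coh}(W/k)[n])\to\hom_{\DM(k)}(\un_k,\M_{\coh}(Y/k)[n])$ is bijective for all $n$. Here we may also assume $X$ — hence $Y$ and $W$ — reduced and $X$ connected, and then $f$ is proper and surjective with geometrically connected fibres; in fact its fibres are smooth projective geometrically connected varieties.

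\emph{The core computation.} By Proposition~\ref{prop:compare} the group $\hom_{\DM(k)}(\un_k,\M_{\coh}(Z/k)[n])$ is the weight-zero motivic cohomology $H^n(Z,\Q(0))$ of $Z$, which by resolution of singularities in characteristic zero coincides with cdh-cohomology $H^n_{\mathrm{cdh}}(Z,\Q)$; this vanishes for $n<0$ and equals $\Q^{\pi_0(Z)}$ for $n=0$. Since $f_*\mathcal O_Y=\mathcal O_W$, the morphism $f$ induces a bijection $\pi_0(Y)\xrightarrow{\sim}\pi_0(W)$, so $f^*$ is an isomorphism in degrees $n\le 0$. It remains to prove that $f^*$ is an isomorphism in positive degrees as well — equivalently, that $\mathrm{cone}(\un_W\to f_*\un_Y)$ induces the zero functor after applying $\hom_{\DM(W)}(\un_W,-)$ and pushforward to $\Spec(k)$. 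This is the step I expect to be the main obstacle. The heuristic is that the fibres of $f$, being smooth projective geometrically connected varieties, are weight-zero (equivalently, cdh-) acyclic, so that cdh-cohomology does not detect them. I would make this precise by proper-cdh (or $h$-topology) descent combined with resolution of singularities, arguing by induction on $\dim W$: resolving $W$ and pulling back along $f$ yields a morphism of abstract blow-up squares in which, over the smooth locus of the resolution, both source and target become smooth over $k$ (using generic smoothness of $f$ in characteristic zero) with $f^*$ an isomorphism, while the remaining strata have strictly smaller dimension. The one further point requiring care — compatibility of the reduction to reduced, connected $X$ with the formation of $\pi_0(Y/X)$ — is routine.
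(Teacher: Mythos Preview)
Your approach is correct and coincides with the paper's: reduce to $L=\un_X$ via Lemma~\ref{lemma:tecknix-gener-artin-mot} and adjunction, then to the case of a smooth base by induction on dimension via abstract blow-up triangles (the paper resolves $X$ and uses compatibility of Stein factorization with base change, you resolve $W$ directly; these are equivalent since $W\to X$ is \'etale), and conclude by computing $\hom_{\DM(U)}(\un,\un[n])\simeq H^n_{\mathrm{Nis}}(U,\Q)$ for smooth $U$. Two simplifications: the cdh-cohomology detour is unnecessary (the blow-up triangles live directly in $\DM$ and the induction handles all degrees $n$ uniformly), and you do not need generic smoothness --- since $W\to X$ is \'etale, $Y\to W$ is already smooth everywhere, so $Y\times_W W'$ is automatically smooth over $k$ for any smooth $W'$.
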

\begin{proof}
In the Stein factorization, $\pi_0(Y/X) \to X$ is finite and $Y\to
\pi_0(Y/X)$ has geometrically connected fibers (see
\cite[Rem.~4.3.4]{ega}). Moreover, this factorization is
characterized by these two properties up to universal
homeomorphisms. From this we deduce, for every finite
type $X$-scheme $X'$, a canonical isomorphism
\begin{equation}
\label{eq:change-base-stein-fact}
\pi_0(Y/X)\times_X X'\simeq \pi_0(Y\times_X X'/X').
\end{equation}
(Use that the two $X'$-schemes above are \'etale,
$Y$ being smooth over $X$.)

The existence of $\M_{\coh}(\pi_0(Y/X))\to \omega^0_X(\M_{\coh}(Y))$ follows
from the universal property of $\omega^0_X$, as
$\M_{\coh}(\pi_0(Y/X))$ is an Artin motive. We need to show that
this morphism is an isomorphism. It then suffices to show that
$\M_{\coh}(\pi_0(Y/X))\to \M_{\coh}(Y)$ satisfies the
universal property of Proposition
\ref{prop:univ-property-for-omega-addenda}, i.e, for any Artin motive $L$ on $X$, the
homomorphism
\begin{equation}
\label{eq-prop:main-computation-omega0-pi-0}
\hom(L,\M_{\coh}(\pi_0(Y/X))) \to \hom(L,\M_{\coh}(Y))
\end{equation}
is a bijection. We split the proof into three steps.

\smallskip

\noindent
\underbar{Step 1:} By Lemma
\ref{lemma:tecknix-gener-artin-mot},
it is enough to check that
\eqref{eq-prop:main-computation-omega0-pi-0} is a bijection for
$L=e_!\un_U[r]$ with $r\in \Z$ and $e:U \to X$ \'etale. By adjunction, base-change and the fact that $e^!= e^*$ for $e$ \'etale,
we see that
\eqref{eq-prop:main-computation-omega0-pi-0} can be written
$$\hom(\un_U[r],\M_{\coh}(\pi_0(Y/X)\times_X U)) \to
\hom(\un_U[r],\M_{\coh}(Y\times_X U)).$$
By \eqref{eq:change-base-stein-fact}, we know that
$\pi_0(Y/X)\times_X U\simeq \pi_0((Y\times_X U)/U)$.
Thus we are reduced to showing that
\eqref{eq-prop:main-computation-omega0-pi-0}
is bijective for $L=\un_{X}[r]$.

We label our morphisms of $k$-schemes:
$$\xymatrix{Y \ar[r]_-g  \ar@/^1.3pc/[rr]^-f & \pi_0(Y/X) \ar[r]_-e & X \ar[r]^-p & \Spec(k).}$$
Recall that $\M_{\coh}(Y)=f_*\un_Y$ and
$\M_{\coh}(\pi_0(Y/X))=e_*\un_{\pi_0(Y/X)}$.
Using adjunction, we can
write \eqref{eq-prop:main-computation-omega0-pi-0} when $L=\un_X[r]$ as
\begin{equation}
\label{eq-prop:main-computation-omega0-pi-0-0-313}
\hom_{\DM(\pi_0(Y/X))}(\un[r],\un)\to \hom_{\DM(Y)}(\un[r], \un).
\end{equation}
The homomorphism above is given by the action of the functor $g^*$
on morphisms as $g^*\un_{\pi_0(Y/X)}=\un_Y$.

\smallskip

\noindent
\underbar{Step 2:} In this step, we reduce to check
that \eqref{eq-prop:main-computation-omega0-pi-0-0-313} is invertible in the case where $X$ is smooth over $k$.
We argue by induction on the dimension of $X$.
Using resolution of singularities, we may find a cartesian square
$$\xymatrix@C=1.5pc@R=1.5pc{E \ar[r]^-{j} \ar[d]_-{q}  & X' \ar[d]^-p \\
Z \ar[r]^-i & X}$$
with $p$ a blow-up, $X'$ smooth over $k$, $i$ and $j$ closed immersions of non-zero codimension everywhere, and such that $X'\backslash E \to (X\backslash Z)_{red}$ is an isomorphism. We deduce two similar cartesian squares
$$\xymatrix@C=1.5pc@R=1.5pc{\pi_0(Y\times_X E/E) \ar[r]^-j \ar[d]_-q   & \pi_0(Y\times_X X'/X') \ar[d]^-p \\
\pi_0(Y\times_X Z/Z) \ar[r]^-i & \pi_0(Y/X)} \qquad
\xymatrix@C=1.5pc@R=1.5pc{Y\times_X E \ar[r]^-j \ar[d]_-q  & Y\times_X X' \ar[d]^-p \\
Y\times_X Z \ar[r]^-i & Y.}$$
Let $t=p\circ j=i\circ q$.
We have two distinguished triangles
$$\un_{\pi_0(Y/X)} \to p_*\un_{\pi_0(Y\times_XX'/X')}\oplus i_*\un_{\pi_0(Y\times_X Z/Z)} \to t_*\un_{\pi_0(Y\times_X E/E)} \to$$
$$\text{and} \quad
\un_{Y} \to p_*\un_{Y\times_XX'}\oplus i_*\un_{Y\times_X Z} \to t_*\un_{Y\times_X E} \to.$$
(They are obtained by showing that
$Cone\{\un \to p_*\un\oplus i_*\un\}\!\xymatrix@C=1.3pc{\ar[r] &}\!
t_*\un$ is invertible, which follows from locality
\cite[Cor.~4.5.47]{ayoub-these-II} and the base change theorem
for projective morphisms \cite[Cor~1.7.18]{ayoub-these-I}.)
Using the five Lemma and then adjunction, we are reduced to showing that
$$\hom_{\DM(\pi_0(Y\times_X \dagger /\dagger))}(\un[r],\un)\to \hom_{\DM(Y\times_X \dagger)}(\un[r], \un)$$
is invertible for $\dagger\in \{X',Z, E\}$.
We are done as $X'$ is smooth and $Z$ and $E$ have dimension strictly smaller than ${\rm dim}(X)$.

\smallskip

\noindent
\underbar{Step 3:} It remains to check that
\eqref{eq-prop:main-computation-omega0-pi-0-0-313}
is bijective assuming that $X$ is smooth. In this case,
$Y$ and $\pi_0(Y/X)$ are also smooth.
Using Proposition \ref{prop:compare} and \cite[Cor.~4.2 and Th. 16.25]{motivic-lectures}, we get isomorphisms
$$\hom_{\DM(U)}(\un[r],\un)\simeq \hom_{\DM(k)}(\M(U)[r],\un)\simeq
{\rm H}^{-r}_{\rm Nis}(U,\Q)=\left\{\begin{array}{ccc} \Q^{\pi_0(U)} & \text{if} & r=0,\\
0 & \text{if} & r\neq 0,\end{array}\right.$$
for every smooth $k$-scheme $U$. (In the above $\pi_0(U)$ denotes the set of connected components of $U$.) We are done as $Y$ and $\pi_0(Y/X)$ have the same set of connected components.
\end{proof}

The statement of Proposition \ref{prop:main-computation-omega0-pi-0}
can be slightly generalized as follows:

\begin{corollary}
\label{cor:of-main-comp-omega0-pi-0}
Keep the notation and hypothesis of {\rm Proposition
\ref{prop:main-computation-omega0-pi-0}.} Let $U\subset Y$ be an
open subscheme such that $Y-U$ is a simple normal crossing divisor
relative to $X$, i.e., $Y-U=\cup_{i\in I} D_i$ with
$D_J=\cap_{j\in J} D_j$ smooth over $X$ and of codimension ${\rm
card}(J)$ for all $\emptyset \neq J \subset I$. Then
$\M_{\coh}(\pi_0(Y/X))\to \M_{\coh}(U)$ identifies
$\M_{\coh}(\pi_0(Y/X))$ with $\omega^0_X(\M_{\coh}(U))$.

\end{corollary}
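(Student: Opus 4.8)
The plan is to prove that the natural map $c\colon\M_{\coh}(\pi_0(Y/X))\to\M_{\coh}(U)$ identifies its source with $\omega^0_X(\M_{\coh}(U))$ by factoring it as $\M_{\coh}(\pi_0(Y/X))\to\M_{\coh}(Y)\xrightarrow{a}\M_{\coh}(U)$, where $a=(p_Y)_*(u)$ for $u\colon\un_Y\to j_*\un_U$ the unit ($j\colon U\hookrightarrow Y$, $p_Y\colon Y\to X$). By Proposition \ref{prop:main-computation-omega0-pi-0} together with Proposition \ref{prop:univ-property-for-omega-addenda}, the first arrow identifies $\M_{\coh}(\pi_0(Y/X))$ with $\omega^0_X(\M_{\coh}(Y))$, with structural morphism $\delta_X$. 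Since $\omega^0_X$ is triangulated and $\delta_X$ is natural, applying $\omega^0_X$ to the triangle $\M_{\coh}(Y)\xrightarrow{a}\M_{\coh}(U)\to C\to$, with $C=\mathrm{Cone}(a)$, reduces the corollary to showing $\omega^0_X(\mathrm{Cone}(a))=0$, i.e.\ to the vanishing of $\hom_{\DM(X)}(L,C)$ for every Artin motive $L$ over $X$. (We may assume $Y-U\neq\emptyset$, the remaining case being the proposition itself.)

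Next I would identify $C$. The localization triangle $i_*i^!\un_Y\to\un_Y\xrightarrow{u}j_*\un_U\to$, for $i\colon D=Y-U\hookrightarrow Y$ with its reduced structure, gives $C\simeq(p_D)_*(i^!\un_Y)[1]$ with $p_D\colon D\to X$. Exactly as in the proof of Lemma \ref{lemma:generation-proj-reg} (via \cite[Prop.~1.4.9]{ayoub-these-I} and the combinatorics of the crossing divisor $D=\cup_{\alpha\in I}D_\alpha$), the object $i_*i^!\un_Y$ lies in the triangulated subcategory of $\DM(Y)$ generated by the $(i_J)_*i_J^!\un_Y$, $\emptyset\neq J\subset I$, where $i_J\colon D_J\hookrightarrow Y$; and since $D_J$ and $Y$ are smooth over $X$, relative purity (cf.~\cite[Th.~1.6.19]{ayoub-these-I}) and the orientability of $\DM$ give $i_J^!\un_Y\simeq\un_{D_J}(-|J|)[-2|J|]$. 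Applying $(p_Y)_*$, the motive $C$ thus lies in the triangulated subcategory of $\DM(X)$ generated by the $\M_{\coh}(D_J)(-|J|)[m]$, $\emptyset\neq J\subset I$, $m\in\Z$; as $\hom_{\DM(X)}(L,-)$ is cohomological, it suffices to prove
$$\hom_{\DM(X)}\!\bigl(L,\M_{\coh}(Z)(-c)[m]\bigr)=0$$
for every Artin motive $L$, every smooth projective $X$-scheme $Z$, every $c\geq 1$ and every $m\in\Z$ (here $c=|J|\geq1$ precisely because $D$ is a genuine divisor).

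By Lemma \ref{lemma:tecknix-gener-artin-mot} one may take $L=e_!\un_W[r]$ with $e\colon W\to X$ \'etale. Using the adjunction $(e_!,e^!)$ and $e^!=e^*$, the projective base-change theorem \cite[Cor.~1.7.18]{ayoub-these-I} (to rewrite $e^*\M_{\coh}(Z)$ as $\M_{\coh}(Z\times_X W\to W)$), the projection formula \cite[Th.~2.3.40]{ayoub-these-I} for the invertible object $\un_W(-c)$, and the adjunction $(p^*,p_*)$, the displayed group is identified with $\hom_{\DM(V)}(\un_V,\un_V(-c)[j])$ for $V=Z\times_X W$. Everything therefore comes down to the claim that, for any quasi-projective $k$-scheme $V$, one has $\hom_{\DM(V)}(\un_V,\un_V(n)[j])=0$ for all $n<0$. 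I would prove this as in Step 2 of the proof of Proposition \ref{prop:main-computation-omega0-pi-0}: by induction on $\dim V$, treating the smooth case in each dimension first. If $V$ is smooth over $k$ it is regular, hence excellent and unibranch, so $\DM(V)\simeq\mathbf{DM}(V)$ by \cite[Cor.~15.2.20]{motif-transfert}, and $\hom_{\mathbf{DM}(V)}(\un_V,\un_V(n)[j])$ is the weight-$n$ motivic cohomology of $V$, which vanishes for $n<0$ by Voevodsky's cancellation theorem (this subsumes $\dim V=0$, after reducing to fields via Proposition \ref{prop:compare} and nil-invariance of $\DM$). For general $V$, Hironaka's resolution produces an abstract blow-up square with smooth total space $V'$ and with closed complement $Z_0$ and exceptional locus $E$ of dimension $<\dim V$; applying $\hom_{\DM(V)}(\un_V(-n),-[j])$ to the associated triangle $\un_V\to p_*\un_{V'}\oplus i_*\un_{Z_0}\to t_*\un_E\to$ (obtained from locality \cite[Cor.~4.5.47]{ayoub-these-II} and projective base change) and using the adjunctions $(f^*,f_*)$ for the proper maps $p,i,t$ reduces the claim over $V$ to the same claim over $V'$, $Z_0$ and $E$, which holds by the smooth case and the inductive hypothesis.

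The main obstacle is this last vanishing of negative-Tate-twisted self-maps of the unit over a possibly singular base; the rest is a formal assembly of the localization--purity package together with Proposition \ref{prop:main-computation-omega0-pi-0}. A secondary point requiring attention is to make sure that all Tate twists occurring in the reduction are strictly negative, which is built into the hypothesis that $D$ is a divisor rather than an arbitrary closed subscheme.
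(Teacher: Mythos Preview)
Your argument is correct, but it takes a different path from the paper after the common reduction to $\omega^0_X(K)=0$ with $K=\mathrm{Cone}(\M_{\coh}(Y)\to\M_{\coh}(U))$. The paper does \emph{not} invoke orientability of $\DM$; instead it uses the distinguished triangle
\[
{\rm Th}^{-1}(\mathcal{N}_J)\un_{D_J}\to \M_{\coh}\bigl(\Proj(\mathcal{N}_J\oplus\mathcal{O}_{D_J})/D_J\bigr)\to \M_{\coh}\bigl(\Proj(\mathcal{N}_J)/D_J\bigr)\to
\]
(from Step~2 of Lemma~\ref{lemma:generation-proj-reg}) to place $K$ in the triangulated subcategory generated by the cones
\[
C_J=\mathrm{Cone}\bigl\{\M_{\coh}(\Proj(\mathcal{N}_J\oplus\mathcal{O}_{D_J})\to X)\to \M_{\coh}(\Proj(\mathcal{N}_J)\to X)\bigr\}.
\]
Both projective bundles are smooth and projective over $X$ with the same $\pi_0(-/X)=\pi_0(D_J/X)$, so Proposition~\ref{prop:main-computation-omega0-pi-0} applied to each gives $\omega^0_X(C_J)=0$ directly. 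Your route instead trivialises the Thom twist as a Tate twist (valid, since $\DM$ with \'etale $\Q$-coefficients is orientable, though the paper never appeals to this), and then proves the general vanishing $\hom_{\DM(V)}(\un_V,\un_V(n)[j])=0$ for $n<0$ via the comparison with $\mathbf{DM}(V)$ for smooth $V$ and an abstract blow-up/induction argument for singular $V$. This yields a more general intermediate statement (vanishing of $\omega^0_X$ on negative Tate twists of such motives), at the cost of importing orientability and the negative-weight vanishing; the paper's argument stays entirely within the toolkit already built (just Proposition~\ref{prop:main-computation-omega0-pi-0} and the projective-bundle triangle). One small remark: the justification ``by Voevodsky's cancellation theorem'' is not quite the right citation---the vanishing of weight-$n$ motivic cohomology for $n<0$ is more elementary, following from the adjunction $\hom_{\DM(V)}(\un_V,\un_V(n)[j])\simeq\hom_{\DM(k)}(\M(V),\Q(n)[j])$ and the vanishing of the Nisnevich hypercohomology $H^*_{\rm Nis}(V,\Q(n))$ for $n<0$, exactly as in Step~3 of the proof of Proposition~\ref{prop:main-computation-omega0-pi-0}.
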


\begin{proof}
Proposition
\ref{prop:main-computation-omega0-pi-0}
gives the analogous assertion for $Y$ instead of $U$. We
show that
$\omega_X^0(\M_{\coh}(U))\to\omega_X^0(\M_{\coh}(Y))$ is an isomorphism,
and for that,
it suffices to show that $\omega_X^0(Cone\{\M_{coh}(Y)\to \M_{coh}(U)\})=0$.

We use the notation and construction from Step 2 of the proof of Lemma \ref{lemma:generation-proj-reg}.
One sees by
basically the same argument that $K=Cone\{\M_{\coh}(Y)\to \M_{\coh}(U)\}$ is
in the
triangulated subcategory of $\DM(X)$ generated by the objects
$$
C_J=Cone\{\M_{\coh}(\mathbb{P}(\mathcal{N}_J\oplus\mathcal{O}_{D_J})\to X) \!\xymatrix@C=1.4pc{\ar[r] & }\!
\M_{\coh}(\mathbb{P}(\mathcal{N}_J) \to X)\}
$$
for $\emptyset\neq J \subset I$.

For a locally free $\mathcal{O}_{D_J}$-module $\mathcal{M}$ of strictly positive rank, $\pi_0(\mathbb{P}(\mathcal{M})/X)\simeq \pi_0(D_J/X)$. Moreover, as $D_J$ is smooth and projective over $X$,
Proposition \ref{prop:main-computation-omega0-pi-0} implies that
$$\omega^0_X(\M_{\coh}(\mathbb{P}(\mathcal{M}) \to X))\simeq \M_{\coh}(\pi_0(\mathbb{P}(\mathcal{M})/X))\simeq \M_{\coh}(\pi_0(D_J/X)).$$
It
follows that
$\omega_X^0(C_J)=0$ for all $\emptyset \neq J \subset I$, and hence $\omega_X^0(K)=0$ as well.
\end{proof}

For the next corollary of Proposition \ref{prop:main-computation-omega0-pi-0}, we introduce the following terminology
\cite{smooth-mot}.

\begin{definition}
Let $X$ be a noetherian scheme. We let $\DM_{\coh}^{\rm
sm}(X)$ be the smallest triangulated
subcategory of $\DM(X)$ closed under infinite sums and containing
$\M_{\coh}(Y)$ whenever $Y$ is a smooth and projective
$X$-scheme. Motives in $\DM_{\coh}^{\rm sm}(X)$ are called
{\rm smooth cohomological motives}.

\end{definition}

The proof of Corollary \ref{cor:of-main-comp-omega0-pi-0}
shows that the cohomological motive of the complement in a smooth and projective $X$-scheme of a relative \emph{sncd} is a smooth motive.

\begin{corollary}
\label{invertible}
Let $X$ be a quasi-projective scheme over a field $k$ of
characteristic zero. Let $M$ be a smooth cohomological motive on
$X$. Then $\omega^0_X(M)$ is a smooth motive. Moreover, for any quasi-projective morphism $f:X'\to X$, the natural
morphism (cf.~\emph{Proposition \ref{prop:additional-prop-omega-0-x}, (ii)})
$$\xymatrix@C=1.5pc{f^*\omega^0_X(M)\ar[r] & \omega^0_{X'} (f^*M)}$$
is invertible.
\end{corollary}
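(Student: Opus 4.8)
The plan is to reduce both claims to the generating case $M=\M_{\coh}(Y)$ with $Y$ smooth and projective over $X$, and then to read off the answer from Proposition \ref{prop:main-computation-omega0-pi-0}.

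First I would set up the reduction. The functor $\omega^0_X$ is triangulated and commutes with infinite direct sums (see the discussion following Definition \ref{defn:i-x-nu-x-omega-x}), and the same holds for $f^*$ (a left adjoint) and for $\omega^0_{X'}$; moreover $f^*$ preserves Artin motives by Proposition \ref{prop:cohomological-stability-oper}, (i$'$), so that the comparison map $\eta_M:f^*\omega^0_X(M)\to \omega^0_{X'}(f^*M)$ of Proposition \ref{prop:additional-prop-omega-0-x}, (ii) is defined for every $M\in\DM_{\coh}(X)$ and is a morphism of triangulated functors commuting with infinite sums. Consequently the full subcategory of those $M\in\DM_{\coh}^{\rm sm}(X)$ with $\omega^0_X(M)\in\DM_{\coh}^{\rm sm}(X)$ and $\eta_M$ invertible is a triangulated subcategory closed under infinite sums: for the first condition one uses that $\DM_{\coh}^{\rm sm}(X)$ is itself triangulated and sum-closed, and for the second the five lemma applied to the morphism of distinguished triangles induced by $\eta$. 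Since $\DM_{\coh}^{\rm sm}(X)$ is by definition generated under these operations by the $\M_{\coh}(Y)$ with $Y$ smooth projective over $X$, it suffices to verify both claims for such $M$.

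For the first claim, Proposition \ref{prop:main-computation-omega0-pi-0} identifies $\omega^0_X(\M_{\coh}(Y))$ with $\M_{\coh}(\pi_0(Y/X))$, compatibly with the counit $\delta_X$ (the isomorphism carries the map $\M_{\coh}(\pi_0(Y/X))\to\M_{\coh}(Y)$ induced by $Y\to\pi_0(Y/X)$ onto $\delta_X(\M_{\coh}(Y))$). Since $Y$ is smooth over $X$, the Stein factorization $\pi_0(Y/X)\to X$ is finite and \'etale (cf.~the proof of Proposition \ref{prop:main-computation-omega0-pi-0}), hence in particular smooth and projective, so $\M_{\coh}(\pi_0(Y/X))$ lies in $\DM_{\coh}^{\rm sm}(X)$; this is the first claim. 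For the base-change claim, the base-change theorem for projective morphisms \cite[Cor.~1.7.18]{ayoub-these-I} gives $f^*\M_{\coh}(Y\to X)\simeq\M_{\coh}(Y\times_X X'\to X')$ with $Y\times_X X'$ again smooth and projective over $X'$; combining $f^*$ of the identification over $X$, Proposition \ref{prop:main-computation-omega0-pi-0} over $X'$, and the base-change isomorphism \eqref{eq:change-base-stein-fact} for Stein factorizations, one obtains a chain of isomorphisms $f^*\omega^0_X(\M_{\coh}(Y))\simeq\M_{\coh}(\pi_0(Y/X)\times_X X')\simeq\M_{\coh}(\pi_0((Y\times_X X')/X'))\simeq\omega^0_{X'}(f^*\M_{\coh}(Y))$ of Artin motives over $X'$.

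The remaining, and only delicate, point is that this composite isomorphism coincides with $\eta_{\M_{\coh}(Y)}$; I would deduce this from the uniqueness in the universal property of $\omega^0_{X'}$ (Proposition \ref{prop:univ-property-for-omega-addenda}). Both $\eta_{\M_{\coh}(Y)}$ and the composite isomorphism are maps out of the Artin motive $f^*\omega^0_X(\M_{\coh}(Y))$ into $\omega^0_{X'}(f^*\M_{\coh}(Y))$, and composing either with $\delta_{X'}$ produces the canonical morphism $f^*\delta_X(\M_{\coh}(Y))$: for $\eta$ this is its defining property, and for the composite it follows from the counit-compatibility of Proposition \ref{prop:main-computation-omega0-pi-0} recalled above together with the fact that \eqref{eq:change-base-stein-fact} is compatible with the canonical morphisms $Y\times_X X'\to\pi_0(Y/X)\times_X X'$ and $Y\times_X X'\to\pi_0((Y\times_X X')/X')$. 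By the uniqueness in Proposition \ref{prop:univ-property-for-omega-addenda} the two maps agree, so $\eta_{\M_{\coh}(Y)}$ is invertible, which finishes the reduction and hence the proof. The main obstacle is precisely this last verification: Proposition \ref{prop:main-computation-omega0-pi-0} yields abstract isomorphisms, so the work lies in showing that tracing them through Stein factorizations reproduces the canonically defined transformation $\eta$ --- which is painless only because the universal property forces uniqueness.
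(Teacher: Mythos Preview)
Your proof is correct and follows essentially the same approach as the paper: reduce to the generators $\M_{\coh}(Y)$ with $Y$ smooth and projective over $X$, then invoke Proposition~\ref{prop:main-computation-omega0-pi-0} together with the base-change isomorphism \eqref{eq:change-base-stein-fact} for Stein factorizations. Your treatment is in fact more thorough than the paper's, which states the reduction and the conclusion tersely; in particular, you spell out why the reduction is legitimate and, via the uniqueness in Proposition~\ref{prop:univ-property-for-omega-addenda}, why the composite isomorphism built from Stein factorizations agrees with the canonical map $\eta$ --- a point the paper leaves implicit.
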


\begin{proof}
That $\omega^0_X(M)$ is a smooth motive if $M$ is a smooth motive follows from
Proposition \ref{prop:main-computation-omega0-pi-0}
and the fact that $\pi_0(Y/X)\to X$ is an \'etale cover when $Y$ is a smooth and projective $X$-scheme.

Now, let $M$ be a smooth motive over $X$. Applying $f^*$ to $\omega^0_X(M) \to M$ we obtain a morphism
$f^*\omega^0_X(M) \to f^*(M)$ from an Artin motive to a
cohomological motive. It factors uniquely through
$f^*\omega^0_X(M)\to \omega^0_{X'} (f^*M)$. This is the natural
morphism in question.

To show that this morphism is invertible for smooth cohomological
motives, it suffices to consider the case $M=\M_{\coh}(Y)$ for $Y$
a smooth and projective $X$-scheme.  Our assertion follows
then from Proposition \ref{prop:main-computation-omega0-pi-0} and
the isomorphism \eqref{eq:change-base-stein-fact}.
\end{proof}

\begin{remark}
The assertion of the corollary above is false for non-smooth cohomological motives.
Proposition \ref{prop:cube-strat-F-omega-j-star} below can be used to construct examples where it fails.
\end{remark}

The next proposition, whose proof occupies the rest of
this subsection, gives some additional properties of the functors $\omega_X^0$.

\begin{proposition}
\label{prop:additional-prop-omega-0-x}
Let $X$ be a
quasi-projective scheme over a field $k$ of characteristic zero.
The functors $\omega^0_X$ and its coaugmentation $\delta_X:\omega^0_X\to \id$ satisfy
the following:

\begin{enumerate}

\item[(i)] If $L$ is an Artin motive over $X$, we have an
isomorphism $\delta_X:\omega_X^0(L)\overset{\sim}{\to} L$. In
particular, the natural transformation
$\delta_X(\omega_X^0):\omega_X^0\circ \omega_X^0 \overset{\sim}{\to}
\omega_X^0$ is invertible. Moreover, $\delta_X(\omega_X^0)=\omega_X^0(\delta_X)$.

\item[(ii)] Let $f:Y \to X$ be a quasi-projective morphism. There is a natural transformation $\alpha_f: f^*\omega_X^0 \to \omega_Y^0 f^*$ making the triangles
$$\xymatrix@C=1.5pc@R=1.5pc{f^*\omega_X^0 \ar[r]^-{\alpha_f} \ar@/_/[dr]_-{f^*(\delta_X)}  & \omega_Y^0f^* \ar[d]^-{\delta_Y(f^*)} \\
& f^*}
\quad \text{and} \quad
\xymatrix@C=1.5pc@R=1.5pc{\omega^0_Y f^* \omega^0_X \ar@/_/[drr]_-{\omega^0_Yf^*(\delta_X)} \ar[rr]^-{\delta_Y(f^*\omega^0_X)} & & f^* \omega^0_X \ar[d]^-{\alpha_f}  \\
& & \omega_Y^0 f^* }$$
commutative.
Moreover, $\alpha_f$ is invertible when $f$ is smooth.

\item[(iii)] Let $f:Y \to X$ be a quasi-projective morphism.
The natural transformation $\omega^0_X f_* \omega^0_Y \to \omega^0_X f_*$, obtained by applying $\omega^0_X f_*$ to $\delta_Y$, is invertible. Moreover, there exists a natural transformation $\beta_f:\omega^0_X f_* \to f_*\omega^0_Y$ such that:

\begin{enumerate}

\item the following two triangles
$$\xymatrix@C=1.5pc@R=1.5pc{\omega^0_Xf_* \ar[r]^-{\beta_f} \ar@/_/[dr]_-{\delta_X(f_*)} & f_*\omega^0_Y \ar[d]^-{f_*(\delta_Y)} \\
& f_*} \quad \text{and} \quad
\xymatrix@C=1.5pc@R=1.5pc{\omega^0_X f_* \omega^0_Y \ar[rr]^-{\omega^0_X\!f_*(\delta_Y)} \ar@/_/[drr]_-{\delta_X(f_*\omega_Y^0)} && \omega^0_X f_* \ar[d]^-{\beta_f} \\
&& f_*\omega_Y^0}$$
commute,

\item $\omega^0_X(\beta_f)$ is invertible for any $f$,

\item $\beta_f$ is invertible when $f$ is finite.

\end{enumerate}

\item[(iv)] Let $e:Y \to X$ be a quasi-finite morphism.
There exists a natural transformation $\eta_e:e_!\omega_Y^0 \to \omega_X^0 e_!$ making the triangles
$$\xymatrix@C=1.5pc@R=1.5pc{e_!\omega_Y^0 \ar[r]^-{\eta_e} \ar@/_/[dr]_-{e_!(\delta_Y)} & \omega_X^0 e_! \ar[d]^-{\delta_X(e_!)} \\
& e_!} \quad \text{and} \quad
\xymatrix@C=1.5pc@R=1.5pc{\omega^0_X e_! \omega^0_Y \ar@/_/[drr]_-{\omega^0_X e_! (\delta_Y)} \ar[rr]^-{\delta_X(e_!\omega^0_Y)} & & e_!\omega^0_Y \ar[d]^-{\eta_e} \\
& & \omega^0_X e_!}$$
commutative.
Moreover, when $e$ is finite, $\eta_e$ is invertible and coincides with $\beta_e^{-1}$ modulo the natural isomorphism $e_!\simeq e_*$.

\item[(v)] Let $e:Y \to X$ be a quasi-finite morphism.
The natural transformation $\omega^0_Y e^! \omega^0_X \to \omega^0_Y e^!$, obtained by applying $\omega^0_Y e^!$ to $\delta_X$, is invertible. Moreover, there exists
a natural transformation $\gamma_e:\omega_Y^0 e^!\to e^!\omega_X^0$ such that:

\begin{enumerate}

\item the following two triangles
$$\xymatrix@C=1.5pc@R=1.5pc{\omega^0_Y e^! \ar[r]^-{\gamma_e}\ar@/_/[dr]_-{\delta_Y (e^!)} & e^!\omega^0_X \ar[d]^-{e^!(\delta_X)}  \\
& e^!} \quad \text{and} \quad \xymatrix@C=1.5pc@R=1.5pc{\omega^0_Y e^!\omega^0_X \ar[rr]^-{\omega^0_Ye^!(\delta_X)} \ar@/_/[drr]_-{\delta_Y(e^!\omega^0_X)} & & \omega^0_Y e^! \ar[d]^-{\gamma_e} \\
& & e^!\omega^0_X}$$
commute,

\item $\omega^0_Y(\gamma_e)$ is invertible for any quasi-finite $e$,

\item $\gamma_e$ is invertible when $e$ is \'etale.

\end{enumerate}

\item[(vi)] Let $U\subset X$ be an open subscheme with complement $Z=X-U$,
and $j:U\hookrightarrow X$ and $i:Z\hookrightarrow X$ be the inclusions.
Let $M\in \DM_{\coh}(X)$ and assume that $j^*M\in \DM_{0}(U)$.
Then the morphism
$i^*\omega^0_X(M) \to \omega^0_Z(i^*M)$ is invertible.

\item[(vii)] The functor $\omega^0_X$ preserves compact objects.

\end{enumerate}

\end{proposition}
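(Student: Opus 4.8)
The plan is to reduce the statement to the computation of $\omega^0_X$ on a set of compact generators of $\DM_{\coh}(X)$, combined with Neeman's description of the compact objects of a compactly generated triangulated category as the thick subcategory generated by any set of compact generators.

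First I would record that $\DM_{\coh}(X)$ is compactly generated, a convenient set of compact generators being $\mathcal{G}=\{\M_{\coh}(Y):Y\to X\text{ smooth and projective}\}$. By Lemma \ref{lemma:generation-proj-reg} these objects generate $\DM_{\coh}(X)$ as a localizing subcategory of $\DM(X)$, and each of them is compact in $\DM(X)$: writing $\pi_Y:Y\to X$ for the structural morphism and $d$ for its relative dimension, relative purity for the smooth proper morphism $\pi_Y$ gives
$$\M_{\coh}(Y)=\pi_{Y*}\un_Y=\pi_{Y!}\un_Y\simeq(\pi_{Y\sharp}\un_Y)(-d)[-2d]=\M(Y)(-d)[-2d],$$
and $\M(Y)$ is compact by the very construction of $\DM(X)$. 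Since $\DM_{\coh}(X)$ is a localizing, hence thick, subcategory of $\DM(X)$, Neeman's theorem identifies the compact objects of $\DM_{\coh}(X)$ with the thick subcategory $\langle\mathcal{G}\rangle$ generated by $\mathcal{G}$; in particular an object of $\DM_{\coh}(X)$ is compact in $\DM(X)$ if and only if it belongs to $\langle\mathcal{G}\rangle$.

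Next I would introduce the full subcategory $\mathcal{C}\subset\DM_{\coh}(X)$ of objects $M$ such that $\omega^0_X(M)$ is compact. Since $\omega^0_X$ is triangulated and additive and the class of compact objects is stable under cones and direct summands, $\mathcal{C}$ is a thick subcategory. It contains $\mathcal{G}$: for $Y\to X$ smooth and projective, Proposition \ref{prop:main-computation-omega0-pi-0} gives $\omega^0_X(\M_{\coh}(Y))\simeq\M_{\coh}(\pi_0(Y/X))$, and, the structural morphism $g:\pi_0(Y/X)\to X$ being finite \'etale, one has $\M_{\coh}(\pi_0(Y/X))=g_*\un=g_\sharp\un=\M(\pi_0(Y/X))$, which is compact. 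Hence $\langle\mathcal{G}\rangle\subset\mathcal{C}$, so $\omega^0_X(M)$ is compact for every $M\in\langle\mathcal{G}\rangle$; since by the previous paragraph every $M\in\DM_{\coh}(X)$ that is compact in $\DM(X)$ lies in $\langle\mathcal{G}\rangle$, this yields the assertion. I expect the only delicate point to be the verification that the generators $\M_{\coh}(Y)$ and the output motives $\M_{\coh}(\pi_0(Y/X))$ are compact --- which reduces, via relative purity and the identities $\pi_!\simeq\pi_*$ (for $\pi$ proper) and $g_\sharp\simeq g_!$ (for $g$ \'etale), to the built-in compactness of homological motives of smooth finite-type $X$-schemes --- whereas the thick-subcategory bookkeeping is entirely routine.
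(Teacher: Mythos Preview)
Your argument has a genuine gap at the very first step: you have misread Lemma~\ref{lemma:generation-proj-reg}. That lemma asserts that $\DM_{\coh}(X)$ is generated by the $\M_{\coh}(Y)$ with $Y$ \emph{projective over $X$ and smooth over $k$}, not smooth over $X$. When $X$ is singular there is no reason for the motives $\M_{\coh}(Y)$ with $Y\to X$ smooth and projective to generate $\DM_{\coh}(X)$ (for instance $\un_X$ itself need not lie in the thick subcategory they span), so your set $\mathcal{G}$ is too small. If instead you take the correct generators from Lemma~\ref{lemma:generation-proj-reg}, then Proposition~\ref{prop:main-computation-omega0-pi-0} no longer applies, since it requires $Y$ smooth over $X$; your computation of $\omega^0_X(\M_{\coh}(Y))$ therefore breaks down, and with it the rest of the argument. (Your relative-purity verification of compactness of $\M_{\coh}(Y)$ likewise uses smoothness over $X$; the compactness is still true for $Y$ smooth over $k$, via $\pi_{Y!}=\pi_{Y*}$ and the fact that $\pi_{Y!}$ preserves compacts, but this does not help with the $\omega^0_X$ computation.)

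This is precisely why the paper's proof of (vii) proceeds by noetherian induction on $X$ rather than by a thick-subcategory argument on generators. Given a compact $M$, one finds a dense open $j:U\hookrightarrow X$ over which $j^*M$ becomes a smooth cohomological motive (this is where Lemma~\ref{lemma:generation-proj-reg} is actually used), applies Proposition~\ref{prop:main-computation-omega0-pi-0} (or rather Corollary~\ref{invertible}) only over $U$, and then uses the localization triangle $i_!i^!M\to M\to j_*j^*M\to$, property (iii), property (vi), and the induction hypothesis on the closed complement $Z$ to control $\omega^0_X(M)$. The necessity of stratifying $X$ and passing to loci where the generators become smooth over the base is the missing idea in your approach.
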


\begin{proof}
The first statement in property (i) is clear from the universal property of
$\omega^0_X(M) \to M$ for cohomological motives $M$ over $X$.
The equality $\delta_X(\omega^0_X)=\omega^0_{X}(\delta_X)$ follows from the commutative square
$$\xymatrix{\omega_X^0\omega_X^0(M)\ar[rr]_-{\sim}^-{\omega^0_X(\delta_X(M))} \ar[d]_-{\delta_X(\omega^0_X(M))}^-{\sim} && \omega^0_X(M)\ar[d]^-{\delta_X(M)}\\
\omega^0_X(M)\ar[rr]^-{\delta_X(M)} && M }$$
and the universal property (and more precisely the uniqueness of the factorization through $\omega^0_X(M)$).

As for (ii), the
natural transformation $\alpha_f$ has already appeared in Corollary
\ref{invertible}, where its restriction to $\DM_{\coh}^{\rm sm}(X)$ was
shown to be invertible. Recall its construction.
For $M\in \DM_{\coh}(X)$, consider the morphism
$f^*(\delta_X): f^*\omega^0_X(M) \to f^*(M)$. By Proposition
\ref{prop:cohomological-stability-oper}, $f^*\omega^0_X(M)$ is an Artin motive. By the universal property of $\omega^0_Y$,
$f^*(\delta_X)$ factors uniquely through $\omega^0_Yf^*(M)$ yielding
$\alpha_f(M):f^*\omega^0_X(M) \to \omega^0_Y f^*(M)$.
The commutation of the first triangle in (ii) is clear from the above construction. For the commutation of the second triangle in (ii), we use the commutative diagram
$$\xymatrix{&& \ar@/_/[dll]_-{\omega^0_Y(f^*\delta_X)}  \omega^0_Y f^* \omega^0_X \ar[rr]^-{\delta_Y(f^* \omega^0_X)} \ar[d]^-{\omega^0_Y(\alpha_f)} & & f^*\omega^0_X \ar[d]^-{\alpha_f} \\
\omega^0_Yf^*  &&  \omega^0_Y \omega^0_Y f^* \ar[ll]^-{\sim}_-{\omega^0_Y (\delta_Y f^*)} \ar[rr]^-{\delta_Y(\omega^0_Y f^*)}_-{\sim} &  &  \omega^0_Y f^*  }$$
and the equality $\omega_Y^0(\delta_Y)=\delta_Y(\omega^0_Y)$
of (i).
The verification that $\alpha_f$ is invertible for
smooth $f$, will be postponed to the end of the proof.

In (iii), the natural transformation $\beta_f$ is the
composition
$$\xymatrix{\omega_X^0 f_*\ar[r] & f_*f^*\omega_X^0f_*\ar[r]^-{f_*\alpha_f f_*} &   f_*\omega_Y^0f^*f_*\ar[r] & f_*\omega_Y^0.}$$
The commutation of the first triangle follows from the more precise commutative diagram
$$\xymatrix{\omega_X^0 f_* \ar[r] \ar[d]^-{\delta_X f_*} & f_*f^*\omega_X^0 f_* \ar[r]^-{f_*\alpha_ff_*} \ar[d]^-{f_*f^*\delta_Xf_*} & f_*\omega_Y^0f^*f_* \ar[r] \ar[d]^-{f_*\delta_Yf^*f_*} & f_*\omega_Y^0\ar[d]^-{f_*\delta_Y} \\
 f_* \ar[r]  & f_*f^* f_* \ar@{=}[r] & f_*f^*f_* \ar[r] & f_*}$$
where the composition in the bottom line is the identity of $f_*$.
Note that the commutation of the middle square follows from the commutation of the triangle in (ii).
For the commutation of the second triangle in (iii), we use the commutative diagram
$$\xymatrix{&& \ar@/_/[dll]_-{\delta_X(f_*\omega^0_Y)} \omega_X^0 f_*\omega_Y^0 \ar[rr]^-{\omega^0_Xf_*(\delta_Y)} \ar[d]^-{\beta_f(\omega^0_Y)} && \omega^0_Xf_* \ar[d]^-{\beta_f} \\
f_*\omega^0_Y  && f_*\omega_Y^0\omega_Y^0 \ar[rr]^-{f_*\omega^0_Y(\delta_Y)}_-{\sim} \ar[ll]^-{\sim}_-{f_*\delta_Y(\omega^0_Y)}   && f_*\omega_Y^0}$$
and the equality $\delta_Y(\omega^0_Y)=\omega^0_Y(\delta_Y)$
of (i).

We now show property (b). Applying $\omega^0_X$ to the commutative triangles from (a) we get
$$\xymatrix{\omega^0_X \omega^0_Xf_* \ar[r]^-{\omega^0_X \beta_f} \ar@/_/[dr]_-{\omega^0_X \delta_X f_*}^-{\sim} & \omega^0_X f_*\omega^0_Y \ar[d]^-{\omega^0_X f_* \delta_Y} \\
& \omega^0_Xf_*} \quad \text{and} \quad
\xymatrix{\omega^0_X \omega^0_X f_* \omega^0_Y \ar[rr]^-{\omega^0_X\omega^0_Xf_* \delta_Y} \ar@/_/[drr]_-{\omega^0_X \delta_X f_*\omega^0_Y }^-{\sim} && \omega^0_X \omega^0_X f_* \ar[d]^-{\omega^0_X\beta_f} \\
&& \omega^0_X f_*\omega_Y^0.\!}$$
The diagonal arrows are indeed invertible as $\omega^0_X(\delta_X)$ is invertible by (i).
This shows that $\omega^0_X(\beta_f)$ has a right and a left inverse.
Using the first triangle above, we see also that
$\omega^0_X f_* \delta_Y$ is also invertible, which is our first claim in (iii).
Property (c) follows from (b).
Indeed, as $f$ is finite, $f_*$
preserves Artin motives. This implies that
the right vertical arrow in the commutative square
$$\xymatrix{\omega^0_X \omega^0_Xf_* \ar[rr]^-{\omega^0_X(\beta_f)}_-{\sim} \ar[d]_-{\delta_X(\omega^0_Xf_*)}^-{\sim}  & & \omega^0_X f_*\omega^0_Y \ar[d]^-{\delta_X(f_*\omega^0_Y)} \\
\omega^0_X f_* \ar[rr]^-{\beta_f} & & f_*\omega^0_Y}$$
is invertible, hence $\beta_f$ is likewise.

The part of Property (iv) concerning general quasi-finite morphisms is proved using the same arguments as in the proof of (ii).
That $\eta_e$ is invertible and coincides with $\beta_e^{-1}$ when $e$ is finite follows from
part (c) of (iii) and Lemma
\ref{lemma:for-the-long-prop-e-!-e-*} below.
Indeed, the vertical arrows in
\eqref{eq-prop:additional-prop-omega-0-x-00831} are then invertible.

\begin{lemma}
\label{lemma:for-the-long-prop-e-!-e-*}
Let $e:Y \to X$ be a quasi-finite morphism. The square
\begin{equation}
\label{eq-prop:additional-prop-omega-0-x-00831}
\xymatrix@C=1.5pc@R=1.5pc{e_!\omega^0_Y \ar[r]^-{\eta_e} \ar[d] &  \omega^0_X e_!\ar[d] \\
e_*\omega^0_Y & \omega^0_Xe_* \ar[l]_-{\beta_e}}
\end{equation}
commutes.
\end{lemma}

\begin{proof}
The square \eqref{eq-prop:additional-prop-omega-0-x-00831}
is part of a larger diagram
$$\xymatrix@C=1.8pc@R=1.9pc{\ar@/^1.9pc/[rrrr]^-{\eta_e} e_!\omega^0_Y \ar[d] & & \ar[ll]_-{(\star)}^-{\delta_X e_! \omega^0_Y} \omega^0_X e_!\omega^0_Y \ar[rr]_-{\omega_X^0 e_!\delta_Y} \ar[d] && \omega^0_X e_! \ar[d]\\
e_*\omega^0_Y & & \ar[ll]_-{\delta_X e_* \omega^0_Y}  \omega^0_X e_*\omega^0_Y \ar[rr]_-{(\star)}^-{\omega^0_X e_*\delta_Y} &&  \omega^0_X e_*\ar@/^1.9pc/[llll]^-{\beta_e}.\!}$$
The two squares and the two triangles that constitute the above diagram are commutative. Hence, it suffices to show that the two arrows labeled with a $(\star)$ are invertible.
But $\delta_X e_! \omega^0_Y$ is invertible as
$e_!\omega^0_Y$ takes values in the category of Artin motives.
Also $\omega^0_X e_*\delta_Y$ is invertible by
Proposition \ref{prop:additional-prop-omega-0-x}, (iii).
\end{proof}

We return to the proof of Proposition
\ref{prop:additional-prop-omega-0-x}.
Property (v) is proven in the same way as (iii). We leave the
details to the reader.
Property (vi) follows easily from Lemma
\ref{lemma:distinguished-triang-for-long-prop} below.
Indeed, as $j^!M=j^*M$ is an Artin motive by hypothesis,
$j_!j^!(M)$ is also Artin and thus $\eta_j:j_!\omega_U^0(j^!M) \to \omega^0_X j_!(j^!M)$ is invertible. This implies that
$i_*(\alpha_i):i_*i^*\omega^0_X M \to i_*\omega^0_Z i^*M$ is invertible. But $i_*$ is a fully faithful embedding as the counit
$i^*i_*\to \id$ is invertible
(cf.~\cite[Cor.~4.5.44]{ayoub-these-II}).

\begin{lemma}
\label{lemma:distinguished-triang-for-long-prop}
Let $j:U \to X$ be an open immersion and $i:Z\to X$ a complementary closed immersion. For $M\in \DM_{\coh}(X)$,
\begin{equation}
\label{eq:distinguished-triang-for-long-prop}
\xymatrix@C=1.5pc@R=1.7pc{ j_!j^!\omega^0_X M \ar[r] \ar[d]^-{\eta_j \circ \gamma_j^{-1}} & \omega^0_X M \ar[r] \ar@{=}[d] &  i_*i^*\omega^0_X M \ar[d]^-{\beta_i^{-1} \circ \alpha_i} \ar[r] & \\
\omega^0_X j_!j^!M \ar[r] & \omega^0_X M \ar[r] &  \omega^0_Xi_*i^*M\ar[r] &}
\end{equation}
is a morphism of distinguished triangles (recall that $\gamma_j$ and $\beta_i$ are invertible by parts \emph{(c)} of \emph{(iii)} and \emph{(v)} in
\emph{Proposition \ref{prop:additional-prop-omega-0-x}} respectively).
\end{lemma}

\begin{proof}
The following two squares
$$\xymatrix@R=1.7pc{j_!\omega^0_U j^! \ar[r]^-{j_!(\gamma_j)} \ar[d]_-{\eta_j(j^!)} & j_!j^! \omega^0_X \ar[d]\\
\omega^0_X j_! j^! \ar[r] & \omega^0_X } \qquad \qquad \xymatrix@R=1.7pc{i_*\omega^0_Z i^* \ar[d]_-{\beta_i(i^*)} \ar[r]^-{i_*(\alpha_i)} & i_*i^*\omega^0_X \ar[d] \\
\omega^0_X i_*i^* \ar[r] & \omega^0_X}$$
commute. We only show this for the first square as the proof is identical for the second one. Using that $j_!\omega^0_U j^!$ takes values in $\DM_0(X)$ it suffices (by the uniqueness of the factorization through $\omega^0_X(-)$) to show that
$$\xymatrix@R=1.7pc{j_!\omega^0_U j^! \ar[r]^-{j_!(\gamma_j)} \ar[d]_-{\eta_j(j^!)} & j_!j^! \omega^0_X \ar[d]\\
\omega^0_X j_! j^! \ar[r] & \id } $$
is commutative. The claim follows now from the commutation of the following two diagrams
$$\xymatrix@C=1.7pc@R=1.7pc{j_!\omega^0_U j^! \ar[r] \ar[d] & j_!j^!\omega^0_X \ar[d] \ar@/^/[dl] \\
j_!j^! \ar[r] & \id} \qquad \xymatrix@C=1.7pc@R=1.7pc{j_!\omega^0_U j^! \ar[r] \ar[d] & \omega^0_X j_!j^! \ar[d] \ar@/^/[dl] \\
 j_!j^! \ar[r] & \id.\!}$$

We now go back to
\eqref{eq:distinguished-triang-for-long-prop}.
By Verdier's axiom (TR3) we may extend the first square of
\eqref{eq:distinguished-triang-for-long-prop}
to a morphism of distinguished triangles.
It is thus sufficient to show that there is at most one morphism
$i_*i^*\omega^0_X M \to \omega^0_X i_*i^*M$
making the triangle
$$\xymatrix@C=1.5pc@R=1.5pc{\omega^0_XM \ar[r] \ar@/_/[dr] & i_*i^*\omega^0_XM \ar[d]\\
& \omega^0_X i_*i^*M}$$
commutative.
Let $a_1$ and $a_2$ be two such morphisms. The composition
$$\xymatrix{\omega^0_XM \ar[r] & i_*i^*\omega^0_XM \ar[r]^-{a_1-a_2} & \omega^0_X i_*i^*M}$$
is zero. Using the top distinguished triangle in
 \eqref{eq:distinguished-triang-for-long-prop}, we may factors $a_1-a_2$ by a morphism $j_!j^!\omega^0_XM[1]\to i_*i^*\omega^0_XM$. Using adjunction and the fact that $i^*j_!\simeq 0$, we deduce that such a morphism is zero. This proves that $a_1=a_2$.
\end{proof}

To complete the proof of Proposition
\ref{prop:additional-prop-omega-0-x}, we still need to show that the functors $\omega^0_X$ preserve compact objects and commute with $f^*$ for $f:Y \to X$ smooth.
We prove both statements by noetherian induction on $X$.
As $f^*$ commutes with infinite sums, we need to show, for $M$ a compact cohomological motive on $X$, that
\begin{itemize}

\item[(a)] $\omega^0_X(M)$ is compact,

\item[(b)] $\alpha_f:f^*\omega_X^0(M)\to \omega_Y^0 (f^*M)$ is invertible.

\end{itemize}
As $M$ is compact,
we may find $j:U\hookrightarrow X$ a dense open immersion
such that $j^*M$ is a smooth cohomological motive.
Indeed, by Lemma \ref{lemma:generation-proj-reg}
there exists finitely many projective $X$-schemes $T_{\alpha}$
which are smooth over $k$ such that $M$ is in the triangulated subcategory of $\DM_{\coh}(X)$ generated by $\M_{\coh}(T_{\alpha})$.
It is thus sufficient to take $U$ such that all $T_{\alpha}\times_X U$ are smooth over $U$.

We first prove (a). Consider the distinguished triangle
(cf.~\cite[Prop.~1.4.9]{ayoub-these-I})
\begin{equation}
\label{eq:main-prop-omega-0-007}
\xymatrix@C=1.5pc{i_!i^!M \ar[r] &  M \ar[r] &  j_*j^*M \ar[r] & }
\end{equation}
with $i$ the inclusion of the complement $Z=X-U$ in $X$.
Applying $\omega_X^0$ and using that $\eta_i$ is invertible, we get a distinguished triangle
$$\xymatrix@C=1.5pc{i_!\omega^0_Z(i^!M) \ar[r] &  \omega^0_X(M) \ar[r] &  \omega_X^0(j_*j^*M) \ar[r] & .}$$
By induction, we know that $\omega^0_Z(i^!M)$ is compact. It is
then sufficient to show that $\omega_X^0(j_*j^*M)$ is compact. By
(iii), we have an isomorphism $\omega_X^0(j_*j^*M)\simeq
\omega_X^0 j_* (\omega^0_U j^*M)$. As $j^*M$ is a compact smooth
cohomological motive, we deduce from Proposition
\ref{prop:main-computation-omega0-pi-0} that $\omega^0_U (j^*M)$
is a compact Artin motive. In particular, $N=j_*\omega^0_U (j^*M)$
is a compact motive such that $j^*N$ is Artin and it suffices to show that $\omega^0_X(N)$ is compact. By (vi), we know
that $i^*\omega^0_X(N)\simeq \omega^0_Z(i^*N)$, which
is compact by induction. From the triangle
(cf.~\cite[Lem.~1.4.6]{ayoub-these-I})
$$\xymatrix@C=1.5pc{j_!j^*N \ar[r] &  \omega^0_X(N) \ar[r] &
 i_*\omega^0_Z(i^*N)\ar[r] & }$$
we deduce that $\omega^0_X(N)$ is compact.

We turn now to the property (b). We form the commutative diagram with cartesian squares
$$\xymatrix@C=1.5pc@R=1.5pc{V \ar[r]^-{j'} \ar[d]_-g & Y \ar[d]^-f &  T \ar[l]_-{i'}\ar[d]^-h \\
U \ar[r]^-j & X & Z. \ar[l]_-i }$$
By the distinguished triangle
\eqref{eq:main-prop-omega-0-007}, we need to show that
\begin{equation}
\label{eq:main-prop-omega-0-011}
f^*\omega^0_X(i_!i^!M) \to \omega^0_Xf^*(i_!i^!M)
\qquad \text{and} \qquad
f^*\omega^0_X(j_*j^*M) \to \omega^0_Xf^*(j_*j^*M)
\end{equation}
are invertible. For the first morphism
of \eqref{eq:main-prop-omega-0-011},
consider the commutative diagram (use Lemma
\ref{lemma:commut-diag-g-star-f-star-omega} below and the equality $i_*=i_!$)
$$\xymatrix@C=1.5pc@R=1.7pc{
f^*\omega^0_X(i_!i^!M) \ar[rr]^-{\alpha_f(i_!i^!M)}  \ar[d]_-{\sim} &&  \omega^0_X(f^*i_!i^!M) \ar[r]^-{\sim} & \omega^0_X(i'_!h^*i^!M)\ar[d]^-{\sim} \\
f^* i_!\omega^0_Z(i^!M) \ar[r]^-{\sim} & i'_!h^*\omega^0_Z(i^!M) \ar[rr]^-{i_!\alpha_h(i^!M)} && i'_!\omega_T^0(h^*i^!M).\!}$$
All the non-labeled arrows are invertible by either (iv) or the base change theorem by smooth morphisms.
As $\alpha_h$ is invertible by induction, we deduce that $\alpha_f(i_!i^!M)$ is also invertible.

For the second morphism of
\eqref{eq:main-prop-omega-0-011}, we use the following commutative diagram
$$\xymatrix@R=1.7pc{f^*\omega^0_X j_* \omega^0_U j^*M \ar[rr]^-{\alpha_f(j_*\omega^0_Uj^*M)}  \ar[d]^-{\sim}_-{\delta_U} && \omega_Y^0 f^*j_*\omega^0_U j^*M \ar[r]^-{\sim}\ar[d]^-{\delta_U}  &
\omega^0_Y j'_*g^*\omega^0_{U}j^*M \ar[d]^-{\delta_U} \\
f^*\omega^0_X j_*j^*M \ar[rr]^-{\alpha_f(j_*j^*M)}  && \omega_Y^0 f^*j_*j^*M  \ar[r]^-{\sim} & \omega^0_Y j'_*g^*j^*M
}$$
The non-labeled morphisms are invertible by the base change theorem by smooth morphisms (cf.~\cite[Prop.~4.5.48]{ayoub-these-II}). The left vertical arrow is invertible by (iii).
Let's show that
$$\delta_U:\omega^0_Y j'_*g^*\omega^0_Uj^*M \to \omega^0_Y j'_*g^*j^*M$$
is also invertible. Using (iii) and the commutative diagram
$$\xymatrix{\omega^0_Y j'_*\omega^0_V g^*\omega^0_Uj^*M \ar[d]_-{\delta_V}^-{\sim} \ar[r]^-{\delta_U} & \omega^0_Y j'_*\omega^0_V g^*j^*M \ar[d]_-{\sim}^-{\delta_V}\\
\omega^0_Y j'_*g^*\omega^0_Uj^*M \ar[r]^-{\delta_U} \ar@/^/[ur]^-{\alpha_g} &  \omega^0_Y j'_*g^*j^*M}$$
we need to show that $\alpha_f: g^*\omega^0_U (j^*M) \to \omega^0_V g^*(j^*M)$
is invertible. This follows from Corollary
\ref{invertible} as $j^*M$ is a smooth cohomological motive.

Putting again $N=j_*\omega^0_U(j^*M)$, we are reduced to
show that
$$f^*\omega_X^0(N) \to \omega^0_Y(f^*N)$$
is invertible. Recall that $j^*N$ is an Artin motive. Using the distinguished
triangle (cf.~\cite[Lem.~1.4.6]{ayoub-these-I})
$$\xymatrix@C=1.5pc{j_!j^*N\ar[r] &  N \ar[r] &  i_*i^*N\ar[r] & }$$
we are reduced to prove that
\begin{equation}
\label{eq:main-prop-omega-0-013}
f^* \omega_X^0(j_!j^*N)\to \omega_Y^0(f^*j_!j^*N) \qquad \text{and}
\qquad
f^*\omega^0_X(i_*i^*N) \to \omega^0_Y(f^*i_*i^*N)
\end{equation}
are invertible. As $j_!j^*N$ and $f^*j_!j^*N$ are already Artin
motives, we have $\omega^0_X(j_!j^*N)=j_!j^*N$ and
$\omega^0_Y(f^*j_!j^*N)=f^*j_!j^*N$ and modulo these
identifications, the first morphism in
\eqref{eq:main-prop-omega-0-013} is the identity. That the second
morphism of \eqref{eq:main-prop-omega-0-013} is invertible,
follows using the induction hypothesis, as we did
for the first morphism of \eqref{eq:main-prop-omega-0-011}.
\end{proof}

\begin{lemma}
\label{lemma:commut-diag-g-star-f-star-omega}
Consider a cartesian square of quasi-projective $k$-schemes
$$\xymatrix@C=1.5pc@R=1.7pc{Y' \ar[r]^-{f'} \ar[d]_-{g'} & Y \ar[d]^-f \\
X' \ar[r]^-g & X}$$
Then the following diagram commutes
$$\xymatrix@C=1.5pc@R=1.5pc{g^* \omega^0_X f_* \ar[r]^-{\beta_f} \ar[d]_-{\alpha_g} & g^* f_*\omega^0_Y \ar[r] & f'_*g'^* \omega^0_Y \ar[d]^-{\alpha_{g'}}\\
\omega_{X'}^0 g^*f_* \ar[r] & \omega_{X'}^0 f'_* g'^* \ar[r]^-{\beta_{f'}} & f'_* \omega_{Y'}^0 g'^*}$$
(where the non-labeled arrows are the base change morphisms).
\end{lemma}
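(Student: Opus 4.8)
The plan is to avoid unwinding the explicit zig-zag constructions of $\beta_f$ and $\beta_{f'}$ and instead to exploit a uniqueness principle. First I fix a cohomological motive $M$ over $Y$ and record that the common source $g^*\omega^0_X f_*(M)$ of the two composites is an Artin motive over $X'$: indeed $\omega^0_X f_*(M)\in\DM_0(X)$, and both $g^*$ and $f'^*$ preserve Artin motives by Proposition~\ref{prop:cohomological-stability-oper}, so in particular $f'^*\bigl(g^*\omega^0_X f_*(M)\bigr)\in\DM_0(Y')$.

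Next comes the key reduction. Using the adjunction $(f'^*,f'_*)$ together with the universal property of $\omega^0_{Y'}$ (Proposition~\ref{prop:univ-property-for-omega-addenda}, applied to the Artin motive $f'^*\bigl(g^*\omega^0_X f_*(M)\bigr)$), post-composition with $f'_*\bigl(\delta_{Y'}(g'^*M)\bigr)\colon f'_*\omega^0_{Y'}g'^*(M)\to f'_*g'^*(M)$ induces a bijection
$$\hom_{\DM(X')}\!\bigl(g^*\omega^0_X f_*(M),\,f'_*\omega^0_{Y'}g'^*(M)\bigr)\;\xrightarrow{\ \sim\ }\;\hom_{\DM(X')}\!\bigl(g^*\omega^0_X f_*(M),\,f'_*g'^*(M)\bigr).$$
Hence it suffices to check that the two composites of the square become equal after post-composition with $f'_*\bigl(\delta_{Y'}(g'^*M)\bigr)$; this is exactly what lets me dispense with the internal definitions of the $\beta$'s.

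For the comparison itself I would chase both composites using only the commutative triangles already established: the first triangle of Proposition~\ref{prop:additional-prop-omega-0-x}(ii) for $g$ and for $g'$, the first triangle of Proposition~\ref{prop:additional-prop-omega-0-x}(iii)(a) for $f$ and for $f'$, and naturality of the base change transformation $\mathrm{bc}\colon g^*f_*\to f'_*g'^*$ and of the counit $\delta_{X'}\colon\omega^0_{X'}\to\id$. On the upper composite: the identities $f'_*\delta_{Y'}(g'^*M)\circ f'_*\alpha_{g'}(M)=f'_*g'^*(\delta_Y(M))$, then naturality of $\mathrm{bc}$ along $\delta_Y(M)$, then $f_*(\delta_Y(M))\circ\beta_f(M)=\delta_X(f_*M)$ collapse it to $\mathrm{bc}(M)\circ g^*(\delta_X(f_*M))$. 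On the lower composite: $f'_*\delta_{Y'}(g'^*M)\circ\beta_{f'}(g'^*M)=\delta_{X'}(f'_*g'^*M)$, then naturality of $\delta_{X'}$ along $\mathrm{bc}(M)$, then $\delta_{X'}(g^*f_*M)\circ\alpha_g(f_*M)=g^*(\delta_X(f_*M))$ collapse it to the same expression, and we conclude.

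The main obstacle is conceptual rather than computational: one must notice that the two maps should not be compared directly — doing so would force unpacking the definitions of $\beta_f$ and $\beta_{f'}$ through the units and counits of $(f^*,f_*)$ and $(f'^*,f'_*)$ and a delicate chase — and that it suffices to compare them after a post-composition which is injective on morphisms out of Artin motives. Once that reduction is set up, the remaining steps are routine bookkeeping with the formal properties collected in Proposition~\ref{prop:additional-prop-omega-0-x}.
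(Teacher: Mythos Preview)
Your proof is correct and takes a genuinely different route from the paper's. The paper unwinds the definition of $\beta_f$ as the composite $\omega^0_X f_*\to f_*f^*\omega^0_X f_*\xrightarrow{f_*\alpha_f f_*}f_*\omega^0_Y f^*f_*\to f_*\omega^0_Y$ (and similarly for $\beta_{f'}$), plugs these into the square, and then verifies a large five-row diagram whose cells are each checked by naturality of units and counits of the adjunctions $(f^*,f_*)$, $(f'^*,f'_*)$ together with the triangle of Proposition~\ref{prop:additional-prop-omega-0-x}(ii). Your argument never touches this internal definition: you observe that the common source is Artin, hence after adjunction $f'^*$ the universal property of $\omega^0_{Y'}$ makes post-composition with $f'_*\delta_{Y'}(g'^*M)$ injective, and then each composite collapses to $\mathrm{bc}(M)\circ g^*\delta_X(f_*M)$ using only the first triangles of (ii) and (iii)(a) plus naturality of $\mathrm{bc}$ and $\delta_{X'}$. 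This is cleaner and more conceptual; the paper's approach has the minor advantage of being a purely mechanical chase once the definitions are on the table, but at the cost of a much larger diagram.
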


\begin{proof}
Using the construction of $\beta_f$ from $\alpha_f$ and
$\beta_{f'}$ from $\alpha_{f'}$, this follows from the diagram
$$\xymatrix@C=1.5pc@R=1.5pc{g^* \omega^0_X f_* \ar[r] \ar@{=}[dd] & g^*f_*f^* \omega^0_X f_* \ar[r]^-{\alpha_f} \ar[d] & g^*f_*\omega^0_Y f^* f_* \ar[d] \ar[r] & g^*f_*\omega^0_Y \ar[d] \\
& f'_*g'^* f^* \omega^0_X f_* \ar[r]^-{\alpha_f} \ar[d]^-{\sim} & f'_*g'^* \omega^0_Y f^* f_*\ar[r] \ar[d]^-{\alpha_{g'}} & f'_*g'^*\omega^0_Y \ar[d]^-{\alpha_{g'}} \\
 g^* \omega^0_X f_* \ar[r] \ar[d]^-{\alpha_g}  & f'_*f'^*g^*\omega^0_X f_* \ar[d]^-{\alpha_g}  & f'_*\omega^0_{Y'} g'^* f^*f_*  \ar[r]\ar[d]^-{\sim}   & f'_*\omega^0_{Y'} g'^* \ar@{=}[dd]\\
\omega^0_{X'} g^* f_* \ar[r] \ar[d] & f'_*f'^*\omega^0_{X'}g^* f_* \ar[r]^-{\alpha_{f'}} \ar[d] & f'_*\omega^0_{Y'} f'^*g^*f_* \ar[d] & \\
\omega^0_{X'} f'_*g'^* \ar[r] & f'_*f'^* \omega^0_{X'} f'^*g'_* \ar[r]^-{\alpha_{f'}} & f'_*\omega^0_{Y'} f'^* f'_* g'^* \ar[r] &  f'_*\omega^0_{Y'} g'^*}$$
which is clearly commutative.
\end{proof}

\subsection{The motive $\mathbb{E}_X$ and its basic properties}
\label{sect:mot-E-X}
To define this motive, we need the
following corollary of Proposition \ref{prop:additional-prop-omega-0-x}:

\begin{corollary}
\label{defn-cor:E-of-X} Let $X$ be quasi-projective
$k$-scheme (with $k$ of characteristic zero). The motive
$\omega^0_Xj_*\un_U$ does not depend on the choice of a dense open
immersion $j:U\subset X$ with $U_{red}$ smooth.
\end{corollary}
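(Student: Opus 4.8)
\emph{Outline.} Let $j_U\colon U\hookrightarrow X$ be a dense open immersion with $U_{\rm red}$ smooth. Since $\un_U\simeq\nu_*\un_{U_{\rm red}}$ for the nil-immersion $\nu\colon U_{\rm red}\hookrightarrow U$ (nil-invariance) and $j_{U*}\nu_*=(j_U\nu)_*$, we may replace $U$ by $U_{\rm red}$; so assume $U$ smooth and reduced. Given two choices $j_1\colon U_1\hookrightarrow X$ and $j_2\colon U_2\hookrightarrow X$, the open $V:=U_1\cap U_2$ is again dense (it contains every generic point of the noetherian scheme $X$), smooth (open in $U_1$) and reduced. It therefore suffices to treat a single dense open immersion $l\colon V\hookrightarrow U$ with $U,V$ smooth reduced and to produce a canonical isomorphism $\omega^0_X(j_{U*}\un_U)\xrightarrow{\sim}\omega^0_X(j_{V*}\un_V)$ (where $j_V=j_U\circ l$): applied to $V\subset U_1$ and $V\subset U_2$ this gives $\omega^0_X(j_{1*}\un_{U_1})\simeq\omega^0_X(j_{2*}\un_{U_2})$.

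\emph{Reduction to a vanishing.} The unit $\un_U\to l_*l^*\un_U=l_*\un_V$ induces $\phi\colon j_{U*}\un_U\to j_{U*}l_*\un_V=j_{V*}\un_V$. Writing $g\colon Z:=U\setminus V\hookrightarrow U$ for the complementary closed immersion and $h:=j_U\circ g\colon Z\hookrightarrow X$, the localization triangle $g_*g^!\un_U\to\un_U\to l_*\un_V\xrightarrow{+1}$ identifies $\mathrm{Cone}(\phi)$ with $h_*(g^!\un_U)[1]$. As $g$ is a closed immersion, hence quasi-finite, and ${\rm char}(k)=0$, Proposition~\ref{prop:cohomological-stability-oper} gives $g^!\un_U\in\DM_{\coh}(Z)$, so $\omega^0_X$ applies and, being triangulated, yields $\mathrm{Cone}(\omega^0_X\phi)\simeq\omega^0_X(h_*g^!\un_U)[1]$. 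Thus the corollary reduces to proving $\omega^0_X(h_*g^!\un_U)=0$.

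\emph{Two ingredients.} Let $\mathcal N_Y\subseteq\DM_{\coh}(Y)$ be the triangulated subcategory, stable under infinite sums, generated by the motives $N(-c)$ with $N\in\DM_{\coh}(Y)$ and $c\ge 1$ (it does lie in $\DM_{\coh}(Y)$, since $\un_Y(-c)$ is a cohomological motive for $c\ge 0$ by the projective bundle formula for $\Proj^1_Y$, and $\DM_{\coh}(Y)$ is closed under $\otimes$). \textbf{(A)} For a cohomological motive $M$ over a quasi-projective $k$-scheme $X$ and $c\ge 1$, $\omega^0_X(M(-c))=0$: by Lemma~\ref{lemma:tecknix-gener-artin-mot} it suffices to show $\hom(e_!\un_W[s],M(-c)[r])=0$ for $e\colon W\to X$ \'etale; using $(e_!,e^!)$, $e^!=e^*$ and then the adjunction along $W\to\Spec(k)$ together with the projection formula, this becomes $\hom_{\DM(k)}(\un_k(c),M'[t])$ for a cohomological motive $M'$ over $k$, and by Lemma~\ref{lemma:generation-proj-reg} one reduces to $M'=\M_{\coh}(Y)$ with $Y$ smooth over $k$, where the group is the motivic cohomology $\hom_{\DM(k)}(\M(Y),\un_k(-c)[t])$, which vanishes in negative weight by Proposition~\ref{prop:compare} and \cite[Cor.~4.2 and Th.~16.25]{motivic-lectures}. \textbf{(B)} If $g\colon Z\hookrightarrow U$ is a closed immersion into a smooth finite-type $k$-scheme with ${\rm codim}_U Z\ge 1$ everywhere, then $g^!\un_U\in\mathcal N_Z$: by nil-invariance assume $Z$ reduced and argue by induction on $\dim Z$; with $Z':=Z_{\rm sing}$ (so $Z\setminus Z'$ is smooth and $\dim Z'<\dim Z$, as $Z$ is reduced over a perfect field), the localization triangle over $Z$ for $Z=Z'\sqcup(Z\setminus Z')$ places $g^!\un_U$ in a distinguished triangle with $a_*(g'^!\un_U)$ and $b_*(\bar g^!\un_{U\setminus Z'})$, where $a\colon Z'\hookrightarrow Z$, $b\colon Z\setminus Z'\hookrightarrow Z$, $g'\colon Z'\hookrightarrow U$ and $\bar g\colon Z\setminus Z'\hookrightarrow U\setminus Z'$; the first term lies in $\mathcal N_Z$ by induction (and since $a_*$ carries $\mathcal N_{Z'}$ into $\mathcal N_Z$ via the projection formula and stability of cohomological motives), while $\bar g^!\un_{U\setminus Z'}$ lies in $\mathcal N_{Z\setminus Z'}$ by relative purity (\cite[Th.~1.6.19]{ayoub-these-I} and the projective bundle formula show it is a finite sum of Tate twists $\un(-c_\alpha)[-2c_\alpha]$ with $c_\alpha\ge 1$), so $b_*$ of it lies in $\mathcal N_Z$.

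\emph{Conclusion.} By (B), $g^!\un_U\in\mathcal N_Z$, hence $h_*g^!\un_U$ is an iterated extension of motives $h_*(N(-c))\simeq(h_*N)(-c)$ with $h_*N$ cohomological over $X$ and $c\ge 1$, each annihilated by $\omega^0_X$ by (A); since $\{M\in\DM(Z):\omega^0_X(h_*M)=0\}$ is triangulated and closed under infinite sums (both $h_*$ and $\omega^0_X$ commute with sums here), it contains $\mathcal N_Z$, so $\omega^0_X(h_*g^!\un_U)=0$ and $\omega^0_X\phi$ is an isomorphism. The main obstacle is ingredient (A): unlike the formal identities of Proposition~\ref{prop:additional-prop-omega-0-x}, it genuinely relies on the comparison with Voevodsky's category and the vanishing of motivic cohomology in negative weights — note that $\omega^0_X$ does not commute with Tate twists (already $\omega^0_X(\un_X(-c))=0\neq\un_X(-c)=\omega^0_X(\un_X)(-c)$ for $c\ge 1$), so this is not a formality.
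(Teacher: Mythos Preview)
Your proof is correct and reaches the same conclusion, but the organization differs from the paper's. The paper first uses Proposition~\ref{prop:additional-prop-omega-0-x}(iii) to change base from $X$ to $U$: since $\omega^0_Xj_*\omega^0_U(u_*\un_V)\simeq\omega^0_Xj_*u_*\un_V$, it suffices to prove $\un_U\simeq\omega^0_U(u_*\un_V)$. This is done by induction on $\dim(U\setminus V)$, choosing an intermediate $V\subset W\subset U$ with $W\setminus V$ smooth and $\dim(U\setminus W)<\dim(U\setminus V)$. In the smooth-complement case one has the purity triangle $\bigoplus_i s_{i*}{\rm Th}^{-1}(\mathcal N_i)\un_{Z_i}\to\un_W\to v_*\un_V\to$, and the vanishing $\omega^0_W(s_{i*}{\rm Th}^{-1}(\mathcal N_i)\un)=0$ (obtainable from Proposition~\ref{prop:main-computation-omega0-pi-0} applied to the two projective bundles in the Thom triangle) finishes the argument.

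You instead stay over $X$, identify the cone of $\phi$ as $h_*(g^!\un_U)$, and prove two standalone lemmas: (A) $\omega^0_X(M(-c))=0$ for cohomological $M$ and $c\ge1$, and (B) $g^!\un_U\in\mathcal N_Z$. Your (A) is a clean general principle that the paper uses only implicitly; it makes explicit that the vanishing ultimately rests on Proposition~\ref{prop:compare} and the vanishing of motivic cohomology in negative weight. One remark on (B): the claim that $\bar g^!\un_{U\setminus Z'}$ is literally a sum $\bigoplus_\alpha\un(-c_\alpha)[-2c_\alpha]$ is the Thom isomorphism in $\DM_\Q$, which holds (the theory is oriented) but is not a formal consequence of purity plus the projective bundle formula alone --- the latter only places ${\rm Th}^{-1}(\mathcal N)\un$ in a triangle between sums of non-positive Tate twists. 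The paper sidesteps this by using Proposition~\ref{prop:main-computation-omega0-pi-0} to kill the Thom space directly without identifying it as a Tate twist. Your route is a bit longer but isolates (A) as a reusable statement; the paper's is shorter and more geometric.
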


\medskip

\begin{proof}
We may assume that $X$ is reduced.
Let $V\subset U \subset X$ be dense and smooth open subschemes of
$X$. Let $u$ denote the inclusion of $V$ in $U$. We need to
show that the morphism $\omega^0_X(j_*\un_U)\to
\omega^0_X(j_*u_*\un_V)$ is an isomorphism. By Proposition
\ref{prop:additional-prop-omega-0-x}, (iii) we have an
isomorphism $\omega^0_X j_* \omega^0_U(u_*\un_V)\simeq
\omega^0_Xj_*u_*\un_V$. It is then sufficient to show that $\un_U
\to \omega^0_U u_*\un_V$ is invertible.

We do this by induction on the dimension of $U-V$. One can find
an intermediate $V\subset W \subset U$ such that $W-V$ is smooth and
${\rm dim}(U-W)<{\rm dim}(W-V)$.
Let us call $v:V\subset W$ and $w:W\subset U$. We then have a commutative
square
$$\xymatrix@C=1.5pc@R=1.5pc{\un_U \ar[r] \ar[d]_-{a} & \omega^0_U(w_*v_*\un_V)  \ar[d]^-{\sim} \\
\omega^0_U w_*\un_W \ar[r]^-{b} & \omega^0_U(w_*\omega^0_W
v_*\un_V).}$$
By induction, we know that $a$ is invertible. It
then sufficient to show that $b$ is invertible. We prove more
precisely that $\un_W \to \omega^0_W v_*\un_V$ is invertible. As
$Z=W-V$ is smooth, it is a disjoint union of its irreducible
components $Z_1, \dots, Z_n$. Let $s_i:Z_i\hookrightarrow W$ and $\mathcal{N}_i$
the normal sheaf of $Z_i$ in $W$. Then $v_*\un_V$ sits in a
distinguished triangle (use \cite[Prop.~1.4.9]{ayoub-these-I}
and the purity isomorphism \cite[Th.~1.6.19]{ayoub-these-I})
$$\xymatrix@C=1.5pc{\bigoplus_{i=1}^n s_{i*}{\rm Th}^{-1}(\mathcal{N}_i)\un_{Z_i} \ar[r] & \un_W \ar[r] & v_*\un_V\ar[r] &.}$$
As $\omega^0_W (s_{i*}{\rm Th}^{-1}(\mathcal{N}_i)\un_{Z_i})\simeq s_{i*}\omega_{Z_i}^0({\rm Th}^{-1}(\mathcal{N}_i)\un_{Z_i})\simeq 0$,
we get $\un_W\simeq  \omega^0_W v_*\un_V$.
\end{proof}

\begin{definition}
\label{defn:the-motive-ee-x}
If $X$ is a quasi-projective $k$-scheme (with $k$ of characteristic zero), we denote by $\EE_X$
the motive $\omega^0_Xj_*\un_U$ with $U_{red}$ smooth, as in {\rm Corollary \ref{defn-cor:E-of-X}}.

\end{definition}

In particular, if $X_{red}$ is smooth, $\EE_X\simeq \un_X$.
We also deduce from
Proposition \ref{prop:additional-prop-omega-0-x} the following:

\begin{corollary}
\label{cor:functoriality-EE-X-010}
Let $f:Y \to X$ be a morphism of quasi-projective
$k$-schemes (with $k$ of characteristic zero) such that every irreducible component of $Y$ dominates an irreducible component of $X$. Then there is a canonical morphism
$f^*\EE_X\to \EE_Y$ which is invertible if $f$ is smooth.

\end{corollary}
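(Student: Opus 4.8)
The plan is to reduce everything to the case of an open immersion via the factorization \eqref{eq:factorization-1-morph-diag-C}, then to exploit the hypothesis that $Y\to X$ is dominant on every component. First I would choose a dense open smooth subscheme $j:U\hookrightarrow X$ with $U_{red}$ smooth, so that by definition $\EE_X=\omega^0_X(j_*\un_U)$. Since every irreducible component of $Y$ dominates an irreducible component of $X$, the preimage $f^{-1}(U)$ is dense in $Y$; shrinking $U$ further (which does not change $\EE_X$ by Corollary \ref{defn-cor:E-of-X}, applied repeatedly as in the proof of that corollary), we may arrange that $V_{red}$ is smooth where $V=f^{-1}(U)$, and then $\EE_Y=\omega^0_Y(j'_*\un_V)$ with $j':V\hookrightarrow Y$. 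Form the cartesian square
$$\xymatrix@C=1.5pc@R=1.5pc{V \ar[r]^-{j'} \ar[d]_-{g} & Y \ar[d]^-f \\ U \ar[r]^-j & X}$$
with $g=f|_V$. The morphism $f^*\EE_X\to \EE_Y$ I would construct is the composite: apply $f^*$ to $\delta_X:\omega^0_X(j_*\un_U)\to j_*\un_U$, obtaining a map $f^*\EE_X\to f^*j_*\un_U$ from an Artin motive (Artin-ness is Proposition \ref{prop:cohomological-stability-oper}, 2-(i$'$)) to a cohomological motive; by the universal property of $\omega^0_Y$ (Proposition \ref{prop:univ-property-for-omega-addenda}) this factors uniquely through $\omega^0_Y(f^*j_*\un_U)$, and there is a base-change map $f^*j_*\un_U\to j'_*g^*\un_U= j'_*\un_V$ inducing $\omega^0_Y(f^*j_*\un_U)\to\omega^0_Y(j'_*\un_V)=\EE_Y$. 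Composing gives the desired canonical morphism.

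For the second assertion, suppose $f$ is smooth. Then $g$ is smooth as well, and the base-change morphism $f^*j_*\un_U\to j'_*g^*\un_U$ is invertible by the base change theorem for smooth morphisms (\cite[Prop.~4.5.48]{ayoub-these-II}). On the other hand, $\alpha_f:f^*\omega^0_X\to\omega^0_Y f^*$ is invertible on smooth morphisms (Proposition \ref{prop:additional-prop-omega-0-x}, (ii)), so $f^*\EE_X=f^*\omega^0_X(j_*\un_U)\xrightarrow{\ \sim\ }\omega^0_Y(f^*j_*\un_U)$. Chaining these two isomorphisms with $\omega^0_Y$ applied to the base-change isomorphism yields $f^*\EE_X\xrightarrow{\ \sim\ }\omega^0_Y(j'_*\un_V)=\EE_Y$, and one checks (by uniqueness of factorization through $\omega^0_Y$) that this isomorphism is precisely the canonical morphism constructed above.

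The main obstacle I anticipate is not any single homotopical step but rather the bookkeeping needed to make ``shrink $U$'' legitimate: one must verify that after possibly replacing $U$ by a smaller dense open, both $U_{red}$ and $f^{-1}(U)_{red}$ become smooth simultaneously, and that this is compatible with the well-definedness in Corollary \ref{defn-cor:E-of-X} on both $X$ and $Y$. The hypothesis that each component of $Y$ dominates a component of $X$ is exactly what guarantees $f^{-1}(U)$ stays dense in $Y$; without it the pullback $f^*\EE_X$ would have nothing to map to. Once the reduction to a single cartesian square over smooth open subschemes is in place, the argument is a formal concatenation of $\alpha_f$, smooth base change, and the universal property of $\omega^0$, exactly paralleling the proof of Corollary \ref{invertible}.
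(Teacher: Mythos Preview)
Your approach is essentially the paper's own, but there is one genuine error in the reduction step. You claim that by shrinking $U$ one can arrange that $V_{red}$ is smooth where $V=f^{-1}(U)$. This is false in general: take $X=\Aff^1_k$, $Y=\Aff^1_k\times C$ with $C$ a singular curve, and $f$ the first projection. Every component of $Y$ dominates $X$, yet $f^{-1}(U')=U'\times C$ is singular for \emph{every} nonempty open $U'\subset X$. So you cannot force $f^{-1}(U)$ to be smooth by shrinking $U$ alone.

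The paper's fix is to choose $V$ to be a smooth dense open subscheme of $f^{-1}(U)$ (which exists since we are in characteristic zero), rather than $f^{-1}(U)$ itself. The square
\[
\xymatrix@C=1.5pc@R=1.5pc{V \ar[r]^-{j'} \ar[d]_-{f'} & Y \ar[d]^-f \\ U \ar[r]^-j & X}
\]
is then commutative but not cartesian, so the map $f^*j_*\un_U\to j'_*f'^*\un_U$ is not a base-change morphism in the technical sense; it is the canonical map obtained from the unit $\id\to j'_*j'^*$, the equality $j'^*f^*=f'^*j^*$, and the counit $j^*j_*\to\id$. Everything else in your argument goes through unchanged. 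In the smooth case one may (and the paper does) take $V=f^{-1}(U)$, since then $V$ is smooth over $U$ and hence over $k$; the square becomes cartesian and your invocation of smooth base change is correct.
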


\begin{proof}
We may assume that $X$ and $Y$ are reduced. Let $j:U\hookrightarrow X$
be the inclusion of a dense open subscheme which is smooth over $k$.
Then $f^{-1}(U)$ is dense in $Y$ and we may find a dense an open subset $V\subset f^{-1}(U)$ which is smooth over $k$. Moreover, if $f$ is smooth, we can take $V=f^{-1}(U)$ and we will do so. Let $j':V\hookrightarrow Y$ and $f':V \to U$ denote the obvious morphisms.
Our morphism is then the composition
$$f^*\EE_X\simeq f^*\omega^0_X j_*\un_U \to \omega^0_Y f^*j_*\un_U\to \omega^0_Y j'_*f'^*\un_U\simeq \omega^0_Yj'_*\un_V \simeq \EE_Y.$$
When $f$ is smooth, the above composition is invertible by the
last assertion in
Proposition \ref{prop:additional-prop-omega-0-x}, (ii)
and the base change theorem by smooth morphisms (cf.~\cite[Prop.~4.5.48]{ayoub-these-II}).
\end{proof}

\begin{lemma}
\label{lem:Phi-invariant-EE-X}
Let $G$ be a finite group acting on an integral quasi-projective $k$-scheme
$Y$ (with $k$ a field of characteristic zero). Let $X=Y/G$ and denote by
$e:Y \to X$ the natural morphism. Then,
$G$ acts naturally on the motive $e_*\EE_Y$.
Moreover, the morphism
$\EE_X \to e_*\EE_Y$,
obtained by the adjunction $(e^*,e_*)$
from the morphism $e^*\EE_X \to \EE_Y$ in
\emph{Corollary
\ref{cor:functoriality-EE-X-010}}, identifies
$\EE_X$ with the sub-object of $G$-invariants in $e_*\EE_Y$, i.e.,
with the image of the projector $\frac{1}{|G|}\sum_{g\in G}g$.

\end{lemma}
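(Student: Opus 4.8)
The plan is to reduce the statement to the étale-local description of $X$ as a quotient of a smooth scheme (which is available since $\EE_X$ only sees the situation up to étale localization by Corollary \ref{cor:functoriality-EE-X-010}), and then to mimic the retraction argument that already appears in Step 1 of the proof of Proposition \ref{prop:direct-image-compl-sncd}. First I would set up the $G$-action: choosing a dense open $U\subset X$ with $U_{red}$ smooth, and $V=e^{-1}(U)\subset Y$, the finite group $G$ acts on $V$ over $U$, hence on $e'_*\un_V$ where $e':V\to U$ is the restriction of $e$, and therefore on $j_*e'_*\un_V\simeq e_*j'_*\un_V$ (with $j:U\hookrightarrow X$, $j':V\hookrightarrow Y$) by functoriality of $e_*$ and base change along the projective, in fact finite, morphism $e$. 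Applying the functor $\omega^0_X$ — which is a functor, hence carries the $G$-action along — produces a $G$-action on $\omega^0_X e_* j'_*\un_V$; and since $e_*$ preserves Artin motives (as $e$ is finite, by Proposition \ref{prop:cohomological-stability-oper}(iii)) and commutes with $\omega^0$ in the relevant sense, Proposition \ref{prop:additional-prop-omega-0-x}(iii) gives $\omega^0_X e_* j'_*\un_V \simeq \omega^0_X e_* \omega^0_Y j'_*\un_V \simeq e_* \omega^0_Y j'_*\un_V = e_*\EE_Y$, so the $G$-action descends to $e_*\EE_Y$. (One must check this is independent of $U$, but that is immediate from Corollary \ref{defn-cor:E-of-X} together with the naturality of everything in $U$.)

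Next I would identify $\EE_X$ with the $G$-invariants. The morphism $\un_U\to e'_*\un_V\simeq e'_*e'^*\un_U$ is split by the retraction $r=\frac{1}{|G|}\sum_{g\in G}g$ exactly as in \cite[Lem.~2.1.165]{ayoub-these-I}, and this identifies $\un_U$ with the image of the idempotent $\frac{1}{|G|}\sum_{g\in G}g$ on $e'_*\un_V$ in $\DM(U)$ (here we use rational coefficients in an essential way, to divide by $|G|$). Applying $j_*$ and then $\omega^0_X$, both of which are additive functors and hence preserve split idempotents and their images, the morphism $\EE_X = \omega^0_X j_*\un_U \to \omega^0_X j_* e'_*\un_V \simeq e_*\EE_Y$ is split by $\omega^0_X j_*(r)$, and identifies $\EE_X$ with the image of the projector $\frac{1}{|G|}\sum_{g\in G}g$ acting on $e_*\EE_Y$. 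To see that this morphism is the one described in the statement — namely the adjunct of $e^*\EE_X\to \EE_Y$ from Corollary \ref{cor:functoriality-EE-X-010} — I would trace through the construction of that morphism (which factors as $e^*\EE_X\simeq e^*\omega^0_Xj_*\un_U\to\omega^0_Y e^*j_*\un_U\to\omega^0_Y j'_*e'^*\un_U\simeq\EE_Y$) and check, using the triangle identities for $(e^*,e_*)$ and the compatibility triangles in Proposition \ref{prop:additional-prop-omega-0-x}(ii)–(iii), that its adjunct coincides with $\omega^0_X$ applied to $j_*$ of the unit $\un_U\to e'_*e'^*\un_U$; this is a diagram chase of the same flavor as Lemma \ref{lemma:commut-diag-g-star-f-star-omega}.

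The main obstacle, I expect, is the bookkeeping needed to verify that the $G$-action constructed on $e_*\EE_Y$ via $\omega^0_X$ genuinely agrees with the "obvious" $G$-action on $e_*\EE_Y$ coming from $G$ acting on $Y$ over $X$ — i.e. that the identification $\omega^0_X e_* j'_*\un_V\simeq e_*\EE_Y$ is $G$-equivariant. This requires that the isomorphism $\omega^0_X e_*\omega^0_Y\simeq \omega^0_X e_*$ of Proposition \ref{prop:additional-prop-omega-0-x}(iii) and the isomorphism $\beta_e\colon\omega^0_X e_*\xrightarrow{\sim} e_*\omega^0_Y$ (invertible since $e$ is finite, by part (c) of (iii)) are natural transformations, hence automatically commute with the $G$-action which is defined purely functorially; so in fact no real obstacle arises once one is careful to phrase the $G$-action as arising from an action by automorphisms of the relevant diagram of schemes/motives, to which every functor in sight applies. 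The rest — splitting of idempotents under additive functors, and the matching of the two descriptions of the comparison map — is routine.
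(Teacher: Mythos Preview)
Your proposal is correct and follows essentially the same approach as the paper's proof: choose a smooth open $U\subset X$ with $V=e^{-1}(U)$, use \cite[Lem.~2.1.165]{ayoub-these-I} to identify $\un_U$ with the $G$-invariants in $e'_*\un_V$, push forward via $j_*$ and $\omega^0_X$, and then identify $\omega^0_X(e_*j'_*\un_V)\simeq e_*\EE_Y$ via $\beta_e$ (invertible since $e$ is finite). One small point: you should shrink $U$ so that $e':V\to U$ is actually \'etale, since that is what \cite[Lem.~2.1.165]{ayoub-these-I} requires; the paper makes this explicit, and without it the splitting argument does not apply.
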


\begin{proof}
Let $j:U \hookrightarrow X$ be the inclusion of a non-empty open subscheme of $X$ which is smooth over $k$ and such that $V=e^{-1}(U)$ is \'etale over $U$. Let $j':V\hookrightarrow X$ denote the inclusion and $e':V \to U$ the \'etale cover given by the restriction of $e$.
The group $G$ acts on
$e'_*\un_V \simeq e'_*e'^*\un_U$
and the morphism $\un_U \to e'_*\un_V$ identifies $\un_U$ with the sub-object of $G$-invariants (see \cite[Lem.~2.1.165]{ayoub-these-I}).
It follows that
$\omega^0_X(j_*\un_U) \to \omega^0_X(j_*e'_*\un_V)$
identifies $\EE_X=\omega^0_X(j_*\un_U)$ with the sub-object of $G$-invariants in $\omega^0_X(j_*e'_*\un_V)$.

On the other hand, we have a $G$-equivariant isomorphism
$\omega^0_X(j_*e'_*\un_V)\simeq e_*\EE_Y$ given by the composition
$$\omega^0_X(j_*e'_*\un_V)\simeq \omega^0_X (e_*j'_*\un_V)
\!\xymatrix{\ar[r]^-{\beta_e}_-{\sim} & }\! e_*\omega^0_Y (j'_*\un_V)=e_*\EE_Y.$$
The natural transformation $\beta_e$ is indeed invertible
by Proposition
\ref{prop:additional-prop-omega-0-x}, (iii), as $e$ is finite.
Now, remark that
the composition
$\EE_X \to \omega^0_X (j_*e'_*\un_V) \simeq e_*\EE_Y$
coincides with the morphism obtained by the adjunction $(e^*,e_*)$ from the morphism $e^*\EE_X \to \EE_Y$ described in Corollary
\ref{cor:functoriality-EE-X-010}. This proves the lemma.
\end{proof}

\begin{corollary}
\label{cor:new-EE-X-for-quotient-singularities}
Let $X$ be a quasi-projective $k$-scheme (with $k$ of characteristic zero) having only quotient singularities. Then the natural morphism
$\un_X \to \EE_X$ is invertible.
\end{corollary}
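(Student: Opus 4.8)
The plan is to reduce, by \'etale descent, to the case of a global quotient $X=Y/G$ and then to combine Lemma \ref{lem:Phi-invariant-EE-X} with the splitting result \cite[Lem.~2.1.165]{ayoub-these-I}.

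First I would note that the statement is local on $X$ for the \'etale topology: the pull-back functors along the members of an \'etale cover are jointly conservative, and by Corollary \ref{cor:functoriality-EE-X-010} (applied to the maps of the cover, which are smooth) these pull-backs carry the natural morphism $\un_X\to\EE_X$ to the analogous morphism over each member. As $X$ has only quotient singularities, it admits an \'etale cover by schemes of the form $Y/G$ with $Y$ smooth quasi-projective over $k$ and $G$ finite; thus we may assume $X=Y/G$, with $e\colon Y\to X$ the quotient morphism. Since $X$ is then normal, hence the disjoint union of its irreducible components, we may assume $X$ integral; then $G$ permutes the connected components of $Y$ transitively, and after replacing $Y$ by one of them and $G$ by its stabilizer we may further assume $Y$ smooth and integral.

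Now $Y$ is smooth, so $\un_Y\to\EE_Y$ is invertible (the line following Definition \ref{defn:the-motive-ee-x}). By Lemma \ref{lem:Phi-invariant-EE-X} the canonical morphism $\EE_X\to e_*\EE_Y\simeq e_*\un_Y$ identifies $\EE_X$ with the image of the projector $\tfrac1{|G|}\sum_{g\in G}g$; and by \cite[Lem.~2.1.165]{ayoub-these-I} (valid with $\Q$-coefficients since $\mathrm{char}\,k=0$) the unit morphism $\un_X\to e_*e^*\un_X=e_*\un_Y$ identifies $\un_X$ with the image of that same projector. Hence $\EE_X\simeq\un_X$, and so $\hom_{\DM(X)}(\un_X,\EE_X)\simeq\hom_{\DM(X)}(\un_X,\un_X)$. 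I would then compute the latter: using $\un_X\simeq(e_*\un_Y)^G$ and the adjunction $(e^*,e_*)$ together with $e^*\un_X=\un_Y$, one gets $\hom_{\DM(X)}(\un_X,\un_X)\simeq\hom_{\DM(Y)}(\un_Y,\un_Y)^G$, which by Proposition \ref{prop:compare} and \cite[Cor.~4.2 and Th.~16.25]{motivic-lectures} equals $\big({\rm H}^0_{\rm Nis}(Y,\Q)\big)^G=\Q$, because $Y$ is smooth and connected and $G$ acts trivially on this group. Thus $\hom_{\DM(X)}(\un_X,\EE_X)$ is a one-dimensional $\Q$-vector space containing an isomorphism (namely the composite $\un_X\overset{\sim}{\to}(e_*\un_Y)^G\overset{\sim}{\to}\EE_X$ just exhibited), so every nonzero element of it is invertible. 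Finally, the natural morphism $\un_X\to\EE_X$ is nonzero, since by construction it factors through $\delta_X\colon\EE_X=\omega^0_X(j_*\un_U)\to j_*\un_U$ the unit morphism $\un_X\to j_*\un_U$ (for $j\colon U\hookrightarrow X$ a dense open immersion with $U_{red}$ smooth), and the latter is nonzero as its restriction to $U$ is $\mathrm{id}_{\un_U}$. Therefore $\un_X\to\EE_X$ is an isomorphism.

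The steps are all of a soft nature; the points I would be most careful about are the reductions in the second paragraph, namely checking that the equivalences used in the \'etale localization genuinely intertwine the natural morphisms $\un\to\EE$ upstairs and downstairs (this follows from the construction in Corollary \ref{cor:functoriality-EE-X-010} and the uniqueness in the universal property of $\omega^0_X$, Proposition \ref{prop:univ-property-for-omega-addenda}), and, similarly, that the identification of $\EE_X$ with the $G$-invariants in $e_*\EE_Y$ given by Lemma \ref{lem:Phi-invariant-EE-X} is compatible with the isomorphism $\un_Y\simeq\EE_Y$. These are routine naturality verifications, so I do not expect a genuine obstacle.
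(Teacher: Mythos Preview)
Your proof is correct and follows the same route the paper intends: reduce \'etale-locally to $X=Y/G$ with $Y$ smooth integral, then use Lemma~\ref{lem:Phi-invariant-EE-X} and $\EE_Y\simeq\un_Y$ to identify both $\un_X$ and $\EE_X$ with the $G$-invariants in $e_*\un_Y$.

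Your final step, however, is more work than needed. Once you know that $i_1\colon\un_X\hookrightarrow e_*\un_Y$ and $i_2\colon\EE_X\hookrightarrow e_*\un_Y$ are both split monomorphisms onto the image of the same projector, it suffices to check that $i_2\circ(\un_X\to\EE_X)=i_1$. This is immediate: by Proposition~\ref{E-unitary} the map $e^*\EE_X\to\EE_Y$ is a morphism of unitary algebras, so the adjoint of $\un_X\to\EE_X\to e_*\EE_Y\simeq e_*\un_Y$ is $\un_Y\to e^*\EE_X\to\EE_Y\simeq\un_Y$, which is the identity; hence the composite is the adjunction unit $i_1$. Since $i_2$ is a (split) monomorphism, $\un_X\to\EE_X$ is then forced to be an isomorphism. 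This avoids the computation of $\hom_{\DM(X)}(\un_X,\un_X)$ and the non-vanishing argument, though those are also fine.
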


\begin{proof}
This is an easy consequence of
Lemma \ref{lem:Phi-invariant-EE-X} and the fact that $\EE_Y\simeq \un_Y$ when $Y$ is smooth. We leave the details to the reader.
\end{proof}

Recall that an algebra $A$ in a monoidal category $(\mathcal{M},\otimes)$
is a pair $(A,m)$ with $A\in \mathcal{M}$ and
$m:A\otimes A \to A$ satisfying the usual associativity condition, i.e., $m(m\otimes \id)=m(\id \otimes m)$.
We say that $A$ is unitary if
there exists a morphism $u:\un \to A$ from a unit object of $\mathcal{M}$ such that $m(u\otimes \id)$ and $m(\id\otimes u)$ are the obvious isomorphisms $\un \otimes A \simeq A$ and $A\otimes \un \simeq A$. When $(\mathcal{M},\otimes)$ is symmetric, we say that $A$ is commutative if $m\circ \tau=m$ where
$\tau$ is the permutation of factors on $A\otimes A$.

Recall, from \cite[D\'ef.~2.1.85]{ayoub-these-I}, that a \emph{pseudo-monoidal} functor $f:(\mathcal{M},\otimes) \to (\mathcal{N},\otimes')$ is a functor $f$ endowed with a bi-natural transformation $f(A)\otimes f(B) \to f(A\otimes' B)$ satisfying some natural coherence conditions. (When this bi-natural transformation is invertible, we say that $f$ is \emph{monoidal}.)  One checks that a pseudo-monoidal functor $f$ takes an algebra of $\mathcal{M}$ to an algebra of $\mathcal{N}$. Moreover, when $f$ is also pseudo-unitary, then $f$ takes a unitary algebra of $\mathcal{M}$ to a unitary algebra of $\mathcal{N}$. Also, if $f$ is symmetric, in the sense of \cite[D\'ef.~2.1.86]{ayoub-these-I}, it preserves commutative algebras.

\begin{lemma}
\label{lem:omega-0-monoidal}
Let $X$ be a quasi-projective scheme over a perfect field $k$. Then $\omega^0_X$ is a symmetric, pseudo-monoidal and pseudo-unitary functor.
\end{lemma}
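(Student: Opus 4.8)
The plan is to build the pseudo-monoidal structure on $\omega^0_X$ out of the universal property in Proposition \ref{prop:univ-property-for-omega-addenda}, using that the tensor product on $\DM(X)$ preserves Artin motives (Proposition \ref{prop:cohomological-stability-oper}, (iii$'$)). First I would fix cohomological motives $M,N\in\DM_{\coh}(X)$ and consider the morphism
$$\delta_X(M)\otimes\delta_X(N):\omega^0_X(M)\otimes\omega^0_X(N)\longrightarrow M\otimes N.$$
The source is an Artin motive, since $\DM_0(X)$ is stable under $\otimes$, and $M\otimes N$ is a cohomological motive by Proposition \ref{prop:cohomological-stability-oper}, (iii). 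Hence by the universal property this morphism factors uniquely through $\delta_X(M\otimes N)$, giving a canonical map
$$\mu_{M,N}:\omega^0_X(M)\otimes\omega^0_X(N)\longrightarrow \omega^0_X(M\otimes N).$$
Bi-naturality in $M$ and $N$ is immediate from the uniqueness of the factorization (any naturality square, when composed down to $M\otimes N$, commutes, so it commutes already at the level of $\omega^0_X(M\otimes N)$). For pseudo-unitarity, $\un_X$ is itself an Artin motive, so $\delta_X(\un_X):\omega^0_X(\un_X)\to\un_X$ is an isomorphism by Proposition \ref{prop:additional-prop-omega-0-x}, (i); its inverse serves as the unit map $\un_X\to\omega^0_X(\un_X)$.

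Next I would check the coherence axioms. Associativity amounts to the commutativity of the pentagon-type diagram relating the two ways of composing $\mu$ for a triple $(M,N,P)$; both composites are morphisms $\omega^0_X(M)\otimes\omega^0_X(N)\otimes\omega^0_X(P)\to\omega^0_X(M\otimes N\otimes P)$ out of an Artin motive into a cohomological motive, and both become the obvious associativity constraint of $\DM(X)$ after composing with $\delta_X(M\otimes N\otimes P)$ (using functoriality of $\delta_X$ and the associativity of $\otimes$ on $\DM(X)$). By the uniqueness clause of Proposition \ref{prop:univ-property-for-omega-addenda} they therefore agree. The left and right unit coherence diagrams are handled the same way, invoking that $\delta_X(\un_X)$ is invertible. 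For symmetry, one checks that $\mu_{N,M}\circ\tau = \omega^0_X(\tau)\circ\mu_{M,N}$ where $\tau$ is the symmetry of $\otimes$; again both sides are maps from an Artin motive and coincide after postcomposition with $\delta_X$, so they are equal. The coherence of this with the associativity constraints (the hexagon for symmetric monoidal structures) follows by the same universal-property argument, reducing it to the corresponding hexagon in $\DM(X)$.

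The only real subtlety — and the step I expect to need the most care — is making sure every coherence diagram is genuinely pinned down by the universal property, i.e. that in each case the two competing natural transformations are parallel morphisms $A\to\omega^0_X(B)$ with $A$ an Artin motive and both become equal after composing with $\delta_X(B)$. This is where one must be a little careful that no twisting or shift sneaks in that would spoil the source being Artin; but since $\DM_0(X)$ is a triangulated subcategory closed under $\otimes$ and shifts, and all the structural isomorphisms involved ($\otimes$-associativity, unit, symmetry) restrict to it, this causes no trouble. With that verified, $\omega^0_X$ together with the collection $\{\mu_{M,N}\}$ and the unit $\delta_X(\un_X)^{-1}$ is a symmetric pseudo-monoidal and pseudo-unitary functor, as claimed.
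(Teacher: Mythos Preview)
Your argument is correct: the map $\mu_{M,N}$ you build via the universal property, together with the unit $\delta_X(\un_X)^{-1}$, does give $\omega^0_X$ the structure of a symmetric pseudo-monoidal pseudo-unitary functor, and the coherence verifications go through exactly as you outline. The only prerequisite you use beyond the universal property is that $\DM_0(X)$ and $\DM_{\coh}(X)$ are closed under $\otimes$ and contain $\un_X$, which is Proposition~\ref{prop:cohomological-stability-oper}.

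The paper's proof is considerably shorter but not genuinely different in content. It observes that the inclusion ${\rm i}_X:\DM_0(X)\hookrightarrow\DM_{\coh}(X)$ is a (strong) monoidal, symmetric, unitary functor between monoidal subcategories of $\DM(X)$, and then invokes a general categorical fact (\cite[Prop.~2.1.90]{ayoub-these-I}): the right adjoint of such a functor is automatically pseudo-monoidal, symmetric and pseudo-unitary. Since $\omega^0_X={\rm i}_X\circ\nu^0_X$, the lemma follows. Unwinding that general fact in this instance produces exactly your construction of $\mu_{M,N}$ and your coherence checks. So your route is the explicit, bare-hands version of the same argument; the paper's route trades the diagram-chasing for a citation. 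Yours has the advantage of being self-contained; the paper's has the advantage of making clear that nothing specific to $\DM(X)$ is happening beyond the closure of the two subcategories under the monoidal structure.
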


\begin{proof}
By Proposition \ref{prop:cohomological-stability-oper}, $\DM_0(X)$ and $\DM_{\coh}(X)$ are monoidal subcategories of $\DM(X)$. In particular, the inclusion ${\rm i}_X:\DM_0(X) \hookrightarrow \DM_{\coh}(X)$ is monoidal, symmetric and unitary. It follows form
\cite[Prop.~2.1.90]{ayoub-these-I} that the right adjoint $\nu^0_X$ of ${\rm i}_X$ is pseudo-monoidal, symmetric and pseudo-unitary. The lemma follows as $\omega_X^0={\rm i}_X\circ \nu^0_X$.
\end{proof}

\begin{proposition}\label{E-unitary}
Let $X$ be a quasi-projective $k$-scheme (with $k$ of characteristic zero).
Then $\mathbb{E}_X$ is a commutative unitary algebra in
$\DM(X)$. Also, under the assumptions of \emph{Corollary \ref{cor:functoriality-EE-X-010}}, the morphism $f^*\EE_X \to \EE_Y$ is a morphism of commutative
unitary algebras.

\end{proposition}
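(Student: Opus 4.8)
The plan is to realize $\EE_X$ as the image of the unit algebra under a composite of symmetric, pseudo-monoidal and pseudo-unitary functors, and then to prove the functoriality statement by checking that the morphism $f^*\EE_X\to\EE_Y$ is assembled out of morphisms of algebras. For the first assertion, fix a dense open immersion $j:U\hookrightarrow X$ with $U_{red}$ smooth, so that $\EE_X=\omega^0_X(j_*\un_U)$ and $j_*\un_U=\M_{\coh}(U\to X)\in\DM_{\coh}(X)$. The unit $\un_U$ is tautologically a commutative unitary algebra in $\DM(U)$. The functor $j_*$ is symmetric pseudo-monoidal and pseudo-unitary --- its structure maps $j_*B\otimes j_*B'\to j_*(B\otimes B')$ and $\un_X\to j_*\un_U$ being the mates, under $j^*\dashv j_*$, of the symmetric monoidal and unitary structure of $j^*$ --- and preserves cohomological motives by Proposition \ref{prop:cohomological-stability-oper}; hence $j_*\un_U$ is a commutative unitary algebra in $\DM_{\coh}(X)$. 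Finally $\omega^0_X$ is symmetric pseudo-monoidal and pseudo-unitary by Lemma \ref{lem:omega-0-monoidal}, so $\EE_X=\omega^0_X(j_*\un_U)$ is a commutative unitary algebra in $\DM_{\coh}(X)$, and therefore in $\DM(X)$, since $\DM_{\coh}(X)$ is a monoidal subcategory of $\DM(X)$ by Proposition \ref{prop:cohomological-stability-oper}. Built into the pseudo-monoidal structure on $\omega^0_X$ is the fact that the counit $\delta_X:\omega^0_X\to\id$ is a monoidal natural transformation; in particular $\delta_X:\EE_X\to j_*\un_U$ is a morphism of algebras, which I record for later use. (The same device shows that this algebra structure is independent of the choice of $U$, upgrading Corollary \ref{defn-cor:E-of-X}, but this is not needed for the statement.)

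For the second assertion, recall the construction of $f^*\EE_X\to\EE_Y$ from the proof of Corollary \ref{cor:functoriality-EE-X-010}: one chooses a dense smooth open $j':V\hookrightarrow Y$ contained in $f^{-1}(U)$, with induced morphism $f':V\to U$, and the map is the composite of $\alpha_f(j_*\un_U):f^*\omega^0_Xj_*\un_U\to\omega^0_Yf^*j_*\un_U$, followed by $\omega^0_Y$ applied to the base-change morphism $f^*j_*\un_U\to j'_*f'^*\un_U$ attached to the commutative square $fj'=jf'$, followed by $\omega^0_Yj'_*$ applied to the canonical isomorphism $f'^*\un_U\xrightarrow{\sim}\un_V$, landing in $\omega^0_Yj'_*\un_V=\EE_Y$. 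It suffices to show that each of these three arrows is a morphism of commutative unitary algebras. The isomorphism $f'^*\un_U\xrightarrow{\sim}\un_V$ is one of algebras because $f'^*$ is monoidal and unitary; applying the pseudo-monoidal functors $j'_*$ and then $\omega^0_Y$ preserves this. The base-change morphism $f^*j_*\to j'_*f'^*$ is a monoidal natural transformation, being assembled from the unit of $j'^*\dashv j'_*$, the canonical monoidal identification $j'^*f^*=f'^*j^*$, and $j'_*f'^*$ applied to the counit of $j^*\dashv j_*$ --- each monoidal because the left adjoints in play are monoidal --- so it induces a morphism of algebras, which is again preserved by $\omega^0_Y$.

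The remaining, and essential, point is that $\alpha_f$ induces a morphism of algebras on $A:=j_*\un_U$. Both $f^*\omega^0_X(A)$ and $\omega^0_Yf^*(A)$ carry the algebra structures obtained by transporting the multiplication $m_A$ through the monoidal functor $f^*$ and the pseudo-monoidal functors $\omega^0_X$ and $\omega^0_Y$. Since $\omega^0_X(A)$ is an Artin motive, so are $f^*\omega^0_X(A)$ and $f^*\omega^0_X(A)\otimes f^*\omega^0_X(A)$ by Proposition \ref{prop:cohomological-stability-oper}; hence, by the universal property of $\omega^0_Y$ (Proposition \ref{prop:univ-property-for-omega-addenda}), the two maps $f^*\omega^0_X(A)\otimes f^*\omega^0_X(A)\to\omega^0_Yf^*(A)$ to be compared --- ``multiply, then apply $\alpha_f$'' versus ``apply $\alpha_f$ to each factor, then multiply'' --- agree as soon as they agree after composing with $\delta_Y:\omega^0_Yf^*(A)\to f^*(A)$. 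Using the triangle identity $\delta_Yf^*\circ\alpha_f=f^*\delta_X$ of Proposition \ref{prop:additional-prop-omega-0-x}~(ii), the monoidality of $\delta_X$ and $\delta_Y$, the monoidality of $f^*$, and the fact (recorded above) that $\delta_X$ is an algebra morphism, both composites are seen to equal $m_{f^*(A)}\circ(f^*\delta_X\otimes f^*\delta_X)$; the parallel, simpler computation for units is omitted. Hence $\alpha_f(A)$ is a morphism of commutative unitary algebras, and so is the composite defining $f^*\EE_X\to\EE_Y$.

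I expect the main obstacle to be precisely this last step. The monoidality of $\delta_X$, $\delta_Y$ and of the base-change transformation is formal, reflecting the compatibility of Grothendieck's six operations with the tensor structure (doctrinal adjunction); but $\alpha_f$ is defined by a universal property rather than by an explicit monoidal recipe, so its compatibility with multiplications has to be teased out, as above, from the universal property of $\omega^0_Y$ together with the stability of Artin motives under tensor product and under $f^*$.
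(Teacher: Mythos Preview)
Your proof is correct and follows the same approach as the paper's: realize $\EE_X$ as the image of $\un_U$ under the pseudo-monoidal functors $j_*$ and $\omega^0_X$ (via Lemma~\ref{lem:omega-0-monoidal}), then check that each constituent of $f^*\EE_X\to\EE_Y$ is a morphism of algebras. The only difference is that the paper asserts in one line that $\alpha_f:f^*\omega^0_X\to\omega^0_Yf^*$ and $f^*j_*\to j'_*f'^*$ are morphisms of pseudo-monoidal and pseudo-unitary functors, whereas you verify the compatibility of $\alpha_f$ with multiplication directly on the object $j_*\un_U$ using the universal property of $\omega^0_Y$ and the stability of Artin motives under $\otimes$ and $f^*$; both routes are valid, yours being more explicit and self-contained.
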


\begin{proof}
We use the notation in the proof of
Corollary \ref{cor:functoriality-EE-X-010}.
The claim follows from Lemma
\ref{lem:omega-0-monoidal} above as $j_*\un_U$ is
a commutative unitary algebra in $\DM(X)$.
The second statement follows from the fact that the natural transformations
$f^*\omega^0_X \to \omega^0_Y f^*$,
$f^*j_*\to j'_*f'^*$, used in the construction of
$f^*\EE_X \to \EE_Y$, are morphisms of pseudo-monoidal and pseudo-unitary
functors.
\end{proof}


\subsection{Some computational tools}
\label{subsect:comput-tools}
We describe some tools which are useful for computing
the motives $\EE_X$.
We first extend the definition of the punctual weight zero part to the case of relative
motives over a diagram of schemes.

\begin{definition}
\label{defn:omega-pour-diag}
Let $(\mathcal{X},\mathcal{I})$ be a diagram of quasi-projective $k$-schemes
and
$\mathcal{J}\subset \mathcal{I}$ a full subcategory.
Denote $\DM_{\mathcal{J}-\coh}(\mathcal{X},\mathcal{I})$
(resp. $\DM_{\mathcal{J}-0}(\mathcal{X},\mathcal{I})$) the triangulated
subcategory of $\DM(\mathcal{X},\mathcal{I})$ whose objects are motives $M$
such that for every $j\in \mathcal{J}$,
$j^*M$ is in $\DM_{\coh}(\mathcal{X}(j))$ (resp.
$\DM_{0}(\mathcal{X}(j))$).

For $M\in \DM_{\mathcal{J}-\coh}(\mathcal{X},\mathcal{I})$ denote, if it exists,
$\omega^{0}_{\mathcal{J}|(
\mathcal{X},\mathcal{I})}(M)$
the universal object in
$\DM_{\mathcal{J}-0}(\mathcal{X},\mathcal{I})$
that admits a mapping $\delta_{\mathcal{J}|(\mathcal{X},\mathcal{I})}:
\omega^{0}_{\mathcal{J}|(
\mathcal{X},\mathcal{I})}(M) \to M$.

\end{definition}

\begin{remark}
\label{notational-remark-for-omega-on-diagrams}
We simply denote
$\DM_{\coh}(\mathcal{X},\mathcal{I})$ and
$\DM_{0}(\mathcal{X},\mathcal{I})$ the categories
$\DM_{\mathcal{I}-\coh}(\mathcal{X},\mathcal{I})$ and $\DM_{\mathcal{I}-0}(\mathcal{X},\mathcal{I})$. We also write
$\omega^0_{(\mathcal{X},\mathcal{I})}$ instead of
$\omega^0_{\mathcal{I}|(\mathcal{X},\mathcal{I})}$.
If $X$ is a quasi-projective $k$-scheme and $\mathcal{I}$ a small category, we denote $\omega^0_X$ instead of
$\omega^0_{(X,\mathcal{I})}$, if no confusion can arise. Also, given a diagram of quasi-projective $k$-schemes $(\mathcal{X},\mathcal{I})$,
a full subcategory $\mathcal{J}\subset \mathcal{I}$ and
a small category $\mathcal{K}$, we write again
$\omega^0_{\mathcal{J}|(\mathcal{X},\mathcal{I})}$
instead of
$\omega^0_{\mathcal{J}\times \mathcal{K}|(\mathcal{X}\circ {\rm pr}_1,\mathcal{I}\times \mathcal{K})}$, if no confusion can arise.
Finally, given a diagram $(\mathcal{Y},\mathcal{L}):\mathcal{I} \to \Dia(\Sch/k)$ in the category of diagrams of quasi-projective $k$-schemes, we write
$\omega^0_{\mathcal{J}|(\mathcal{Y},\mathcal{I})}$ instead of
$\omega^0_{\int_{\mathcal{J}}\mathcal{L}| (\mathcal{Y},\int_{\mathcal{I}}\mathcal{L})}$, if no confusion can arise.
\end{remark}

A full subcategory $\mathcal{J}\subset \mathcal{I}$ is said to be \emph{attracting} if for every $j\in \mathcal{J}$ and $i\in \mathcal{I}$, the condition
$\hom_{\mathcal{I}}(j,i)\neq \emptyset$ implies that $i\in \mathcal{J}$.

\begin{lemma}
\label{lemma:existence-w-0-J-X-I}
Keep the notation and assumption of \emph{Definition \ref{defn:omega-pour-diag}}. If $\mathcal{J}\subset \mathcal{I}$ is attracting,
$\omega^{0}_{\mathcal{J}|(
\mathcal{X},\mathcal{I})}(M)$
exists for all $M\in \DM_{\mathcal{J}-\coh}(\mathcal{X},\mathcal{I})$.
Moreover, the functor $\omega^0_{\mathcal{J}|(\mathcal{X},\mathcal{I})}$ commutes with infinite sums.
\end{lemma}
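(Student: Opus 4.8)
The plan is to reduce the existence of $\omega^0_{\mathcal{J}|(\mathcal{X},\mathcal{I})}$ to the existence theorem for right adjoints on compactly generated triangulated categories, exactly as was done for a single scheme in Definition \ref{defn:i-x-nu-x-omega-x}. The first step is to observe that $\DM(\mathcal{X},\mathcal{I})$ is compactly generated: the homological motives $\M(U,i)=i_\sharp\M(U\to\mathcal{X}(i))$ for $i\in\mathcal{I}$ and $U$ smooth of finite type over $\mathcal{X}(i)$ form a generating family of compact objects (this is part of the construction in \cite[Ch.~4]{ayoub-these-II}). Then I would argue that the inclusion $\DM_{\mathcal{J}-0}(\mathcal{X},\mathcal{I})\hookrightarrow\DM_{\mathcal{J}-\coh}(\mathcal{X},\mathcal{I})$, and indeed $\DM_{\mathcal{J}-\coh}(\mathcal{X},\mathcal{I})\hookrightarrow\DM(\mathcal{X},\mathcal{I})$, is a triangulated functor commuting with infinite sums between compactly generated categories, and that it preserves compact objects; by \cite[Cor.~2.1.22, Lem.~2.1.28]{ayoub-these-I} it then admits a right adjoint commuting with infinite sums, which gives $\omega^0$ together with the counit $\delta$, and the universal property is immediate from full faithfulness of the inclusion.

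The technical heart, and the place where the hypothesis that $\mathcal{J}$ is \emph{attracting} enters, is showing that $\DM_{\mathcal{J}-0}(\mathcal{X},\mathcal{I})$ is itself compactly generated and that the inclusion sends a generating set of compact objects to compact objects. Here is where I would use the attracting condition: for $j\in\mathcal{J}$ and $V$ finite over $\mathcal{X}(j)$, the object $j_\sharp$ does not in general land in $\DM_{\mathcal{J}-0}$, but one can instead use objects of the form $j_\sharp\M_{\coh}(V\to\mathcal{X}(j))$ and check, via the first isomorphism of Lemma \ref{rem:i-*-j-diaise-}, that $i^*j_\sharp\M_{\coh}(V)$ is a \emph{finite direct sum} of motives of the form $\mathcal{X}(j\to i)^*\M_{\coh}(V)$ over $\hom_{\mathcal{I}}(j,i)$; since $\mathcal{J}$ is attracting, whenever $\hom_{\mathcal{I}}(j,i)\neq\emptyset$ we have $i\in\mathcal{J}$, so these restrictions all land in $\DM_0(\mathcal{X}(i))$ by Proposition \ref{prop:cohomological-stability-oper}(i$'$), and when $\hom_{\mathcal{I}}(j,i)=\emptyset$ the restriction vanishes. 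Thus $j_\sharp\M_{\coh}(V)\in\DM_{\mathcal{J}-0}(\mathcal{X},\mathcal{I})$. One then checks that as $j$ ranges over $\mathcal{J}$, $V$ over finite $\mathcal{X}(j)$-schemes, and with Tate twists and shifts, these objects generate $\DM_{\mathcal{J}-0}(\mathcal{X},\mathcal{I})$: given $M$ with $j^*M=0$-like orthogonality to all of them, one shows $j^*M$ is orthogonal to all Artin generators of $\DM_0(\mathcal{X}(j))$ for each $j\in\mathcal{J}$ (again using the adjunction $(j_\sharp,j^*)$ and the $\sharp$-projection-formula compatibility), hence $j^*M=0$ for all $j\in\mathcal{J}$, which by definition means $M$ is zero in $\DM_{\mathcal{J}-0}(\mathcal{X},\mathcal{I})$ — but as this is a full subcategory of $\DM(\mathcal{X},\mathcal{I})$ with the same sums, that forces $M$ to be generated. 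The same computation shows these generators are compact in $\DM(\mathcal{X},\mathcal{I})$, being built from compact motives by the left-adjoint $j_\sharp$ and finitely many operations.

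The main obstacle I anticipate is precisely the bookkeeping in that middle step: one must be careful that the ``universal object'' in Definition \ref{defn:omega-pour-diag} is genuinely a right adjoint, i.e.\ that $\DM_{\mathcal{J}-0}(\mathcal{X},\mathcal{I})$ is a \emph{localizing} (sum-closed) triangulated subcategory — which it is, since $j^*$ is triangulated and commutes with sums and $\DM_0(\mathcal{X}(j))$ is localizing — and that the generating objects one writes down actually lie inside it, which is exactly where attractingness is used and where, without it, the argument breaks (restriction along an arrow $j\to i$ with $i\notin\mathcal{J}$ would produce a cohomological, non-Artin, motive). Once the generation-by-compacts statement is in hand, the conclusion — existence of $\omega^0_{\mathcal{J}|(\mathcal{X},\mathcal{I})}$ and its commutation with infinite sums — follows formally from the cited corollary and lemma of \cite{ayoub-these-I}, just as in the single-scheme case.
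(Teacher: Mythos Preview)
Your overall strategy---show that $\DM_{\mathcal{J}-0}(\mathcal{X},\mathcal{I})$ and $\DM_{\mathcal{J}-\coh}(\mathcal{X},\mathcal{I})$ are compactly generated and invoke \cite[Cor.~2.1.22, Lem.~2.1.28]{ayoub-these-I}---is exactly the paper's. But your generation argument has a real gap, and the attracting hypothesis is misplaced.

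You propose $\{j_\sharp L : j\in\mathcal{J},\ L\ \text{compact Artin over}\ \mathcal{X}(j)\}$ as compact generators of $\DM_{\mathcal{J}-0}(\mathcal{X},\mathcal{I})$. Your orthogonality argument correctly shows that if $M\in\DM_{\mathcal{J}-0}$ is right-orthogonal to all of these, then $j^*M=0$ for every $j\in\mathcal{J}$. You then write ``by definition $M$ is zero in $\DM_{\mathcal{J}-0}(\mathcal{X},\mathcal{I})$''. This is false when $\mathcal{J}\subsetneq\mathcal{I}$: the family $\{j^*\}_{j\in\mathcal{J}}$ is not conservative. For any $i\in\mathcal{I}\smallsetminus\mathcal{J}$ and nonzero compact $A\in\DM(\mathcal{X}(i))$, the object $i_\sharp A$ is nonzero (Lemma~\ref{rem:i-*-j-diaise-} gives $A$ as a summand of $i^*i_\sharp A$), yet $j^*i_\sharp A=\bigoplus_{\hom_{\mathcal{I}}(j,i)}(\cdots)=0$ for all $j\in\mathcal{J}$ \emph{precisely because} $\mathcal{J}$ is attracting; so $i_\sharp A\in\DM_{\mathcal{J}-0}(\mathcal{X},\mathcal{I})$ and is orthogonal to your whole family. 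These $i_\sharp A$ are exactly the missing second family of generators in the paper's proof: attracting is used there to show they lie in $\DM_{\mathcal{J}-0}$ (resp.\ $\DM_{\mathcal{J}-\coh}$), and together with the $j_\sharp B$ they generate by \cite[Prop.~2.1.27]{ayoub-these-I}, since one now controls $i^*M$ for \emph{all} $i\in\mathcal{I}$. By contrast, verifying $j_\sharp L\in\DM_{\mathcal{J}-0}$ for $j\in\mathcal{J}$ needs no attracting at all---one only has to check $i'^*(j_\sharp L)$ is Artin for $i'\in\mathcal{J}$, and Lemma~\ref{rem:i-*-j-diaise-} together with stability of Artin motives under pullback (Proposition~\ref{prop:cohomological-stability-oper}(i$'$)) already gives this.
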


\begin{proof}
The subcategories $\DM_{\mathcal{J}-0}(\mathcal{X},\mathcal{I}), \, \DM_{\mathcal{J}-\coh}(\mathcal{X},\mathcal{I}) \subset \DM(\mathcal{X},\mathcal{I})$ are stable under infinite sums. We show that they are compactly generated. The proof being the same for both categories, we concentrate on $\DM_{\mathcal{J}-\coh}(\mathcal{X},\mathcal{I})$.
For $j\in \mathcal{J}$ and $B\in \DM_{\coh}(\mathcal{X}(j))$,
$j_{\sharp}B$ is in $\DM_{\coh}(\mathcal{X},\mathcal{I})$ (which is contained in $\DM_{\mathcal{J}-\coh}(\mathcal{X},\mathcal{I})$). Indeed,
by Lemma \ref{rem:i-*-j-diaise-}, for any $i \in \mathcal{I}$,
$i^*j_{\sharp}B$ is isomorphic to the coproduct over
the arrows $i\to j$ in $\hom_{\mathcal{I}}(i,j)$ of
$\mathcal{X}(i\to j)^*B$. Similarly, for $i\in \mathcal{I}-\mathcal{J}$ and $A\in \DM(\mathcal{X}(i))$, $i_{\sharp}A$ is in
$\DM_{\mathcal{J}-\coh}(\mathcal{X},\mathcal{I})$. Indeed, for
$j\in \mathcal{J}$, $j^*i_{\sharp}=0$. This follows from
Lemma \ref{rem:i-*-j-diaise-} and the fact that $\hom_{\mathcal{I}}(j,i)=\emptyset$.
For all compact $A$ and $B$,
the motives
$i_{\sharp}A$ and $j_{\sharp}B$ are compact,
and they form a system of compact generators for
$\DM_{\mathcal{J}-\coh}(\mathcal{X},\mathcal{I})$
by \cite[Prop.~2.1.27]{ayoub-these-I}.
Now, by \cite[Cor.~2.1.22 and Lem.~2.1.28]{ayoub-these-I}, the inclusion ${\rm i}_{\mathcal{J}|(\mathcal{X}|\mathcal{I})}:\DM_{\mathcal{J}-0}(\mathcal{X},\mathcal{I}) \hookrightarrow \DM_{\mathcal{J}-\coh}(\mathcal{X},\mathcal{I})$ has a right adjoint $\nu_{\mathcal{J}|(\mathcal{X},\mathcal{I})}^0$ that commutes with infinite sums. It is clear that $\omega^0_{\mathcal{J}|(\mathcal{X}|\mathcal{I})}=
{\rm i}_{\mathcal{J}|(\mathcal{X}|\mathcal{I})}\circ
\nu_{\mathcal{J}|(\mathcal{X},\mathcal{I})}^0$ gives the universal object in $\DM_{\mathcal{J}-0}(\mathcal{X},\mathcal{I})$
that maps to $M\in \DM_{\mathcal{J}-\coh}(\mathcal{X},\mathcal{I})$.
\end{proof}

\begin{proposition}
\label{prop:omega-diagram-raisonnable}
Keep the notation and assumption of \emph{Definition \ref{defn:omega-pour-diag}} and assume that $\mathcal{J}\subset \mathcal{I}$ is attracting.

\begin{enumerate}

\item[(a)] For $j\in \mathcal{J}$, there is a canonical isomorphism $j^*\circ \omega^{0}_{\mathcal{J}|(\mathcal{X},\mathcal{I})}\simeq \omega^0_{\mathcal{X}(j)}\circ j^*$ making the triangle
$$\xymatrix@R=1.8pc{j^*\circ \omega^{0}_{\mathcal{J}|(\mathcal{X},\mathcal{I})} \ar[r]^-{\sim} \ar@/_.8pc/[dr]_-{j^*(\delta_{\mathcal{J}|(\mathcal{X},\mathcal{I})})} & \omega^0_{\mathcal{X}(j)}\circ j^* \ar[d]^-{\delta_{\mathcal{X}(j)}(j^*)} \\
& j^*}$$
commutative.

\item[(b)]
For $i\in \mathcal{I}- \mathcal{J}$,
the natural transformation
$i^*(\delta_{\mathcal{J}|(\mathcal{X},\mathcal{I})}):i^*\circ \omega^{0}_{\mathcal{J}|(\mathcal{X},\mathcal{I})}\to i^*$
is an isomorphism.

\end{enumerate}

\end{proposition}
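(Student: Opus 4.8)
The plan is to deduce both statements formally from the universal property characterizing $\delta_{\mathcal{J}|(\mathcal{X},\mathcal{I})}$, combined with the adjunctions $(i_{\sharp},i^{*})$ attached to the objects $i$ of $\mathcal{I}$ and the computation of $j^{*}i_{\sharp}$ given in Lemma \ref{rem:i-*-j-diaise-}. First I record the tools: since ${\rm i}_{\mathcal{J}|(\mathcal{X},\mathcal{I})}:\DM_{\mathcal{J}-0}(\mathcal{X},\mathcal{I})\hookrightarrow\DM_{\mathcal{J}-\coh}(\mathcal{X},\mathcal{I})$ is a full embedding (as in the proof of Lemma \ref{lemma:existence-w-0-J-X-I}), the morphism $\delta_{\mathcal{J}|(\mathcal{X},\mathcal{I})}(M):\omega^{0}_{\mathcal{J}|(\mathcal{X},\mathcal{I})}(M)\to M$ is the universal morphism to $M$ out of an object of $\DM_{\mathcal{J}-0}(\mathcal{X},\mathcal{I})$, exactly as in Proposition \ref{prop:univ-property-for-omega-addenda}; I will invoke this several times. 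I also use that for each $i\in\mathcal{I}$ the morphism $i:\mathcal{X}(i)\to(\mathcal{X},\mathcal{I})$ is objectwise smooth, so that $i_{\sharp}$ is defined.

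For part (b), fix $i\in\mathcal{I}-\mathcal{J}$ and an arbitrary object $A\in\DM(\mathcal{X}(i))$. Since $\mathcal{J}$ is attracting and $i\notin\mathcal{J}$, we have $\hom_{\mathcal{I}}(j,i)=\emptyset$ for every $j\in\mathcal{J}$; hence, by Lemma \ref{rem:i-*-j-diaise-}, $j^{*}i_{\sharp}A=0$ for every $j\in\mathcal{J}$, so that $i_{\sharp}A$ already belongs to $\DM_{\mathcal{J}-0}(\mathcal{X},\mathcal{I})$ (its objects are characterized by this condition, cf.\ Definition \ref{defn:omega-pour-diag}). Applying the universal property of $\delta_{\mathcal{J}|(\mathcal{X},\mathcal{I})}(M)$ to $i_{\sharp}A$ and transporting along the adjunction $(i_{\sharp},i^{*})$ shows that composition with $i^{*}(\delta_{\mathcal{J}|(\mathcal{X},\mathcal{I})}(M))$ induces a bijection $\hom(A,i^{*}\omega^{0}_{\mathcal{J}|(\mathcal{X},\mathcal{I})}(M))\xrightarrow{\sim}\hom(A,i^{*}M)$; as $A$ is arbitrary, $i^{*}(\delta_{\mathcal{J}|(\mathcal{X},\mathcal{I})})$ is invertible.

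For part (a), I first build the comparison map. For $j\in\mathcal{J}$ the motive $j^{*}\omega^{0}_{\mathcal{J}|(\mathcal{X},\mathcal{I})}(M)$ lies in $\DM_{0}(\mathcal{X}(j))$, while $j^{*}M$ lies in $\DM_{\coh}(\mathcal{X}(j))$; so by the universal property of $\omega^{0}_{\mathcal{X}(j)}$ (Proposition \ref{prop:univ-property-for-omega-addenda}) the morphism $j^{*}(\delta_{\mathcal{J}|(\mathcal{X},\mathcal{I})}(M))$ factors uniquely as $\delta_{\mathcal{X}(j)}(j^{*}M)\circ\phi_{M}$, which defines the natural transformation of (a) and makes the triangle commute by construction. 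To prove $\phi_{M}$ invertible, the key point is that $j_{\sharp}$ carries Artin motives into $\DM_{\mathcal{J}-0}(\mathcal{X},\mathcal{I})$: for $A\in\DM_{0}(\mathcal{X}(j))$ and any $j'\in\mathcal{J}$, Lemma \ref{rem:i-*-j-diaise-} identifies $j'^{*}j_{\sharp}A$ with $\bigoplus_{j'\to j}\mathcal{X}(j'\to j)^{*}A$, which is a (possibly infinite) sum of Artin motives — any morphism of quasi-projective $k$-schemes is quasi-projective, as the source embeds in a projective space over $k$, so Proposition \ref{prop:cohomological-stability-oper}, (i$'$), applies — hence an Artin motive. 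Therefore, for $A\in\DM_{0}(\mathcal{X}(j))$, the adjunction $(j_{\sharp},j^{*})$ together with the universal property shows that $\hom(A,j^{*}(\delta_{\mathcal{J}|(\mathcal{X},\mathcal{I})}(M)))$ is bijective; since $\hom(A,\delta_{\mathcal{X}(j)}(j^{*}M))$ is bijective too, the factorization $j^{*}(\delta_{\mathcal{J}|(\mathcal{X},\mathcal{I})}(M))=\delta_{\mathcal{X}(j)}(j^{*}M)\circ\phi_{M}$ forces $\hom(A,\phi_{M})$ to be bijective for every $A\in\DM_{0}(\mathcal{X}(j))$. As both the source and target of $\phi_{M}$ lie in $\DM_{0}(\mathcal{X}(j))$, the Yoneda lemma then gives that $\phi_{M}$ is an isomorphism.

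I expect the only real subtlety to be the bookkeeping that pins down, via Lemma \ref{rem:i-*-j-diaise-} and the stability statement in Proposition \ref{prop:cohomological-stability-oper}, which objects $i_{\sharp}A$ and $j_{\sharp}A$ lie in $\DM_{\mathcal{J}-0}(\mathcal{X},\mathcal{I})$; once that is in place, both parts are purely formal. A secondary routine point is the naturality of $\phi_{M}$ in $M$, which is automatic since $\phi_{M}$ is assembled from natural transformations and a unique factorization, but should be checked.
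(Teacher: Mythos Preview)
Your proof is correct and follows essentially the same approach as the paper: both arguments use the adjunctions $(i_{\sharp},i^{*})$ and $(j_{\sharp},j^{*})$ together with Lemma~\ref{rem:i-*-j-diaise-} to identify which objects $i_{\sharp}A$ and $j_{\sharp}A$ land in $\DM_{\mathcal{J}-0}(\mathcal{X},\mathcal{I})$, and then conclude via the universal property and Yoneda. The only cosmetic difference is that the paper shows directly that $j^{*}(\delta_{\mathcal{J}|(\mathcal{X},\mathcal{I})}(M))$ satisfies the universal property characterizing $\delta_{\mathcal{X}(j)}(j^{*}M)$, whereas you first construct the comparison map $\phi_{M}$ and then check it is invertible; these are two packagings of the same computation.
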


\begin{proof}
We fix $M\in \DM_{\mathcal{J}-\coh}(\mathcal{X},\mathcal{I})$.
For (a), we need to show that
$j^*(\omega^{0}_{\mathcal{J}|(\mathcal{X},\mathcal{I})}(M))\to j^*M$
is the universal morphism from an Artin motive. Let
$A\in \DM_0(\mathcal{X}(j))$ be an Artin motive. To give a morphism
$a_1:A\to j^*M$
is equivalent, by the adjunction $(j_{\sharp},j^*)$, to giving
a morphism $a_2:j_{\sharp} A \to M$.
Using Lemma \ref{rem:i-*-j-diaise-}, we see that $j_{\sharp}A$ is in
$\DM_{0}(\mathcal{X},\mathcal{I})$ and in particular in
$\DM_{\mathcal{J}-0}(\mathcal{X},\mathcal{I})$. Thus, to give the morphism $a_2$ is equivalent to giving a morphism
$a_3:j_{\sharp} A \to \omega^0_{\mathcal{J}|(\mathcal{X},\mathcal{I})}(M)$. Using again the adjunction $(j_{\sharp},j^*)$, we see that to give $a_3$ is equivalent to giving $a_4:A \to j^*(\omega^0_{\mathcal{J}|(\mathcal{X},\mathcal{I})}(M))$.

For (b), we fix $N\in \DM(\mathcal{X}(i))$. To give a morphism
$b_1:N \to i^*M$ is equivalent, by the adjunction $(i_{\sharp},i^*)$, to giving a morphism
$b_2:i_{\sharp} N \to M$. Now, for $j\in \mathcal{J}$,
$j^*i_{\sharp} N$ is zero
(as in the proof of Lemma \ref{lemma:existence-w-0-J-X-I}).
In particular, $i_{\sharp}N$ is in
$\DM_{\mathcal{J}-0}(\mathcal{X},\mathcal{I})$. Thus, to give the morphism $b_2$ is equivalent to giving a morphism
$i_{\sharp}N \to \omega^0_{\mathcal{J}|(\mathcal{X},\mathcal{I})}(M)$.
Using again the adjunction $(i_{\sharp},i^*)$, we
see that to give $b_3$ is equivalent to giving $b_4:N \to i^*
(\omega^0_{\mathcal{J}|(\mathcal{X},\mathcal{I})}(M))$. Our claim follows now by Yoneda's lemma.
\end{proof}

We introduce some notation. Recall that $\underline{\mathbf{1}}$
denotes the ordered set $\{0\to 1\}$. Let $\ucarre$ be the
complement of $(1,1)$ in $\underline{\mathbf{1}}\times
\underline{\mathbf{1}}$. Given a set $E$, we denote
$\mathcal{P}(E)$ the set of subsets of $E$, partially ordered by
inclusion. Let also $\mathcal{P}_2(E)\subset \mathcal{P}(E)^2$ be
the subset consisting of pairs $(I_0,I_1)$ of subsets of $E$ such
that $(I_0\cap I_1)=\emptyset$. The direct product $\ucarre^E$ can
be identified with $\mathcal{P}_2(E)$ by sending a function $f:E
\to \ucarre$ to the pair $(I_0,I_1)$ where $I_0=\{e\in E, \,
f(e)=(1,0)\}$ and $I_1=\{e\in E, \, f(e)=(0,1)\}$. In particular,
we have an identification $\mathcal{P}_2([\![1,n-1]\!])\times
\ucarre \simeq \mathcal{P}_2([\![1,n]\!])$ which sends
$((J_0,J_1),(0,0))$, $((J_0,J_1),(1,0))$ and $((J_0,J_1),(0,1))$
to $(J_0,J_1)$, $(J_0\bigsqcup\{n\},J_1)$ and $(J_0,J_1\bigsqcup
\{n\})$ respectively for every $(J_0,J_1)\in
\mathcal{P}_2([\![1,n-1]\!])$. This identification will be used
freely in the next statement.

\begin{proposition}
\label{prop:cube-strat-F-omega-j-star}
Let $X$ be a
quasi-projective scheme over a field $k$ of characteristic zero,
endowed with a stratification by locally closed subschemes
$\mathcal{S}=(X_i)_{i\in [\![0,n]\!]}$ such that $X_{i}\subset
\overline{X_{i-1}}$ for $i\in [\![1,n]\!]$. For $i\in
[\![0,n]\!]$, we denote by $u_i$ the inclusion of $X_i$ in $X$.

Then there exists a canonical motive $\theta_{X,\mathcal{S}}\in
\DM(X,\mathcal{P}_2([\![1,n]\!]))$, which is a commutative unitary algebra and which satisfies the following properties.
\begin{enumerate}

\item[(i)] Let $(I_0,I_1)\in \mathcal{P}_2([\![1,n]\!])$.  Then
$$(I_0,I_1)^*\theta_{X,\mathcal{S}}=\psi^{I_0,I_1}_{n}\dots \psi^{I_0,I_1}_{1}(u_{0*}\un_{X_0})$$
where
$$\psi^{I_0,I_1}_j=\left\{\begin{array}{ccc}
\id & \text{if} & j\in I_0,\\
u_{j*}u_j^* & \text{if} & j\not\in  I_0\bigsqcup I_1,\\
u_{j*}\omega^0_{X_j} u_j^* & \text{if} & j\in I_1.
\end{array}\right.$$

\item[(ii)] Suppose that $(I_0,I_1)\subset (I_0',I_1')$ (i.e., $I_0\subset I_0'$ and $I_1\subset I_1'$).
The morphism $(I_0',I_1')^*\theta_{X,\mathcal{S}}\to (I_0,I_1)^*\theta_{X,\mathcal{S}}$ is induced by the natural transformations $\psi^{I_0',I_1'}_j \to \psi^{I_0,I_1}_j$ equal to the identity or one of the two natural transformations
$$\id \to u_{j*}u_j^* \qquad \text{and} \qquad u_{j*}\omega^0_{X_j} u_j^* \to u_{j*}u_j^*$$
depending on the value of $j$.

\item[(iii)] There exists a canonical isomorphism of commutative unitary algebras
$$\omega^0_X (u_{0*}\un_{X_0}) \simeq {\rm holim}\; \theta_{X,\mathcal{S}}.$$
More precisely, ${\rm holim}\; \theta_{X,\mathcal{S}}$ is an Artin
motive, and
$([\![1,n]\!],\emptyset)^*\theta_{X,\mathcal{S}} \simeq u_{0*}\un_{X_0}$
yields a canonical morphism ${\rm holim}\, \theta_{X,\mathcal{S}}
\to u_{0*}\un_{X_0}$ which identifies ${\rm
holim}\, \theta_{X,\mathcal{S}}$ with
$\omega^0_X(u_{0*}\un_{X_0})$.

\end{enumerate}

The motive $\theta_{X,\mathcal{S}}$ is functorial with respect to universally open morphisms\footnote{Recall that a finite presentation morphism $p:T \to S$ is open if the image of every Zariski open subset of $T$ is a Zariski open subset of $S$. We say that $p$ is universally open if any base-change of $p$ is open.} in the following way. Let
$l:\check{X} \to X$ be a universally open morphism of quasi-projective $k$-schemes. For $i\in [\![0,n]\!]$, denote $\check{X}_i=l^{-1}(X_i)$ and $\check{u}_i:\check{X}_i \hookrightarrow \check{X}$ the inclusion.
Then $\check{\mathcal{S}}=(\check{X}_i)_{i\in [\![0,n]\!]}$
is a stratification on $\check{X}$ such that
$\check{X}_i\subset \overline{\check{X}_{i-1}}$ for
$i\in [\![1,n]\!]$, and
there exists a canonical morphism of commutative unitary algebras $l^*\theta_{X,\mathcal{S}}\to \theta_{\check{X},\check{\mathcal{S}}}$
making the following diagram commutative
$$\xymatrix@C=1.5pc@R=1.7pc{l^*\omega^0_X u_{0*} \un_{X_0} \ar[d]_-{\sim}  \ar[r] & \omega^0_{\check{X}} l^*u_{0*}\un_{X_0} \ar[r]  & \omega^0_{\check{X}} (\check{u}_0)_*\un_{\check{X}_0}\ar[d]^-{\sim} \\
l^* {\rm holim}\,\theta_{X,\mathcal{S}} \ar[r]  & {\rm holim}\, l^*\theta_{X,\mathcal{S}} \ar[r]  & {\rm holim}\, \theta_{\check{X},\check{\mathcal{S}}}.}$$
Moreover, when $l$ is smooth, the morphism
$l^*\theta_{X,\mathcal{S}} \to \theta_{\check{X},\check{\mathcal{S}}}$ is invertible.

\end{proposition}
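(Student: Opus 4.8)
The plan is to construct $\theta_{X,\mathcal{S}}$ inductively on $n$, using the identification $\mathcal{P}_2([\![1,n-1]\!])\times\ucarre\simeq\mathcal{P}_2([\![1,n]\!])$ and the machinery of motives over diagrams of schemes from \S\ref{subsect:motives over a diagram}. For $n=0$ one simply takes $\theta_{X,\mathcal{S}}=u_{0*}\un_{X_0}$, which is a commutative unitary algebra since $u_{0*}$ is pseudo-monoidal and $\un_{X_0}$ is one. For the inductive step, writing $\mathcal{S}'=(X_i)_{i\in[\![0,n-1]\!]}$ as a stratification on $\overline{X_{n-1}}$ (or on $X$, suitably interpreted), one has $\theta_{X,\mathcal{S}'}\in\DM(X,\mathcal{P}_2([\![1,n-1]\!]))$ by induction. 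One then needs to promote the span $\bigl\{\id\leftarrow u_{n*}\omega^0_{X_n}u_n^*\rightarrow u_{n*}u_n^*\bigr\}$ applied objectwise to $\theta_{X,\mathcal{S}'}$ into an actual object of $\DM(X,\mathcal{P}_2([\![1,n-1]\!])\times\ucarre)$. The key point, as the paper itself stresses in the Introduction, is that one cannot just write down a diagram of motives: the natural transformation $\id\to u_{n*}u_n^*$ and $\delta_{X_n}:u_{n*}\omega^0_{X_n}u_n^*\to u_{n*}u_n^*$ must be realised strictly, which is done by working with the explicit unit and counit at the level of the underlying model categories / derivators, exactly as in Lemma \ref{texnical-commut-holim} where $\ucarre$ is handled. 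Since $\ucarre$ is universal for homotopy limits (Lemma \ref{texnical-commut-holim}), ${\rm holim}$ over $\mathcal{P}_2([\![1,n]\!])$ factors as iterated ${\rm holim}$ over $\ucarre$, and this is what will let the induction close.

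The proof of properties (i), (ii), (iii) will then go as follows. Property (i) is the \emph{definition} of the objectwise value, read off from the inductive construction: applying $\psi^{I_0,I_1}_n$ to $(I_0\cap[\![1,n-1]\!],\,I_1\cap[\![1,n-1]\!])^*\theta_{X,\mathcal{S}'}$ and unwinding. Property (ii) is likewise forced by the construction, the transition maps being the units/counits used to build the $\ucarre$-diagram; one checks compatibility with composition of inclusions $(I_0,I_1)\subset(I_0',I_1')\subset(I_0'',I_1'')$, which is routine bookkeeping in the derivator. For (iii), I would compute ${\rm holim}\,\theta_{X,\mathcal{S}}$ by iterated homotopy pullback over the $\ucarre$-factors: the homotopy pullback of $\bigl\{\id\leftarrow u_{n*}\omega^0_{X_n}u_n^*\rightarrow u_{n*}u_n^*\bigr\}$ applied to a motive $M$ is, by the triangle defining $\omega^0$, exactly the fibre of $M\to u_{n*}\mathrm{Cone}(\omega^0_{X_n}u_n^*M\to u_n^*M)$; iterating over $n,n-1,\dots,1$ one arrives at a motive mapping to $u_{0*}\un_{X_0}$. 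That this motive is an Artin motive and realises $\omega^0_X(u_{0*}\un_{X_0})$ is proved by induction: using Lemma \ref{lemma:form-locality-closed-cover}-style locality together with Proposition \ref{prop:additional-prop-omega-0-x}(iii),(vi) — specifically the isomorphism $\omega^0_X j_*\omega^0_U\simeq\omega^0_X j_*$ and the behaviour of $\omega^0$ under $i^*$ when the open restriction is already Artin — one verifies at each stratum that the homotopy limit has the right restrictions, hence agrees with $\omega^0_X(u_{0*}\un_{X_0})$ by the universal property (Proposition \ref{prop:univ-property-for-omega-addenda}). The commutative-algebra structure is inherited throughout because all the functors involved ($u_{j*}$, $\omega^0_{X_j}$, homotopy limit over a finite diagram) are pseudo-monoidal and pseudo-unitary (Lemma \ref{lem:omega-0-monoidal}), and the transition maps are morphisms of such.

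For the functoriality statement, let $l:\check X\to X$ be universally open. First one checks $\check{\mathcal{S}}$ is a stratification with $\check X_i\subset\overline{\check X_{i-1}}$: this uses precisely that $l$ is universally open, so $l^{-1}(\overline{X_{i-1}})=\overline{l^{-1}(X_{i-1})}$ (openness of $l$ gives that the image of the complement is open, equivalently $l^{-1}$ commutes with closure for the relevant subschemes). Then one builds the comparison $l^*\theta_{X,\mathcal{S}}\to\theta_{\check X,\check{\mathcal S}}$ by induction on $n$, at each stage using the base-change natural transformations $l^*u_{j*}\to \check u_{j*}\check l_j^*$ (for $\check l_j:\check X_j\to X_j$ the restriction) together with $\alpha_l:l^*\omega^0_{X_j}\to\omega^0_{\check X_j}l^*$ from Proposition \ref{prop:additional-prop-omega-0-x}(ii); these are morphisms of pseudo-monoidal pseudo-unitary functors, so the comparison is a morphism of commutative unitary algebras, and it is compatible with the transition maps defining the diagrams, hence is a genuine morphism in $\DM(\check X,\mathcal{P}_2([\![1,n]\!]))$. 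Applying ${\rm holim}$ and using $l^*{\rm holim}\to{\rm holim}\,l^*$ gives the stated commutative square, the left vertical and right vertical isomorphisms being instances of (iii) over $X$ and over $\check X$ respectively, and the top composite being $l^*\omega^0_X u_{0*}\to\omega^0_{\check X}l^*u_{0*}\to\omega^0_{\check X}\check u_{0*}\check l_0^*$ (the last map using $l^*u_{0*}\un_{X_0}\to \check u_{0*}\un_{\check X_0}$, an isomorphism already when $l$ is flat, in particular smooth, but in general only a map). Finally, when $l$ is smooth, $\alpha_l$ is invertible (Proposition \ref{prop:additional-prop-omega-0-x}(ii), last sentence) and the base-change maps $l^*u_{j*}\to\check u_{j*}\check l_j^*$ are invertible by smooth base change (\cite[Prop.~4.5.48]{ayoub-these-II}), so the comparison $l^*\theta_{X,\mathcal S}\to\theta_{\check X,\check{\mathcal S}}$ is an isomorphism objectwise, hence an isomorphism. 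The main obstacle I expect is the rigidification: carrying out the induction \emph{strictly} at the level of diagrams of motives so that ${\rm holim}$ is a well-defined functor and the transition morphisms compose on the nose — this is a genuine technical point rather than a formal one, and is the reason the paper insists on working in $\DM(X,\ucarre)$ rather than with diagrams in $\DM(X)$; one leans heavily on Lemma \ref{texnical-commut-holim} and the derivator formalism of \cite{ayoub-these-I} to make it legitimate.
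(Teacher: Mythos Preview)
Your inductive strategy and your identification of the central difficulty (promoting the span $\id\to u_{n*}u_n^*\leftarrow u_{n*}\omega^0_{X_n}u_n^*$ to a genuine object of $\DM(X,\ucarre)$) are correct and match the paper. However, the mechanism you propose for the rigidification --- ``working with the explicit unit and counit at the level of the underlying model categories / derivators, exactly as in Lemma \ref{texnical-commut-holim}'' --- is both vague and not what the paper does; Lemma \ref{texnical-commut-holim} concerns commutation of $1$-morphisms of derivators with $\ucarre$-holim and plays no role in the \emph{construction}.

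The paper's device is different and sharper. One takes $X'=X\setminus X_n$ with the induced stratification $\mathcal{S}'$ (not $\overline{X_{n-1}}$ as you write), obtains $\theta_{X',\mathcal{S}'}\in\DM(X',\mathcal{P}_2([\![1,n-1]\!]))$ by induction, and introduces the auxiliary diagram of schemes $(\mathcal{A}_n,\ucarre)$ with $\mathcal{A}_n(1,0)=X$ and $\mathcal{A}_n(0,0)=\mathcal{A}_n(0,1)=X_n$. The object is then given by the closed formula
\[
\theta_{X,\mathcal{S}}\;=\;e_*\,\omega^0_{\{(0,1)\}\mid(\mathcal{A}_n,\ucarre)}\,o_*\,b^*\,j_*\,\theta_{X',\mathcal{S}'},
\]
with $j:X'\hookrightarrow X$, $b:(\mathcal{A}_n\circ o,\underline{\mathbf 1})\to X$, $e:(\mathcal{A}_n,\ucarre)\to(X,\ucarre)$ the evident morphisms and $o:\underline{\mathbf 1}\hookrightarrow\ucarre$. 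The ingredient you did not locate is the \emph{partial} punctual-weight-zero functor $\omega^0_{\{(0,1)\}\mid(\mathcal{A}_n,\ucarre)}$ of Definition \ref{defn:omega-pour-diag} and Proposition \ref{prop:omega-diagram-raisonnable}: this applies $\omega^0_{X_n}$ at the $(0,1)$-vertex and the identity elsewhere, \emph{functorially}, without any ad~hoc strictification. The composite $o_*b^*$ already produces $(j_*\theta_{X',\mathcal{S}'},\,u_n^*j_*\theta_{X',\mathcal{S}'},\,u_n^*j_*\theta_{X',\mathcal{S}'})$ as a bona fide object of $\DM(\mathcal{A}_n,\ucarre)$; the partial $\omega^0$ replaces the $(0,1)$-entry by its Artin part, and $e_*$ pushes forward. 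Properties (i) and (ii) then follow from Proposition \ref{prop:omega-diagram-raisonnable}, and the algebra structure from pseudo-monoidality of every functor in the formula. Your outlines for (iii) and for functoriality in $l$ are essentially right; for (iii) the paper checks that $j^*$ and $u_n^*$ of the holim are Artin (hence so is the holim, by localization), then uses that $\omega^0_X$ commutes with $\ucarre^n$-holim --- \emph{this} is where Lemma \ref{texnical-commut-holim} enters --- together with Proposition \ref{prop:additional-prop-omega-0-x}(iii).
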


\begin{proof}
The construction of the motive $\theta_{X,\mathcal{S}}$ and the proof of its properties are by induction on the integer $n$. When $n=0$, there is
nothing to do. Indeed, as $\mathcal{P}_2(\emptyset)=\mathbf{e}$, the category with
one object and one arrow, one has to take $\theta_{X,\mathcal{S}}= \un_X\in \DM(X)$.

Let us assume that $n\geq 1$ and that the proposition is proven
for $n-1$. Let $X'=X-X_n$ and $X'_i=X_i$ for $0\leq i \leq n-1$. We
have a stratification $\mathcal{S}'=(X'_i)_{i\in [\![0,n-1]\!]}$ of
$X'$. Denote $u'_i:X'_i \hookrightarrow X'$ and $j:X'\hookrightarrow X$. By induction, we
have a motive $\theta_{X',\mathcal{S}'}\in \DM(X,\mathcal{P}_2([\![1,n-1]\!]))$ satisfying
the properties of the statement.

Let $(\mathcal{A}_n,\ucarre)$ be the following diagram of schemes
$$\xymatrix{X & X_n \ar[l]_-{u_n} \ar@{=}[r] & X_n}$$
where $\mathcal{A}_n(1,0)=X$ and $\mathcal{A}_n(0,0)=\mathcal{A}_n(0,1)=X_n$.
Write $o$ for the non-decreasing map $(-,0):\underline{\mathbf{1}} \to \ucarre$. By restriction, we get a diagram of schemes
$(\mathcal{A}_n \circ o,\underline{\mathbf{1}})$ and a corresponding morphism $o:(\mathcal{A}_n \circ o,\underline{\mathbf{1}})
\to (\mathcal{A}_n,\ucarre)$.
Also we have a morphism
$b:(\mathcal{A}_n\circ o,
\underline{\mathbf{1}}) \to X$
in $\Dia(\Sch/k)$ which is the closed immersion $u_n$ over $0\in \underline{\mathbf{1}}$
and the identity over $1\in \underline{\mathbf{1}}$.
Similarly, we have a morphism
$e:(\mathcal{A}_n,\ucarre) \to (X,\ucarre)$
which is given by $\id_X$ and $u_n$.
Now consider the following diagram in $\Dia(\Sch/k)$
$$\xymatrix@C=1.5pc{X' \ar[r]^-j & X & (\mathcal{A}_n\circ o,\underline{\mathbf{1}}) \ar[r]^-o \ar[l]_-b & (\mathcal{A}_n,\ucarre) \ar[r]^-e & (X,\ucarre). }$$
We define $\theta_{X,\mathcal{S}}$ out of $\theta_{X',\mathcal{S}'}$
by the formula
\begin{equation}
\label{eq:inductive-formula-mot-theta-x-s}
\theta_{X,\mathcal{S}}=e_*\omega^0_{\{(0,1)\}|(\mathcal{A}_n,\smallucarre)} o_*b^*j_*\theta_{X',\mathcal{S}'}.
\footnote{It is possible to give a simpler formula for $\theta_{X,\mathcal{S}}$ by replacing the composition $o_*b^*$ by the operation $p^*$ with $p$ the natural morphism $(\mathcal{A}_n,\ucarre)\to X$. However, the formula
\eqref{eq:inductive-formula-mot-theta-x-s}
is more suited for the proof of
Proposition \ref{prop:comparaison-theta-et-theta-prime}.}
\end{equation}
In the formula above, $\omega^0_{\{(0,1)\}|(\mathcal{A}_n,\smallucarre)}$ is really
$\omega^0_{\mathcal{P}_2([\![1,n-1]\!])\times \{0,1\}|
(\mathcal{A}_n\circ {\rm pr}_2,\mathcal{P}_2([\![1,n-1]\!])\times \smallucarre)}$ (see
Remark \ref{notational-remark-for-omega-on-diagrams}).
As the functors used
in \eqref{eq:inductive-formula-mot-theta-x-s}
are all pseudo-monoidal, symmetric and pseudo-unitary, we see that
$\theta_{X,\mathcal{S}}$ is again a commutative unitary algebra.

The motive $o_*b^*j_*\theta_{X',\mathcal{S}'}$ is given by
$j_*\theta_{X',\mathcal{S}'}$ over $\mathcal{A}_n(1,0)=X$ and
by
$u_n^*j_*\theta_{X',\mathcal{S}'}$ over $\mathcal{A}_n(0,0)=X_n$ and
$\mathcal{A}_n(0,1)=X_n$.
It follows from
Proposition \ref{prop:omega-diagram-raisonnable}
that the $\ucarre$-partial skeleton (cf.~\eqref{eq:partial-skeleton-definition}) of
$\theta_{X,\mathcal{S}}$ is given by
\begin{equation}
\label{eq:needed-in-sect-3-for-theta-X-S-007}
\xymatrix{ \overset{^{(1,0)}}{j_*\theta_{X',\mathcal{S}'}} \ar@<-.5pc>[r]^-{\eta} & \overset{^{(0,0)}} {u_{n*}u_n^* j_*\theta_{X',\mathcal{S}'} } &   \overset{^{(0,1)}}{u_{n*} \omega^0_{X_{n}} u_n^* j_*\theta_{X',\mathcal{S'}}}\ar@<.5pc>[l]_-{\delta_{X_n}}.}
\end{equation}
Properties (i) and (ii) are thus immediate.

We now check (iii). Using the induction hypothesis and Lemma
\ref{texnical-commut-holim}, the homotopy limit of
$\theta_{X,\mathcal{S}}$ can be identified with the homotopy limit of
\begin{equation}
\label{eq-prop:cube-strat-F-omega-j-star-rajoute-123}
\xymatrix{
j_*(\omega^0_{X'} u'_{0*}\un_{X'_0}) \ar[r]^-{\eta} &  u_{n*} u_n^* j_* (\omega^0_{X'} u'_{0*}\un_{X'_0}) & u_{n*} \omega^0_{X_n} u_n^* j_* (\omega^0_{X'} u'_{0*}\un_{X'_0}) \ar[l]_-{\delta_{X_n}}.}
\end{equation}
This shows that $j^*{\rm holim}\,\theta_{X,\mathcal{S}}\simeq \omega^0_{X'}u'_{0*}\un_{X_0'}$ and $u_n^*
{\rm holim}\,\theta_{X,\mathcal{S}} \simeq
\omega^0_{X_n} u_n^* N$ with
$N=j_*(\omega^0_{X'} u'_{0*} \un_{X'_0})$
(for the latter isomorphism, use that $u_n^*(\eta)$ is invertible if $\eta$ is the unit morphism of the adjunction $(u_n^*,u_{n*})$).
In particular, both motives
$j^*{\rm holim}\,\theta_{X,\mathcal{S}}$ and $u_n^*
{\rm holim}\,\theta_{X,\mathcal{S}}$ are Artin.
Using the localization triangle $j_!j^*\to \id \to u_{n*}u_n^* \to$ of \cite[Lem.~1.4.6]{ayoub-these-I}, we deduce that
${\rm holim}\theta_{X,\mathcal{S}}$
is also an Artin motive.

In particular, $\omega^0_X ({\rm holim}\,\theta_{X,\mathcal{S}})\simeq {\rm holim}\,\theta_{X,\mathcal{S}}$.
By Lemma \ref{texnical-commut-holim},
$\omega^0_X$ (which clearly defines an endomorphism of the triangulated derivator $\DM(X,-)$) commutes with homotopy limits indexed
by $\ucarre^n$.
Hence,
${\rm holim}\;\theta_{X,\mathcal{S}}$ is isomorphic to the homotopy limit of
$$\xymatrix{\omega^0_X N \ar[r]^-{\eta} &  \omega^0_X u_{n*} u_n^* N & \omega^0_X  u_{n*}  \omega^0_{X_n} u_n^* N \ar[l]_-{\delta_X}^-{\sim}}$$
where the morphism on the right is invertible by
Proposition \ref{prop:additional-prop-omega-0-x}, (iii).
This shows that ${\rm holim}\,\theta_{X,\mathcal{S}} \simeq \omega^0_X(N)$ and more precisely that the natural morphism
${\rm holim}\, \theta_{X,\mathcal{S}} \to N$ is the universal morphism from an Artin motive to $N$.

To finish the proof of (iii), we recall that $N=j_*\omega^0_{X'} u'_{0*} \un_{X'_0}$.
Again, by Proposition
\ref{prop:additional-prop-omega-0-x},
(iii)
$$\xymatrix{\omega^0_XN=\omega^0_X j_*\omega^0_{X'} u'_{0*} \un_{X'_0}
\ar[r]^-{\delta_{X'}} & \omega^0_X j_*u'_{0*} \un_{X'_0} \simeq
\omega^0_X u_{0*} \un_{X_0}}$$
is invertible. This shows that
${\rm holim}\, \theta_{X,\mathcal{S}}\simeq \omega^0_X (u_{0*} \un_{X_0})$ and more precisely
that the natural morphism ${\rm holim}\, \theta_{X,\mathcal{S}}\to u_{0*}\un_{X_0}$ is the universal morphism from an Artin motive to
$u_{0*}\un_{X_0}$.

It remains to show the functoriality with respect to universally open morphisms.
The condition that $l$ is universally open is assumed to ensure that $(\check{X}_i)_{i\in [\![0,n]\!]}$ is a stratification of $\check{X}$. Indeed, for such $l$, $l^{-1}(X_i)$ is dense in
$l^{-1}(\overline{X_i})$. To prove this, we remark that $l^{-1}(\overline{X_i})- \overline{l^{-1}(X_i)}$ is an open subset
of $l^{-1}(\overline{X_i})$ whose image in $\overline{X_i}$ is open and contained in $\overline{X_{i+1}}$. As $\overline{X_{i+1}}$ is a closed subset which is everywhere of positive codimension, it cannot contain a non-empty open subset of $\overline{X_i}$. This forces
$l^{-1}(\overline{X_i})- \overline{l^{-1}(X_i)}$ to be empty.

Let $\check{X}'=\check{X}\times_X X'$ and $l':\check{X}' \to X'$ be the projection to the second factor. Let also $\check{\mathcal{S}}'$ be the inverse image of the stratification $\mathcal{S}'$ along $l'$. By induction, we may assume that we have a morphism $l'^*\theta_{X',\mathcal{S}'}\to \theta_{\check{X}',\check{\mathcal{S}}'}$ which is invertible if $l$ is smooth. We form the commutative diagram
$$\xymatrix@C=1.7pc@R=1.7pc{\check{X}' \ar[r]^-{j} \ar[d]^-{l'} & \check{X} \ar[d]^-l & \ar[l]_-{b} (\check{\mathcal{A}}_n\circ o) \ar[r]^-o \ar[d]^-l &  \check{\mathcal{A}}_n \ar[r]^-{e} \ar[d]^-l & (\check{X},\ucarre)\ar[d]^-l \\
X' \ar[r]^-j & X & \ar[l]_-b \mathcal{A}_n\circ o \ar[r]^-o & \mathcal{A}_n \ar[r]^-e & (X,\ucarre) }$$
where the diagram of schemes $\check{\mathcal{A}}_n$ is for $\check{X}$ what $\mathcal{A}_n$ is for $X$.
All the squares in the above diagram are cartesian.
We deduce morphisms
$$l^*e_* \simeq e_*l^*, \quad  l^*o_* \to  o_*l^*, \quad l^*b^*\simeq b^*l^* \quad \text{and} \quad l^*j_*\to j_*l'^*.$$
Note that the second and fourth morphisms above are invertible when $l$ is smooth (cf.~\cite[Prop.~4.5.48]{ayoub-these-II}).
Also, we have a natural transformation
$$l^*\omega^0_{(0,1)|\mathcal{A}_n}\to \omega^0_{(0,1)|\check{\mathcal{A}}_n}l^*$$
where we further simplify notation by writing
$\omega^0_{(0,1)|\mathcal{A}_n}$ instead of $\omega^0_{\{(0,1)\}|(\mathcal{A}_n,\smallucarre)}$.
This transformation is invertible when $l$ is smooth, as it follows immediately from
Proposition \ref{prop:omega-diagram-raisonnable} and Proposition \ref{prop:additional-prop-omega-0-x}, (ii).
Thus we get a morphism
$$l^*e_*\omega^0_{(0,1)|\mathcal{A}_n}
o_*b^*j_*\theta_{X',\mathcal{S}'}\to
e_*\omega^0_{(0,1)|\check{\mathcal{A}}_n} o_*b^*j_* l'^*\theta'_{X',\mathcal{S}'}\to e_*\omega^0_{(0,1)|\check{\mathcal{A}}_n} o_*b^*j_*\theta_{\check{X}',\check{\mathcal{S}}'}$$
which is invertible when $f$ is smooth.
By construction, the left hand side is $l^*\theta_{X,\mathcal{S}}$ and the right hand side is $\theta_{\check{X},\check{\mathcal{S}}}$. This gives the morphism $l^*\theta_{X,\mathcal{S}}\to \theta_{\check{X},\check{\mathcal{S}}}$ of the statement.
The commutativity of the last diagram in the statement follows immediately from the commutativity of
$$\xymatrix@C=1.5pc@R=1.7pc{l^* ([\![1,n]\!],\emptyset)^*\theta_{X,\mathcal{S}} \ar[rr]^-{\sim} \ar[d]_-{\sim} & & l^* u_{0*}\un_{X_0} \ar[d] \\
([\![1,n]\!],\emptyset)^* l^*\theta_{X,\mathcal{S}} \ar[r] & ([\![1,n]\!],\emptyset)^* \theta_{\check{X},\check{\mathcal{S}}} \ar[r]^-{\sim} & (\check{u}_0)_*\un_{\check{X}_0}}$$
and the characterization of the isomorphism ${\rm holim}\,\theta_{X,\mathcal{S}}\simeq \omega^0_X u_{0*} \un_{X_0}$ in (iii).
\end{proof}

In terms of Definition \ref{defn:the-motive-ee-x},
we obtain directly from assertion (iii) of Proposition \ref{prop:cube-strat-F-omega-j-star},
whose notation we retain:

\begin{corollary}
\label{cor:E=colim}
When $(X_0)_{red}$ is smooth, $\EE_X\simeq {\rm holim}\; \theta_{X,\mathcal{S}}.$
\end{corollary}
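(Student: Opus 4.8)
The plan is to read the statement off directly from assertion~(iii) of Proposition~\ref{prop:cube-strat-F-omega-j-star}, once one observes that the smallest stratum $X_0$ is a dense open subscheme of $X$.

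First I would check that $u_0:X_0\hookrightarrow X$ is a dense open immersion. Since $X_i\subset\overline{X_{i-1}}$ for $i\in[\![1,n]\!]$, an immediate induction gives $\overline{X_i}\subset\overline{X_0}$ for every $i$; as $\mathcal{S}$ is a partition of $X$, this forces $X=\bigsqcup_i X_i\subset\overline{X_0}$, hence $\overline{X_0}=X$. A locally closed subscheme is open in its own closure, so $X_0$ is open in $X$, and $u_0$ has dense image.

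Next, assuming $(X_0)_{red}$ is smooth, the open immersion $u_0:X_0\hookrightarrow X$ is an admissible choice of dense open immersion in Corollary~\ref{defn-cor:E-of-X}, so by Definition~\ref{defn:the-motive-ee-x} one has $\EE_X\simeq\omega^0_X(u_{0*}\un_{X_0})$ (here one uses $\DM(X_0)\simeq\DM((X_0)_{red})$, so that $\un_{X_0}$, and hence $u_{0*}\un_{X_0}$, is unaffected by replacing $X_0$ by $(X_0)_{red}$). Combining this with the canonical isomorphism $\omega^0_X(u_{0*}\un_{X_0})\simeq{\rm holim}\;\theta_{X,\mathcal{S}}$ supplied by Proposition~\ref{prop:cube-strat-F-omega-j-star}(iii) yields $\EE_X\simeq{\rm holim}\;\theta_{X,\mathcal{S}}$, which is the assertion.

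There is no genuine obstacle here: the whole content is already contained in Proposition~\ref{prop:cube-strat-F-omega-j-star}, and the only point one must verify is the elementary topological fact that $X_0$ is dense and open, which is exactly what makes Definition~\ref{defn:the-motive-ee-x} applicable with $U=X_0$.
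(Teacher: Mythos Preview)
Your proposal is correct and follows exactly the approach of the paper, which simply states that the corollary is obtained ``directly from assertion (iii) of Proposition~\ref{prop:cube-strat-F-omega-j-star}'' together with Definition~\ref{defn:the-motive-ee-x}. You have merely spelled out the elementary point the paper leaves implicit, namely that $X_0$ is a dense open subscheme of $X$ (your phrase ``smallest stratum'' is slightly misleading---$X_0$ has the smallest index but is the \emph{open} stratum---though your argument itself is correct).
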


\begin{remark}
Proposition \ref{prop:cube-strat-F-omega-j-star} shares some
similarities with (a particular case of) the formula in
\cite[Th.~3.3.5]{sophie-morel}. However, our statement is sharper as we have an actual isomorphism of motives and not only an equality in a Grothendieck group.
\end{remark}

\subsection{Computing the motive $\EE_X$}
\label{subsection:compute-E-X}
In this section we describe a way
to compute the motive $\EE_X$ using some extra data related to the singularities of $X$.
The proof of the main result of this article, that is Theorem
\ref{thm:main-thm}, is based on this computation.

\subsubsection{The setting}
\label{subsub:setting-for-comput}
Let $X$ be a quasi-projective scheme defined over a field $k$ of
characteristic zero. Suppose we are given the following data:

\begin{enumerate}

\item[\textbf{D1)}] A stratification
$\mathcal{S}=(X_i)_{i\in [\![0,n]\!]}$ of $X$ by locally closed
subschemes $X_i$ which are smooth and such that, for $i\in
[\![1,n]\!]$, $X_i$ is contained in $\overline{X_{i-1}}$ and has
positive codimension everywhere. We do not assume that the $X_i$
are connected. For $i\in [\![0,n]\!]$, we denote by $X_{\geq i}$
the Zariski closure of $X_i$, so that we have the equality of sets
$X_{\geq i}=\bigsqcup_{j\in[\![i,n]\!]} X_j$.

\item[\textbf{D2)}] For $i\in [\![0,n]\!]$, we have a projective morphism
$e_i:Y_i \to X_{\geq i}$ such that $Y_i$ has only quotient
singularities, and $e_i^{-1}(X_i)$ is dense in $Y_i$ and maps
isomorphically to $X_i$. Moreover, $e_i^{-1}(X_{\geq j})$ is a
simple normal crossings divisor (\emph{sncd}) in $Y_i$ for all
$i<j\leq n$.

\item[\textbf{D3)}] For $i\in [\![0,n]\!]$, we have a finite surjective
morphism $c_i:Z_i \to Y_i$ from a smooth $k$-scheme $Z_i$.
Moreover, we assume $(e_i\circ c_i)^{-1}(X_i)$ dense in $Z_i$, and
\'etale and Galois over each connected component of $X_i$. Also,
$Z_i- (e_i\circ c_i)^{-1}(X_i)$ is a \emph{sncd} and the inverse
image along $c_i$ of every irreducible component of $Y_i-
e_i^{-1}(X_i)$ is a smooth sub-divisor of $Z_i- (e_i\circ
c_i)^{-1}(X_i)$ (i.e., the disjoint union of its irreducible
components).

\end{enumerate}

The irreducible components of the \emph{sncd} $Y_i^{\infty}=Y_i-
e_i^{-1}(X_i)$ induce, as in Example
\ref{ex:strat-ass-family-subschemes}, a stratification
$\mathcal{R}^{\infty}_i$ of $Y_i$. More generally, given
$\emptyset \neq I \subset [\![0,n]\!]$, we denote by
$\mathcal{R}(I)$ the stratification on $Y_{{\rm min}(I)}$ induced
by the family of irreducible components of $\bigcup_{j\in I -
\{{\rm min}(I)\}} \overline{e_{{\rm min}(I)}^{-1}(X_j)}$, or
equivalently, by the irreducible components of $Y^{\infty}_i$
whose image in $X$ is an irreducible component of one of the
$X_{\geq j}$ for some $j\in I- \{{\rm min}(I)\}$. Note that
$\mathcal{R}(\{i\})$ is the coarse stratification whose strata are
just the connected components of $Y_i$, and that the
stratifications $\mathcal{R}^{\infty}_i$ and
$\mathcal{R}([\![i,n]\!])$ are the same. We assume the following
two properties:

\begin{enumerate}

\item[\textbf{P1)}]
For $i\leq j$ in $[\![0,n]\!]$, the morphism
$e_i^{-1}(X_{j})\to X_{j}$ extends (uniquely, of course) to a
morphism $e_{i,j}:\overline{e_i^{-1}(X_{j})} \to Y_{j}$, where the
closure is taken inside $Y_i$. Moreover, for $K\subset
[\![j+1,n]\!]$, every $\mathcal{R}(\{i,j\}\bigsqcup K)$-stratum is
mapped by $e_{i,j}$ onto an $\mathcal{R}(\{j\}\bigsqcup K)$-stratum
of $Y_{j}$.

\item[\textbf{P2)}]
For $i\in [\![0,n]\!]$, the morphism
$e_i:Y_i \to X_{\geq i}$ maps an $\mathcal{R}^{\infty}_i$-stratum
$E\subset Y_i$ onto an $\mathcal{S}$-stratum $D\subset X$.
Let $F$ be a connected component of $c_i^{-1}(E)$ endowed with its
reduced scheme structure. Then $F\to E$ is an \'etale cover. Moreover,
if $F'$ is the closure of $F$ in $(c_i\circ e_i)^{-1}(D)$, then
$F' \to D$ is a smooth and projective morphism whose Stein
factorization is dominated by the \'etale Galois cover
$(c_j\circ e_j)^{-1}(D) \to D$, where $j\in [\![i,n]\!]$ is the index such that $D\subset X_j$.

\end{enumerate}

In order to verify part (b) of our main theorem
(Theorem \ref{thm:main-thm}), we need to keep track of the
functoriality of our constructions. For this, we fix a universally
open morphism of quasi-projective $k$-schemes $l:\check{X} \to X$.
Let $\check{X}_i=l^{-1}(X_i)$ which we endow with its reduced scheme
structure. Then $\check{\mathcal{S}}=(\check{X}_i)_{i\in
[\![0,n]\!]}$ is a stratification of $\check{X}$ such that
$\check{X}_i\subset \overline{\check{X}_{i-1}}$ for $i\in
[\![1,n]\!]$ (cf.~the proof of
Proposition \ref{prop:cube-strat-F-omega-j-star}). Moreover, $\check{X}_{\geq i}$, the Zariski closure
of $\check{X}_i$, is equal to the inverse image of $X_{\geq i}$ by
$l$. As in \textbf{D1)}, we assume that each $\check{X}_i$ is
smooth.

Next, we assume that we are given morphisms
$\check{e}_i:\check{Y}_i \to \check{X}_{\geq i}$ and
$\check{c}_i:\check{Z}_i \to \check{Y}_i$ as in $\textbf{D2)}$ and
$\textbf{D3)}$ satisfying to the properties in $\textbf{P1)}$ and
$\textbf{P2)}$. We write $\check{\mathcal{R}}^{\infty}_i$ and
$\check{\mathcal{R}}(I)$ (with $\emptyset \neq I \subset
[\![0,n]\!]$) for the stratifications on $\check{Y}_i$ and
$\check{Y}_{{\rm min}(I)}$, defined as before.
We also assume the existence of a commutative diagram
\begin{equation}
\label{diag:for-naturality-comput-E-X-rajoute}
\xymatrix@C=1.5pc@R=1.5pc{\check{Z}_i \ar[r]^-{\check{c}_i} \ar[d]^-l & \check{Y}_i \ar[r]^-{\check{e}_i} \ar[d]^-l & \check{X}_{\geq i } \ar[d]^-l \\
Z_i \ar[r]^-{c_i} & Y_i \ar[r]^-{e_i} & X_{\geq i}.\!}
\end{equation}
While the morphism $l:\check{Y}_i \to Y_i$
is uniquely determined by $l:\check{X}_{\geq i} \to X_{\geq i}$,
this is not the case for $l:\check{Z}_i \to Z_i$ in general.
Finally, we assume that for $i\in [\![0,n]\!]$ and $I\subset
[\![i+1,n]\!]$, the morphism $l:\check{Y}_i \to Y_i$ maps an
$\check{\mathcal{R}}(\{i\} \bigsqcup I)$-stratum of $\check{Y}_i$
onto an $\mathcal{R}(\{i\}\bigsqcup I)$-stratum of $Y_i$.

We make the following comment concerning notation:

\begin{remark}
We will be constructing some objects (diagrams of schemes,
motives, etc.), using the scheme $X$ and the morphisms $e_i$ and
$c_i$. We will, of course, introduce notation for them.  Analogous
objects will be constructed for $\check{X}$, $\check{e}_i$ and
$\check{c_i}$.  We use parallel notation for these, that is by
just adding $\check{\;}\,$'s.
\end{remark}

\subsubsection{The diagram of schemes $(T,\mathcal{P}^*([\![0,n]\!])^{\rm op})$}
\label{subsub:T}
For $\emptyset \neq I \subset [\![0,n]\!]$
define the scheme $T(I)$ by
\begin{equation}
\label{eq:final-def-T(I)}
T(I)=\bigcap_{i\in I} \overline{e_{{\rm min}(I)}^{-1}(X_i)}.
\end{equation}
By definition, $T(I)$ is an $\mathcal{R}(I)$-constructible closed subscheme of $Y_{{\rm min}(I)}$ and if $\emptyset \neq J \subset I$ with ${\rm min}(J)={\rm min}(I)$, then
$T(I)\subset T(J)$.
The following gives a recursive formula for $T(I)$:

\begin{lemma}
\label{lemma:rec-form-for-T()}
For $i_0\in [\![0,n]\!]$, we have $T(\{i_0\})=Y_{i_0}$.
For $\emptyset \neq
I\subset [\![0,n]\!]$ such that $I'=I-\{{\rm max}(I)\}$ is non-empty, we have
\begin{equation}
\label{rec-form-for-T}
T(I)=\overline{(T(I')\to X)^{-1}(X_{{\rm max}(I)})}.
\end{equation}

\end{lemma}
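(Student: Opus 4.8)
The plan is to argue directly from the definition \eqref{eq:final-def-T(I)}, the only non-formal ingredient being the simple normal crossings structure guaranteed by \textbf{D2)}. The first assertion is immediate: $T(\{i_0\})=\overline{e_{i_0}^{-1}(X_{i_0})}$, and $e_{i_0}^{-1}(X_{i_0})$ is dense in $Y_{i_0}$ by \textbf{D2)}. For the recursion, put $m=\min(I)$ and $M=\max(I)$; as $|I|\ge 2$ we have $m\in I'$, so $\min(I')=m$. From \eqref{eq:final-def-T(I)} one reads off $T(I)=T(I')\cap\overline{e_m^{-1}(X_M)}$, and since $T(I')\to X$ factors as $T(I')\hookrightarrow Y_m\xrightarrow{e_m}X_{\ge m}\hookrightarrow X$, the subscheme $(T(I')\to X)^{-1}(X_M)$ is $T(I')\cap e_m^{-1}(X_M)$. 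Thus the claim reduces to the identity of (reduced) closed subschemes
$$T(I')\cap\overline{e_m^{-1}(X_M)}\;=\;\overline{\,T(I')\cap e_m^{-1}(X_M)\,},$$
and we work with underlying spaces throughout, which is harmless since the constructions only see the topology. The inclusion ``$\supseteq$'' holds because $T(I')\cap e_m^{-1}(X_M)$ is contained in the closed set $T(I')\cap\overline{e_m^{-1}(X_M)}$.

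For ``$\subseteq$'' I would use the following consequences of \textbf{D2)}. For $m<i\le n$, the \emph{sncd} $e_m^{-1}(X_{\ge i})$ is contained in the \emph{sncd} $Y_m^\infty=e_m^{-1}(X_{\ge m+1})$, hence is a union of irreducible components of $Y_m^\infty$; consequently $\overline{e_m^{-1}(X_i)}$ is the union of those components $D$ of $Y_m^\infty$ with $e_m(D)\subseteq X_{\ge i}$, $e_m(D)\not\subseteq X_{\ge i+1}$ (say $D$ has \emph{level} $i$), and each $\overline{e_m^{-1}(X_i)}$ is a divisor in $Y_m$; moreover, by a standard codimension count for simple normal crossings (verified \'etale-locally on the smooth models of \textbf{D2)}--\textbf{D3)}), an intersection of $r$ pairwise distinct components of $Y_m^\infty$ has codimension $\ge r$ in $Y_m$. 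Now let $W$ be an irreducible component of $T(I)=\bigcap_{i\in I,\ i>m}\overline{e_m^{-1}(X_i)}$ (the term $i=m$ contributing $Y_m$). Irreducibility forces, for each $i\in I$ with $i>m$, $W\subseteq D_i$ for some component $D_i$ of level $i$; these are $|I|-1$ distinct components, so ${\rm codim}_{Y_m}W\ge|I|-1$, while Krull's theorem gives ${\rm codim}_{Y_m}W\le|I|-1$ since $W$ is a component of an intersection of $|I|-1$ divisors. Hence $W\not\subseteq e_m^{-1}(X_{\ge M+1})$: otherwise, that locus being a union of components of level $\ge M+1$, we would have $W\subseteq D'$ for such a $D'$, distinct from every $D_i$ (whose level is $\le M$), forcing ${\rm codim}_{Y_m}W\ge|I|$. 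Since $W\subseteq D_M\subseteq e_m^{-1}(X_{\ge M})$ and $e_m^{-1}(X_M)=e_m^{-1}(X_{\ge M})\setminus e_m^{-1}(X_{\ge M+1})$ is open in $e_m^{-1}(X_{\ge M})$, the set $W\cap e_m^{-1}(X_M)$ is a non-empty open, hence dense, subset of the irreducible $W$; as $W\subseteq T(I)\subseteq T(I')$, we get $W=\overline{W\cap e_m^{-1}(X_M)}\subseteq\overline{T(I')\cap e_m^{-1}(X_M)}$. Taking the union over the finitely many components of $T(I)$ yields the remaining inclusion.

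The one real point is this reverse inclusion: the naive identity $S\cap\overline V=\overline{S\cap V}$ fails for a general constructible $S$ and a locally closed $V$ adapted to an \emph{sncd}, and what rescues it is precisely that every component of $T(I)$ has the expected codimension $|I|-1$, which pins it down as a component of a multi-intersection of boundary divisors and keeps it out of the deeper locus $e_m^{-1}(X_{\ge M+1})$. I expect the only delicate bookkeeping to be the behaviour of ``irreducible component'' and of the codimension estimates in the presence of the quotient singularities of the $Y_i$; this is handled by passing \'etale-locally to the smooth covers of Definition \ref{defn:sncd-in-quotient-sing}, where the boundary becomes an honest \emph{sncd}. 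Note that neither \textbf{P1)} nor \textbf{P2)} is needed: the statement concerns only the closed subschemes $T(I)$, which depend on nothing beyond $e_{\min(I)}$ and the stratification.
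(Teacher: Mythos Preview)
Your proof is correct and follows essentially the same approach as the paper. Both reduce to the closure identity $T(I')\cap\overline{e_m^{-1}(X_M)}=\overline{T(I')\cap e_m^{-1}(X_M)}$ and prove the nontrivial inclusion by the same codimension contradiction: a component of $T(I)$ has codimension exactly $|I|-1$, so containment in $e_m^{-1}(X_{\ge M+1})$ would force it into an extra boundary divisor and push the codimension to $|I|$. The only cosmetic difference is that the paper fixes a component $C$ of $T(I')$ and a component $D$ of $\overline{e_m^{-1}(X_M)}$ and works with a connected component $E$ of $C\cap D$, whereas you work directly with an irreducible component $W$ of $T(I)$; these amount to the same thing, and the paper reads the codimension $|I|-1$ straight off the sncd definition rather than via Krull's bound.
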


\begin{proof}
The first claim follows from the definition. For the second claim,
we may assume that $I$ has at least three elements. Indeed,
when $I$ has
two elements, the two formulas \eqref{eq:final-def-T(I)} and
\eqref{rec-form-for-T} are identical.

   From \eqref{eq:final-def-T(I)}, we have $T(I)=T(I')\bigcap
\overline{e^{-1}_{{\rm min}(I)}(X_{{\rm max}(I)})}$. Thus, we need
to show that
$$\overline{T(I')\bigcap e^{-1}_{{\rm min}(I)}(X_{{\rm max}(I)})}=
T(I')\bigcap \overline{e^{-1}_{{\rm min}(I)}(X_{{\rm max}(I)})}.$$
It suffices to show that
\begin{equation}
\label{eq:pour-la-form-rec-T(-)}
\overline{C\bigcap D \bigcap e^{-1}_{{\rm min}(I)}(X_{{\rm max}(I)}) }=C\bigcap D
\end{equation}
for any irreducible component $C$ of $T(I')$ and any irreducible component $D$ of $\overline{e^{-1}_{{\rm min}(I)}(X_{{\rm max}(I)})}$.
As $Y^{\infty}_{{\rm min}(I)}$ is a \emph{sncd}
and because for all $i\in [\![{\rm min}(I)+1,n]\!]$, $\overline{e^{-1}_{{\rm min}(I)}(X_i)}$ is a union of irreducible divisors of $Y_{{\rm min}(I)}^{\infty}$, $C$ is a connected component of
an intersection $\bigcap_{i\in I- \{{\rm min}(I), {\rm max}(I)\}} D_i$
with $D_i$ an irreducible component of
$\overline{e^{-1}_{{\rm min}(I)}(X_i)}$. Moreover, the $D_i$ are uniquely determined by $C$.
Now, let $E$ be a connected component of $C\bigcap D$. As $E$ has only quotient singularities, its is normal and hence irreducible. We claim that $E\cap e^{-1}_{{\rm min}(I)}(X_{{\rm max}(I)})$ is not empty. This will finish the proof of the lemma. Indeed, the image of $E$ in $X$ is contained in $X_{\geq {\rm max}(I)}$. As $X_{{\rm max}(I)}$ is an open subset of
$X_{\geq {\rm max}(I)}$, we see that $E\cap e^{-1}_{{\rm min}(I)}(X_{{\rm max}(I)})$ is an open subset $E$. If the latter is non-empty, it is dense in $E$ and hence $\overline{E\cap e^{-1}_{{\rm min}(I)}(X_{{\rm max}(I)})}=E$. Applying this to all connected components of $C\bigcap D$, we get the equality \eqref{eq:pour-la-form-rec-T(-)}.

To show that $E\cap e^{-1}_{{\rm min}(I)}(X_{{\rm max}(I)})$ is non-empty, we argue by contradiction. Indeed, the contrary implies that ${\rm max}(I)\leq n-1$ and $E\subset e^{-1}_{{\rm min}(I)}(X_{\geq {\rm max}(I)+1})$. Thus, we may find an irreducible component $D'$ of $e^{-1}_{{\rm min}(I)}(X_{\geq {\rm max}(I)+1})$ which contains $E$.
Then $E$, which has codimension ${\rm card}(I)-1$ in
$Y_{{\rm min}(I)}$, is contained in the intersection of ${\rm card}(I)$ distinct irreducible components of $Y_{{\rm min}(I)}^{\infty}$, namely $D$, $D'$ and the $D_i$ for $i\in I-\{{\rm min}(I),\,{\rm max}(I)\}$.
This is a contradiction as $Y_{{\rm min}(I)}^{\infty}$ is a \emph{sncd} in $Y_{{\rm min}(I)}$.
\end{proof}

\begin{lemma}
\label{lemma:T-is-a-funct}
For
$\emptyset \neq J\subset I\subset [\![0,n]\!]$,
let $i_0={\rm min}(I)$ and $j_0={\rm min}(J)$.
Then $T(I)$ is a closed subscheme of $Y_{i_0}$ contained in $\overline{e_{i_0}^{-1}(X_{j_0})}$. Moreover,
the image of $T(I)$ by the morphism $e_{i_0, j_0}:
\overline{e_{i_0}^{-1}(X_{j_0})}
\to Y_{j_0}$ is contained in $T(J)$. This gives
a morphism
$$T(J\subset I): T(I)\to T(J).$$
$T(-)$ becomes thereby a contravariant functor from the partially ordered set
$\mathcal{P}^*([\![0,n]\!])$ of non-empty subsets of $[\![0,n]\!]$ to the category of $X$-schemes.
\end{lemma}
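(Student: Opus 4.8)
The plan is to derive the whole statement from three inputs already at hand: the existence of the extension $e_{i,j}$ (first sentence of \textbf{P1)}), the recursive description of $T(I)$ in Lemma~\ref{lemma:rec-form-for-T()}, and the density statements of \textbf{D2)} (in particular that $e_i$ restricts to an isomorphism $e_i^{-1}(X_i)\xrightarrow{\sim}X_i$). Throughout I would use the uniqueness principle that two morphisms from a reduced scheme to a separated scheme agreeing on a dense open subscheme coincide; all the schemes $T(I),\,Y_i$ are reduced and separated.

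Set $i_0=\min(I)$ and $j_0=\min(J)$, so $i_0\le j_0$ and $j_0\in J\subseteq I$. Since $j_0\in I$, the definition \eqref{eq:final-def-T(I)} gives $T(I)\subseteq\overline{e_{i_0}^{-1}(X_{j_0})}$, the domain of $e_{i_0,j_0}$; this settles the first assertion. When $i_0=j_0$ one has $e_{i_0,i_0}=\mathrm{id}$ (it extends the identity of the dense open $e_{i_0}^{-1}(X_{i_0})$), and the inclusion $T(I)\subseteq T(J)$ is immediate from \eqref{eq:final-def-T(I)}. So assume $i_0<j_0$. First I would observe that $e_{i_0,j_0}$ is a morphism of $X$-schemes: on the dense open $e_{i_0}^{-1}(X_{j_0})$ the maps $e_{j_0}\circ e_{i_0,j_0}$ and $e_{i_0}$ both induce the structural map to $X_{j_0}$, so by the uniqueness principle $e_{j_0}\circ e_{i_0,j_0}=e_{i_0}$ on all of $\overline{e_{i_0}^{-1}(X_{j_0})}$. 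To prove $e_{i_0,j_0}(T(I))\subseteq T(J)=\bigcap_{\ell\in J}\overline{e_{j_0}^{-1}(X_\ell)}$, I would use the elementary containments $T(I)\subseteq T(\{i_0,j_0,\ell\})$ (same minimum $i_0$, from \eqref{eq:final-def-T(I)}) to reduce to showing $e_{i_0,j_0}(T(\{i_0,j_0,\ell\}))\subseteq\overline{e_{j_0}^{-1}(X_\ell)}$ for each $\ell\in J$. This is clear for $\ell=j_0$; for $\ell>j_0$, Lemma~\ref{lemma:rec-form-for-T()} shows that $\Omega:=\overline{e_{i_0}^{-1}(X_{j_0})}\cap e_{i_0}^{-1}(X_\ell)$ is dense in $T(\{i_0,j_0,\ell\})$, and the $X$-morphism property gives $e_{i_0,j_0}(\Omega)\subseteq e_{j_0}^{-1}(X_\ell)$; continuity then yields the containment, and reducedness of $T(I)$ gives the factorization $T(J\subseteq I):T(I)\to T(J)$.

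For functoriality, $T(I\subseteq I)=\mathrm{id}$ has been noted. Given $K\subseteq J\subseteq I$ with $k_0=\min K$, both composites $T(K\subseteq I)$ and $T(K\subseteq J)\circ T(J\subseteq I)$ are defined on $T(I)\subseteq T(\{i_0,j_0,k_0\})$, so it suffices to check $e_{j_0,k_0}\circ e_{i_0,j_0}=e_{i_0,k_0}$ on $T(\{i_0,j_0,k_0\})$, and one may take $i_0<j_0<k_0$. By Lemma~\ref{lemma:rec-form-for-T()} the open $\Omega':=\overline{e_{i_0}^{-1}(X_{j_0})}\cap e_{i_0}^{-1}(X_{k_0})$ is dense in $T(\{i_0,j_0,k_0\})$; for $w\in\Omega'$ the points $e_{i_0,k_0}(w)$ and $e_{j_0,k_0}(e_{i_0,j_0}(w))$ both lie in $e_{k_0}^{-1}(X_{k_0})$ and map to $e_{i_0}(w)$ under $e_{k_0}$ (using the $X$-morphism property), and since $e_{k_0}:e_{k_0}^{-1}(X_{k_0})\xrightarrow{\sim}X_{k_0}$ by \textbf{D2)}, the two morphisms agree on $\Omega'$; the uniqueness principle finishes the argument. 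The main point — really the only thing beyond bookkeeping — is the reduction to the triples $\{i_0,j_0,\ell\}$, which turns the global statement into a density computation over a single stratum $X_\ell$ via Lemma~\ref{lemma:rec-form-for-T()}; keeping track of which composite is defined where, and of the reduced and separated hypotheses needed to invoke uniqueness of extensions, is where the care has to go.
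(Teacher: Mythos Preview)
Your proof is correct and follows essentially the same route as the paper's: reduce to small index sets, exploit the recursive description of $T(-)$ from Lemma~\ref{lemma:rec-form-for-T()} to identify dense opens, and then use the $X$-morphism property of $e_{i_0,j_0}$ together with Zariski continuity. The only real difference is in the containment step $e_{i_0,j_0}(T(I))\subseteq T(J)$: you write $T(J)=\bigcap_{\ell\in J}\overline{e_{j_0}^{-1}(X_\ell)}$ and check each factor separately by reducing to the triple $\{i_0,j_0,\ell\}$, whereas the paper first reduces to $I=\{i_0\}\sqcup J$ and then inducts on $|J|$. Your argument is marginally more direct (no induction), but both come down to the same density-plus-continuity move applied to $(T(\{i_0,j_0\})\to X)^{-1}(X_\ell)$. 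For functoriality, both proofs reduce to the triple $\{i_0,j_0,k_0\}$ and use density; you make the role of the isomorphism $e_{k_0}^{-1}(X_{k_0})\xrightarrow{\sim}X_{k_0}$ from \textbf{D2)} explicit, while the paper phrases the same point as ``$T(\{i_0,j_0\})\to T(\{j_0\})$ is a morphism of $X$-schemes''.
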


\begin{proof}
As $j_0\in I$, we have
$T(I)\subset T(\{i_0,j_0\})=\overline{e_{i_0}^{-1}(X_{j_0})}$.
We now check that $e_{i_0, j_0}$ sends $T(I)$ into $T(J)$.
When $i_0=j_0$, this is true as
$e_{i_0,j_0}$ is the identity of $Y_{i_0}$ and $T(I)\subset T(J)$. Thus, we may assume that $i_0<j_0$. Using the chain of inclusions
$J\subset \{i_0\}\bigsqcup J \subset I$, we may further assume that
$I=\{i_0\}\bigsqcup J$. We argue by induction on the number of elements in $J$. As $T(\{j_0\})=Y_{j_0}$, there is nothing to prove when $J$ has only one element. When $J$ contains at least two elements, let $J'=J- \{{\rm max}(J)\}$. By induction, we have
$e_{i_0, j_0}(T(\{i_0\}\bigsqcup J')) \subset T(J')$.
It follows that
$$e_{i_0 ,j_0}[(T(\{i_0\}\textstyle{\bigsqcup} J') \to X)^{-1}(X_{{\rm max}(J)})]\subset
(T(J') \to X)^{-1}(X_{{\rm max}(J)}).$$
As $e_{i_0, j_0}$ is continuous for the Zariski topology, we deduce that
$$e_{i_0 , j_0}[\overline{(T(\{i_0\}\textstyle{\bigsqcup} J') \to X)^{-1}(X_{{\rm max}(J)})}]\subset
\overline{(T(J') \to X)^{-1}(X_{{\rm max}(J)})}.$$
We now use \eqref{eq:pour-la-form-rec-T(-)} to conclude.

It remains to check that the morphisms $T(J\subset I)$ define a contravariant functor from
$\mathcal{P}^*([\![0,n]\!])$, i.e., that $T(K\subset I)=T(K\subset J)\circ T(J\subset I)$
for $\emptyset\neq K\subset J \subset I\subset [\![0,n]\!]$.
Let $i_0={\rm min}(I)$, $j_0={\rm min}(J)$ and $k_0={\rm min}(K)$ so that $i_0\leq j_0 \leq k_0$. As $T(I)\subset T(\{i_0,j_0,k_0\})$,
$T(J)\subset T(\{j_0,k_0\})$ and $T(K)\subset T(\{k_0\})$, we may assume that $I=\{i_0,j_0,k_0\}$, $J=\{j_0,k_0\}$ and
$K=\{k_0\}$.
By the recursive formula
\eqref{rec-form-for-T},
we have $T(\{i_0,j_0,k_0\})=\overline{(T(\{i_0,j_0\})\to X)^{-1}(X_{k_0})}$, $T(\{j_0,k_0\})=\overline{(T(\{j_0\})\to X)^{-1}(X_{k_0})}$
and $T(\{k_0\})=\overline{e_{k_0}^{-1}(X_{k_0})}=Y_{k_0}$.
By continuity for the Zariski topology, it is then sufficient to show that
$$\xymatrix@C=1.5pc@R=1.5pc{(T(\{i_0,j_0\})\to X)^{-1}(X_{k_0}) \ar[r] \ar@/_/[dr] & (T(\{j_0\})\to X)^{-1}(X_{k_0}) \ar[d] \\
& X_{k_0}}$$
commutes.
But this is obviously true, as $T(\{i_0,j_0\}) \to T(\{j_0\})$ is a morphism of $X$-schemes.
\end{proof}

\begin{lemma}
\label{lemma:functorialite-de-T}
For $\emptyset\neq I \subset [\![0,n]\!]$, the morphism
$l:\check{Y}_{{\rm min}(I)} \to Y_{{\rm min}(I)}$ maps
$\check{T}(I)$ to $T(I)$, inducing a morphism
$l(I):\check{T}(I) \to T(I)$. As $I$ varies, these morphisms give a natural transformation of functors
$\check{T} \to T$, and thus a morphism $l:(\check{T},\mathcal{P}^*([\![0,n]\!])^{\rm op}) \to
(T,\mathcal{P}^*([\![0,n]\!])^{\rm op})$ in $\Dia(\Sch/k)$ which is the identity on the indexing categories.
\end{lemma}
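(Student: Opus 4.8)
The plan is to prove the two assertions of the lemma — that $l$ restricts to a morphism $\check{T}(I)\to T(I)$, and that these restrictions are natural in $I$ — and then to observe that together they constitute exactly the asserted morphism of diagrams, with identity categorical part. Throughout, write $i_0={\rm min}(I)$.

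First I would check that $l:\check{Y}_{i_0}\to Y_{i_0}$ maps $\check{T}(I)$ into $T(I)$. The right-hand square of \eqref{diag:for-naturality-comput-E-X-rajoute} for the index $i_0$ gives $e_{i_0}\circ l=l\circ\check{e}_{i_0}$, and since $\check{X}_i=l^{-1}(X_i)$ this yields $\check{e}_{i_0}^{-1}(\check{X}_i)=l^{-1}(e_{i_0}^{-1}(X_i))$ inside $\check{Y}_{i_0}$, for every $i\in I$. As $l$ is continuous it then carries $\overline{\check{e}_{i_0}^{-1}(\check{X}_i)}$ into $\overline{e_{i_0}^{-1}(X_i)}$; intersecting over $i\in I$ and using $l(A\cap B)\subseteq l(A)\cap l(B)$ gives $l(\check{T}(I))\subseteq T(I)$. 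Since $\check{T}(I)$ is reduced, the restriction of $l$ factors uniquely through the closed subscheme $T(I)\hookrightarrow Y_{i_0}$, defining $l(I):\check{T}(I)\to T(I)$. (Universal openness of $l$ is not used here; it intervenes only, via Proposition \ref{prop:cube-strat-F-omega-j-star}, in guaranteeing that $\check{\mathcal{S}}$ is a stratification.)

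Next I would verify naturality: for $\emptyset\neq J\subseteq I$ with $j_0={\rm min}(J)$ (so $i_0\leq j_0$), I must show that the square with horizontal arrows $l(I),l(J)$ and vertical arrows $T(J\subset I):T(I)\to T(J)$ and $\check{T}(J\subset I):\check{T}(I)\to\check{T}(J)$ commutes. When $i_0=j_0$ the vertical arrows are inclusions of closed subschemes of $Y_{i_0}$, resp.\ $\check{Y}_{i_0}$, and $l(I),l(J)$ are restrictions of one and the same morphism $l:\check{Y}_{i_0}\to Y_{i_0}$, so there is nothing to do. When $i_0<j_0$, Lemma \ref{lemma:T-is-a-funct} identifies $T(J\subset I)$ and $\check{T}(J\subset I)$ with the restrictions of the morphisms $e_{i_0,j_0}:\overline{e_{i_0}^{-1}(X_{j_0})}\to Y_{j_0}$ and $\check{e}_{i_0,j_0}:\overline{\check{e}_{i_0}^{-1}(\check{X}_{j_0})}\to\check{Y}_{j_0}$ of \textbf{P1)}; since $j_0\in I$ we have $\check{T}(I)\subseteq\overline{\check{e}_{i_0}^{-1}(\check{X}_{j_0})}$, so it is enough to prove
$$l\circ\check{e}_{i_0,j_0}=e_{i_0,j_0}\circ l\colon\quad\overline{\check{e}_{i_0}^{-1}(\check{X}_{j_0})}\longrightarrow Y_{j_0}.$$
Both sides are morphisms from a reduced scheme to the separated scheme $Y_{j_0}$, and $\check{e}_{i_0}^{-1}(\check{X}_{j_0})$ is dense in $\overline{\check{e}_{i_0}^{-1}(\check{X}_{j_0})}$; hence — exactly as in the uniqueness of extensions invoked in \textbf{P1)} — it suffices to check the equality on $\check{e}_{i_0}^{-1}(\check{X}_{j_0})$. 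On that locus $\check{e}_{i_0,j_0}$ is, by construction, the composite of $\check{e}_{i_0}:\check{e}_{i_0}^{-1}(\check{X}_{j_0})\to\check{X}_{j_0}$, the inverse of the isomorphism $\check{e}_{j_0}:\check{e}_{j_0}^{-1}(\check{X}_{j_0})\overset{\sim}{\to}\check{X}_{j_0}$ of \textbf{D2)}, and the open immersion into $\check{Y}_{j_0}$; likewise for $e_{i_0,j_0}$ over $e_{i_0}^{-1}(X_{j_0})$. Using $e_{i_0}\circ l=l\circ\check{e}_{i_0}$ and $e_{j_0}\circ l=l\circ\check{e}_{j_0}$ from \eqref{diag:for-naturality-comput-E-X-rajoute}, one sees that $l$ carries the isomorphism $\check{e}_{j_0}^{-1}$ to $e_{j_0}^{-1}$ over $l:\check{X}_{j_0}\to X_{j_0}$, and that both restricted composites equal the single morphism $z\mapsto e_{j_0}^{-1}(e_{i_0}(l(z)))$; this yields the claim.

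Finally, the family $(l(I))_I$ together with the commutativity established in the previous paragraph is by definition a natural transformation $\check{T}\to T$ of functors $\mathcal{P}^*([\![0,n]\!])^{\rm op}\to\Sch/k$, i.e.\ a morphism $l:(\check{T},\mathcal{P}^*([\![0,n]\!])^{\rm op})\to(T,\mathcal{P}^*([\![0,n]\!])^{\rm op})$ in $\Dia(\Sch/k)$ with identity categorical part. I expect the only real obstacle to be the identity $l\circ\check{e}_{i_0,j_0}=e_{i_0,j_0}\circ l$: since $e_{i_0,j_0}$ is characterized only as the unique extension of a morphism defined on the dense subscheme $e_{i_0}^{-1}(X_{j_0})$, one must descend the comparison to that subscheme and there carefully unwind the isomorphisms $e_i^{-1}(X_i)\cong X_i$ of \textbf{D2)} and their compatibility with $l$; everything else is continuity of $l$ together with the standard reduced/separated formalism for uniqueness of extensions.
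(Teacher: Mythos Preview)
Your proof is correct and follows essentially the same line as the paper's. The one noteworthy difference is in the first claim: you work directly from the definition \eqref{eq:final-def-T(I)} of $T(I)$ as an intersection of closures, using $\check e_{i_0}^{-1}(\check X_i)=l^{-1}(e_{i_0}^{-1}(X_i))$ and continuity, whereas the paper argues by induction on $I$ via the recursive formula of Lemma~\ref{lemma:rec-form-for-T()}. Your direct argument is slightly shorter and avoids invoking that lemma; the paper's inductive argument is perhaps more in keeping with how the objects $T(I)$ are manipulated elsewhere in the section. For the naturality square with $i_0<j_0$, the paper reduces to the case $I=\{i_0,j_0\}$, $J=\{j_0\}$ and then uses exactly the commutative square and the continuity/density argument you spell out; your version is just a little more explicit about unwinding $e_{i_0,j_0}$ through the isomorphism $e_{j_0}^{-1}(X_{j_0})\simeq X_{j_0}$ of \textbf{D2)}.
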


\begin{proof}
For the first claim, we use induction on $I$. When $I=\{i_0\}$,
there is nothing to prove as $\check{T}(\{i_0\})=\check{Y}_{i_0}$
and $T(\{i_0\})=Y_{i_0}$. Now, assume that $I$ has at least two
elements, and let $I'=I-\{{\rm max}(I)\}$. By the inductive
formula \eqref{rec-form-for-T}, we have
$\check{T}(I)=\overline{(\check{T}(I')\to
\check{X})^{-1}(\check{X}_{{\rm max}(I)})}$ and
$T(I)=\overline{(T(I')\to X)^{-1}(X_{{\rm max}(I)})}$. As
$\check{X}_{{\rm max}(I)}=f^{-1}(X_{{\rm max}(I)})$, we also have
$\check{T}(I)=\overline{(\check{T}(I')\to X)^{-1}(X_{{\rm
max}(I)})}$. As $\check{T}(I')\to T(I')$ is a morphism of
$X$-schemes, it takes $(\check{T}(I')\to X)^{-1}(X_{{\rm
max}(I)})$
inside
$(T(I')\to X)^{-1}(X_{{\rm max}(I)})$, and hence, by continuity for the Zariski topology, $\check{T}(I)$ inside $T(I)$.

For the second part of the lemma, we fix $\emptyset\neq J \subset I\subset [\![0,n]\!]$. We need to show that
$T(J\subset I)\circ l(I)=l(J)\circ \check{T}(J\subset I)$.
This is true when ${\rm min}(I)={\rm min}(J)=i_0$ because then,
$T(I), \, T(J) \subset Y_{i_0}$ and $T(J\subset I)$ is the inclusion morphism, and similarly for $\check{T}$.
So we may assume that $i_0={\rm min}(I)<j_0={\rm min}(J)$.
Using the inclusions $T(I)\subset T(\{i_0,j_0\})$,
$T(J)\subset T(\{j_0\})$ and the similar ones for $\check{T}$, we are furthermore reduced to the case $I=\{i_0,j_0\}$ and $J=\{j_0\}$.
The claim follows now from the commutative square
$$\xymatrix@C=1.5pc@R=1.5pc{\check{e}_{i_0}^{-1}(\check{X}_{j_0}) \ar[r] \ar[d] & e_{i_0}^{-1}(X_{j_0}) \ar[d] \\
\check{X}_{j_0} \ar[r] & X_{j_0},\!}$$
and continuity for the Zariski topology.
\end{proof}

We end this paragraph with a remark which will be helpful later on in constructing some motives and establishing their properties by induction on $n$.

\begin{remark}
\label{rem:for-doing-induction-X-X'}
Assume that $n\geq 1$. Let $X'=X-X_n$ endowed with the stratification
$\mathcal{S}'=(X'_j)_{0\leq j \leq n-1}$ with
$X'_j=X_j$ for $j\in [\![0,n-1]\!]$. As before, let
$X'_{\geq j}$ denotes the Zariski closure
of $X'_j$ in $X'$.
Let
$Y'_j=Y_j\times_{X_{\geq j}}X'_{\geq j}$ and $Z'_j=Z_j\times_{X_{\geq j}} X'_{\geq j}$ and call $e'_j:Y'_j \to X'_{\geq j}$ and
$c'_j:Z'_j \to Y'_j$ the natural projections.
This gives data as in \textbf{D1)}, \textbf{D2)} and
\textbf{D3)} satisfying the properties in \textbf{P1)} and \textbf{P2)}.

As for $X$, we have a contravariant functor
$T'$ from $\mathcal{P}^*([\![0,n-1]\!])$
to the category of $X'$-schemes which sends $\emptyset \neq I\subset [\![0,n-1]\!]$ to a closed subscheme $T'(I)\subset Y'_{{\rm min}(I)}$.
For $\emptyset\neq I\subset [\![0,n-1]\!]$, $T'(I)$ is a closed subscheme of $Y'_{{\rm min}(I)}$ which is an open subscheme of
$Y_{{\rm min}(I)}$. Moreover, the Zariski closure of
$T'(I)$ in $Y_{{\rm min}(I)}$ is equal to $T(I)$. Thus, we have an objectwise dense open immersion of diagram of schemes
$$j:(T',\mathcal{P}^*([\![0,n-1]\!])^{\rm op}) \to (T\circ \iota_{n}, \mathcal{P}^*([\![0,n-1]\!])^{\rm op})$$
where $\iota_{n}:\mathcal{P}^*([\![0,n-1]\!]) \hookrightarrow \mathcal{P}^*([\![0,n]\!])$ is the obvious inclusion.
Also, remark that $(T,\mathcal{P}^*([\![0,n]\!])^{\rm op})$ is the total diagram associated to the following diagram in
$\Dia(\Sch)$ indexed by $\ucarre$:
\begin{equation}
\label{eq:for-T-and-T'-relationes}
\xymatrix{(T\circ \iota_{n},\mathcal{P}^*([\![0,n-1]\!])^{\rm op}) & ((T\circ \iota_{n})\times_X X_n ,
\mathcal{P}^*([\![0,n-1]\!])^{\rm op}) \ar[l]_-{v_n} \ar[r]^-{(q_n,pr)} & X_n,}
\end{equation}
where $v_n$ and $q_n$ are the projections to the first and second
factor in $(T\circ \iota_n)\times_X X_n$, and $pr$ is the unique
functor from $\mathcal{P}^*([\![0,n-1]\!])^{{\rm op}}$ to the
terminal category $\textbf{e}$.

\end{remark}

\subsubsection{The diagram of schemes $(\mathcal{X},\mathcal{P}_2([\![1,n]\!]))$  and the motive $\theta'_{X,\mathcal{S}}$}
\label{subsub:X} As in \S\ref{subsect:comput-tools}, we let
$\mathcal{P}_2([\![1,n]\!])\subset \mathcal{P}([\![1,n]\!])^2$
denotes the subset of pairs $(I_0,I_1)$ such that $I_0\cap
I_1=\emptyset$. We define a functor (i.e., an non-decreasing map)
$$\varsigma_n:\mathcal{P}_2([\![1,n]\!]) \to \mathcal{P}^*([\![0,n]\!])^{\rm op},$$
as follows. For $(I_0,I_1)\in \mathcal{P}_2([\![1,n]\!])$, let
$J=[\![0,n]\!]-I_0$ and $i_{max}={\rm max}(\{0\}\bigsqcup I_1)$.
We set $\varsigma_n(I_0,I_1)=[\![i_{max},n]\!]\cap J$. As
$\{0\}\bigsqcup I_1\subset J$, $i_{max}\in J$ and thus
$\varsigma_n(I_0,I_1)$ is non-empty. One sees likewise that
$\varsigma_n$ is non-decreasing.

We let $\mathcal{X}=T\circ \varsigma_n:\mathcal{P}_2([\![1,n]\!])
\to \Sch/k$. We have a natural morphism of diagrams of schemes
$\varsigma_n:(\mathcal{X},\mathcal{P}_2([\![1,n]\!])) \to
(T,\mathcal{P}^*([\![0,n]\!])^{\rm op})$.

\begin{remark}
\label{rem:for-doing-induction-X-X'-II}
With the notation of Remark \ref{rem:for-doing-induction-X-X'}, we also have an object
$(\mathcal{X}',\mathcal{P}_2([\![1,n-1]\!]))$ of $\Dia(\Sch/k)$ obtained by composing $T'$ with the non-decreasing map
$\varsigma_{n-1}:\mathcal{P}_2([\![1,n-1]\!]) \to \mathcal{P}^*([\![0,n-1]\!])^{\rm op}$.
We have an objectwise dense open immersion of diagrams of schemes
$$j:(\mathcal{X}',\mathcal{P}_2([\![1,n-1]\!])) \to (\mathcal{X}\circ \iota^{0}_n,\mathcal{P}_2([\![1,n-1]\!])),$$
where $\iota^{0}_n:\mathcal{P}_2([\![1,n-1]\!]) \hookrightarrow
\mathcal{P}_2([\![1,n]\!])$ is the non-decreasing map that sends $(I_0,I_1)$ to $(I_0\bigsqcup \{n\},I_1)$.
Moreover, $(\mathcal{X},\mathcal{P}_2([\![1,n]\!]))$ is the total diagram associated to the following diagram in $\Dia(\Sch)$
indexed by $\ucarre$:
\begin{equation}
\label{eq:pour-induction-mathcal-X}
\small{\xymatrix@C=1.2pc{(\mathcal{X}\circ
\iota_n^0,\mathcal{P}_2([\![1,n\!-\!1]\!])) &((\mathcal{X}\circ
\iota^0_n)\!\times_X \! X_n,\mathcal{P}_2([\![1,n\!-\!1]\!]))
\ar[r]^-{q_n} \ar[l]_-{v_n} &
(X_n,\mathcal{P}_2([\![1,n\!-\!1]\!])),}}
\end{equation}
modulo the identification of $\mathcal{P}_2([\![1,n]\!])$ with
$\mathcal{P}_2([\![1,n-1]\!])\times \ucarre$.

\end{remark}

We now define
inductively a motive $\theta'_{X,\mathcal{S}}\in \DM(\mathcal{X},
\mathcal{P}_2([\![1,n]\!]))$, which is a commutative unitary algebra.
When $n=0$, we simply take $\un_{X_0}$.
When $n\geq 1$, we use Remark \ref{rem:for-doing-induction-X-X'-II}
and assume that the motive $\theta'_{X',\mathcal{S}'}\in \DM(\mathcal{X}',\mathcal{P}_2([\![1,n-1]\!]))$
is constructed.

We will abuse notation and
denote $(\mathcal{X},\ucarre)$ the object of
$\Dia(\Dia(\Sch))$ given by
\eqref{eq:pour-induction-mathcal-X}, i.e.,
such that $\mathcal{X}(1,0)=\mathcal{X}\circ \iota_n^0$,
$\mathcal{X}(0,0)=\mathcal{X}(1,0)\times_X X_n$ and
$\mathcal{X}(0,1)=(X_n,\mathcal{P}_2([\![1,n-1]\!]))$.
Let
$o$ be the non-decreasing map $(-,0):\underline{\mathbf{1}}\to
\ucarre$. It induces a morphism
$o:(\mathcal{X}\circ o,\underline{\mathbf{1}}) \to (\mathcal{X},\ucarre)$ in
$\Dia(\Dia(\Sch))$.
We also have a natural morphism
$b:(\mathcal{X}\circ o,\underline{\mathbf{1}}) \to
\mathcal{X}(1,0)=\mathcal{X}\circ \iota_n^0$ in
$\Dia(\Dia(\Sch))$.
Over $1\in \underline{\mathbf{1}}$, it is the identity of
$\mathcal{X}\circ \iota^0_n$. Over $0\in \underline{\mathbf{1}}$,
it is the objectwise
closed immersion $v_n:(\mathcal{X}\circ \iota^0_n)\times_X X_n \to
\mathcal{X}\circ \iota^0_n$. Passing to total diagrams, we obtain a diagram in $\Dia(\Sch)$ as follows:
$$\xymatrix@C=1.5pc@R=1.5pc{ & (\mathcal{X}\circ o, \mathcal{P}_2([\![1,n-1]\!])\times \underline{\mathbf{1}}) \ar[d]^-b  \ar[r]^-o & (\mathcal{X},\mathcal{P}_2([\![1,n-1]\!])\times \ucarre). \\
(\mathcal{X}',\mathcal{P}_2([\![1,n-1]\!]))  \ar[r]^-j & (\mathcal{X}\circ \iota_n^0,\mathcal{P}_2([\![1,n-1]\!])) &}$$

With these notation, we set
\begin{equation}
\label{eq:motive-theta-prime-induct-defn}
\theta'_{X,\mathcal{S}}=\omega^0_{\{(0,1)\}|(\mathcal{X},\smallucarre)} \left( o_*b^*j_*\theta'_{X',\mathcal{S}'} \right).
\end{equation}
In the formula above,
$\omega^0_{\{(0,1)\}|(\mathcal{X},\smallucarre)}$ is really
$\omega^0_{\mathcal{P}_2([\![1,n-1]\!]) \times \{(0,1)\}| (\mathcal{X},\mathcal{P}_2([\![1,n-1]\!])\times \smallucarre )}$
(see Remark \ref{notational-remark-for-omega-on-diagrams}).
This is again a commutative unitary algebra in $\DM(\mathcal{X},\mathcal{P}_2([\![1,n]\!]))$. Over the sub-diagram
$\mathcal{X}(1,0)=\mathcal{X}\circ \iota^0_n$, the motive $\theta'_{X,\mathcal{S}}$ is given by $j_*\theta'_{\mathcal{X}',
\mathcal{S}'}$. Over the sub-diagram
$\mathcal{X}(0,0)=(\mathcal{X}\circ \iota^0_n)\times_X X_n$, the motive $\theta'_{X,\mathcal{S}}$ is given by
$v_n^*j_*\theta'_{X',\mathcal{S}'}$. And finally,
over the constant diagram of schemes $\mathcal{X}(0,1)=(X_n,\mathcal{P}_2([\![1,n-1]\!]))$, the motive $\theta'_{X,\mathcal{S}}$ is given by $\omega^0_{X_n} q_{n*}v_n^*j_*\theta'_{X',\mathcal{S}'}$.

\begin{proposition}
\label{prop:comparaison-theta-et-theta-prime}
Denote by $f:(\mathcal{X},\mathcal{P}_2([\![1,n]\!])) \to  (X,\mathcal{P}_2(
[\![1,n]\!]))$ the natural morphism.
There is a canonical isomorphism of commutative unitary algebras
$\theta_{X,\mathcal{S}}\simeq f_*\theta'_{X,\mathcal{S}}$, where
$\theta_{X,\mathcal{S}}$ is the motive constructed in \emph{Proposition
\ref{prop:cube-strat-F-omega-j-star}}.
\end{proposition}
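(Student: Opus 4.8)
The plan is to prove the isomorphism by induction on $n$, following the inductive construction of both motives. When $n=0$ the top stratum $X_0=X$ is smooth and $X_{\geq 0}=X$, so by \textbf{D2)} the projective morphism $e_0:Y_0\to X$ restricts to an isomorphism over the dense open $X_0=X$ and hence is itself an isomorphism; thus $\mathcal{X}=Y_0\simeq X$, $f=e_0$, $\theta'_{X,\mathcal{S}}=\un_{X_0}$, and $f_*\theta'_{X,\mathcal{S}}=\un_X=\theta_{X,\mathcal{S}}$.

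For the inductive step, assume $n\geq 1$ and that $\theta_{X',\mathcal{S}'}\simeq f'_*\theta'_{X',\mathcal{S}'}$ for $X'=X-X_n$, where $f':(\mathcal{X}',\mathcal{P}_2([\![1,n-1]\!]))\to(X',\mathcal{P}_2([\![1,n-1]\!]))$ is the projection. The key observation is that $f$ factors as $f=e\circ g$ in $\Dia(\Dia(\Sch))$, where $e:(\mathcal{A}_n,\ucarre)\to(X,\ucarre)$ is the morphism used in \eqref{eq:inductive-formula-mot-theta-x-s} (given by $\id_X$ over $(1,0)$ and by $u_n$ over $(0,0)$ and $(0,1)$), and $g:(\mathcal{X},\ucarre)\to(\mathcal{A}_n,\ucarre)$ is the morphism of $\ucarre$-diagrams of schemes over $X$ which over $(1,0)$ is the structural morphism $\bar f:\mathcal{X}\circ\iota_n^0\to X$, over $(0,0)$ is the projection $q_n:(\mathcal{X}\circ\iota_n^0)\times_X X_n\to X_n$, and over $(0,1)$ is an isomorphism $\mathcal{X}(0,1)\overset{\sim}{\to}X_n$ (recall $\mathcal{X}(0,1)\simeq T(\{n\})=Y_n\simeq X_n$, again by \textbf{D2)}, since $X_n=X_{\geq n}$ is closed). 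Each of the three squares making up $g$ is cartesian, $\bar f$ is objectwise projective, and $q_n$ is the base change of $\bar f$.

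Using $f=e\circ g$ and the recursive formula \eqref{eq:motive-theta-prime-induct-defn} we obtain
$$f_*\theta'_{X,\mathcal{S}}\;=\;e_*\,g_*\,\omega^0_{\{(0,1)\}|(\mathcal{X},\smallucarre)}\bigl(o_*b^*j_*\theta'_{X',\mathcal{S}'}\bigr),$$
and the plan is to push $g_*$ inward step by step. First, $g_*$ commutes with $o_*$ (purely formally, from the pointwise formula for the categorical pushforward $o_*$ together with the compatibility squares defining $g$) and with $b^*$ (by projective base change for diagrams of schemes applied to the cartesian squares comprising $g$, as in \cite[Th.~2.4.22]{ayoub-these-I} and Proposition \ref{prop-most-gen-proj-base-change}); moreover $\bar f_*\circ j_{\mathcal{X},*}=j_*\circ f'_*$ by functoriality of $(-)_*$ applied to the commutative square formed by $j_{\mathcal{X}}$ (the objectwise open immersion of Remark \ref{rem:for-doing-induction-X-X'-II}), $\bar f$, $f'$ and $j$. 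Combining these and invoking the inductive hypothesis gives $g_*o_*b^*j_*\theta'_{X',\mathcal{S}'}\simeq o_*b^*j_*\theta_{X',\mathcal{S}'}$. Second --- and this is the main point --- one must commute $g_*$ past $\omega^0_{\{(0,1)\}|(\mathcal{X},\smallucarre)}$. Here $\{(0,1)\}\subset\ucarre$ is attracting, so the functor exists by Lemma \ref{lemma:existence-w-0-J-X-I}; since $g$ is an isomorphism over $(0,1)$, the $(0,1)$-component of $g_*\omega^0_{\{(0,1)\}|(\mathcal{X},\smallucarre)}E$ is an Artin motive, whence by the universal property of $\omega^0_{\{(0,1)\}|(\mathcal{A}_n,\smallucarre)}$ there is a canonical morphism $g_*\omega^0_{\{(0,1)\}|(\mathcal{X},\smallucarre)}E\to\omega^0_{\{(0,1)\}|(\mathcal{A}_n,\smallucarre)}g_*E$. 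One checks it is invertible by restricting along $i^*$ for each $i\in\ucarre$: over $(1,0)$ and $(0,0)$ both sides compute $\bar f_*(\text{resp. }q_{n*})$ of $i^*E$ by Proposition \ref{prop:omega-diagram-raisonnable}(b) and the objectwise formula for the geometric pushforward $g_*$, while over $(0,1)$ both sides equal $\omega^0_{X_n}$ of $(0,1)^*E$ by Proposition \ref{prop:omega-diagram-raisonnable}(a) together with the idempotency $\omega^0\circ\omega^0\simeq\omega^0$ of Proposition \ref{prop:additional-prop-omega-0-x}(i). Conservativity of the family $\{i^*\}_{i\in\ucarre}$ then yields the isomorphism, and chaining all the steps gives $f_*\theta'_{X,\mathcal{S}}\simeq e_*\omega^0_{\{(0,1)\}|(\mathcal{A}_n,\smallucarre)}o_*b^*j_*\theta_{X',\mathcal{S}'}=\theta_{X,\mathcal{S}}$.

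It remains to observe that the identification is one of commutative unitary algebras: all functors occurring ($j_*$, $b^*$, $o_*$, $g_*$, $e_*$ and the $\omega^0$-operators) are pseudo-monoidal, symmetric and pseudo-unitary, and all the natural transformations used (base change, the counits $\delta$, and the universal factorizations) are morphisms of pseudo-monoidal and pseudo-unitary functors, so the constructed isomorphism respects the algebra structures throughout. I expect the main obstacle to be precisely the commutation of $g_*$ with $\omega^0_{\{(0,1)\}|(\mathcal{X},\smallucarre)}$: proper pushforward and the weight-truncation do not commute in general, but the point is that the truncation acts only at the $(0,1)$-spot, where $g$ is an isomorphism, so the nontrivial pushforwards $\bar f_*$ and $q_{n*}$ occur away from the truncation and only the harmless identity $\omega^0\circ\omega^0\simeq\omega^0$ intervenes there.
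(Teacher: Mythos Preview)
Your proposal is correct and takes essentially the same approach as the paper: both proceed by induction on $n$, factor $f=e\circ g$ through the auxiliary diagram $(\mathcal{A}_n,\ucarre)$, and rely on the same four commutation steps (the projective base change $b^*\bar f_*\simeq g_*b^*$, the formal identity $o_*g_*=g_*o_*$, the commutation of $g_*$ with the partial truncation $\omega^0_{\{(0,1)\}}$ because $g$ is an isomorphism over $(0,1)$, and $j_*f'_*=\bar f_*j_*$). The only cosmetic difference is that the paper starts from $\theta_{X,\mathcal{S}}$ and pushes $f'_*$ outward, whereas you start from $f_*\theta'_{X,\mathcal{S}}$ and push $g_*$ inward; the mention of the idempotency $\omega^0\circ\omega^0\simeq\omega^0$ in your $(0,1)$-check is superfluous (both sides already reduce to $\omega^0_{X_n}((0,1)^*E)$ directly), but harmless.
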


\begin{proof}
We will construct the isomorphism $\theta_{X,\mathcal{S}}\simeq f_*\theta'_{X,\mathcal{S}}$ inductively on $n$. Keep the above notation and denote $f':(\mathcal{X}',\mathcal{P}_2([\![1,n-1]\!])) \to
(X',\mathcal{P}_2([\![1,n-1]\!]))$ the natural morphism.

When $n=0$, $\mathcal{X}=X$ and $\theta_{X,\mathcal{S}}=\theta'_{X,\mathcal{S}}=\un_{X}$. In the sequel, we assume that $n\geq 1$ and put $m=n-1$. By the induction hypothesis, we have an isomorphism $\theta_{X',\mathcal{S}'}\simeq
f'_*\theta'_{X',\mathcal{S}'}$. We will use the construction of
$\theta_{X,\mathcal{S}}$ out of $\theta_{X',\mathcal{S}'}$ given in the proof of Proposition
\ref{prop:cube-strat-F-omega-j-star}. With the notation of that proof, we have a commutative diagram in $\Dia(\Sch/k)$ as follows:
$$\small{\xymatrix@C=.7pc@R=1.5pc{(\mathcal{X}',\mathcal{P}_2([\![1,m]\!])) \ar[r]^-j \ar[d]^-{f'}& (\mathcal{X}\!\circ\! \iota^0_n,\mathcal{P}_2([\![1,m]\!])) \ar[d]^-{f} & \ar[l]_-b (\mathcal{X}\!\circ \! o,\mathcal{P}_2([\![1,m]\!])\!\times \! \underline{\mathbf{1}})  \ar[r]^-o \ar[d]^-g & (\mathcal{X},\mathcal{P}_2([\![1,m]\!])\!\times\! \ucarre) \ar[d]^-g \ar@/^5pc/[dd]^-f  \\
(X',\mathcal{P}_2([\![1,m]\!])) \ar[r]^-j & (X,\mathcal{P}_2([\![1,m]\!])) & \ar[l]_-b (\mathcal{A}_n\!\circ\! o,\mathcal{P}_2([\![1,m]\!])\!\times \!\underline{\mathbf{1}})  \ar[r]^-o & (\mathcal{A}_n, \mathcal{P}_2([\![1,m]\!])\!\times \!\ucarre ) \ar[d]^-e \\
&&&  (X,\mathcal{P}_2([\![1,m]\!])\!\times \!\ucarre).\!}}$$
Now recall that $\theta_{X,\mathcal{S}}=e_*\omega^0_{\{(0,1)\}|(\mathcal{A}_n,\smallucarre)}o_*b^*j_*\theta_{X',\mathcal{S}'}$. Using the induction hypothesis and the commutation of the first square in the above diagram, we get
\begin{equation}
\label{eq:chain-iso-pour-theta-theta-prime-comp}
o_*b^*j_*\theta_{X',\mathcal{S}'}\simeq
o_*b^*j_*f'_*\theta'_{X', \mathcal{S}'}
\simeq o_*b^* f_*j_*\theta'_{X',\mathcal{S}'}.
\end{equation}
The second square in the diagram above is cartesian. Moreover,
$f_{|\mathcal{P}_2([\![1,m]\!])\times \underline{\mathbf{1}}}$ is objectwise projective. Using \cite[Th.~2.4.22]{ayoub-these-I}, we see that the base change morphism
$b^* f_* \to g_*b^*$ is invertible. Thus, we may continue the chain of isomorphisms \eqref{eq:chain-iso-pour-theta-theta-prime-comp} with
$$\simeq o_*g_*b^*j_*\theta'_{X',\mathcal{S}'}\simeq
g_*o_* b^*j_*\theta'_{X'\mathcal{S}'}.$$
As $g$ restricted to $\mathcal{P}_2([\![1,m]\!])\times \{(0,1)\}$
is an isomorphism, we see immediately that
$$\omega^0_{\{(0,1)\}|(\mathcal{A}_n,\smallucarre)} g_* \simeq
g_*\omega^0_{\{(0,1)\}|(\mathcal{X},\smallucarre)}.$$
Thus, we have canonical isomorphisms
$$\theta_{X,\mathcal{S}}\simeq e_*g_*
\omega^0_{\{(0,1)\}|(\mathcal{X},\smallucarre)}\left(
o_* b^*j_*\theta'_{X'\mathcal{S}'}\right)\simeq f_*\theta'_{X,\mathcal{S}}.$$
This proves the proposition.
\end{proof}

  From Lemma \ref{lemma:functorialite-de-T}, we have a morphism of
diagrams of schemes
$l:(\check{\mathcal{X}},\mathcal{P}_2([\![1,n]\!])) \to
(\mathcal{X},\mathcal{P}_2([\![1,n]\!]))$. Moreover, the following
square
$$\xymatrix@C=1.5pc@R=1.5pc{(\check{\mathcal{X}},\mathcal{P}_2([\![1,n]\!])) \ar[r]^-l \ar[d]_-{\check{f}} & (\mathcal{X},\mathcal{P}_2([\![1,n]\!])) \ar[d]^-f \\
(\check{X},\mathcal{P}_2([\![1,n]\!])) \ar[r]^-l & (X,\mathcal{P}_2([\![1,n]\!]))}$$
is commutative.

\begin{proposition}
\label{prop:functorialite-motif-theta-prime}
There is a morphism of motives
$l^*\theta'_{X,\mathcal{S}} \to \theta'_{\check{X}, \check{\mathcal{S}}}$ which is invertible when $f:\check{X}\to X$ is smooth and $\check{Y}_i=\check{X}\times_X Y_i$ for $i\in [\![0,n]\!]$.
Moreover,
the following diagram of $\DM(\check{X},\mathcal{P}_2([\![1,n]\!]))$:
$$\xymatrix@C=1.5pc@R=1.5pc{l^* f_*\theta'_{X,\mathcal{S}} \ar[r] \ar[d]_-{\sim} & \check{f}_* l^* \theta'_{X,\mathcal{S}} \ar[r] & \check{f}_* \theta'_{\check{X},\check{\mathcal{S}}} \ar[d]^-{\sim}\\
l^* \theta_{X,\mathcal{S}} \ar[rr] & & \theta_{\check{X},\check{\mathcal{S}}}}$$
commutes; the arrow in the bottom being the morphism
of \emph{Proposition
\ref{prop:cube-strat-F-omega-j-star}}.
\end{proposition}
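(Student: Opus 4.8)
The plan is to imitate, step by step, the inductive construction of $\theta'_{X,\mathcal{S}}$ from \eqref{eq:motive-theta-prime-induct-defn}, building the comparison morphism $l^*\theta'_{X,\mathcal{S}} \to \theta'_{\check{X},\check{\mathcal{S}}}$ out of the corresponding morphism for $X'$, and then to combine it with the comparison $\theta_{X,\mathcal{S}}\simeq f_*\theta'_{X,\mathcal{S}}$ of Proposition \ref{prop:comparaison-theta-et-theta-prime} and with the functoriality of $\theta_{X,\mathcal{S}}$ established at the end of Proposition \ref{prop:cube-strat-F-omega-j-star}. So the whole thing is a bookkeeping argument by induction on $n$, combining four ingredients: base change for objectwise projective morphisms (\cite[Th.~2.4.22]{ayoub-these-I}) and for smooth morphisms (\cite[Prop.~4.5.48]{ayoub-these-II}), the natural transformations $l^*o_*\to o_*l^*$ and $l^*j_*\to j_*l'^*$ attached to the cartesian squares of diagrams of schemes in question, and the natural transformation $l^*\omega^0_{\{(0,1)\}|(\mathcal{X},\smallucarre)}\to \omega^0_{\{(0,1)\}|(\check{\mathcal{X}},\smallucarre)}l^*$ provided by Proposition \ref{prop:omega-diagram-raisonnable} together with Proposition \ref{prop:additional-prop-omega-0-x}, (ii); the last one is invertible when $l$ is smooth.

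More concretely, I would argue as follows. For $n=0$ both motives are $\un_{X_0}$ resp.\ $\un_{\check{X}_0}$ and there is nothing to do. For $n\geq 1$, put $m=n-1$ and invoke the induction hypothesis to get $l'^*\theta'_{X',\mathcal{S}'}\to \theta'_{\check{X}',\check{\mathcal{S}}'}$, invertible in the smooth case. Using Remark \ref{rem:for-doing-induction-X-X'-II} and Lemma \ref{lemma:functorialite-de-T}, one has the cube of diagrams of schemes relating $(\check{\mathcal{X}},\ucarre)$-data to $(\mathcal{X},\ucarre)$-data, all of whose relevant squares are cartesian; the vertical maps in the ``$X_n$'' corner are $l|_{X_n}:\check{X}_n\to X_n$, which is again universally open (and smooth when $l$ is). Then one forms the composite
$$l^*\omega^0_{\{(0,1)\}|(\mathcal{X},\smallucarre)}o_*b^*j_*\theta'_{X',\mathcal{S}'}\to \omega^0_{\{(0,1)\}|(\check{\mathcal{X}},\smallucarre)}l^*o_*b^*j_*\theta'_{X',\mathcal{S}'}\to \omega^0_{\{(0,1)\}|(\check{\mathcal{X}},\smallucarre)}o_*b^*j_*l'^*\theta'_{X',\mathcal{S}'}\to \omega^0_{\{(0,1)\}|(\check{\mathcal{X}},\smallucarre)}o_*b^*j_*\theta'_{\check{X}',\check{\mathcal{S}}'},$$
where the middle arrow uses $l^*o_*\to o_*l^*$, $l^*b^*\simeq b^*l^*$, $l^*j_*\to j_*l'^*$, and the last uses the induction hypothesis. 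The left side is $l^*\theta'_{X,\mathcal{S}}$ and the right side is $\theta'_{\check{X},\check{\mathcal{S}}}$ by \eqref{eq:motive-theta-prime-induct-defn}, giving the desired morphism. Invertibility when $f$ is smooth and $\check{Y}_i=\check{X}\times_X Y_i$: the two transition morphisms $l^*o_*\to o_*l^*$ and $l^*j_*\to j_*l'^*$ are then invertible by smooth base change \cite[Prop.~4.5.48]{ayoub-these-II}, the $\omega^0$-transition is invertible as noted, and the induction hypothesis handles the last arrow — note the hypothesis $\check{Y}_i=\check{X}\times_X Y_i$ propagates to $X'$ since $Y'_j=Y_j\times_{X_{\geq j}}X'_{\geq j}$ and $\check{X}'=\check{X}\times_X X'$, which is needed to know $b$ (hence $v_n$, $q_n$) behaves well under base change.

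Finally, for the commuting square in the statement, I would check it again by induction on $n$, running the same diagram-chase that proves Proposition \ref{prop:comparaison-theta-et-theta-prime}. The point is that the isomorphism $\theta_{X,\mathcal{S}}\simeq f_*\theta'_{X,\mathcal{S}}$ is assembled from exactly the same elementary pieces (base change $b^*f_*\to g_*b^*$ along the objectwise projective $f|_{\ucarre\text{-part}}$, the commutation $o_*g_*\simeq g_*o_*$, $e_*g_*\simeq f_*$, and $\omega^0_{\{(0,1)\}}g_*\simeq g_*\omega^0_{\{(0,1)\}}$ since $g$ restricts to an isomorphism over $(0,1)$) that also govern the functoriality morphism of Proposition \ref{prop:cube-strat-F-omega-j-star}; one then observes that each such elementary morphism is natural in the base, i.e.\ compatible with the $l^*$-transition maps, which is a diagram-chase built from the compatibility of base-change transformations with one another (2-out-of-3 / pasting of cartesian squares) and from naturality of $\delta$ and of the $\omega^0$-transition maps recorded in Propositions \ref{prop:additional-prop-omega-0-x} and \ref{prop:omega-diagram-raisonnable}. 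The main obstacle is purely organizational: keeping the numerous transition natural transformations straight and verifying that the big rectangle splits into squares each of which commutes for one of the stated formal reasons — in particular verifying carefully that the ``projective base change'' identifications used to build $\theta\simeq f_*\theta'$ are the same ones that appear in the functoriality of $\theta$, so that the outer square of the statement reduces to a union of manifestly commuting cells. No genuinely new input beyond \cite[Th.~2.4.22]{ayoub-these-I}, \cite[Prop.~4.5.48]{ayoub-these-II}, Propositions \ref{prop:omega-diagram-raisonnable}, \ref{prop:additional-prop-omega-0-x} and the inductive hypothesis is required.
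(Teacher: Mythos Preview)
Your proposal is correct and follows essentially the same route as the paper: induction on $n$, with the morphism built from the chain $l^*o_*b^*j_*\to o_*l^*b^*j_*\simeq o_*b^*l^*j_*\to o_*b^*j_*l'^*$ combined with the transition $l^*\omega^0_{\{(0,1)\}|(\mathcal{X},\smallucarre)}\to \omega^0_{\{(0,1)\}|(\check{\mathcal{X}},\smallucarre)}l^*$, invertibility in the smooth case via \cite[Prop.~4.5.48]{ayoub-these-II} and Propositions \ref{prop:omega-diagram-raisonnable}, \ref{prop:additional-prop-omega-0-x}(ii), and the commutativity of the square checked by induction using the inductive definitions of $\theta_{X,\mathcal{S}}\simeq f_*\theta'_{X,\mathcal{S}}$ --- which the paper likewise leaves to the reader.
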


\begin{proof}
The proof is by induction. When $n=0$, the statement is obvious. We assume that $n\geq 1$ and that a morphism
$l'^*\theta'_{X',\mathcal{S}'}\to \theta'_{\check{X}',\check{\mathcal{S}}'}$ has been constructed with the expected properties.
We consider the commutative diagram in $\Dia(\Sch/k)$:
$$\xymatrix@C=1.5pc@R=1.5pc{\check{\mathcal{X}}' \ar[r]^-j\ar[d]_-{l'}  & \check{\mathcal{X}}\circ \iota^0_n \ar[d]^-l & \check{\mathcal{X}}\circ o \ar[l]_-b \ar[r]^-o \ar[d]^-l &
\check{\mathcal{X}} \ar[d]^-l \\
\mathcal{X}' \ar[r]^-j & \mathcal{X}\circ \iota_n^0 & \mathcal{X}\circ o \ar[l]_-b \ar[r]^-o & \mathcal{X}.\!}$$
This gives us natural transformations
$$l^*o_*b^*j_* \to o_* l^* b^* j_*\simeq o_*b^*l^*j_*\to
o_*b^*j_*l^*.$$
Note that the first and third morphisms above are invertible when
$f:\check{X}\to X$ is smooth and $\check{Y}_i=\check{X}\times_X Y_i$ for $i\in [\![0,n]\!]$; this follows from the base change theorem by smooth morphisms \cite[Prop.~4.5.48]{ayoub-these-II}. On the other hand, we have a natural transformation
$$l^*\omega^0_{\{(0,1)\}|(\mathcal{X},\smallucarre)} \to
\omega^0_{\{(0,1)\}|(\check{\mathcal{X}},\smallucarre)} l^*$$
constructed in the same way as the natural transformation in Proposition
\ref{prop:additional-prop-omega-0-x}, (ii).
When
$f:\check{X}\to X$ is smooth and $\check{Y}_i=\check{X}\times_X Y_i$ for $i\in [\![0,n]\!]$, this natural transformation is invertible as it follows immediately from Proposition
\ref{prop:omega-diagram-raisonnable} and the last statement in Proposition \ref{prop:additional-prop-omega-0-x}, (ii).
We now obtain our morphism by taking the composition
$$\xymatrix@C=1.3pc@R=1.3pc{l^*\omega^0_{\{(0,1)\}|(\mathcal{X},\smallucarre)} o_*b^*j_* \theta'_{X',\mathcal{S}'}\ar[r] &
\omega^0_{\{(0,1)\}|(\check{\mathcal{X}},\smallucarre)} l^* o_*b^*j_* \theta'_{X',\mathcal{S}'}\ar[d] \\
& \omega^0_{\{(0,1)\}|(\check{\mathcal{X}},\smallucarre)}  o_*b^*j_* l^* \theta'_{X',\mathcal{S}'}\ar[r] &
\omega^0_{\{(0,1)\}|(\check{\mathcal{X}},\smallucarre)}  o_*b^*j_* \theta'_{\check{X}',\check{\mathcal{S}}'} }$$
and recalling that the object on the left is $l^*\theta'_{X,\mathcal{S}}$ and the object on the right is $\theta'_{\check{X},\check{\mathcal{S}}}$.

The verification that the diagram of the statement is commutative is also done by induction, using the inductive definition of the isomorphisms $f_*\theta'_{X,\mathcal{S}}\simeq \theta_{X,\mathcal{S}}$ and $\check{f}_*\theta'_{\check{X},\check{\mathcal{S}}}\simeq \theta_{\check{X},\check{\mathcal{S}}}$. The details of the proof are left to the reader.
\end{proof}

\subsubsection{The diagram of schemes $\mathcal{T}$}
\label{subsub:calT}
Recall from \S\ref{subsub:setting-for-comput}
that for $\emptyset\neq I \subset [\![0,n]\!]$, there is a
stratification $\mathcal{R}(I)$ on $Y_{{\rm min}(I)}$ induced by
the set of  irreducible components of $Y_{{\rm min}(I)}^{\infty}$
whose image in $X$ is an irreducible component of some $X_{\geq
j}$ with $j\in I$. Moreover, the subscheme $T(I)\subset Y_{{\rm
min}(I)}$ is $\mathcal{R}(I)$-constructible. We let $A(I)$ denote
the set of irreducible closed $\mathcal{R}(I)$-constructible
subsets of $T(I)$. The set $A(I)$ is ordered by
inclusion. There is an non-decreasing bijection from the set of $\mathcal{R}(I)$-strata contained in $T(I)$ which is given by taking closures. Clearly, every irreducible component of $T(I)$ is in
$A(I)$. In particular, the elements of $A(I)$ form a covering of
the scheme $T(I)$ by closed subsets. Note also that if $D_1$ and
$D_2$ are in $A(I)$ and $D$ is a connected component of $D_1\cap
D_2$, then $D\in A(I)$.

\begin{proposition}
\label{prop:for-defining-s-J-I-and-funct}
Let $\emptyset \neq J \subset I \subset [\![0,n]\!]$
and $D\in A(I)$. Then there is a smallest element ${\rm s}_{J\subset I}(D) \in A(J)$ containing the image of $D$ by $T(I)\to T(J)$.
Moreover, the mappings ${\rm s}_{J\subset I}$ make $A$ into a contravariant functor from $\mathcal{P}^*([\![0,n]\!])$ to the category of ordered sets.

\end{proposition}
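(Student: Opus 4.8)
The plan is to prove the existence of ${\rm s}_{J\subset I}(D)$ first, and then verify functoriality. For the existence, recall from Lemma \ref{lemma:T-is-a-funct} that the morphism $T(J\subset I):T(I)\to T(J)$ is the restriction of $e_{i_0,j_0}:\overline{e_{i_0}^{-1}(X_{j_0})}\to Y_{j_0}$, where $i_0={\rm min}(I)$ and $j_0={\rm min}(J)$. First I would show that the Zariski closure $\overline{T(J\subset I)(D)}$ inside $T(J)$ is an $\mathcal{R}(J)$-constructible irreducible closed subset, i.e.\ an element of $A(J)$; this will be the candidate for ${\rm s}_{J\subset I}(D)$. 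Irreducibility and closedness are automatic (continuous image of an irreducible set, then closure). The key point is $\mathcal{R}(J)$-constructibility. Here I would invoke property \textbf{P1)}: since $D$ is an irreducible closed $\mathcal{R}(I)$-constructible subset of $T(I)$, it is a union of $\mathcal{R}(I)$-strata, and by the second clause of \textbf{P1)} each such stratum — being an $\mathcal{R}(\{i_0,j_0\}\bigsqcup K)$-stratum for an appropriate $K\subset[\![j_0+1,n]\!]$ — is mapped by $e_{i_0,j_0}$ onto an $\mathcal{R}(\{j_0\}\bigsqcup K)$-stratum of $Y_{j_0}$. One must check that these target strata are $\mathcal{R}(J)$-strata, which follows because $\{j_0\}\bigsqcup K$ can be arranged to sit inside $J$ (or at least the relevant irreducible components of $Y_{j_0}^\infty$ appearing in $\mathcal{R}(\{j_0\}\bigsqcup K)$ are among those defining $\mathcal{R}(J)$); I would spell out that $\mathcal{R}(J')\subseteq\mathcal{R}(J)$-constructibility is preserved when $J'\subset J$. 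Thus $T(J\subset I)(D)$ is a union of finitely many $\mathcal{R}(J)$-strata, hence so is its closure (closure of an $\mathcal{R}(J)$-stratum is $\mathcal{R}(J)$-constructible by property (ii) of a stratification), giving $\overline{T(J\subset I)(D)}\in A(J)$.

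Next I would show this element is the \emph{smallest} one in $A(J)$ containing $T(J\subset I)(D)$. If $D'\in A(J)$ contains the image, then being closed it contains the closure $\overline{T(J\subset I)(D)}$; hence $\overline{T(J\subset I)(D)}$ is minimal among elements of $A(J)$ containing the image, and it is itself such an element, so it is the smallest. This shows ${\rm s}_{J\subset I}(D):=\overline{T(J\subset I)(D)}$ is well-defined. Monotonicity of ${\rm s}_{J\subset I}$ with respect to inclusion of elements of $A(I)$ is immediate: $D_1\subset D_2$ implies $T(J\subset I)(D_1)\subset T(J\subset I)(D_2)$, hence the same for closures.

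Finally, for functoriality I would verify ${\rm s}_{K\subset I}={\rm s}_{K\subset J}\circ {\rm s}_{J\subset I}$ for $\emptyset\neq K\subset J\subset I$, together with ${\rm s}_{I\subset I}={\rm id}$. The identity case is trivial since $T(I\subset I)={\rm id}$. For the composition, apply ${\rm s}_{K\subset J}$ to ${\rm s}_{J\subset I}(D)=\overline{T(J\subset I)(D)}$: by definition this is $\overline{T(K\subset J)(\overline{T(J\subset I)(D)})}$, which by continuity of $T(K\subset J)$ equals $\overline{T(K\subset J)(T(J\subset I)(D))}=\overline{T(K\subset I)(D)}={\rm s}_{K\subset I}(D)$, using the functoriality $T(K\subset I)=T(K\subset J)\circ T(J\subset I)$ from Lemma \ref{lemma:T-is-a-funct}. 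The only subtlety is that one needs the closure-commutes-with-image step, which is the standard fact that for a continuous map $g$ and any subset $S$, $\overline{g(\overline S)}=\overline{g(S)}$; I would cite or sketch this. The main obstacle is the $\mathcal{R}(J)$-constructibility in the first step — i.e.\ extracting exactly what property \textbf{P1)} gives about images of strata and bookkeeping the index sets $K$ — so I would write that part most carefully; everything else is formal.
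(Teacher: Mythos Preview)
There is a genuine gap: your central claim that $\overline{T(J\subset I)(D)}\in A(J)$ is false in general. Take the simplest case ${\rm min}(I)={\rm min}(J)=i_0$ with $J\subsetneq I$: then $T(J\subset I)$ is the inclusion $T(I)\hookrightarrow T(J)$, so the image of $D$ is $D$ itself. But $D$ is only $\mathcal{R}(I)$-constructible, cut out by divisors $\overline{e_{i_0}^{-1}(X_j)}$ for $j\in I\setminus\{i_0\}$, including those $j\notin J$; hence $D$ need not be $\mathcal{R}(J)$-constructible. Concretely, for $I=\{0,1\}$, $J=\{0\}$, and $D$ an irreducible component of $\overline{e_0^{-1}(X_1)}$, the set $D$ is a divisor in $Y_0$, not a connected component, so $D\notin A(\{0\})$; here ${\rm s}_{J\subset I}(D)$ is the component of $Y_0$ containing $D$, strictly larger than $D$. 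Your appeal to \textbf{P1)} breaks down because an $\mathcal{R}(I)$-stratum is not generally an $\mathcal{R}(\{i_0,j_0\}\sqcup K)$-stratum for $K\subset[\![j_0+1,n]\!]$ (elements of $I$ strictly between $i_0$ and $j_0$ contribute extra divisors), and even when that step goes through you would still need $\{j_0\}\sqcup K\subset J$, which fails whenever $K$ meets $I\setminus J$. Your functoriality argument then collapses as well, since it rests on the incorrect formula ${\rm s}_{J\subset I}(D)=\overline{T(J\subset I)(D)}$.

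The paper proceeds differently. For existence it uses that if $T_1,T_2\in A(J)$ both contain the image of $D$, then so does the connected component of $T_1\cap T_2$ containing that image (this holds because the stratifications arise from a \emph{sncd}); since $A(J)$ is finite, a smallest element exists. For functoriality, the inclusion ${\rm s}_{K\subset I}(D)\subset {\rm s}_{K\subset J}\,{\rm s}_{J\subset I}(D)$ is automatic from minimality. For the reverse inclusion one first reduces to the special case $J=\{j_0\}\sqcup K$ with $j_0<{\rm min}(K)$, where \textbf{P1)} shows that $T(J)\to T(K)$ takes $\mathcal{R}(J)$-strata onto $\mathcal{R}(K)$-strata (so ${\rm s}_{K\subset J}$ really is the image map here), and then one uses that \emph{inverse images} under $T(J)\to T(K)$ of $\mathcal{R}(K)$-constructible sets are $\mathcal{R}(J)$-constructible to produce an element of $A(J)$ containing $(T(I)\to T(J))(D)$ and lying over ${\rm s}_{K\subset I}(D)$, which forces ${\rm s}_{K\subset J}\,{\rm s}_{J\subset I}(D)\subset {\rm s}_{K\subset I}(D)$.
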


\begin{proof}
If $T_1$ and $T_2$ are two elements in $A(J)$ containing $(T(I)\to T(J))(D)$, then the connected component of $T_1\cap T_2$ containing
$(T(I)\to T(J))(D)$ is also in $A(J)$. This proves the existence of
${\rm s}_{I\subset J}(D)$.

Next, we show that the maps ${\rm s}_{J\subset I}$ make $A$ into a contravariant functor.
Let $\emptyset \neq K\subset J$ be a third subset of $[\![0,n]\!]$.
As ${\rm s}_{K\subset J}\, {\rm s}_{J\subset I}(D)$ contains the image of $D$ by the morphism $T(I)\to T(K)$, we have by the minimality of
${\rm s}_{K\subset I}(D)$ that
\begin{equation}
\label{eq-prop:rms-est-transitive}
{\rm s}_{K\subset I}(D) \subset {\rm s}_{K\subset J}{\rm s}_{J\subset I}(D).
\end{equation}
Let $J'=\{{\rm min}(J)\}\bigcup K$. Then $J'\subset J$ with ${\rm min}(J')={\rm min}(J)$,
and every
$\mathcal{R}(J')$-constructible subset of $Y_{{\rm min}(J)}$ is also
$\mathcal{R}(J)$-constructible.
By the minimality of
${\rm s}_{J\subset I}(D)$ we thus get an inclusion
${\rm s}_{J\subset I}(D)\subset {\rm s}_{J'\subset I}(D)$.
It follows that ${\rm s}_{K\subset J}\,{\rm s}_{J\subset I}(D)\subset {\rm s}_{K\subset J'}\,{\rm s}_{J'\subset I}(D)$. Thus, it suffices to show that
$${\rm s}_{K\subset I}(D)={\rm s}_{K\subset J'}\,{\rm s}_{J'\subset I}(D).$$
In other words, we may assume that $J=\{j_0\}\bigsqcup K$ for a $0\leq j_0<{\rm min}(K)$. In this case, $T(J)\to T(K)$ is dominant and, by Property \textbf{P1)},
${\rm s}_{K\subset J}$ takes an element of $A(J)$ to its image by $T(J)\to T(K)$.

Again by Property \textbf{P1)}, the inverse image along $T(J) \to
T(K)$ of an $\mathcal{R}(K)$-constructible subset is
$\mathcal{R}(J)$-constructible. In particular, $(T(J)\to
T(K))^{-1}({\rm s}_{K\subset I}(D))$ is
$\mathcal{R}(J)$-constructible. The same is true for any of its
irreducible components. Denote by $P$ one of these irreducible
components containing $(T(I)\to T(J))(D)$. Then, $P\in A(J)$ and
${\rm s}_{J \subset I}(D)\subset P$. It follows that ${\rm
s}_{K\subset I}(D)$ contains the image of ${\rm s}_{J\subset
I}(D)$ in $T(K)$, and hence ${\rm s}_{K\subset J}\,{\rm
s}_{J\subset I}(D)\subset {\rm s}_{K\subset I}(D)$. This proves
the proposition.
\end{proof}

\begin{lemma}
\label{lemma:dominance-elements-A-I}
Let $\emptyset \neq I \subset [\![0,n]\!]$. The image in $X$ of an element $E\in A(I)$
is an irreducible component of $X_{\geq {\rm max}(I)}$.
\end{lemma}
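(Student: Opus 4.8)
Write $i_0 = {\rm min}(I)$, so that $E$ is an irreducible closed $\mathcal{R}(I)$-constructible subset of $T(I) \subset Y_{i_0}$, and $T(I) = \bigcap_{i\in I}\overline{e_{i_0}^{-1}(X_i)}$. The first observation is that $E$ is cut out inside the \emph{sncd} $Y_{i_0}^\infty$ by a prescribed set of its irreducible divisors: since each $\overline{e_{i_0}^{-1}(X_i)}$ (for $i \in I - \{i_0\}$) is a union of irreducible components of $Y_{i_0}^\infty$, the subscheme $T(I)$ is an intersection of such components, and an irreducible $\mathcal{R}(I)$-constructible closed subset of it is a connected component of an intersection $\bigcap_{\alpha} D_\alpha$ of certain irreducible divisors of $Y_{i_0}^\infty$ whose images are components of the various $X_{\geq j}$, $j \in I$. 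In particular $E$ has codimension equal to the number of such divisors, and (crucially) among them must appear at least one divisor mapping onto a component of $X_{\geq {\rm max}(I)}$ — because the closure $\overline{e_{i_0}^{-1}(X_{{\rm max}(I)})}$ is part of the intersection defining $T(I)$, and $E \subset T(I)$.

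The key step is then to identify the image $e_{i_0}(E)$ in $X$. By Property \textbf{P1)}, applied with $j = {\rm max}(I)$ (and taking $K = \emptyset$, or rather running the argument through the chain of inclusions), the stratum $E$ is mapped by $e_{i_0}$ into $X_{\geq {\rm max}(I)}$; more precisely, since $E$ is an $\mathcal{R}(I)$-stratum closure and ${\rm max}(I) \in I$, its generic point maps into $X_{{\rm max}(I)}$ — indeed $E$ lies in $\overline{e_{i_0}^{-1}(X_{{\rm max}(I)})}$ but not (by minimality/the component structure, as in the argument proving Lemma \ref{lemma:rec-form-for-T()}) inside $e_{i_0}^{-1}(X_{\geq {\rm max}(I)+1})$, for otherwise $E$ would be contained in yet another irreducible divisor of $Y_{i_0}^\infty$, contradicting that the number of divisors through $E$ is exactly its codimension in the \emph{sncd}. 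Hence $E \cap e_{i_0}^{-1}(X_{{\rm max}(I)})$ is a nonempty open dense subset of $E$, and $e_{i_0}(E)$ is the closure in $X$ of an irreducible subset of $X_{{\rm max}(I)}$.

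It remains to see that $e_{i_0}(E)$ is all of an \emph{irreducible component} of $X_{\geq {\rm max}(I)}$, not merely a proper irreducible closed subset. For this I would invoke Property \textbf{P1)} in its full strength together with \textbf{P2)}: the morphism $e_{i_0, {\rm max}(I)} : \overline{e_{i_0}^{-1}(X_{{\rm max}(I)})} \to Y_{{\rm max}(I)}$ sends each $\mathcal{R}(I)$-stratum onto an $\mathcal{R}(\{{\rm max}(I)\})$-stratum, i.e.\ onto a connected component of $Y_{{\rm max}(I)}$; composing with $e_{{\rm max}(I)} : Y_{{\rm max}(I)} \to X_{\geq {\rm max}(I)}$, whose restriction over $X_{{\rm max}(I)}$ is an isomorphism onto $X_{{\rm max}(I)}$ (by \textbf{D2)}), we conclude that $e_{i_0}(E)$ contains the image of a whole connected component of $Y_{{\rm max}(I)}$, hence the closure of a whole connected component of $X_{{\rm max}(I)}$, which is an irreducible component of $X_{\geq {\rm max}(I)}$. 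Since we already showed $e_{i_0}(E)$ is irreducible and contained in $X_{\geq {\rm max}(I)}$, it coincides with that component.

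\textbf{Main obstacle.} The delicate point is the bookkeeping that forces $E$ to map \emph{onto} a full stratum rather than into a smaller piece: one must use the compatibility in \textbf{P1)} between the stratifications $\mathcal{R}(I)$ on $Y_{i_0}$ and $\mathcal{R}(\{{\rm max}(I)\})$ on $Y_{{\rm max}(I)}$ (namely that strata go onto strata, i.e.\ surjectively), combined with the fact from \textbf{D2)} that $e_{{\rm max}(I)}$ restricts to an isomorphism over the open stratum $X_{{\rm max}(I)}$. Getting the logic right — that "stratum maps onto stratum" upstairs propagates to "$e_{i_0}(E)$ is a full component of $X_{\geq {\rm max}(I)}$" downstairs — is where the care is needed; everything else is the now-familiar \emph{sncd}/codimension-counting argument already used in Lemma \ref{lemma:rec-form-for-T()}.
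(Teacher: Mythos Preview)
Your codimension-counting argument showing $E \not\subset e_{i_0}^{-1}(X_{\geq \max(I)+1})$ is exactly what the paper does. The divergence is in the final step, and there is a gap in your invocation of \textbf{P1)}. With $j = \max(I)$ and $K = \emptyset$, \textbf{P1)} only says that $\mathcal{R}(\{i_0,\max(I)\})$-strata map onto components of $Y_{\max(I)}$; it says nothing directly about $\mathcal{R}(I)$-strata when $I$ has intermediate elements $i_0 < i_1 < \dots < \max(I)$, because the constraint $K \subset [\![j+1,n]\!]$ forbids taking $K = I - \{i_0,\max(I)\}$. Your parenthetical about ``running the argument through the chain of inclusions'' is the right instinct --- one can iterate \textbf{P1)} along $i_0 < i_1 < \dots < i_s = \max(I)$, at step $t$ taking $j = i_{t+1}$ and $K = \{i_{t+2},\dots,i_s\}$, so that $\mathcal{R}(I)$-strata map onto $\mathcal{R}(I-\{i_0\})$-strata in $Y_{i_1}$, and so on down to components of $Y_{\max(I)}$ --- but this chain has to be spelled out, and one must check the composite agrees with $e_{i_0,\max(I)}$ on $E$.

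The paper sidesteps this. Since $\mathcal{R}(I)$ is coarser than $\mathcal{R}^\infty_{i_0}$, the set $E$ is a union of $\mathcal{R}^\infty_{i_0}$-strata; by \textbf{P2)} each of these maps onto an $\mathcal{S}$-stratum of $X$, so $e_{i_0}(E)$ is $\mathcal{S}$-constructible. Being also closed, irreducible, contained in $X_{\geq \max(I)}$, and (by your argument) meeting $X_{\max(I)}$, it is forced to be an irreducible component of $X_{\geq \max(I)}$. This is a one-line conclusion once $\mathcal{S}$-constructibility is in hand, and it avoids factoring through $Y_{\max(I)}$ altogether.
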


\begin{proof}
Let $i_0={\rm min}(I)$. When $I=\{i_0\}$, $E=Y_{i_0}$ and there is nothing to prove. Also when $n\in I$, the claim is clear as
the image of $E$ in $X$ is an irreducible $\mathcal{S}$-constructible subset contained in $X_n$.

We now assume that ${\rm card}(I)\geq 2$ and ${\rm max}(I)\leq n-1$.
If $D$ is an irreducible component of $Y_{i_0}^{\infty}$ containing $E$, then $D\subset \overline{e_{i_0}^{-1}(X_j)}$ for some $j\in I-\{{\rm min}(I)\}$. This shows that $E$ is not contained in
$e_{i_0}^{-1}(X_{\geq {\rm max}(I)+1})$. As the image of $E$ in $X$
is an $\mathcal{S}$-constructible, closed and irreducible subset of $X_{\geq {\rm max}(I)}$,
it must contain a connected component of
$X_{{\rm max}(I)}$. Thus, it is an irreducible component of
$X_{\geq {\rm max}(I)}$.
\end{proof}

\begin{proposition}
Let $\emptyset \neq I \subset [\![0,n]\!]$. Taking the image by
the morphism $\check{T}(I) \to T(I)$ yields a mapping
$\check{A}(I) \to A(I)$. As $I$ varies, these mappings define a
natural transformation $\check{A} \to A$ between contravariant
functors from $\mathcal{P}^*([\![0,n]\!])$ to the category of ordered sets.
\end{proposition}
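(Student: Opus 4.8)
The plan is to verify that the set-map $\check{A}(I) \to A(I)$ given by taking images is well-defined, and then that it is compatible with the functorial structure maps ${\rm s}_{J\subset I}$ on both sides, i.e. that $\check{A} \to A$ is a natural transformation. First I would check well-definedness: given $\check{E}\in \check{A}(I)$, it is by definition an irreducible closed $\check{\mathcal{R}}(I)$-constructible subset of $\check{T}(I)\subset \check{Y}_{{\rm min}(I)}$. By Lemma \ref{lemma:functorialite-de-T} the morphism $l:\check{Y}_{{\rm min}(I)}\to Y_{{\rm min}(I)}$ carries $\check{T}(I)$ into $T(I)$, so the image $l(\check{E})$ lies in $T(I)$; its closure $\overline{l(\check{E})}$ is irreducible since $\check{E}$ is. The one genuinely nontrivial point is that $\overline{l(\check{E})}$ is $\mathcal{R}(I)$-constructible. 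This is exactly where the last compatibility assumption of \S\ref{subsub:setting-for-comput} enters: we assumed that for $i\in[\![0,n]\!]$ and $I\subset[\![i+1,n]\!]$, the morphism $l:\check{Y}_i \to Y_i$ maps an $\check{\mathcal{R}}(\{i\}\bigsqcup I)$-stratum onto an $\mathcal{R}(\{i\}\bigsqcup I)$-stratum. Applying this with $i={\rm min}(I)$ (and the subset $I-\{{\rm min}(I)\}$ in place of the auxiliary index set) shows that $l$ carries $\check{\mathcal{R}}(I)$-strata onto $\mathcal{R}(I)$-strata; hence the image of a $\check{\mathcal{R}}(I)$-constructible set is $\mathcal{R}(I)$-constructible, and since $\overline{l(\check{E})}$ is irreducible and closed we conclude $\overline{l(\check{E})}\in A(I)$. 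So the map $\check{A}(I)\to A(I)$, $\check{E}\mapsto \overline{l(\check{E})}$, is well-defined and is visibly non-decreasing for inclusion.

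Next I would verify naturality, i.e. that for $\emptyset\neq J\subset I\subset[\![0,n]\!]$ the square relating $\check{A}(I)\to A(I)$, $\check{A}(J)\to A(J)$, ${\rm s}_{J\subset I}$ and $\check{\rm s}_{J\subset I}$ commutes. Fix $\check{E}\in\check{A}(I)$, with image $E=\overline{l(\check{E})}\in A(I)$. By Lemma \ref{lemma:functorialite-de-T} (the naturality of $\check{T}\to T$) the diagram of $X$-schemes
\[
\xymatrix@C=1.5pc@R=1.5pc{\check{T}(I) \ar[r]^-{l(I)} \ar[d]_-{\check{T}(J\subset I)} & T(I) \ar[d]^-{T(J\subset I)} \\ \check{T}(J) \ar[r]^-{l(J)} & T(J)}
\]
commutes. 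Hence $l(J)$ applied to the image of $\check{E}$ under $\check{T}(J\subset I)$ equals the image under $T(J\subset I)$ of $l(I)(\check{E})$, and a fortiori of $E$. Therefore $\overline{l(\check{E}')}$, where $\check{E}' = \check{\rm s}_{J\subset I}(\check{E})$ is the smallest element of $\check{A}(J)$ containing $(\check{T}(J\subset I))(\check{E})$, is a closed irreducible $\mathcal{R}(J)$-constructible subset of $T(J)$ containing $(T(J\subset I)\circ l(I))(\check E)$, hence containing $(T(J\subset I))(E)$ (as the latter is the closure of the former). By the minimality in the definition of ${\rm s}_{J\subset I}$ (Proposition \ref{prop:for-defining-s-J-I-and-funct}) we get ${\rm s}_{J\subset I}(E)\subset \overline{l(\check{E}')}$. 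For the reverse inclusion, I would argue that $(\check{T}(J\subset I))(\check E)$ has image under $l(J)$ contained in $(T(J\subset I))(E)\subset {\rm s}_{J\subset I}(E)$, and the preimage under $l(J)$ of the $\mathcal{R}(J)$-constructible set ${\rm s}_{J\subset I}(E)$ is $\check{\mathcal{R}}(J)$-constructible (again by the stratum-mapping hypothesis on $l$), so one of its irreducible components is an element of $\check A(J)$ containing $(\check{T}(J\subset I))(\check E)$; by minimality of $\check{\rm s}_{J\subset I}(\check E)=\check E'$, this component contains $\check E'$, whence $l(J)(\check E')\subset {\rm s}_{J\subset I}(E)$ and $\overline{l(\check E')}\subset {\rm s}_{J\subset I}(E)$. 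Combining the two inclusions gives $\overline{l(\check E')}={\rm s}_{J\subset I}(E)$, which is precisely the commutativity of the square.

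The main obstacle is the bookkeeping around constructibility of images and preimages: ensuring that the stratum-mapping hypothesis on $l:\check Y_i\to Y_i$ is strong enough to guarantee both that images of $\check{\mathcal{R}}(I)$-constructible sets are $\mathcal{R}(I)$-constructible and that preimages of $\mathcal{R}(J)$-constructible sets are $\check{\mathcal{R}}(J)$-constructible, in the precise indexing used by Property \textbf{P1)} (one must be careful that ${\rm s}_{J\subset I}$ is computed, by the proof of Proposition \ref{prop:for-defining-s-J-I-and-funct}, via the dominant morphisms $T(\{j_0\}\bigsqcup K)\to T(K)$, and that $l$ is compatible with these as well). Once these constructibility statements are in hand, the rest is a formal diagram-chase using Lemma \ref{lemma:functorialite-de-T} and the minimality characterization of the maps ${\rm s}_{J\subset I}$ and $\check{\rm s}_{J\subset I}$. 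I would also remark that the functoriality of $A$ (Proposition \ref{prop:for-defining-s-J-I-and-funct}) and of $\check A$ automatically makes the collection $\{\check A(I)\to A(I)\}_I$ a natural transformation once the single square above commutes for every pair $J\subset I$, so no separate coherence check is needed.
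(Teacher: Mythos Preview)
Your proposal is correct and follows essentially the same route as the paper's proof: both establish well-definedness via the hypothesis that $l$ maps $\check{\mathcal{R}}(I)$-strata onto $\mathcal{R}(I)$-strata, and then prove commutativity of the square by the same two-inclusion argument---one inclusion from the minimality of ${\rm s}_{J\subset I}(E)$ applied to the image of $\check{\rm s}_{J\subset I}(\check E)$, the other from pulling back ${\rm s}_{J\subset I}(E)$ along $l$, picking an irreducible component, and invoking the minimality of $\check{\rm s}_{J\subset I}(\check E)$. The only cosmetic difference is that you carefully write closures $\overline{l(\check E)}$ throughout, whereas the paper simply says ``image'' and asserts it is closed; your caution is harmless and arguably more precise.
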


\begin{proof}
The image by $\check{T}(I)\to T(I)$ of an element in $\check{A}(I)$
is indeed an element of $T(I)$ as $\check{Y}_{{\rm min}(I)} \to Y_{{\rm min}(I)}$ maps an
$\check{\mathcal{R}}(I)$-stratum to an $\mathcal{R}(I)$-stratum.

Next, let $\emptyset \neq J \subset I \subset [\![0,n]\!]$. We need to check that the square
$$\xymatrix@C=1.5pc@R=1.5pc{\check{A}(I) \ar[r] \ar[d]_-{\check{{\rm s}}_{J\subset I}} & A(I) \ar[d]^-{{\rm s}_{J\subset I}} \\
\check{A}(J) \ar[r] & A(J) }$$ is commutative. Let $\check{D}\in
\check{A}(I)$ and call $D\in A(I)$ its image by $\check{T}(I) \to
T(I)$. Then $(\check{T}(J)\to T(J))(\check{s}_{J\subset
I}(\check{D}))$ is an $\mathcal{R}(J)$-constructible, closed and
irreducible subset containing $(T(I)\to T(J))(D)$. By the
minimality of ${\rm s}_{J\subset I}$, we get the inclusion
$${\rm s}_{J\subset I}(D) \subset (\check{T}(J)\to T(J)) (\check{s}_{J\subset I}(\check{D})).$$

On the other hand, using again that
$\check{Y}_{{\rm min}(J)} \to Y_{{\rm min}(J)}$ maps an $\check{\mathcal{R}}(J)$-stratum to an $\mathcal{R}(J)$-stratum,
we see that
$$(\check{Y}_{{\rm min}(J)}\to Y_{{\rm min}(J)})^{-1}({\rm s}_{J\subset I}(D))$$
is $\check{\mathcal{R}}(J)$-constructible. Let $P$ be an
irreducible component of this subset which contains
$(\check{T}(I)\to \check{T}(J))(\check{D})$. Then $P$ is also
$\check{\mathcal{R}}(J)$-constructible and thus contains
$\check{{\rm s}}_{J\subset I}(\check{D})$. This gives the opposite
inclusion $(\check{T}(J)\to T(J)) (\check{s}_{J\subset
I}(\check{D}))\subset {\rm s}_{J\subset I}(D)$.
\end{proof}

We also record the following lemma and corollary for later use:

\begin{lemma}
\label{lem:fibers-of-s-J-subset-I}
Let $\emptyset \neq J \subset I \subset [\![0,n]\!]$.
We assume that ${\rm min}(I)={\rm min}(J)=i_0$.
Let $F \in A(J)$. Then
\begin{equation}
\label{eq-lem:fibers-of-s-J-subset-I-1}
F\bigcap \left(\bigcup_{i\in I-J} \overline{e_{i_0}^{-1}(X_i)}\right)
\end{equation}
is a sncd in $F$. It induces a stratification which we denote by
$\mathcal{R}_F(J|I)$. Then, for an element $E\in A(I)$, we have
$F={\rm s}_{J\subset I}(E)$ if and only if $E$ is
$\mathcal{R}_F(J|I)$-constructible.
\end{lemma}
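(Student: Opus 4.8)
The plan is to reduce the statement to elementary combinatorics inside the simple normal crossings divisor $Y_{i_0}^\infty = Y_{i_0}- e_{i_0}^{-1}(X_{i_0})$ on $Y_{i_0}$, where $i_0={\rm min}(I)={\rm min}(J)$, and write $\mathcal{D}$ for the set of its irreducible components. Recall from \S\ref{subsub:setting-for-comput} that $\mathcal{R}(J)$ (resp.\ $\mathcal{R}(I)$) is the stratification of $Y_{i_0}$ in the sense of Example \ref{ex:strat-ass-family-subschemes} associated with the family $\mathcal{D}_J\subset\mathcal{D}$ (resp.\ $\mathcal{D}_I\subset\mathcal{D}$) of those components whose image in $X$ is an irreducible component of some $X_{\geq j}$ with $j\in J-\{i_0\}$ (resp.\ $j\in I-\{i_0\}$); equivalently $\mathcal{D}_J$ (resp.\ $\mathcal{D}_I$) is the set of irreducible components of $\bigcup_{j\in J-\{i_0\}}\overline{e_{i_0}^{-1}(X_j)}$ (resp.\ of $\bigcup_{i\in I-\{i_0\}}\overline{e_{i_0}^{-1}(X_i)}$). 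First I would check that $\mathcal{D}_I=\mathcal{D}_J\sqcup\mathcal{D}'$, where $\mathcal{D}'$ denotes the set of irreducible components of $\bigcup_{i\in I- J}\overline{e_{i_0}^{-1}(X_i)}$: the union is disjoint because a member of $\mathcal{D}_J$ has image the closure of a connected component of some $X_j$ ($j\in J$) while a member of $\mathcal{D}'$ has image the closure of a connected component of some $X_i$ ($i\in I- J$), and $X_i\cap X_j=\emptyset$ for $i\neq j$. Since ${\rm min}(I)={\rm min}(J)$, the morphism $T(I)\to T(J)$ of Lemma \ref{lemma:T-is-a-funct} is simply the inclusion of closed subschemes of $Y_{i_0}$.

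Next I would record the standard dictionary for constructible subsets of a sncd: an irreducible closed $\mathcal{R}(J)$-constructible (resp.\ $\mathcal{R}(I)$-constructible) subset of $T(J)$ (resp.\ of $T(I)$) is precisely a connected component, contained in $T(J)$ (resp.\ in $T(I)$), of an intersection $D_{\mathcal{A}}:=\bigcap_{D\in\mathcal{A}}D$ for some $\mathcal{A}\subset\mathcal{D}_J$ (resp.\ $\mathcal{A}\subset\mathcal{D}_I$); such a component $W$ determines $\mathcal{A}$ uniquely as $\mathcal{A}(W)=\{D\in\mathcal{D}_J:W\subset D\}$ (resp.\ with $\mathcal{D}_I$), and, by the transversality built into the sncd structure, $W\not\subset D$ for every component $D\in\mathcal{D}$ with $D\notin\mathcal{A}(W)$, the codimension of $W\cap D$ in $W$ being then $1$. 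For $F\in A(J)$ set $\mathcal{A}=\mathcal{A}(F)\subset\mathcal{D}_J$; since $\mathcal{D}'\subset\mathcal{D}-\mathcal{A}$, the statement following Definition \ref{defn:sncd-in-quotient-sing} (restriction of a sncd to an intersection of its components), together with the fact that a sub-union of the components of a sncd is again a sncd, shows that
$$F\cap\Big(\bigcup_{i\in I- J}\overline{e_{i_0}^{-1}(X_i)}\Big)=\bigcup_{D\in\mathcal{D}'}(F\cap D)$$
is a sncd in $F$; this proves the first assertion of the lemma and, in passing, records that $F\not\subset D$ for every $D\in\mathcal{D}'$.

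Finally I would identify the two conditions in the equivalence. For $E\in A(I)$ put $\mathcal{B}=\mathcal{A}(E)\subset\mathcal{D}_I=\mathcal{D}_J\sqcup\mathcal{D}'$ and split $\mathcal{B}=\mathcal{B}_J\sqcup\mathcal{B}'$. A short argument --- using that $E\subset T(I)\subset T(J)$ forces, for each $j\in J-\{i_0\}$, some component of $\overline{e_{i_0}^{-1}(X_j)}$ to lie in $\mathcal{B}_J$, hence the connected component $G$ of $D_{\mathcal{B}_J}$ through $E$ to be contained in $T(J)$, and using minimality --- shows that ${\rm s}_{J\subset I}(E)=G$. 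Consequently ${\rm s}_{J\subset I}(E)=F$ if and only if $E\subset F$ and $\mathcal{B}_J=\mathcal{A}(F)$, i.e.\ $E$ is contained in no member of $\mathcal{D}_J$ that does not already contain $F$. On the other hand, $E$ is $\mathcal{R}_F(J|I)$-constructible if and only if $E\subset F$ and $E$ is a connected component of $F\cap D_{\mathcal{C}}$ for some $\mathcal{C}\subset\mathcal{D}'$. If ${\rm s}_{J\subset I}(E)=F$, take $\mathcal{C}=\mathcal{B}'$: then $E\subset F$ and $E$, being a connected component of $D_{\mathcal{B}}=D_{\mathcal{A}(F)}\cap D_{\mathcal{B}'}$ contained in the intermediate set $F\cap D_{\mathcal{B}'}$, is also a connected component of the latter, so $E$ is $\mathcal{R}_F(J|I)$-constructible. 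Conversely, if $E$ is a connected component of $F\cap D_{\mathcal{C}}$ with $\mathcal{C}\subset\mathcal{D}'$, and if some $D_0\in\mathcal{D}_J$ with $F\not\subset D_0$ contained $E$, then $E$ would lie in $F\cap D_0\cap D_{\mathcal{C}}$, a closed subset of pure codimension $|\mathcal{C}|+1$ in $F$ (here one uses the codimension formulas for intersections of components of a sncd and that $D_0\notin\mathcal{A}(F)\cup\mathcal{C}$), contradicting $\dim E=\dim F-|\mathcal{C}|$; hence $\mathcal{A}(E)\cap\mathcal{D}_J=\mathcal{A}(F)$ and ${\rm s}_{J\subset I}(E)=F$. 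The step I expect to require the most care is the dictionary of the second paragraph and the description of ${\rm s}_{J\subset I}(E)$ in the third --- verifying that closures of $\mathcal{R}(J)$- and $\mathcal{R}(I)$-strata are exactly the connected components of the intersections $D_{\mathcal{A}}$, keeping track of which set $\mathcal{A}$ occurs and of the $T(J)$- and $T(I)$-constraints; once this is in place, everything else is a formal consequence of the sncd formalism already developed in \S\ref{sub:direct-image-sncd}.
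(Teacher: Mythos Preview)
Your proposal is correct and follows essentially the same route as the paper. Both arguments rest on the same idea: write $E$ as a connected component of an intersection of irreducible components of the sncd $Y_{i_0}^\infty$, split those components into the ones indexed by $J$ and those indexed by $I-J$, use minimality of $s_{J\subset I}(E)$ to show $F$ is contained in every $J$-component through $E$ (forward direction), and use a codimension count to rule out extra $J$-components (converse). Your write-up is more explicit than the paper's --- you spell out the dictionary between closures of $\mathcal{R}(I)$-strata and connected components of $D_{\mathcal A}$, and you actually justify the sncd assertion (the paper leaves both tacit) --- but the substance is the same.
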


\begin{proof}
There is a unique family of irreducible components $(D_{\alpha})_{\alpha\in A}$ of $Y_{i_0}^{\infty}$ such that $E$ is a connected component of $\bigcap_{\alpha\in A} D_{\alpha}$.
As $E$ is
$\mathcal{R}(I)$-constructible, there is a map $t:A \to I-\{i_0\}$ such that $e_{i_0}(D_{\alpha})$ is an irreducible component of $X_{\geq t(\alpha)}$ for all $\alpha\in A$.

Now, assume that $F={\rm s}_{J\subset I}(E)$.
For $\alpha\in A$ such that $t(\alpha)\in J$, we must have
$F\subset D_{t(\alpha)}$. Indeed, the connected component $C$ of
$F\cap D_{t(\alpha)}$ containing $E$ is an $\mathcal{R}(J)$-constructible subset of $T(J)$ containing $E$. By the minimality of
$F={\rm s}_{J\subset I}(E)$, we must have $F=C$. It follows that
$E$ is a connected component of
$$F\bigcap \left(\bigcap_{\alpha\in A} D_{\alpha}\right)=F\bigcap\left(\bigcap_{\alpha\in t^{-1}(I-J)} D_{\alpha}\right).$$
This proves that $E$ is $\mathcal{R}_F(J|I)$-constructible.

Conversely, if ${\rm s}_{J\subset I}(E)\subsetneq F$, we can find an irreducible component $D$ of $Y_{i_0}^{\infty}$, dominating an irreducible component of $X_{\geq j_0}$ with $j_0\in J-\{i_0\}$, and such that
$E\subset F\cap D \subsetneq F$. But then, $F\cap D$ does not contain any non-empty $\mathcal{R}_F(J|I)$-constructible subset.
Thus, $E$ cannot be $\mathcal{R}_F(J|I)$-constructible.
\end{proof}

\begin{corollary}
\label{cor:fibers-of-s-J-subset-I}
Let $\emptyset \neq J \subset I \subset [\![0,n]\!]$ such that
${\rm min}(I)={\rm min}(J)$.
Let $F,\, F'\in A(J)$ and assume that $F\subset F'$. Let
$E\in {\rm s}_{J\subset I}^{-1}(F)$. Then, there is a smallest
element
$E'\in {\rm s}_{J\subset I}^{-1}(F')$ such that $E\subset E'$.
This defines an non-decreasing map ${\rm s}_{J\subset I}^{-1}(F) \to
{\rm s}_{J\subset I}^{-1}(F')$. We obtain in this way a functor
from $A(J)$ to the category of ordered sets sending
$F\in A(J)$ to ${\rm s}_{J\subset I}^{-1}(F)$. Moreover,
$\int_{A(J)} {\rm s}_{J\subset I}^{-1}(-)$ is canonically isomorphic to
$A(I)$.

\end{corollary}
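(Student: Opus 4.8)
The plan is to prove Corollary \ref{cor:fibers-of-s-J-subset-I} by deducing everything from Lemma \ref{lem:fibers-of-s-J-subset-I} and the recursive/combinatorial structure of the sets $A(-)$ and the maps ${\rm s}_{J\subset I}$. First I would fix $\emptyset\neq J\subset I\subset [\![0,n]\!]$ with ${\rm min}(I)={\rm min}(J)=i_0$ and recall the concrete description from Lemma \ref{lem:fibers-of-s-J-subset-I}: for $F\in A(J)$, the \emph{sncd} $F\cap(\bigcup_{i\in I- J}\overline{e_{i_0}^{-1}(X_i)})$ in $F$ induces a stratification $\mathcal{R}_F(J|I)$, and ${\rm s}_{J\subset I}^{-1}(F)$ is exactly the set of $\mathcal{R}_F(J|I)$-constructible elements $E\in A(I)$, i.e. the irreducible closed constructible subsets of $F$ cut out inside $F$ by the restriction of the divisors $\overline{e_{i_0}^{-1}(X_i)}$, $i\in I- J$. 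Thus ${\rm s}_{J\subset I}^{-1}(F)$ is naturally identified with $A_F$, the poset of irreducible closed constructible subsets of $F$ for the induced \emph{sncd} stratification, the partial order being inclusion.

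Next I would construct the non-decreasing map ${\rm s}_{J\subset I}^{-1}(F)\to {\rm s}_{J\subset I}^{-1}(F')$ when $F\subset F'$ in $A(J)$. Given $E\in {\rm s}_{J\subset I}^{-1}(F)$, consider inside $F'$ the smallest constructible (for $\mathcal{R}_{F'}(J|I)$) irreducible subset containing $E$; its existence follows exactly as in the proof of Proposition \ref{prop:for-defining-s-J-I-and-funct} (if $E$ is contained in two such subsets $E_1,E_2$, the connected component of $E_1\cap E_2$ containing $E$ is again irreducible, closed and $\mathcal{R}_{F'}(J|I)$-constructible, and is smaller). Call this $E'$; it lies in $A(I)$ and, by Lemma \ref{lem:fibers-of-s-J-subset-I}, in ${\rm s}_{J\subset I}^{-1}(F')$. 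Monotonicity in $E$ is immediate from the minimality characterization, and functoriality (compatibility with a third $F''\supset F'$) follows by the same minimality argument that was used to prove transitivity of ${\rm s}$ in Proposition \ref{prop:for-defining-s-J-I-and-funct}: one gets one inclusion $E''_{F''}\subset (E'_{F'})'_{F''}$ for free by minimality of the left-hand side, and the reverse inclusion because $(E'_{F'})'_{F''}$ is $\mathcal{R}_{F''}(J|I)$-constructible and contains $E$. This yields the desired functor $A(J)\to (\text{ordered sets})$, $F\mapsto {\rm s}_{J\subset I}^{-1}(F)$.

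Finally I would produce the isomorphism $\int_{A(J)}{\rm s}_{J\subset I}^{-1}(-)\simeq A(I)$. The obvious candidate functor sends a pair $(F,E)$ with $F\in A(J)$ and $E\in {\rm s}_{J\subset I}^{-1}(F)$ to $E\in A(I)$, and sends an arrow $(F,E)\to (F',E')$ — which by definition of the total category is the data of $F\subset F'$ in $A(J)$ together with an arrow from the image of $E$ under ${\rm s}_{J\subset I}^{-1}(F)\to {\rm s}_{J\subset I}^{-1}(F')$ to $E'$, i.e. an inclusion $(E)'_{F'}\subset E'$ — to the inclusion $E\subset E'$ in $A(I)$ (note $E\subset (E)'_{F'}\subset E'$). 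This is well-defined and functorial. For the inverse, given $E\in A(I)$ set $F={\rm s}_{J\subset I}(E)$; then $(F,E)\in \int_{A(J)}{\rm s}_{J\subset I}^{-1}(-)$, and an inclusion $E_1\subset E_2$ in $A(I)$ forces ${\rm s}_{J\subset I}(E_1)\subset {\rm s}_{J\subset I}(E_2)$ (minimality again, since ${\rm s}_{J\subset I}(E_2)$ is constructible and contains $E_1$) together with $(E_1)'_{{\rm s}(E_2)}\subset E_2$ (because $E_2\in {\rm s}^{-1}({\rm s}(E_2))$ contains $E_1$), i.e. exactly an arrow of the total category. The two assignments are visibly mutually inverse on objects and on arrows, giving the isomorphism of categories. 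The only genuinely delicate point — and the one I would treat most carefully — is checking that the two descriptions of the arrows of $\int_{A(J)}{\rm s}_{J\subset I}^{-1}(-)$ (namely, via the transition maps ${\rm s}^{-1}(F)\to{\rm s}^{-1}(F')$) match the arrows of $A(I)$ on the nose, which amounts to the compatibility "$E_1\subset E_2$ in $A(I)$ $\iff$ ${\rm s}(E_1)\subset{\rm s}(E_2)$ and $(E_1)'_{{\rm s}(E_2)}\subset E_2$"; this is a short argument but it is where one must use the minimality property and Lemma \ref{lem:fibers-of-s-J-subset-I} simultaneously.
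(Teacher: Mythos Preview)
Your proposal is correct and follows exactly the approach the paper indicates: the paper's proof says only that the existence of $E'$ follows from Lemma \ref{lem:fibers-of-s-J-subset-I} by the same argument as in the proof of Proposition \ref{prop:for-defining-s-J-I-and-funct}, and leaves the rest to the reader. You have simply supplied those omitted details, and done so correctly.
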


\begin{proof}
The first statement (i.e., the existence of $E'$) follows from
Lemma \ref{lem:fibers-of-s-J-subset-I} by the same argument as in the proof of Proposition \ref{prop:for-defining-s-J-I-and-funct}. The other statements are easy and will be left to the reader.
\end{proof}

Given $\emptyset\neq I \subset [\![0,n]\!]$, elements of
$A(I)$ will be denoted by greek letters, $\alpha$, $\beta$, etc,
and the corresponding irreducible closed subschemes of $T(I)$ will be denoted by $\mathcal{T}(I,\alpha)$, $\mathcal{T}(I,\beta)$, etc.
The assignment
\begin{equation}
\label{eq:new-diag-cal-T-defn}
\mathcal{T}(I):\alpha \rightsquigarrow \mathcal{T}(I,\alpha)
\end{equation}
is a contravariant functor from the ordered set $A(I)$
to the category of $X$-schemes.
Thus, for each $I\in \mathcal{P}^*([\![0,n]\!])$, we have a diagram of schemes $(\mathcal{T}(I),A(I))$.
Moreover, the assignment
\begin{equation}
\label{eq:new-diag-cal-T-defn-II}
\mathcal{T}:I\rightsquigarrow
(\mathcal{T}(I),A(I))
\end{equation}
is also a contravariant functor and gives a diagram in
$\Dia(\Sch/k)$.
The inclusions $\mathcal{T}(I,\alpha)\hookrightarrow T(I)$
induce tautological morphisms
\begin{equation}
\label{TnT}
(\mathcal{T}(I),A(I))\to
T(I),
\end{equation}
that are natural in $I$. Moreover,
the morphism $l:\check{X} \to X$ induces morphisms of diagrams of
schemes $(\check{\mathcal{T}}(I),
\check{A}(I)) \to
(\mathcal{T}(I),A(I))$ that are natural in
$I$, and thus give a morphism in $\Dia(\Dia(\Sch/k))$.

\subsubsection{The diagram of schemes $\mathcal{Y}$ and the motive
$\theta''_{X,\mathcal{S}}$}\label{subsub:Y}

For $(I_0,I_1)\in \mathcal{P}_2([\![1,n]\!])$, let $J=[\![0,n]\!]- I_0$ and order
$\{0\}\bigsqcup I_1=\{i_0< \dots < i_s\}$.
Then $i_0=0$ and we let $i_{s+1}=n$.
We define
a diagram of schemes $\mathcal{Y}(I_0,I_1)$ as follows.
First, we construct a sequence of diagrams of schemes
$\mathcal{Y}_1(I_0,I_1), \dots, \mathcal{Y}_{s+1}(I_0,I_1)$ with
morphisms $p_j(I_0,I_1):\mathcal{Y}_j(I_0,I_1)\to \mathcal{T}(J\cap [\![i_{j-1},i_j]\!])$ and then set $\mathcal{Y}(I_0,I_1)=\mathcal{Y}_{s+1}(I_0,I_1)$.
Let $\mathcal{Y}_1(I_0,I_1)=\mathcal{T}(J\cap [\![i_0,i_1]\!])$ and take the identity morphism for $p_1(I_0,I_1)$. Now assume that
$\mathcal{Y}_j(I_0,I_1)$ and $p_j(I_0,I_1)$
are defined for some $j\leq s$.
The composition
$$\mathcal{Y}_j(I_0,I_1)\to \mathcal{T}(J\cap [\![i_{j-1},i_j]\!]) \to Y_{i_j}$$
makes $\mathcal{Y}_j(I_0,I_1)$ into a diagram of projective $Y_{i_j}$-schemes. In particular, we may consider the diagram $\pi_0(\mathcal{Y}_j(I_0,I_1)/Y_{i_j})$
obtained by taking objectwise the Stein factorizations of the various projections to $Y_{i_j}$. We then define
$$\mathcal{Y}_{j+1}(I_0,I_1)=\pi_0(\mathcal{Y}_j(I_0,I_1)/Y_{i_j}) \times_{Y_{i_j}} \mathcal{T}(J\cap [\![i_j, i_{j+1}]\!])$$
and take for $p_{j+1}(I_0,I_1)$ the projection to the second factor.

By construction, we have a morphism $p(I_0,I_1):\mathcal{Y}(I_0,I_1)\to \mathcal{T}(\varsigma_n(I_0,I_1))$ in $\Dia(\Sch/k)$.
The indexing category $\mathcal{C}(I_0,I_1)$ of $\mathcal{Y}(I_0,I_1)$ is
$$A(J\cap [\![i_0,i_1]\!])\times \dots \times
A(J\cap [\![i_{s-1},i_s]\!]) \times
A(J\cap [\![i_s,n]\!]).$$
The following gather some properties related to this construction.

\begin{proposition}
\label{prop:basic-property-diag-cal-Y-gen}

\begin{enumerate}

\item[(a)] The assignment $\mathcal{Y}:(I_0,I_1) \rightsquigarrow \mathcal{Y}(I_0,I_1)$ extends naturally to a functor from $\mathcal{P}_2([\![1,n]\!])$ to $\Dia(\Sch/k)$.
Moreover, the $p(I_0,I_1)$'s define a morphism of diagrams $p:\mathcal{Y}\to \mathcal{T}\circ \varsigma_n$.

\item[(b)]
Given an object $(\alpha_j)_{j=0,\dots, s}$ of $\mathcal{C}(I_0,I_1)$,
the $k$-scheme $\mathcal{Y}(I_0,I_1,(\alpha_j)_j)$ has only quotient singularities.
The morphism
$\mathcal{Y}(I_0,I_1,(\alpha_j)_j)\to \mathcal{T}(\varsigma_n(I_0,I_1),\alpha_s)$ is finite and each connected component of
$\mathcal{Y}(I_0,I_1,(\alpha_j)_j)$ is dominated by a connected component of $Z_{i_s}\times_{Y_{i_s}} \mathcal{T}(\varsigma_n(I_0,I_1),\alpha_s)$ where $Z_{i_s}$ is the scheme given in \emph{\textbf{D3)}}.
\end{enumerate}

\end{proposition}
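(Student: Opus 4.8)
The plan is to prove both assertions by a careful bookkeeping argument following the inductive construction of $\mathcal{Y}(I_0,I_1)$ through the chain $\mathcal{Y}_1(I_0,I_1), \dots, \mathcal{Y}_{s+1}(I_0,I_1)$, and then to unwind the functoriality in $(I_0,I_1)$ from the functoriality already established for $\mathcal{T}$. For part (a), I would first observe that a morphism $(I_0,I_1)\subset (I'_0,I'_1)$ in $\mathcal{P}_2([\![1,n]\!])$ either enlarges $I_0$ (which shrinks $J$ and hence composes with restriction maps $\mathcal{T}(K)\to \mathcal{T}(K\cap [\![a,b]\!])$ coming from Proposition \ref{prop:for-defining-s-J-I-and-funct}) or enlarges $I_1$ (which refines the partition $\{0\}\bigsqcup I_1$ and thus factors a single step $\mathcal{Y}_j \to \mathcal{T}(J\cap[\![i_{j-1},i_j]\!])$ through two consecutive Stein-factorization-and-fiber-product steps). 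In both cases one gets the required transition morphism between the $\mathcal{Y}$'s by compatibility of Stein factorization with base change, i.e.\ the isomorphism \eqref{eq:change-base-stein-fact} applied objectwise, together with naturality of the morphisms \eqref{TnT}; the cocycle condition then follows from that of $\mathcal{T}$ and the uniqueness in the universal property of Stein factorization. That $p:\mathcal{Y}\to \mathcal{T}\circ \varsigma_n$ is a morphism of diagrams is immediate from the construction of each $p_j(I_0,I_1)$ as the projection onto the $\mathcal{T}$-factor.

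For part (b), I would argue by induction on $j$ that each $\mathcal{Y}_j(I_0,I_1,(\alpha_\ell)_{\ell<j})$ has only quotient singularities, that the structure morphism to $\mathcal{T}(J\cap[\![i_{j-1},i_j]\!],\alpha_{j-1})$ is projective, and — this is the crucial additional invariant to carry through — that each of its connected components is dominated by a connected component of $Z_{i_{j-1}}\times_{Y_{i_{j-1}}} \mathcal{T}(J\cap[\![i_{j-1},i_j]\!],\alpha_{j-1})$. The base case $j=1$ is Property \textbf{D2)} together with the fact that each $\mathcal{T}(I,\alpha)$ is a stratum closure in $Y_{\min(I)}$, hence has only quotient singularities (it is étale-locally a quotient of a smooth sub-divisor), and the domination clause is Property \textbf{P2)}. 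For the inductive step, passing from $\mathcal{Y}_j$ to $\mathcal{Y}_{j+1}=\pi_0(\mathcal{Y}_j/Y_{i_j})\times_{Y_{i_j}}\mathcal{T}(J\cap[\![i_j,i_{j+1}]\!])$: the Stein factorization $\pi_0(\mathcal{Y}_j/Y_{i_j})\to Y_{i_j}$ is a finite morphism which is étale (since $\mathcal{Y}_j$ is smooth, or rather has quotient singularities, over a neighborhood of the generic point — here I would invoke Property \textbf{P2)}, which says the relevant Stein factorizations are dominated by the étale Galois covers $(c_j\circ e_j)^{-1}(D)\to D$), so it is finite étale onto its image; the fiber product with the scheme $\mathcal{T}(J\cap[\![i_j,i_{j+1}]\!])$, which has only quotient singularities and is smooth over $X_{i_j}$ near its generic stratum, therefore again has only quotient singularities, and its structure morphism to the $\mathcal{T}$-factor is finite (pullback of the finite morphism $\pi_0(\mathcal{Y}_j/Y_{i_j})\to Y_{i_j}$). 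The domination clause is propagated by combining the inductive domination of $\mathcal{Y}_j$ by components of $Z_{i_{j-1}}\times \mathcal{T}(\cdots)$ with Property \textbf{P2)} relating the Stein factorizations of the $Z$-covers at consecutive indices; concretely, a connected component of $\mathcal{Y}_{j+1}(\cdots,(\alpha_\ell)_\ell)$ maps to a component of $\pi_0(\mathcal{Y}_j/Y_{i_j})$, which by the domination hypothesis and the compatibility clause of \textbf{P2)} is dominated by a component of $(c_{i_j}\circ e_{i_j})^{-1}(D)$, i.e.\ of $Z_{i_j}\times_{Y_{i_j}}\mathcal{T}(\cdots,\alpha_j)$, and one then fibers over $\mathcal{T}(J\cap[\![i_j,i_{j+1}]\!],\alpha_j)$.

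The main obstacle I anticipate is verifying precisely that the Stein factorizations $\pi_0(\mathcal{Y}_j(I_0,I_1,(\alpha_\ell)_\ell)/Y_{i_j})$ behave as étale covers over $Y_{i_j}$ and that they remain dominated by the $Z$-covers as $j$ increases — this is exactly where Property \textbf{P2)} is designed to be invoked, but matching the combinatorial data (which stratum $D\subset X$ is relevant at stage $j$, and why $D\subset X_{i_j}$ with the index $i_j$ the correct one from the clause ``where $j\in[\![i,n]\!]$ is the index such that $D\subset X_j$'') requires some care with Lemma \ref{lemma:dominance-elements-A-I}, which identifies the image in $X$ of an element $E\in A(I)$ as an irreducible component of $X_{\geq \max(I)}$. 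Once that identification is in place, the smoothness and domination assertions of \textbf{P2)} apply verbatim, and the rest is the routine fiber-product bookkeeping sketched above, which I would not spell out in full.
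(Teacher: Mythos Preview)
Your approach is essentially correct and matches the paper's strategy: both parts are proved by induction along the chain $\mathcal{Y}_1,\dots,\mathcal{Y}_{s+1}$, with part (a) handled by a double induction building the transition morphisms $\mathcal{Y}_j(I_0,I_1)\to\mathcal{Y}_{\tau(j)}(I_0',I_1')$ (and the paper, like you, leaves the details to the reader), and part (b) handled by propagating the domination clause using \textbf{P2)}.

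There is, however, one organizational difference worth noting. You propose to carry \emph{both} invariants --- quotient singularities and domination by $Z$-components --- through the induction simultaneously, and for this you claim that each Stein factorization $\pi_0(\mathcal{Y}_j/Y_{i_j})\to Y_{i_j}$ is finite \'etale onto its image. This intermediate \'etaleness claim is not obviously immediate (you correctly flag it as the main obstacle), and it is not needed. The paper's argument is more economical: it carries \emph{only} the domination invariant through the induction, and then at the very end deduces quotient singularities in one stroke from the fact that $\mathcal{Y}(I_0,I_1,(\alpha_j)_j)$ is normal and admits a (possibly ramified) Galois cover by a connected component of $Z_{i_s}\times_{Y_{i_s}}\mathcal{T}(\varsigma_n(I_0,I_1),\alpha_s)$, which is smooth by \textbf{D3)}. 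This sidesteps entirely the question of whether the intermediate Stein factorizations are \'etale, and makes the induction step a direct application of \textbf{P2)}: if a component of $\mathcal{Y}_t$ is dominated by a component $F$ of $Z_{i_{t-1}}\times_{Y_{i_{t-1}}}\mathcal{T}(\cdots,\alpha_{t-1})$, then $\pi_0(F/Y_{i_t})$ is dominated by a component of $Z_{i_t}$ by \textbf{P2)}, and fibering over $\mathcal{T}(\cdots,\alpha_t)$ finishes the step.
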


\begin{proof}
For (a), consider two pairs $(I_0,I_1)\subset (I_0',I_1')$ in $\mathcal{P}_2([\![1,n]\!])$ and set $J=[\![0,n]\!]- I_0$ and
$J'=[\![0,n]\!]- I_0'$. Also order $\{0\}\bigsqcup I_1=\{i_0<\dots < i_s\}$ and $\{0\}\bigsqcup I_1'=\{i'_0<\dots< i'_{s'}\}$ and set $i_{s+1}=i'_{s'+1}=n$. Let $\tau:[\![0,s+1]\!]\hookrightarrow [\![0,s'+1]\!]$ be the map such that $i'_{\tau(j)}=i_j$ for all $0\leq j \leq s+1$.
We construct by induction on $j\in [\![1,s+1]\!]$ a morphism
$\mathcal{Y}_j(I_0,I_1)\to
\mathcal{Y}_{\tau(j)}(I_0',I_1')$. Assume this is done for $j\leq s$. Remark that
$$\mathcal{Y}_{j+1}(I_0,I_1) = \pi_0(\mathcal{Y}_{j}(I_0,I_1)/Y_{i_{j}})\times_{Y_{i_j}}
\mathcal{T}(J\cap [\![i'_{\tau(j)},i'_{\tau(j+1)}]\!]).$$
We use a second induction, now on $\tau(j)\leq l \leq \tau(j+1)$, to construct morphisms of diagrams
$$(\pi_0(\mathcal{Y}_{j}(I_0,I_1)/Y_{i_{j}})\times_{Y_{i_j}}
\mathcal{T}(J\cap [\![i'_{\tau(j)},i'_{l}]\!])) \to \mathcal{Y}_{l}(I_0',I_1').$$
For $l=\tau(j+1)$, we obtain the morphism
$\mathcal{Y}_{j+1}(I_0,I_1)\to
\mathcal{Y}_{\tau(j+1)}(I_0',I_1')$.
We leave the details to the reader.

Let $1\leq t\leq s$ and assume that
each connected component of
$\mathcal{Y}_{t}(I_0,I_1,(\alpha_j)_{0\leq j \leq t-1})$
is dominated by a connected component $F$ of $Z_{i_{t-1}}\times_{Y_{i_{t-1}}} \mathcal{T}(J\cap [\![i_{t-1},i_t]\!],\alpha_{t-1})$.
To show the corresponding property for $\mathcal{Y}_{t+1}$, it is thus sufficient to show that every connected component of
$\pi_0(F/Y_{i_t})\times_{Y_{i_t}} \mathcal{T}(J\cap [\![i_t,i_{t+1}]\!],\alpha_t)$ is dominated by a connected component of
$Z_{i_t}\times_{Y_{i_t}}
\mathcal{T}(J\cap [\![i_t,i_{t+1}]\!],\alpha_t)$. By \textbf{P2)},
$\pi_0(F/Y_{i_t})$ is dominated by a connected component of
$Z_{i_t}$. This proves the second assertion in (b) by induction. That
$\mathcal{Y}(I_0,I_1,(\alpha_j)_j)$ has quotient singularities is now clear as the latter is normal and has a (possibly ramified) Galois covering by a connected component of
$Z_{i_s}\times_{Y_{i_s}} \mathcal{T}(\varsigma_n(I_0,I_1),\alpha_s)$, which is a smooth scheme.
\end{proof}

There is a commutative triangle in $\Dia(\Dia(\Sch/k))$
$$\xymatrix@C=1.5pc@R=1.5pc{(\mathcal{Y},\mathcal{P}_2([\![1,n]\!])) \ar[r]^-h \ar@/_/[dr]_-{(h,\varsigma_n)} &  (\mathcal{X},\mathcal{P}_2([\![1,n]\!])) \ar[d]^-{\varsigma_n}\\
& (T,\mathcal{P}^*([\![0,n]\!])^{\rm op}) }$$
where, for $(I_0,I_1)\in \mathcal{P}_2([\![1,n]\!])$,
$h$ is the composition
$$\mathcal{Y}(I_0,I_1)\to \mathcal{T}(\varsigma_n(I_0,I_1)) \to T(\varsigma_n(I_0,I_1)).$$

\begin{remark}
\label{rem:for-doing-induction-X-X'-III}
We assume that $n\geq 1$ and we use the notation as in Remarks
\ref{rem:for-doing-induction-X-X'}
and \ref{rem:for-doing-induction-X-X'-II}.
For $\emptyset\neq I\subset [\![0,n-1]\!]$, we denote by $A'(I)$ the set of irreducible closed subsets of
$T'(I)$ which are $\mathcal{R}'(I)$-constructible. It follows from
Lemma \ref{lemma:dominance-elements-A-I} that the map
$A'(I) \to A(I)$, which takes $Z\in A'(I)$ to its Zariski closure in $T(I)$, is a bijection. Hence, we have an objectwise dense open immersion
$\mathcal{T}'(I) \to \mathcal{T}(I)$.
Similarly, let
$(I_0,I_1)\in \mathcal{P}_2([\![1,n-1]\!])$.
Set $J=[\![0,n-1]\!]-I_0=[\![0,n]\!]-
(I_0\bigsqcup \{n\})$ and order
$\{0\}\bigsqcup I_1=\{i_0<\dots < i_s\}$.
By induction on $1\leq j \leq s$, it is easy to see that
$\mathcal{Y}'_j(I_0,I_1)\simeq \mathcal{Y}_j(I_0\bigsqcup \{n\},I_1)\times_X X'$
(with $\mathcal{Y}'_j(I_0,I_1)$ the diagram constructed as above using $X'$, $Y_i'$, etc).
This gives an objectwise dense open immersion
$j:\mathcal{Y}' \to \mathcal{Y}\circ \iota_n^0$.

\end{remark}

In the sequel, we abuse notation and denote by $\mathcal{Y}$ the total diagram of schemes associated to $\mathcal{Y}\in \Dia(\Dia(\Sch/k))$.
We will define a commutative unitary algebra $\theta''_{X,\mathcal{S}}\in \DM(\mathcal{Y})$ using induction on $n$. When $n=0$, $\mathcal{Y}$ is the family of connected components of $X$, and we take $\theta''_{X,\mathcal{S}}=\un_{\mathcal{Y}}$.

Assume $n\geq 1$ and that $\theta''_{X',\mathcal{S}'}$
has been constructed (with the notation of Remark
\ref{rem:for-doing-induction-X-X'-III}).
Consider the following diagram in $\Dia(\Dia(\Sch/k))$:
$$\xymatrix{\mathcal{Y}' \ar[r]^-j & \mathcal{Y}\circ \iota^0_n & \mathcal{Y} \circ o \ar[l]_-b \ar[r]^-o & \mathcal{Y},}$$
which we also view as a diagram in $\Dia(\Sch/k)$ by passing to total diagrams.  Recall
$o:\mathcal{P}_2([\![1,n-1]\!])\times \underline{\mathbf{1}}\hookrightarrow\mathcal{P}_2([\![1,n-1]\!])\times \ucarre = \mathcal{P}_2([\![1,n]\!])$,
which is induced by the inclusion
$(-,0):\underline{\mathbf{1}}\hookrightarrow \ucarre$. The morphism $b$ is given on the indexing categories by the projection to the first factor of $\mathcal{P}_2([\![1,n-1]\!])\times \underline{\mathbf{1}}$. Its restriction to
$\mathcal{P}_2([\![1,n-1]\!])\times \{1\}$ is the identity morphism.
Its restriction to $\mathcal{P}_2([\![1,n-1]\!])\times \{0\}$ is the morphism $\mathcal{Y}\circ \iota_n \to \mathcal{Y}\circ \iota_n^0$ induced by the natural transformation $\iota_n\to \iota_n^0$ (where $\iota_n:\mathcal{P}_2([\![1,n-1]\!]) \hookrightarrow \mathcal{P}_2([\![1,n]\!])$ is the inclusion).
With this notation, we set:
\begin{equation}
\label{eq:recursive-defn-theta-double-prime}
\theta''_{X,\mathcal{S}}=\omega^0_{\{(0,1)\}|(\mathcal{Y},\smallucarre)}\left(o_*b^*j_*\theta''_{X',\mathcal{S}'}\right).
\end{equation}
In the above formula,
$\omega^0_{\{(0,1)\}|(\mathcal{Y},\smallucarre)}$ is really
$\omega^0_{\mathcal{P}_2([\![1,n-1]\!])\times \{(0,1)\}|(\mathcal{Y},\mathcal{P}_2([\![1,n-1]\!])\times \smallucarre)}$
(see Remark
\ref{notational-remark-for-omega-on-diagrams}).
This is again a commutative unitary algebra.

\begin{proposition}
\label{prop:comparison-theta-prime-and-h-theta-double}
There is a canonical isomorphism of commutative unitary algebras $\theta'_{X,\mathcal{S}} \simeq h_*\theta''_{X,\mathcal{S}}$, with
$h:\mathcal{Y} \to \mathcal{X}$ the natural morphism.
\end{proposition}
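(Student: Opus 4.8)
The plan is to prove Proposition \ref{prop:comparison-theta-prime-and-h-theta-double} by induction on $n$, mirroring the inductive constructions of $\theta'_{X,\mathcal{S}}$ (formula \eqref{eq:motive-theta-prime-induct-defn}) and $\theta''_{X,\mathcal{S}}$ (formula \eqref{eq:recursive-defn-theta-double-prime}), in exact analogy with the proof of Proposition \ref{prop:comparaison-theta-et-theta-prime}. When $n=0$ both sides are $\un$ over the diagram of connected components of $X$ and $h$ is an isomorphism of indexing categories, so there is nothing to do. For $n\geq 1$, one assumes the isomorphism $\theta'_{X',\mathcal{S}'}\simeq h'_*\theta''_{X',\mathcal{S}'}$ over $(\mathcal{X}',\mathcal{P}_2([\![1,n-1]\!]))$, where $h':\mathcal{Y}'\to \mathcal{X}'$ is the natural morphism.

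The key step is to assemble a commutative diagram in $\Dia(\Dia(\Sch/k))$ relating the two recursive constructions: the morphisms $j$, $b$, $o$ appearing in \eqref{eq:motive-theta-prime-induct-defn} and in \eqref{eq:recursive-defn-theta-double-prime} are compatible via $h$ and $h'$, i.e. one has a commutative ladder
$$\xymatrix@C=1.3pc@R=1.5pc{\mathcal{Y}' \ar[r]^-j \ar[d]_-{h'} & \mathcal{Y}\circ \iota_n^0 \ar[d] & \ar[l]_-b \mathcal{Y}\circ o \ar[r]^-o \ar[d] & \mathcal{Y} \ar[d]^-h \\
\mathcal{X}' \ar[r]^-j & \mathcal{X}\circ \iota_n^0 & \ar[l]_-b \mathcal{X}\circ o \ar[r]^-o & \mathcal{X}}$$
with all squares cartesian, using Remark \ref{rem:for-doing-induction-X-X'-III} (which gives $\mathcal{Y}'\simeq\mathcal{Y}\circ\iota_n^0\times_X X'$ and hence $\mathcal{Y}'=\mathcal{Y}\circ\iota_n^0\times_{\mathcal{X}\circ\iota_n^0}\mathcal{X}'$ by base-changing Proposition \ref{prop:comparaison-theta-et-theta-prime}'s constructions appropriately). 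Then, applying $h_*$ to the right-hand side of \eqref{eq:recursive-defn-theta-double-prime} and commuting it past the functors in the formula, one gets a chain of isomorphisms
$$h_*\theta''_{X,\mathcal{S}} = h_*\omega^0_{\{(0,1)\}|(\mathcal{Y},\smallucarre)}o_*b^*j_*\theta''_{X',\mathcal{S}'} \simeq \omega^0_{\{(0,1)\}|(\mathcal{X},\smallucarre)}h_*o_*b^*j_*\theta''_{X',\mathcal{S}'}.$$
The commutation $h_*\omega^0_{\{(0,1)\}|(\mathcal{Y},\smallucarre)}\simeq \omega^0_{\{(0,1)\}|(\mathcal{X},\smallucarre)}h_*$ holds because over the $(0,1)$-part $h$ is objectwise finite (by Proposition \ref{prop:basic-property-diag-cal-Y-gen}(b)), so $\omega^0$ commutes with $h_*$ by Proposition \ref{prop:additional-prop-omega-0-x}, (iii) (part (c): $\beta_h$ invertible for finite $h$), together with Proposition \ref{prop:omega-diagram-raisonnable} to handle the diagram variable $(0,1)$ uniformly. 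Next one commutes $h_*$ past $o_*$ (trivial, both are direct images and $h$ over $\mathcal{Y}\circ o$ covers the corresponding morphism over $\mathcal{X}\circ o$), past $b^*$ and $j_*$ using the base-change theorem for projective morphisms \cite[Th.~2.4.22]{ayoub-these-I} (the morphism $h$ restricted to $\mathcal{Y}\circ\iota_n^0$ is objectwise projective) and smooth/open base change, arriving at $o_*b^*j_*h'_*\theta''_{X',\mathcal{S}'}$; applying the inductive isomorphism $h'_*\theta''_{X',\mathcal{S}'}\simeq \theta'_{X',\mathcal{S}'}$ then yields $o_*b^*j_*\theta'_{X',\mathcal{S}'}$, and applying $\omega^0_{\{(0,1)\}|(\mathcal{X},\smallucarre)}$ gives exactly $\theta'_{X,\mathcal{S}}$ by \eqref{eq:motive-theta-prime-induct-defn}. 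Since each of these functors is pseudo-monoidal, symmetric and pseudo-unitary (Lemma \ref{lem:omega-0-monoidal}), the resulting isomorphism is one of commutative unitary algebras.

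The main obstacle I expect is verifying carefully that $h:\mathcal{Y}\to\mathcal{X}$ restricted to the relevant sub-diagrams has the right properties (objectwise finite over the $(0,1)$-stratum, objectwise projective over $\mathcal{X}\circ\iota_n^0$) and that the two recursive ladders are genuinely compatible under passing to total diagrams — i.e. that the identification $\mathcal{P}_2([\![1,n]\!])\simeq\mathcal{P}_2([\![1,n-1]\!])\times\ucarre$ is respected by both constructions. This is essentially bookkeeping with the total-diagram formalism of \S\ref{subsect:motives over a diagram}, but it is where sign/indexing errors would creep in; Corollary \ref{cor:very-useful-texnical-cor} and Proposition \ref{prop-most-gen-proj-base-change} are the tools that make the base-change commutations clean after applying $i^*$ objectwise. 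The finiteness of $h$ over the $(0,1)$-part, which is what makes $\beta_h$ invertible and hence lets $\omega^0$ slide past $h_*$, is the one genuinely non-formal input, and it rests on Proposition \ref{prop:basic-property-diag-cal-Y-gen}(b). As in Proposition \ref{prop:comparaison-theta-et-theta-prime}, the routine diagram chase can be left to the reader once the structural isomorphisms are in place.
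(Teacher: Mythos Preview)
Your overall inductive strategy and the commutative ladder are right, and your treatment of the commutation $\omega^0_{\{(0,1)\}|(\mathcal{X},\smallucarre)} h_* \simeq h_*\omega^0_{\{(0,1)\}|(\mathcal{Y},\smallucarre)}$ is essentially the paper's Part~D: over the $(0,1)$-part, $h$ is objectwise finite (Proposition~\ref{prop:basic-property-diag-cal-Y-gen}(b)), so $\beta_h$ is invertible and $\omega^0$ commutes with $h_*$ via Propositions~\ref{prop:additional-prop-omega-0-x}(iii,c) and~\ref{prop:omega-diagram-raisonnable} (together with Proposition~\ref{prop:limits-with-respect-ordered-set} to handle the homotopy limit over the indexing category).

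The genuine gap is your claim that ``all squares are cartesian'', specifically the middle square
\[
\xymatrix@C=1.5pc@R=1.5pc{\mathcal{Y}\circ o \ar[r]^-{b} \ar[d]_-{h} & \mathcal{Y}\circ\iota_n^0 \ar[d]^-{h}\\ \mathcal{X}\circ o \ar[r]^-{b} & \mathcal{X}\circ\iota_n^0.}
\]
Over an object $(I_0,I_1)\in\mathcal{P}_2([\![1,n-1]\!])$ with $K=J\cap[\![i_s,n-1]\!]$ and $K'=K\sqcup\{n\}$, the fibre product $\mathcal{Y}(I_0\sqcup\{n\},I_1)\times_{\mathcal{X}(I_0\sqcup\{n\},I_1)}\mathcal{X}(I_0,I_1)$ equals $\mathcal{Y}(I_0\sqcup\{n\},I_1)\times_X X_n$, which is indexed by $A(K)$ in its last factor, whereas $\mathcal{Y}(I_0,I_1)$ is indexed by $A(K')$ in its last factor. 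These are different ordered sets, so the square is \emph{not} cartesian and the base-change theorem \cite[Th.~2.4.22]{ayoub-these-I} cannot be invoked directly to show $b^*h_*\to h_*b^*$ is invertible.

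What the paper does (Parts~B and~C of the proof) is to factor the comparison through the actual fibre product via a morphism
\[
c:\mathcal{Y}(I_0,I_1)\longrightarrow \mathcal{Y}(I_0\sqcup\{n\},I_1)\times_X X_n,
\]
apply Proposition~\ref{prop-most-gen-proj-base-change} to the genuine cartesian square, and then prove separately that the unit $\id\to c_*c^*$ is invertible. This last step is the non-formal input you are missing: it uses Corollary~\ref{cor:fibers-of-s-J-subset-I} to recognise $A(K')$ as $\int_{A(K)}{\rm s}_{K\subset K'}^{-1}(-)$, reduces via Corollary~\ref{cor:very-useful-texnical-cor} to a morphism $c'(\alpha):(\mathcal{T}(K')\circ\upsilon_\alpha,{\rm s}_{K\subset K'}^{-1}(\alpha))\to\mathcal{T}(K,\alpha)\times_X X_n$ for each $\alpha\in A(K)$, and finally applies Lemma~\ref{lemma:form-locality-closed-cover} (the stratification locality lemma) to conclude. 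None of this is bookkeeping; it is precisely where the combinatorics of the stratification enters.
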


\begin{proof}
We argue by induction on $n$. When $n=0$, the claim is clear.
Assume that $n\geq 1$ and let $h':\mathcal{Y}' \to \mathcal{X}'$ denote the natural morphism of diagrams of schemes. By induction, we may assume that the isomorphism $\theta'_{X',\mathcal{S}'}\simeq h'_*\theta''_{X',\mathcal{S}'}$ is constructed. We split the proof in four parts.

\smallskip

\noindent \underbar{Part A}: We have a commutative diagram in
$\Dia(\Sch/k)$:
\begin{equation}
\label{eq-prop:comparison-theta-prime-and-h-theta-double}
\xymatrix@C=1.5pc@R=1.5pc{\mathcal{Y}' \ar[r]^-j \ar[d]_-{h'} & \mathcal{Y}\circ \iota_n^0 \ar[d]^-h &  \mathcal{Y}\circ o\ar[d]^-h \ar[l]_-b \ar[r]^o &\mathcal{Y}\ar[d]^-h  \\
\mathcal{X}' \ar[r]^-j & \mathcal{X}\circ \iota_n^0 & \mathcal{X}\circ o \ar[l]_-b \ar[r]^-o & \mathcal{X}.\!}
\end{equation}
This gives natural transformations
$$o_*b^*j_*h'_*\simeq o_*b^*h_*j_*\to o_*h_*b^* j_* \simeq h_*o_*b^*j_*.$$
Recall that
$\theta'_{X,\mathcal{S}}=\omega^0_{\{(0,1)\}|(\mathcal{X},\smallucarre)} o_*b^*j_*\theta'_{X',\mathcal{S}'}$.
Our morphism $\theta'_{X,\mathcal{S}} \to h_*\theta''_{X,\mathcal{S}}$ is then the composition
$$\xymatrix@C=1.5pc@R=1.5pc{\omega^0_{ \{(0,1)\}| (\mathcal{X},\smallucarre)} o_*b^*j_*\theta'_{X',\mathcal{S}'} \ar[r]^-{\sim} &
\omega^0_{\{(0,1)\}| (\mathcal{X}, \smallucarre)} o_*b^*j_*h'_*\theta''_{X',\mathcal{S}'} \ar[d] \\
& \omega^0_{ \{(0,1)\}| (\mathcal{X}, \smallucarre)} h_*o_*b^*j_*\theta''_{X',\mathcal{S}'} \ar[r] &
h_* \omega^0_{\{(0,1)\}|(\mathcal{Y},\smallucarre)}o_*b^*j_*\theta''_{X',\mathcal{S}'}.}$$
(The last morphism is constructed in the same way as in Proposition
\ref{prop:additional-prop-omega-0-x}, (iii).)
To prove the proposition, we need to check that the following natural transformations are invertible:
\begin{enumerate}

\item the base change morphism $b^*h_* \to h_*b^*$ associated to the middle commutative square in \eqref{eq-prop:comparison-theta-prime-and-h-theta-double},

\item $\omega^0_{\{(0,1)\}| (\mathcal{X}, \smallucarre)} h_*
\to h_* \omega^0_{\{(0,1)\}|(\mathcal{Y},\smallucarre)}$.

\end{enumerate}
The first natural transformation will be treated in the next two parts. The second one, will be treated in the last part.

\smallskip

\noindent \underbar{Part B}: Here we begin the verification that
the base change morphism $b^*h_* \to h_* b^*$
is invertible. It suffices to show that this natural transformation is invertible when applying $((I_0,I_1),0)^*$ and
$((I_0,I_1),1)^*$ for $(I_0,I_1)\in \mathcal{P}_2([\![1,n-1]\!])$.
Using Corollary
\ref{cor:very-useful-texnical-cor},
we see that it suffices to show that the base change morphisms associated to the squares
$$ \xymatrix@C=1.7pc@R=1.7pc{\mathcal{Y}(I_0,I_1) \ar[r]^-{b}  \ar[d]^-{h(I_0,I_1)}  & \mathcal{Y}(I_0\bigsqcup \{n\},I_1) \ar[d]^-{h(I_0\bigsqcup\{n\},I_1)} \\
\mathcal{X}(I_0,I_1) \ar[r]_-{b} & \mathcal{X}(I_0\bigsqcup \{n\},I_1) } \qquad
\xymatrix@C=1.7pc@R=1.7pc{\mathcal{Y}(I_0\bigsqcup \{n\},I_1) \ar[r]^-{b}  \ar[d]^-{h(I_0\bigsqcup\{n\},I_1)}  & \mathcal{Y}(I_0\bigsqcup \{n\},I_1) \ar[d]^-{h(I_0\bigsqcup\{n\},I_1)} \\
\mathcal{X}(I_0\bigsqcup\{n\},I_1) \ar[r]_-{b} & \mathcal{X}(I_0\bigsqcup \{n\},I_1) ,\!}$$
are invertible. As the horizontal arrows in the second square are  identities, we only need to consider the first square.
For this, remark that $\mathcal{X}(I_0,I_1)=\mathcal{X}(I_0\bigsqcup \{n\},I_1)\times_X X_n$. Thus, we may factor this square as follows
\begin{equation}
\label{eq-prop:comparison-theta-prime-and-h-theta-double-2}
\xymatrix@C=1.5pc@R=1.5pc{\mathcal{Y}(I_0,I_1) \ar[r]^-c \ar@/_/[dr]_-h \ar@/^1.9pc/[rr]^{b} & \mathcal{Y}(I_0\bigsqcup \{n\}, I_1) \times_X X_n \ar[r]^-{b_1} \ar[d]^-{h_1} & \mathcal{Y}(I_0\bigsqcup \{n\}, I_1) \ar[d]^-h \\
& \mathcal{X}(I_0\bigsqcup \{n\},I_1)\times_X X_n \ar[r]^-b & \mathcal{X}(I_0\bigsqcup \{n\},I_1), \!}
\end{equation}
where, to simplify notation, we wrote $h$ for $h(I_0,I_1)$ and $h(I_0\bigsqcup \{n\}, I_1)$. Using this commutative diagram
\eqref{eq-prop:comparison-theta-prime-and-h-theta-double-2}, we may factor the base change morphism $b^*h_* \to h_*b^*$ as follows:
$$b^*h_* \to h_{1*} b_1^* \to h_{1*} c_*c^* b_1^* \simeq h_*b^*.$$
Applying Proposition
\ref{prop-most-gen-proj-base-change} to the cartesian square in
\eqref{eq-prop:comparison-theta-prime-and-h-theta-double-2},
we get that the base change morphism
$b^*h_* \to h_{1*} b_1^*$ is invertible. Thus, it remains to show that
the unit morphism $\id \to c_*c^*$ is invertible.
This will be treated in the next part.

\smallskip

\noindent \underbar{Part C}: Let $J=[\![0,n-1]\!]- I_0$ and order
$\{0\}\bigsqcup I_1=\{i_0<\dots< i_s\}$. By the construction of $\mathcal{Y}$, we have
a cartesian square in $\Dia(\Sch/k)$:
$$\xymatrix@C=1.5pc@R=1.5pc{\mathcal{Y}(I_0,I_1) \ar[d]_-{c} \ar[r] & \mathcal{T}(K') \ar[d]^-{c'} \\
\mathcal{Y}(I_0\bigsqcup \{n\},I_1) \times_X X_n \ar[r] & \mathcal{T}(K)\times_X X_n,\!}$$
where $K=J\cap [\![i_s,n-1]\!]$ and $K'=K\bigsqcup \{n\}$.

Recall that $\mathcal{T}(K)\times_X X_n$ is indexed by
the ordered set $A(K)$ of irreducible, closed and $\mathcal{R}(K)$-constructible subsets of $T(K)$.
By Corollary \ref{cor:fibers-of-s-J-subset-I}, there is
a functor
${\rm s}_{K\subset K'}^{-1}(-):A(K) \to \Dia$
such that $A(K')\simeq \int_{A(K)}{\rm s}_{K\subset K'}^{-1}(-)$.
Moreover, with $\upsilon_{\alpha}:{\rm s}_{K\subset K'}^{-1}(\alpha) \hookrightarrow A(K')$ the inclusion, the assignment
\begin{equation}
\label{eq:assignment-totol-diagram-pour-mathcal-T}
\alpha \in A(K) \quad \rightsquigarrow \quad
(\mathcal{T}(K')\circ \upsilon_{\alpha} , {\rm s}_{K\subset K'}^{-1}(\alpha))
\end{equation}
is a functor from $A(K)$ to $\Dia(\Sch/k)$.
Also, the total diagram associated to
\eqref{eq:assignment-totol-diagram-pour-mathcal-T} coincides with
$\mathcal{T}(K')$.
Thus, $c'$ and hence $c$ satisfy the conditions on $(f,\rho)$ in Corollary \ref{cor:very-useful-texnical-cor}.

Now, as usual, it suffices to check that the natural transformation
$((\alpha_j)_j)^* \to ((\alpha_j)_j)^* c_*c^*$ is invertible
for $(\alpha_j)_{0\leq j\leq s}$ in the indexing category $\mathcal{C}(I_0\bigsqcup\{n\},I_1)$
of the diagram
$\mathcal{Y}(I_0\bigsqcup \{n\},I_1)$.
By Corollary
\ref{cor:very-useful-texnical-cor},
the base change morphism associated to the
cartesian square
$$\xymatrix@C=1.5pc@R=1.5pc{\mathcal{Y}(I_0,I_1,((\alpha_j)_{0\leq j\leq s-1},\upsilon_{\alpha_s})) \ar[d]_-{c((\alpha_j)_j)} \ar[r] & \mathcal{Y}(I_0,I_1) \ar[d]^-c \\
\mathcal{Y}(I_0\bigsqcup \{n\},I_1,(\alpha_j)_j) \times_X X_n \ar[r] & \mathcal{Y}(I_0\bigsqcup \{n\},I_1) \times_X X_n}$$
is invertible. Hence, it suffices to check that
$\id \to c((\alpha_j)_j)_*c((\alpha_j)_j)^*$ is invertible.
On the other hand, the morphism
$\mathcal{Y}(I_0\bigsqcup \{n\},I_1)((\alpha_j)_j) \times_X X_n \to \mathcal{T}(K,\alpha_s)\times_X X_n$
is finite and the cohomological direct image along this map is conservative. This reduces us to check that
$\id \to c'(\alpha)_*c'(\alpha)^*$ is invertible for any $\alpha \in A(K)$.

Recall that $c'(\alpha)$ is the natural morphism $(\mathcal{T}(K')\circ \upsilon_{\alpha},{\rm s}_{K\subset K'}^{-1}(\alpha)) \to \mathcal{T}(K,\alpha)\times_X X_n$. We are now in the situation of
Lemma \ref{lemma:form-locality-closed-cover} where $X$ is given by
$\mathcal{T}(K,\alpha)\times_X X_n$
with the stratification induced by the family of its irreducible components. By that lemma, $\id \to c'(\alpha)_*c'(\alpha)^*$ is indeed an isomorphism. This finishes the verification that
$\id \to c_*c^*$ is invertible.

\smallskip

\noindent \underbar{Part D}: In this part, we finish the proof of
the proposition by showing that the natural transformation
\begin{equation}
\label{eq-prop:comparison-theta-prime-and-h-theta-double-7}
\omega^0_{ \{(0,1)\}|(\mathcal{X}, \smallucarre)} h_*
\to h_* \omega^0_{ \{(0,1)\}|(\mathcal{Y}, \smallucarre)}
\end{equation}
is invertible.
It suffices to show that
\eqref{eq-prop:comparison-theta-prime-and-h-theta-double-7}
is invertible after applying $(I_0,I_1)^*:\DM(\mathcal{X})\to \DM(\mathcal{X}(I_0,I_1))$. There are two cases depending on whether $n\in I_1$ or $n\not \in I_1$.

First, let's assume that $n\not \in I_1$. Then, by
Proposition
\ref{prop:omega-diagram-raisonnable} and Corollary
\ref{cor:very-useful-texnical-cor}, we have
$$(I_0,I_1)^*\omega^0_{\{(0,1)\}, (\mathcal{X},\smallucarre)} h_*
\simeq (I_0,I_1)^* h_*\simeq
h(I_0,I_1)_* (\mathcal{Y}(I_0,I_1)\to \mathcal{Y})^*,$$
where $h(I_0,I_1)$ is the projection of $\mathcal{Y}(I_0,I_1)$ to $\mathcal{X}(I_0,I_1)$.
Similarly,
$$(I_0,I_1)^*
h_* \omega^0_{\{(0,1)\}|(\mathcal{Y},\smallucarre)}
\simeq h(I_0,I_1)_* (\mathcal{Y}(I_0,I_1)\to \mathcal{Y})^*
\omega^0_{\{(0,1)\}|(\mathcal{Y}, \smallucarre)}$$
$$\simeq
h(I_0,I_1)_* (\mathcal{Y}(I_0,I_1)\to \mathcal{Y})^*.$$
Moreover, modulo these isomorphisms, our natural transformation is the identity.

Next, we assume that $n\in I_1$. Using again
Proposition
\ref{prop:omega-diagram-raisonnable} and Corollary
\ref{cor:very-useful-texnical-cor}, we see that
$$(I_0,I_1)^*\omega^0_{\{(0,1)\}, (\mathcal{X},\smallucarre)} h_*
\simeq \omega^0_{\mathcal{X}(I_0,I_1)} (I_0,I_1)^* h_*
\simeq
\omega^0_{\mathcal{X}(I_0,I_1)} h(I_0,I_1)_* (\mathcal{Y}(I_0,I_1)\to \mathcal{Y})^*,$$
and, similarly,
$$(I_0,I_1)^*
h_* \omega^0_{\{(0,1)\}|(\mathcal{Y},\smallucarre)}
\simeq h(I_0,I_1)_* (\mathcal{Y}(I_0,I_1)\to \mathcal{Y})^*
\omega^0_{\{(0,1)\}|(\mathcal{Y},\smallucarre)}$$
$$\simeq
h(I_0,I_1)_* \omega^0_{\mathcal{Y}(I_0,I_1)} (\mathcal{Y}(I_0,I_1)\to \mathcal{Y})^*.$$
Hence, we are left to check that the natural transformation
$$\omega^0_{\mathcal{X}(I_0,I_1)} h(I_0,I_1)_* \to
h(I_0,I_1)_*\omega^0_{\mathcal{Y}(I_0,I_1)}$$
is invertible.
This follows from
Propositions \ref{prop:limits-with-respect-ordered-set} and
\ref{prop:omega-diagram-raisonnable}
as $h(I_0,I_1)$ is objectwise a finite morphism.
\end{proof}

We have a morphism $l:(\check{\mathcal{Y}},\mathcal{P}_2([\![1,n]\!])) \to (\mathcal{Y},\mathcal{P}_2([\![1,n]\!]))$ in $\Dia(\Dia(\Sch/k))$ which we may view as a morphism of diagrams of schemes by passing to the total diagrams. Moreover, the following square
$$\xymatrix@C=1.5pc@R=1.5pc{(\check{\mathcal{Y}},\mathcal{P}_2([\![1,n]\!])) \ar[r]^-l \ar[d]_-{\check{h}} & (\mathcal{Y},\mathcal{P}_2([\![1,n]\!])) \ar[d]^-h \\
(\check{\mathcal{X}},\mathcal{P}_2([\![1,n]\!]) \ar[r]^-l & (\mathcal{X},\mathcal{P}_2([\![1,n]\!]) }$$
is commutative.

\begin{proposition}
\label{prop:fur-functoriality-construction-theta-double-prime}
There is a morphism of motives $l^*\theta''_{X,\mathcal{S}} \to \theta''_{\check{X},\check{\mathcal{S}}}$ which is
invertible when $f:\check{X} \to X$ is smooth and $\check{Y}_i=\check{X}\times_X Y_i$ for $i\in [\![0,n]\!]$. Moreover, the following diagram of $\DM(\check{\mathcal{X}},\mathcal{P}_2([\![1,n]\!]))$:
$$\xymatrix@C=1.5pc@R=1.5pc{l^*h_* \theta''_{X,\mathcal{S}} \ar[r] \ar[d]_-{\sim} & \check{h}_*l^* \theta''_{X,\mathcal{S}} \ar[r] & \check{h}_*\theta'' \ar[d]^-{\sim} \\
l^*\theta'_{X,\mathcal{S}} \ar[rr] & & \theta'_{\check{X},\check{\mathcal{S}}} }$$
commutes; the arrow in the bottom being the morphism of
\emph{Proposition
\ref{prop:functorialite-motif-theta-prime}}.
\end{proposition}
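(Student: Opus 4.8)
The proof of Proposition \ref{prop:fur-functoriality-construction-theta-double-prime} will follow exactly the pattern of Propositions \ref{prop:functorialite-motif-theta-prime} and \ref{prop:comparison-theta-prime-and-h-theta-double}, by induction on $n$. The plan is to construct the morphism $l^*\theta''_{X,\mathcal{S}}\to \theta''_{\check{X},\check{\mathcal{S}}}$ from the analogous morphism $l'^*\theta''_{X',\mathcal{S}'}\to \theta''_{\check{X}',\check{\mathcal{S}}'}$ over $X'=X-X_n$, using the recursive formula \eqref{eq:recursive-defn-theta-double-prime}. When $n=0$, both sides are the unit motives of the respective diagrams of connected components, and there is nothing to do. For $n\geq 1$, one considers the commutative diagram in $\Dia(\Sch/k)$ analogous to \eqref{eq-prop:comparison-theta-prime-and-h-theta-double}, with all squares cartesian:
$$\xymatrix@C=1.5pc@R=1.5pc{\check{\mathcal{Y}}' \ar[r]^-j \ar[d]_-{l'} & \check{\mathcal{Y}}\circ \iota_n^0 \ar[d]^-l & \check{\mathcal{Y}}\circ o \ar[l]_-b \ar[r]^-o \ar[d]^-l & \check{\mathcal{Y}}\ar[d]^-l \\
\mathcal{Y}' \ar[r]^-j & \mathcal{Y}\circ \iota_n^0 & \mathcal{Y}\circ o \ar[l]_-b \ar[r]^-o & \mathcal{Y}.\!}$$
This yields natural transformations $l^*o_*b^*j_*\to o_*l^*b^*j_*\simeq o_*b^*l^*j_*\to o_*b^*j_*l'^*$, together with the natural transformation $l^*\omega^0_{\{(0,1)\}|(\mathcal{Y},\smallucarre)}\to \omega^0_{\{(0,1)\}|(\check{\mathcal{Y}},\smallucarre)}l^*$ built as in Proposition \ref{prop:additional-prop-omega-0-x}, (ii). Composing these with the induction hypothesis $l'^*\theta''_{X',\mathcal{S}'}\to \theta''_{\check{X}',\check{\mathcal{S}}'}$, and recalling that the source and target of the composite are exactly $l^*\theta''_{X,\mathcal{S}}$ and $\theta''_{\check{X},\check{\mathcal{S}}}$, produces the desired morphism.

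For the invertibility when $f$ is smooth and $\check Y_i=\check X\times_X Y_i$, I would argue that under these hypotheses all the intermediate natural transformations are invertible: the base change maps $l^*o_*\to o_*l^*$ and $l^*j_*\to j_*l'^*$ are invertible by the base change theorem for smooth morphisms \cite[Prop.~4.5.48]{ayoub-these-II} (here one needs that, since $\check Y_i=\check X\times_X Y_i$, the fibre product structure of the $\check{\mathcal Y}_j(I_0,I_1)$ recorded in Remark \ref{rem:for-doing-induction-X-X'-III} persists so that all the relevant squares are cartesian with smooth horizontal legs); the transformation $l^*\omega^0_{\{(0,1)\}|(\mathcal{Y},\smallucarre)}\to \omega^0_{\{(0,1)\}|(\check{\mathcal{Y}},\smallucarre)}l^*$ is invertible by Proposition \ref{prop:omega-diagram-raisonnable} combined with the last assertion of Proposition \ref{prop:additional-prop-omega-0-x}, (ii), exactly as in the proof of Proposition \ref{prop:functorialite-motif-theta-prime}; and the induction hypothesis is invertible by assumption. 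One small point to check is that the hypothesis ``$\check Y_i=\check X\times_X Y_i$'' passes to $X'$, i.e.\ that $\check Y_i'=\check X'\times_{X'}Y_i'$, which is immediate from the definitions $Y_i'=Y_i\times_{X_{\geq i}}X'_{\geq i}$ and $\check X'_{\geq i}=l^{-1}(X'_{\geq i})$.

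For the commutativity of the displayed square relating $l^*h_*\theta''_{X,\mathcal{S}}$, $\check h_*\theta''_{\check X,\check{\mathcal S}}$, $l^*\theta'_{X,\mathcal{S}}$ and $\theta'_{\check X,\check{\mathcal S}}$, I would again proceed by induction, unwinding the inductive constructions of the isomorphisms $h_*\theta''_{X,\mathcal{S}}\simeq \theta'_{X,\mathcal{S}}$ (Proposition \ref{prop:comparison-theta-prime-and-h-theta-double}) and $\check h_*\theta''_{\check X,\check{\mathcal S}}\simeq \theta'_{\check X,\check{\mathcal S}}$, and the morphism $l^*\theta'_{X,\mathcal{S}}\to \theta'_{\check X,\check{\mathcal S}}$ of Proposition \ref{prop:functorialite-motif-theta-prime}. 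The verification reduces to the compatibility of the various base change $2$-cells appearing in all three constructions: the base change squares against $b$, $o$, $j$ for the pair $(h,\check h)$, and the $\omega^0$-exchange $2$-cells. These fit together into a (large, but routine) diagram of commuting squares and triangles built from the pasting of base change $2$-cells and the naturality of the transformations $\alpha_{(-)}$, $\beta_{(-)}$ from Proposition \ref{prop:additional-prop-omega-0-x}, and from Lemma \ref{lemma:commut-diag-g-star-f-star-omega}. The main obstacle is purely bookkeeping: assembling this compatibility diagram and checking it commutes cell by cell, since there is no new geometric or categorical input beyond what is already used in Propositions \ref{prop:functorialite-motif-theta-prime} and \ref{prop:comparison-theta-prime-and-h-theta-double}. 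As in those proofs, the details are left to the reader.
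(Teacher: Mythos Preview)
Your proposal is correct and follows exactly the approach the paper intends: the paper's proof consists solely of the remark that the argument is completely analogous to that of Proposition~\ref{prop:functorialite-motif-theta-prime} and is left to the reader. You have supplied precisely those details, using the recursive formula~\eqref{eq:recursive-defn-theta-double-prime} for $\theta''$ in place of~\eqref{eq:motive-theta-prime-induct-defn} for $\theta'$ and the same base-change and $\omega^0$-exchange transformations.
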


\begin{proof}
The proof is completely analogous to that of Proposition
\ref{prop:functorialite-motif-theta-prime}. We leave it to the reader.
\end{proof}

\subsubsection{The motive $\beta_{X,\mathcal{S}}$}
\label{par:mot-beta-x-s}
In this paragraph, we construct a motive $\beta_{X,\mathcal{S}}$ over the diagram of schemes $(T,\mathcal{P}^*([\![0,n]\!])^{\rm op})$
using only operations of inverse images and cohomological direct images. We then show that $\theta''_{X,\mathcal{S}}$ can be identified with the inverse image of $\beta_{X,\mathcal{S}}$ along $(h,\varsigma_n)$.

First, we introduce a
notation. Let $\mathcal{C}$ be a category having a final object $\star$. Given an object $(\mathcal{W},\mathcal{A})$ of $\Dia(\mathcal{C})$, we denote by
$(\mathcal{W}^+,\mathcal{A}\times \underline{\mathbf{1}})$ the total diagram associated to the functor $\underline{\mathbf{1}} \to \Dia(\mathcal{C})$ sending $0$ to $(\mathcal{W},\mathcal{A})$, $1$ to
$(\star,\mathcal{A})$ and the arrow $0\to 1$
to the unique morphism $(\mathcal{W},\mathcal{A}) \to (\star,\mathcal{A})$, which is the identity on the indexing categories.
We are mainly interested in the case where the category $\mathcal{C}$ is $\Sch/k$ or $\Dia(\Sch/k)$; in both cases, the final object is given by $\Spec(k)$. In particular we have two diagrams of schemes $(T^+,\mathcal{P}^*([\![0,n]\!])^{\rm op} \times \underline{\mathbf{1}})$ and
$(\mathcal{X}^+, \mathcal{P}_2([\![1,n]\!])\times \underline{\mathbf{1}})$. Also we have two objects of $\Dia(\Dia(\Sch/k))$, namely
$(\mathcal{T}^+,\mathcal{P}^*([\![0,n]\!])^{\rm op} \times \underline{\mathbf{1}})$ and $(\mathcal{Y}^+, \mathcal{P}_2([\![1,n]\!])\times \underline{\mathbf{1}})$.

We now define a commutative unitary algebra $\beta^+_{X,\mathcal{S}}\in \DM(T^+,\mathcal{P}^*([\![0,n]\!])^{\rm op}\times \underline{\mathbf{1}})$ by induction on $n$. When $n=0$, we take for $\beta^+_{X,\mathcal{S}}$ the unit motive on the diagram $\{X \to \Spec(k)\}$.
When $n\geq 1$, we use the notation in
Remark \ref{rem:for-doing-induction-X-X'} and assume that $\beta^+_{X',\mathcal{S}'}$ has been constructed.

As before, denote by $\iota_n:\mathcal{P}^*([\![0,n-1]\!]) \hookrightarrow
\mathcal{P}^*([\![0,n]\!])$ the obvious inclusion. Also,
let $o:\mathcal{P}^*([\![0,n-1]\!])^{\rm op}\times \underline{\mathbf{1}} \hookrightarrow \mathcal{P}^*([\![0,n]\!])^{\rm op}$
denote the non-decreasing map sending $(I,0)$ to $I\bigsqcup\{n\}$ and $(I,1)$ to $I$.
We have a diagram in $\Dia(\Sch/k)$:
\begin{equation}
\label{new-diagram-for-defining-beta-x-s-1}
\xymatrix@C=1.7pc{T' \ar[r]^-j & T\circ \iota_n & T\circ o \ar[l]_-b \ar[r]^-o & T,}
\end{equation}
where $j$ is an objectwise dense open immersion and
$b$ is as follows. On the indexing categories, it is given by the projection to the first factor. Over $\mathcal{P}^*([\![0,n-1]\!])\times \{1\}$, it is objectwise an identity morphism, and
over $\mathcal{P}^*([\![0,n-1]\!])\times \{0\}$,
it is the objectwise closed immersion
$(T\circ \iota_n)\times_X X_n \to (T\circ \iota_n)$.

We deduce from \eqref{new-diagram-for-defining-beta-x-s-1} a new diagram in $\Dia(\Sch/k)$:
\begin{equation}
\label{new-diagram-for-defining-beta-x-s-2}
\xymatrix@C=1.7pc{T'^+ \ar[r]^-{j^+} & T^+\circ (\iota_n\times \id_{\underline{\mathbf{1}}}) & T^+\circ (o\times \id_{\underline{\mathbf{1}}}) \ar[l]_-{b^+} \ar[r]^-{o^+} & T^+.}
\end{equation}
On the other hand, we define a morphism of diagrams of schemes
$e_n:T^+ \to T^+$ as follows.
On the indexing categories, we take the identity except on
$(\{n\},0)\in \mathcal{P}^*([\![0,n]\!])^{\rm op}\times \underline{\mathbf{1}}$ which is sent to $(\{n\},1)$.
Also, we take for $e_n(I,u)$ the identity when $(I,u)\neq (\{n\},0)$ and the projection $T(\{n\})=X_n \to \Spec(k)$ when $(I,u)=(\{n\},0)$.
We now define:
$$\beta^+_{X,\mathcal{S}}=e_n^* (o^+)_* (b^+)^* (j^+)_* \beta^+_{X',\mathcal{S}'}.$$
This is again a commutative unitary algebra.

We claim that over the sub-diagram
$T^+_{|\mathcal{P}^*([\![0,n]\!])^{\rm op}\times \{1\}}\simeq (\Spec(k),\mathcal{P}^*([\![0,n]\!])^{\rm op})$, the motive
$\beta^+_{X,\mathcal{S}}$ is given by the unit motive.
Arguing by induction, we are left to show that
$$\xymatrix@C=1.6pc{\un_{(\Spec(k),\mathcal{P}^*([\![0,n]\!])^{\rm op})} \ar[r] &  o_{*} \un_{(\Spec(k),\mathcal{P}^*([\![0,n-1]\!])^{\rm op}\times \underline{\mathbf{1}})} }$$
is invertible. It suffices to show this after applying $I^*$ for
$I\in \mathcal{P}^*([\![0,n]\!])$.
When $I$ is different from $\{n\}$, this is clear. When $I=\{n\}$,
we need to show that $\un_{\Spec(k)}\simeq {\rm holim}_{\mathcal{P}^*([\![0,n-1]\!])^{\rm op}\times \underline{\mathbf{1}}}
\un$.
This follows from \cite[Prop.~2.1.41]{ayoub-these-I} due to the presence of an initial object, namely
$([\![1,n-1]\!],0)$.

Now, let $\beta_{X,\mathcal{S}}=(T \to T^+)^*\beta^+_{X,\mathcal{S}}$.
This is the motive which is of interest to us.
The motive
$\beta^+_{X,\mathcal{S}}$ is only a technical devise needed for the functorial construction of $\beta_{X,\mathcal{S}}$. Clearly,
$\beta_{X,\mathcal{S}}$ is a commutative unitary algebra and it is related to
$\beta_{X',\mathcal{S}'}$ as follows.
Over the sub-diagram
$T\circ o$ of $T$,
$\beta_{X,\mathcal{S}}$ is given by
$b^*j_*\beta_{X',\mathcal{S}}$,
whereas, over $T(\{n\})=X_n$, it is given by $e_n(\{n\})^*\un_{\Spec(k)}\simeq \un_{X_n}$. We have the following result.

\begin{lemma}
\label{lemma:applic-of-comput-direct-im-sncd-I}
Let $i_0={\rm min}(I)$, and $s_I:T(I)\hookrightarrow Y_{i_0}$ and $t_{i_0}:e_{i_0}^{-1}(X_{i_0}) \hookrightarrow Y_{i_0}$ be the inclusions.
Then $I^*\beta_{X,\mathcal{S}}\in \DM(T(I))$ is canonically isomorphic to
$s_{I}^* t_{i_0 *}\un_{e_{i_0}^{-1}(X_{i_0})}$.

\end{lemma}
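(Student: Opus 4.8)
\textbf{Proof plan for Lemma \ref{lemma:applic-of-comput-direct-im-sncd-I}.}

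The plan is to proceed by induction on $n$, following exactly the inductive construction of $\beta_{X,\mathcal{S}}$ (equivalently, of $\beta^+_{X,\mathcal{S}}$) given just before the statement. When $n=0$ there is a single scheme $X=X_0$, $T(\{0\})=Y_0$, and $\beta_{X,\mathcal{S}}=\un_{Y_0}$, while $e_0^{-1}(X_0)$ is dense in $Y_0$; but here $e_0^{-1}(X_0)$ maps isomorphically onto $X_0$ and $Y_0$ may differ from it only in that $Y_0$ has quotient singularities — however with $n=0$ there is no \emph{sncd} at infinity, so $e_0^{-1}(X_0)=Y_0$ and $t_{0}=\id$, giving the claim trivially. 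For the inductive step, fix $\emptyset\neq I\subset[\![0,n]\!]$ and set $i_0=\min(I)$. There are two cases according to whether $n\in I$ or $n\notin I$, dictated by the shape of the diagram \eqref{new-diagram-for-defining-beta-x-s-1} and the morphism $e_n$.

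\textbf{Case $n\notin I$.} Then $I\in\mathcal{P}^*([\![0,n-1]\!])$ (viewed inside $\mathcal{P}^*([\![0,n]\!])$ via $\iota_n$), and over this part of the indexing category $\beta_{X,\mathcal{S}}$ is, by construction, $b^*j_*\beta_{X',\mathcal{S}'}$ evaluated at $I$, where $b$ is the objectwise closed immersion $T(I)\hookrightarrow T'(I)$-closure situation of Remark \ref{rem:for-doing-induction-X-X'} and $j$ is the objectwise dense open immersion $T'(I)\hookrightarrow T(I)$. Concretely $I^*\beta_{X,\mathcal{S}}\simeq (T'(I)\hookrightarrow T(I))_*\bigl(I^*\beta_{X',\mathcal{S}'}\bigr)$, and by the induction hypothesis applied to $X'$ the latter is $s'^{*}_I t'_{i_0*}\un_{e'^{-1}_{i_0}(X'_{i_0})}$ inside $Y'_{i_0}$, which is the open subscheme $Y_{i_0}-\overline{e_{i_0}^{-1}(X_{\ge n})}$ of $Y_{i_0}$. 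The content of this case is then precisely to identify the pushforward along $T'(I)\hookrightarrow T(I)$ of $s'^{*}_I t'_{i_0*}\un$ with $s_I^* t_{i_0*}\un_{e_{i_0}^{-1}(X_{i_0})}$. This is exactly the kind of statement handled by Proposition \ref{prop:direct-image-compl-sncd} and especially by Corollary \ref{cor:new-for-compar-beta-prime-et-theta-dp}: the open complement $U=e_{i_0}^{-1}(X_{i_0})$ of the \emph{sncd} $Y_{i_0}^{\infty}$ in (the quotient-singular) scheme $Y_{i_0}$, the closed subset $Z=T(I)$ which is $\mathcal{R}(I)$-constructible hence constructible for the relevant family $(D_\beta)$, and the closed subset $T=T(I)-T'(I)$ which is contained in the union of those components of $Y_{i_0}^{\infty}$ lying over $X_{\ge n}$ — i.e.\ over components \emph{not} indexed by $I$. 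Applying Proposition \ref{prop:direct-image-compl-sncd} gives that restriction of $t_{i_0*}\un_U$ to $T(I)$ is computed on the open part $T'(I)$, i.e.\ $s_I^* t_{i_0*}\un_U \simeq (T'(I)\hookrightarrow T(I))_* (T'(I)\hookrightarrow T(I))^* s_I^* t_{i_0*}\un_U$, and one checks the inner restriction to $T'(I)$ agrees with $s'^*_I t'_{i_0*}\un_{U'}$ via the base-change Corollary \ref{cor:new-for-compar-beta-prime-et-theta-dp}, which is designed precisely for passing between the $G$- and $H$-quotient pictures and restricting to open subschemes.

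\textbf{Case $n\in I$.} If $I=\{n\}$ then $i_0=n$, $T(\{n\})=X_n=Y_n$ in the relevant normalization, $e_n(\{n\})$ is the structural map $X_n\to\Spec(k)$, and by construction $\{n\}^*\beta_{X,\mathcal{S}}\simeq e_n(\{n\})^*\un_{\Spec(k)}=\un_{X_n}$, which matches $s_{\{n\}}^* t_{n*}\un_{e_n^{-1}(X_n)}$ since $e_n^{-1}(X_n)$ is dense in $Y_n=X_n$ with $t_n=\id$. If $\{n\}\subsetneq I$, write $I'=I-\{n\}$, $i_0=\min(I)=\min(I')$; over this vertex $\beta_{X,\mathcal{S}}$ is again $b^*j_*$ of $\beta_{X',\mathcal{S}'}$ at $I'$ (the diagram \eqref{new-diagram-for-defining-beta-x-s-1} sends $(I',0)\mapsto I'\sqcup\{n\}=I$, and $b$ restricts to the closed immersion $T(I)\hookrightarrow(T\circ\iota_n)(I')\times_X X_n$, while $(T\circ\iota_n)(I')=T'(I')$-closure $=T(I')$). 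The recursive formula \eqref{rec-form-for-T} tells us $T(I)=\overline{(T(I')\to X)^{-1}(X_n)}$, so $T(I)$ is a closed subscheme of $T(I')\subset Y_{i_0}$ lying in the divisor components over $X_{\ge n}$; again by the induction hypothesis $I'^*\beta_{X',\mathcal{S}'}\simeq s'^*_{I'}t'_{i_0*}\un_{e'^{-1}_{i_0}(X'_{i_0})}$ over $Y'_{i_0}$, and restricting further to the closed subscheme $T(I)\hookrightarrow T(I')$ via $b^*$ and using Proposition \ref{prop:direct-image-compl-sncd} (now with $Z=T(I)$ still constructible, $T=T(I)\cap\bigcup_{\alpha\in I^c}D_\alpha$) identifies it with $s_I^* t_{i_0*}\un_{e_{i_0}^{-1}(X_{i_0})}$, because $s_I=s_{I'}\circ(T(I)\hookrightarrow T(I'))$ as inclusions into $Y_{i_0}$.

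\textbf{Main obstacle.} The only non-formal point is checking, in both cases, that the hypotheses (i)--(ii) of Proposition \ref{prop:direct-image-compl-sncd} are genuinely satisfied by the triple $(T, Z, X)=(T(I)\subset Y_{i_0})$ and the divisor $D=Y_{i_0}^{\infty}=\bigcup D_\alpha$: namely that $T(I)$ is constructible for the stratification induced by the subfamily $(D_\beta)_{\beta\in J}$ with $J$ the components lying over $\bigcup_{j\in I-\{i_0\}}X_{\ge j}$, and that the ``removed'' locus $T(I)-T'(I)$ (resp.\ $T(I)\cap(\text{components over }X_{\ge n})$) lies in $\bigcup_{\alpha\in I-J}D_\alpha$. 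Both are consequences of the combinatorial definition of $\mathcal{R}(I)$ and of the equality $\mathcal{R}^{\infty}_{i_0}=\mathcal{R}([\![i_0,n]\!])$, together with Lemma \ref{lemma:rec-form-for-T()} and the analysis of irreducible components in its proof (each $\overline{e_{i_0}^{-1}(X_j)}$ is a union of divisor components of $Y_{i_0}^{\infty}$, and distinct ones are cut out by distinct components because $Y_{i_0}^{\infty}$ is a \emph{sncd}). Once this bookkeeping is in place, the induction closes and the functoriality of the identifications (needed only implicitly here, for later use) follows from the corresponding functoriality built into Corollary \ref{cor:new-for-compar-beta-prime-et-theta-dp}.
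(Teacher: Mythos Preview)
Your argument is essentially correct and uses the same key input (Proposition~\ref{prop:direct-image-compl-sncd}), but it is organised differently from the paper. The paper first unwinds the recursive definition of $\beta_{X,\mathcal{S}}$ to write down $I^*\beta_{X,\mathcal{S}}$ explicitly as the iterated composition
\[
(T^0(I_m)\hookrightarrow T(I_m))_*\,(T^0(I_m)\hookrightarrow T(I_{m-1}))^*\cdots (T^0(I_0)\hookrightarrow T(I_0))_*\,\un,
\]
with $I_j=\{i_0<\dots<i_j\}$ and $T^0(I_j)=(T(I_j)\to X)^{-1}(X_{i_j})$, and then inducts on $j$ (equivalently on $|I|$), applying Proposition~\ref{prop:direct-image-compl-sncd} once at each step with $Z=T(I_{j+1})$ and $T=T(I_{j+1})-T^0(I_{j+1})\subset e_{i_0}^{-1}(X_{\ge i_{j+1}+1})$. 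Your induction on $n$, splitting into $n\in I$ versus $n\notin I$, reaches the same conclusion but with more bookkeeping; the paper's route is shorter because it localises the problem entirely inside $Y_{i_0}$ from the start.

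One inaccuracy: your appeal to Corollary~\ref{cor:new-for-compar-beta-prime-et-theta-dp} is misplaced. That corollary compares quotients by two subgroups $H\subset G$ and is irrelevant here. The identification you need, namely $(T'(I)\hookrightarrow T(I))^*s_I^*\,t_{i_0*}\un\simeq s'^{\,*}_I\,t'_{i_0*}\un$, is purely formal: since $e_{i_0}^{-1}(X_{i_0})\subset Y'_{i_0}$ and $T'(I)\subset Y'_{i_0}$, both sides factor through the open immersion $Y'_{i_0}\hookrightarrow Y_{i_0}$, and one uses only that $j^*j_*\simeq\id$ for an open immersion $j$. Similarly, in your case $n\in I$, $I\neq\{n\}$, the morphism $b$ at $(I',0)$ is simply the closed immersion $T(I)=T(I')\times_X X_n\hookrightarrow T(I')$ (not a map into $T(I')\times_X X_n$), so $b^*$ is just restriction along $s_I=s_{I'}\circ(T(I)\hookrightarrow T(I'))$; no further use of Proposition~\ref{prop:direct-image-compl-sncd} is needed at that point beyond what you already used for $I'$.
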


\begin{proof}
Write $I=\{i_0<\dots< i_m\}$. For $0\leq j \leq m$, we set
$I_j=\{i_0, \dots, i_j\}$ and $T^0(I_j)=(T(I_j) \to X)^{-1}(X_{i_j})$, a dense open subset of $T(I_j)$.
One sees immediately from the definition of $\beta_{X,\mathcal{S}}$ that $I^*\beta_{X,\mathcal{S}}\in \DM(T(I))$ is given by
$$(T^0(I_m)\hookrightarrow T(I_m))_*(T^0(I_{m}) \hookrightarrow T(I_{m-1}))^*\dots \hspace{5cm}$$
$$\hspace{3cm}(T^0(I_1) \hookrightarrow T(I_1))_*(T^0(I_1) \hookrightarrow  T(I_0))^*(T^0(I_0)\hookrightarrow T(I_0))_*\un_{e_{i_0}^{-1}(X_{i_0})}.$$
For $0\leq j \leq m$, call $M_j\in \DM(T(I_j))$
the motive $I_j^*\beta_{X,\mathcal{S}}$. Thus, we have
$$M_{j+1}=(T^0(I_{j+1})\hookrightarrow T(I_{j+1}))_*(T^0(I_{j+1}) \hookrightarrow
T(I_j))^*M_j.$$
By induction on $j$, we may assume that $M_j\simeq s_{I_j}^*t_{i_0*} \un$.
Our claim follows then from Proposition
\ref{prop:direct-image-compl-sncd}. Indeed,
$T(I_{j+1})$ is $\mathcal{R}(I_{j+1})$-constructible and $T^0(I_{j+1})\subset T(I_{j+1})$ is the complement of a closed subset contained in $e_{i_0}^{-1}(X_{\geq i_{j+1}+1})$.
\end{proof}

Now we view $\mathcal{T}^+$ as an object of $\Dia(\Sch/k)$ by passing to total diagrams. We
define a motive $\beta'^+_{X,\mathcal{S}}\in
\DM(\mathcal{T}^+)$ by induction on $n$ as follows. For $n=0$, we take for $\beta'^+_{X,\mathcal{S}}$ the unit motive.
For $n\geq 1$, we assume that the motive
$\beta'^+_{X,\mathcal{S}} \in \DM(\mathcal{T}'^+)$ has been constructed.
We have a diagram in $\Dia(\Dia(\Sch/k))$:
$$\xymatrix@C=1.7pc{\mathcal{T}' \ar[r]^-j & \mathcal{T}\circ \iota_n & \mathcal{T}\circ o \ar[l]_-{b} \ar[r]^-o & \mathcal{T}}$$
which gives:
$$\xymatrix@C=1.7pc{\mathcal{T}'^+ \ar[r]^-{j^+} &
\mathcal{T}^+\circ (\iota_n \times \id_{\underline{\mathbf{1}}}) & \mathcal{T}^+\circ (o\times \id_{\underline{\mathbf{1}}}) \ar[l]_-{b^+} \ar[r]^-{o^+} & \mathcal{T}^+,}$$
that we consider as a diagram in $\Dia(\Sch/k)$ by passing to total diagrams. We also have a morphism $e_n:\mathcal{T}^+\to \mathcal{T}^+$ in $\Dia(\Dia(\Sch/k))$ constructed in exactly the same manner as $e_n:T^+ \to T^+$. With these notation, we set
$$\beta'^+_{X,\mathcal{S}}=e_n^*(o^+)_*(b^+)^*(j^+)_*
\beta'^+_{X',\mathcal{S}'}.$$
As before, we can show that the restriction of $\beta'^+_{X,\mathcal{S}}$ to the sub-diagram $\mathcal{T}^+_{|\mathcal{P}^*([\![0,n]\!])^{\rm op}\times \{1\}} \simeq (\Spec(k),\mathcal{P}^*([\![0,n]\!])^{\rm op})$ is isomorphic to the unit motive.

Also, we set $\beta'_{X,\mathcal{S}}=(\mathcal{T}\to\mathcal{T}^+)^*\beta'^+_{X,\mathcal{S}}$. This is a commutative unitary algebra of
$\DM(\mathcal{T})$. It can be related to $\beta'_{X',\mathcal{S}'}$ as follows. Over the sub-diagram $\mathcal{T}\circ o$,
$\beta'_{X,\mathcal{S}}$ is given by $b^*j_*\beta'_{X',\mathcal{S}'}$, whereas, over $\mathcal{T}(\{n\})$, it is given by the unit motive
$\un_{\mathcal{T}(\{n\})}$.

\begin{lemma}
\label{lemma:applic-of-comput-direct-im-sncd-II}
Let $I\in \mathcal{P}^*([\![0,n]\!])$ and $\alpha\in A(I)$.
Denote $i_0={\rm min}(I)$, $s_{I,\alpha}:\mathcal{T}(I,\alpha) \hookrightarrow
Y_{i_0}$ the inclusion. Then, $(I,\alpha)^*\beta'_{X,\mathcal{S}} \in \DM(\mathcal{T}(I,\alpha))$ is canonically isomorphic to $s_{I,\alpha}^* t_{i_0*}\un_{e_{i_0}^{-1}(X_{i_0})}$.

\end{lemma}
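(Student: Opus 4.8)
The plan is to prove Lemma \ref{lemma:applic-of-comput-direct-im-sncd-II} by essentially transporting the statement of Lemma \ref{lemma:applic-of-comput-direct-im-sncd-I} along the tautological morphisms $(\mathcal{T}(I),A(I))\to T(I)$ of \eqref{TnT}, using an induction on $n$ parallel to the one defining $\beta'_{X,\mathcal{S}}$. First I would unwind the recursive definition of $\beta'_{X,\mathcal{S}}$ exactly as was done for $\beta_{X,\mathcal{S}}$ in the proof of Lemma \ref{lemma:applic-of-comput-direct-im-sncd-I}: writing $I=\{i_0<\dots<i_m\}$ and $I_j=\{i_0,\dots,i_j\}$, the motive $(I,\alpha)^*\beta'_{X,\mathcal{S}}$ is obtained by a chain of alternating cohomological direct images along objectwise dense open immersions $\mathcal{T}^0(I_{j+1})\hookrightarrow \mathcal{T}(I_{j+1})$ and inverse images along the transition maps $\mathcal{T}(I_{j+1})\to \mathcal{T}(I_j)$, starting from the unit motive over $\mathcal{T}(I_0)=\mathcal{T}(\{i_0\},-)$ (here $\mathcal{T}^0(I_j)$ denotes the preimage of $X_{i_j}$). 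The point is that this chain of functors is, objectwise on $A(I)$, obtained from the corresponding chain over $T(I)$ by the base-change encoded in \eqref{TnT}: every square
$$\xymatrix@C=1.5pc@R=1.5pc{\mathcal{T}(I_{j+1}) \ar[r] \ar[d] & \mathcal{T}(I_j) \ar[d] \\ T(I_{j+1}) \ar[r] & T(I_j)}$$
is cartesian, and likewise for the open immersions $\mathcal{T}^0(I_{j+1})\hookrightarrow \mathcal{T}(I_{j+1})$, so the relevant base-change morphisms compare the two chains.

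Next I would reduce to a single base-change statement by induction on $j$: assuming $I_j^*\beta'_{X,\mathcal{S}}$ (as a motive on $(\mathcal{T}(I_j),A(I_j))$) is the pullback along $(\mathcal{T}(I_j),A(I_j))\to T(I_j)$ of $I_j^*\beta_{X,\mathcal{S}}$, I need that the cohomological direct image along the open immersion $\mathcal{T}^0(I_{j+1})\hookrightarrow \mathcal{T}(I_{j+1})$ commutes with the inverse image along $\mathcal{T}(I_{j+1})\to T(I_{j+1})$. This is the base change theorem for open immersions, which holds unconditionally for the six operations on $\DM(-)$, both for single schemes and for diagrams of schemes (using the locality axiom \cite[Cor.~4.5.47]{ayoub-these-II} and, for the diagram version, Corollary \ref{cor:very-useful-texnical-cor} to check it objectwise). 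Applying this step $m$ times and invoking Lemma \ref{lemma:applic-of-comput-direct-im-sncd-I}, which identifies $I^*\beta_{X,\mathcal{S}}$ with $s_I^* t_{i_0*}\un_{e_{i_0}^{-1}(X_{i_0})}$, I conclude that $(I,\alpha)^*\beta'_{X,\mathcal{S}}$ is the inverse image of $s_I^* t_{i_0*}\un_{e_{i_0}^{-1}(X_{i_0})}$ along the inclusion $\mathcal{T}(I,\alpha)\hookrightarrow T(I)$, which by construction of $s_{I,\alpha}$ (the composite $\mathcal{T}(I,\alpha)\hookrightarrow T(I)\hookrightarrow Y_{i_0}$) is exactly $s_{I,\alpha}^* t_{i_0*}\un_{e_{i_0}^{-1}(X_{i_0})}$.

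I expect the main obstacle to be bookkeeping rather than anything deep: one must be careful that the various objectwise dense open immersions $\mathcal{T}^0(I_{j+1})\hookrightarrow \mathcal{T}(I_{j+1})$ are genuinely open immersions of diagrams of schemes with the right cartesianness (so that Proposition \ref{prop:omega-diagram-raisonnable}-type arguments, or just Corollary \ref{cor:very-useful-texnical-cor}, apply to reduce everything to a pointwise statement in $A(I)$), and that the transition morphisms $\mathcal{T}(I_{j+1})\to \mathcal{T}(I_j)$ used in the $e_n^*$ and $b^*$ of the recursion genuinely lie over the corresponding $T(I_{j+1})\to T(I_j)$. Both facts follow from the construction of $\mathcal{T}$ in \S\ref{subsub:calT} and Proposition \ref{prop:for-defining-s-J-I-and-funct}, together with the compatibility of the recursive definitions of $\beta_{X,\mathcal{S}}$ and $\beta'_{X,\mathcal{S}}$ (both use the same maps $j$, $b$, $o$, $e_n$, only over $\mathcal{T}$ instead of $T$). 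Since all functors involved in the recursion are $f^*$ and $f_*$ for maps that are cartesian over the corresponding maps of $T$'s, the pullback compatibility propagates through the induction with no new input beyond open-immersion base change. Once the identification at the level of each $(I,\alpha)$ is in place, naturality in $(I,\alpha)$ is automatic from the functoriality built into \eqref{TnT}, so no separate argument is needed for that.
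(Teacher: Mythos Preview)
Your overall shape is right—unwinding the recursion as a chain of $(j_*)(b^*)$ operations and arguing by induction on $j$ is exactly what the paper does—but the key step is misattributed. You claim that the cohomological direct image along the open immersion $j:T^0(I_{j+1})\hookrightarrow T(I_{j+1})$ commutes with the inverse image along the closed immersion $q:\mathcal{T}(I_{j+1},\alpha_{j+1})\hookrightarrow T(I_{j+1})$, and you cite ``the base change theorem for open immersions, which holds unconditionally''. There is no such unconditional theorem in this direction: for a cartesian square with $j$ an open immersion and $q$ a closed immersion, the map $q^*j_*\to j'_*q'^*$ is generally \emph{not} invertible (take $X=\Aff^1$, $q$ the inclusion of the origin, $j$ the inclusion of the complement, and apply to $\un$). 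The locality axiom and Corollary \ref{cor:very-useful-texnical-cor} let you check things objectwise on $A(I)$, but they do not supply the missing base change.

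The paper avoids this by not reducing to Lemma \ref{lemma:applic-of-comput-direct-im-sncd-I} at all. Instead, it runs the same inductive chain directly on the schemes $\mathcal{T}(I_j,\alpha_j)$ (with $\alpha_j={\rm s}_{I_j\subset I}(\alpha)$) and invokes Proposition \ref{prop:direct-image-compl-sncd} at each step: that proposition is precisely the statement that, in this sncd setting, the restriction of $t_{i_0*}\un$ to the closed subscheme $\mathcal{T}(I_{j+1},\alpha_{j+1})$ agrees with the pushforward from its open part $\mathcal{T}^0(I_{j+1},\alpha_{j+1})$. In other words, the needed ``base change'' is a special feature of the sncd geometry (encoded in Proposition \ref{prop:direct-image-compl-sncd}), not a formal property of $j_*$. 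Your proof becomes correct if you replace the appeal to an unconditional base change by an appeal to Proposition \ref{prop:direct-image-compl-sncd} applied on $Y_{i_0}$ with $Z=\mathcal{T}(I_{j+1},\alpha_{j+1})$; this is exactly what the paper does. (Equivalently, you could cite Proposition \ref{prop:comparison-beta-et-beta-prime}, but note that its proof already uses Proposition \ref{prop:direct-image-compl-sncd} in the same way.)
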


\begin{proof}
The proof is similar to that of
Lemma \ref{lemma:applic-of-comput-direct-im-sncd-I}.
Write $I=\{i_0<\dots<i_m\}$ and set $I_j=\{i_0,\dots, i_j\}$ for
$0\leq j \leq m$. Let $\alpha_j\in A(I_j)$ be the image of
$\alpha$ by ${\rm s}_{I_j \subset I}:A(I) \to A(I_j)$.
Also let $\mathcal{T}^0(I_j,\alpha_j)$ be the inverse image of
$X_{i_j}$ by the morphism
$\mathcal{T}(I_j,\alpha_j) \to X$.
It follows from the construction of $\beta'_{X,\mathcal{S}}$ that $(I,\alpha)^*\beta'_{X,\mathcal{S}}$ is given by
$$(\mathcal{T}^0(I_m,\alpha_m) \hookrightarrow \mathcal{T}(I_m,\alpha_m))_* (\mathcal{T}^0(I_m,\alpha_m) \hookrightarrow \mathcal{T}(I_{m-1},\alpha_{m-1}))^* \dots \hspace{3.5cm}$$
$$\hspace{.5cm}(\mathcal{T}^0(I_1,\alpha_1) \hookrightarrow \mathcal{T}(I_1,\alpha_1))_* (\mathcal{T}^0(I_1,\alpha_1) \hookrightarrow \mathcal{T}(I_{0},\alpha_{0}))^* (\mathcal{T}^0(I_0,\alpha_0) \hookrightarrow \mathcal{T}(I_0,\alpha_0))_*\un.$$
For $0\leq j \leq m$, call $M_j\in \DM(\mathcal{T}(I_j,\alpha_j))$ the motive $(I_j,\alpha_j)^*\beta'_{X,\mathcal{S}}$. Thus, we have
$$M_{j+1}=(\mathcal{T}^0(I_{j+1},\alpha_{j+1}) \hookrightarrow
\mathcal{T}(I_{j+1},\alpha_{j+1}))_*(\mathcal{T}^0(I_{j+1},\alpha_{j+1}) \hookrightarrow \mathcal{T}(I_j,\alpha_j))^*M_j.$$
We now use Proposition
\ref{prop:direct-image-compl-sncd} and induction on $j$ to show that
$M_j\simeq s_{I_j,\alpha_j}^* t_{i_0*}\un$.
\end{proof}

Call $q:(\mathcal{T},\mathcal{P}^*([\![0,n]\!])^{\rm op}) \to
(T,\mathcal{P}^*([\![0,n]\!])^{\rm op})$ the natural projection which we may equally consider as a morphism in $\Dia(\Dia(\Sch/k))$ or
$\Dia(\Sch/k)$.

\begin{proposition}
\label{prop:comparison-beta-et-beta-prime}
There is canonical isomorphism of commutative unitary algebras
$q^*\beta_{X,\mathcal{S}} \simeq \beta'_{X,\mathcal{S}}$.
\end{proposition}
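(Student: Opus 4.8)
The strategy is induction on $n$, following exactly the same inductive template already used for $\theta_{X,\mathcal{S}}$, $\theta'_{X,\mathcal{S}}$, $\theta''_{X,\mathcal{S}}$ and the comparison propositions \ref{prop:comparaison-theta-et-theta-prime} and \ref{prop:comparison-theta-prime-and-h-theta-double}. When $n=0$ both $\beta_{X,\mathcal{S}}$ and $\beta'_{X,\mathcal{S}}$ are unit motives over the (discrete) diagram of connected components of $X$, and $q$ is the identity, so there is nothing to prove. For $n\geq 1$ one writes $X'=X-X_n$ with the induced data as in Remark \ref{rem:for-doing-induction-X-X'}, denotes by $q':(\mathcal{T}',\mathcal{P}^*([\![0,n-1]\!])^{\rm op}) \to (T',\mathcal{P}^*([\![0,n-1]\!])^{\rm op})$ the analogous projection, and assumes the isomorphism $q'^*\beta_{X',\mathcal{S}'}\simeq \beta'_{X',\mathcal{S}'}$ (indeed its $+$-decorated refinement $q'^*\beta^+_{X',\mathcal{S}'}\simeq \beta'^+_{X',\mathcal{S}'}$, which one carries along in the induction exactly as the comparison isomorphisms were carried along in \S\ref{subsub:X}--\S\ref{subsub:Y}).

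\textbf{Key steps.} First I would compare the two defining diagrams. One has the commutative diagram in $\Dia(\Dia(\Sch/k))$
$$\xymatrix@C=1.5pc@R=1.5pc{\mathcal{T}'^+ \ar[r]^-{j^+} \ar[d]_-{q'^+} & \mathcal{T}^+\circ(\iota_n\times\id_{\underline{\mathbf{1}}}) \ar[d]^-{q^+} &  \mathcal{T}^+\circ(o\times\id_{\underline{\mathbf{1}}}) \ar[d]^-{q^+} \ar[l]_-{b^+} \ar[r]^-{o^+} & \mathcal{T}^+ \ar[d]^-{q^+} \\
T'^+ \ar[r]^-{j^+} & T^+\circ(\iota_n\times\id_{\underline{\mathbf{1}}}) & T^+\circ(o\times\id_{\underline{\mathbf{1}}}) \ar[l]_-{b^+} \ar[r]^-{o^+} & T^+,}$$
together with the square relating the two copies of $e_n$, all of whose squares are cartesian (the $\mathcal{T}$-level is obtained from the $T$-level by the base change $\mathcal{T}\to T$, objectwise a disjoint union of closed immersions). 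This yields a chain of natural transformations
$$q^{+*}e_n^*(o^+)_*(b^+)^*(j^+)_* \to e_n^* q^{+*}(o^+)_*(b^+)^*(j^+)_* \to e_n^*(o^+)_* q^{+*}(b^+)^*(j^+)_* \to e_n^*(o^+)_*(b^+)^* q^{+*}(j^+)_* \to e_n^*(o^+)_*(b^+)^*(j^+)_* q'^{+*},$$
and composing with the inductive isomorphism $q'^{+*}\beta^+_{X',\mathcal{S}'}\simeq \beta'^+_{X',\mathcal{S}'}$ gives the desired morphism $q^{+*}\beta^+_{X,\mathcal{S}}\to \beta'^+_{X,\mathcal{S}}$; restricting along $\mathcal{T}\to\mathcal{T}^+$ produces $q^*\beta_{X,\mathcal{S}}\to\beta'_{X,\mathcal{S}}$. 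It remains to check that the two base change morphisms appearing above --- namely $q^{+*}(o^+)_*\to (o^+)_* q^{+*}$ and $q^{+*}(j^+)_*\to (j^+)_* q'^{+*}$ --- are invertible; the morphisms involving $e_n^*$ and $(b^+)^*$ are invertible for formal reasons since $b^+$ is objectwise a closed immersion and $q^{+*}$ commutes with it (base change by smooth, or just objectwise inspection, recalling $q$ is objectwise a finite morphism so that $q^*=q^!$ up to twist and the relevant exchange laws hold).

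\textbf{Main obstacle.} The crux is the invertibility of $q^{+*}(o^+)_*\to (o^+)_* q^{+*}$ and $q^{+*}(j^+)_*\to (j^+)_* q'^{+*}$. For these one uses that $q$ is objectwise a \emph{finite} morphism (each $\mathcal{T}(I)\to T(I)$ is a closed-cover morphism, in particular proper), so Proposition \ref{prop-most-gen-proj-base-change} (or rather its finite-morphism specialization together with Corollary \ref{cor:very-useful-texnical-cor} and Proposition \ref{prop:cohomol-diag-sch-adjunction}) applies: one reduces by applying $(I,\alpha)^*$ for each $I\in\mathcal{P}^*([\![0,n]\!])$ and each $\alpha\in A(I)$ to a base change statement over the single scheme $\mathcal{T}(I,\alpha)$, where the indexing categories in play are finite ordered sets ($\ucarre^m$, or comma categories over them), hence universal for homotopy limits by Proposition \ref{prop:limits-with-respect-ordered-set} and Lemma \ref{texnical-commut-holim}. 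An efficient alternative, which I would actually pursue in parallel, is to \emph{bypass} the direct base change computation: by Lemmas \ref{lemma:applic-of-comput-direct-im-sncd-I} and \ref{lemma:applic-of-comput-direct-im-sncd-II} we already know both $I^*\beta_{X,\mathcal{S}}\simeq s_I^*t_{i_0*}\un_{e_{i_0}^{-1}(X_{i_0})}$ and $(I,\alpha)^*\beta'_{X,\mathcal{S}}\simeq s_{I,\alpha}^*t_{i_0*}\un_{e_{i_0}^{-1}(X_{i_0})}$, and since $\mathcal{T}(I,\alpha)\hookrightarrow T(I)$ factors $s_{I,\alpha}$ through $s_I$, pulling the first isomorphism back along $\mathcal{T}(I,\alpha)\to T(I)$ gives precisely the second; so one checks that the inductively constructed morphism $q^*\beta_{X,\mathcal{S}}\to\beta'_{X,\mathcal{S}}$ agrees objectwise with this identification (compatibly with the transition morphisms of the diagrams, by the recursive descriptions of both motives over $\mathcal{T}\circ o$ and over $\mathcal{T}(\{n\})$), whence it is an isomorphism. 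This second route makes the invertibility transparent and localizes the work to a bookkeeping verification that the two constructions of the comparison map coincide, which is routine given Proposition \ref{prop:direct-image-compl-sncd} and its corollary; I expect that bookkeeping, rather than any conceptual point, to be the only real labor.
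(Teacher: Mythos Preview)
Your overall architecture---induction on $n$, carrying along the $+$-decorated refinement $(q^+)^*\beta^+_{X,\mathcal{S}}\to\beta'^+_{X,\mathcal{S}}$, and building the comparison morphism from the commutative ladder relating the two defining diagrams---is exactly the paper's approach, and your ``alternative'' route via Lemmas~\ref{lemma:applic-of-comput-direct-im-sncd-I} and~\ref{lemma:applic-of-comput-direct-im-sncd-II} is in fact the one the paper takes. The paper organizes the invertibility check by restricting the morphism $(q^+)^*\beta^+_{X,\mathcal{S}}\to\beta'^+_{X,\mathcal{S}}$ to three sub-diagrams of $\mathcal{T}^+$: over $\mathcal{P}^*([\![0,n]\!])^{\rm op}\times\{1\}$ and over $\mathcal{T}(\{n\})\times\{0\}$ both sides are unit motives; over $\mathcal{T}^+\circ(o\times\id_{\underline{\mathbf{1}}})$ the question reduces to the single base change $q^*j_*\beta_{X',\mathcal{S}'}\to j_*q'^*\beta_{X',\mathcal{S}'}$, which is checked after applying $(I,\alpha)^*$ using Lemma~\ref{lemma:applic-of-comput-direct-im-sncd-I} and then Proposition~\ref{prop:direct-image-compl-sncd} twice.

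Your \emph{first} approach to the main obstacle, however, does not work as written. Proposition~\ref{prop-most-gen-proj-base-change} gives invertibility of $(g,\alpha)^*(f,\rho)_*\to (f',\rho')_*(g',\alpha')^*$ under the hypothesis that the \emph{push-forward} morphism $f$ is objectwise projective. In the base change $q^{+*}(j^+)_*\to (j^+)_* q'^{+*}$ the push-forward is $j^+$, an objectwise dense open immersion---not projective---so that proposition does not apply; properness of $q$ is irrelevant here. Indeed $q^*j_*\to j_*q'^*$ is \emph{not} an isomorphism of functors: it is invertible only when applied to $\beta_{X',\mathcal{S}'}$, and the argument necessarily uses the explicit shape of that motive (Lemma~\ref{lemma:applic-of-comput-direct-im-sncd-I}) together with Proposition~\ref{prop:direct-image-compl-sncd}. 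So drop the first route and commit to the second; what remains is, as you say, bookkeeping.
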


\begin{proof}
Call $q^+:\mathcal{T}^+ \to T^+$ the morphism in $\Dia(\Dia(\Sch/k))$ deduced from $q$. We will construct by induction on $n$ a canonical isomorphism of commutative algebras $(q^+)^*\beta^+_{X,\mathcal{S}}\simeq \beta'^+_{X,\mathcal{S}}$,
and then get the isomorphism $q^*\beta_{X,\mathcal{S}} \simeq \beta'_{X,\mathcal{S}}$ by applying $(\mathcal{T} \to \mathcal{T}^+)^*$
and using the equality $(\mathcal{T}\to \mathcal{T}^+) \circ q=(q^+)\circ (T\to T^+)$.

There is a commutative diagram
$$\xymatrix@C=1.5pc@R=1.5pc{\mathcal{T}'^+ \ar[r]^-{j^+} \ar[d]^-{q'^+} &
\mathcal{T}^+\circ (\iota_n \times \id_{\underline{\mathbf{1}}}) \ar[d]^-{q^+} & \mathcal{T}^+\circ (o\times \id_{\underline{\mathbf{1}}}) \ar[l]_-{b^+} \ar[r]^-{o^+} \ar[d]^-{q^+} & \mathcal{T}^+ \ar[d]^-{q^+} & \mathcal{T}^+ \ar[l]_-{e_n} \ar[d]^-{q^+} \\
T'^+ \ar[r]^-{j^+} & T^+\circ (\iota_n\times \id_{\underline{\mathbf{1}}}) & T^+\circ (o\times \id_{\underline{\mathbf{1}}}) \ar[l]_-{b^+} \ar[r]^-{o^+} & T^+ & T^+ \ar[l]_-{e_n} ,\!}$$
which we consider in $\Dia(\Sch/k)$ by passing to total diagrams of schemes. This gives natural transformations
$$(q^+)^* e_n^* \simeq e_n^* (q^+)^* , \;
(q^+)^* (o^+)_* \to  (o^+)(q^+)^* , \; (q^+)^*(b^+)^* \simeq (b^+)^*(q^+)^*$$
$$\text{and} \quad
(q^+)^* (j^+)_* \to (j^+)_*(q'^+)^*.$$
We get a canonical morphism of commutative unitary algebras
$(q^+)^* \beta^+_{X,\mathcal{S}} \to \beta'^+_{X,\mathcal{S}}$ by taking the composition:
$$\xymatrix@C=1.7pc@R=1.3pc{(q^+)^* e_n^* (o^+)_* (b^+)^* (j^+)_*\beta^+_{X',\mathcal{S}}  \ar[r] &
e_n^*(o^+)_+(b^+)^* (j^+)_* (q'^+)^* \beta^+_{X',\mathcal{S}} \ar[d]^-{\sim} \\
& e_n^*(o^+)_+(b^+)^* (j^+)_* \beta'^+_{X',\mathcal{S}}.}$$

It remains to show that $(q^+)^*\beta^+_{X,\mathcal{S}} \to \beta'^+_{X,\mathcal{S}}$ is invertible. This is obviously the case over the sub-diagram $\mathcal{T}^+_{|\mathcal{P}^*([\![0,n]\!])\times \{1\}}\simeq (\Spec(k),\mathcal{P}^*([\![0,n]\!]))$ as both
sides of the morphism are canonically isomorphic to the unit motive.
We deduce also that
$(q^+)^*\beta^+_{X,\mathcal{S}} \to \beta'^+_{X,\mathcal{S}}$
is invertible over the sub-diagram $\mathcal{Y}(\{n\})\times \{0\}$. Indeed, by construction,
there are canonical isomorphisms
$$(\{n\},0)^*\beta'^+_{X,\mathcal{S}}\simeq ((\{n\},1)^*\beta'^+_{X,\mathcal{S}})_{|\mathcal{T}(\{n\})}\simeq \un_{\mathcal{T}(\{n\})}$$
and similarly for $\beta_{X,\mathcal{S}}^+$.

To end the proof, it remains to show that
$(q^+)^*\beta^+_{X,\mathcal{S}} \to \beta'^+_{X,\mathcal{S}}$
is invertible over the sub-diagram $\mathcal{T}^+\circ (o\times \id_{\underline{\mathbf{1}}})$. But over this sub-diagram,
$(q^+)^*\beta^+_{X,\mathcal{S}}$ and
$\beta'^+_{X,\mathcal{S}}$ are given by
$q^* b^*j_*\beta_{X',\mathcal{S}'}$ and $b^*j_*\beta'_{X',\mathcal{S}}$ respectively. Moreover, our morphism is given by the composition
$$\xymatrix@C=1.6pc{q^*b^*j_*\beta_{X',\mathcal{S}'} \simeq b^*q^* j_*\beta_{X',\mathcal{S}'} \ar[r] &  b^*j_*q'^*\beta_{X',\mathcal{S}'} \simeq
b^*j_*\beta'_{X',\mathcal{S}'}.}$$
Thus, it suffices to show that the base change morphism
$q^*j_* \to j_*q'^*$ is invertible
when applied to the motive $\beta_{X',\mathcal{S}'}$.
It suffices to show this after applying $(I,\alpha)^*$ for
$I\in \mathcal{P}^*([\![0,n-1]\!])$ and $\alpha \in A(I)$.
We are then reduced to showing that the base change morphism associated to the cartesian diagram
$$\xymatrix@C=1.5pc@R=1.5pc{\mathcal{T}'(I,\alpha) \ar[r]^-{q'} \ar[d]_-j  & T'(I) \ar[d]^-j \\
\mathcal{T}(I,\alpha) \ar[r]^-q & T(I)}$$
is invertible when applied to the motive
$I^*\beta_{X',\mathcal{S}'}$. But by Lemma
\ref{lemma:applic-of-comput-direct-im-sncd-I},
$I^*\beta_{X',\mathcal{S}'}\simeq s'^*_It'_{i_0*} \un_{e^{-1}_{i_0}(X_{i_0})}$ where $i_0={\rm min}(I)$, $t'_{i_0}:e^{-1}_{i_0}(X_{i_0}) \hookrightarrow Y'_{i_0}$ and $s'_I:T'(I)\hookrightarrow Y'_{i_0}$.
By Proposition
\ref{prop:direct-image-compl-sncd}, there is an isomorphism
$s_I^* t_{i_0*} \un_{e^{-1}_{i_0}(X_{i_0})} \simeq  j_*(s'^*_It'_{i_0*}\un_{e^{-1}_{i_0}(X_{i_0})})$. Thus, we are reduced to showing that the canonical morphism
$$\xymatrix@C=1.7pc{s_{I,\alpha}^* t_{i_0*}\un_{e^{-1}_{i_0}(X_{i_0})} \ar[r] &  j_*
s'^*_{I,\alpha} t'_{i_0*} \un_{e^{-1}_{i_0}(X_{i_0}) }}$$
is invertible. This too is true by
Proposition
\ref{prop:direct-image-compl-sncd}. This ends the proof of the proposition.
\end{proof}

Let $(p,\varsigma_n):(\mathcal{Y}, \mathcal{P}_2([\![1,n]\!])) \to (\mathcal{T},\mathcal{P}^*([\![0,n]\!])^{\rm op})$ denote the natural projection which we may equally consider as a morphism in $\Dia(\Sch/k)$ or $\Dia(\Dia(\Sch/k))$.

\begin{proposition}
\label{prop:comparison-theta-double-et-beta-prime}
There is a canonical isomorphism of commutative unitary algebras $(p,\varsigma_n)^*\beta'_{X,\mathcal{S}} \simeq \theta''_{X,\mathcal{S}}$.

\end{proposition}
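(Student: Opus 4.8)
The plan is to proceed by induction on $n$, exactly mirroring the recursive definitions of $\theta''_{X,\mathcal{S}}$ (formula \eqref{eq:recursive-defn-theta-double-prime}) and of $\beta'_{X,\mathcal{S}}=(\mathcal{T}\to\mathcal{T}^+)^*\beta'^+_{X,\mathcal{S}}$. When $n=0$ both sides are unit motives on the family of connected components of $X$ and there is nothing to prove. For the inductive step, set $m=n-1$ and assume a canonical isomorphism of commutative unitary algebras $(p',\varsigma_m)^*\beta'_{X',\mathcal{S}'}\simeq\theta''_{X',\mathcal{S}'}$ over $\mathcal{Y}'$. The first task is to write down a commutative diagram in $\Dia(\Dia(\Sch/k))$ relating the data $(\mathcal{Y}'\xrightarrow{j}\mathcal{Y}\circ\iota_n^0\xleftarrow{b}\mathcal{Y}\circ o\xrightarrow{o}\mathcal{Y})$ to the data $(\mathcal{T}'\xrightarrow{j}\mathcal{T}\circ\iota_n\xleftarrow{b}\mathcal{T}\circ o\xrightarrow{o}\mathcal{T})$ via the various copies of $(p,\varsigma)$; then compare the functor $\omega^0_{\{(0,1)\}|(\mathcal{Y},\smallucarre)}o_*b^*j_*$ appearing in \eqref{eq:recursive-defn-theta-double-prime} with $(p,\varsigma_n)^*$ applied to $e_n^*(o^+)_*(b^+)^*(j^+)_*$ restricted to $\mathcal{T}$. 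The role of $e_n^*$ on the $\beta'$-side (which collapses the stratum indexed by $\{n\}$ to a point and replaces its value by the unit) will match up with the fact that over the sub-diagram $\mathcal{X}(0,1)=(X_n,\mathcal{P}_2([\![1,n-1]\!]))$, the motive $\theta''_{X,\mathcal{S}}$ is given by $\omega^0_{X_n}q_{n*}v_n^*j_*\theta''_{X',\mathcal{S}'}$, combined with Lemma \ref{lemma:applic-of-comput-direct-im-sncd-II} which identifies $(I,\alpha)^*\beta'_{X,\mathcal{S}}$ with $s_{I,\alpha}^*t_{i_0*}\un_{e_{i_0}^{-1}(X_{i_0})}$.

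The heart of the argument will be to build the comparison morphism by taking the composition
$$(p,\varsigma_n)^*\beta'_{X,\mathcal{S}}\;\simeq\;(p,\varsigma_n)^* e_n^* (o^+)_*(b^+)^*(j^+)_*\beta'^+_{X',\mathcal{S}'}|_{\mathcal{T}}\;\to\;\omega^0_{\{(0,1)\}|(\mathcal{Y},\smallucarre)}o_*b^*j_*(p',\varsigma_m)^*\beta'_{X',\mathcal{S}'}\;\simeq\;\theta''_{X,\mathcal{S}},$$
where the first isomorphism uses the inductive description of $\beta'_{X,\mathcal{S}}$ over the sub-diagrams $\mathcal{T}\circ o$ and $\mathcal{T}(\{n\})$ (stated just before Lemma \ref{lemma:applic-of-comput-direct-im-sncd-II}), the middle arrow is assembled from the base change maps $(p,\varsigma_n)^*o_*\to o_*(p,\varsigma)^*$, $(p,\varsigma)^*b^*\simeq b^*(p,\varsigma)^*$, $(p,\varsigma)^*j_*\to j_*(p',\varsigma_m)^*$ together with the natural transformation $(p,\varsigma_n)^*\omega^0_{\{(0,1)\}|(\mathcal{T},\smallucarre)}\to\omega^0_{\{(0,1)\}|(\mathcal{Y},\smallucarre)}(p,\varsigma)^*$ (constructed as in Proposition \ref{prop:additional-prop-omega-0-x}, (ii)), and the last isomorphism is the inductive hypothesis together with \eqref{eq:recursive-defn-theta-double-prime}. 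Since every functor used is pseudo-monoidal, symmetric and pseudo-unitary, the resulting morphism is automatically one of commutative unitary algebras.

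It then remains to check the assembled morphism is invertible, which as usual is verified after applying $((I_0,I_1),\,(\alpha_j)_j)^*$ for each object of the total diagram $\mathcal{Y}$; using Corollary \ref{cor:very-useful-texnical-cor} to commute these evaluations past $o_*$ and $j_*$, and Proposition \ref{prop:omega-diagram-raisonnable} to commute them past the $\omega^0$'s, one is reduced to fibrewise statements on ordinary schemes. The key fibrewise input is Proposition \ref{prop:direct-image-compl-sncd}: by property \textbf{P1)} the relevant subschemes $\mathcal{T}(I_j,\alpha_j)$ are $\mathcal{R}$-constructible closed subsets and the open parts $\mathcal{T}^0(I_j,\alpha_j)$ are complements of closed subsets lying in $e_{i_0}^{-1}(X_{\geq i_{j+1}+1})\subset\bigcup_{\alpha\in I-J}\overline{e_{i_0}^{-1}(X_\alpha)}$, so the unit maps $z^*j_*\un\to u_*u^*z^*j_*\un$ are isomorphisms; combined with Lemmas \ref{lemma:applic-of-comput-direct-im-sncd-I} and \ref{lemma:applic-of-comput-direct-im-sncd-II}, which pin down both sides as iterated direct images of $\un_{e_{i_0}^{-1}(X_{i_0})}$, this forces the comparison to be an isomorphism on each object. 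The main obstacle I expect is bookkeeping: matching the two rather differently-presented recursions — the $\beta'$-side built from $e_n^*(o^+)_*(b^+)^*(j^+)_*$ on the auxiliary diagrams $\mathcal{T}^{+}$, and the $\theta''$-side built from $\omega^0_{\{(0,1)\}|(\mathcal{Y},\smallucarre)}o_*b^*j_*$ — requires carefully tracking how $e_n^*$ interacts with the collapsing of the top stratum, and verifying that the base change morphisms $(p,\varsigma_n)^*o_*\to o_*(p,\varsigma)^*$ really are invertible here (the needed instance should follow from objectwise finiteness of $h$ via Propositions \ref{prop:limits-with-respect-ordered-set} and \ref{prop-most-gen-proj-base-change}, or directly from Proposition \ref{prop-most-gen-proj-base-change} using that the relevant $\mathcal{J}(i)$ are universal for homotopy limits). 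None of the individual steps is deep, but the diagram-chasing is delicate, and much of it can be left to the reader once the framework is in place, as the authors do for the parallel Propositions \ref{prop:functorialite-motif-theta-prime} and \ref{prop:fur-functoriality-construction-theta-double-prime}.
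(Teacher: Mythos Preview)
Your inductive framework is right, and over the pieces indexed by $(1,0)$ and $(0,0)$ your sketch is close to what the paper does. But there are two genuine gaps.

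\textbf{Construction of the comparison morphism.} There is no $\omega^0_{\{(0,1)\}|(\mathcal{T},\smallucarre)}$ on the $\beta'$-side: the recursion for $\beta'^+$ uses $e_n^*$, not a weight-zero truncation. So the natural transformation $(p,\varsigma_n)^*\omega^0_{\{(0,1)\}|(\mathcal{T},\smallucarre)}\to\omega^0_{\{(0,1)\}|(\mathcal{Y},\smallucarre)}(p,\varsigma)^*$ you invoke does not exist. The paper's mechanism is different: one uses the $2$-morphism $\id_{\mathcal{T}^+}\to e_n$ to obtain $e_n^*\to\id$, builds a map $\xi:(p,\varsigma_n)^*\beta'_{X,\mathcal{S}}\to o_*b^*j_*\theta''_{X',\mathcal{S}'}$ (with \emph{no} $\omega^0$ on the target), and then observes that $(p,\varsigma_n)^*\beta'_{X,\mathcal{S}}$ is already Artin over the $(0,1)$-piece (being the unit there), so $\xi$ factors uniquely through $\theta''_{X,\mathcal{S}}=\omega^0_{\{(0,1)\}|(\mathcal{Y},\smallucarre)}(o_*b^*j_*\theta''_{X',\mathcal{S}'})$ by the universal property of $\omega^0$.

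\textbf{Invertibility over the $(0,1)$-piece.} This is where the real content lies, and your proposed toolkit (Proposition~\ref{prop:direct-image-compl-sncd}, Lemmas~\ref{lemma:applic-of-comput-direct-im-sncd-I} and~\ref{lemma:applic-of-comput-direct-im-sncd-II}) is not sufficient. Over an object $(I_0,I_1)$ with $n\in I_1$, the source is $\un_{\mathcal{Y}(I_0,I_1)}$ while the target is $\omega^0$ of a push-forward along $\mathcal{Y}(I_0,I_1')\to\mathcal{Y}(I_0,I_1)$ of a pullback of $t_{i_s*}\un$. Showing this is an isomorphism requires two further ingredients you do not mention: Lemma~\ref{lemma:omega-0-restriction-direct-image-sncd}, which says $\un_Z\to\omega^0_Z(i^*j_*\un_{W_0})$ is invertible for complements of \emph{sncd}'s, and Proposition~\ref{prop:main-computation-omega0-pi-0}, which identifies $\omega^0$ of the cohomological motive of a smooth projective morphism with the motive of its Stein factorization (recall $\mathcal{Y}(I_0,I_1)$ is objectwise the Stein factorization of $\mathcal{Y}(I_0,I_1')$ over $X_n$). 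Lemmas~\ref{lemma:applic-of-comput-direct-im-sncd-I} and~\ref{lemma:applic-of-comput-direct-im-sncd-II} only identify the motives objectwise; they do not compute $\omega^0$.

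A smaller point: for the base-change step $p^*j_*\beta'_{X',\mathcal{S}'}\to j_*p'^*\beta'_{X',\mathcal{S}'}$ (your $(1,0)$ and $(0,0)$ pieces), the precise result needed is Corollary~\ref{cor:new-for-compar-beta-prime-et-theta-dp}, which handles the finite-cover situation arising from the $Z_{i_s}$ in \textbf{D3)}; Proposition~\ref{prop:direct-image-compl-sncd} alone is not quite enough.
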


\begin{proof}
Consider the object $(\mathcal{Y}^+,\mathcal{P}_2([\![1,n]\!])\times \underline{\mathbf{1}})$ of $\Dia(\Dia(\Sch/k))$ obtained from $(\mathcal{Y},\mathcal{P}_2([\![1,n]\!]))$. Thus, $(\mathcal{Y}^+)_{|\mathcal{P}_2([\![1,n]\!])\times \{1\}}$ is the constant diagram
$(\Spec(k),\mathcal{P}_2([\![1,n]\!]))$.

We define a motive
$\theta''^+_{X,\mathcal{S}}$ over the total diagram of schemes
associated to $\mathcal{Y}^+$ (which we still denote $\mathcal{Y}^+$) by induction on $n$ as follows.
When $n=0$, we take the unit motive. If $n\geq 1$, we consider the following diagram in $\Dia(\Dia(\Sch/k))$:
$$\xymatrix@C=1.7pc{\mathcal{Y}'^+ \ar[r]^-{j^+} & \mathcal{Y}^+\circ (\iota^0_n\times \id_{\underline{\mathbf{1}}}) & \mathcal{Y}^+\circ (o\times \id_{\underline{\mathbf{1}}}) \ar[l]_-{b^+} \ar[r]^-{o^+} &
\mathcal{Y}^+,}$$
which we view in $\Dia(\Sch/k)$ by passing to total diagrams.
We set
$$\theta''^+_{X,\mathcal{S}}=\omega^0_{\{(0,1)\}\times \underline{\mathbf{1}}|(\mathcal{Y}^+,\smallucarre \times \underline{\mathbf{1}})}\left((o^+)_*(b^+)^*(j^+)_*\theta''^+_{X',\mathcal{S}'}\right).$$
As usual, $\omega^0_{\{(0,1)\}\times \underline{\mathbf{1}}|(\mathcal{Y}^+,\smallucarre \times \underline{\mathbf{1}})}$ is really $\omega^0_{\mathcal{P}_2([\![1,n-1]\!])\times \{(0,1)\}\times \underline{\mathbf{1}}|(\mathcal{Y}^+,
\mathcal{P}_2([\![1,n-1]\!])\times \smallucarre \times \underline{\mathbf{1}})}$.
It is clear that $\theta''_{X,\mathcal{S}}\simeq (\mathcal{Y}\to \mathcal{Y}^+)^*\theta''^+_{X,\mathcal{S}}$. Thus, it is sufficient to construct a canonical isomorphism of commutative unitary algebras $(p^+,\varsigma_n \times \id_{\underline{\mathbf{1}}})^*\beta'^+_{X,\mathcal{S}}\simeq \theta''^+_{X,\mathcal{S}}$, where $(p^+,\varsigma_n \times \id_{\underline{\mathbf{1}}}):\mathcal{Y}^+ \to \mathcal{T}^+$ is the morphism deduced from $(p,\varsigma_n)$.

We argue by induction on $n$. When $n=0$, the claim is clear as both motives $\beta'^+_{X,\mathcal{S}}$ and $\theta''^+_{X,\mathcal{S}}$ are unit motives. We assume that $n\geq 0$ and that the isomorphism
$(p'^+,\varsigma_{n-1}\times \id_{\underline{\mathbf{1}}} )^*\beta'^+_{X',\mathcal{S}'} \simeq \theta''^+_{X',\mathcal{S}'}$
has been constructed. We split the proof into parts.

To simplify notations, we will write $p$, $p'$, $p^+$, and
$p'^+$ instead of $(p,\varsigma_n)$, $(p',\varsigma_{n-1})$,
$(p^+,\varsigma_n \times \id_{\underline{\mathbf{1}}})$ and
$(p'^+,\varsigma_{n-1}\times \id_{\underline{\mathbf{1}}})$.

\smallskip

\noindent \underbar{Part A}: Here we construct a canonical
morphism $(p^+)^* \beta'^+_{X,\mathcal{S}} \to
\theta''^+_{X,\mathcal{S}}$ of commutative unitary algebras. There
is a commutative diagram in $\Dia(\Dia(\Sch/k))$:
$$\xymatrix@C=1.5pc@R=1.5pc{\mathcal{Y}'^+ \ar[r]^-{j^+} \ar[d]^-{p'^+} & \mathcal{Y}^+\circ (\iota^0_n\times \id_{\underline{\mathbf{1}}}) \ar[d]^-{p^+} & \mathcal{Y}^+\circ (o\times \id_{\underline{\mathbf{1}}}) \ar[l]_-{b^+} \ar[r]^-{o^+} \ar[d]^-{p^+} &
\mathcal{Y}^+ \ar[d]^-{p^+} \\
\mathcal{T}'^+ \ar[r]^-{j^+} & \mathcal{T}^+\circ (\iota_n\times \id_{\underline{\mathbf{1}}}) & \mathcal{T}^+\circ (o\times \id_{\underline{\mathbf{1}}}) \ar[l]_-{b^+} \ar[r]^-{o^+} & \mathcal{T}^+,}$$
which we may view in $\Dia(\Sch/k)$ by passing to total diagrams.
We deduce from this natural transformations
$$\xymatrix@C=1.5pc@R=1.5pc{(p^+)^*(o^+)_*(b^+)^* (j^+)_*
\ar[r] & (o^+)_*(p^+)^* (b^+)^* (j^+)_* \ar[d]^-{\sim} \\
& (o^+)_* (b^+)^* (p^+)^* (j^+)_* \ar[r] & (o^+)_* (b^+)^* (j^+)_* (p'^+)^*.\!}$$

On the other hand, we have a $2$-morphism of diagrams of schemes
$\id_{\mathcal{T}^+} \to e_n$ which on the indexing categories is the identity except on $(\{n\},0)$ where it is given by
$(\{n\},0)\to (\{n\},1)$. This gives a natural transformation
$e_n^* \to \id\simeq (\id_{\mathcal{T}_+})^*$. We now consider the
morphism
$$\xi^+:(p^+)^*\beta'^+_{X,\mathcal{S}} \to
(o^+)_*(b^+)^*(j^+)_*\theta''^+_{X',\mathcal{S}'}$$
given by the composition
$${\small \xymatrix@C=1pc @R=1.2pc{(p^+)^* e_n^* (o^+)_*(b^+)^*(j^+)_*\beta'^+_{X',\mathcal{S}'} \ar[r] &
(p^+)^* (o^+)_*(b^+)^*(j^+)_*\beta'^+_{X',\mathcal{S}'}\ar[d] &\\
& (o^+)_*(b^+)^*(j^+)_*(p'^+)^*\beta'^+_{X',\mathcal{S}'}
\ar[r]^-{\sim} &
(o^+)_*(b^+)^*(j^+)_*\theta''^+_{X',\mathcal{S}'}.}}$$
As $\beta'^+_{X,\mathcal{S}}$ is the unit motive over
$\{n\}\times \underline{\mathbf{1}}\subset \mathcal{P}^*([\![0,n]\!])^{\rm op}\times \underline{\mathbf{1}}$,
the natural morphism
$$\xymatrix@C=1.5pc{\omega^0_{\{(0,1)\}\times \underline{\mathbf{1}}|(\mathcal{Y}^+,\smallucarre\times \underline{\mathbf{1}})} (p^+)^*\beta'^+_{X,\mathcal{S}} \ar[r] &
(p^+)^*\beta'^+_{X,\mathcal{S}}}$$
is invertible. Hence, there exists a unique morphism
$(p^+)^* \beta'^+_{X,\mathcal{S}} \to \theta''^+_{X,\mathcal{S}}$
making the following triangle
$$\xymatrix@C=1.5pc@R=1.5pc{ \omega^0_{\{(0,1)\}\times \underline{\mathbf{1}}|(\mathcal{Y}^+, \smallucarre\times \underline{\mathbf{1}})} \left( (p^+)^*\beta'^+_{X,\mathcal{S}} \right) \ar[r]^-{\sim} \ar[d]   &
(p^+)^*\beta'^+_{X,\mathcal{S}} \ar@/^/[dl]\\
\omega^0_{\{(0,1)\}\times \underline{\mathbf{1}}|(\mathcal{Y}^+,
\smallucarre\times \underline{\mathbf{1}})} \left((o^+)_*(b^+)^*(j^+)_*\theta''^+_{X',\mathcal{S}'}  \right) & }$$
commutative.
Thus, to end the proof, it remains to check that
$\omega^0_{\{(0,1)\}\times \underline{\mathbf{1}}|(\mathcal{Y}^+,
\smallucarre\times \underline{\mathbf{1}})}(\xi^+)$
is invertible.
This will be done in the next three steps.

\smallskip

\noindent \underbar{Part B}: Here we remark that
$\xi^+_{|\mathcal{P}_2([\![1,n]\!])\times \{1\}}$ is invertible.
We have seen that the restriction of $(\beta'^+_{X,\mathcal{S}})$
to $\mathcal{P}^*([\![0,n]\!])^{\rm op}\times \{1\}$ was
canonically isomorphic to the unit motive. It follows that
$((p^+)^*\beta'^+_{X,\mathcal{S}})_{|\mathcal{P}_2([\![1,n]\!])\times
\{1\}}\simeq \un_{(\Spec(k),\mathcal{P}_2([\![1,n]\!])}$.

Similarly, the restriction of $\theta''^+_{X,\mathcal{S}}$ to
$\mathcal{P}_2([\![1,n]\!])\times \{1\}$ is the unit motive. As in the case of $\beta^+_{X,\mathcal{S}}$, we prove this by induction on $n$.
We are then reduced to showing that
$\un \simeq {\rm holim}_{\smallucarre}\un$ which is obviously true.

We leave it to the reader to check that
$\xi^+_{|\mathcal{P}_2([\![1,n]\!])\times \{1\}}$ is the identity of the unit of $\DM(\Spec(k),\mathcal{P}_2([\![1,n]\!]))$ modulo the above isomorphisms.
Denote $\xi:p^*\beta'_{X,\mathcal{S}} \to \theta''_{X,\mathcal{S}}$ the restriction of $\xi^+$ along the inclusion
$\mathcal{Y} \to \mathcal{Y}^+$. It remains to show that $\omega^0_{\{(0,1)\}|(\mathcal{Y},\smallucarre)}(\xi)$ is invertible.

\smallskip

\noindent \underbar{Part C}: Here we show that $\xi$ is invertible
after restricting to the sub-digram $(\mathcal{Y}\circ o,
\mathcal{P}_2([\![1,n-1]\!]) \times \underline{\mathbf{1}}) \hookrightarrow
(\mathcal{Y},\mathcal{P}_2([\![1,n]\!]))$. The restrictions of the
motives $p^* \beta'_{X,\mathcal{S}}$ and
$o_*b^*j_*\theta''_{X',\mathcal{S}'}$ to this sub-diagram are
given by $p^* b^* j_*\beta'_{X',\mathcal{S}'}$ and $b^*j_*
\theta''_{X',\mathcal{S}'}$ respectively. Moreover, our morphism
is given by the composition
$$\xymatrix@C=1.7pc{p^* b^* j_*\beta'_{X',\mathcal{S}'} \ar[r]^-{\sim} &
b^*p^* j_*\beta'_{X',\mathcal{S}'}
\ar[r] &
b^* j_* p'^*  \beta'_{X',\mathcal{S}'} \ar[r]^-{\sim} & b^* j_* \theta''_{X',\mathcal{S}'}.}$$
Thus, it suffices to show that the base change morphism
$p^*j_* \beta'_{X',\mathcal{S}'}
\to j_* p'^* \beta'_{X',\mathcal{S}'}$
is invertible. As usual, it suffices to check this over each constituent of $\mathcal{Y}\circ \iota_n^0$. Thus, fix $(I_0,I_1)\in \mathcal{P}_2([\![1,n-1]\!])$ and let $I_0'=I_0\bigsqcup \{n\}$, $J=[\![0,n-1]\!]- I_0=[\![0,n]\!]- I_0'$,
$\{0\}\bigsqcup I_1=\{i_0<\dots<i_s\}$ and $K=J\cap [\![i_s,n-1]\!]=J\cap [\![i_s,n]\!]$.
We need to show, for $(\alpha_j)_{0\leq j \leq s}\in \mathcal{C}(I_0\bigsqcup \{n\},I_1)$, that
the base change morphism $p^*j_* (K,\alpha_s)^*\beta'_{X',\mathcal{S}'}
\to j_*p'^* (K,\alpha_s)^*\beta'_{X',\mathcal{S}'}$ associated to
the cartesian square
$$\xymatrix@C=1.5pc@R=1.5pc{\mathcal{Y'}(I_0,I_1,(\alpha_j)_j) \ar[r]^-{p'} \ar[d]_-j & \mathcal{T}'(K,\alpha_s) \ar[d]^-j \\
\mathcal{Y}(I_0',I_1,(\alpha_j)_j) \ar[r]^-p & \mathcal{T}(K,\alpha_s)}$$
is invertible.

By Lemma \ref{lemma:applic-of-comput-direct-im-sncd-II},
$(K,\alpha_s)^*\beta'_{X',\mathcal{S}'}$ is canonically isomorphic to
$s'^*_{K,\alpha_s} t'_{i_s*}\un_{e_{i_s}^{-1}(X'_{i_s})}$
where $t'_{i_s*}:e_{i_s}^{-1}(X'_{i_s}) \hookrightarrow Y'_{i_s}$ and
$s'_{K,\alpha_s}:\mathcal{T}'(K,\alpha_s) \hookrightarrow Y_{i_s}$
are the inclusions. Using
Proposition \ref{prop:direct-image-compl-sncd}
applied on $Y_{i_s}$, one gets that $j_*(K,\alpha_s)^*\beta'_{X',\mathcal{S}'} \simeq s_{K,\alpha_s}^* t_{i_s*} \un_{e_{i_s}^{-1}(X_{i_s})}$.
Now, the scheme
$$P=\mathcal{Y}(I_0\bigsqcup [\![i_s+1,n]\!],I_1, ((\alpha_j)_{0\leq j\leq s-1},{\rm s}_{\{i_s\}\subset K}(\alpha_s)))$$
is a finite cover of $Y_{i_s}$ such that each of its connected component is dominated by a connected component of the cover $Z_{i_s}$ of \textbf{D3)}.
Moreover, $\mathcal{Y}(I_0',I_1,(A_j)_j)=P\times_{Y_{i_s}} \mathcal{T}(K,A_s)$. Our claim follows now from Corollary
\ref{cor:new-for-compar-beta-prime-et-theta-dp}.

\smallskip

\noindent \underbar{Part D}: Here we describe the morphism $\xi$
over a sub-diagram $\mathcal{Y}(I_0,I_1)$ with $(I_0,I_1)\in
\mathcal{P}_2([\![1,n]\!])$ such that $n\in I_1$, and show that it
is a universal morphism from an Artin motive to a
cohomological motive.

Let $I_1'=I_1- \{n\}$ and $J=[\![0,n]\!]- I_0$, and order
$\{0\}\bigsqcup I_1'=\{i_0<\dots < i_s\}$.
Also, let $K=[\![i_s,n]\!]\cap J$.
With these notations, we have a commutative diagram
$$\xymatrix@C=1.5pc@R=1.5pc{ p^*(o_*b^*j_*\beta'_{X',\mathcal{S}'})_{|\mathcal{T}(\{n\})} \ar[r] \ar[d]_-{\sim} & p^*\{\mathcal{T}(K) \to \mathcal{T}(\{n\})\}_* (o_*b^*j_*\beta'_{X',\mathcal{S}'})_{|\mathcal{T}(K)} \ar[d] \\
(p^*o_*b^*j_*\beta'_{X',\mathcal{S}'})_{|\mathcal{Y}(I_0,I_1)} \ar[r] \ar[d]_-{\xi_{|\mathcal{Y}(I_0,I_1)}} & \{\mathcal{Y}(I_0,I_1') \to \mathcal{Y}(I_0,I_1)\}_*
(p^*o_*b^*j_*\beta'_{X',\mathcal{S}'})_{|\mathcal{Y}(I_0,I_1')}\ar[d]_-{\sim}^-{\xi_{|\mathcal{Y}(I_0,I_1')}} \\
(o_*b^*j_*\theta''_{X',\mathcal{S}'})_{|\mathcal{Y}(I_0,I_1)}\ar[r]^-{\sim}
& \{\mathcal{Y}(I_0,I_1') \to \mathcal{Y}(I_0,I_1)\}_* (o_*b^* j_*
\theta''_{X',\mathcal{S}'})_{|\mathcal{Y}(I_0,I_1')}.}$$
That the
bottom horizontal arrow is invertible, is an easy consequence of
Axiom \textbf{DerAlg} $\mathbf{4'}$ of
\cite[Rem.~2.3.14]{ayoub-these-I}. That the first vertical arrow
on the left is invertible, is obvious. That the second vertical
arrow on the right is invertible follows from the Part C of the
proof.

On the other hand, we know that $(o_*b^*j_*\beta'_{X',\mathcal{S}'})_{|\mathcal{T}(n)}\simeq \un_{\mathcal{T}(\{n\})}$. Also, by
Lemma \ref{lemma:applic-of-comput-direct-im-sncd-I} and Proposition
\ref{prop:comparison-beta-et-beta-prime}, we have
$$(o_*b^*j_*\beta'_{X',\mathcal{S}'})_{|\mathcal{T}(K)}
=\{\mathcal{T}(K)\to Y_{i_s}\}^*(t_{i_s*} \un_{e_{i_s}^{-1}(X_{i_s})}).$$
It follows that $\xi_{|\mathcal{Y}(I_0,I_1)}$ is isomorphic to the natural morphism
$$\zeta:\xymatrix@C=1.7pc{\un_{\mathcal{Y}(I_0,I_1)} \ar[r] & }\! \{\mathcal{Y}(I_0,I_1') \to \mathcal{Y}(I_0,I_1)\}_* p^* \{\mathcal{T}(K)\to Y_{{\rm min}(K)}\}^*(t_{i_s*} \un_{e_{i_s}^{-1}(X_{i_s})}).$$
To finish the proof of the proposition, we need to show that
$\omega^0_{\mathcal{Y}(I_0,I_1)}(\zeta)$ is invertible.
By Proposition \ref{prop:additional-prop-omega-0-x}, (iii), the natural transformation
$$\omega^0_{\mathcal{Y}(I_0,I_1)}
\{\mathcal{Y}(I_0,I_1') \to \mathcal{Y}(I_0,I_1)\}_*
\omega^0_{\mathcal{Y}(I_0,I_1')} \!\xymatrix@C=1.7pc{\ar[r] &}\!
\omega^0_{\mathcal{Y}(I_0,I_1)}
\{\mathcal{Y}(I_0,I_1') \to \mathcal{Y}(I_0,I_1)\}_*$$
is invertible. Moreover,
using Lemma
\ref{lemma:omega-0-restriction-direct-image-sncd} below,
we see that the natural morphism
$$\un_{\mathcal{Y}(I_0,I_1')} \!\xymatrix@C=1.7pc{\ar[r] &}\!  \omega^0_{\mathcal{Y}(I_0,I_1')} p^* \{\mathcal{T}(K) \to Y_{{\rm min}(K)}\}^*(t_{i_s*} \un_{e^{-1}_{i_s}(X_{i_s})})$$
is invertible. Hence, we are left to check that
$$\un_{\mathcal{Y}(I_0,I_1)} \!\xymatrix@C=1.5pc{\ar[r] & }\!  \omega^0_{\mathcal{Y}(I_0,I_1)}
\{\mathcal{Y}(I_0,I_1') \to \mathcal{Y}(I_0,I_1)\}_*
\un_{\mathcal{Y}(I_0,I_1')}$$
is invertible. This follows from
Proposition
\ref{prop:main-computation-omega0-pi-0} as $\mathcal{Y}(I_0,I_1)$ is objectwise the Stein factorization of the
$X_n$-scheme $\mathcal{Y}(I_0,I_1')$ which is smooth and projective.
Indeed, by \textbf{D3)}, the latter admits a finite \'etale cover by a smooth and projective $X_n$-scheme.
\end{proof}

\begin{lemma}
\label{lemma:omega-0-restriction-direct-image-sncd}
Let $W$ be a quasi-projective $k$-scheme having only quotient singularities, and
$j:W_0 \hookrightarrow W$ the inclusion of the complement of a sncd in $W$. Let $i:Z \to W$ be any morphism from a quasi-projective $k$-scheme. Then, the natural morphism
$\un_Z \to \omega^0_{Z}(i^*j_*\un_{W_0})$
is invertible.
\end{lemma}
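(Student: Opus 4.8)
The plan is to reduce to the case where $W$ is smooth and $i$ is a closed immersion, then invoke Proposition \ref{prop:direct-image-compl-sncd}. First I would dispose of the quotient singularities using Lemma \ref{lem:Phi-invariant-EE-X}'s technique: \'etale-locally on $W$ we may write $W = P/G$ with $P$ smooth and $G$ a finite group fixing each component of a \emph{sncd} $F$ with $F/G = W \setminus W_0$; since the question is \'etale-local on $W$ (hence on $Z$ after base change) by the locality axiom \cite[Cor.~4.5.47]{ayoub-these-II}, and since $\un_{W_0}$ is a retract of the direct image $e_*\un_{e^{-1}(W_0)}$ along the finite surjection $e : P/1 \to W$ via the averaging projector $\frac{1}{|G|}\sum_g g$ (as in the proof of Proposition \ref{prop:direct-image-compl-sncd}, Step 1), I reduce to $W = P$ smooth and $F$ a genuine \emph{sncd}.

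Next, with $W$ smooth, I would factor the morphism $i : Z \to W$. The key observation is that $\omega^0_Z(i^*j_*\un_{W_0})$ only sees the restriction $i^*j_*\un_{W_0} \in \DM(Z)$, which depends only on the stratification of $W$ by the intersections of the components of $F$ pulled back to $Z$. So I would first treat the case where $i$ is a locally closed immersion, and then the general case. For a locally closed immersion $i : Z \hookrightarrow W$, Proposition \ref{prop:direct-image-compl-sncd} applies essentially directly: taking in that proposition $T = \emptyset$ (so $Z^0 = Z$) and using that $Z$ is automatically constructible for the stratification induced by \emph{all} the components of $F$, we learn that $i^*j_*\un_{W_0}$ is computed correctly and, in fact, the proof of that proposition — via the purity triangles and the splitting $e_0^*d_0^*j_*\un_U \simeq \un \oplus \un(-1)[-1]$ — shows that $i^*j_*\un_{W_0}$ lies in the triangulated subcategory of $\DM(Z)$ generated by Tate twists $\un_{Z'}(r)[r]$ of (closed subschemes of) $Z$ coming from the strata; applying $\omega^0_Z$ and using that $\omega^0_Z$ kills all strictly positive Tate twists (Proposition \ref{prop:main-computation-omega0-pi-0}, as in the proof of Corollary \ref{cor:of-main-comp-omega0-pi-0} and Corollary \ref{defn-cor:E-of-X}) collapses everything onto the summand $\un_Z$ sitting over the open stratum $Z \cap W_0$ — but one must check the dense-open-stratum contribution gives exactly $\un_Z$ and the counit $\un_Z \to \omega^0_Z(i^*j_*\un_{W_0})$ realizes this identification.

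For the general case of an arbitrary morphism $i : Z \to W$, I would factor $i$ through its graph $Z \hookrightarrow Z \times_k W \to W$; the first map is a closed immersion and the second is smooth (a base change of $W \to \Spec k$), so $i^*j_*\un_{W_0} \simeq (\text{closed imm.})^* (\text{smooth})^* j_*\un_{W_0}$, and by the smooth base change theorem \cite[Prop.~4.5.48]{ayoub-these-II} the smooth pullback of $j_*\un_{W_0}$ is again the direct image from the complement of the pulled-back \emph{sncd} $F \times_k Z$ in $Z \times_k W$ — which is smooth — so we are back in the closed-immersion case over a smooth base. The main obstacle I anticipate is the bookkeeping in the reduction steps: ensuring that the \emph{sncd} hypothesis is preserved under each \'etale localization and base change (this is routine from Definition \ref{defn:sncd-in-quotient-sing} and the remarks following it), and more substantively, verifying that after applying $\omega^0_Z$ the surviving piece is canonically the unit $\un_Z$ with the correct structure map — this requires carefully tracking the counit $\delta_Z$ through the purity triangles of Proposition \ref{prop:direct-image-compl-sncd}, exactly the kind of argument already carried out in Corollary \ref{defn-cor:E-of-X}, so I would model this step on that proof.
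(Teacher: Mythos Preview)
Your core idea is right---the cone of $\un_W \to j_*\un_{W_0}$ is built from Tate-twisted objects that $\omega^0$ annihilates---but the execution has two concrete gaps.

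First, in your locally closed immersion case, you claim $Z$ is ``automatically constructible for the stratification induced by all the components of $F$'', but $Z$ is an arbitrary locally closed subscheme of $W$ and there is no reason for this. Proposition~\ref{prop:direct-image-compl-sncd} genuinely requires the constructibility hypothesis (i) on $Z$; without it the proposition does not apply, and the purity splittings $e_0^*d_0^*j_*\un \simeq \un \oplus \un(-1)[-1]$ you want to invoke live over the strata $D_J$, not over an arbitrary $Z$.

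Second, in your graph factorization $Z \hookrightarrow Z \times_k W \xrightarrow{pr_2} W$, the projection $pr_2$ is the base change of $Z \to \Spec(k)$ along $W \to \Spec(k)$, not of $W \to \Spec(k)$; it is smooth only when $Z$ is smooth over $k$. So smooth base change for $j_*$ along $pr_2$ is not available in general, and the reduction to a closed immersion into a smooth ambient scheme breaks down.

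The paper sidesteps both problems by staying on $W$ until the last moment. After the reduction to $W$ smooth (essentially as you describe), it sets $C = Cone(\un_W \to j_*\un_{W_0})$ and uses the localization triangle \cite[Prop.~1.4.9]{ayoub-these-I} together with purity \cite[Th.~1.6.19]{ayoub-these-I} to place $C$ in the triangulated subcategory of $\DM(W)$ generated by the objects $s_{I*}\Th^{-1}(\mathcal{N}_I)\un_{D_I}$ for $\emptyset \neq I$. Only then does it apply $i^*$: since base change for the closed immersions $s_I$ gives $i^* s_{I*} \simeq t_{I*} (i^{-1}(D_I) \to D_I)^*$, the pullback $i^*C$ lies in the subcategory of $\DM(Z)$ generated by $t_{I*}\Th^{-1}(t_I^*\mathcal{N}_I)\un_{i^{-1}(D_I)}$, where $t_I: i^{-1}(D_I) \hookrightarrow Z$. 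Each of these is killed by $\omega^0_Z$ (Proposition~\ref{prop:additional-prop-omega-0-x}(iii,c) plus the fact that $\Th^{-1}$ involves a strictly negative Tate twist), so $\omega^0_Z(i^*C) = 0$, i.e., $\un_Z \overset{\sim}{\to} \omega^0_Z(i^*j_*\un_{W_0})$. No hypothesis on $i$ beyond being a morphism of schemes is ever used.
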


\begin{proof}
We may assume that $W=W'/G$ where $W'$ is a smooth $k$-scheme and
$G$ is a finite group acting on $W$. We can also assume that the inverse image of any irreducible component of $W-W_0$ is a smooth divisor in $W'$. Denote $e:W' \to W$ be the quotient map and
$j':W'_0=e^{-1}(W_0)\hookrightarrow W'$ the inclusion. Then $e_*j'_*\un_{W'_0}$ admits an action of $G$ and $j_*\un_{W_0}$ is the image of the projector $\frac{1}{|G|}\sum_{g\in G}g$
(cf.~\cite[Lem.~2.1.165]{ayoub-these-I}). Thus, it suffices to show that $i^*e_*\un \to \omega^0_Z(i^*e_*j'_*\un)$ is an isomorphism. Using base-change for finite
morphisms (cf.\cite[Cor.~1.7.18]{ayoub-these-I}) and Proposition \ref{prop:additional-prop-omega-0-x}, (iii,\,c), we reduce to prove the lemma for $W'$, $W'_0$ and $Z'=Z\times_W W'$. In other words, we may assume that $W$ is smooth.

Denote $D_1, \dots, D_r$ the irreducible components of the divisor
$W- W_0$. For $\emptyset \neq I\subset [\![1,r]\!]$,
let $D_I=\bigcap_{i\in I} D_i$. Denote $s_I:D_I \hookrightarrow
W$ the inclusion and $\mathcal{N}_I$ the normal sheaf to $s_I$.
Let $C=Cone(\un_{W} \to j_*\un_{W_0})$. It suffices to show that
$\omega^0_Z(i^*C)=0$. We know, using \cite[Prop.~1.4.9 and Th.~1.6.19]{ayoub-these-I}, that $C$ is in the triangulated subcategory
of $\DM(W)$ generated by
$s_{I*}{\rm Th}^{-1}(\mathcal{N}_I)\un_{D_I}$
for $\emptyset \neq I \subset [\![1,r]\!]$. Denote $t_I:i^{-1}(D_I)
\hookrightarrow Z$ the inclusion. Then
$i^*C$ is in the triangulated subcategory of $\DM(Z)$ generated by
$t_{I*}{\rm Th}^{-1}(t_I^*\mathcal{N}_I)\un_{i^{-1}(D_I)}$ for
$\emptyset \neq I \subset [\![1,r]\!]$. The lemma follows as
$\omega^0_Z(t_{I*}{\rm Th}^{-1}(t_I^*\mathcal{N}_I)\un_{i^{-1}(D_I)})=0$.
\end{proof}

As before, let
$(h,\varsigma_n):(\mathcal{Y},\mathcal{P}_2([\![1,n]\!])) \to
(T,\mathcal{P}^*([\![0,n]\!])^{\rm op})$ be the natural
projection. From Propositions
\ref{prop:comparison-beta-et-beta-prime} and
\ref{prop:comparison-theta-double-et-beta-prime} there exists a
canonical isomorphism of commutative unitary algebras
$$(h,\varsigma_n)^* \beta_{X,\mathcal{S}} \overset{\sim}{\to}
\theta''_{X,\mathcal{S}}.$$

Recall that we have is a commutative square in $\Dia(\Dia(\Sch/k))$
$$\xymatrix@C=1.5pc@R=1.5pc{\check{\mathcal{Y}} \ar[r]^-l \ar[d]_-{(\check{h},\varsigma_n)} &
\mathcal{Y} \ar[d]^-{(h,\varsigma_n)}\\
\check{T} \ar[r]^-l & T }$$
which we view in $\Dia(\Sch/k)$ by passing to total diagrams.
The proof of the following proposition is omitted:

\begin{proposition}
\label{prop:fur-functoriality-construction-beta}
There is a morphism of motives $l^*\beta_{X,\mathcal{S}}\to\beta_{\check{X},
\check{\mathcal{S}}}$ which is
invertible when $f:\check{X} \to X$ is smooth and $\check{Y}_i=\check{X}
\times_X Y_i$ for $i\in [\![0,n]\!]$. Moreover, the following diagram of
$\DM(\check{\mathcal{Y}})$:
$$\xymatrix@C=1.5pc@R=1.5pc{l^* (h,\varsigma_n)^* \beta_{X,\mathcal{S}} \ar[r]^-{\sim} \ar[d]_-{\sim}
 & (\check{h},\varsigma_n)^* l^* \beta_{X,\mathcal{S}} \ar[r] & (\check{h},
\varsigma_n)^*\beta_{\check{X},\check{\mathcal{S}}} \ar[d]^-{\sim} \\
l^*\theta''_{X,\mathcal{S}} \ar[rr] & & \theta''_{\check{X},\check{
\mathcal{S}}} }
$$
commutes;
the arrow in the bottom being the morphism of
\emph{Proposition
\ref{prop:fur-functoriality-construction-theta-double-prime}}.
\end{proposition}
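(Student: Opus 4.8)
The plan is to imitate, essentially line for line, the inductive arguments used for Propositions \ref{prop:functorialite-motif-theta-prime} and \ref{prop:fur-functoriality-construction-theta-double-prime}, proceeding by induction on $n$ and going through the auxiliary motive $\beta^+_{X,\mathcal{S}}$. For $n=0$ both $\beta_{X,\mathcal{S}}$ and $\beta_{\check X,\check{\mathcal{S}}}$ (as well as their $+$-versions) are unit motives, the morphism is the identity, and the comparison square reduces to a tautology. For $n\geq 1$ one first constructs a morphism $l^*\beta^+_{X,\mathcal{S}}\to\beta^+_{\check X,\check{\mathcal{S}}}$ of commutative unitary algebras in $\DM(\check T^+,\mathcal{P}^*([\![0,n]\!])^{\rm op}\times\underline{\mathbf{1}})$, and then obtains the asserted morphism $l^*\beta_{X,\mathcal{S}}\to\beta_{\check X,\check{\mathcal{S}}}$ by applying $(\check T\to\check T^+)^*$ and using $(\check T\to\check T^+)\circ l=l\circ(T\to T^+)$.

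To build the morphism at level $n$, I would place the defining diagram \eqref{new-diagram-for-defining-beta-x-s-2} for $\check X$ above the one for $X$ and join them by the appropriate instances of $l$ (compatibility of $l$ with $j^+$, $b^+$ and $o^+$ comes from Lemma \ref{lemma:functorialite-de-T}; compatibility with $e_n$ is straightforward, exactly as in the proof of Proposition \ref{prop:comparison-beta-et-beta-prime}, since $e_n$ only reindexes and collapses $T(\{n\})=X_n$ to $\Spec(k)$). Passing to total diagrams, this furnishes the canonical natural transformations
$$l^*e_n^*\simeq e_n^*l^*,\qquad l^*(o^+)_*\to(o^+)_*l^*,\qquad l^*(b^+)^*\simeq(b^+)^*l^*,\qquad l^*(j^+)_*\to(j^+)_*l'^*,$$
and the desired morphism $l^*\beta^+_{X,\mathcal{S}}\to\beta^+_{\check X,\check{\mathcal{S}}}$ is the composite of these applied to $\beta^+_{X',\mathcal{S}'}$ together with the inductive morphism $l'^*\beta^+_{X',\mathcal{S}'}\to\beta^+_{\check X',\check{\mathcal{S}}'}$.

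For the invertibility statement, I would first check, under the hypotheses that $f\colon\check X\to X$ is smooth and $\check Y_i=\check X\times_X Y_i$ for all $i$, that $\check T(I)=\check X\times_X T(I)$ for every $\emptyset\neq I\subset[\![0,n]\!]$. This follows by induction on $I$ from the recursive formula \eqref{rec-form-for-T}, using that a smooth (hence flat and universally open) morphism $l$ satisfies $l^{-1}(\overline Z)=\overline{l^{-1}(Z)}$, just as in the proof of Proposition \ref{prop:cube-strat-F-omega-j-star}. Consequently all the squares entering the diagram of the previous paragraph are cartesian, so the base change theorem by smooth morphisms \cite[Prop.~4.5.48]{ayoub-these-II} makes $l^*(o^+)_*\to(o^+)_*l^*$ and $l^*(j^+)_*\to(j^+)_*l'^*$ invertible; combined with the inductive invertibility of $l'^*\beta^+_{X',\mathcal{S}'}\to\beta^+_{\check X',\check{\mathcal{S}}'}$, this yields the claim.

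Finally, the commutativity of the displayed square relating $l^*\beta_{X,\mathcal{S}}\to\beta_{\check X,\check{\mathcal{S}}}$, the isomorphism $(h,\varsigma_n)^*\beta_{X,\mathcal{S}}\simeq\theta''_{X,\mathcal{S}}$ (assembled from Propositions \ref{prop:comparison-beta-et-beta-prime} and \ref{prop:comparison-theta-double-et-beta-prime}) and the morphism $l^*\theta''_{X,\mathcal{S}}\to\theta''_{\check X,\check{\mathcal{S}}}$ of Proposition \ref{prop:fur-functoriality-construction-theta-double-prime} is a diagram chase performed inductively, exactly as in the analogous verifications of Propositions \ref{prop:functorialite-motif-theta-prime} and \ref{prop:fur-functoriality-construction-theta-double-prime}; at each stage it reduces to compatibility of the various base change natural transformations with composition, plus the inductive hypothesis. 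The only point that is more than formal bookkeeping — and hence the main obstacle — is the identification $\check T(I)=\check X\times_X T(I)$ in the smooth case, i.e. the compatibility of the formation of Zariski closures with smooth base change on which the whole invertibility assertion rests; everything else is a matter of organizing large but routine $2$-categorical diagrams of diagrams of schemes.
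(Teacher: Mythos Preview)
The paper explicitly omits the proof of this proposition (it writes ``The proof of the following proposition is omitted'' immediately before the statement), so there is no argument to compare against. Your plan is exactly the natural one the authors had in mind: induction on $n$ through the auxiliary object $\beta^+_{X,\mathcal{S}}$, using the commutative ladder built from \eqref{new-diagram-for-defining-beta-x-s-2} to produce the base-change transformations, invoking smooth base change \cite[Prop.~4.5.48]{ayoub-these-II} for invertibility once the relevant squares are cartesian, and an inductive diagram chase for the compatibility with the isomorphisms of Propositions \ref{prop:comparison-beta-et-beta-prime} and \ref{prop:comparison-theta-double-et-beta-prime}. This is precisely the template of Propositions \ref{prop:functorialite-motif-theta-prime} and \ref{prop:fur-functoriality-construction-theta-double-prime}, which is why the authors felt free to omit it.
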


\subsubsection{Conclusion}
\label{subsubsection:conclusion}
Let $\Upsilon:\mathcal{Y} \to X$ be the natural projection.
Putting together Propositions
\ref{prop:comparaison-theta-et-theta-prime},
\ref{prop:comparison-theta-prime-and-h-theta-double},
\ref{prop:comparison-beta-et-beta-prime} and
\ref{prop:comparison-theta-double-et-beta-prime}, we obtain the
canonical isomorphism $\theta_{X,\mathcal{S}}\simeq f_*h_*(h,\varsigma_n)^*
\beta_{X,\mathcal{S}}$. On the other hand,
$\Upsilon = p_{\mathcal{P}_2([\![1,n]\!])} \circ f\circ h$,
where $p_{\mathcal{P}_2([\![1,n]\!])}$ is the morphism of diagrams of schemes $(X,\mathcal{P}_2([\![1,n]\!])) \to (X,\textbf{e})$ induced by the projection of $\mathcal{P}_2([\![1,n]\!])$ to $\mathbf{e}$.
Moreover,
$(p_{\mathcal{P}_2([\![1,n]\!])})_*$ is
the homotopy limit
along $\mathcal{P}_2([\![1,n]\!])$.
Combining this with Corollary
\ref{cor:E=colim} gives:

\begin{theorem}
\label{thm:final-form-for-applic-main-thm}
With the above notation, we have:

\begin{enumerate}

\item[(a)]
There is a canonical isomorphism of commutative unitary
algebras
$$\EE_X \simeq \Upsilon_* (h,\varsigma_n)^* \beta_{X,\mathcal{S}}.$$

\item[(b)] There is a canonical morphism
$l^*\beta_{X,\mathcal{S}} \to
\beta_{\check{X},\check{\mathcal{S}}}$
of commutative unitary algebras
which is invertible when
$f:\check{X} \to X$ is smooth and $\check{Y}_i=\check{X}\times_X
Y_i$ for $i\in [\![0,n]\!]$. Moreover, the following diagram
commutes:
$$\xymatrix@C=1.2pc@R=1.5pc{l^*\EE_X \ar[d]_-{\sim} \ar[rrr] & & & \EE_{\check{X}} \ar[d]^-{\sim} \\
l^*\Upsilon_*(h,\varsigma_n)^* \beta_{X,\mathcal{S}} \ar[r] &
\check{\Upsilon}_* l^*(h,\varsigma_n)^* \beta_{X,\mathcal{S}}
\ar[r]^-{\sim} & \check{\Upsilon}_*(\check{h},\varsigma_n)^* l^*
\beta_{X,\mathcal{S}} \ar[r] & \check{\Upsilon}_*
(\check{h},\varsigma_n)^* \beta_{\check{X},\check{\mathcal{S}}}
}$$
\end{enumerate}

\end{theorem}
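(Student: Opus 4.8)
The plan is to assemble part (a) from the chain of isomorphisms already established in the preceding paragraphs. By Corollary \ref{cor:E=colim} (available once $(X_0)_{red}$ is smooth, which holds by \textbf{D1)}), $\EE_X\simeq \mathrm{holim}\,\theta_{X,\mathcal{S}}$, and as noted just before the statement, $\mathrm{holim}\,\theta_{X,\mathcal{S}}$ is computed as $(p_{\mathcal{P}_2([\![1,n]\!])})_*\theta_{X,\mathcal{S}}$, since $(p_{\mathcal{P}_2([\![1,n]\!])})_*$ is by definition the homotopy limit along $\mathcal{P}_2([\![1,n]\!])$. Now Proposition \ref{prop:comparaison-theta-et-theta-prime} gives $\theta_{X,\mathcal{S}}\simeq f_*\theta'_{X,\mathcal{S}}$, Proposition \ref{prop:comparison-theta-prime-and-h-theta-double} gives $\theta'_{X,\mathcal{S}}\simeq h_*\theta''_{X,\mathcal{S}}$, and Propositions \ref{prop:comparison-beta-et-beta-prime} together with \ref{prop:comparison-theta-double-et-beta-prime} give $\theta''_{X,\mathcal{S}}\simeq (h,\varsigma_n)^*\beta_{X,\mathcal{S}}$ (the composite isomorphism $(h,\varsigma_n)^*\beta_{X,\mathcal{S}}\overset{\sim}{\to}\theta''_{X,\mathcal{S}}$ is explicitly recorded right after Lemma \ref{lemma:omega-0-restriction-direct-image-sncd}). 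Splicing these together and using $\Upsilon=p_{\mathcal{P}_2([\![1,n]\!])}\circ f\circ h$ yields
$$\EE_X\simeq (p_{\mathcal{P}_2([\![1,n]\!])})_* f_* h_* (h,\varsigma_n)^*\beta_{X,\mathcal{S}}\simeq \Upsilon_*(h,\varsigma_n)^*\beta_{X,\mathcal{S}}.$$
Each of the four cited isomorphisms is an isomorphism of commutative unitary algebras, and $p_*$, $f_*$, $h_*$, $(h,\varsigma_n)^*$ are pseudo-monoidal and pseudo-unitary, so the composite respects the algebra structure; this gives (a).

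For part (b), the morphism $l^*\beta_{X,\mathcal{S}}\to\beta_{\check X,\check{\mathcal{S}}}$ is exactly the one produced in Proposition \ref{prop:fur-functoriality-construction-beta}, which is invertible under the stated smoothness hypothesis and $\check Y_i=\check X\times_X Y_i$. To obtain the morphism $l^*\EE_X\to\EE_{\check X}$ and the commutativity of the big square, I would combine Proposition \ref{prop:fur-functoriality-construction-beta} with the corresponding functoriality statements at the intermediate levels: Proposition \ref{prop:cube-strat-F-omega-j-star} (for $l^*\theta_{X,\mathcal{S}}\to\theta_{\check X,\check{\mathcal{S}}}$ and its compatibility with $\EE$), Proposition \ref{prop:functorialite-motif-theta-prime} (for $\theta'$), Proposition \ref{prop:fur-functoriality-construction-theta-double-prime} (for $\theta''$), and the final commuting diagram of Proposition \ref{prop:fur-functoriality-construction-beta} (comparing the $\beta$- and $\theta''$-level functoriality morphisms). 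Stacking the commuting squares from these four propositions on top of one another, and invoking the base change morphism $l^*\Upsilon_*\to\check\Upsilon_* l^*$ at the outer level, produces precisely the diagram in the statement. The only extra ingredient needed is the canonical identification $l^*\EE_X\simeq\EE_{\check X}$ of Corollary \ref{cor:functoriality-EE-X-010} (valid because $l$, being universally open with $\check X_{\geq i}=l^{-1}(X_{\geq i})$, sends every irreducible component of $\check X$ dominantly to one of $X$), together with the already-established $\EE\simeq\mathrm{holim}\,\theta$ identifications on both sides, which are part of the conclusions of the cited propositions.

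The main obstacle is bookkeeping, not a genuine mathematical difficulty: one must check that the four functoriality morphisms of Propositions \ref{prop:cube-strat-F-omega-j-star}, \ref{prop:functorialite-motif-theta-prime}, \ref{prop:fur-functoriality-construction-theta-double-prime} and \ref{prop:fur-functoriality-construction-beta} are mutually compatible under the three comparison isomorphisms $\theta\simeq f_*\theta'$, $\theta'\simeq h_*\theta''$, $\theta''\simeq(h,\varsigma_n)^*\beta$ — i.e.\ that the cubes one forms by pairing a comparison isomorphism with its two functoriality morphisms all commute. Each of those compatibilities is precisely the content of the second ("Moreover, the following diagram commutes") half of the corresponding Proposition, so the verification reduces to pasting commutative diagrams; the subtlety is to do the pasting in the right order and to keep track of the various base change morphisms (for $b^*h_*$, for $l^*\Upsilon_*$, and for the smooth-base-change cases) so that the outer square is literally the one displayed. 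Since every piece is already in hand, I would simply indicate the assembly and leave the (lengthy but routine) diagram chase to the reader, exactly as the paper does for Propositions \ref{prop:functorialite-motif-theta-prime} and \ref{prop:fur-functoriality-construction-theta-double-prime}.
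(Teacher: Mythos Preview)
Your proposal is correct and follows essentially the same approach as the paper: the paper's proof is the short paragraph immediately preceding the theorem, which assembles (a) from Corollary~\ref{cor:E=colim} and Propositions~\ref{prop:comparaison-theta-et-theta-prime}, \ref{prop:comparison-theta-prime-and-h-theta-double}, \ref{prop:comparison-beta-et-beta-prime}, \ref{prop:comparison-theta-double-et-beta-prime} via the factorization $\Upsilon=p_{\mathcal{P}_2([\![1,n]\!])}\circ f\circ h$, and leaves (b) implicit in the functoriality clauses of those same propositions together with Proposition~\ref{prop:fur-functoriality-construction-beta}. One small imprecision: in (b) the map $l^*\EE_X\to\EE_{\check X}$ from Corollary~\ref{cor:functoriality-EE-X-010} is a canonical \emph{morphism}, not an identification (it is only invertible when $l$ is smooth), so you should not write ``$\simeq$'' there.
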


Fix a complex embedding $k\subset \C$ and denote by $\beta_{X,\mathcal{S}}^{an}=\An^*(\beta_{X,\mathcal{S}})$ the Betti realization of the motive
$\beta_{X,\mathcal{S}}$.
This is an object of $\mathbf{D}(T(\C),\mathcal{P}^*([\![0,n]\!])^{\rm op})$. The following is a consequence of
Theorem \ref{thm:final-form-for-applic-main-thm}:

\begin{corollary}
\label{cor:realization-EE-X-gen}
There is a canonical isomorphism of commutative unitary algebras
$$\An^*(\EE_{X})\simeq {\rm R}\Upsilon^{an}_*(h^{an},\varsigma_n)^* \beta^{an}_{X,\mathcal{S}},$$
where ${\rm R}\Upsilon^{an}_*$ is the derived direct image of complexes of sheaves. Moreover, the diagram
$$\xymatrix@C=1.3pc@R=1.5pc{(l^{an})^* \An^*(\EE_X) \ar[r]^-{\sim} \ar[d] & (l^{an})^* {\rm R}\Upsilon^{an}_* (h^{an},\varsigma_n)^* \beta^{an}_{X,\mathcal{S}} \ar[r] & {\rm R}\check{\Upsilon}^{an}_* (\check{h}^{an},\varsigma_n)_* l^* \beta^{an}_{X,\mathcal{S}} \ar[d] \\
\An^*(\EE_{\check{X}}) \ar[rr]^-{\sim} & & {\rm R}\check{\Upsilon}^{an}_* (\check{h}^{an},\varsigma_n)_* \beta^{an}_{\check{X},\check{\mathcal{S}}}}$$
is commutative.
\end{corollary}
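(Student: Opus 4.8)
The plan is to deduce Corollary \ref{cor:realization-EE-X-gen} from Theorem \ref{thm:final-form-for-applic-main-thm} by applying the Betti realization functor, using only the compatibility of the realization with $f^*$ and $f_*$ recorded in \S\ref{The Betti realization}. First I would observe that every motive occurring in Theorem \ref{thm:final-form-for-applic-main-thm} is compact: indeed $\beta_{X,\mathcal{S}}$ is built from unit motives by finitely many applications of $f^*$ and $f_*$ along morphisms of (diagrams of) quasi-projective $k$-schemes, each of which preserves compactness (the relevant $f_*$'s are all cohomological direct images along quasi-projective morphisms, hence preserve compact objects), and similarly $\theta_{X,\mathcal{S}}$, $\theta'_{X,\mathcal{S}}$, $\theta''_{X,\mathcal{S}}$ and $\EE_X\simeq \Upsilon_*(h,\varsigma_n)^*\beta_{X,\mathcal{S}}$ are compact. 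This compactness is exactly what is needed to invoke the statement from \S\ref{The Betti realization} that $\An^*f_*\to \mathrm{R}(f^{an})_*\An^*$ is invertible on compact motives, here in the version for diagrams of schemes from \cite[Sect.~4]{realiz-oper}.

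Next I would apply $\An^*$ to the isomorphism $\EE_X\simeq \Upsilon_*(h,\varsigma_n)^*\beta_{X,\mathcal{S}}$ of Theorem \ref{thm:final-form-for-applic-main-thm}(a). Using that $\An^*$ commutes with $(h,\varsigma_n)^*$ on the nose and with $\Upsilon_*$ up to the canonical base-change comparison, which is invertible since $(h,\varsigma_n)^*\beta_{X,\mathcal{S}}$ is compact, one gets the canonical isomorphism
$$\An^*(\EE_X)\simeq \mathrm{R}\Upsilon^{an}_*(h^{an},\varsigma_n)^*\beta^{an}_{X,\mathcal{S}}.$$
That this is an isomorphism of commutative unitary algebras follows because $\An^*$ is a monoidal (in particular pseudo-monoidal and pseudo-unitary) functor, as is $\mathrm{R}\Upsilon^{an}_*$, and both $\Upsilon_*$ and $(h,\varsigma_n)^*$ are compatible with the algebra structures; since all the isomorphisms in Theorem \ref{thm:final-form-for-applic-main-thm}(a) are isomorphisms of algebras, their realizations are too. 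For the commutative square in the corollary, I would simply apply $\An^*$ to the commutative diagram in Theorem \ref{thm:final-form-for-applic-main-thm}(b) and identify each arrow: the leftmost and the two outer vertical isomorphisms are the realizations of the corresponding algebra isomorphisms just constructed (for $X$ and for $\check X$); the horizontal arrows involving $l^*\EE_X\to\EE_{\check X}$ and $l^*\beta_{X,\mathcal{S}}\to\beta_{\check X,\check{\mathcal{S}}}$ realize to the maps named in the corollary; and the base-change arrows $l^*\Upsilon_*\to\check\Upsilon_*l^*$ realize to $(l^{an})^*\mathrm{R}\Upsilon^{an}_*\to\mathrm{R}\check\Upsilon^{an}_*(l^{an})^*$ because, again, everything in sight is compact so the realization comparison maps are invertible and the naturality square of these comparison maps against the base-change morphisms commutes by the general compatibility statement of \cite{realiz-oper}.

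The main obstacle, and the only point that requires genuine care, is the bookkeeping around compactness and the functoriality of the $\An^*f_*\to\mathrm{R}(f^{an})_*\An^*$ comparison for \emph{diagrams} of schemes rather than single schemes. One must check that the object $(h,\varsigma_n)^*\beta_{X,\mathcal{S}}\in\DM(\mathcal{Y})$ is compact in the category of motives over the diagram $\mathcal{Y}$ — this follows by unwinding the inductive construction of $\beta_{X,\mathcal{S}}$ in \S\ref{par:mot-beta-x-s} (built from units by $o_*$, $b^*$, $j_*$, $e_n^*$, all objectwise quasi-projective and hence compactness-preserving by Lemma \ref{rem:i-*-j-diaise-} together with the single-scheme case) — and then that the realization commutes with $\Upsilon_*$ on such objects, which is the diagram-of-schemes version of the statement quoted in \S\ref{The Betti realization}. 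Once this is in place, the rest is a formal diagram chase: every square is obtained by applying the (symmetric monoidal, triangulated) functor $\An^*$ to a commuting square of motives and then replacing the comparison natural transformations by isomorphisms on the compact objects at hand. I would therefore organize the write-up as: (i) a compactness lemma for the motives of \S\ref{par:mot-beta-x-s}; (ii) the construction of the algebra isomorphism in the corollary by realizing Theorem \ref{thm:final-form-for-applic-main-thm}(a); (iii) the verification of the square by realizing Theorem \ref{thm:final-form-for-applic-main-thm}(b), citing \cite{realiz-oper} for the compatibility of the base-change comparison maps.
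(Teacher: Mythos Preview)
Your overall strategy is the same as the paper's: apply $\An^*$ to Theorem \ref{thm:final-form-for-applic-main-thm} and check that the realization commutes with $\Upsilon_*$. The paper's proof is in fact very short and says exactly this: ``The only point that remains to be checked is the commutation of the Betti realization with the cohomological direct image along $\Upsilon$.''

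However, you have a genuine gap at the crucial step. You reduce everything to ``the realization commutes with $\Upsilon_*$ on compact objects, which is the diagram-of-schemes version of the statement quoted in \S\ref{The Betti realization}'', and you treat this as if it were simply a matter of compactness plus a citation of \cite{realiz-oper}. But $\Upsilon$ is a morphism from a diagram of schemes to a single scheme, so $\Upsilon_*$ factors as a \emph{geometric} direct image followed by a \emph{categorical} direct image, i.e., a homotopy limit over the indexing category of $\mathcal{Y}$ (see the factorization \eqref{eq:factorization-1-morph-diag-C}). The compactness argument and \cite{realiz-oper} handle only the geometric part. For the categorical part one must show that $\An^*$, which is merely a $1$-morphism of triangulated derivators, commutes with the homotopy limit along the indexing category of $\mathcal{Y}$. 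This is not automatic for arbitrary indexing categories.

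The paper's proof supplies exactly the missing ingredient: the indexing category of $\mathcal{Y}$ is built from finite ordered sets by the $\int$-construction, hence is \emph{universal for homotopy limits} by Lemma \ref{lemma:permanence-universal-for-holim} and Proposition \ref{prop:limits-with-respect-ordered-set}. This is what forces $\An^*(p_{\mathcal{I}})_*\to (p_{\mathcal{I}})_*\An^*$ to be invertible. Your write-up should invoke these two results explicitly after factoring $\Upsilon$ into its geometric and categorical parts; the compactness discussion is then only needed for the geometric part, and Lemma \ref{rem:i-*-j-diaise-} plays no role.
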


\begin{proof}
The only point that remains to be checked is the commutation of the Betti realization with the cohomological direct image along $\Upsilon$, i.e., that the natural transformation
$\An^* \Upsilon_* \to {\rm R}\Upsilon^{an}_* \An^*$
is invertible when applied to compact motives. For this, we use the factorization of $\Upsilon$ into its geometric and categorical parts.
The commutation with the cohomological direct image along the geometric part follows from \cite{realiz-oper}.
We are then reduced to showing that
$\An^*$ commutes with homotopical limits along the indexing category of the diagram $\mathcal{Y}$. This follows from Lemma \ref{lemma:permanence-universal-for-holim} and Proposition
\ref{prop:limits-with-respect-ordered-set}.
\end{proof}

\label{page-for-digamma-resolution-top-spaces}
In the analytic context, we will need a direct construction of $\beta_{X,\mathcal{S}}^{an}$ which we now describe. This construction is possible as the
inverse image functors for sheaves on topological spaces
are exact, and thus do not need to be left derived as it is the case for motives.

Fix a functorial flasque resolution
$\digamma_{\!\dagger}$, for each topological space $\dagger$, that
is pseudo-monoidal and natural with respect to morphisms of
topological spaces. The latter condition means that a continuous
mapping $f:\dagger' \to \dagger$ induces a natural transformation
of pseudo-monoidal functors $f^*\circ \digamma_{\!\dagger} \to
\digamma_{\!\dagger'}\circ f^*$; moreover, these natural
transformations are compatible with the composition of continuous
mappings in the obvious way. One can take as
$\digamma_{\!\dagger}$ the monadic Godement resolution, for
instance.
It is clear that the resolution $\digamma_{\!\dagger}$
carries over to diagrams of topological spaces objectwise.
In the sequel, we
write just ``$\digamma$'', with the diagram of topological spaces
understood.

Clearly,
$\beta_{X,\mathcal{S}}^{an}$ is the restriction
to the sub-diagram $T^{an}\hookrightarrow T^{an+}$
of a complex of sheaves $\beta_{X,\mathcal{S}}^{an+}$ which is defined
inductively by the formula
\begin{equation}
\label{eq:for-defining-beta-an-X-S-directly-1}
\beta_{X,\mathcal{S}}^{an+}=(e^{an}_n)^*{\rm R} (o^{an+})_*(b^{an+})^*{\rm R}(j^{an+})_*\beta^{an+}_{X',\mathcal{S}'}.
\end{equation}
Of course, we are using the notation from
Remark \ref{rem:for-doing-induction-X-X'}, and the diagrams
\eqref{new-diagram-for-defining-beta-x-s-1} and
\eqref{new-diagram-for-defining-beta-x-s-2}.
Using the fixed resolution $\digamma$, we can take $(j^{an+})_*\circ \digamma$ for the derived functor ${\rm R}(j^{an+})_*$.

Now, assume that the restriction of $\beta^{an+}_{X',\mathcal{S}'}$ to $(pt,\mathcal{P}^*([\![1,n]\!])^{\rm op}\times \{1\}) \subset T^+$ is constant, i.e., equal to $K_{(pt,\mathcal{P}^*([\![1,n]\!])^{\rm op}\times \{1\})}$ where $K$ is a complex of $\Q$-vector spaces quasi-isomorphic to $\Q[0]$.
We claim that the natural morphism
\begin{equation}
\label{eq:for-defining-beta-an-X-S-directly-3}
(e^{an}_n)^*(o^{an+})_*(b^{an+})^*(j^{an+})_*\digamma\beta^{an+}_{X',\mathcal{S}'} \to (e^{an}_n)^*{\rm R} (o^{an+})_* (b^{an+})^*(j^{an+})_*\digamma\beta^{an+}_{X',\mathcal{S}'}
\end{equation}
is a quasi-isomorphism.
Over the sub-diagram $T^{an+}\circ (o\times \id_{\underline{\mathbf{1}}})$, this is clear as $(o^{an+})_*$ is the identity functor there.
As $(\beta^{an+}_{X',\mathcal{S}'})_{|\mathcal{P}^*([\![1,n-1]\!])^{\rm op}\times \{1\}}$ is the constant sheaf associated to $K$,
then
\eqref{eq:for-defining-beta-an-X-S-directly-3}
is given over $(pt,\{(\{n\},1)\})$ by
\begin{equation}
\label{eq:for-defining-beta-an-X-S-directly-4}
{\rm lim}_{\mathcal{P_2}([\![1,n-1]\!])^{\rm op}\times \underline{\mathbf{1}}} \, \digamma_{\!pt}K \to
{\rm holim}_{\mathcal{P_2}([\![1,n-1]\!])^{\rm op}\times \underline{\mathbf{1}}} \, \digamma_{\!pt}K.
\end{equation}
The latter is a quasi-isomorphism as both sides are quasi-isomorphic to
$\digamma_{\!pt}K$. (The left hand side is in fact isomorphic to
the complex $\digamma_{\!pt}K$.) Finally,
over $T^{an}(\{n\})=T^{an+}(\{n\},0)$, the morphism
\eqref{eq:for-defining-beta-an-X-S-directly-3}
is the pull-back of
\eqref{eq:for-defining-beta-an-X-S-directly-4}
along the projection of $T^{an}(\{n\})$ to the point. Hence, it is also a quasi-isomorphism.

It follows from the above that
$\beta_{X,\mathcal{S}}^{an+}$ can be defined inductively using the simpler formula
\begin{equation}
\label{eq:for-defining-beta-an-X-S-directly-5}
\beta_{X,\mathcal{S}}^{an+}=(e^{an}_n)^*(o^{an+})_* (b^{an+})^*(j^{an+})_*\digamma\beta^{an+}_{X',\mathcal{S}'}.
\end{equation}
Remark that if $(\beta^{an+}_{X',\mathcal{S}'})_{|\mathcal{P}^*([\![1,n-1]\!])^{\rm op}\times \{1\}}$ is the constant sheaf associated to $K$, then
$(\beta^{an+}_{X,\mathcal{S}})_{|\mathcal{P}^*([\![1,n]\!])^{\rm op}\times \{1\}}$ is
the constant sheaf associated to $\digamma K$. By an easy induction, we see that $(\beta^{an+}_{X,\mathcal{S}})_{|\mathcal{P}^*([\![1,n]\!])^{\rm op}\times \{1\}}$ is the constant sheaf associated to
$\digamma^n\Q$.

Now, in the formula
\eqref{eq:for-defining-beta-an-X-S-directly-5},
$(e^{an}_n)^*$ has the effect to replace the complex of sheaves
$(\{n\},0)^*(o^{an+})_* (b^{an+})^*(j^{an+})_*\digamma\beta^{an+}_{X',\mathcal{S}'}$
on $T^{an+}(\{n\},0)=T^{an}(\{n\})$ by
$(\digamma^n\Q)_{T^{an}(\{n\})}$. This shows that $\beta_{X,\mathcal{S}}^{an}$ is obtained from
$\beta^{an}_{X',\mathcal{S}'}$ as follows. First, consider
the complex of sheaves
$(b^{an+})^*(j^{an+})_*\digamma\beta^{an+}_{X',\mathcal{S}'}$ on $T^{an}\circ o$.
Then extend it to $T^{an}$
by adding the sheaf $(\digamma^n\Q)_{T(\{n\})}$ over $T(\{n\})$. In fact, it doesn't change much if one puts $\Q_{T(\{n\})}$ instead of $(\digamma^n\Q)_{T(\{n\})}$. This is possible, i.e., we still get an object of $\mathbf{K}(\Shv(T^{an}))$, by using the canonical map $\Q \to \digamma^n\Q$ to define the restriction maps along arrows in
${\rm Ouv}(T,\mathcal{P}^*([\![1,n]\!])^{\rm op})$.

For $\emptyset \neq I \subset [\![1,n]\!]$, denote $T^0(I)$ the inverse image of $X_{{\rm max}(I)}$ in $T(I)$.
It is now clear that
$\beta^{an}_{X,\mathcal{S}}$ is given over $\emptyset \neq I \subset [\![1,n]\!]$ by the following complex of sheaves on $T(I)^{an}$:
$$(T^0(I\cap [\![i_0,n]\!])^{an}\hookrightarrow T(I\cap [\![i_0,n]\!])^{an})_*\digamma (T^0(I\cap [\![i_0,n]\!])^{an}\hookrightarrow T(I\cap [\![i_0,n-1]\!])^{an})^* \cdots $$
$$(T^0(I\cap [\![i_0,i_0+1]\!])^{an}\hookrightarrow T(I\cap [\![i_0,i_0+1]\!])^{an})_*\digamma (T^0(I\cap [\![i_0,i_0+1]\!])^{an}\hookrightarrow T(I\cap [\![i_0,i_0]\!])^{an})^*$$
$$(T^0(I\cap [\![i_0,i_0]\!])^{an}\hookrightarrow T(I\cap [\![i_0,i_0]\!])^{an})_*\digamma \Q_{T^0(\{i_0\})^{an}},$$
with $i_0={\rm min}(I)$.
Simplifying a little bit, we arrive to the following statement
(see the proof of
Lemma \ref{lemma:applic-of-comput-direct-im-sncd-I}):

\begin{lemma}
\label{lemma:rajou-concrete-desc-beta-an-gen}
The complex of sheaves of $\Q$-vector spaces
$\beta^{an}_{X,\mathcal{S}}$ has, up to a canonical quasi-isomorphism, the following description.
Let $\emptyset \neq I \subset [\![1,n]\!]$ and write $I=\{i_0<\cdots<i_m\}$. For $0\leq j \leq m$, we set $I_j=\{i_0,\dots, i_j\}$. Then $\beta^{an}_{X,\mathcal{S}}(I)$ is the following complex
$$(T^0(I_m)^{an}\hookrightarrow T(I_m)^{an})_*\digamma(T^0(I_{m})^{an} \hookrightarrow T(I_{m-1})^{an})^*\dots \hspace{3cm}$$
$$(T^0(I_1)^{an} \hookrightarrow T(I_1)^{an})_*\digamma(T^0(I_1)^{an} \hookrightarrow  T(I_0)^{an})^*(T^0(I_0)^{an}\hookrightarrow T(I_0)^{an})_*\digamma\Q_{T^0(I_0)^{an}}.$$
Moreover, for $\emptyset \neq J \subset I$, the morphism
$\beta^{an}_{X,\mathcal{S}}(J) \to (T(I) \to T(J))_*
\beta^{an}_{X,\mathcal{S}}(I)$ is a composition of units of adjunction and augmentations $\id \to \digamma$.
\end{lemma}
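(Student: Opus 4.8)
The statement is a purely computational unwinding of the inductive definition of $\beta^{an}_{X,\mathcal{S}}$ given by the simplified formula \eqref{eq:for-defining-beta-an-X-S-directly-5}, combined with the explicit description of the indexing map $o:\mathcal{P}^*([\![0,n-1]\!])^{\rm op}\times \underline{\mathbf{1}}\hookrightarrow \mathcal{P}^*([\![0,n]\!])^{\rm op}$ sending $(I,0)$ to $I\sqcup\{n\}$ and $(I,1)$ to $I$. The plan is to argue by induction on $n$, carefully tracking what the complex of sheaves $\beta^{an+}_{X,\mathcal{S}}$ becomes over each object $(I,u)\in \mathcal{P}^*([\![0,n]\!])^{\rm op}\times \underline{\mathbf{1}}$, and then to restrict along $T^{an}\hookrightarrow T^{an+}$ (i.e.\ set $u=0$ after re-indexing, which identifies $\mathcal{P}^*([\![0,n]\!])^{\rm op}$ with the sub-diagram over $\{0\}$) to obtain $\beta^{an}_{X,\mathcal{S}}$ itself. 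The base case $n=0$ is trivial: $\beta^{an}_{X,\mathcal{S}}$ is the constant sheaf $\Q$ on the scheme $X=T(\emptyset)^{an}$... wait, the indexing set is $\mathcal{P}^*([\![1,n]\!])$ for $\beta^{an}_{X,\mathcal{S}}(I)$, so for $n=0$ there is nothing; the recursion starts properly at $n\geq 1$.

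\textbf{Key steps.} First I would recall from the paragraph preceding the lemma that, after the quasi-isomorphism \eqref{eq:for-defining-beta-an-X-S-directly-3}, $\beta^{an+}_{X,\mathcal{S}}$ is given by the non-derived formula \eqref{eq:for-defining-beta-an-X-S-directly-5}, and that its restriction to $\mathcal{P}^*([\![0,n]\!])^{\rm op}\times\{1\}$ is the constant sheaf $\digamma^n\Q$ — or, as explained there, $\Q$ up to the canonical map $\Q\to\digamma^n\Q$, which we are free to substitute since all that matters is producing a valid object of $\mathbf{K}(\Shv(T^{an}))$ with the correct restriction maps. Second, I would read off from \eqref{eq:for-defining-beta-an-X-S-directly-5} and the description of $o$, $b$, $j$, $e_n$ (from \eqref{new-diagram-for-defining-beta-x-s-1} and \eqref{new-diagram-for-defining-beta-x-s-2}) that over $I\sqcup\{n\}$ with ${\rm max}(I)=n-1$ the complex $\beta^{an}_{X,\mathcal{S}}(I\sqcup\{n\})$ is obtained from $\beta^{an}_{X',\mathcal{S}'}(I)$ by applying $(T^0(I\sqcup\{n\})^{an}\hookrightarrow T(I\sqcup\{n\})^{an})_*\circ\digamma\circ(T^0(I\sqcup\{n\})^{an}\hookrightarrow T(I)^{an})^*$ — here using that $T^0(I\sqcup\{n\})$ is the inverse image of $X_n$ in $T(I\sqcup\{n\})$ and that $v_n$ realizes $T(I\sqcup\{n\})\times_X X_n \hookrightarrow T(I)$ (cf.\ Remark \ref{rem:for-doing-induction-X-X'} and formula \eqref{eq:for-T-and-T'-relationes}), while $e_n^*$ replaces the term over $(\{n\},0)$ by $\Q_{T^0(\{n\})^{an}}$. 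Iterating this observation down the chain $I_0\subset I_1\subset\cdots\subset I_m=I$ — each step contributing one layer $(T^0(I_{j+1})^{an}\hookrightarrow T(I_{j+1})^{an})_*\digamma(T^0(I_{j+1})^{an}\hookrightarrow T(I_j)^{an})^*$ and the bottom contributing $(T^0(I_0)^{an}\hookrightarrow T(I_0)^{an})_*\digamma\Q_{T^0(I_0)^{an}}$ — yields exactly the displayed formula for $\beta^{an}_{X,\mathcal{S}}(I)$. This is structurally the analytic mirror of the proof of Lemma \ref{lemma:applic-of-comput-direct-im-sncd-I}, where the same telescoping was done on the motivic side; I would invoke that proof for the combinatorial bookkeeping of which subsets $I_j$ appear. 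Third, for the transition morphisms: given $\emptyset\neq J\subset I$ with $J=\{i_{a_0}<\cdots\}\subseteq I=\{i_0<\cdots<i_m\}$, the arrow $\beta^{an}_{X,\mathcal{S}}(J)\to (T(I)\to T(J))_*\beta^{an}_{X,\mathcal{S}}(I)$ is assembled, in the inductive construction, from the structural morphisms of the diagram $\mathcal{T}^{an}$ (which are closed immersions or dominant maps between the $T^{an}(-)$, hence induce the $(T(I)\to T(J))_*$ terms) and from the augmentations $\id\to\digamma$ inserted by each application of the fixed flasque resolution, plus the various units $\id\to f_*f^*$ of adjunction. I would simply trace these through one inductive step and observe the compatibility is formal.

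\textbf{Main obstacle.} The genuine difficulty is not conceptual but notational: keeping the indices straight between $\mathcal{P}^*([\![0,n]\!])$ (the domain of $T$ and $\beta_{X,\mathcal{S}}$) and $\mathcal{P}^*([\![1,n]\!])$ (the index set used in the \emph{statement} of the lemma for $\beta^{an}_{X,\mathcal{S}}(I)$), and making sure the $T^0(-)$ appearing in the lemma — defined as the inverse image of $X_{{\rm max}(I)}$ — matches the $T^0(I_j)=(T(I_j)\to X)^{-1}(X_{i_j})$ that arise naturally in the recursion (they do, since ${\rm max}(I_j)=i_j$). A secondary subtlety is justifying that one may replace $(\digamma^n\Q)_{T(\{n\})}$ by $\Q_{T(\{n\})}$ without leaving $\mathbf{K}(\Shv(T^{an}))$ — but this is precisely the remark made in the text just before the lemma (use $\Q\to\digamma^n\Q$ for the restriction maps), so I would only cite it. I expect the whole argument to run in about a page once the bookkeeping from the proof of Lemma \ref{lemma:applic-of-comput-direct-im-sncd-I} is imported; the phrase ``simplifying a little bit'' in the text signals exactly this routine reduction.
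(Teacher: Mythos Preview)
Your proposal is correct and follows essentially the same approach as the paper: the lemma has no separate proof there, but is stated as the outcome of the discussion immediately preceding it, which unwinds the non-derived inductive formula \eqref{eq:for-defining-beta-an-X-S-directly-5} exactly as you describe and then invokes the proof of Lemma \ref{lemma:applic-of-comput-direct-im-sncd-I} for the telescoping bookkeeping. Your identification of the main obstacle as purely notational, and your handling of the $e_n^*$ step and the transition morphisms, match the paper's treatment.
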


It is a corollary of Theorem
\ref{thm:final-form-for-applic-main-thm} that one can use $\un_{\mathcal{Y}}$ instead of the more complicated $(h,\varsigma_n)^*\beta_{X,\mathcal{S}}$ to compute $\EE_X$, though we need the original version for the proof of Theorem \ref{thm:main-thm}. Precisely:

\begin{corollary}
\label{cor:not-used-for-main-thm}
There is a canonical isomorphism of commutative unitary algebras
$\EE_X \simeq \Upsilon_*\un_{\mathcal{Y}}$. Moreover, the following diagram
$$\xymatrix@C=1.5pc@R=1.5pc{l^*\EE_X \ar[rr] \ar[d]_-{\sim} & & \EE_{\check{X}} \ar[d]^-{\sim} \\
l^* \Upsilon_*\un_{\mathcal{Y}} \ar[r] & \check{\Upsilon}_* l^* \un_{\mathcal{Y}} \ar[r]^-{\sim} & \check{\Upsilon}_* \un_{\check{\mathcal{Y}}}}$$
commutes.

\end{corollary}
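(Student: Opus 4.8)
The plan is to deduce Corollary~\ref{cor:not-used-for-main-thm} from Theorem~\ref{thm:final-form-for-applic-main-thm} by showing that $(h,\varsigma_n)^*\beta_{X,\mathcal{S}}$ is, after applying $\Upsilon_*$, interchangeable with the unit motive $\un_{\mathcal{Y}}$. The natural transformation $\un_{\mathcal{Y}} \to (h,\varsigma_n)^*\beta_{X,\mathcal{S}}$ exists because $\beta_{X,\mathcal{S}}$ is a unitary algebra, so the first task is to construct it and then to check that $\Upsilon_*$ of this map is an isomorphism. Since $\Upsilon = p_{\mathcal{P}_2([\![1,n]\!])}\circ f \circ h$ factors through homotopy limits and cohomological direct images, and since $\Upsilon_*$ can be computed objectwise over the indexing categories using Corollary~\ref{cor:very-useful-texnical-cor} (and Remark~\ref{rem-gen:cor-useful-texnical-to-derivators}), it suffices to check the statement after restricting to each object $(I_0,I_1)\in\mathcal{P}_2([\![1,n]\!])$, i.e.\ to show that the morphism $\un_{\mathcal{Y}(I_0,I_1)} \to (h(I_0,I_1),\varsigma_n)^*\beta_{X,\mathcal{S}}$ induces an isomorphism after applying $h(I_0,I_1)_*$ composed with the relevant homotopy limit.

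The key computational input is Lemma~\ref{lemma:applic-of-comput-direct-im-sncd-II} (together with Proposition~\ref{prop:comparison-beta-et-beta-prime}), which identifies $(I,\alpha)^*\beta'_{X,\mathcal{S}}$ with $s_{I,\alpha}^* t_{i_0*}\un_{e_{i_0}^{-1}(X_{i_0})}$, a restriction of the direct image along the complement of a \emph{sncd}. Thus $(h,\varsigma_n)^*\beta_{X,\mathcal{S}}$ restricted to $\mathcal{Y}(I_0,I_1)$ is the pullback to $\mathcal{Y}(I_0,I_1)$ of $t_{i_s*}\un_{e_{i_s}^{-1}(X_{i_s})}$ along $\mathcal{Y}(I_0,I_1)\to Y_{i_s}$. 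By Proposition~\ref{prop:basic-property-diag-cal-Y-gen}(b), $\mathcal{Y}(I_0,I_1,(\alpha_j)_j)$ has only quotient singularities and maps to $\mathcal{T}(\varsigma_n(I_0,I_1),\alpha_s)\subset Y_{i_s}$, so Lemma~\ref{lemma:omega-0-restriction-direct-image-sncd} applies: the natural morphism $\un \to \omega^0(i^*j_*\un)$ is invertible when $i^*j_*\un$ is a restriction of such a \emph{sncd} direct image. Combining this with Proposition~\ref{prop:additional-prop-omega-0-x}(iii) to absorb the intermediate $\omega^0$'s and homotopy limits, one obtains that $\Upsilon_*((h,\varsigma_n)^*\beta_{X,\mathcal{S}})$ and $\Upsilon_*\un_{\mathcal{Y}}$ agree. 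The cleanest way to organize this is probably to revisit the proof of Proposition~\ref{prop:comparison-theta-double-et-beta-prime}, Part~D, where exactly this kind of reduction (to $\un \to \omega^0(\text{\emph{sncd}-restriction})$ and to the Stein-factorization statement of Proposition~\ref{prop:main-computation-omega0-pi-0}) is already carried out; I would mimic that argument one level up, i.e.\ after taking $\Upsilon_*$ rather than $\omega^0$ of a single term.

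For the functoriality statement (the commuting square), the strategy is to reduce to Theorem~\ref{thm:final-form-for-applic-main-thm}(b) together with Proposition~\ref{prop:fur-functoriality-construction-beta}: the square in the Corollary is obtained from the square in Theorem~\ref{thm:final-form-for-applic-main-thm}(b) by composing with the natural transformation $\un \to \beta$ (and its checked version $\un \to \beta_{\check{X},\check{\mathcal{S}}}$), and one must verify that the morphism $l^*\beta_{X,\mathcal{S}}\to\beta_{\check{X},\check{\mathcal{S}}}$ of Proposition~\ref{prop:fur-functoriality-construction-beta} is compatible with the unit sections — this is immediate since all the functors involved in the construction of $\beta_{X,\mathcal{S}}$ ($e_n^*$, $o_*$, $b^*$, $j_*$) are pseudo-unitary, so they carry the unit section to the unit section compatibly with $l^*$. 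I would state this compatibility as a one-line consequence of pseudo-unitarity and then chase the enlarged diagram.

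The main obstacle I anticipate is the bookkeeping needed to reduce the global statement (involving $\Upsilon_*$, which is a composite of an honest direct image along the geometric part of $\Upsilon$ and a homotopy limit over $\mathcal{P}_2([\![1,n]\!])$ built out of iterated $\ucarre$-shaped limits) to the objectwise statements where Lemmas~\ref{lemma:omega-0-restriction-direct-image-sncd} and \ref{lemma:applic-of-comput-direct-im-sncd-II} and Proposition~\ref{prop:main-computation-omega0-pi-0} can be directly invoked — essentially repeating the Parts~A--D structure of the proof of Proposition~\ref{prop:comparison-theta-double-et-beta-prime} with $\un_{\mathcal{Y}}$ in place of $p^*\beta'_{X,\mathcal{S}}$. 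Because that proof has already established all the hard facts (the base-change invertibilities of Part~C via Corollary~\ref{cor:new-for-compar-beta-prime-et-theta-dp}, the Stein-factorization vanishing of Part~D), I expect the argument here to be essentially formal: induct on $n$ using Remark~\ref{rem:for-doing-induction-X-X'-III}, and at the inductive step reduce to the two cases $n\in I_1$ and $n\notin I_1$ exactly as in Part~D. Given the authors' stated practice of leaving such parallel arguments to the reader, it would also be defensible to give only the reduction and cite the proof of Proposition~\ref{prop:comparison-theta-double-et-beta-prime} for the remaining verifications.
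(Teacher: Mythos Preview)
Your proposed reduction has a genuine gap. You claim that it suffices to check, for each $(I_0,I_1)\in\mathcal{P}_2([\![1,n]\!])$, that the morphism $\un_{\mathcal{Y}(I_0,I_1)}\to (h(I_0,I_1),\varsigma_n)^*\beta_{X,\mathcal{S}}$ becomes an isomorphism after applying $f(I_0,I_1)_*h(I_0,I_1)_*$ and the relevant homotopy limit. But this is simply false objectwise: over $(I_0,I_1)=([\![1,n]\!],\emptyset)$ one has $\mathcal{X}(I_0,I_1)=Y_0$, the motive $(h,\varsigma_n)^*\beta_{X,\mathcal{S}}$ restricts to $t_{0*}\un_{e_0^{-1}(X_0)}$ (the direct image from the complement of the \emph{sncd}), and the map $\un_{Y_0}\to t_{0*}\un_{e_0^{-1}(X_0)}$ is not an isomorphism, nor does pushing forward along $e_0$ make it one. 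The isomorphism you want only appears \emph{after} taking the full homotopy limit over $\mathcal{P}_2([\![1,n]\!])$, so there is no objectwise reduction of the kind you describe. Likewise, your suggestion to rerun Parts~A--D of the proof of Proposition~\ref{prop:comparison-theta-double-et-beta-prime} with $\un_{\mathcal{Y}}$ in place of $p^*\beta'_{X,\mathcal{S}}$ does not carry over cleanly: that argument compares two objects each built by the same inductive recipe, whereas $\un_{\mathcal{Y}}$ has no such inductive description matching the one for $\theta''_{X,\mathcal{S}}$.

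The missing idea, which the paper supplies, is to first prove directly (by a separate induction on $n$) that $\Upsilon_*\un_{\mathcal{Y}}$ is an Artin motive. This is Part~A of the paper's proof: one restricts along $j:X'\hookrightarrow X$ and $u_n:X_n\hookrightarrow X$, uses Proposition~\ref{prop-most-gen-proj-base-change} to base-change, and analyzes the resulting $\ucarre$-shaped diagram over $X_n$, showing in particular that $\{(\mathcal{Y}\circ\iota_n^0)\times_XX_n\to X_n\}_*\un\to\{(\mathcal{Y}\circ\iota_n)\to X_n\}_*\un$ is an isomorphism by the same mechanism as in Part~C of Proposition~\ref{prop:comparison-theta-prime-and-h-theta-double}. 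Once both sides of $\Upsilon_*\un_{\mathcal{Y}}\to\Upsilon_*(h,\varsigma_n)^*\beta_{X,\mathcal{S}}$ are known to be Artin, one may harmlessly apply $\omega^0_X$ to the map (Part~B): then Proposition~\ref{prop:additional-prop-omega-0-x}(ii),(iii) let you slide $\omega^0$ inside $\Upsilon_*$ and past $(p,\varsigma_n)^*$, reducing everything to the statement $\un_{\mathcal{T}}\simeq\omega^0_{\mathcal{T}}(\beta'_{X,\mathcal{S}})$, which is exactly where Lemmas~\ref{lemma:applic-of-comput-direct-im-sncd-II} and~\ref{lemma:omega-0-restriction-direct-image-sncd} apply. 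Your ingredients are the right ones, but the argument does not go through without the Artinness step; that step is what makes the $\omega^0$-trick legitimate and replaces your attempted objectwise reduction.
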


\begin{proof}
We only prove the first claim.
There is a canonical morphism $\un_{T} \to \beta_{X,\mathcal{S}}$ (which is the unity of the algebra) that induces a morphism
\begin{equation}
\label{eq-cor:not-used-for-main-thm-1}
\xymatrix@C=1.7pc{\Upsilon_* \un_{\mathcal{Y}} \ar[r] &  \Upsilon_* (h,\varsigma_n)^*\beta_{X,\mathcal{S}}.}
\end{equation}
By Theorem
\ref{thm:final-form-for-applic-main-thm}, it suffices to show that
\eqref{eq-cor:not-used-for-main-thm-1} is invertible.
We split the proof into two steps.

\smallskip

\noindent
\underbar{Part A:}
Here, we prove, by induction on $n$, that
$\Upsilon_*\un_{\mathcal{Y}}$ is an Artin motive.
When $n=0$, this is clear.

Assume $n\geq 0$ and that $\Upsilon'_*\un_{\mathcal{Y}'}$ is known to be an Artin motive over $X'$.
To check that $\Upsilon_*\un_{\mathcal{Y}}$ is an Artin motive, it suffices to show that
$j^*\Upsilon_*\un_{\mathcal{Y}}$ and $u_n^*\Upsilon_*\un_{\mathcal{Y}}$ are Artin motives, with $j:X'\hookrightarrow X$ and $u_n:X_n \hookrightarrow X$ the inclusions.
We have $j^*\Upsilon_*\un_{\mathcal{Y}}\simeq \Upsilon'_*\un_{\mathcal{Y}'}$, which settles the case of $j^*\Upsilon_*\un_{\mathcal{Y}}$ by the induction hypothesis.

It remains to show that
$u_n^* \Upsilon_*\un$ is an Artin motive. Using Proposition
\ref{prop-most-gen-proj-base-change},
we have $u_n^*\Upsilon_*\un \simeq \{\mathcal{Y}\times_X X_n \to X_n\}_*\un$. Moreover, the latter is the homotopy limit of
\begin{equation}
\label{eq-cor:not-used-for-main-thm-rajou-1}
\xymatrix@C=1.5pc{
\{(\mathcal{Y}\circ \iota^0_n)\times_X X_n\to X_n\}_*\un
\ar[r] & \{(\mathcal{Y}\circ \iota_n)\to X_n\}_*\un &
\{(\mathcal{Y}\circ \iota_n^1) \to X_n\}_*\un  \ar[l]}
\end{equation}
with $\iota^0_n$, $\iota_n$ and $\iota^1_n$ the non-decreasing maps from $\mathcal{P}_2([\![1,n-1]\!])$ to $\mathcal{P}_2([\![1,n]\!])$ sending $(I_0,I_1)$ to $(I_0\bigsqcup \{n\},I_1)$, $(I_0,I_1)$ and
$(I_0,I_1\bigsqcup\{n\})$ respectively.
As $\mathcal{Y}\circ \iota_n^1$ is objectwise finite over $X_n$, we deduce that
$\{(\mathcal{Y}\circ \iota_n^1) \to X_n\}_*\un$ is an Artin motive.
Hence, it suffices to show that
the first arrow in
\eqref{eq-cor:not-used-for-main-thm-rajou-1}
is an isomorphism. This would follows if the natural morphisms
$$\un_{\mathcal{Y}(I_0\bigsqcup\{n\},I_1)\times_X X_n} \!\xymatrix@C=1.7pc{\ar[r] &}
\{\mathcal{Y}(I_0,I_1)\to \mathcal{Y}(I_0\bigsqcup\{n\},I_1)\times_X X_n\}_*\un_{\mathcal{Y}(I_0,I_1)}$$
are invertible for all $(I_0,I_1)\in \mathcal{P}_2([\![1,n-1]\!])$.
This can be done as in Part C of the proof of
Proposition \ref{prop:comparison-theta-prime-and-h-theta-double}.
We leave the details to the reader.

\smallskip

\noindent
\underbar{Part B:} Recall that we need to show that
\eqref{eq-cor:not-used-for-main-thm-1} is invertible.
As both sides are Artin motives, it suffices to show that
\begin{equation}
\label{eq-cor:not-used-for-main-thm-701}
\xymatrix@C=1.7pc{\omega^0_{X} (\Upsilon_* \un_{\mathcal{Y}}) \ar[r] & \omega^0_X(\Upsilon_* (h,\varsigma_n)^*\beta_{X,\mathcal{S}}) \simeq
\omega^0_X(\Upsilon_* (p,\varsigma_n)^*\beta'_{X,\mathcal{S}})}
\end{equation}
is invertible.
Using Proposition
\ref{prop:additional-prop-omega-0-x}, (ii) and (iii), we have
canonical isomorphisms
$$\omega^0_X \Upsilon_* (p,\varsigma_n)^* \omega^0_{\mathcal{T}} \beta'_{X,\mathcal{S}} \simeq \omega^0_X\Upsilon_*\omega^0_{\mathcal{Y}} (p,\varsigma_n)^*\beta'_{X,\mathcal{S}} \simeq
\omega^0_X\Upsilon_* (p,\varsigma_n)^*\beta'_{X,\mathcal{S}}.$$
Hence, it suffices to check that
$\un_{\mathcal{T}} \to \omega^0_{\mathcal{T}}(\beta'_{X,\mathcal{S}})$
is invertible. But this follows immediately from
Lemmas
\ref{lemma:omega-0-restriction-direct-image-sncd}
and \ref{lemma:applic-of-comput-direct-im-sncd-I}.
\end{proof}

\section{Compactifications of locally symmetric varieties}
\label{sect:compactif-locally-symmetric}

This section is an exposition of known material that is fundamental for our
construction.

\subsection{Generalities involving algebraic groups and symmetric spaces}
\label{subsect:generalities}
Linear algebraic groups over $\Q$
will always be denoted with boldface roman letters: $\mathbf{G}$,
$\mathbf{H}$, $\mathbf{P}$, etc. Their groups of $\R$-points
$\mathbf{G}(\R)$, $\mathbf{H}(\R)$, $\mathbf{P}(\R)$, etc.\,\,will
be denoted by the corresponding italic letters: $G$, $H$, $P$,
etc. Given a Lie group $G$, we denote by $G^0$ the connected
component of the identity element.

Let $\mathbf{G}$ be a semi-simple linear algebraic group over $\Q$. We assume
that $\mathbf{G}$ is simple over $\Q$, for the general case can be
deduced from that. Let $D$ be a symmetric space (of non-compact type)
such that ${\rm
Aut}(D)=G$ (modulo compact factors). One has that $D$ is a contractible
space.
Given a base point $x\in D$,
$K={\rm Stab}(x)$ is a maximal compact subgroup of $G$ and one has
$D\simeq G/K$. $D$ is said to be {\it hermitian}
when it admits a $G$-invariant complex structure.

An {\it arithmetic subgroup} $\Gamma\subset\mathbf{G}(\Q)$ is a group
commensurable
with $\mathscr{G}(\Z)$ (where $\mathscr{G}$ is group scheme over $\Z$ such that $\mathbf{G} = \mathscr{G}\otimes_{\Z}\Q$). For such $\Gamma$, one considers
the quotient $\Gamma\backslash D$, which has finite volume with respect to
an invariant metric.  When $D$ is
hermitian, $\Gamma\backslash D$ is actually the analytic space of $\C$-points of a quasi-projective $\C$-scheme $X$, as follows from \cite{baily-borel} (see our
\S\ref{subsect:The-Baily-Borel-compactification});
it is called a {\it locally symmetric variety} for obvious
reasons. In fact, the $\C$-scheme $X$ can be defined over a number
field.\footnote{The {\it Shimura variety} associated to
$\mathbf{G}$, where in effect $\Gamma$ is allowed to vary, has
$X(\C)$ as a connected component, and it is defined over a single
number field $k$ (called the {\it reflex field}) (see
\cite[2.2.1]{deligne}). Each connected component generally will
not be defined over $k$, but rather some algebraic extension of
$k$.}

The analytic space $X(\C)$ has various natural
compactifications, some of them algebraic and others only
topological. We describe a few of these below.  We will assume
throughout that $\Gamma$ is {\it neat}, in the sense of
\cite[D\'ef.~17.1]{borel}.
(Any arithmetic group $\Gamma$ contains a neat arithmetic subgroup
that is normal and of finite index.)  This ensures there are no
quotient singularities distorting the stratification of the
compactifications below.

If $\Gamma$ is an arithmetic subgroup of $\mathbf{G}(\Q)$ and
$\mathbf{H_1/H_2}$ is an algebraic subquotient group of
$\mathbf{G}$ (so $\mathbf{H_2}$ is a normal subgroup of
$\mathbf{H_1}$), we
let $\Gamma(\mathbf{H_1}/\mathbf{H_2})$ denote the induced arithmetic subgroup of
$H_1/H_2$, viz., $(\Gamma\cap H_1)/(\Gamma\cap H_2)$. In other words,
we view $\Gamma$ as defining a functor from such pairs
$(\mathbf{H_1},\mathbf{H_2})$ to groups.

Given two arithmetic subgroups $\Gamma, \, \Gamma'\subset \mathbf{G}(\Q)$ and $g\in \mathbf{G}(\Q)$ such that $g\Gamma' g^{-1}\subset \Gamma$, we have an induced map
(essentially a {\it Hecke correspondence})
$\Gamma'\backslash D \to \Gamma \backslash D$ which we usually denote by $g$. When $D$ is hermitian, this map comes from a morphism of $\C$-schemes $g:X' \to X$
(where $X'$ is the $\C$-scheme such that $X'(\C)\simeq \Gamma'\backslash D$). In fact, this morphism is defined over a number field.

\subsection{The Borel-Serre compactifications}
\label{subsect:The Borel-Serre compactifications}
\emph{The main reference for
the material in this subsection is \cite{borel-serre}; the reductive version
was introduced in \cite[\S4]{warped} (see \cite{rbs-1}). For these
compactifications, $D$ does not have to be hermitian.}

Let $\mathbf{P}\subset\mathbf{G}$
be a parabolic $\Q$-subgroup,
$\mathbf{N_P}$ its unipotent radical and $\mathbf{L_P}=\mathbf{P}/
\mathbf{N_P}$. The choice of a base point $x\in D$ induces
a lift of $L_P$ to $L_P(x)\subset P$. It is possible
(see for example
\cite[Prop.~III.1.11]{borel-ji})
to choose $x$, so that $L_P(x)$ is the Lie group of $\R$-points of a $\Q$-subgroup
$\mathbf{L_P}(x)\subset \mathbf{P}$, and we will do so.
$\mathbf{L_P}(x)$ is called a
{\it Levi subgroup} of $\mathbf{P}$, and we have
$\mathbf{P}=\mathbf{N_P}
\mathbf{L_P}(x)$, a semi-direct product.
Let $\mathbf{S_P}$ be the maximal $\Q$-split torus in the
center of $\mathbf{L_P}$. Then one has an almost direct product decomposition
$\mathbf{L_P}=\mathbf{S_P}\mathbf{M_P}$.
We denote by $\mathbf{S_P}(x)$ and $\mathbf{M_P}(x)$ the images of
$\mathbf{S_P}$ and $\mathbf{M_P}$ in the lift $\mathbf{L_P}(x)$.
One obtains the Langlands decomposition of $P$:
\begin{equation}\label{Langlands-decomp}
P=N_P\times (M_P(x)\times A_P),
\end{equation}
a semi-direct product, where $A_P=S_P(x)^0$. There is a maximal $\Q$-split
torus $\mathbf{S}$ of
$\mathbf{G}$ containing $\mathbf{S_P}(x)$ and a set of simple $\Q$-roots (characters)
$\Delta(\mathbf{G},\mathbf{S})$
with respect to $\mathbf{S}$ for which $\mathbf{P}$ is standard
(see \cite[4.1]{borel-serre}, or \S
\ref{subsect:The-Baily-Borel-compactification}
below).  Then the subset
$\Delta_{\mathbf{P}}\subset \Delta(\mathbf{G},\mathbf{S})$,
consisting of those roots $\alpha$ that are non-trivial
on $A_P$,
provides coordinates on $A_P$, which determines a canonical isomorphism
\begin{equation}\label{AP}
A_P\simeq (\R^+)^{\Delta_{\mathbf{P}}}.
\end{equation}
The {\it parabolic $\Q$-rank} of $\mathbf{P}$, denoted $r(\mathbf{P})$, is
${\rm card}(\Delta_{\mathbf{P}})=\dim A_P$.

The symmetric space $D$ admits two useful, topological partial
compactifications,
the Borel-Serre and the reductive
Borel-Serre, which we proceed to describe.
Given a parabolic $\Q$-subgroup $\mathbf{P}\subseteq \mathbf{G}$ (not necessary a proper subgroup of $\mathbf{G}$, i.e., $\mathbf{P}=\mathbf{G}$ is allowed), let $\overline{A}_P$ denote the
``pure corner'' given in terms of \eqref{AP} as $(0,\infty]^{\Delta_{\mathbf{P}}}$,
a torus embedding over $\R$.\footnote{In \cite{borel-serre},
$\overline{A}_P$ is given as $[0,\infty)^{\Delta_{\mathbf{P}}}$, but there the
convention is
that $G$ acts on $D$ on the right. We are using the more common
convention nowadays of a left-action.}
Then, the {\it corner}
for $\mathbf{P}$ is defined to be the partial compactification of $D$:
\begin{equation}\label{corner}
\overline{D}(\mathbf{P}) = D\times^{A_P}\overline{A}_P,
\end{equation}
where $A_P$ acts on $D$ by the {\it geodesic action} \cite[\S 3]
{borel-serre},\footnote{When $\mathbf{G}$ is $\mathbf{SL_2}$, so $D$ is the
upper
half-plane, $\mathbf{P}$ the group of upper-triangular matrices, then $A_P$
is the
subgroup of diagonal matrices with positive diagonal entries. The usual
action of $A_P$ on $D$ is radial, but
the geodesic
action is vertical. Thus, the Borel-Serre construction for $\mathbf{P}$ puts a
line at infinity. (The line is collapsed to a point in the reductive
version; see below.)} which commutes with the usual action of $P$.
Moreover, when $\mathbf{P}\subseteq \mathbf{Q}$, there are canonical inclusions
$A_Q\subseteq A_P$ and
$\Delta_{\mathbf{Q}}\subseteq \Delta_{\mathbf{P}}$ (so $\mathbf{N_Q}\subseteq \mathbf{N_P}$). This yields a canonical
embedding
\begin{equation}\label{gluing}
\overline{D}(\mathbf{Q}) \hookrightarrow \overline{D}(\mathbf{P}),
\end{equation}
of partial compactifications
of $D$. Note that $\overline{D}(\mathbf{G}) = D$. Using \eqref{gluing} for gluing, one obtains from
these $\overline{D}(\mathbf{P})$
the space $\overline{D}^{bs}$, which is shown to be a manifold with corners
for
which \eqref{corner} provides local charts.
The boundary face, or stratum, $e(\mathbf{P})$ of $\overline{D}^{bs}$ that is
associated to
$\mathbf{P}$ is
the lowest-dimensional $A_P$-orbit in $\overline{D}(\mathbf{P})$. In terms of
\eqref{AP},
\begin{equation}\label{eP}
e(\mathbf{P})= D\times^{A_P}\{\infty\}^{\Delta_{\mathbf{P}}}\simeq D/A_P\simeq
N_P\times D_P,
\end{equation}
where $D_P=M_P(x)/(M_P(x)\cap K)$ (cf.~(\ref{Langlands-decomp})).  Thus,
$e(\mathbf{P})$ is contractible,
and it is attached to
$D$ as the set of limits of the full geodesic action of $A_P$.
Then as sets,
\begin{equation}\label{corner-stratification}
\overline{D}(\mathbf{P})=\bigsqcup_{\mathbf{P}\subseteq
\mathbf{Q}}
e(\mathbf{Q}) \quad
\text{and} \quad
\overline{D}^{bs}=\bigsqcup_{\mathbf{P}} e(\mathbf{P}),
\end{equation}
and the above displays the standard stratification of a manifold
with corners. (In the language of \S\ref{sub:direct-image-sncd},
we have $e(\mathbf{P})\preceq e(\mathbf{Q})$ when
$\mathbf{P}\subseteq\mathbf{Q}$.) Thus, $e(\mathbf{P})$ is of
codimension $r(\mathbf{P})$ in $\overline{D}^{bs}$ and the open
stratum is $e(\mathbf{G})=D$. When $\mathbf{P}\subseteq
\mathbf{Q}$, the action of $A_P$ on $\overline{D}(\mathbf{P})$
preserves the stratum $e(\mathbf{Q})$. Moreover, $A_P$ acts on
$e(\mathbf{Q})$ through the quotient $A_P/A_Q$.

The group $\mathbf{G}(\Q)$ acts on $\overline{D}^{bs}$, with
$g\in\mathbf{G}(\Q)$
taking $e(\mathbf{Q})$ to $e(g\mathbf{Q}g^{-1})$.
A neat arithmetic subgroup
$\Gamma\subset\mathbf{G}(\Q)$
acts on $\overline{D}^{bs}$ without fixed points, and the quotient
$\Gamma\backslash\overline{D}^{bs}$ is a {\it compact} manifold with corners.
To emphasize that this is a compactification of $\Gamma\backslash D$, we
also write
$\overline{\Gamma\backslash D}^{bs}$; this is the {\it Borel-Serre
compactification}
of $\Gamma\backslash D$. We have, also as sets, a finite decomposition
into strata (cf.~\eqref{corner-stratification})
\begin{equation}\label{arithmetic-bs}
\overline{\Gamma\backslash D}^{bs}=
\bigsqcup_{\mathbf{P}} e'(\mathbf{P}),
\end{equation}
where $\mathbf{P}$ is taken modulo $\Gamma$-conjugacy, and the
``prime'' in the term for $\mathbf{P}$ indicates the quotient by
$\Gamma(\mathbf{P})$, which coincides with $\{\gamma\in\Gamma;\,
\gamma\text{ stabilizes }
e(\mathbf{P})\}$.
The open stratum in \eqref{arithmetic-bs} is $e'(\mathbf{G})=\Gamma\backslash D$.
The compactness of $\overline{\Gamma\backslash D}^{bs}$
gives the existence of a
neighborhood
of $\overline{e(\mathbf{P})}$ in $\overline{D}^{bs}$ on which
$\Gamma$-equivalence
and $\Gamma(\mathbf{P})$-equivalence coincide.\footnote{Unless
$\mathbf{P}$ is minimal, this neighborhood
cannot be taken
to be of the form $N_P\times D_P\times\{a\in A_P:a^\beta>t\text{ for all }
\beta\in
\Delta(\mathbf{P})\}$, as is stated erroneously in \cite[\S 10]{borel-serre}.
(One
can trace this back to 5.4, (7) of {\it op.~cit.})}

The reductive Borel-Serre compactification of $\Gamma\backslash D$ is the quotient by $\Gamma$ of a certain
stratified quotient space $\overline{D}^{rbs}$
of $\overline{D}^{bs}$,
or equivalently (from the point of view of $\Gamma\backslash D$), a quotient space of
$\overline{\Gamma\backslash D}^{bs}$.
The mapping $\overline{D}^{bs}\to \overline{D}^{rbs}$ is given stratum
by stratum by the canonical projection
$e(\mathbf{P})\to\widehat`{e}(\mathbf{P})$, where
\begin{equation}\label{reductive}
\widehat{e}(\mathbf{P}):=N_P\backslash e(\mathbf{P})\simeq D_P
\end{equation}
for $\mathbf{P}\subseteq \mathbf{G}$ a parabolic $\Q$-subgroup (not necessarily proper). In particular, $\widehat{e}(\mathbf{G})=D$ and
$\overline{D}^{bs} \to \overline{D}^{rbs}$ is the identity on their common open stratum.

It is rather straightforward to determine that with the quotient
topology, $\overline{D}^{rbs}$ is a separated space.
It is clear from \eqref{corner-stratification} that as sets,
\begin{equation}
\label{reductive-stratification}
\overline{D}^{rbs}=\bigsqcup_{\mathbf{P}} \widehat{e}(\mathbf{P}),
\end{equation}
where $\mathbf{P}$ runs over all parabolic $\Q$-subgroups of $\mathbf{G}$.
The quotient by a neat arithmetic subgroup is separated as well, and
$\Gamma\backslash\overline{D}^{rbs}=\overline{\Gamma\backslash D}^{rbs}$
is a compact stratified space. It is called, because of \eqref{reductive}, the
{\it reductive Borel-Serre
compactification} of $\Gamma\backslash D$. Clearly,
\eqref{arithmetic-bs}
and \eqref{reductive-stratification}
imply that as a set,
\begin{equation}
\overline{\Gamma\backslash D}^{rbs}=
\bigsqcup_{ \mathbf{P}} \widehat{e}^{\,\prime}(\mathbf{P}),
\end{equation}
with $\mathbf{P}$ as for (\ref{arithmetic-bs}). Note that
$\widehat{e}^{\,\prime}(\mathbf{G})=\Gamma \backslash D$. More generally,
\begin{equation}
\label{eq:startum-rbs-compact-loc-sym}
\widehat{e}^{\, \prime}(\mathbf{P})=\Gamma(\mathbf{M}_{\mathbf{P}})\backslash \widehat{e}(\mathbf{P}).
\end{equation}
where $\Gamma(\mathbf{M}_{\mathbf{P}})=(\Gamma\cap P)/(\Gamma \cap N_P A_P)$ which coincides with $\Gamma(\mathbf{P}/\mathbf{N}_{\mathbf{P}}\mathbf{S}_{\mathbf{P}}(x))$ as $\Gamma$ is neat.
There is a canonical quotient
mapping $\overline{
\Gamma\backslash D}^{bs}\to \overline{\Gamma\backslash D}^{rbs}\!$, which is a {\it
morphism of compactifications}, i.e., it maps $\Gamma\backslash D$ to itself
by the identity mapping.

The above constructions are hereditary, in that the closure of $e(\mathbf{P})$ (resp. $\widehat{e}(\mathbf{P})$) in
$\overline{D}^{bs}$ (resp. $\overline{D}^{rbs}$)
can be identified with the Borel-Serre (resp. reductive Borel-Serre) compactification
$\overline{e(\mathbf{P})}^{bs}$ (resp. $\overline{\widehat{e}(\mathbf{P})}^{rbs}$) of $e(\mathbf{P})$ (resp. $\widehat{e}(\mathbf{P})$). Note that
$e(\mathbf{P})$ is not a symmetric space unless $\mathbf{P}=
\mathbf{G}$, and $\widehat{e}(\mathbf{P})$ may contain euclidean
factors. Nevertheless, these are spaces to which the Borel-Serre
construction applies \cite[\S 2]{borel-serre}. As sets,
$$
\overline{e(\mathbf{P})}^{bs}=\bigsqcup_{\mathbf{Q}}
e(\mathbf{Q})  \quad \text{and} \quad \overline{\widehat{e}(
\mathbf{P})}^{rbs}=\bigsqcup_{\mathbf{Q}}
\widehat{e}(\mathbf{Q}),
$$
where $\mathbf{Q}$ runs over all parabolic $\Q$-subgroups of $\mathbf{G}$ contained in $\mathbf{P}$.
Inside $\overline{D}^{bs}$, we have
\begin{equation}\label{intersections}
\overline{e(\mathbf{P})}^{bs}\cap\, \overline{e(\mathbf{Q})}^{bs}=\left\{
\begin{array}{cc}
\overline{e(\mathbf{P}\cap\mathbf{Q})}^{bs} & \text{if }
\mathbf{P}\cap\mathbf{Q} \text{ is parabolic,}\\
\emptyset & \text{otherwise.}
\end{array}\right.
\end{equation}
However, in $\overline{\Gamma\backslash D}^{bs}$,
$\overline{e'(\mathbf{P})}^{bs}$ and $\overline{e'(\mathbf{Q})}^{bs}$ have
non-empty intersection if and only if $\mathbf{P}$ and {\it a
$\Gamma$-conjugate of}\, $\mathbf{Q}$ have parabolic intersection.
It is known that when $\mathbf{P}\cap
\mathbf{Q}$ is parabolic,
$\overline{e'(\mathbf{P})}^{bs}\cap\; \overline{e'(\mathbf{Q})}^{bs}$ is the
union of finitely many
connected components, one of which is
$\overline{e'(\mathbf{P}\cap\mathbf{Q})}^{bs}$, and the others are of a similar
nature (see \cite[\S3:\,Appendix]{HZ2}).
Parallel statements hold for $\overline{\widehat{e}(\mathbf{P})}^{rbs}$.

If $\Gamma'\subset \mathbf{G}(\Q)$ is another neat arithmetic subgroup and
$g\in \mathbf{G}(\Q)$ is such $g\Gamma' g^{-1}\subset \Gamma$, the induced
morphism $g:\Gamma' \backslash D \to \Gamma
\backslash D$
extends
to the Borel-Serre and the reductive Borel-Serre compactifications, yielding:
\begin{equation}
\label{eqn:g-extended-to-borel-serre-compact}
g^{bs}:\overline{\Gamma'\backslash D}^{bs}\to\overline{\Gamma\backslash D}^{
bs} \quad \text{and} \quad
g^{rbs}:\overline{\Gamma'\backslash D}^{rbs} \to \overline{\Gamma\backslash
D}^{rbs}.
\end{equation}

\subsection{The Baily-Borel Satake compactification}
\label{subsect:The-Baily-Borel-compactification}
\emph{The main reference for
the material in this subsection is \cite{baily-borel}.}

We assume that $D$ is a hermitian symmetric space. Let
$\mathbf{P}$ be a parabolic $\Q$-subgroup of $\mathbf{G}$ (not necessarily
proper). The
Levi quotient $\mathbf{L_P}$ admits a more refined decomposition
than is given in \S\ref{subsect:The Borel-Serre compactifications},
which we next describe.

Let $\mathbf{S}\subset \mathbf{G}$ be a maximal $\Q$-split torus in
$\mathbf{G}$. Let $\Phi(\mathbf{G},\mathbf{S})$ be the set of $\Q$-roots of
$\mathbf{G}$ with respect to $\mathbf{S}$.
Choose an order on $\mathbf{S}$ and denote the set of positive roots by
$\Phi^+(\mathbf{G},\mathbf{S})$ and the set of simple roots by $\Delta(
\mathbf{G},\mathbf{S})$. By \cite[\S 2.9]{baily-borel},
the root system $\Phi(\mathbf{G},\mathbf{S})$ is of classification
type $BC_r$ or $C_r$,
where $r={\rm rk}_{\Q}(\mathbf{G})$ (recall that $\mathbf{G}$ is assumed to
be $\Q$-simple).

List the simple roots as $\beta_1, \dots, \beta_r$ so that
$\beta_i$ is not orthogonal to $\beta_{i+1}$, and $\beta_r$ is the
short root if $\Phi(\mathbf{G},\mathbf{S})$ is of classification
type $BC_r$ and the long root if $\Phi(\mathbf{G},\mathbf{S})$ is
of classification type $C_r$. The root $\beta_r$ will be called
the {\it distinguished root} or the root at the distinguished end.

There is a unique minimal parabolic $\Q$-subgroup
$\mathbf{P}$ whose unipotent radical $\mathbf{N_P}$ is spanned by
the root spaces of the roots in $\Phi^+(\mathbf{G},\mathbf{S})$.
The parabolic $\Q$-subgroups $\mathbf{Q}$ that contain
$\mathbf{P}$ will be called {\it standard}. They are the ones
expressible in the form $\mathbf{P}_I$ for proper subsets
$I\subset \Delta(\mathbf{G},\mathbf{S})$; this is generated by
$\mathbf{N_P}$ and the centralizer of $\mathbf{S}_I:=\{s\in
\mathbf{S}; \, s^{\beta}=1, \, \beta \in I\}$.  Then
$\mathbf{N}_{\mathbf{P}_I}$ is the product of the root spaces of all
roots not in the span of $I$; this set of roots is denoted
$\Phi^+(\mathbf{G},\mathbf{S})^I$. Every parabolic subgroup $\mathbf{Q}$
of $\mathbf{G}$ is a $\mathbf{G}(\Q)$-conjugate of a unique standard
parabolic subgroup $\mathbf{P}_I$. We then say that $\mathbf{Q}$ is of {\it type}
$I$, or of {\it cotype} $J$, where $J=\Delta(\mathbf{G},\mathbf{S})-I$.

Recall that a subset of $\Delta(\mathbf{G},\mathbf{S})$ is
called {\it connected} if it is not the disjoint union of two
non-empty subsets which are orthogonal with respect to the Killing
form. Given a proper subset $I\subset
\Delta(\mathbf{G},\mathbf{S})$, let $\Delta_{I,h}$ be the
connected component of $I$ containing the distinguished root
$\beta_r$, with the convention that if $\beta_r\not \in I$, then
$\Delta_{I,h}=\emptyset$. We also put
$\Delta_{I,\ell}=I-\Delta_{I,h}$.

The subset $\Delta_{I,h}$ spans a subsystem
$\Phi_{I,h}(\mathbf{G}, \mathbf{S})$ of
$\Phi(\mathbf{G},\mathbf{S})$. The root spaces of elements in
$\Phi_{I,h}(\mathbf{G},\mathbf{S})$ generate a semi-simple
subgroup $\mathbf{M}_{\mathbf{Q},h}$ of $\mathbf{M_Q}$. Similarly,
$\Delta_{I,\ell}$ spans a subsystem
$\Phi_{I,\ell}(\mathbf{G},\mathbf{S})$ and the root spaces of
roots in $\Phi_{I,\ell}(\mathbf{G},\mathbf{S})$ generate a
semi-simple subgroup $\mathbf{M}_{\mathbf{Q},\ell}$ of
$\mathbf{M_Q}$. We have an almost direct product decomposition
$\mathbf{M_Q}=\widetilde{\mathbf{M}}_{\mathbf{Q},\ell}\times
{\mathbf{M}}_{ \mathbf{Q},h}$,\footnote{In the literature, notably
\cite{toroidal-comp}, one finds the subscripts reversed:
``${\ell,\mathbf{Q}}$'' and ``${h,\mathbf{Q}}$'', and use of the
notation $\mathbf{G}_{\ell, \mathbf{Q}}$ and
$\mathbf{G}_{h,\mathbf{Q}}$ instead of
$\widetilde{\mathbf{M}}_{\mathbf{Q},\ell}$ and ${\mathbf{M}}_{
\mathbf{Q},h}$.} where $\widetilde{\mathbf{M}}_{\mathbf{Q},\ell}$
is a reductive group containing $\mathbf{M}_{\mathbf{Q},\ell}$ and
having the same root system. This decomposition can be extended to
any parabolic $\Q$-subgroup $\mathbf{Q}$ of $\mathbf{G}$ (i.e.,
not necessarily standard). Indeed, as any parabolic $\Q$-subgroup
is conjugate to a unique standard one (or equivalently, we can
change $\mathbf{S}$ and $\Phi^+(\mathbf{G},\mathbf{S})$ to make
$\mathbf{Q}$ standard), we can define $\Delta_{\mathbf{Q},h}$,
etc. We get in this way a decomposition
\begin{equation}
\label{refined-langlands}
\mathbf{L_Q}=\mathbf{S_Q}\widetilde{\mathbf{M}}_{\mathbf{Q},\ell}\,\mathbf{M}_{
\mathbf{Q},h}
\end{equation}
(compare with \eqref{Langlands-decomp}).

Given a maximal parabolic $\Q$-subgroup $\mathbf{Q}\subset \mathbf{G}$,
we have the {\it rational boundary component}
\begin{equation}\label{boundary-comp}
e_h(\mathbf{Q}):=\widetilde{M}_{Q,\ell}\backslash \widehat{e}(\mathbf{Q})
\end{equation}
sitting in the boundary of $D$ in its embedding as a bounded
symmetric domain (see \cite[p.~170]{toroidal-comp}). It is isomorphic to
the hermitian symmetric space
$M_{Q,h}/(M_{Q,h}\cap K)$. We let
$$
\overline{D}^{bb}= D\sqcup \left( \bigsqcup_{\mathbf{Q} \; \text{maximal}}
{e_h}(\mathbf{Q})\right).
$$
Suitably topologized, $\overline{D}^{bb}$ is a stratified space,
with $e_h(\mathbf{Q'})$ in the closure of $e_h(\mathbf{Q})$, i.e.,
${e_h}(\mathbf{Q'})\preceq{e_h}(\mathbf{Q})$, if and only if
$\mathbf{Q'}\preceq\mathbf{Q}$\,; the latter is defined to mean
that, $\mathbf{Q'}$ and $\mathbf{Q}$ can be made simultaneously
standard of respective cotypes $\{\beta_{i'}\}$ and $\{\beta_i\}$ with
$i\leq i'$.
(We also write $\mathbf{Q}'\prec \mathbf{Q}$ if $i<i'$.)
The quotient by $\Gamma$,
$$
\overline{\Gamma\backslash D}^{bb}=\Gamma\backslash \overline{D}^{bb},
$$
is the {\it Baily-Borel Satake compactification} of $\Gamma\backslash D$.

It is shown in \cite{baily-borel}
that, in effect, $\overline{\Gamma\backslash D}^{bb}$ is the analytic
variety of
$\C$-points of a
normal $\C$-scheme $\overline{X}^{bb}$; in fact, $\overline{X}^{bb}$ can be
defined
over a number field.
The boundary $\partial \overline{X}^{bb}=\overline{X}^{bb}- X$ is
naturally stratified
with each stratum written as
$X^{bb}_{\mathbf{Q}}$,
with $\mathbf{Q}$ running over the finite set of
$\Gamma$-conjugacy
classes of maximal parabolic $\Q$-subgroups.
More precisely,
\begin{equation}\label{bb-strat}
\overline{X}^{bb}=X\sqcup\left(\bigsqcup_{\mathbf{Q}\text{ max'l, mod }\Gamma}
X^{bb}_\mathbf{Q}\right)\!,
\end{equation}
where $X^{bb}_{\mathbf{Q}}$ is the $\C$-scheme such that
$$X^{bb}_{\mathbf{Q}}(\C)=\Gamma(\mathbf{M}_{\mathbf{Q},h})\backslash e_h(
\mathbf{Q}).$$
In the above, $\Gamma(\mathbf{M}_{\mathbf{Q},h})=(\Gamma\cap Q)/(\Gamma \cap
N_QA_Q\widetilde{M}_{Q,\ell})$. As $\Gamma$ is neat, this arithmetic subgroup
coincides with
$\Gamma(\mathbf{Q}/\mathbf{N}_{\mathbf{Q}}\mathbf{S}_{\mathbf{Q}}\widetilde{
\mathbf{M}}_{\mathbf{Q},\ell})$.

The construction is hereditary, in that the normalization of the closure
$\overline{X}^{bb}_{ \mathbf{Q}}$ of the stratum
$X^{bb}_\mathbf{Q}$ in $\overline{X}^{bb}$ can be identified with
the Baily-Borel Satake compactification of $X^{bb}_\mathbf{Q}$.
Thus, there is a finite and surjective morphism
$$\overline{(X^{bb}_{\mathbf{Q}})}^{bb} \to \overline{X}^{bb}_{ \mathbf{Q}}$$
which is an isomorphism over $X^{bb}_{\mathbf{Q}}$.

Citing \cite[\S3.11]{satake-compactifications} or \cite[\S2]{GT},
we assert:

\begin{proposition}\label{bs-bb} There is a commutative diagram
\begin{equation}\label{diagram-bs-bb}
\xymatrix @C=2.7pc@R=1.5pc
{& \overline{\Gamma \backslash D}^{bs} \ar[d]^-{q} \\
 & \overline{\Gamma \backslash D}^{rbs}\ar[d]^-{p}\\
\Gamma \backslash D \ar[r]^-{j^{bb}} \ar@/^1pc/[uur]^-{j^{bs}\!\!\!\!}
\ar@/^.2pc/[ur]^-{j^{rbs}\!\!\!\!\!\!\!} & \overline{\Gamma \backslash D}^{bb}}
\end{equation}
where $p$ and $q$ are morphisms of compactifications of $\Gamma
\backslash D$.
\end{proposition}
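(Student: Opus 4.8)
The plan is to deduce Proposition \ref{bs-bb} directly from the three constructions given in \S\ref{subsect:The Borel-Serre compactifications} and \S\ref{subsect:The-Baily-Borel-compactification}, so that the only real content is the existence of the map $p$; the map $q$ and the commuting triangle in $\Gamma\backslash D$ are already recorded. First I would recall that by the construction reviewed after \eqref{reductive}, the stratum-by-stratum projection $e(\mathbf{P})\to \widehat e(\mathbf{P})=N_P\backslash e(\mathbf{P})$ assembles to a continuous surjection $\overline D^{bs}\to\overline D^{rbs}$ which is $\mathbf{G}(\Q)$-equivariant (it intertwines the action $g\cdot e(\mathbf{P})=e(g\mathbf{P}g^{-1})$), hence descends to the morphism of compactifications $q:\overline{\Gamma\backslash D}^{bs}\to\overline{\Gamma\backslash D}^{rbs}$; this is the identity on the common open stratum $\Gamma\backslash D$ and therefore a morphism of compactifications in the sense used above.

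Next I would construct $p$. The point is that the reductive Borel-Serre strata $\widehat e(\mathbf{P})$ for \emph{all} parabolic $\Q$-subgroups $\mathbf{P}$ map naturally onto the Baily-Borel boundary components $e_h(\mathbf{Q})$ indexed by \emph{maximal} parabolics. Concretely, given a parabolic $\Q$-subgroup $\mathbf{P}$, there is a unique maximal parabolic $\Q$-subgroup $\mathbf{Q}\supseteq\mathbf{P}$ which is ``standard of largest cotype at the distinguished end'' — i.e.\ making $\mathbf{P}$ standard, $\mathbf{Q}$ is the maximal parabolic of cotype $\{\beta_i\}$ with $\beta_i$ the smallest-indexed simple root not in the type of $\mathbf{P}$ — together with the collapsing $\widehat e(\mathbf{P})=D_P\twoheadrightarrow \widetilde M_{Q,\ell}\backslash\widehat e(\mathbf{Q})=e_h(\mathbf{Q})$ coming from the refined Langlands decomposition \eqref{refined-langlands} and definition \eqref{boundary-comp}; when $\mathbf{P}=\mathbf{G}$ both sides are $D$ and the map is the identity. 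Patching these over the stratifications \eqref{reductive-stratification} and the description of $\overline D^{bb}$ gives a continuous, $\mathbf{G}(\Q)$-equivariant surjection $\overline D^{rbs}\to\overline D^{bb}$; I would check continuity using that $\overline D^{rbs}$ carries the quotient topology from $\overline D^{bs}$ and that the analogous collapsing $\overline D^{bs}\to\overline D^{bb}$ is classical (the Borel-Serre compactification dominates all Satake compactifications, \cite{satake-compactifications}). Taking the quotient by the neat arithmetic subgroup $\Gamma$, which acts compatibly, yields $p:\overline{\Gamma\backslash D}^{rbs}\to\overline{\Gamma\backslash D}^{bb}$, and it is the identity on $\Gamma\backslash D$, hence a morphism of compactifications.

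Finally I would verify commutativity of \eqref{diagram-bs-bb}. The composite $\overline D^{bs}\to\overline D^{rbs}\to\overline D^{bb}$ agrees stratum-by-stratum with the classical collapse $e(\mathbf{P})\to e_h(\mathbf{Q})$ (both factor through $\widehat e(\mathbf{P})$), so $p\circ q=j^{bb}$-compatibly on $\Gamma\backslash D$ and equals the standard map $\overline{\Gamma\backslash D}^{bs}\to\overline{\Gamma\backslash D}^{bb}$; the three embeddings $j^{bs},j^{rbs},j^{bb}$ of $\Gamma\backslash D$ are by definition the open inclusions into the open stratum, and since $q,p$ restrict to the identity on $\Gamma\backslash D$, the triangles $q\circ j^{bs}=j^{rbs}$ and $p\circ j^{rbs}=j^{bb}$ hold tautologically. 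I expect the only genuinely delicate point to be the topological one: that the stratum-wise collapsing $\overline D^{rbs}\to\overline D^{bb}$ is actually continuous (equivalently, that $\Gamma$-equivalence and the boundary identifications are compatible in neighborhoods of strata). This is exactly what is handled in \cite[\S3.11]{satake-compactifications} and \cite[\S2]{GT}, and I would simply cite those references for it rather than reprove the neighborhood estimates; everything else is bookkeeping with the stratifications \eqref{corner-stratification}, \eqref{reductive-stratification}, \eqref{bb-strat} and the decompositions \eqref{Langlands-decomp}, \eqref{refined-langlands}.
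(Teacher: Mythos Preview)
The paper does not actually prove this proposition: it simply asserts it, citing \cite[\S3.11]{satake-compactifications} and \cite[\S2]{GT}. Your proposal is therefore more detailed than the paper's treatment, and it correctly identifies those same references as the source for the only nontrivial point (continuity of the collapsing $\overline{D}^{rbs}\to\overline{D}^{bb}$). The overall architecture of your sketch is sound and matches the standard construction.

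One small slip: the maximal parabolic $\mathbf{Q}$ onto whose boundary component $e_h(\mathbf{Q})$ the stratum $\widehat e(\mathbf{P})$ collapses is the one with $\mathbf{M}_{\mathbf{Q},h}\simeq\mathbf{M}_{\mathbf{P},h}$, i.e., the maximal parabolic to which $\mathbf{P}$ is \emph{subordinate} in the sense used later in the paper (cf.\ \S\ref{subsect:toroidal-compact}). Making $\mathbf{P}$ standard of type $I$, this $\mathbf{Q}$ has cotype $\{\beta_{j_0}\}$ where $\beta_{j_0}$ is the \emph{largest}-indexed simple root not in $I$ (so that $\Delta_{I,h}=\{\beta_{j_0+1},\dots,\beta_r\}$), not the smallest-indexed as you wrote. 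This is only a bookkeeping error and does not affect the validity of your argument.
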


As was the case with the Borel-Serre compactifications, if $\Gamma'\subset
\mathbf{G}(\Q)$ is another neat arithmetic subgroup and
$g\in \mathbf{G}(\Q)$ such that $g\Gamma' g^{-1}\subset \Gamma$, the induced
morphism $g:\Gamma'\backslash D \to \Gamma \backslash D$ extends to the
Baily-Borel Satake compactifications, yielding a morphism of analytic spaces
\begin{equation}
\label{eq:induced-g-on-baily-borel-compact}
g^{bb}:\overline{\Gamma' \backslash D}^{bb}\to\overline{\Gamma\backslash D}^{bb}.
\end{equation}
As both analytic spaces are projective, we deduce a morphism of $\C$-schemes
$g^{bb}:\overline{X'}^{bb} \to \overline{X}^{bb}$ which is finite and surjective.
In fact, this morphism is defined over a number
field.

\subsection{The toroidal compactifications}
\label{subsect:toroidal-compact}
{\it The main reference for the material in this subsection is
\cite{toroidal-comp}.}

We start with a quick summary of the outcome of the construction. There
are usually infinitely many toroidal compactifications
$\overline{X}^{tor}_{\Sigma}$ of $X$, depending on the
choice of some combinatorial data denoted $\Sigma$.  They are
algebraic varieties constructed over $\overline{X}^{bb}$, so that
there is a morphism
$\overline{X}^{tor}_{\Sigma}\to\overline{X}^{bb}$, which is a morphism
of compactifications of $X$. For suitable
choices of $\Sigma$ (again, infinitely many), one has that
$\overline{X}^{tor}_{\Sigma}$ is smooth and projective, and the
boundary $\partial\overline{X}^{tor}_{\Sigma}$ is a divisor with
simple normal crossings.

We specify some of the details.
Let $\mathbf{Q}$ be a maximal parabolic $\Q$-subgroup. Denote by
$\mathbf{U_Q}$ the center of the unipotent radical $\mathbf{N_Q}$
and let $\mathbf{V_Q}=\mathbf{N_Q}/\mathbf{U_Q}$. Then
$\mathbf{U_Q}$ and $\mathbf{V_Q}$ are vector group-schemes (i.e., isomorphic to
direct products of copies of the additive group $\mathbb{G}_a$). The action of $\mathbf{L_Q}$ on $\mathbf{U_{Q}}$ factors through $\mathbf{L_Q}/\mathbf{M}_{\mathbf{Q},h}$. The latter is isomorphic to
the quotient of $\mathbf{S_Q}\widetilde{\mathbf{M}}_{\mathbf{Q},\ell}$ by the finite normal subgroup $\mathbf{S_Q}\widetilde{\mathbf{M}}_{\mathbf{Q},\ell}\cap \mathbf{M}_{\mathbf{Q},h}$.

There is a homogeneous, self-adjoint cone (with vertex removed)
$C_Q\subset U_Q$, invariant under the
action of $A_{Q}\widetilde M_{Q,\ell}$, with the geodesic action of $A_Q$
giving
the cone dilations; it arises in the realization of $D$ as a
Siegel domain with respect to $\mathbf{Q}$
(see \cite[pp.~235--236]{toroidal-comp}).
Denote by $\overline{C}_Q$
the union of $C_Q$ and its rational boundary components, equipped with
the Satake topology (see \cite[pp.~81]{toroidal-comp}).

Let $\mathbf{Q_1}$ and $\mathbf{Q_2}$ be two standard maximal
parabolic $\Q$-subgroups. Then $\mathbf{Q_1}\succeq \mathbf{Q_2}$ if and only if
$\mathbf{M}_{\mathbf{Q_1},\ell}\subseteq\mathbf{M}_{\mathbf{Q_2},
\ell}$ (or equivalently
$\mathbf{M}_{\mathbf{Q_1},h}\supseteq\mathbf{M}_{\mathbf{Q_2},h}$).\footnote{These are inclusions of subquotients of $\mathbf{G}$.}
In that case, $\mathbf{U_{Q_1}}
\subseteq\mathbf{U_{Q_2}}$ and the inclusion is $\widetilde
{\mathbf{M}}_{\mathbf{Q_1},\ell}\,$-equivariant. However, what is relevant is the
embedding, for $\mathbf{Q}_1\succ \mathbf{Q}_2$, of $C_{Q_1}$ in
$\overline{C}_{Q_2}$ as a {\it rational boundary component},
analogous to what we had for the $e_h(\mathbf{Q})$'s in
$\overline{D}^{bb}$ in
\S\ref{subsect:The-Baily-Borel-compactification}.

Given a parabolic $\Q$-subgroup $\mathbf{P}\subset \mathbf{G}$
(not necessarily maximal), we put
$\Gamma(\widetilde{\mathbf{M}}_{\mathbf{P},\ell})=(\Gamma \cap
P)/(\Gamma \cap N_PA_PM_{Q,h})$. As $\Gamma$ is neat, this
coincides with
$\Gamma(\mathbf{P}/\mathbf{N_P}\mathbf{S_P}\mathbf{M}_{\mathbf{P},h})$.
The arithmetic subgroup
$\Gamma(\widetilde{\mathbf{M}}_{\mathbf{Q},\ell})$ acts on $U_Q$.

\begin{definition}
\label{defn:compatible-family-prpcd-s}
A \emph{compatible family of
partial rational polyhedral cone decompositions} (with respect to
$\Gamma$) $\Sigma=\{\Sigma_{\mathbf{Q}}\}$ is a family of rational
polyhedral cone decompositions \emph{(prpcd's)}
$\Sigma_{\mathbf{Q}}$ of $\overline C_Q$,\footnote{If one means
{\it closed} cones, that displays the face relations.  We will
mean throughout their interiors, obtaining a stratification of
$\overline C_Q$ and thus a decomposition in the literal sense.}
one for each maximal parabolic $\Q$-subgroup $\mathbf{Q}$, such
that the following conditions are satisfied.
\begin{enumerate}

\item $\Sigma_{\mathbf{Q}}$ is equivariant with respect to the action of
$\Gamma(\widetilde{\mathbf{M}}_{\mathbf{Q},\ell})$, and there are finitely
many equivalence classes of rational polyhedral cones modulo this action.

\item For $\gamma\in \Gamma$, the isomorphism $\overline C_Q\simeq \overline C_{\gamma Q\gamma^{-1}}$
induced by ${\rm int}(\gamma):C_Q \overset{\sim}{\to} C_{\gamma
Q\gamma^{-1}}$ sends a rational polyhedral cone in
$\Sigma_{\mathbf{Q}}$ to a rational polyhedral cone in
$\Sigma_{\gamma \mathbf{Q}\gamma^{-1}}$.

\item if $\mathbf{Q_1}\succeq\mathbf{Q_2}$,
then $\Sigma_{\mathbf{Q_1}}$ is the trace of
$\Sigma_{\mathbf{Q_2}}$ with respect to the inclusion of
$\overline C_{Q_1}$ in $\overline C_{Q_2}$, i.e., $\Sigma_{\mathbf{Q}_1}$ is
the subset of polyhedral cones in $\Sigma_{\mathbf{Q}_2}$ that are contained
in $\overline{C}_{Q_1}$.

\end{enumerate}

\end{definition}

By \cite{toroidal-comp}, such decompositions always exist.
Fix a compatible family of {\it prpcd}'s $\Sigma=\{\Sigma_{\mathbf{Q}}\}$. One gets
for each $\mathbf{Q}$, from
the corresponding Siegel domain picture of $D$, a tower of schemes
\begin{equation}
\label{tower}
\xymatrix@C=1.7pc{ \mathcal{S}_{\mathbf{Q}} \ar[r] &  \mathcal{A}_{\mathbf{Q}}
\ar[r] & \widetilde{X}^{bb}_{\mathbf{Q}},}
\end{equation}
associated to the tower of algebraic groups
$$
\xymatrix@C=1.7pc{ \mathbf{M}_{\mathbf{Q},h}\mathbf{N_Q} \ar[r] & \mathbf{M}_{
\mathbf{Q},h}\mathbf{N_Q/U_Q}
\ar[r] &  \mathbf{M}_{\mathbf{Q},h}.}
$$
In \eqref{tower}, $\widetilde{X}^{bb}_{\mathbf{Q}}$ is a Galois \'etale cover of $X^{bb}_{\mathbf{Q}}$. It corresponds to the locally symmetric variety $(\Gamma(\mathbf{M_Q})\cap M_{Q,h})\backslash e_h(\mathbf{Q})$. Hence, the group of automorphisms
of $\widetilde{X}^{bb}_{\mathbf{Q}}\to X^{bb}_{\mathbf{Q}}$
is given by the finite quotient
\begin{equation}
\label{eq:galois-group-tilde-X-bb}
\frac{\Gamma(\mathbf{M}_{\mathbf{Q},h})}{\Gamma(\mathbf{M_Q})\cap M_{Q,h}}\simeq \frac{\Gamma(\mathbf{M_Q})}{(\Gamma(\mathbf{M_Q}) \cap M_{Q,h})(\Gamma(\mathbf{M_Q}) \cap \widetilde{M}_{Q,\ell})}\simeq \frac{\Gamma(\widetilde{\mathbf{M}}_{\mathbf{Q},\ell})}{\Gamma(\mathbf{M_Q})\cap \widetilde{M}_{Q,\ell}}.
\end{equation}
Moreover, $\mathcal{A}_{\mathbf{Q}}$ is an abelian scheme over
$\widetilde{X}^{bb}_{
\mathbf{Q}}$ and $\mathcal{S}_{\mathbf{Q}}\to \mathcal{A}_{\mathbf{Q}}$
is a $\mathbf{T_Q}$-torsor,
where $\mathbf{T_Q}=(\Gamma(\mathbf{U_Q})\otimes \mathbb{G}_m)$, which is a split $\Q$-torus.
Furthermore, the arithmetic subgroup
$\Gamma(\widetilde{\mathbf{M}}_{\mathbf{Q},\ell})$ acts on $\widetilde{X}^{bb}_{\mathbf{Q}}$,
$\mathbf{T_Q}$ and $\mathcal{S}_{\mathbf{Q}}$
and the morphisms in
\eqref{tower} are compatible with these actions. Also note that
$\Gamma(\widetilde{\mathbf{M}}_{\mathbf{Q},\ell})$ acts on $\widetilde{X}^{bb}_{\mathbf{Q}}$ by its quotient $\Gamma(\widetilde{\mathbf{M}}_{\mathbf{Q},\ell})/\Gamma(\mathbf{M}_{\mathbf{Q}}) \cap \widetilde{M}_{Q,\ell}$ via the isomorphisms
\eqref{eq:galois-group-tilde-X-bb}. In particular, it permutes transitively the fibers of the \'etale cover $\widetilde{X}^{bb}_{\mathbf{Q}}\to X^{bb}_{\mathbf{Q}}$.

Let $\mathbf{T}_{\mathbf{Q},\Sigma}$ be the
$\Gamma(\widetilde{\mathbf{M}}_{\mathbf{Q},\ell})$-equivariant torus
embedding associated to the {\it prpcd}
$\Sigma_{\mathbf{Q}}$, the rational polyhedral cones in $\Sigma_{\mathbf{Q}}$ corresponding
to $\mathbf{T_Q}$-orbits, and put
\begin{equation}\label{SQ}
{\mathcal{S}}_{\mathbf{Q},\Sigma} =
\mathcal{S}_{
\mathbf{Q}}\times^{\mathbf{T_Q}}\mathbf{T}_{\mathbf{Q},\Sigma}\quad
\text{ and }\quad
\mathcal{B}_{
\mathbf{Q},\Sigma}=\partial{\mathcal{S}}_{\mathbf{Q},
\Sigma}=\mathcal{S}_{\mathbf{Q},\Sigma}-
\mathcal{S}_{\mathbf{Q}}.
\end{equation}

Using reduction theory, one sees that the
${\mathcal{S}}_{\mathbf{Q},\Sigma}$'s can be used to define the
boundary for the compactification $\overline{X}^{tor}_{\Sigma}$,
the {\it toroidal compactification} of $X$ constructed from
$\Sigma$ \cite{toroidal-comp}. One calls $\Sigma$ {\it projective}
(resp.~{\it smooth}), when $\overline{X}^{tor}_{\Sigma}$ is
projective (resp.~smooth). Again by \cite{toroidal-comp}, smooth
projective $\Sigma$ always exist. For a smooth $\Sigma$, the
rational polyhedral cones in the decompositions must be generated
by a subset of a $\Z$-basis of $\Gamma(\mathbf{U}_{\mathbf{Q}})$.
We also say that $\Sigma$ is \emph{simplicial} if the rational
polyhedral cones in the decompositions are simplicial cones, i.e.,
generated by a subset of a basis of the $\R$-vector space $U_Q$.
When $\Sigma$ is simplicial, the toroidal compactification
$\overline{X}^{tor}_{\Sigma}$ has only quotient
singularities. From the construction:

\begin{theorem}
\label{morph-comp}
There is a commutative triangle
$$\xymatrix@C=1.5pc@R=1.5pc{X \ar[r] \ar@/_/[dr] & \overline{X}^{tor}_{\Sigma} \ar[d]^-e \\
& \overline{X}^{bb}}$$
with $e$ a morphism of compactifications of $X$. For a cofinal subset of compatible families of {\it prpcd}'s $\Sigma$,
$\overline{X}^{tor}_{\Sigma}$ is a
smooth and projective compactification of $X$, with a simple normal crossing divisor at infinity.
\end{theorem}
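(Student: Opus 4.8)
The plan is to recall that Theorem \ref{morph-comp} is not something we prove from scratch here: it is the main output of the theory of toroidal compactifications, and the reference \cite{toroidal-comp} (supplemented by \cite{pink-thesis}) supplies everything. So the ``proof'' is really a matter of assembling the statement from the constructions summarized above and citing the appropriate places in \cite{toroidal-comp}. First I would note that the tower \eqref{tower}, the torus embeddings $\mathbf{T}_{\mathbf{Q},\Sigma}$, and the spaces ${\mathcal S}_{\mathbf{Q},\Sigma}$ of \eqref{SQ} glue, via reduction theory and the compatibility conditions (1)--(3) in Definition \ref{defn:compatible-family-prpcd-s}, to produce a scheme $\overline{X}^{tor}_{\Sigma}$ containing $X$ as a dense open subscheme; this is precisely the content of \cite[Ch.~III]{toroidal-comp}. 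The morphism $e:\overline{X}^{tor}_{\Sigma}\to\overline{X}^{bb}$ arises because each ${\mathcal S}_{\mathbf{Q},\Sigma}$ maps to the stratum-closure $\overline{X}^{bb}_{\mathbf{Q}}$ (through $\widetilde{X}^{bb}_{\mathbf{Q}}$) compatibly with the face relations $\mathbf{Q}_1\succeq\mathbf{Q}_2$, and by construction $e$ restricts to the identity on $X$; that it is a morphism of compactifications is then immediate. This gives the commutative triangle.

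Next I would address the two clauses about good $\Sigma$. For smoothness and projectivity: \cite{toroidal-comp} shows that the set of compatible families of {\it prpcd}'s that are simultaneously smooth and projective is cofinal in the system of all such families (ordered by refinement), so in particular non-empty and cofinal; for such $\Sigma$, each $\mathbf{T}_{\mathbf{Q},\Sigma}$ is a smooth toric variety (the cones being spanned by part of a $\Z$-basis of $\Gamma(\mathbf{U}_{\mathbf{Q}})$, as recalled just before the theorem), the ${\mathcal S}_{\mathbf{Q},\Sigma}$ inherit smoothness from the smoothness of $\mathcal{S}_{\mathbf{Q}}\to\mathcal{A}_{\mathbf{Q}}\to\widetilde{X}^{bb}_{\mathbf{Q}}$ (abelian scheme over a smooth base, then a torsor), and the projectivity of $\Sigma$ yields an ample line bundle on $\overline{X}^{tor}_{\Sigma}$ by the standard polarization construction of \cite{toroidal-comp}. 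For the normal-crossing statement: the boundary $\partial\overline{X}^{tor}_{\Sigma}$ is locally modeled on the boundary of a smooth toric variety, namely $\mathbf{T}_{\mathbf{Q},\Sigma}-\mathbf{T_Q}$, whose irreducible components are the orbit closures attached to the rays of $\Sigma_{\mathbf{Q}}$; smoothness of the cones forces these to meet transversally, and the neatness of $\Gamma$ (assumed throughout \S\ref{subsect:generalities}) rules out self-intersections, so $\partial\overline{X}^{tor}_{\Sigma}$ is a simple normal crossing divisor. This proves Theorem \ref{morph-comp}.

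The only genuine subtlety --- and the step I would flag as the ``main obstacle'' --- is the gluing that produces $\overline{X}^{tor}_{\Sigma}$ as a separated scheme and the verification that $e$ is everywhere defined: this requires reduction theory to show that the local charts ${\mathcal S}_{\mathbf{Q},\Sigma}$ (divided by the relevant arithmetic groups $\Gamma(\widetilde{\mathbf{M}}_{\mathbf{Q},\ell})$) overlap correctly and that the overlaps are compatible with the maps down to $\overline{X}^{bb}$. Since we are in the position of \emph{using} rather than \emph{reproving} \cite{toroidal-comp}, I would simply cite \cite[Main Theorem, Ch.~III]{toroidal-comp} (and \cite[\S6.2--6.3]{pink-thesis} for the mixed Shimura variety formulation that makes the functoriality in $\Gamma$ transparent, which we shall need later in \S\ref{sect:main-comput}) for this gluing, and likewise cite \emph{loc.~cit.}~for the cofinality of smooth projective $\Sigma$. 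Thus the proof of Theorem \ref{morph-comp} is a citation of known results, the only work on our side being to match our notation (in particular the conventions on $\Sigma^{\circ}$, $\Sigma^c$ and on $\widetilde{\mathbf{M}}_{\mathbf{Q},\ell}$ versus $\mathbf{G}_{\ell,\mathbf{Q}}$ flagged in the Notation section) with that of \cite{toroidal-comp}.
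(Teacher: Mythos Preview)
Your proposal is correct and matches the paper's approach: the paper gives no proof at all for Theorem \ref{morph-comp}, simply prefacing the statement with ``From the construction:'' after the paragraph citing \cite{toroidal-comp} for the existence of the toroidal compactification and of smooth projective $\Sigma$. Your write-up is a faithful (and more detailed) unpacking of exactly that citation.
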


Let $\mathcal{B}^{\circ}_{\mathbf{Q},\Sigma}$
be the complement in
$\mathcal{B}_{\mathbf{Q},\Sigma}$ of the divisors
that correspond to rays in $\Sigma_{\mathbf{Q}}$ which are contained in the boundary of $\overline{C}_{Q}$. Let also
$\mathcal{B}^{c}_{\mathbf{Q},\Sigma}$ be the closure of
$\mathcal{B}^{\circ}_{\mathbf{Q},\Sigma}$ in
$\mathcal{B}_{\mathbf{Q},\Sigma}$. The group
$\Gamma(\widetilde{\mathbf{M}}_{\mathbf{Q},\ell})$
acts on the $\C$-schemes
$\mathcal{B}^{\circ}_{\mathbf{Q},\Sigma}$ and
$\mathcal{B}^{c}_{\mathbf{Q},\Sigma}$.
The next proposition describes, in effect, the fibers of $e$ in
Theorem \ref{morph-comp}.
Again from the construction:

\begin{proposition}
\label{fiber}
For $\mathbf{Q}\subset \mathbf{G}$
a maximal parabolic $\Q$-subgroup, the base-change of
$e:\overline{X}^{tor}_{\Sigma} \to \overline{X}^{bb}$
with respect to the inclusion
$X^{bb}_{\mathbf{Q}} \hookrightarrow \overline{X}^{bb}$ is isomorphic to
$$\xymatrix@C=1.7pc{\Gamma(\widetilde{\mathbf{M}}_{\mathbf{Q},\ell})\backslash
\mathcal{B}^{c}_{
\mathbf{Q},\Sigma}\ar[r] & X^{bb}_{\mathbf{Q}}.}$$
\end{proposition}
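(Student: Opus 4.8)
\textbf{Plan for the proof of Proposition \ref{fiber}.} The statement is essentially a bookkeeping assertion extracted from the construction of $\overline{X}^{tor}_{\Sigma}$ in \cite{toroidal-comp}, so the plan is to unwind the relevant pieces of that construction rather than to prove something genuinely new. First I would recall that, by reduction theory (as used in \cite[Ch.~III]{toroidal-comp}), a neighborhood of the boundary component over $X^{bb}_{\mathbf{Q}}$ in $\overline{X}^{tor}_{\Sigma}$ is modeled, étale-locally, on the quotient by $\Gamma(\widetilde{\mathbf{M}}_{\mathbf{Q},\ell})$ of (an open subset of) ${\mathcal{S}}_{\mathbf{Q},\Sigma} = \mathcal{S}_{\mathbf{Q}}\times^{\mathbf{T_Q}}\mathbf{T}_{\mathbf{Q},\Sigma}$, compatibly with the tower \eqref{tower}. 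The map $e$ to $\overline{X}^{bb}$ is then induced by the projection collapsing the torus-embedding and abelian-scheme directions, i.e.\ by ${\mathcal{S}}_{\mathbf{Q},\Sigma}\to \widetilde{X}^{bb}_{\mathbf{Q}}\to X^{bb}_{\mathbf{Q}}$ on the nose. The content of the proposition is that the scheme-theoretic fiber of $e$ over the stratum $X^{bb}_{\mathbf{Q}}$ is exactly the union of boundary divisors that lie over that stratum, modulo $\Gamma(\widetilde{\mathbf{M}}_{\mathbf{Q},\ell})$.

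Concretely, I would proceed as follows. Step one: identify the part of $\partial{\mathcal{S}}_{\mathbf{Q},\Sigma}$ lying over $\widetilde X^{bb}_{\mathbf Q}$ (as opposed to over deeper strata of $\overline{X}^{bb}$ coming from $\mathbf{Q}_1\succ\mathbf{Q}$). By condition (3) of Definition \ref{defn:compatible-family-prpcd-s}, the cones of $\Sigma_{\mathbf{Q}}$ that meet the boundary of $\overline{C}_Q$ are exactly those inherited from larger parabolics, so the divisors corresponding to rays {\it in the interior} $C_Q$ are precisely the ones whose image in $\overline{X}^{bb}$ is $X^{bb}_{\mathbf Q}$; removing the other divisors from $\mathcal{B}_{\mathbf{Q},\Sigma}$ gives by definition $\mathcal{B}^{\circ}_{\mathbf{Q},\Sigma}$, and taking its closure gives $\mathcal{B}^{c}_{\mathbf{Q},\Sigma}$. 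Step two: check that this closure is exactly $e^{-1}(X^{bb}_{\mathbf Q})$ before passing to the quotient; this is where one must be careful about scheme structure versus underlying set — the fiber $X^{bb}_{\mathbf Q}\times_{\overline{X}^{bb}}\overline{X}^{tor}_{\Sigma}$ should be taken with its natural (reduced, since $\partial\overline{X}^{tor}_{\Sigma}$ is a \emph{sncd} for smooth $\Sigma$, or with quotient singularities for simplicial $\Sigma$) structure, and one verifies this locally in the torus-embedding charts, where it amounts to the standard description of the closed orbit-stratum of a toric variety attached to the cones in $\overline{C}_Q$ meeting $C_Q$. Step three: take the quotient by $\Gamma(\widetilde{\mathbf{M}}_{\mathbf{Q},\ell})$. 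Since this group acts on $\mathcal{B}^{c}_{\mathbf{Q},\Sigma}$ (noted just before the statement) and $e$ is $\Gamma(\widetilde{\mathbf{M}}_{\mathbf{Q},\ell})$-equivariant with $\widetilde X^{bb}_{\mathbf Q}\to X^{bb}_{\mathbf Q}$ the quotient map of \eqref{eq:galois-group-tilde-X-bb}, forming the quotient commutes with base change along $X^{bb}_{\mathbf Q}\hookrightarrow\overline{X}^{bb}$, and one gets the asserted isomorphism with $\Gamma(\widetilde{\mathbf{M}}_{\mathbf{Q},\ell})\backslash\mathcal{B}^{c}_{\mathbf{Q},\Sigma}\to X^{bb}_{\mathbf Q}$.

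The main obstacle, I expect, is purely expository: matching the ad hoc local description of $\overline{X}^{tor}_{\Sigma}$ near the $\mathbf{Q}$-boundary in \cite{toroidal-comp} with the global statement of the proposition, in particular tracking which toric divisors (rays interior to $C_Q$ versus rays in $\partial\overline{C}_Q$) map to which boundary stratum of $\overline{X}^{bb}$, and verifying that the fiber product is reduced/has the expected singularity type so that $\mathcal{B}^c_{\mathbf{Q},\Sigma}$ really is the fiber and not just a set-theoretic truncation of it. There is no hard input beyond \cite{toroidal-comp} and the hereditarity and stratification properties recorded in \S\ref{subsect:The-Baily-Borel-compactification} and \S\ref{subsect:toroidal-compact}; consequently I would keep the write-up short, citing \cite{toroidal-comp} for the local models and doing only the orbit-stratum identification and the equivariant descent explicitly.
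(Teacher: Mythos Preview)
Your plan is correct and in fact more detailed than what the paper does: the paper gives no proof at all, merely prefacing the statement with ``Again from the construction'' and treating it as an immediate consequence of \cite{toroidal-comp}. Your three-step unwinding (identify interior-ray divisors, verify the fiber locally in toric charts, descend by the $\Gamma(\widetilde{\mathbf{M}}_{\mathbf{Q},\ell})$-action) is exactly the content behind that phrase, so you should feel free to keep the write-up as short as the paper's or include the sketch you have outlined.
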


For evident reasons, the schemes $\Gamma(\widetilde{\mathbf{M}}_{\mathbf{Q},
\ell})\backslash
\mathcal{B}^{c}_{\mathbf{Q},\Sigma}$, with
$\mathbf{Q}$
maximal, have been called
the Baily-Borel-type ``strata'' of $\partial\overline{X}^{tor}_{\Sigma}$
(though $\mathcal{B}^{c}_{\mathbf{Q},\Sigma}$ generally has crossings).
They admit further refinement, which we now describe.

Let $\mathbf{R}\subset \mathbf{G}$ be a proper parabolic $\Q$-subgroup (not necessarily maximal). Let $\mathbf{Q}$ be the maximal parabolic $\Q$-subgroup containing $\mathbf{R}$ and such that $\mathbf{M}_{\mathbf{Q},h}\simeq \mathbf{M}_{\mathbf{R},h}$.
(For this reason,
one says that $\mathbf{R}$ is {\it
subordinate to} $\mathbf{Q}$, as
in \cite[\S1]{HZ2}.)
Let $\Sigma^{\circ}_{\mathbf{R}}\subset \Sigma_{\mathbf{Q}}$ be the subset of rational polyhedral cones $\sigma$ satisfying the following two conditions:
\begin{enumerate}

\item every extremal ray of $\sigma$ is contained in $C_P$ with $\mathbf{P}$ one of the maximal parabolic $\Q$-subgroups that contain $\mathbf{R}$,

\item for every maximal parabolic $\Q$-subgroup $\mathbf{P}$ containing $\mathbf{R}$, there is at least one extremal ray of
$\sigma$ contained in $C_P$.
\end{enumerate}
Let also $\Sigma^c_{\mathbf{R}}\subset \Sigma_{\mathbf{Q}}$ be the
subset of rational polyhedral cones containing an element of
$\Sigma^{\circ}_{\mathbf{R}}$ in their closure.
Denote by $\mathcal{B}^{\circ}_{\mathbf{R},\Sigma}$ the locally closed subscheme of $\mathcal{S}_{\mathbf{Q},\Sigma}$ that is the union of the strata corresponding to rational polyhedral cones in $\Sigma_{\mathbf{R}}^{\circ}$. Also denote by
$\mathcal{B}^{c}_{\mathbf{R},\Sigma}$ the closed subscheme of $\mathcal{S}_{\mathbf{Q},\Sigma}$ which is the union of the strata corresponding to rational polyhedral cones in $\Sigma_{\mathbf{R}}^{c}$. Clearly, $\mathcal{B}^c_{\mathbf{R},\Sigma}$ is the closure of
$\mathcal{B}^{\circ}_{\mathbf{R},\Sigma}$
in $\mathcal{S}_{\mathbf{Q},\Sigma}$.
(When $\mathbf{R}$ is itself maximal, this agrees with what was defined above).
When $\Sigma_{\mathbf{Q}}$ is fine enough, $\Sigma^c_{\mathbf{R}}$
is the union of $\Sigma^{\circ}_{\mathbf{R'}}$ where $\mathbf{R'}$ runs overs the parabolic $\Q$-subgroups of $\mathbf{G}$ contained in
$\mathbf{R}$ and subordinate to $\mathbf{Q}$.
In this case, we have, as sets,
\begin{equation}
\label{corner-like-strata}
\mathcal{B}^c_{\mathbf{R},\Sigma}\quad = \bigsqcup_{\mathbf{R'} \subseteq \mathbf{R} \; \text{with} \; \mathbf{M}_{\mathbf{R},h}\simeq \mathbf{M}_{\mathbf{R}',h}} \mathcal{B}^{\circ}_{\mathbf{R'},\Sigma}\,.
\end{equation}

\begin{proposition}
\label{normalizers}
Let $\mathbf{Q}\subset \mathbf{G}$ be a maximal parabolic $\Q$-subgroup and $\mathbf{R}\subset \mathbf{Q}$ a parabolic $\Q$-subgroup of $\mathbf{G}$ which is subordinate to $\mathbf{Q}$.

\begin{enumerate}

\item[(i)]
For $\gamma\in\Gamma(\widetilde{\mathbf{M}}_{\mathbf{Q},\ell})$,
we have $\gamma\cdot\mathcal{B}^\circ_{\mathbf{R},\Sigma} = \mathcal{B}^\circ_{\gamma\mathbf{R}
\gamma^{-1},\Sigma}$ and $\gamma\cdot \mathcal{B}^c_{\mathbf{R},\Sigma} = \mathcal{B}^c_{
\gamma\mathbf{R}\gamma^{-1},\Sigma}$.

\item[(ii)]
The stabilizers of $\mathcal{B}^{\circ}_{\mathbf{R},\Sigma}$ and of $\mathcal{B}^c_{\mathbf{R},\Sigma}$
in $\Gamma(\widetilde{\mathbf{M}}_{\mathbf{Q},\ell})$ are given by the same arithmetic group
$\Gamma(\widetilde{\mathbf{M}}_{\mathbf{Q},\ell}\,|\,\mathbf{R})$ in {\rm (\ref{eq:useful-arthmetic-subgr})} below.

\item[(iii)] When $\Sigma_{\mathbf Q}$ is sufficiently fine,
$\mathcal{B}^c_{\mathbf{R},\Sigma}\cap \mathcal{B}^c_{\gamma\mathbf{R}
\gamma^{-1},\Sigma}=\emptyset$ for
$\gamma\in\Gamma(\widetilde{\mathbf{M}}_{\mathbf{Q},\ell})$ not
in the stabilizer of $\mathcal{B}^{c}_{\mathbf{R},\Sigma}$.
\end{enumerate}
\end{proposition}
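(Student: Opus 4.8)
\textbf{Proof plan for Proposition \ref{normalizers}.}

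The plan is to treat the three assertions essentially in the order given, since (ii) and (iii) build on (i). For part (i), I would unwind the definitions of $\mathcal{B}^\circ_{\mathbf{R},\Sigma}$ and $\mathcal{B}^c_{\mathbf{R},\Sigma}$ as unions of strata of $\mathcal{S}_{\mathbf{Q},\Sigma}$ indexed by cones in $\Sigma^\circ_{\mathbf{R}}$ and $\Sigma^c_{\mathbf{R}}$ respectively. The key point is that the action of $\gamma\in\Gamma(\widetilde{\mathbf{M}}_{\mathbf{Q},\ell})$ on $\mathcal{S}_{\mathbf{Q},\Sigma}$ comes from the equivariant structure on $\mathcal{S}_{\mathbf{Q}}$ and on the torus embedding $\mathbf{T}_{\mathbf{Q},\Sigma}$, and on the level of cones it is just $\mathrm{int}(\gamma):\overline{C}_Q\to\overline{C}_Q$. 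So $\gamma$ permutes the strata according to how $\mathrm{int}(\gamma)$ permutes the cones. It therefore suffices to check that $\mathrm{int}(\gamma)$ carries $\Sigma^\circ_{\mathbf{R}}$ onto $\Sigma^\circ_{\gamma\mathbf{R}\gamma^{-1}}$, and likewise for the ``$c$'' version. This is immediate from the defining conditions (1), (2) of $\Sigma^\circ_{\mathbf{R}}$ once one observes that $\mathrm{int}(\gamma)$ sends $C_P$ to $C_{\gamma P\gamma^{-1}}$ for every maximal parabolic $\mathbf{P}$ containing $\mathbf{R}$, and that $\mathbf{P}\mapsto\gamma\mathbf{P}\gamma^{-1}$ is a bijection between the maximal parabolics containing $\mathbf{R}$ and those containing $\gamma\mathbf{R}\gamma^{-1}$; then $\Sigma^c$ transforms correctly because ``containing an element of $\Sigma^\circ_{\mathbf{R}}$ in the closure'' is a condition preserved by the linear isomorphism $\mathrm{int}(\gamma)$. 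Taking closures gives the statement for $\mathcal{B}^c$.

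For part (ii), I would argue that the stabilizer of $\mathcal{B}^\circ_{\mathbf{R},\Sigma}$ in $\Gamma(\widetilde{\mathbf{M}}_{\mathbf{Q},\ell})$ consists exactly of those $\gamma$ with $\mathrm{int}(\gamma)(\Sigma^\circ_{\mathbf{R}})=\Sigma^\circ_{\mathbf{R}}$, and that by part (i) this equals $\{\gamma:\gamma\mathbf{R}\gamma^{-1}\ \text{and}\ \mathbf{R}\ \text{determine the same subset }\Sigma^\circ\}$. Using that the set of maximal parabolics $\{\mathbf{P}\supseteq\mathbf{R}\}$ can be recovered from $\Sigma^\circ_{\mathbf{R}}$ (via condition (2): these are precisely the $\mathbf{P}$ such that some extremal ray of some cone in $\Sigma^\circ_{\mathbf{R}}$ lies in $C_P$), and that $\mathbf{R}$ itself is the intersection of these $\mathbf{P}$'s together with $\mathbf{Q}$ — since $\mathbf{R}$ is subordinate to $\mathbf{Q}$ and is the parabolic ``spanned'' by that family of rays — one gets that the stabilizer is $\{\gamma\in\Gamma(\widetilde{\mathbf{M}}_{\mathbf{Q},\ell}):\gamma\mathbf{R}\gamma^{-1}=\mathbf{R}\}$, i.e.\ the normalizer of $\mathbf{R}$ intersected with $\Gamma(\widetilde{\mathbf{M}}_{\mathbf{Q},\ell})$; this is the arithmetic group denoted $\Gamma(\widetilde{\mathbf{M}}_{\mathbf{Q},\ell}\,|\,\mathbf{R})$ in \eqref{eq:useful-arthmetic-subgr}. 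Since $\mathcal{B}^c$ is the closure of $\mathcal{B}^\circ$, its stabilizer contains that of $\mathcal{B}^\circ$; for the reverse inclusion one uses that $\mathcal{B}^\circ_{\mathbf{R},\Sigma}$ is the unique open stratum of $\mathcal{B}^c_{\mathbf{R},\Sigma}$ of its dimension sitting over the generic locus, hence is canonically recovered from $\mathcal{B}^c_{\mathbf{R},\Sigma}$ and is preserved by anything preserving $\mathcal{B}^c_{\mathbf{R},\Sigma}$. This gives the common value in (ii).

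For part (iii), I would show that if $\mathcal{B}^c_{\mathbf{R},\Sigma}\cap\mathcal{B}^c_{\gamma\mathbf{R}\gamma^{-1},\Sigma}\neq\emptyset$ then, $\Sigma_{\mathbf{Q}}$ being fine enough that \eqref{corner-like-strata} holds, the two closed sets meet in a stratum $\mathcal{B}^\circ_{\mathbf{R}',\Sigma}$ with $\mathbf{R}'\subseteq\mathbf{R}$, $\mathbf{R}'\subseteq\gamma\mathbf{R}\gamma^{-1}$ and $\mathbf{M}_{\mathbf{R}',h}\simeq\mathbf{M}_{\mathbf{R},h}\simeq\mathbf{M}_{\gamma\mathbf{R}\gamma^{-1},h}$; equivalently $\Sigma^c_{\mathbf{R}}$ and $\Sigma^c_{\gamma\mathbf{R}\gamma^{-1}}$ share a common cone $\sigma$. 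Reading off the family of maximal parabolics from the extremal rays of $\sigma$ as above, one deduces that $\mathbf{R}$ and $\gamma\mathbf{R}\gamma^{-1}$ are both obtained by intersecting $\mathbf{Q}$ with the same family $\{\mathbf{P}\supseteq\mathbf{R}'\}$ — here I would invoke the relevant finiteness/compatibility input from \cite{toroidal-comp,HZ2} guaranteeing that, for $\Sigma_{\mathbf{Q}}$ sufficiently fine, a single cone in $\Sigma^c_{\mathbf{R}}$ determines $\mathbf{R}$ among the parabolics subordinate to $\mathbf{Q}$ with the given $h$-part — so $\gamma\mathbf{R}\gamma^{-1}=\mathbf{R}$ and hence $\gamma$ lies in the stabilizer $\Gamma(\widetilde{\mathbf{M}}_{\mathbf{Q},\ell}\,|\,\mathbf{R})$ of part (ii), contradicting the hypothesis. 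The main obstacle, I expect, is exactly this last combinatorial point: making precise and correctly quoting the fineness hypothesis on $\Sigma_{\mathbf{Q}}$ under which the cone data rigidly encodes the parabolic $\mathbf{R}$ (and which is already implicit in the validity of \eqref{corner-like-strata}), rather than any geometric difficulty — once the dictionary between cones, strata of $\mathcal{S}_{\mathbf{Q},\Sigma}$, and parabolics subordinate to $\mathbf{Q}$ is set up carefully, parts (i) and (ii) are formal and (iii) reduces to this encoding statement.
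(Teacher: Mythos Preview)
Your proposal is correct and follows essentially the same approach as the paper: both deduce (i)--(iii) from the dictionary between cones in $\Sigma_{\mathbf{Q}}$, strata of $\mathcal{S}_{\mathbf{Q},\Sigma}$, and parabolics subordinate to $\mathbf{Q}$, with the decomposition \eqref{corner-like-strata} as the key input. The paper's proof is extremely terse (it just introduces the group $\Gamma(\widetilde{\mathbf{M}}_{\mathbf{Q},\ell}\,|\,\mathbf{R})$ and asserts that all three statements ``follow directly from \eqref{corner-like-strata}''), whereas you have spelled out the cone-level mechanics that make this work; but there is no substantive difference in strategy.
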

\begin{proof}
For any parabolic $\Q$-subgroup $\mathbf{R}$ of $\mathbf{G}$ that
is subordinate to $\mathbf{Q}$, we
denote by $\mathbf{\langle\,\widetilde
M}_{\mathbf{Q},\ell}\,|\,\mathbf{R\,\rangle}$ the
image of $\mathbf{R}$ by
the projection of $\mathbf{Q}$ onto (a quotient by a finite group
of) $\mathbf{\widetilde M_{Q,\ell}}$ or, equivalently, the
intersection of $\mathbf{\widetilde M_{Q,\ell}}$ with the image of
$\mathbf{R}$ in $\mathbf {M_Q}$. This is a parabolic
$\Q$-subgroup of $\mathbf{\widetilde M_{Q,\ell}}$ (whose Lie group
of real points was denoted $G_{\ell,R}$ in
\cite[(2.2.12)]{rbs-2}). We then set
\begin{equation}
\label{eq:useful-arthmetic-subgr}
\Gamma(\widetilde{\mathbf{M}}_{\mathbf{Q},\ell}\,|\,\mathbf{R})=
\Gamma(\widetilde{\mathbf{M}}_{\mathbf{Q},\ell})\cap (R/N_QA_QM_{Q,h}),\footnote{
By $N_QA_QM_{Q,h}$ we really mean $N_QA_QM_{Q,h}(x)$, where $M_{Q,h}(x)\subset Q$
is the lift of $M_{Q,h}$ induced by the lift $\mathbf{L_Q}(x)\subset \mathbf{Q}$.
Note that $N_QA_QM_{Q,h}(x)$ does not depends on the choice of $\mathbf{L_Q}(x)$, which justifies the abuse of notation.}
\end{equation}
an arithmetic subgroup of $\mathbf{\langle\,\widetilde
M}_{\mathbf{Q},\ell}\,|\,\mathbf{R\,\rangle}$.\,\, The three
statements above follow directly from \eqref{corner-like-strata}.
\end{proof}

\begin{proposition}
\label{prop:intersection-closure-strata-toroidal-comp}
Assume that $\Sigma=(\Sigma_{\mathbf{Q}})$ is fine enough.
Let $\mathbf{Q}_1\succ \mathbf{Q}_2\succ \dots \succ \mathbf{Q}_s$ be maximal parabolic $\Q$-subgroups of $\mathbf{G}$.
Let $E$ be the set of parabolic $\Q$-subgroups that can be written as
$\bigcap_{i=1}^s \gamma_i \mathbf{Q}_i \gamma_i^{-1}$
for some $s$-tuple $(\gamma_1,\dots, \gamma_s)\in \Gamma^s$.
Then, the locally closed subscheme of $\overline{X}^{tor}_{\Sigma}$ given by
$$\overline{e^{-1}(X^{bb}_{\mathbf{Q}_1})}\bigcap \dots \bigcap
\overline{e^{-1}(X^{bb}_{\mathbf{Q}_{s-1}})} \bigcap e^{-1}(X^{bb}_{\mathbf{Q}_s})$$
corresponds via the isomorphism $e^{-1}(X^{bb}_{\mathbf{Q}_s})\simeq
\Gamma(\widetilde{\mathbf{M}}_{\mathbf{Q}_s,\ell})\backslash \mathcal{B}^c_{\mathbf{Q}_s,\Sigma}$ to
$$\Gamma(\widetilde{\mathbf{M}}_{\mathbf{Q}_s,\ell})\backslash \bigsqcup_{\mathbf{R}\in E} \mathcal{B}^c_{\mathbf{R},\Sigma}.$$

\end{proposition}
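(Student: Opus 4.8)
The plan is to reduce the statement to the "Baily-Borel-type stratum" case already treated in Proposition \ref{fiber} together with the local combinatorial description \eqref{corner-like-strata}, then to track how the closures $\overline{e^{-1}(X^{bb}_{\mathbf{Q}_i})}$ meet inside $\overline{X}^{tor}_{\Sigma}$. First I would recall that $e^{-1}(X^{bb}_{\mathbf{Q}_s})\simeq \Gamma(\widetilde{\mathbf{M}}_{\mathbf{Q}_s,\ell})\backslash \mathcal{B}^c_{\mathbf{Q}_s,\Sigma}$ by Proposition \ref{fiber}, so it suffices to identify the preimage in $\mathcal{B}^c_{\mathbf{Q}_s,\Sigma}$ of the given locally closed subscheme. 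Since the formation of $\overline{X}^{tor}_{\Sigma}$ over $X^{bb}_{\mathbf{Q}_s}$ is governed by the prpcd $\Sigma_{\mathbf{Q}_s}$, and since for $\mathbf{Q}\succeq \mathbf{Q}_s$ the cone $C_Q$ is a rational boundary component of $\overline{C}_{Q_s}$ (cf.~\S\ref{subsect:toroidal-compact}), the divisor $e^{-1}(X^{bb}_{\mathbf{Q}})$ restricted to $e^{-1}(X^{bb}_{\mathbf{Q}_s})$ corresponds precisely to the union of strata of $\mathcal{S}_{\mathbf{Q}_s,\Sigma}$ associated to rational polyhedral cones having an extremal ray in (a $\Gamma$-translate of) $C_Q$. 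This is the key translation: intersecting with $\overline{e^{-1}(X^{bb}_{\mathbf{Q}_i})}$ amounts, on the level of $\Sigma_{\mathbf{Q}_s}$, to requiring that the cone $\sigma$ indexing the stratum have at least one extremal ray lying in some $\Gamma$-conjugate of $C_{Q_i}$, for each $i=1,\dots,s-1$, while lying in $\overline{C}_{Q_s}$.

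Next I would make this precise. Given an $s$-tuple $(\gamma_1,\dots,\gamma_s)\in\Gamma^s$ with $\mathbf{R}=\bigcap_i \gamma_i\mathbf{Q}_i\gamma_i^{-1}$ parabolic, one checks that $\mathbf{R}$ is subordinate to $\mathbf{Q}_s$ (replacing $\gamma_s$ by the identity after conjugating, using that $\mathbf{Q}_1\succ\cdots\succ\mathbf{Q}_s$ gives $\mathbf{M}_{\mathbf{R},h}\simeq \mathbf{M}_{\mathbf{Q}_s,h}$), and that the maximal parabolic $\Q$-subgroups $\mathbf{P}$ containing $\mathbf{R}$ are exactly the $\gamma_i\mathbf{Q}_i\gamma_i^{-1}$ (and the ones intermediate between some $\gamma_i\mathbf{Q}_i\gamma_i^{-1}$ and $\mathbf{Q}_s$, but these contribute cones already accounted for). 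By the definition of $\Sigma^\circ_{\mathbf{R}}$, a cone $\sigma\in\Sigma_{\mathbf{Q}_s}$ lies in $\Sigma^\circ_{\mathbf{R}}$ iff every extremal ray of $\sigma$ sits in some $C_P$ with $\mathbf{P}\supseteq\mathbf{R}$ maximal, and each such $C_P$ contains at least one extremal ray. Passing to closures, $\Sigma^c_{\mathbf{R}}$ consists of the cones $\tau$ whose closure contains some $\sigma\in\Sigma^\circ_{\mathbf{R}}$; when $\Sigma_{\mathbf{Q}_s}$ is fine enough, \eqref{corner-like-strata} gives $\mathcal{B}^c_{\mathbf{R},\Sigma}=\bigsqcup_{\mathbf{R}'\subseteq\mathbf{R},\ \mathbf{M}_{\mathbf{R}',h}\simeq\mathbf{M}_{\mathbf{R},h}}\mathcal{B}^\circ_{\mathbf{R}',\Sigma}$. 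I would then verify that $\overline{e^{-1}(X^{bb}_{\mathbf{Q}_i})}$ pulled back to $\mathcal{S}_{\mathbf{Q}_s,\Sigma}$ is the union of those $\mathcal{B}^\circ_{\mathbf{R}',\Sigma}$ with $\mathbf{R}'$ admitting a maximal parabolic overgroup that is a $\Gamma$-conjugate of $\mathbf{Q}_i$, and that $e^{-1}(X^{bb}_{\mathbf{Q}_s})$ is the union over all $\mathbf{R}'$ subordinate to $\mathbf{Q}_s$. Taking the intersection over $i$, and using that $\overline{X_{\mathbf{Q}}^{bb}}$ has positive codimension so the closure is taken correctly, the preimage of the asserted locally closed subscheme becomes exactly $\bigsqcup_{\mathbf{R}'\ \text{subord.\ to}\ \mathbf{Q}_s,\ \mathbf{R}'\ \text{has a maximal overgroup conjugate to each}\ \mathbf{Q}_i}\mathcal{B}^\circ_{\mathbf{R}',\Sigma}$, which by regrouping $\mathbf{R}'$ according to the smallest $\mathbf{R}\in E$ containing it (and $\mathbf{M}_{\mathbf{R}',h}\simeq\mathbf{M}_{\mathbf{R},h}$) equals $\bigsqcup_{\mathbf{R}\in E}\mathcal{B}^c_{\mathbf{R},\Sigma}$.

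Finally, I would take the quotient by $\Gamma(\widetilde{\mathbf{M}}_{\mathbf{Q}_s,\ell})$. By Proposition \ref{normalizers}(i) the group permutes the $\mathcal{B}^c_{\mathbf{R},\Sigma}$ according to its conjugation action on subordinate parabolics, and this is compatible with the conjugation action on the index set $E$ (note $E$ is visibly stable under $\Gamma(\widetilde{\mathbf{M}}_{\mathbf{Q}_s,\ell})$-conjugation since conjugating all $\gamma_i$ on the left preserves the form $\bigcap\gamma_i\mathbf{Q}_i\gamma_i^{-1}$); hence the quotient is $\Gamma(\widetilde{\mathbf{M}}_{\mathbf{Q}_s,\ell})\backslash\bigsqcup_{\mathbf{R}\in E}\mathcal{B}^c_{\mathbf{R},\Sigma}$, as claimed.

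I expect the main obstacle to be the bookkeeping in the middle step: precisely matching, under the Siegel-domain/torus-embedding dictionary, the scheme-theoretic closures $\overline{e^{-1}(X^{bb}_{\mathbf{Q}_i})}$ with unions of cone-strata, including the verification that "fine enough" $\Sigma$ makes \eqref{corner-like-strata} and the analogous statements hold simultaneously for all the $\mathbf{Q}_i$ and all relevant $\mathbf{R}$, and checking that no spurious components appear (i.e.\ that the closure operation on the $X^{bb}$-side corresponds exactly to passing from $\Sigma^\circ$ to $\Sigma^c$). The group-theoretic identification of $E$ with the set of $\mathbf{R}$'s arising, and the final descent to the $\Gamma(\widetilde{\mathbf{M}}_{\mathbf{Q}_s,\ell})$-quotient, should then be routine given Proposition \ref{normalizers}.
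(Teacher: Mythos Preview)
The paper does not give a proof of this proposition. It appears in Section~\ref{sect:compactif-locally-symmetric}, which the authors describe as ``an exposition of known material,'' and like Theorem~\ref{morph-comp} and Proposition~\ref{fiber} just before it, the statement is left unproved, presumably as a consequence of the construction in \cite{toroidal-comp}. Your sketch is a reasonable reconstruction of the argument and uses the right ingredients: Proposition~\ref{fiber} for the identification over $X^{bb}_{\mathbf{Q}_s}$, the Siegel-domain/torus-embedding dictionary to translate closures of $e^{-1}(X^{bb}_{\mathbf{Q}_i})$ into conditions on extremal rays of cones in $\Sigma_{\mathbf{Q}_s}$, the decomposition \eqref{corner-like-strata}, and Proposition~\ref{normalizers} for the descent to the quotient.

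One small point worth tightening: in your regrouping step, the element $\mathbf{R}\in E$ associated to a given $\mathbf{R}'$ is in fact \emph{unique}, not merely ``smallest.'' This is because a parabolic $\mathbf{R}'$ is contained in a unique maximal parabolic of each given cotype, so the $\gamma_i\mathbf{Q}_i\gamma_i^{-1}$ containing $\mathbf{R}'$ are determined by $\mathbf{R}'$, and their intersection (which is parabolic since it contains $\mathbf{R}'$) is the required $\mathbf{R}$. With this, the bijection between the index sets on both sides is clean, and the rest of your argument goes through.
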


The subset $X^{tor}_{\mathbf{R},\Sigma}=\Gamma(\widetilde{\mathbf{M}}_{\mathbf{Q},\ell}\,|\,\mathbf{R})\backslash \mathcal{B}^{\circ}_{
\mathbf{R}}$
of $\overline{X}^{tor}_{\Sigma}$ has been called the corner-like
``$\mathbf{R}$-stratum'' of $\partial\overline{X}^{tor}_{\Sigma}$ (though it, too,
generally has crossings).  It
is defined for all parabolic $\Q$-subgroups $\mathbf{R}$.

As was the case with the other compactifications, the toroidal compactifications are functorial with respect to the action of $\mathbf{G}(\Q)$. Let $\Gamma'\subset \mathbf{G}(\Q)$ be another neat arithmetic subgroup and $g\in \mathbf{G}(\Q)$ such that $g\Gamma'g^{-1}\subset \Gamma$. Given a compatible family of {\it prpcd}'s $\Sigma=\{\Sigma_{\mathbf{Q}}\}$ (with respect to $\Gamma$), we can find a compatible family of {\it prpcd}'s $\Sigma'=\{\Sigma'_{\mathbf{Q}}\}$ (with respect to $\Gamma'$) such that for every
maximal parabolic $\Q$-subgroup $\mathbf{Q}\subset \mathbf{G}$,
the isomorphism ${\rm int}(g):U_Q \to U_{gQg^{-1}}$ sends a rational polyhedral cone of
$\Sigma'_{\mathbf{Q}}$ inside a rational polyhedral cone of $\Sigma_{g\mathbf{Q}g^{-1}}$. If this is the case, the morphism $g:X' \to X$ extends to the toroidal compactifications, yielding
$$g^{tor}:\overline{(X')}^{tor}_{\Sigma'} \to \overline{X}^{tor}_{\Sigma}.$$
This morphism maps the $\mathbf{R}$-stratum $(X')^{tor}_{\mathbf{R},\Sigma'}$ onto the $\mathbf{R}$-stratum
$X^{tor}_{\mathbf{R},\Sigma}$.

\subsection{The hereditary property of toroidal boundary strata}
\label{subsection:hereditary-toroidal}
The hereditary property of the strata of the toroidal compactification is
properly done using the notions of mixed Shimura data and mixed Shimura
varieties \cite{pink-thesis}. Roughly speaking,
given a maximal parabolic $\Q$-subgroup $\mathbf{Q} \subset \mathbf{G}$, it is
possible to relate the closure of $X^{tor}_{\mathbf{Q},\Sigma}$ in
$\overline{X}^{tor}_{\Sigma}$ with the toroidal compactification of the mixed
Shimura variety associated to the non-reductive $\Q$-group
$\mathbf{M}_{\mathbf{Q},h}\mathbf{N}_{\mathbf{Q}}$, a subgroup of $\mathbf{G}$.
However, for our purposes we need only a weaker statement that
does not invoke mixed Shimura varieties at all. We begin with a definition:

\begin{definition}
\label{defn:extended-fam-prpcd-s}
Let $\mathcal{M}$ be the set of pairs $(\mathbf{Q},\mathbf{R})$
where:
\begin{itemize}

\item $\mathbf{Q}\subseteq \mathbf{G}$ is a parabolic $\Q$-subgroup
which is maximal or improper,

\item $\mathbf{R} \subset \mathbf{M}_{\mathbf{Q},h}$ is a maximal parabolic $\Q$-subgroup.

\end{itemize}

\noindent An \emph{extended compatible family of partial rational polyhedral cone decompositions} (with respect to $\Gamma$) $\Sigma=\{\Sigma_{\mathbf{Q},\mathbf{R}}\}_{(\mathbf{Q},\mathbf{R})\in \mathcal{M}}$ is a family of prpcd's $\Sigma_{\mathbf{Q},\mathbf{R}}$ of $\overline{C}_{R}\subset U_R$ such that the following conditions are satisfied.
\begin{enumerate}

\item[(i)] For $\gamma\in \Gamma$ and $(\mathbf{Q},\mathbf{R})\in \mathcal{M}$, the isomorphism ${\rm int}(\gamma):U_R \overset{\sim}{\to} U_{\gamma R\gamma^{-1}}$ sends a rational polyhedral cone of $\Sigma_{\mathbf{Q},\mathbf{R}}$ to a rational polyhedral cone of $\Sigma_{\gamma\mathbf{Q}\gamma^{-1},\gamma\mathbf{R}\gamma^{-1}}$.

\item[(ii)] For $\mathbf{Q}\subseteq \mathbf{G}$ a parabolic $\Q$-subgroup which is maximal or improper,
the family $\Sigma_{(\mathbf{Q})}=\{\Sigma_{\mathbf{Q},\mathbf{R}}\}_{\mathbf{R}}$ is a compatible family of prpcd's with respect to $\Gamma(\mathbf{M}_{\mathbf{Q},h})$ for the $\Q$-group $\mathbf{M}_{\mathbf{Q},h}$ (in the sense of \emph{Definition \ref{defn:compatible-family-prpcd-s}}).

\item[(iii)] Let $(\mathbf{Q}_1,\mathbf{R}_1)$ and $(\mathbf{Q}_2,\mathbf{R}_2)$ be two elements of $\mathcal{M}$ such that
$\mathbf{Q_1}\succeq \mathbf{Q_2}$
and $\mathbf{R}_2=\mathbf{M}_{\mathbf{Q_2},h}\cap \mathbf{R}_1$.
Then the image of a rational polyhedral cone of $\Sigma_{\mathbf{Q_1},\mathbf{R_1}}$ by the natural map\footnote{There is indeed a natural morphism of $\Q$-groups $\mathbf{N_{R_1}} \to \mathbf{N_{R_2}}$ that induces $U_{R_1} \to U_{R_2}$. It is defined as follows. Let $\mathbf{P}$ be the image of $\mathbf{Q_1}\cap \mathbf{Q_2}$ by the projection of $\mathbf{Q_1}$ to (a finite quotient of) $\mathbf{M}_{\mathbf{Q_1},h}$. As $\mathbf{M}_{\mathbf{P},h}\simeq \mathbf{M}_{\mathbf{Q_2},h}$, there is a canonical projection
$\mathbf{P}\to \mathbf{M}_{\mathbf{Q_2},h}$ that maps $\mathbf{P}\cap \mathbf{R_1}$ onto $\mathbf{R_2}$. This gives a natural morphism
$\mathbf{N_{P\cap R_1}}\to \mathbf{N_{R_2}}$. On the other hand, the inclusion of parabolic subgroups $\mathbf{P}\cap \mathbf{R_1}\subset \mathbf{R_1}$ gives the inclusion of nilpotent radicals
$\mathbf{N_{R_1}} \subset \mathbf{N_{P\cap R_1}}$. Our morphism is then the composition $\mathbf{N_{R_1}} \hookrightarrow \mathbf{N_{P\cap R_1}} \to \mathbf{N_{R_2}}$.}
$U_{R_1} \to U_{R_2}$ is contained in a rational polyhedral cone of
$\Sigma_{\mathbf{Q_2},\mathbf{R_2}}$.

\end{enumerate}

\end{definition}

\
We say that an extended compatible family of {\it prpcd}'s $\Sigma=\{\Sigma_{
\mathbf{Q},\mathbf{R}}\}_{(\mathbf{Q},\mathbf{R})\in \mathcal{M}}$
is smooth (resp. simplicial, projective) if for every parabolic $\Q$-subgroup
$\mathbf{Q}\subseteq \mathbf{G}$ which is maximal or improper, the compatible
family of {\it prpcd}'s $\Sigma_{(\mathbf{Q})}=\{\Sigma_{\mathbf{Q},\mathbf{R}}\}_{
\mathbf{R}}$ is smooth (resp. simplicial, projective).

\begin{remark}
Given a collection of {\it prpcd}'s $\{\Sigma^0_{\mathbf{Q},\mathbf{R}}\}_{(\mathbf{Q},
\mathbf{R})\in \mathcal{M}}$ satisfying
the conditions (i) and (ii) of Definition
\ref{defn:extended-fam-prpcd-s},
there is a smooth and projective extended compatible family of {\it prpcd}'s $\{\Sigma_{\mathbf{Q},\mathbf{R}}\}_{(\mathbf{Q},\mathbf{R})\in \mathcal{M}}$ such that
$\Sigma_{\mathbf{Q},\mathbf{R}}$ is finer than $\Sigma^0_{\mathbf{Q},\mathbf{R}}$ for all $(\mathbf{Q},\mathbf{R})\in \mathcal{M}$.
\end{remark}

We now fix an
extended compatible family of {\it prpcd}'s
$\Sigma=\{\Sigma_{\mathbf{Q},\mathbf{R}}\}_{(\mathbf{Q},\mathbf{R})\in
\mathcal{M}}$.
For $\mathbf{Q}\subseteq \mathbf{G}$ a parabolic $\Q$-subgroup
which is maximal or improper,
we may consider
$\overline{(X^{bb}_{\mathbf{Q}})}^{tor}_{\Sigma_{(\mathbf{Q})}}$,
the toroidal compactification of the locally symmetric variety $X^{bb}_{\mathbf{Q}}$ associated to the compatible family
of {\it prpcd}'s $\Sigma_{(\mathbf{Q})}=\{\Sigma_{\mathbf{Q},\mathbf{R}}\}_{\mathbf{R}}$. This is a smooth and projective $\C$-scheme that depends only on the conjugacy class of $\mathbf{Q}$ modulo $\Gamma$. Moreover, we have a canonical morphism $e_{\mathbf{Q}}:\overline{(X^{bb}_{\mathbf{Q}})}^{tor}_{\Sigma_{(\mathbf{Q})}} \to \overline{X}^{bb}_{\mathbf{Q}}$.
(When $\mathbf{Q}=\mathbf{G}$, we recover the projection $e$ from
Theorem \ref{morph-comp}.)

Let $\mathbf{R}\subset \mathbf{M}_{\mathbf{Q},h}$ be a proper
parabolic $\Q$-subgroup, and denote by $\mathbf{P}\subset
\mathbf{Q}$ the inverse image of $\mathbf{R}$ by the projection
$\mathbf{Q}\to\mathbf{M}_{ \mathbf{Q},h}$. Let $\mathbf{R}\subset
\mathbf{R}'$ and $\mathbf{P}\subset \mathbf{P'}$ be the maximal
parabolic $\Q$-subgroups of $\mathbf{M}_{\mathbf{Q},h}$ and
$\mathbf{G}$ respectively, such that $\mathbf{R}$ is subordinate
to $\mathbf{R'}$ and $\mathbf{P}$ is subordinate to $\mathbf{P'}$.
Using the construction in \eqref{tower} for
$\mathbf{R'}\subset\mathbf{M}_{ \mathbf{Q},h}$, we have a morphism
of schemes
$\mathcal{S}_{(\mathbf{Q},\mathbf{R}')}\to\mathcal{A}_{(\mathbf{Q},
\mathbf{R}')}$, where $\mathcal{A}_{(\mathbf{Q},\mathbf{R}')}$ is
an abelian scheme over an \'etale cover of $X^{bb}_{\mathbf{P'}}$
whose fibers are made from $V_{R'}$, and
$\mathcal{S}_{(\mathbf{Q},\mathbf{R'})}$ is a torsor over the
torus
$\mathbf{T}_{(\mathbf{Q},\mathbf{R'})}=\Gamma(\mathbf{U}_{\mathbf{R'}})\otimes
\mathbb{G}_m$ with $\Gamma(\mathbf{U_{R'}})=\Gamma(\mathbf{M}_{\mathbf{Q},h})\cap U_{R'}$. We deduce from the {\it prpcd}
$\Sigma_{\mathbf{Q},\mathbf{R'}}$ a torus embedding
$\mathcal{S}_{(\mathbf{Q},\mathbf{R'}),\Sigma_{(\mathbf{Q})}}$
over $\mathcal{A}_{(\mathbf{Q},\mathbf{R'})}$ with boundary
$\mathcal{B}_{(\mathbf{Q},\mathbf{R'}),\Sigma_{(\mathbf{Q})}}$.
The schemes
$\mathcal{B}^{\circ}_{(\mathbf{Q},\mathbf{R}),\Sigma_{(\mathbf{Q})}}$
and
$\mathcal{B}^c_{(\mathbf{Q},\mathbf{R}),\Sigma_{(\mathbf{Q})}}$
are defined as before. We assume that $\Sigma$ is fine enough and
set, also as before,
$(X^{bb}_{\mathbf{Q}})^{tor}_{\mathbf{R},\Sigma_{(\mathbf{Q})}}=
\Gamma(\widetilde{\mathbf{M}}_{\mathbf{R'},\ell}\,|\,\mathbf{R})\backslash
\mathcal{B}^{\circ}_{(\mathbf{Q},\mathbf{R}),\Sigma_{(\mathbf{Q})}}$;
here the arithmetic group
$\Gamma(\widetilde{\mathbf{M}}_{\mathbf{R'},\ell}\,|\,\mathbf{R})$
is defined as in
\eqref{eq:useful-arthmetic-subgr}, but for parabolic subgroups of
$\mathbf{M}_{\mathbf{Q},h}$ instead of $\mathbf{G}$  and its
arithmetic subgroup $\Gamma(\mathbf{M}_{\mathbf{Q},h})$ (instead
of $\Gamma$). This is the $\mathbf{R}$-stratum in the toroidal
compactification of $X^{bb}_{\mathbf{Q}}$ associated to
$\Sigma_{(\mathbf{Q})}$.

Now, let $\mathbf{Q_1}, \, \mathbf{Q_2}\subseteq \mathbf{G}$ be two
parabolic $\Q$-subgroups which are maximal or improper and such that
$\mathbf{Q_1}\succeq \mathbf{Q_2}$ (i.e.,
$\mathbf{M}_{\mathbf{Q_2},h}\subseteq \mathbf{M}_{\mathbf{Q_1},h}$).
For $i=1,\,2$, let $\mathbf{R_i}\subset \mathbf{M}_{\mathbf{Q_i},h}$ be a proper
parabolic
$\Q$-subgroup such that
$\mathbf{R}_2=\mathbf{M}_{\mathbf{Q_2},h}\cap \mathbf{R_1}$.
Also, let $\mathbf{R_i'}\subset \mathbf{M}_{\mathbf{Q_i},h}$ be the
maximal
parabolic $\Q$-subgroup such that
$\mathbf{R_i}$ is subordinate to $\mathbf{R_i'}$; then
$\mathbf{M}_{\mathbf{R'_1},h}\simeq \mathbf{M}_{\mathbf{R'_2},h}$.
Then, there is a commutative square
$$\xymatrix@C=1.5pc@R=1.5pc{\mathcal{S}_{(\mathbf{Q_1},\mathbf{R_1'})} \ar[r] \ar[d] &
\mathcal{S}_{(\mathbf{Q_2},\mathbf{R_2'})} \ar[d] \\
\mathcal{A}_{(\mathbf{\mathbf{Q}_1},\mathbf{R_1'})} \ar[r] &
\mathcal{A}_{(\mathbf{Q_2},\mathbf{R_2'})} ,\!}$$
and condition (iii) of Definition \ref{defn:extended-fam-prpcd-s}
gives an extension
$\mathcal{S}_{(\mathbf{Q_1},\mathbf{R'_1}),\Sigma_{(\mathbf{Q_1})}} \to
\mathcal{S}_{(\mathbf{Q_2},\mathbf{R_2'}),\Sigma_{(\mathbf{Q_2})}}$
of the top horizontal arrow.
This yields morphisms
$$\xymatrix@C=1.7pc{\mathcal{B}^{\circ}_{(\mathbf{Q_1},\mathbf{R_1}),\Sigma_{(
\mathbf{Q_1})}}
\ar@<.2pc>[r] &
\mathcal{B}^{\circ}_{(\mathbf{Q_2},\mathbf{R_2}),\Sigma_{(\mathbf{Q_2})}}}
\quad \text{and} \quad
\xymatrix@C=1.7pc{\mathcal{B}^c_{(\mathbf{Q_1},\mathbf{R_1}),\Sigma_{(\mathbf{Q_1})}}
\ar@<.2pc>[r] &
\mathcal{B}^c_{(\mathbf{Q_2},\mathbf{R_2}),\Sigma_{(\mathbf{Q_2})}}}$$
which are equivariant for the action of
$\Gamma(\widetilde{\mathbf{M}}_{\mathbf{R_1'},\ell}\,|\,\mathbf{R_1})$.
We are now in position to state the weak hereditary property for the
strata of the toroidal compactifications.

\begin{proposition}
\label{prop:hereditary-for-toroidal}
With notation as above, let $\mathbf{P}$ be the image of
$\mathbf{Q_1} \cap \mathbf{Q_2}$ in $\mathbf{M}_{\mathbf{Q_1},h}$.  Then the
morphism $(X_{\mathbf{Q}_1}^{bb})^{
tor}_{\mathbf{P},\Sigma_{(\mathbf{Q_1})} } \to X^{bb}_{\mathbf{Q_2}}$
from the toroidal construction for parabolic $\Q$-subgroups of
$\mathbf{M}_{\mathbf{Q_1},h}$ extends
(uniquely) to a morphism
\begin{equation}
\label{eq:hereditary-for-toroidal}
\xymatrix@C=1.7pc{\overline{(X_{\mathbf{Q}_1}^{bb})^{tor}_{\mathbf{P},\Sigma_{
(\mathbf{Q_1})} }}\ar[r] &
\overline{(X^{bb}_{\mathbf{Q_2}})}^{tor}_{\Sigma_{
(\mathbf{Q_2})}}}
\end{equation}
where the source is the Zariski closure of the $\mathbf{P}$-stratum
$(X_{\mathbf{Q}_1}^{bb})^{tor}_{\mathbf{P},\Sigma_{(\mathbf{Q_1})} }$ in
the toroidal compactification
$\overline{(X^{bb}_{\mathbf{Q_1}})}^{tor}_{\Sigma_{(\mathbf{Q_1})}}$ of
$X^{bb}_{\mathbf{Q_1}}$.

Moreover, this morphism sends the $\mathbf{R_1}$-stratum
$(X^{bb}_{\mathbf{Q_1}})^{tor}_{\mathbf{R_1},\Sigma_{(\mathbf{Q_1})}}$ to the
$\mathbf{R_2}$-stratum
$(X^{bb}_{\mathbf{Q_2}})^{tor}_{\mathbf{R_2},\Sigma_{(\mathbf{Q_2})}}$,
and the restriction
$(X^{bb}_{\mathbf{Q_1}})^{tor}_{\mathbf{R_1},\Sigma_{(\mathbf{Q_1})}} \to
(X^{bb}_{\mathbf{Q_2}})^{tor}_{\mathbf{R_2},\Sigma_{(\mathbf{Q_2})}}$ of
\eqref{eq:hereditary-for-toroidal} is given by
$$\xymatrix@C=1.7pc{\Gamma(\widetilde{\mathbf{M}}_{\mathbf{R_1'},\ell}\,|\,
\mathbf{R_1}) \backslash
\mathcal{B}^{\circ}_{(\mathbf{Q_1},\mathbf{R_1}),\Sigma_{(\mathbf{Q_1})}} \ar[r]
&  \Gamma(\widetilde{\mathbf{M}}_{\mathbf{R_2'},\ell}\,|\,\mathbf{R_2})
\backslash
\mathcal{B}^{\circ}_{(\mathbf{Q_2},\mathbf{R_2}),\Sigma_{(\mathbf{Q_2})}}.}$$
In particular, it takes the stratum corresponding to a rational polyhedral
cone $\sigma \in \Sigma^{\circ}_{\mathbf{Q_1},\mathbf{R_1}}$ to the
stratum corresponding to the rational polyhedral cone of
$\Sigma^{\circ}_{\mathbf{Q_2},\mathbf{R_2}}$ that contains the image of
$\sigma$ by
$U_{R_1'} \to U_{R_2'}$.
\end{proposition}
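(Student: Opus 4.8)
The plan is to prove Proposition \ref{prop:hereditary-for-toroidal} by unwinding the toroidal construction on both sides and checking that the extension already visible at the level of the torus-embedding towers descends to the claimed morphism of closures. First I would reduce to the case where $\mathbf{Q_1}$, $\mathbf{Q_2}$, $\mathbf{R_1}$ and $\mathbf{R_2}$ are all standard (simultaneously), using functoriality of the whole construction with respect to conjugation by $\mathbf{G}(\Q)$ and condition (i) of Definition \ref{defn:extended-fam-prpcd-s}; this is harmless because all the schemes involved depend only on the $\Gamma$-conjugacy classes. Next, recall from \S\ref{subsection:hereditary-toroidal} that condition (iii) of Definition \ref{defn:extended-fam-prpcd-s} furnishes, for the chosen maximal parabolics $\mathbf{R_1'}\subset \mathbf{M}_{\mathbf{Q_1},h}$ and $\mathbf{R_2'}\subset \mathbf{M}_{\mathbf{Q_2},h}$ with $\mathbf{M}_{\mathbf{R_1'},h}\simeq \mathbf{M}_{\mathbf{R_2'},h}$, a commutative square relating $\mathcal{S}_{(\mathbf{Q_1},\mathbf{R_1'})}\to\mathcal{A}_{(\mathbf{Q_1},\mathbf{R_1'})}$ with $\mathcal{S}_{(\mathbf{Q_2},\mathbf{R_2'})}\to\mathcal{A}_{(\mathbf{Q_2},\mathbf{R_2'})}$, together with the extension $\mathcal{S}_{(\mathbf{Q_1},\mathbf{R_1'}),\Sigma_{(\mathbf{Q_1})}}\to\mathcal{S}_{(\mathbf{Q_2},\mathbf{R_2'}),\Sigma_{(\mathbf{Q_2})}}$ of the top arrow coming from the containment of cones $U_{R_1'}\to U_{R_2'}$. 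Restricting to boundary divisors, this yields the morphisms $\mathcal{B}^{\circ}_{(\mathbf{Q_1},\mathbf{R_1}),\Sigma_{(\mathbf{Q_1})}}\to\mathcal{B}^{\circ}_{(\mathbf{Q_2},\mathbf{R_2}),\Sigma_{(\mathbf{Q_2})}}$ and $\mathcal{B}^c_{(\mathbf{Q_1},\mathbf{R_1}),\Sigma_{(\mathbf{Q_1})}}\to\mathcal{B}^c_{(\mathbf{Q_2},\mathbf{R_2}),\Sigma_{(\mathbf{Q_2})}}$ already noted before the statement, equivariant for $\Gamma(\widetilde{\mathbf{M}}_{\mathbf{R_1'},\ell}\,|\,\mathbf{R_1})$ mapping to $\Gamma(\widetilde{\mathbf{M}}_{\mathbf{R_2'},\ell}\,|\,\mathbf{R_2})$.

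The core of the argument is then to identify the Zariski closure $\overline{(X_{\mathbf{Q}_1}^{bb})^{tor}_{\mathbf{P},\Sigma_{(\mathbf{Q_1})}}}$ inside $\overline{(X^{bb}_{\mathbf{Q_1}})}^{tor}_{\Sigma_{(\mathbf{Q_1})}}$. By Proposition \ref{prop:intersection-closure-strata-toroidal-comp} applied inside the locally symmetric variety $X^{bb}_{\mathbf{Q_1}}$ (with $\mathbf{M}_{\mathbf{Q_1},h}$ playing the role of $\mathbf{G}$ and $\Gamma(\mathbf{M}_{\mathbf{Q_1},h})$ playing the role of $\Gamma$), this closure is the union of corner-like strata for parabolic $\Q$-subgroups of $\mathbf{M}_{\mathbf{Q_1},h}$ contained in $\mathbf{P}$, equivalently (via \eqref{corner-like-strata}) the image of a suitable disjoint union of $\mathcal{B}^c$'s. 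On this union, the morphism of the statement is built from the extended-cone-decomposition compatibility applied to each pair of subordinate parabolics simultaneously: one checks that the maps $\mathcal{B}^c_{(\mathbf{Q_1},\mathbf{R_1}),\Sigma_{(\mathbf{Q_1})}}\to\mathcal{B}^c_{(\mathbf{Q_2},\mathbf{R_2}),\Sigma_{(\mathbf{Q_2})}}$ are compatible as $\mathbf{R_1}$ shrinks, which is exactly condition (iii) of Definition \ref{defn:extended-fam-prpcd-s} applied along chains, and hence glue to a morphism of the quotients. Passing to the quotients by $\Gamma(\widetilde{\mathbf{M}}_{\mathbf{R_1'},\ell}\,|\,\mathbf{R_1})$ uses Proposition \ref{normalizers}(i)--(iii): part (i) ensures the construction is $\Gamma$-equivariant, parts (ii)--(iii) (with $\Sigma$ taken fine enough) guarantee that the quotient is again a disjoint union of the corner-like strata without unwanted identifications, so the descended map is well-defined and has the asserted image. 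Uniqueness of the extension follows because $(X_{\mathbf{Q}_1}^{bb})^{tor}_{\mathbf{P},\Sigma_{(\mathbf{Q_1})}}$ is dense in its closure and $\overline{(X^{bb}_{\mathbf{Q_2}})}^{tor}_{\Sigma_{(\mathbf{Q_2})}}$ is separated.

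It then remains to track the effect on strata. For the $\mathbf{R_1}$-stratum, by construction the morphism restricts to the equivariant map $\mathcal{B}^{\circ}_{(\mathbf{Q_1},\mathbf{R_1}),\Sigma_{(\mathbf{Q_1})}}\to\mathcal{B}^{\circ}_{(\mathbf{Q_2},\mathbf{R_2}),\Sigma_{(\mathbf{Q_2})}}$, and after passing to quotients this is precisely $\Gamma(\widetilde{\mathbf{M}}_{\mathbf{R_1'},\ell}\,|\,\mathbf{R_1})\backslash\mathcal{B}^{\circ}_{(\mathbf{Q_1},\mathbf{R_1}),\Sigma_{(\mathbf{Q_1})}}\to\Gamma(\widetilde{\mathbf{M}}_{\mathbf{R_2'},\ell}\,|\,\mathbf{R_2})\backslash\mathcal{B}^{\circ}_{(\mathbf{Q_2},\mathbf{R_2}),\Sigma_{(\mathbf{Q_2})}}$, i.e.\ the map $(X^{bb}_{\mathbf{Q_1}})^{tor}_{\mathbf{R_1},\Sigma_{(\mathbf{Q_1})}}\to(X^{bb}_{\mathbf{Q_2}})^{tor}_{\mathbf{R_2},\Sigma_{(\mathbf{Q_2})}}$. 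Finally, the statement about individual cone-strata is read off from the fact that the torus-embedding extension $\mathcal{S}_{(\mathbf{Q_1},\mathbf{R_1'}),\Sigma_{(\mathbf{Q_1})}}\to\mathcal{S}_{(\mathbf{Q_2},\mathbf{R_2'}),\Sigma_{(\mathbf{Q_2})}}$ is the toric morphism induced by $U_{R_1'}\to U_{R_2'}$, which sends the orbit attached to a cone $\sigma$ to the orbit attached to the smallest cone of $\Sigma_{\mathbf{Q_2},\mathbf{R_2'}}$ containing the image of $\sigma$; since $\sigma\in\Sigma^{\circ}_{\mathbf{Q_1},\mathbf{R_1}}$, condition (iii) and the definition of $\Sigma^{\circ}_{\mathbf{Q_2},\mathbf{R_2}}$ force that target cone to lie in $\Sigma^{\circ}_{\mathbf{Q_2},\mathbf{R_2}}$. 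The main obstacle I anticipate is the bookkeeping needed to verify that "$\Sigma$ fine enough" can be chosen uniformly so that Propositions \ref{normalizers} and \ref{prop:intersection-closure-strata-toroidal-comp} apply simultaneously inside $X^{bb}_{\mathbf{Q_1}}$ and $X^{bb}_{\mathbf{Q_2}}$, and that the gluing of the $\mathcal{B}^c$-maps along chains of parabolics is genuinely consistent — this is where condition (iii) of Definition \ref{defn:extended-fam-prpcd-s} must be used with some care rather than just on a single pair.
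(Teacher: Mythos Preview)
The paper does not actually prove this proposition: its entire proof reads ``This is a reformulation of part of \cite[Props.~6.25 and 7.9]{pink-thesis}.'' So there is no argument in the paper to compare your approach against; the authors are simply citing Pink's thesis for the hereditary behaviour of toroidal boundary strata.

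Your outline is a plausible sketch of how one would unwind Pink's construction directly, and the ingredients you invoke (the tower morphisms from \S\ref{subsection:hereditary-toroidal}, condition (iii) of Definition~\ref{defn:extended-fam-prpcd-s}, Propositions~\ref{normalizers} and \ref{prop:intersection-closure-strata-toroidal-comp}, and the standard fact about toric maps and cone images) are the right ones. However, what you have written is still a plan rather than a proof: the step where you ``glue the $\mathcal{B}^c$-maps along chains of parabolics'' and then descend to the quotient requires checking compatibility on overlaps in the analytic/\'etale local models used to build the toroidal compactification, not just on the combinatorics of cones, and this is exactly the content of Pink's Propositions~6.25 and 7.9. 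Unless you are prepared to reproduce that part of \cite{pink-thesis} in detail, the honest move here is to do what the paper does and cite Pink.
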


\begin{proof}
This is a reformulation of part of \cite[Props.~6.25 and 7.9]{pink-thesis}.
\end{proof}

\begin{remark}
\label{rmk:hereditary-for-toroidal}
When $\mathbf{Q_1} = \mathbf{G}$, the above formula simplifies a little.
Writing $\mathbf{Q}$ instead of $\mathbf{Q_2}$, and $\Sigma$ for
$\Sigma_{(\mathbf{G})}$, we get that
$X^{tor}_{\mathbf{Q},\Sigma}\to X^{bb}_{\mathbf{Q}}$ extends
to a morphism
$$\xymatrix@C=1.7pc{\overline{X^{tor}_{\mathbf{Q},\Sigma}}\ar[r] &
\overline{(X^{bb}_{\mathbf{Q}})}^{tor}_{\Sigma_{(\mathbf{Q})}}}$$
from the Zariski closure of the $\mathbf{Q}$-stratum
$X^{tor}_{\mathbf{Q},\Sigma}$ in
the toroidal compactification $\overline{X}^{tor}_{\Sigma}$ to
the toroidal
compactification of the $\mathbf{Q}$-stratum of $\overline{X}^{bb}$.

\end{remark}

\subsection{Toroidal and Borel-Serre compactifications, taken together}
\label{subsection:toroidal-vs-rbs} It is well-known that, in
general, there are no morphisms of compactifications between the
toroidal and the Borel-Serre compactifications of a locally
symmetric variety. Thus, one is led to consider their least common
modification (see \cite[\S1]{rbs-2}), a compactification of
$\Gamma\backslash D$ we denote by
$\widehat{\overline{\Gamma\backslash D}}_{\Sigma}$, defined as the
closure of the diagonal embedding of $\Gamma\backslash D$ in
$\overline{\Gamma\backslash D}^{bs}\times
\overline{X}^{tor}_{\Sigma}(\C)$.  The projections to the
first and second factors yield morphisms of compactifications
$$\xymatrix@C=1.8pc{\overline{\Gamma\backslash D}^{bs} & \widehat{\overline{\Gamma
\backslash D}}_{\Sigma} \ar[r]^-{pr_2} \ar[l]_-{pr_1} & \overline{X}^{tor}_{
\Sigma}(\C).}$$
In this paragraph we gather some easy facts about the natural stratification of
$\widehat{\overline{\Gamma\backslash D}}_{\Sigma}$.

Let $\mathbf{P}$ be a proper parabolic $\Q$-subgroup of $\mathbf{G}$, and
$\mathbf{Q}$ the maximal parabolic $\Q$-subgroup containing $\mathbf{P}$ and
such that $\mathbf{M}_{\mathbf{P},h}\simeq\mathbf{M}_{\mathbf{Q},h}$.
With the notation of
\S \ref{subsect:The Borel-Serre compactifications},
the canonical retraction
$\overline{D}(\mathbf{P})\to e(\mathbf{P})$ induces a continuous mapping
\begin{equation}
\label{eq:compar-rbs-tor-1}
\Gamma(\mathbf{Q}_h)\backslash \overline{D}(\mathbf{P}) \to \widetilde{X}_{\mathbf{Q}}^{bb}(\C)
\end{equation}
which is equivariant for the action of $\Gamma(\mathbf{M}_{\mathbf{Q},\ell}\,|\,\mathbf{P})$; here,
$\mathbf{Q}_h$ denotes the inverse image of
$\mathbf{M}_{\mathbf{Q},h}$ by the projection of $\mathbf{Q}$ to (the quotient by a finite normal subgroup of) $\mathbf{M_Q}$.
On the other hand, we have
\begin{equation}
\label{eq:compar-rbs-tor-2}
\mathcal{S}_{\mathbf{Q},\Sigma}(\C)\to
\widetilde{X}^{bb}_{\mathbf{Q}}(\C)
\end{equation}
which is also equivariant for the action of $\Gamma(\mathbf{M}_{\mathbf{Q},\ell}\,|\,\mathbf{P})$.
Moreover, there is an open neighborhood $\mathcal{N}_{\mathbf{P},\Sigma} \subset \mathcal{S}_{\mathbf{Q},\Sigma}(\C)$ of
$\mathcal{B}^{\circ}_{\mathbf{P},\Sigma}(\C)$
stable under the action
of $\Gamma(\mathbf{M}_{\mathbf{Q},\ell}\,|\,\mathbf{P})$ and such that
the deleted neighborhood $\mathcal{N}^\circ_{\mathbf{P},\Sigma}=\mathcal{N}_{\mathbf{P},\Sigma}-
\mathcal{B}_{\mathbf{Q},\Sigma}(\C)=\mathcal{N}_{\mathbf{P},\Sigma} \cap \mathcal{S}_{\mathbf{Q}}(\C)$
is naturally identified with
an open subset of $\Gamma(\mathbf{Q}_h)\backslash D$, also stable under $\Gamma(\mathbf{M}_{\mathbf{Q},\ell}\,|\,\mathbf{P})$.

We define $\widehat{B}^{\circ}_{\mathbf{P},\Sigma}$
to be the intersection with $(\Gamma(\mathbf{Q}_h)\backslash e(\mathbf{P}))\times \mathcal{B}^{\circ}_{\mathbf{P},\Sigma}(\C)$ of the closure of the diagonal imbedding of $\mathcal{N}^{\circ}_{\mathbf{P},\Sigma}$ in
$(\Gamma(\mathbf{Q}_h)\backslash \overline{D}(\mathbf{P}))\times \mathcal{S}_{\mathbf{Q},\Sigma}(\C)$.
One checks that $\widehat{B}^{\circ}_{\mathbf{P},\Sigma}$ does not depend on the choice of $\mathcal{N}_{\mathbf{P},\Sigma}$.
We have:

\begin{proposition}
\label{prop:strata-compar-rbs-tor}
There is a natural action of
$\Gamma(\mathbf{M}_{\mathbf{Q},\ell}\,|\,\mathbf{P})$ on
$\widehat{B}^{\circ}_{\mathbf{P},\Sigma}$. If $\Gamma$ is small enough, the diagonal morphism
$$\Gamma(\mathbf{M}_{\mathbf{Q},\ell}\,|\,\mathbf{P})\backslash
\widehat{B}^{\circ}_{\mathbf{P},\Sigma}
\!\xymatrix@C=1.7pc{\ar[r] & }\!
(\Gamma(\mathbf{P})\backslash e(\mathbf{P}))\times (\Gamma(\mathbf{M}_{\mathbf{Q},\ell}\,|\,\mathbf{P})\backslash \mathcal{B}^{\circ}_{\mathbf{P},\Sigma}(\C))$$
identifies $\Gamma(\mathbf{M}_{\mathbf{Q},\ell}\,|\,\mathbf{P})\backslash
\widehat{B}^{\circ}_{\mathbf{P},\Sigma}$ with the intersection of
$e'(\mathbf{P})\times X^{tor}_{\mathbf{P},\Sigma}(\C)$ with $\widehat{\overline{\Gamma\backslash D}}$.

\end{proposition}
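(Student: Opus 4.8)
The statement is local near the product stratum $e'(\mathbf{P})\times X^{tor}_{\mathbf{P},\Sigma}(\C)$, so the plan is to exhibit one common local chart dominating both compactifications and then take diagonal closures inside it. First I would recall the two charts. On the Borel--Serre side, reduction theory (\cite[\S\S 7--9]{borel-serre}, and the statement recalled in \S\ref{subsect:The Borel-Serre compactifications} that $\Gamma$-equivalence and $\Gamma(\mathbf{P})$-equivalence agree on a neighbourhood of $\overline{e(\mathbf{P})}$) gives, for $\Gamma$ small enough, a $\Gamma(\mathbf{P})$-stable open $W\subset\overline{D}(\mathbf{P})$ containing $e(\mathbf{P})$ with $\Gamma(\mathbf{P})\backslash W$ open in $\overline{\Gamma\backslash D}^{bs}$. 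Since $\mathbf{Q}_h=\mathbf{N_Q}\mathbf{M}_{\mathbf{Q},h}\subseteq\mathbf{P}$ and $\Gamma$ is neat, $\Gamma(\mathbf{Q}_h)$ is normal in $\Gamma(\mathbf{P})$ with quotient canonically $G:=\Gamma(\mathbf{M}_{\mathbf{Q},\ell}\,|\,\mathbf{P})$, so $\psi_1\colon\Gamma(\mathbf{Q}_h)\backslash W\to\overline{\Gamma\backslash D}^{bs}$ is a free properly discontinuous $G$-quotient onto an open set meeting the boundary in $e'(\mathbf{P})$. On the toroidal side, Proposition \ref{normalizers} (with Proposition \ref{fiber} and $\Sigma$ fine enough) shows that, after shrinking $\mathcal{N}_{\mathbf{P},\Sigma}$, the map $\psi_2\colon\mathcal{N}_{\mathbf{P},\Sigma}\to\overline{X}^{tor}_{\Sigma}(\C)$ is a free properly discontinuous $\Gamma(\widetilde{\mathbf{M}}_{\mathbf{Q},\ell}\,|\,\mathbf{P})$-quotient onto an open neighbourhood of $X^{tor}_{\mathbf{P},\Sigma}(\C)$, and $\Gamma(\widetilde{\mathbf{M}}_{\mathbf{Q},\ell}\,|\,\mathbf{P})=G$ because $\Gamma$ is neat. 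The essential input — which I expect to be the real obstacle — is the compatibility of the two charts over $\Gamma\backslash D$: under the identification of $\mathcal{N}^{\circ}_{\mathbf{P},\Sigma}$ with an open subset of $\Gamma(\mathbf{Q}_h)\backslash D$, the composite $\mathcal{N}^{\circ}_{\mathbf{P},\Sigma}\hookrightarrow\Gamma(\mathbf{Q}_h)\backslash W\xrightarrow{\psi_1}\Gamma\backslash D$ must coincide with $\mathcal{N}^{\circ}_{\mathbf{P},\Sigma}\hookrightarrow\mathcal{N}_{\mathbf{P},\Sigma}\xrightarrow{\psi_2}\Gamma\backslash D$, and both must be $G$-equivariantly compatible with the maps $\rho_1,\rho_2$ onto $\widetilde{X}^{bb}_{\mathbf{Q}}(\C)$ of \eqref{eq:compar-rbs-tor-1} and \eqref{eq:compar-rbs-tor-2}. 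This is built into the Siegel-domain realization of $D$ relative to $\mathbf{Q}$ underlying both constructions; I would extract it from \cite{rbs-2} (see also \cite[\S 3]{HZ2}) rather than reprove it.

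Granting this, put $\Psi=\psi_1\times\psi_2$ on the local model $\mathcal{L}=(\Gamma(\mathbf{Q}_h)\backslash W)\times\mathcal{N}_{\mathbf{P},\Sigma}$; it is a $G$-equivariant (diagonal action) homeomorphism onto an open $\mathcal{U}\subset\overline{\Gamma\backslash D}^{bs}\times\overline{X}^{tor}_{\Sigma}(\C)$ meeting the product stratum in all of $e'(\mathbf{P})\times X^{tor}_{\mathbf{P},\Sigma}(\C)$, and the preimage under $\Psi$ of that product stratum is exactly the $G$-stable set $(\Gamma(\mathbf{Q}_h)\backslash e(\mathbf{P}))\times\mathcal{B}^{\circ}_{\mathbf{P},\Sigma}(\C)$. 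Let $\mathcal{W}\subset\Gamma\backslash D$ be the common image of $\mathcal{N}^{\circ}_{\mathbf{P},\Sigma}$; by the compatibility, $\mathcal{W}=G\backslash\mathcal{N}^{\circ}_{\mathbf{P},\Sigma}$ and $\Psi^{-1}(\Delta\mathcal{W})=G\cdot\Delta(\mathcal{N}^{\circ}_{\mathbf{P},\Sigma})$ for the diagonal $G$-action. Taking closures in $\mathcal{L}$ gives $\Psi^{-1}(\overline{\Delta\mathcal{W}})=G\cdot\overline{\Delta(\mathcal{N}^{\circ}_{\mathbf{P},\Sigma})}$, and since $\widehat{\overline{\Gamma\backslash D}}\cap\mathcal{U}=\overline{\Delta\mathcal{W}}$, intersecting with the $G$-stable set above and using $G\cdot A\cap B=G\cdot(A\cap B)$ for $B$ $G$-stable yields $\Psi^{-1}\!\bigl(\widehat{\overline{\Gamma\backslash D}}\cap(e'(\mathbf{P})\times X^{tor}_{\mathbf{P},\Sigma}(\C))\bigr)=G\cdot\widehat{B}^{\circ}_{\mathbf{P},\Sigma}$. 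Applying $\Psi$ gives surjectivity of the diagonal morphism; equivalently one may lift a sequence $x_n\to(a,b)$ from $\Gamma\backslash D$ through each chart and observe that the two lifts correspond under the identification, so the limits together lie in $\widehat{B}^{\circ}_{\mathbf{P},\Sigma}$.

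For injectivity I would use that $\overline{\Delta(\mathcal{N}^{\circ}_{\mathbf{P},\Sigma})}$, hence $\widehat{B}^{\circ}_{\mathbf{P},\Sigma}$, lies in the fibre product of $\Gamma(\mathbf{Q}_h)\backslash\overline{D}(\mathbf{P})$ and $\mathcal{S}_{\mathbf{Q},\Sigma}(\C)$ over $\widetilde{X}^{bb}_{\mathbf{Q}}(\C)$ via $\rho_1,\rho_2$: both $\rho_i$ extend continuously and are $G$-equivariant, they agree on $\Delta(\mathcal{N}^{\circ}_{\mathbf{P},\Sigma})$, and a fibre product is closed in the product. If $(\tilde a,\hat b)$ and $(g_1\tilde a,g_2\hat b)$ both lie in $\widehat{B}^{\circ}_{\mathbf{P},\Sigma}$ with $g_1,g_2\in G$, then $g_1\rho_1(\tilde a)=\rho_1(g_1\tilde a)=\rho_2(g_2\hat b)=g_2\rho_1(\tilde a)$, so $g_1^{-1}g_2$ fixes a point of $\widetilde{X}^{bb}_{\mathbf{Q}}(\C)$; as $\widetilde{X}^{bb}_{\mathbf{Q}}\to X^{bb}_{\mathbf{Q}}$ is a finite \'etale Galois cover and $\Gamma$ is neat, $G$ acts freely on $\widetilde{X}^{bb}_{\mathbf{Q}}(\C)$, so $g_1=g_2$ and the two points are in one diagonal $G$-orbit. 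Combined with the freeness and proper discontinuity of the $G$-actions and the fact that $\Psi$ is a homeomorphism, the induced map $G\backslash\widehat{B}^{\circ}_{\mathbf{P},\Sigma}\to\widehat{\overline{\Gamma\backslash D}}\cap(e'(\mathbf{P})\times X^{tor}_{\mathbf{P},\Sigma}(\C))$ is a homeomorphism. The natural $G$-action on $\widehat{B}^{\circ}_{\mathbf{P},\Sigma}$ of the statement is the diagonal one inherited from the (chosen $G$-stable) $\mathcal{N}_{\mathbf{P},\Sigma}$ through the closure construction.

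In summary, the only non-formal step is the first paragraph's compatibility assertion and the "honest chart" claims — genuinely identifying, for $\Gamma$ small and $\Sigma$ fine, the Borel--Serre corner near $e'(\mathbf{P})$ and the toroidal local model near $X^{tor}_{\mathbf{P},\Sigma}$ as $G$-quotients of the same open subset $\mathcal{N}^{\circ}_{\mathbf{P},\Sigma}$ of $\Gamma(\mathbf{Q}_h)\backslash D$, compatibly with $\rho_1$ and $\rho_2$ — which rests on reduction theory and on \cite{rbs-2}. Everything else is manipulation of diagonal closures and of free properly discontinuous group actions.
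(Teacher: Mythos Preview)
The paper does not actually give a proof of this proposition: it is stated as a fact (in the expository \S\ref{subsection:toroidal-vs-rbs}, whose content is attributed to \cite{rbs-2}) and is followed immediately by Lemma~\ref{lemma:two-proper-maps}. So there is no ``paper's own proof'' to compare against; what follows is an assessment of your sketch on its own terms.

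Your overall strategy --- produce compatible $G$-equivariant charts on the Borel--Serre and toroidal sides over a common deleted neighbourhood $\mathcal N^{\circ}_{\mathbf P,\Sigma}\subset\Gamma(\mathbf Q_h)\backslash D$, then take diagonal closures --- is the right one and matches how this is handled in \cite{rbs-2}. The surjectivity argument via $\Psi^{-1}(\overline{\Delta\mathcal W})=G\cdot\overline{\Delta(\mathcal N^{\circ}_{\mathbf P,\Sigma})}$ is sound, and you are right to isolate the chart-compatibility as the one genuinely non-formal input.

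There is, however, a real gap in your injectivity step. You conclude that $g_1^{-1}g_2$ fixes a point of $\widetilde X^{bb}_{\mathbf Q}(\C)$ and then assert that $G$ acts freely there. This is false: by \eqref{eq:galois-group-tilde-X-bb} the action of $\Gamma(\widetilde{\mathbf M}_{\mathbf Q,\ell})$ on $\widetilde X^{bb}_{\mathbf Q}$ factors through the finite Galois group $\Gamma(\widetilde{\mathbf M}_{\mathbf Q,\ell})/(\Gamma(\mathbf M_{\mathbf Q})\cap\widetilde M_{Q,\ell})$, and the kernel $\Gamma(\mathbf M_{\mathbf Q})\cap\widetilde M_{Q,\ell}$ is an (infinite) arithmetic subgroup. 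Hence $G=\Gamma(\widetilde{\mathbf M}_{\mathbf Q,\ell}\,|\,\mathbf P)$ typically has an infinite subgroup acting trivially on $\widetilde X^{bb}_{\mathbf Q}(\C)$, and your argument yields only that $g_1^{-1}g_2$ lies in that kernel. To repair this you should instead use that the $G$-action is free on one of the two factors of the actual product you are mapping to --- for instance, freeness on $\mathcal B^{\circ}_{\mathbf P,\Sigma}(\C)$ (Proposition~\ref{normalizers}(ii),(iii) for $\Sigma$ fine enough) forces $g_2$ to be determined by $\hat b'$, and then one must argue separately (e.g.\ via the closure being locally a graph over one projection, as in \cite[\S1]{rbs-2}) that $g_1$ is then forced to equal $g_2$. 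The fibre-product-over-$\widetilde X^{bb}_{\mathbf Q}$ observation is useful for this last step, but not in the way you used it.
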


For this reason,
$\Gamma(\mathbf{M}_{\mathbf{Q},\ell}\,|\,\mathbf{P})\backslash
\widehat{B}^{\circ}_{\mathbf{P},\Sigma}$ will be called the corner-like $\mathbf{P}$-stratum of $\widehat{\overline{\Gamma \backslash D}}_{\Sigma}$.
We make note of the following assertion for later use:

\begin{lemma}
\label{lemma:two-proper-maps}
We have two cartesian squares
$$\xymatrix@C=1.5pc@R=1.5pc{\widehat{B}^{\circ}_{\mathbf{P},\Sigma} \ar[r] \ar[d] & \Gamma(\mathbf{M}_{\mathbf{Q},\ell}\,|\,\mathbf{P}) \backslash  \widehat{B}^{\circ}_{\mathbf{P},\Sigma} \ar[d]\\
 \mathcal{B}^{\circ}_{\mathbf{P},\Sigma}(\C) \ar[r] &
\Gamma(\mathbf{M}_{\mathbf{Q},\ell}\,|\,\mathbf{P}) \backslash  \mathcal{B}^{\circ}_{\mathbf{P},\Sigma}(\C)  } \qquad \quad
\xymatrix@C=1pc{\widehat{B}^{\circ}_{\mathbf{P},\Sigma} \ar[r]\ar[d] &  \Gamma(\mathbf{M}_{\mathbf{Q},\ell}\,|\,\mathbf{P}) \backslash \widehat{B}^{\circ}_{\mathbf{P},\Sigma} \hspace{1.3cm}\ar@<-1.7pc>[d]
\\
\Gamma(\mathbf{Q}_h) \backslash e(\mathbf{P})  \ar[r] & \Gamma(\mathbf{Q}) \backslash e(\mathbf{P}) =e'(\mathbf{P})}$$
where the bottom horizontal arrows are proper maps. In particular,
the top horizontal arrows are also proper maps.

\end{lemma}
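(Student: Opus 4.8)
The plan is to reduce the lemma to two soft inputs --- the universal property of quotients by free, properly discontinuous group actions, and Proposition~\ref{prop:strata-compar-rbs-tor} --- so that the only genuinely geometric ingredient, needed just for the properness assertion, is the reduction-theoretic control of the identifications near the corner-like strata. First I would unwind the two diagrams. Write $\Lambda=\Gamma(\mathbf{M}_{\mathbf{Q},\ell}\,|\,\mathbf{P})$; as a subgroup of the arithmetic group $\Gamma(\widetilde{\mathbf{M}}_{\mathbf{Q},\ell})$ it is neat (since $\Gamma$ is), hence torsion-free, and it acts freely and properly discontinuously on $\mathcal{B}^{\circ}_{\mathbf{P},\Sigma}(\C)$, on $\Gamma(\mathbf{Q}_h)\backslash e(\mathbf{P})$, and --- compatibly, via the two coordinate projections of the ambient product $(\Gamma(\mathbf{Q}_h)\backslash\overline{D}(\mathbf{P}))\times\mathcal{S}_{\mathbf{Q},\Sigma}(\C)$ --- on $\widehat{B}^{\circ}_{\mathbf{P},\Sigma}$. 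The left vertical arrows of the two squares are precisely these two projections restricted to $\widehat{B}^{\circ}_{\mathbf{P},\Sigma}$, and they are $\Lambda$-equivariant by construction (cf.\ the discussion around \eqref{eq:compar-rbs-tor-1}--\eqref{eq:compar-rbs-tor-2}). Passing to $\Lambda$-orbit spaces yields the right vertical arrows, once one invokes Proposition~\ref{prop:strata-compar-rbs-tor} to identify $\Lambda\backslash\widehat{B}^{\circ}_{\mathbf{P},\Sigma}$ with the corner-like $\mathbf{P}$-stratum of $\widehat{\overline{\Gamma\backslash D}}_{\Sigma}$, together with the identifications $\Lambda\backslash\mathcal{B}^{\circ}_{\mathbf{P},\Sigma}(\C)=X^{tor}_{\mathbf{P},\Sigma}(\C)$ (its definition in \S\ref{subsect:toroidal-compact}) and $\Lambda\backslash(\Gamma(\mathbf{Q}_h)\backslash e(\mathbf{P}))=\Gamma(\mathbf{P})\backslash e(\mathbf{P})=e'(\mathbf{P})$; the last one uses $\mathbf{Q}_h\subseteq\mathbf{P}$ together with the refined Langlands decomposition \eqref{refined-langlands}, which gives canonical isomorphisms $\mathbf{P}/\mathbf{Q}_h\simeq\langle\widetilde{\mathbf{M}}_{\mathbf{Q},\ell}\,|\,\mathbf{P}\rangle$ and hence $\Gamma(\mathbf{P})/\Gamma(\mathbf{Q}_h)\simeq\Lambda$ (compare \eqref{eq:useful-arthmetic-subgr}). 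Thus each square is the canonical quotient square attached to a $\Lambda$-equivariant map of $\Lambda$-spaces.

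Granting this set-up, cartesianness is formal. If a discrete group $\Lambda$ acts freely and properly discontinuously on locally compact spaces $Y$ and $Z$ and $f\colon Y\to Z$ is $\Lambda$-equivariant and continuous, then the square with rows $Y\to\Lambda\backslash Y$, $Z\to\Lambda\backslash Z$ and vertical maps $f$ and $\bar f$ is cartesian: the comparison map $Y\to Z\times_{\Lambda\backslash Z}(\Lambda\backslash Y)$ is surjective (in a given $\Lambda$-orbit of $Y$, pick the representative over which $f$ takes the prescribed value in $Z$), injective (freeness of the action on $Z$), and a morphism of coverings of $\Lambda\backslash Y$, hence a homeomorphism. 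I would apply this with $(Y,Z)=(\widehat{B}^{\circ}_{\mathbf{P},\Sigma},\,\mathcal{B}^{\circ}_{\mathbf{P},\Sigma}(\C))$ and with $(Y,Z)=(\widehat{B}^{\circ}_{\mathbf{P},\Sigma},\,\Gamma(\mathbf{Q}_h)\backslash e(\mathbf{P}))$.

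It then remains to prove that the bottom horizontal arrows $\mathcal{B}^{\circ}_{\mathbf{P},\Sigma}(\C)\to X^{tor}_{\mathbf{P},\Sigma}(\C)$ and $\Gamma(\mathbf{Q}_h)\backslash e(\mathbf{P})\to e'(\mathbf{P})$ are proper; the ``in particular'' clause then follows, since properness is stable under base change and the two squares are cartesian. This is the one non-formal step, and I expect the main difficulty to be here: I would derive it from the explicit local structure of the three compactifications in a neighbourhood of the $\mathbf{P}$-stratum --- the Siegel-domain / torus-embedding presentation of a neighbourhood of $\mathcal{B}^{\circ}_{\mathbf{P},\Sigma}$ inside $\mathcal{S}_{\mathbf{Q},\Sigma}(\C)$ recalled in \S\ref{subsection:toroidal-vs-rbs}, and the reduction-theoretic fact that near $\overline{e(\mathbf{P})}^{bs}$ the equivalence relations cutting out the quotients in question are governed by $\mathbf{P}$ --- so that over a neighbourhood of each stratum these quotient maps become maps with compact fibres, hence proper. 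Everything else reduces to the bookkeeping of the first paragraph and the elementary descent statement of the second.
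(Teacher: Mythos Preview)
Your treatment of cartesianness is correct and matches the paper's one-line argument: both deduce it from the free, properly discontinuous $\Lambda$-action, and your general lemma about equivariant maps of $\Lambda$-spaces is exactly what the paper is abbreviating.

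The gap is in the properness step. You propose to show that the bottom horizontal arrows --- the quotient maps $\mathcal{B}^{\circ}_{\mathbf{P},\Sigma}(\C)\to\Lambda\backslash\mathcal{B}^{\circ}_{\mathbf{P},\Sigma}(\C)$ and $\Gamma(\mathbf{Q}_h)\backslash e(\mathbf{P})\to e'(\mathbf{P})$ --- have compact fibres and hence are proper. But these are covering maps with deck group $\Lambda=\Gamma(\widetilde{\mathbf{M}}_{\mathbf{Q},\ell}\,|\,\mathbf{P})$, an arithmetic subgroup of the positive-dimensional group $\langle\widetilde{\mathbf{M}}_{\mathbf{Q},\ell}\,|\,\mathbf{P}\rangle$; the fibres are the infinite discrete $\Lambda$-orbits, so no local analysis will make them compact. (For instance, when $\mathbf{P}=\mathbf{Q}$ the second map is essentially the covering of the locally symmetric space of $\widetilde{M}_{Q,\ell}$ by its symmetric space.)

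The paper instead points to Proposition~\ref{prop:strata-compar-rbs-tor}, and what that proposition actually delivers is this: $\Lambda\backslash\widehat{B}^{\circ}_{\mathbf{P},\Sigma}$ is identified with a stratum of the \emph{compact} space $\widehat{\overline{\Gamma\backslash D}}_{\Sigma}$, namely the intersection of $e'(\mathbf{P})\times X^{tor}_{\mathbf{P},\Sigma}(\C)$ with it. Hence the projections from $\Lambda\backslash\widehat{B}^{\circ}_{\mathbf{P},\Sigma}$ to $X^{tor}_{\mathbf{P},\Sigma}(\C)$ and to $e'(\mathbf{P})$ --- the right \emph{vertical} arrows of your two squares --- are restrictions of the projections $pr_2$, $pr_1$ from the compact $\widehat{\overline{\Gamma\backslash D}}_{\Sigma}$, and are therefore proper. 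By the cartesianness you already established, the left vertical arrows are then proper by base change. This is exactly what gets used later (for the properness of $p_1$, $p_2$ around \eqref{eq:maps-between-3-diff-W-rajoute}). You already invoke Proposition~\ref{prop:strata-compar-rbs-tor} for the identification of the top-right corner; you should extract the properness from it as well, rather than attempt a separate reduction-theoretic argument for the horizontal quotient maps.
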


\begin{proof}
That the squares are cartesian follows from the fact that $\Gamma(\mathbf{M}_{\mathbf{Q},\ell}\,|\,\mathbf{P})$
acts properly discontinuously on $\mathcal{B}^{\circ}_{\mathbf{P},\Sigma}(\C)$ and
$\Gamma(\mathbf{Q}_h) \backslash e(\mathbf{P})$.
That the bottom arrows are proper maps follows from
Proposition \ref{prop:strata-compar-rbs-tor}.
\end{proof}


\section{Application to the reductive Borel-Serre compactification}

\label{sect:main-comput}
In this section, we state and prove the main result of the paper.

\subsection{The Main Theorem: statement and complements}
We keep the notation and assumptions of Section
\ref{sect:compactif-locally-symmetric}. Recall that
$\mathbf{G}$ is a simple $\Q$-group, and $D$ is a hermitian symmetric domain
with $\Aut(D)\simeq G$ modulo compact factors. Our main result is:

\begin{theorem}
\label{thm:main-thm}
\begin{itemize}

\item[(a)]
Let $\Gamma\subset \mathbf{G}(\Q)$ be an arithmetic subgroup
and $X$ the $\C$-scheme such that
$X(\C)\simeq \Gamma\backslash D$.
Denote $p:\overline{\Gamma\backslash D}^{rbs}\to \overline{\Gamma
\backslash D}^{bb}$ the natural projection.
Then, there exists
a canonical isomorphism of commutative unitary algebras
$$\varphi:\An^*(\EE_{\overline{X}^{bb}})\overset{\sim}{\to} {\rm R}p_*\Q\,;$$
here $\EE_{\overline{X}^{bb}}$ is the Artin motive defined in
\emph{Corollary \ref{defn-cor:E-of-X}}, which is a unitary algebra by
\emph{Proposition \ref{E-unitary}}.

\item[(b)]
Let $\Gamma, \; \Gamma'\subset \mathbf{G}(\Q)$ be arithmetic subgroups
and denote by $X$ and $X'$ the $\C$-schemes such that
$X(\C)\simeq \Gamma\backslash D$ and $X'(\C)\simeq \Gamma'\backslash D$. Also, denote $p:\overline{\Gamma\backslash D}^{rbs}\to \overline{\Gamma
\backslash D}^{bb}$ and
$p':\overline{\Gamma'\backslash D}^{rbs}\to \overline{\Gamma'
\backslash D}^{bb}$ the natural projections.

Let $g\in \mathbf{G}(\Q)$ such that
$g\Gamma'g^{-1}\subset\Gamma$. We have induced morphisms $g^{rbs}$ and
$g^{bb}$ from the compactifications of $\Gamma'\backslash D$ to the compactifications of $\Gamma\backslash D$ as in
\eqref{eqn:g-extended-to-borel-serre-compact}
and \eqref{eq:induced-g-on-baily-borel-compact}.
Moreover, $g^{bb}$ is induced by a morphism of $\C$-schemes which we also denote by $g^{bb}$.
With these notations, we have a commutative diagram in
$\mathbf{D}(\Shv(\overline{\Gamma'\backslash D}^{bb}))$:
$$\xymatrix@C=1.5pc@R=1.5pc{(g^{bb})^* \An^* \EE_{\overline{X}^{bb}} \ar[d]^-{\sim}_-{\varphi}
\ar[r]^-{\sim} & \An^* (g^{bb})^* \EE_{\overline{X}^{bb}} \ar[r] & \An^* \EE_{
\overline{X'}^{bb}} \ar[d]^-{\varphi}_-{\sim} \\
(g^{bb})^* {\rm R} p_* \Q_{\overline{\Gamma\backslash D}^{rbs}} \ar[r] &
{\rm R} p'_*(g^{rbs})^*\Q_{\overline{\Gamma\backslash D}^{rbs}} \ar[r]^-{\sim}
& {\rm R} p'_*\Q_{\overline{\Gamma'\backslash D}^{rbs}},\!}$$
where $(g^{bb})^*{\rm R}p_* \to {\rm R}p'_* (g^{rbs})^*$ is the base change
morphism and $(g^{bb})^* \EE_{\overline{X}^{bb}} \to \EE_{\overline{X'}^{bb}}$
is the morphism in \emph{Corollary \ref{cor:functoriality-EE-X-010}}.
\end{itemize}
\end{theorem}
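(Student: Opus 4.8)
\textbf{Proof strategy for Theorem \ref{thm:main-thm}.}
The plan is to compute both sides of the desired isomorphism $\varphi$ using the machinery of \S\ref{subsection:compute-E-X}, applied to $X=\overline{X}^{bb}$. First I would set up the stratification data \textbf{D1)}--\textbf{D3)} and properties \textbf{P1)}--\textbf{P2)} on $\overline{X}^{bb}$. The stratification $\mathcal{S}$ is the one described in \eqref{bb-strat}, whose strata are (the connected components of) $X$ and the $X^{bb}_{\mathbf{Q}}(\C)=\Gamma(\mathbf{M}_{\mathbf{Q},h})\backslash e_h(\mathbf{Q})$, ordered by the relation $\preceq$ from \S\ref{subsect:The-Baily-Borel-compactification}. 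For the resolutions $e_i:Y_i \to X_{\geq i}$ of \textbf{D2)}, I would take the toroidal compactifications: for the top stratum, $Y_0 = \overline{X}^{tor}_{\Sigma}$ with $\Sigma$ a smooth projective compatible family of {\it prpcd}'s (Theorem \ref{morph-comp}), and for lower strata, the toroidal compactifications $\overline{(X^{bb}_{\mathbf{Q}})}^{tor}_{\Sigma_{(\mathbf{Q})}}$ of the boundary Shimura varieties, using the \emph{extended} compatible families of Definition \ref{defn:extended-fam-prpcd-s}. The hereditary property of Proposition \ref{prop:hereditary-for-toroidal} (and Remark \ref{rmk:hereditary-for-toroidal}) is exactly what is needed to verify \textbf{P1)}; Proposition \ref{fiber} identifies the boundary divisors $e_i^{-1}(X_{\geq j})$ and the stratifications $\mathcal{R}(I)$ in terms of the corner-like strata $\mathcal{B}^c_{\mathbf{R},\Sigma}$, and the description \eqref{corner-like-strata} together with Proposition \ref{normalizers} should give \textbf{P2)}. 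The covers $c_i:Z_i\to Y_i$ of \textbf{D3)} come from passing to the \'etale covers $\widetilde{X}^{bb}_{\mathbf{Q}}$ of \eqref{tower} and taking a smooth resolution adapted to the torus embeddings, so that the Galois condition in \textbf{P2)} holds via \eqref{eq:galois-group-tilde-X-bb}.

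Given this data, Theorem \ref{thm:final-form-for-applic-main-thm}(a) gives $\EE_{\overline{X}^{bb}}\simeq \Upsilon_*(h,\varsigma_n)^*\beta_{\overline{X}^{bb},\mathcal{S}}$, and Corollary \ref{cor:realization-EE-X-gen} gives $\An^*(\EE_{\overline{X}^{bb}})\simeq {\rm R}\Upsilon^{an}_*(h^{an},\varsigma_n)^*\beta^{an}_{\overline{X}^{bb},\mathcal{S}}$, where $\beta^{an}$ has the concrete description in Lemma \ref{lemma:rajou-concrete-desc-beta-an-gen} in terms of the flasque resolution $\digamma$ and iterated direct images along inclusions of the open strata $T^0(I)\hookrightarrow T(I)$. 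The core of the argument is then to massage this complex of sheaves, without changing its direct image to a point, into a complex manifestly computing ${\rm R}p_*\Q_{\overline{\Gamma\backslash D}^{rbs}}$. Following the outline in the introduction (``we successively modify the diagram of schemes''), I would: (i) replace the diagram $\mathcal{Y}$ by a simpler diagram $\mathcal{V}$ of toroidal corner-like strata, using a comparison result in the spirit of Proposition \ref{prop:comparison-cal-Y-and-cal-V} (referenced but not in this excerpt), exploiting that the $\omega^0$-operation collapses the torus-embedding directions; (ii) pass from $\overline{X}^{bb}$-schemes to the diagram $\widehat{\overline{\Gamma\backslash D}}_{\Sigma}$ of \S\ref{subsection:toroidal-vs-rbs}, whose strata $\Gamma(\mathbf{M}_{\mathbf{Q},\ell}\,|\,\mathbf{P})\backslash\widehat{B}^{\circ}_{\mathbf{P},\Sigma}$ fiber over both the Borel-Serre strata $e'(\mathbf{P})$ and the toroidal strata (Proposition \ref{prop:strata-compar-rbs-tor}, Lemma \ref{lemma:two-proper-maps}); (iii) use the properness of the vertical maps to $e'(\mathbf{P})$ in Lemma \ref{lemma:two-proper-maps} to push forward along $pr_1$ and escape to the diagram $\mathcal{W}^{tor}$ of topological spaces, whence the reductive Borel-Serre compactification emerges because $\widehat{e}(\mathbf{P})\simeq N_P\backslash e(\mathbf{P})$ appears as the fiber of the collapse. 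At the level of sheaves, the iterated $(T^0\hookrightarrow T)_*\digamma(\cdot)^*$ pattern in Lemma \ref{lemma:rajou-concrete-desc-beta-an-gen} is precisely the simplicial/stratified recipe for ${\rm R}p_*$ of the constant sheaf on a stratified space built by gluing, so the final identification becomes a matter of matching indexing categories.

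For part (b), the functoriality, I would run the same construction compatibly for $\Gamma$ and $\Gamma'$: given $g$ with $g\Gamma'g^{-1}\subset\Gamma$, the last paragraph of \S\ref{subsect:toroidal-compact} produces compatible families $\Sigma,\Sigma'$ and a morphism $g^{tor}$; this is \emph{not} smooth in general, but $g^{bb}:\overline{X'}^{bb}\to\overline{X}^{bb}$ is finite, hence universally open, so the functoriality statement in Theorem \ref{thm:final-form-for-applic-main-thm}(b) (with $l=g^{bb}$, $\check X=\overline{X'}^{bb}$) applies and gives the left square of the diagram in (b) after applying $\An^*$ and Corollary \ref{cor:realization-EE-X-gen}. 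The right square (comparing with ${\rm R}p'_*\Q$ via the base change $(g^{bb})^*{\rm R}p_*\to {\rm R}p'_*(g^{rbs})^*$) then follows by tracing the same sequence of modifications (i)--(iii) on the $g$-equivariant diagrams and invoking the compatibility of $g^{rbs}$ with the stratifications of the Borel-Serre and toroidal compactifications recorded in \S\ref{sect:compactif-locally-symmetric}. The main obstacle I anticipate is step (i)--(ii): verifying that the sequence of rewritings of the diagram of schemes genuinely does not alter the direct image to $\overline{X}^{bb}$ — this requires careful bookkeeping of the indexing posets $A(I)$, ${\rm s}_{J\subset I}^{-1}(-)$ (Corollary \ref{cor:fibers-of-s-J-subset-I}) against the combinatorics of the {\it prpcd}'s $\Sigma^{\circ}_{\mathbf{R}},\Sigma^c_{\mathbf{R}}$, and heavy use of Proposition \ref{prop:direct-image-compl-sncd} and Corollary \ref{cor:new-for-compar-beta-prime-et-theta-dp} to control direct images along complements of the (quotient-singular) crossing divisors $\mathcal{B}^c_{\mathbf{R},\Sigma}$. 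The passage to topological spaces in step (iii), by contrast, should be comparatively clean once the scheme-level diagram has been reduced to the corner-like strata, since inverse images of sheaves on topological spaces are exact and the proper base change theorem does the rest.
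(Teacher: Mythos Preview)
Your overall architecture matches the paper's: set up \textbf{D1)}--\textbf{D3)} and \textbf{P1)}--\textbf{P2)} on $\overline{X}^{bb}$ using toroidal compactifications and an extended family of \emph{prpcd}'s, invoke Theorem~\ref{thm:final-form-for-applic-main-thm}/Corollary~\ref{cor:realization-EE-X-gen}, then successively rewrite the diagram of schemes until one can bridge to the Borel--Serre side via $\widehat{\overline{\Gamma\backslash D}}_{\Sigma}$. Two points, however, need correction.

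First, your mechanism for step (i) is misidentified. The $\omega^0$-operation has already done its work inside \S\ref{subsection:compute-E-X}; after Theorem~\ref{thm:final-form-for-applic-main-thm} one is comparing \emph{direct images of (pullbacks of) $\beta$}, and no further weight truncation is involved. The actual content of the comparison $\mathcal{Y}^{tor}\leftrightarrow\mathcal{V}^{tor}$ (Proposition~\ref{prop:comparison-cal-Y-and-cal-V}) is that certain unit maps $\id\to\rho_{i*}\rho_i^*$ are invertible, and the geometric input making this work is the \emph{contractibility of the closed cones} $\overline{C}_R\subset U_R$ (so the homotopy colimit over the cone complex is a point) together with the fact that the relevant groupoids act \emph{freely} on sets of connected components. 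Neither of these is about torus-embedding directions being killed by $\omega^0$.

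Second, the sequence and nature of the diagrams is slightly muddled. The paper's essential organizational device, which you do not anticipate, is a pair of groupoids $\mathscr{P}_{\Gamma}(I)$ and $\mathscr{Q}_{\Gamma}(I_0,I_1)$ built from pairs of parabolic $\Q$-subgroups; these index Zariski-locally-trivial covers $\widetilde{T}^{tor}(I)\to T^{tor}(I)$ and replace the unwieldy products $\prod_j A(J\cap[\![i_j,i_{j+1}]\!])$. The diagram $\mathcal{W}^{tor}$ is still \emph{algebraic}: it is a condensed model of $\mathcal{V}^{tor}$ obtained by extracting one distinguished connected component per object of $\mathscr{Q}_{\Gamma}(I_0,I_1)$. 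The escape to topology happens \emph{after} reaching $\mathcal{W}^{tor}$, by building parallel diagrams $\mathcal{W}^{bs}$ and $\widehat{\mathcal{W}}$ (and then $\mathcal{Z}^{bs}$, $\mathcal{U}^{bs}$) out of closures of Borel--Serre strata, and the bridge is a proper-base-change argument on $\widehat{\mathcal{W}}$ sitting over both $\mathcal{W}^{tor}$ and $\mathcal{W}^{bs}$. The final identification with ${\rm R}p_*\Q$ then uses that $\overline{e(\mathbf{F_0})}$ (the $\widetilde{M}_{Q,\ell}$-factor) is contractible and that the quotient groupoids act properly discontinuously. You should also note the preliminary reduction (Proposition~\ref{prop:neat=superfluous}) to neat $\Gamma$, which is what legitimizes all the stratification assumptions.
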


\begin{remark}
The claim that $\varphi$ is an isomorphism of unitary algebras
implies in particular that the square
$$\xymatrix@C=1.5pc@R=1.5pc{\An^*(\un_{\overline{X}^{bb}})  \ar[r]^-{\sim} \ar[d] & \Q_{\overline{\Gamma\backslash D}^{bb}} \ar[d] \\
\An^*(\EE_{\overline{X}^{bb}}) \ar[r]^-{\varphi}_-{\sim} & {\rm R}p_*\Q_{
\overline{\Gamma\backslash D}^{rbs}}}$$
commutes. Indeed, the vertical arrows are the unit morphisms of the algebras
$\An^*(\EE_{\overline{X}^{bb}})$ and ${\rm R}p_*\Q_{\overline{\Gamma\backslash
D}^{rbs}}$.
\end{remark}

\begin{remark}
\label{rem:compat-with-Hecke-corresp}
The isomorphism in Theorem \ref{thm:main-thm}, (a)
is compatible with the action of Hecke correspondences.
These are a composite of a pullback and a trace.
By Theorem \ref{thm:main-thm}, (b), we are thus reduced
to check the compatibility
with the trace map associated to arithmetic subgroups
$\Gamma, \; \Gamma'\subset \mathbf{G}(\Q)$
and $g\in \mathbf{G}(\Q)$ such that
$g\Gamma'g^{-1}\subset\Gamma$.
Again by Theorem \ref{thm:main-thm}, (b),
we can assume that $g=1$. For simplicity, we also assume
that $\Gamma'$ is a normal
subgroup of $\Gamma$; the more general case reduces to that one.
Using the adjunction $((1^{bb})^*,1^{bb}_*)$
one has a canonical morphism
$\EE_{\overline{X}^{bb}} \to 1^{bb}_*\EE_{\overline{X'}^{bb}}$, and similarly for the relative cohomology of the reductive Borel-Serre compactification under its projection to the Baily-Borel Satake compactification. Using
Theorem \ref{thm:main-thm}, (b),
one deduces that these morphisms are compatible with $\varphi$.
Now, the group $G=\Gamma'/\Gamma$
acts on the target of
$\EE_{\overline{X}^{bb}} \to 1^{bb}_*\EE_{\overline{X'}^{bb}}$ and identifies the source with the image of the projector ${\rm card}(G)^{-1}\sum_{h\in G}h$ (cf.~Lemma \ref{lem:Phi-invariant-EE-X}).
The trace map
$tr:1^{bb}_*\EE_{\overline{X'}^{bb}} \to \EE_{\overline{X}^{bb}}$
is a multiple (by ${\rm card}(G)$) of the projection of
$1^{bb}_*\EE_{\overline{X'}^{bb}}$ to its direct factor
$\EE_{\overline{X}^{bb}}$
(and similarly for the relative cohomology of the reductive Borel-Serre compactification).
This proves that the isomorphism $\varphi$
is compatible with the trace maps.

\end{remark}

\begin{remark}
In \cite{GT}, Goresky and Tai constructed a morphism from the singular
cohomology of the reductive Borel-Serre compactification $\overline{\Gamma
\backslash D}^{rbs}$ to the Betti cohomology of a toroidal compactification
$\overline{X}^{tor}_{\Sigma}$,
for fine enough compatible families of {\it prpcd}'s $\Sigma$.  This came out of a
study of the
least common modification of the two compactifications of $\Gamma
\backslash D$,
and it is induced by a continuous mapping.  We can use
Theorem \ref{thm:main-thm} to recover a version of their result.
Indeed, assume that $\Sigma$ is chosen so that $\overline{X}^{tor}_{\Sigma}$
is projective and smooth. Denote by $e:\overline{X}^{tor}_{\Sigma}\to
\overline{X}^{bb}$ the natural projection. As $e$ is dominant, we have, by
Corollary
\ref{cor:functoriality-EE-X-010}, a natural morphism $e^*\EE_{\overline{X}^{
bb}} \to \EE_{\overline{X}^{tor}_{\Sigma}}\simeq \un_{\overline{X}^{tor}_{
\Sigma}}$. By adjunction, we deduce a natural morphism
$\EE_{\overline{X}^{bb}} \to e_*\un_{\overline{X}^{tor}_{\Sigma}}$. Applying
the Betti realization, and using
Theorem \ref{thm:main-thm}, we deduce a natural morphism
${\rm R} p_*\Q_{\overline{\Gamma \backslash D}^{rbs}} \to {\rm R}e_* \Q_{
\overline{X}^{tor}_{\Sigma}(\C)}$. Taking the
cohomological direct images along the projection of $\overline{\Gamma
\backslash D}^{bb}$ to the point, we obtain a natural morphism
${\rm H}^*(\overline{\Gamma \backslash D}^{rbs}) \to {\rm H}^*(\overline{X}^{
tor}_{\Sigma}(\C))$. We expect this to agree with the morphism from \cite{GT}.

\end{remark}

\begin{remark}
We indicate somewhat heuristically how the determination in Theorem \ref{thm:main-thm} (of $\omega^0_{\overline X^{bb}}j^{bb}_*\un_X$ when
$\Gamma$ is neat) is consistent with the notion of punctual lowest
weight in a Hodge theoretical sense. We refer to
\eqref{diagram-bs-bb} in Proposition \ref{bs-bb} for notation. The diagram gives
$${\rm R} j^{bb}_*\Q_{(\Gamma\backslash
D)}\simeq {\rm R}(pqj^{bs})_*\Q_{(\Gamma\backslash
D)}\simeq {\rm R}(pq)_*\Q_{(\overline{\Gamma\backslash D)}^{bs}},
$$
as $j^{bs}$ is a homotopy equivalence.

Let $\mathbf Q$ be a maximal $\Q$-parabolic subgroup of $\mathbf G$. Over $(\Gamma\backslash D)^{bb}_Q$
(the underlying topological space of $X_{\mathbf{Q}}^{bb}$ from \eqref{bb-strat}),
we
have that $q$ is a fibration, with
\begin{equation}\label{rbs-fiber}
q^{-1}(x)\simeq (\overline{\Gamma(\widetilde M_{Q,\ell})\backslash \widetilde D_{Q,\ell}})^{rbs}
\end{equation}
whenever $x\in (\Gamma\backslash D)^{bb}_Q$ (see
\cite[Prop.\,2.3.8]{rbs-2}). Likewise, for such $x$ one has
\begin{equation}\label{bs-fiber}
(pq)^{-1}(x)\simeq\overline{\Gamma( N_Q\widetilde
M_{Q,\ell})\backslash (N_Q\times \widetilde D_{Q,\ell})}^{bs},
\end{equation}
which has the homotopy type of $\Gamma( N_Q\widetilde
M_{Q,\ell})\backslash (N_Q\times \widetilde D_{Q,\ell})$. (In the
preceding, $\widetilde D_{Q,\ell}$ denotes the symmetric space of
$\widetilde M_{Q,\ell}$\,.) In particular, the latter is a
$(\Gamma(N_Q)\backslash N_Q)$-fibration over $\Gamma(\widetilde
M_{Q,\ell})\backslash \widetilde D_{Q,\ell}$.

We can take the sheaves of smooth differential forms on $\Gamma(
N_Q\widetilde M_{Q,\ell})\backslash (N_Q\times \widetilde
D_{Q,\ell})$ with coefficients in the exterior algebra
$\bigwedge^{\!*} \mathfrak{n}_Q^{\vee}$, where $\mathfrak{n}_Q$ is
the Lie algebra of $N_Q$, as the $\C$-datum of a mixed Hodge
complex for ${\rm H}^*(\Gamma( N_Q\widetilde M_{Q,\ell})\backslash
(N_Q\times \widetilde D_{Q,\ell}))$ (cf.
\cite[\S5.2]{HZ2}).\footnote{In fact, allowing $x$ to vary
produces a variation of mixed Hodge structure on
$(\Gamma\backslash D)^{bb}_Q$.} The weights are those that come
from the definition of a Shimura variety \cite[\S2.1]{deligne}:
forms on $\Gamma(\widetilde M_{Q,\ell})\backslash \widetilde
D_{Q,\ell}$ with $\C$-coefficients comprise $W_0$ -- indeed, these
forms appear only combinatorially in the toroidal setting (cf.
Definition \ref{defn:compatible-family-prpcd-s}), and have trivial
contribution to the mixed Hodge structure; and $\bigwedge^{\!i}
\mathfrak{n}^{\vee}_Q$ has only positive weights when $i>0$. Thus,
the lowest weight is given by $\Q_{(\Gamma(\widetilde
M_{Q,\ell})\backslash \widetilde D_{Q,\ell})}$. However, we
preferred to see (71) here, which involves more than the quotient
of (72) by $N_Q$, insufficient over {\it its} boundary. However,
factoring $q$ through the {\it excentric} Borel-Serre
compactification $\overline{\Gamma \backslash D}^{ebs}$ (see
\cite[(2.3.5)]{rbs-2}) produces
$$\overline{\Gamma((N_Q/U_Q)\widetilde
M_{Q,\ell})\backslash ((N_Q/U_Q)\times \widetilde D_{Q,\ell})}^{bs}.$$

\end{remark}

In the statement of Theorem \ref{thm:main-thm} we used the
notation ${\rm R}p_*$ for the derived operation of cohomological direct image
of sheaves. As we mainly consider derived operations on sheaves, we will drop
from now on
the ``${\rm R}$''; this convention was already used for the operations
on motives in Sections \ref{sect:triang-cat-of-mot} and
\ref{sect:artin-zero-slice-weight}.

\begin{definition}
\label{defn:reductive-bs-motive}
We keep the notation from \emph{Theorem \ref{thm:main-thm}}.
Let $\pi:\overline{X}^{bb} \to \Spec(\C)$ be the projection to the point.
The motive $\pi_*\left(\EE_{\overline{X}^{bb}}\right)$
is called the \emph{reductive Borel-Serre motive} of $X$ and will be denoted
$\M^{rbs}(X)$.
\end{definition}

\begin{remark}
\label{rem:Mrbs-defined-numberfield}
As was the case for the scheme $X$, the motive $\M^{rbs}(X)$ can be defined
over a number
field. Indeed, let $k\subset \C$ be a field of definition of $\overline{
X}^{bb}$, which we may take to be a finite extension of $\Q$. Let
$\overline{X}^{bb}_{/k}$ be a $k$-scheme such that $\overline{X}^{bb}\simeq
\overline{X}^{bb}_{/k} \otimes_k \C$. Also, denote by $\pi_{/k}:\overline{X}^{
bb}_{/k} \to \Spec(k)$ the projection to the point. Then, the motive
$\M^{rbs}(X_{/k})=(\pi_{/k})_* \left(\EE_{\overline{X}^{bb}_{/k}}\right)$
satisfies $\M^{rbs}(X_{/k})\otimes_k \C \simeq \M^{rbs}(X)$, where
$-\otimes_k \C$ denotes the inverse image of motives along $\Spec(\C) \to
\Spec(k)$. For this reason, $\M^{rbs}(X_{/k})$ is called a \emph{reductive
Borel-Serre motive} over $k$.

\end{remark}
\smallskip

In the following statement, we identify $\mathbf{D}(\Q)$ with
$\mathbf{D}(\Shv(pt))$, where $pt$ is the topological space consisting of
one point. With this understood, the Betti realization on $\DM(\C)$ takes
values in
$\mathbf{D}(\Q)$.

\begin{corollary}
\label{cor-of-main-thm}
There is an isomorphism of commutative unitary algebras
$$\varphi:\An^*(\M^{rbs}(X)) \overset{\sim}{\to}
{\rm H}^*(\overline{\Gamma \backslash D}^{rbs})$$
from the Betti realization of the motive $\M^{rbs}(X)$ to the singular
cohomology of the topological space
$\overline{\Gamma\backslash D}^{rbs}$.
Moreover, for $g\in \mathbf{G}(\Q)$ such that $g\Gamma'g^{-1}\subset \Gamma$,
there is a morphism of commutative unitary algebras $\M^{rbs}(g):\M^{rbs}(X)
\to \M^{rbs}(X')$, which makes the following square in $\mathbf{D}(\Q)$ commutative:
$$\xymatrix@C=1.5pc@R=1.5pc{\An^*(\M^{rbs}(X)) \ar[r]^-{\varphi}_-{\sim} \ar[d]_-{\M^{rbs}(g)}
& {\rm H}^*(\overline{\Gamma\backslash D}^{rbs}) \ar[d]^-{{\rm H}^*(g^{rbs})}\\
\An^*(\M^{rbs}(X')) \ar[r]^-{\varphi}_-{\sim} & {\rm H}^*(\overline{\Gamma'
\backslash D}^{rbs}).\!}$$

\end{corollary}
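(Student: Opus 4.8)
The plan is to deduce Corollary \ref{cor-of-main-thm} from Theorem \ref{thm:main-thm} by pushing forward along the structural projection $\pi:\overline{X}^{bb}\to\Spec(\C)$ and its analytic counterpart $\pi^{an}:\overline{\Gamma\backslash D}^{bb}\to pt$. First I would recall that, by Definition \ref{defn:reductive-bs-motive}, $\M^{rbs}(X)=\pi_*(\EE_{\overline{X}^{bb}})$, so $\An^*(\M^{rbs}(X))=\An^*(\pi_*\EE_{\overline{X}^{bb}})$. Since $\EE_{\overline{X}^{bb}}$ is an Artin motive, hence compact (it is a direct factor of a cohomological motive built from the resolution data, and $\omega^0$ preserves compacts by Proposition \ref{prop:additional-prop-omega-0-x}, (vii)), the Betti realization commutes with $\pi_*$: by \S\ref{The Betti realization} the natural transformation $\An^*\pi_*\to \pi^{an}_*\An^*$ is invertible on compact motives. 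Therefore $\An^*(\M^{rbs}(X))\simeq \pi^{an}_*\An^*(\EE_{\overline{X}^{bb}})$. Now apply $\pi^{an}_*$ to the isomorphism $\varphi$ of Theorem \ref{thm:main-thm}, (a), which is an isomorphism of commutative unitary algebras: this gives $\pi^{an}_*\An^*(\EE_{\overline{X}^{bb}})\overset{\sim}{\to}\pi^{an}_*({\rm R}p_*\Q_{\overline{\Gamma\backslash D}^{rbs}})$. Finally, since $\pi^{an}\circ p$ is the projection of $\overline{\Gamma\backslash D}^{rbs}$ to the point, $\pi^{an}_*\,{\rm R}p_*\Q={\rm R}(\pi^{an}p)_*\Q$ computes the singular cohomology ${\rm H}^*(\overline{\Gamma\backslash D}^{rbs})$ (as $\overline{\Gamma\backslash D}^{rbs}$ is a compact space and $\Q$ its constant sheaf). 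Composing the displayed isomorphisms yields the desired isomorphism $\varphi:\An^*(\M^{rbs}(X))\overset{\sim}{\to}{\rm H}^*(\overline{\Gamma\backslash D}^{rbs})$ of commutative unitary algebras; the algebra structure is preserved at each step because $\pi_*$ and $\pi^{an}_*$ are (lax) monoidal on unitary algebras and the realization functor respects these structures.

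For the functoriality statement, I would set $\M^{rbs}(g):=\pi_*$ applied to the composite $\M^{rbs}(X)=\pi_*\EE_{\overline{X}^{bb}}\to \pi_*(g^{bb})_*(g^{bb})^*\EE_{\overline{X}^{bb}}=\pi'_*(g^{bb})^*\EE_{\overline{X}^{bb}}\to \pi'_*\EE_{\overline{X'}^{bb}}=\M^{rbs}(X')$, where the first arrow is the unit of the adjunction $((g^{bb})^*,(g^{bb})_*)$ and the second is $\pi'_*$ of the morphism of algebras $(g^{bb})^*\EE_{\overline{X}^{bb}}\to\EE_{\overline{X'}^{bb}}$ from Corollary \ref{cor:functoriality-EE-X-010} (which is a morphism of algebras by Proposition \ref{E-unitary}), using $\pi\circ g^{bb}=\pi'$. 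This is a morphism of commutative unitary algebras since each constituent is. To see the square commutes after applying $\An^*$, I would take $\pi^{an}_*$ of the commutative diagram in Theorem \ref{thm:main-thm}, (b): that diagram relates $(g^{bb})^*\An^*\EE_{\overline{X}^{bb}}$ to $(g^{bb})^*{\rm R}p_*\Q_{\overline{\Gamma\backslash D}^{rbs}}$ and to the primed objects via $\varphi$ and the base change morphism $(g^{bb})^*{\rm R}p_*\to {\rm R}p'_*(g^{rbs})^*$. Pushing forward along $\pi'^{an}$ and using that $\An^*$ commutes with $(g^{bb})_*$, $\pi_*$, $\pi'_*$ on compact motives, together with the identity $\pi'^{an}\circ p'\circ(\text{proj})$ computing ${\rm H}^*(\overline{\Gamma'\backslash D}^{rbs})$ and the fact that $\pi'^{an}_*$ of the base change morphism for $g^{rbs}$ realizes ${\rm H}^*(g^{rbs})$, I obtain precisely the commutative square of the corollary. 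Here one uses that $g^{rbs}$ is proper (it is a morphism of compact spaces), so ${\rm R}(g^{rbs})_!={\rm R}(g^{rbs})_*$ and the base change is the honest pullback on cohomology.

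The only mildly delicate points are bookkeeping ones: checking that every motive to which we apply $\An^*\circ\pi_*$ is compact (so that the realization commutes with cohomological direct image as asserted in \S\ref{The Betti realization}) — this follows since $\EE_{\overline{X}^{bb}}$, being in $\DM_0(\overline{X}^{bb})$ and arising as $\omega^0$ of a compact cohomological motive, is compact by Proposition \ref{prop:additional-prop-omega-0-x}, (vii) — and checking the compatibility of all the identifications with the monoidal (algebra) structures, which is routine given that $\An^*$ is monoidal, $\pi_*,\pi'_*$ are pseudo-monoidal, and the morphisms in Corollary \ref{cor:functoriality-EE-X-010} and Theorem \ref{thm:main-thm} are morphisms of algebras. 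I do not expect a genuine obstacle here: the corollary is a formal consequence of Theorem \ref{thm:main-thm} obtained by taking global sections, and the entire content of the statement is already contained in part (a) and (b) of the theorem. The main thing to be careful about is simply to invoke compactness at each use of the commutation $\An^*\pi_*\simeq\pi^{an}_*\An^*$, and to note that on the topological side $\pi^{an}_*\,{\rm R}p_*={\rm R}(\pi^{an}p)_*$ by functoriality of derived direct images, so that its value on $\Q$ is indeed $R\Gamma(\overline{\Gamma\backslash D}^{rbs},\Q)={\rm H}^*(\overline{\Gamma\backslash D}^{rbs})$ in $\mathbf{D}(\Q)$.
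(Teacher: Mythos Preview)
Your proposal is correct and follows essentially the same approach as the paper: construct $\varphi$ by composing the commutation isomorphism $\An^*\pi_*\simeq\pi^{an}_*\An^*$ (valid on the compact motive $\EE_{\overline{X}^{bb}}$, via Proposition~\ref{prop:additional-prop-omega-0-x},~(vii)) with $\pi^{an}_*$ applied to Theorem~\ref{thm:main-thm},~(a), and define $\M^{rbs}(g)$ exactly as you do using the unit of $((g^{bb})^*,(g^{bb})_*)$ and Corollary~\ref{cor:functoriality-EE-X-010}. The paper's proof is slightly terser---it simply says the commutativity of the square follows from part~(b) of Theorem~\ref{thm:main-thm} and leaves the details to the reader---whereas you spell out the pushforward of that diagram along $\pi'^{an}_*$ and the bookkeeping with algebra structures, but there is no substantive difference.
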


\begin{proof}
The morphism $\varphi:\An^*(\M^{rbs}(X)) \to {\rm H}^*(\overline{\Gamma
\backslash D}^{rbs})$ is the composition
$$\xymatrix@C=1.3pc{\An^*(\M^{rbs}(X)) = \An^* \pi_*\EE_{\overline{X}^{bb}}
\ar[r] & \pi^{an}_*\An^*\EE_{\overline{X}^{bb}} \ar[r]^-{\sim} &
\pi^{an}_*p_*\Q_{\overline{\Gamma \backslash D}^{rbs}} \simeq
{\rm H}^*(\overline{\Gamma\backslash D}^{rbs})}$$
where the isomorphism $\pi^{an}_*\An^*\EE_{\overline{X}^{bb}} \simeq
\pi^{an}_*p_*\Q_{\overline{\Gamma \backslash D}^{rbs}}$ is the one induced
by the isomorphism in
Theorem \ref{thm:main-thm}, (a).
That $\An^*\pi_*\EE_{\overline{X}^{bb}} \to \pi^{an}_*\An^*\EE_{\overline{X}^{bb}}$ is invertible follows from the commutation of the Betti realization with the cohomological direct images, the motive $\EE_{\overline{X}^{bb}}$ being compact.

We now pass to the second part of the corollary.
Call $\pi$ and $\pi'$ the projections of the schemes $X$ and $X'$ to $\Spec(\C)$. Note that we have $\pi'=\pi\circ g^{bb}$.
We define our
$\M^{rbs}(g)$ as the following composition
$$\xymatrix@C=1.2pc{\pi_*\EE_{\overline{X}^{bb}} \ar[r] & \pi_*(g^{bb})_*(g^{bb})^* \EE_{\overline{X}^{bb}} \ar[r] & \pi_*(g^{bb})_* \EE_{\overline{X'}^{bb}} \simeq \pi'_* \EE_{\overline{X'}^{bb}}}$$
where the morphism in the middle is the one described in
Corollary \ref{cor:functoriality-EE-X-010}.
That the square
of the statement commutes follows from part (b) of
Theorem \ref{thm:main-thm}. We leave the details to the reader.
\end{proof}

\begin{remark}\label{huber}
Let $k\subset \C$ be a number field as in Remark
\ref{rem:Mrbs-defined-numberfield}. We may apply Huber's mixed
realization functor $R_{\mathcal{MR}}:\mathbf{DM}_{gm}(k) \to
D_{\mathcal{MR}}$ \cite[Th.~2.3.3]{huber-realisation} to the dual
of ${\rm a}_{tr}(\M^{rbs}(X_{/k}))$, where ${\rm
a}_{tr}:\DM(k)\simeq \mathbf{DM}(k)$ is the equivalence given by
Proposition \ref{prop:compare}. (Note that ${\rm
a}_{tr}(\M^{rbs}(X_{/k}))$ is a geometric motive as
$\M^{rbs}(X_{/k})$ is compact by Proposition
\ref{prop:additional-prop-omega-0-x}, (vii) and
\cite[Cor.~2.2.21]{ayoub-these-I}.) We get in this way an object
of the derived category of mixed realizations which we simply
denote by $R^{\,rbs}_{\mathcal{MR}}(X_{/k})$. The singular
component of $R^{\,rbs}_{\mathcal{MR}}(X_{/k})$ corresponding to
the canonical embedding $k\hookrightarrow \C$ is Huber's singular
realization of the dual of ${\rm a}_{tr}(\M^{rbs}(X))$ which is
canonically isomorphic to $\An^*(\M^{rbs}(X))$. (Unfortunately,
the comparison between Huber's singular realization
\cite{huber-realisation,huber-corrigendum} and the Betti
realization \cite{realiz-oper} we have used in this paper is not treated
in the literature, though we expect it be straightforward.) Hence,
by Corollary \ref{cor-of-main-thm}, the cohomology groups of
$\overline{\Gamma\backslash D}^{rbs}$ are naturally mixed
realizations in the sense of \cite[Def.~11.1.1]{huber-monograph}.
In particular, ${\rm H}^*(\overline{\Gamma\backslash D}^{rbs})$
carries a mixed Hodge structure (presumably the same as what one
would get when \cite{rbs-mixed-hodge-str} is corrected) and ${\rm
H}^*(\overline{\Gamma\backslash D}^{rbs})\otimes \Q_{\ell}$ is
naturally a representation of $Gal(\overline{\Q}/k)$ for every
prime number $\ell$. All this is compatible with the action of
Hecke correspondences (see Remark
\ref{rem:compat-with-Hecke-corresp}).

\end{remark}

In the remainder of this section, we explain how to reduce Theorem
\ref{thm:main-thm}
to the case where the arithmetic subgroups are neat.

\begin{proposition}
\label{prop:neat=superfluous}
If \emph{Theorem \ref{thm:main-thm}} holds for neat arithmetic subgroups
of $\mathbf{G}(\Q)$, then it holds for all arithmetic subgroups.

\end{proposition}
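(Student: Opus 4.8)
The plan is to reduce the general case of Theorem \ref{thm:main-thm} to the neat case by a standard averaging argument over a normal neat subgroup of finite index. First I would fix an arithmetic subgroup $\Gamma \subset \mathbf{G}(\Q)$ and choose, as recalled in \S\ref{subsect:generalities}, a neat arithmetic subgroup $\Gamma_1 \subset \Gamma$ which is normal and of finite index; put $G=\Gamma/\Gamma_1$, a finite group. Let $X$ and $X_1$ be the $\C$-schemes with $X(\C)\simeq \Gamma\backslash D$ and $X_1(\C)\simeq \Gamma_1\backslash D$, and let $c:X_1 \to X$ be the natural morphism, which extends to a finite surjective morphism $c^{bb}:\overline{X_1}^{bb} \to \overline{X}^{bb}$ on the Baily--Borel Satake compactifications (cf.~\eqref{eq:induced-g-on-baily-borel-compact} with $g=1$); this morphism is $G$-equivariant and identifies $\overline{X}^{bb}$ with the quotient $\overline{X_1}^{bb}/G$. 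On the topological side, $G$ acts on $\overline{\Gamma_1\backslash D}^{rbs}$ and on $\overline{\Gamma_1\backslash D}^{bb}$ compatibly with $p_1:\overline{\Gamma_1\backslash D}^{rbs}\to \overline{\Gamma_1\backslash D}^{bb}$, and the quotients recover $\overline{\Gamma\backslash D}^{rbs}$, $\overline{\Gamma\backslash D}^{bb}$ and $p$ respectively; this is because the reductive Borel--Serre construction is functorial for the $\mathbf{G}(\Q)$-action (see \S\ref{subsect:The Borel-Serre compactifications}).

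The key step is then to compare the two sides after passing to $G$-invariants. On the motivic side, Lemma \ref{lem:Phi-invariant-EE-X} shows that $\EE_{\overline{X}^{bb}}$ is canonically the sub-object of $G$-invariants of $c^{bb}_*\EE_{\overline{X_1}^{bb}}$, i.e.~the image of the projector $\frac{1}{|G|}\sum_{h\in G} h$. Applying the (exact) Betti realization $\An^*$, which commutes with finite direct sums and with $c^{bb}_*$ (here $c^{bb}$ is finite, so the commutation of $\An^*$ with cohomological direct image holds without compactness issues, and in any case $\EE_{\overline{X_1}^{bb}}$ is compact), we get that $\An^*(\EE_{\overline{X}^{bb}})$ is the image of $\frac{1}{|G|}\sum_{h\in G}h$ acting on $(c^{bb})^{an}_*\An^*(\EE_{\overline{X_1}^{bb}})$. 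On the topological side, $\Q_{\overline{\Gamma\backslash D}^{rbs}}$ is likewise the image of the averaging projector on $(c^{rbs})_*\Q_{\overline{\Gamma_1\backslash D}^{rbs}}$ (here $c^{rbs}:\overline{\Gamma_1\backslash D}^{rbs}\to\overline{\Gamma\backslash D}^{rbs}$ is the quotient map, a finite quotient by $G$, so $\Q_{\overline{\Gamma\backslash D}^{rbs}}\simeq (c^{rbs}_*\Q_{\overline{\Gamma_1\backslash D}^{rbs}})^G$ since we use $\Q$-coefficients), and applying $p_*$ together with the base change isomorphism $(c^{bb})_*p_{1*}\simeq p_*(c^{rbs})_*$ identifies $p_*\Q_{\overline{\Gamma\backslash D}^{rbs}}$ with the $G$-invariants of $(c^{bb})^{an}_*p_{1*}\Q_{\overline{\Gamma_1\backslash D}^{rbs}}$. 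Now Theorem \ref{thm:main-thm}(a) for $\Gamma_1$ provides a $G$-equivariant isomorphism $\varphi_1:\An^*(\EE_{\overline{X_1}^{bb}})\overset{\sim}{\to} p_{1*}\Q_{\overline{\Gamma_1\backslash D}^{rbs}}$ — its $G$-equivariance being precisely the content of part (b) of the theorem applied to the elements $g=h\in G\subset \mathbf{G}(\Q)$ with $h\Gamma_1 h^{-1}=\Gamma_1$. Pushing $\varphi_1$ forward along $c^{bb}$ and restricting to $G$-invariants therefore yields the desired isomorphism $\varphi$ for $\Gamma$; it is automatically an isomorphism of commutative unitary algebras since $\varphi_1$ is one and all the operations involved (finite direct image, averaging projector, $\An^*$) are compatible with the algebra structures.

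Finally I would check part (b) of Theorem \ref{thm:main-thm} for general arithmetic subgroups. Given $\Gamma,\Gamma'$ and $g\in\mathbf{G}(\Q)$ with $g\Gamma'g^{-1}\subset\Gamma$, choose a common normal neat subgroup: first pick neat normal finite-index $\Gamma_1\lhd\Gamma$ and $\Gamma_1'\lhd\Gamma'$, then shrink further so that $g\Gamma_1'g^{-1}\subset\Gamma_1$ (possible since $g\Gamma_1'g^{-1}\cap\Gamma$ is arithmetic and contains a neat normal finite-index subgroup of $\Gamma$, whose preimage in $\Gamma_1'$ works after intersecting with $\Gamma_1'$). The required commutative square for $(\Gamma,\Gamma',g)$ is then obtained from the one for $(\Gamma_1,\Gamma_1',g)$ by applying the appropriate finite direct images $(c^{bb})_*$, $(c'^{bb})_*$ and the corresponding topological maps, and then restricting everything to $G=\Gamma/\Gamma_1$- and $G'=\Gamma'/\Gamma_1'$-invariants, using the naturality of the base change morphisms $(g^{bb})^*p_*\to p'_*(g^{rbs})^*$ and of the morphisms $(g^{bb})^*\EE_{\overline{X}^{bb}}\to\EE_{\overline{X'}^{bb}}$ of Corollary \ref{cor:functoriality-EE-X-010} with respect to these quotient operations. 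The main obstacle I anticipate is bookkeeping rather than conceptual: one must be careful that all the averaging projectors, base change isomorphisms, and the $G$-actions are mutually compatible — in particular that the base change isomorphism $(c^{bb})_*p_{1*}\simeq p_*c^{rbs}_*$ is $G$-equivariant and intertwines the averaging projectors on source and target — and that the compatibility data from Theorem \ref{thm:main-thm}(b) applied to the various $h\in G$ genuinely gives $G$-equivariance of $\varphi_1$ as a morphism of algebras. Everything else is a routine consequence of the functoriality already established in \S\ref{sect:compactif-locally-symmetric} and in \S\ref{sect:mot-E-X}.
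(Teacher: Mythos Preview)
Your proposal is correct and follows essentially the same approach as the paper: choose a neat normal subgroup of finite index, use part (b) of Theorem \ref{thm:main-thm} for the neat case (applied to representatives of the finite quotient $G$) to see that $\varphi_1$ is $G$-equivariant, invoke Lemma \ref{lem:Phi-invariant-EE-X} on the motivic side and the analogous averaging on the topological side, then pass to $G$-invariants. The paper treats only part (a) explicitly and leaves (b) to the reader, whereas you sketch (b) as well; one small notational slip is writing ``$h\in G\subset\mathbf{G}(\Q)$'' when you mean representatives $h\in\Gamma$, but the intent is clear.
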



\begin{proof}
We assume that Theorem \ref{thm:main-thm} is proven for $\Gamma$ neat, and
we show how to extend
it for arithmetic subgroups of $\mathbf{G}(\Q)$ which are not necessarily
neat. In fact, we will deal only with part (a) and leave part (b) to the reader.

Let $\Gamma_0\subset \mathbf{G}(\Q)$ be any arithmetic subgroup.
We may find a normal subgroup $\Gamma\lhd \Gamma_0$ of finite index which is
neat.
The finite group $\Gamma\backslash \Gamma_0$ acts on the topological spaces
$\Gamma \backslash D$, $\overline{\Gamma\backslash D}^{rbs}$ and $\overline{
\Gamma \backslash D}^{bb}$, and their quotients with respect to these actions
are $\Gamma_0\backslash D$, $\overline{\Gamma_0\backslash D}^{rbs}$ and
$\overline{\Gamma_0\backslash D}^{bb}$ respectively.
We let $e:\Gamma'\backslash D \to \Gamma \backslash D$, $e^{bb}:\overline{\Gamma\backslash D}^{bb} \to
\overline{\Gamma_0\backslash D}^{bb}$ and
$e^{rbs}:\overline{\Gamma\backslash D}^{rbs} \to
\overline{\Gamma_0\backslash D}^{rbs}$ be the quotient maps.

Also, if $X$ and $\overline{X}^{bb}$ are the $\C$-schemes such that $X(\C)\simeq \Gamma\backslash D$ and $\overline{X}^{bb}(\C)=\overline{\Gamma\backslash D}^{bb}$, then
$\Gamma\backslash \Gamma_0$ acts on $X$ and $\overline{X}^{bb}$, and their
quotients with respect to these actions are
respectively $X_0$ and $\overline{X_0}^{bb}$, the $\C$-schemes such that $X_0(\C)\simeq \Gamma_0\backslash D$ and $\overline{X_0}^{bb}(\C)=\overline{\Gamma_0\backslash D}^{bb}$.
We also denote by
$e^{bb}:\overline{X}^{bb} \to \overline{X_0}^{bb}$ the morphism of
$\C$-schemes that is given by
$e^{bb}:\overline{\Gamma\backslash D}^{bb} \to
\overline{\Gamma_0\backslash D}^{bb}$ on the varieties of $\C$-points.

Now, denote by
$p:\overline{\Gamma\backslash D}^{rbs} \to
\overline{\Gamma\backslash D}^{bb}$ and
$p_0:\overline{\Gamma_0\backslash D}^{rbs} \to
\overline{\Gamma_0\backslash D}^{bb}$ the natural projections.
With the notation of Theorem \ref{thm:main-thm}, (b), an element
$g\in \Gamma_0$ acts on
$e^{bb}_*\EE_{\overline{X}^{bb}}$ by the composition
$$\xymatrix@C=1.5pc{e^{bb}_*\EE_{\overline{X}^{bb}}
\ar[r]^-{\sim} & e^{bb}_* (g^{bb})_*(g^{bb})^* \EE_{\overline{X}^{bb}}
\ar[r]^-{\sim} & e^{bb}_* g^{bb}_* \EE_{\overline{X}^{bb}} \simeq e^{bb}_*
\EE_{\overline{X}^{bb}}.}$$
For the last isomorphism, we used that $e^{bb}\circ g^{bb}=e^{bb}$.
It is easy to check that this gives a representation of $\Gamma \backslash
\Gamma_0$ on $e^{bb}_*\EE_{\overline{X}^{bb}}$.
Applying Lemma \ref{lem:Phi-invariant-EE-X}, we have that the sub-object of
$(\Gamma \backslash \Gamma_0)$-invariants
is canonically isomorphic to $\EE_{\overline{X_0}^{bb}}$.
Similarly, $g\in \Gamma_0$ acts on $e^{rbs}_* \Q_{\overline{\Gamma
\backslash D}^{rbs}}$ by the composition
$$\xymatrix@C=1.2pc{e^{rbs}_* \Q_{\overline{\Gamma\backslash D}^{rbs}} \ar[r] &
e^{rbs}_*(g^{rbs})_*(g^{rbs})^* \Q_{\overline{\Gamma\backslash D}^{rbs}}
\simeq e^{rbs}_*g^{rbs}_* \Q_{\overline{\Gamma\backslash D}^{rbs}} \simeq
e^{rbs}_* \Q_{\overline{\Gamma\backslash D}^{rbs}}.}$$
For the last isomorphism, we used that $e^{rbs}\circ g^{rbs}=e^{rbs}$.
It is easy to check that this gives a representation of
$\Gamma\backslash \Gamma_0$ on
$e^{rbs}_* \Q_{\overline{\Gamma\backslash D}^{rbs}}$. Moreover, the sub-object
of $(\Gamma\backslash \Gamma_0)$-invariants is canonically isomorphic to
$\Q_{\overline{\Gamma_0\backslash D}^{rbs}}$.

By Theorem \ref{thm:main-thm}, (b),
we have a commutative diagram
$$\xymatrix@C=1pc@R=1.5pc{\An^*\EE_{\overline{X}^{bb}} \ar[r]^-{\sim} \ar[d]_-{\varphi}^-{\sim} & (g^{bb})_*(g^{bb})^*\An^*\EE_{\overline{X}^{bb}} \ar[r]^-{\sim} \ar[d]^-{\varphi}_-{\sim}  & (g^{bb})_* \An^*(g^{bb})^*\EE_{\overline{X}^{bb}} \ar[r]^-{\sim} & (g^{bb})_*\An^* \EE_{\overline{X}^{bb}}\ar[d]^-{\varphi}_-{\sim}  \\
p_*\Q_{\overline{\Gamma \backslash D}^{rbs}} \ar[r]^-{\sim} & (g^{bb})_*(g^{bb})^* p_*\Q_{\overline{\Gamma \backslash D}^{rbs}} \ar[r]^-{\sim} & (g^{bb})_*p_*(g^{rbs})^*\Q_{\overline{\Gamma \backslash D}^{rbs}} \ar[r]^-{\sim} & (g^{bb})_* p_*\Q_{\overline{\Gamma \backslash D}^{rbs}}.\!}$$
If we apply $e^{bb}_*$
to the first horizontal line, we get
the action of $g\in \Gamma_0$ on
the complex of sheaves $\An^* e^{bb}_*\EE_{\overline{X}^{bb}}$ modulo the
isomorphisms
$e^{bb}_*\An^*\EE_{\overline{X}^{bb}}\simeq\An^*e^{bb}_*\EE_{\overline{
X}^{bb}}$ and
$e^{bb}_*g^{bb}_*\An^*\EE_{\overline{X}^{bb}} \simeq e^{bb}_*\An^*\EE_{
\overline{X}^{bb}}\simeq \An^*e^{bb}_*\EE_{\overline{X}^{bb}}$.
Also, if we apply $e^{bb}_*$ to the second horizontal line, we get the
action of $g\in \Gamma_0$ on
$p_{0*}e^{rbs}_*\Q_{\overline{\Gamma\backslash D}^{rbs}}$
modulo the isomorphisms
$e^{bb}_*p_*\Q_{\overline{\Gamma\backslash D}^{rbs}}\simeq p_{0*}
e^{rbs}_*\Q_{\overline{\Gamma \backslash D}^{rbs}}$ and
$e^{bb}_*g^{bb}_*p_*\Q_{\overline{\Gamma \backslash D}^{rbs}}\simeq
e^{bb}_*p_*\Q_{\overline{\Gamma \backslash D}^{rbs}} \simeq p_{0*}
e^{rbs}_*\Q_{\overline{\Gamma \backslash D}^{rbs}}$. This shows that the
isomorphism $\An^* e^{bb}_*\EE_{\overline{X}^{bb}} \overset{\sim}{\to}
p_{0*} e^{rbs}_*\Q_{\overline{\Gamma \backslash D}^{rbs}}$, given
by the composition
$$
\xymatrix@C=1.4pc{\An^*e^{bb}_*\EE_{\overline{X}^{bb}}
\ar[r]^-{\sim} & e^{bb}_*\An^*\EE_{\overline{X}^{bb}} \ar[r]^-{\varphi}_{
\sim} & e^{bb}_* p_*\Q_{\overline{\Gamma \backslash D}^{rbs}} \ar[r]^-{
\sim} & p_{0*} e^{rbs}_*
\Q_{\overline{\Gamma \backslash D}^{rbs}},}
$$
is $(\Gamma \backslash \Gamma_0)$-equivariant. Passing to the sub-objects
of $(\Gamma \backslash \Gamma_0)$-invariants, yields an isomorphism
\begin{equation}
\label{eq:varphi-when-Gamma-nonneat}
\varphi:\An^*\EE_{\overline{X_0}^{bb}} \overset{\sim}{\to}
p_{0*} \Q_{\overline{\Gamma_0\backslash D}^{rbs}}.
\end{equation}
Moreover, this is an isomorphism of unitary algebras as $\Gamma\backslash
\Gamma_0$
acts by unitary algebra automorphisms on $e^{bb}_*\EE_{\overline{X}^{bb}}$ and
$e^{rbs}_*\Q_{\overline{\Gamma \backslash D}^{rbs}}$.
We leave it to the reader to check that
\eqref{eq:varphi-when-Gamma-nonneat}
is independent of the choice of a neat normal subgroup $\Gamma\subset\Gamma_0$.
\end{proof}

\subsection{Proof of the Main Theorem}
Keep the notation in Theorem \ref{thm:main-thm}.
We denote by $r$ the $\Q$-rank of the simple $\Q$-group $\mathbf{G}$.
As in \S
\ref{subsect:The-Baily-Borel-compactification}, we list the simple roots:
$\beta_1, \dots, \beta_r$ so that
$\beta_i$ is not orthogonal to $\beta_{i+1}$ and $\beta_r$ is the
distinguished root. We will identify
$[\![1,r]\!]$ with $\Delta(\mathbf{G},\mathbf{S})$,
by sending $1\leq i\leq r$ to $\beta_i$.
For $I \subset [\![1,r]\!]$, we let
$\mathbf{P}_I$ denote the standard parabolic $\Q$-subgroup
of type $I$ and cotype $[\![1,r]\!]-I$ (see
\S \ref{subsect:toroidal-compact}).
Note that
$\mathbf{P}_{[\![1,r]\!]}=\mathbf{G}$, which for convenience will be designated as the parabolic
$\Q$-subgroup of cotype $\{0\}$ (rather than $\emptyset$).

\subsubsection{Setting the stage}\label{Stage}
The Baily-Borel Satake compactification $\overline{X}^{bb}$ of $X$
admits a natural stratification $(X^{bb}_i)_{i\in [\![0,r]\!]}$
such that $X^{bb}_i$ is the union of the strata
$X^{bb}_{\mathbf{Q}}$, where $\mathbf{Q}\subseteq \mathbf{G}$
varies among parabolic $\Q$-subgroups that are of cotype $\{i\}$.
Thus, the connected components of $X^{bb}_i$ are locally symmetric varieties
of the same dimension. In particular, the open stratum
$X^{bb}_0=X^{bb}_{\mathbf{G}}$ is simply $X$. As $\Gamma$ is neat,
the schemes $X^{bb}_i$ are smooth. For $i\in [\![0,r]\!]$, denote
by $X^{bb}_{\geq i}$ the Zariski closure of $X^{bb}_i$. Then, as
sets, we have $X^{bb}_{\geq i}=\bigsqcup_{j\in [\![i,r]\!]}
X^{bb}_j$.  Thus, we are in the situation of \textbf{D1)} of \S
\ref{subsub:setting-for-comput}. Note also that each irreducible
component of $X^{bb}_{\geq i}$ is of the form
$\overline{X}^{bb}_{\mathbf{Q}}$. The normalization of the latter is
$\overline{(X^{bb}_{\mathbf{Q}})}^{bb}$, the Baily-Borel Satake
compactification of $X^{bb}_{\mathbf{Q}}$.

The data in \textbf{D2)} of \S \ref{subsub:setting-for-comput}
are realized using the toroidal compactifications (see \S
\ref{subsect:toroidal-compact}) of the connected components of $X^{bb}_i$. However,
to ensure Properties
\textbf{P1)} and \textbf{P2)} of \S \ref{subsub:setting-for-comput}, some
care is needed in the choice of the compatible families of {\it prpcd}'s
for the
locally symmetric varieties $X^{bb}_{\mathbf{Q}}$. First, we introduce the
following notation: if $\mathbf{Q}\subset \mathbf{G}$ is a parabolic
$\Q$-subgroup which is maximal or improper, we denote by
$\widetilde{\Gamma}(\mathbf{M}_{\mathbf{Q},h})$ the arithmetic subgroup of
$\mathbf{M}_{\mathbf{Q},h}$ equal to
$\Gamma(\mathbf{M_Q})\cap M_{Q,h}$. This is a normal subgroup of
finite index in $\Gamma(\mathbf{M}_{\mathbf{Q},h})$.\footnote{We recall that $\Gamma(\mathbf{M_Q})=\Gamma(\mathbf{Q}/\mathbf{N_Q}\mathbf{S_Q})$ and $\Gamma(\mathbf{M}_{\mathbf{Q},h})=\Gamma(\mathbf{Q}/\mathbf{N_Q}\mathbf{S_Q}\widetilde{\mathbf{M}}_{\mathbf{Q},\ell})$, where $\Gamma$ is viewed as a functor on pairs as in \S \ref{subsect:generalities}.}

In the sequel, we fix an extended compatible family of {\it prpcd}'s $\Sigma=
\{\Sigma_{\mathbf{Q},\mathbf{R}}\}$ (with respect to $\Gamma$) in the sense of
Definition \ref{defn:extended-fam-prpcd-s} satisfying the following properties:
\begin{enumerate}

\item $\Sigma=\{\Sigma_{\mathbf{Q},\mathbf{R}}\}$ is projective and simplicial.

\item For every parabolic $\Q$-subgroup $\mathbf{Q}\subseteq \mathbf{G}$
which is maximal or improper, the compatible family of {\it prpcd}'s
$\Sigma_{(\mathbf{Q})}=(\Sigma_{\mathbf{Q},\mathbf{R}})_{\mathbf{R}}$
is a smooth and projective family with respect to the arithmetic
subgroup
$\widetilde{\Gamma}(\mathbf{M}_{\mathbf{Q},h})$.

\end{enumerate}

Clearly, there exist such extended compatible families of {\it prpcd}'s
and they
form a cofinal subset (with respect to refinement) of the set of all
extended compatible families of
{\it prpcd}'s.
We will also assume that our $\Sigma$ is fine enough so that the statements
in Propositions \ref{normalizers} and
\ref{prop:intersection-closure-strata-toroidal-comp}
hold wherever they are needed.

For $\mathbf{Q}\subseteq \mathbf{G}$ a parabolic $\Q$-subgroup
which is maximal or improper, we denote by
$Y^{tor}_{\mathbf{Q}}=\overline{
(X^{bb}_{\mathbf{Q}})}^{tor}_{\Sigma_{(\mathbf{Q})}}$ the toroidal
compactification of the locally symmetric variety
$X^{bb}_{\mathbf{Q}}$ associated to the compatible family of {\it
prpcd}'s
$\Sigma_{(\mathbf{Q})}=\{\Sigma_{\mathbf{Q},\mathbf{R}}\}_{
\mathbf{R}}$. This is a projective $\C$-scheme having only
quotient singularities. As for the stratum $X^{bb}_{\mathbf{Q}}$,
the scheme $Y^{tor}_{\mathbf{Q}}$ depends only on the conjugacy
class of $\mathbf{Q}$ modulo $\Gamma$. Moreover, we have a
canonical projective morphism
\begin{equation}\label{eq:e-subQ}
e_{\mathbf{Q}}: Y^{tor}_{\mathbf{Q}} \to
\overline{X}^{bb}_{\mathbf{Q}}.
\end{equation}

As in \S \ref{subsect:toroidal-compact}, denote by
$\widetilde{X}^{bb}_{\mathbf{Q}}$
the $\C$-scheme whose analytic variety of $\C$-points is
$\widetilde{\Gamma}(\mathbf{M}_{\mathbf{Q},h}) \backslash
e_h(\mathbf{Q})$. This an
\'etale cover of $X^{bb}_{\mathbf{Q}}$ with Galois group
$\widetilde{\Gamma}(\mathbf{M}_{\mathbf{Q},h}) \backslash
\Gamma(\mathbf{M}_{\mathbf{Q},h})$.
Let
$Z^{tor}_{\mathbf{Q}}=\overline{(\widetilde{X}^{bb}_{\mathbf{Q}})}^{tor}_{
\Sigma_{(\mathbf{Q})}}$
be the toroidal compactification of the locally symmetric variety
$\widetilde{X}^{bb}_{\mathbf{Q}}$ associated to the same compatible
family of
{\it prpcd}'s $\Sigma_{(\mathbf{Q})}$. Then
$Z^{tor}_{\mathbf{Q}}$ is a smooth and projective scheme and
there is a morphism $c_{\mathbf{Q}}:Z^{tor}_{\mathbf{Q}}
\to Y^{tor}_{\mathbf{Q}}$ which is a finite Galois cover. Also,
if $\Sigma_{
(\mathbf{Q})}$ is fine enough, the
inverse image by $c_{\mathbf{Q}}$ of an irreducible divisor in the boundary of
$Y^{tor}_{\mathbf{Q}}$ is a smooth divisor, i.e., a disjoint union of
irreducible divisors in $Z^{tor}_{\mathbf{Q}}$.

For $i\in [\![0,r]\!]$, we let $Y^{tor}_i$ and $Z^{tor}_i$ be the
disjoint union of the $Y^{tor}_{\mathbf{Q}}$ and
$Z^{tor}_{\mathbf{Q}}$ respectively, for $\mathbf{Q}\subseteq
\mathbf{G}$ of cotype $\{i\}$, taken up to conjugation by elements
of $\Gamma$. We have natural morphisms $e_i:Y^{tor}_i \to
X^{bb}_{\geq i}$ and $c_i:Z^{tor}_i \to Y^{tor}_i$ which gives
\textbf{D2)} and \textbf{D3)}.

\begin{lemma}
The stratified scheme $\overline{X}^{bb}$ and the families of morphisms
$(e_i)_{i\in [\![0,r]\!]}$ and $(c_i)_{i\in [\![0,r]\!]}$ satisfy
\emph{Properties
\textbf{P1)}} and \emph{\textbf{P2)}} of
\emph{\S \ref{subsub:setting-for-comput}}.
\end{lemma}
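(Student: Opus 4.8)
The statement to prove is that the stratified scheme $\overline{X}^{bb}$, together with the families $(e_i)$ and $(c_i)$ built from toroidal compactifications, verifies Properties \textbf{P1)} and \textbf{P2)} of \S\ref{subsub:setting-for-comput}. This is essentially a matter of translating the hereditary properties of toroidal compactifications recorded in \S\ref{subsection:hereditary-toroidal} into the language of the recursive data \textbf{D1)--D3)}. I would organize the proof around the dictionary: an $\mathcal{R}^{\infty}_i$-stratum of $Y^{tor}_i = \bigsqcup_{\mathbf{Q}} \overline{(X^{bb}_{\mathbf{Q}})}^{tor}_{\Sigma_{(\mathbf{Q})}}$ (cotype $\{i\}$) corresponds, via Proposition \ref{prop:intersection-closure-strata-toroidal-comp}, to a corner-like stratum $\mathcal{B}^{\circ}_{\mathbf{R},\Sigma_{(\mathbf{Q})}}$ indexed by a parabolic $\Q$-subgroup $\mathbf{R}\subset \mathbf{M}_{\mathbf{Q},h}$; and the irreducible components of $\overline{e_i^{-1}(X^{bb}_{\geq j})}$ are exactly the closures of the Baily-Borel-type ``strata'' for $\mathbf{R}$ maximal in $\mathbf{M}_{\mathbf{Q},h}$.

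\textbf{Step 1: Property P1).} Fix $i\leq j$ in $[\![0,r]\!]$ and a connected component of $Y^{tor}_i$, which is $\overline{(X^{bb}_{\mathbf{Q}_1})}^{tor}_{\Sigma_{(\mathbf{Q}_1)}}$ for some parabolic $\mathbf{Q}_1$ of cotype $\{i\}$. The subscheme $\overline{e_i^{-1}(X^{bb}_j)}$ decomposes into pieces indexed by the parabolics $\mathbf{Q}_2\subseteq \mathbf{Q}_1$ of cotype $\{j\}$ with $\mathbf{M}_{\mathbf{Q}_2,h}\subsetneq \mathbf{M}_{\mathbf{Q}_1,h}$; its closure $\overline{e_i^{-1}(X^{bb}_j)}$ is the union of the Zariski closures in $\overline{(X^{bb}_{\mathbf{Q}_1})}^{tor}_{\Sigma_{(\mathbf{Q}_1)}}$ of the corner-like strata $(X^{bb}_{\mathbf{Q}_1})^{tor}_{\mathbf{P},\Sigma_{(\mathbf{Q}_1)}}$, where $\mathbf{P}$ is the image of $\mathbf{Q}_1\cap \mathbf{Q}_2$ in $\mathbf{M}_{\mathbf{Q}_1,h}$ and $\mathbf{P}$ is subordinate to the maximal parabolic whose $h$-part is $\mathbf{M}_{\mathbf{Q}_2,h}$. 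Proposition \ref{prop:hereditary-for-toroidal} then provides the required morphism $e_{i,j}:\overline{e_i^{-1}(X^{bb}_j)}\to Y^{tor}_j$ (componentwise, the extension $\overline{(X^{bb}_{\mathbf{Q}_1})^{tor}_{\mathbf{P},\Sigma_{(\mathbf{Q}_1)}}}\to \overline{(X^{bb}_{\mathbf{Q}_2})}^{tor}_{\Sigma_{(\mathbf{Q}_2)}}$), and uniqueness is automatic since $e_i^{-1}(X^{bb}_j)$ is dense. For the stratum-to-stratum assertion in \textbf{P1)} I would use the last sentence of Proposition \ref{prop:hereditary-for-toroidal}: a stratum corresponding to a rational polyhedral cone $\sigma\in \Sigma^{\circ}_{\mathbf{Q}_1,\mathbf{R}_1}$ is carried to the stratum for the cone of $\Sigma^{\circ}_{\mathbf{Q}_2,\mathbf{R}_2}$ containing the image of $\sigma$ under $U_{R_1'}\to U_{R_2'}$; combining this across a chain (for $K\subset [\![j+1,n]\!]$, iterate over the intermediate parabolics, using that any $\mathcal{R}(\{i,j\}\sqcup K)$-stratum is an intersection of divisors indexed by $\{j\}\sqcup K$ together with the cone structure) gives that $\mathcal{R}(\{i,j\}\sqcup K)$-strata map onto $\mathcal{R}(\{j\}\sqcup K)$-strata. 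The main bookkeeping here is matching the combinatorial indexing of $\mathcal{R}(I)$-strata in \S\ref{subsub:setting-for-comput} with the cone-indexing of toroidal strata, using the identification \eqref{corner-like-strata}.

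\textbf{Step 2: Property P2).} Fix $i$ and an $\mathcal{R}^{\infty}_i$-stratum $E\subset Y^{tor}_i$, lying in the component $\overline{(X^{bb}_{\mathbf{Q}})}^{tor}_{\Sigma_{(\mathbf{Q})}}$; by Proposition \ref{prop:intersection-closure-strata-toroidal-comp} it maps to the $\mathcal{S}$-stratum $D = X^{bb}_{\mathbf{Q}'}$ for a suitable parabolic $\mathbf{Q}'$, which lies in $X^{bb}_j$ for the index $j$ with $D\subset X^{bb}_j$. A connected component $F$ of $c_i^{-1}(E)$ is a corner-like stratum in the toroidal compactification of the \'etale cover $\widetilde{X}^{bb}_{\mathbf{Q}}$; that $F\to E$ is an \'etale cover follows since $c_{\mathbf{Q}}$ is finite Galois and $\Sigma_{(\mathbf{Q})}$ was chosen smooth (condition (2) in \S\ref{Stage}), so the boundary of $Z^{tor}_{\mathbf{Q}}$ is a smooth divisor mapping to that of $Y^{tor}_{\mathbf{Q}}$. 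For the closure $F'$ of $F$ in $(c_i\circ e_i)^{-1}(D)$: using the tower \eqref{tower} and Proposition \ref{fiber}, $F'$ is (a piece of) a scheme of the form $\mathcal{B}^c_{\mathbf{R},\Sigma_{(\mathbf{Q})}}$ fibered over $\widetilde{X}^{bb}$; smoothness of $F'$ and of $F'\to D$ comes from $\Sigma$ being smooth, and projectivity from $\Sigma$ being projective. The Stein factorization of $F'\to D$ is an \'etale cover of $D$ whose deck group is read off from \eqref{eq:galois-group-tilde-X-bb}; that it is dominated by $(c_j\circ e_j)^{-1}(D)\to D$ amounts to comparing this with the Galois group of $\widetilde{X}^{bb}_{\mathbf{Q}'}\to X^{bb}_{\mathbf{Q}'}$, which by construction is $\widetilde{\Gamma}(\mathbf{M}_{\mathbf{Q}',h})\backslash \Gamma(\mathbf{M}_{\mathbf{Q}',h})$. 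The key group-theoretic fact to invoke is that $\Gamma$ is neat, so all the arithmetic subgroups in play are torsion-free and the various quotients in \eqref{eq:galois-group-tilde-X-bb} behave functorially along $\mathbf{Q}\supseteq \mathbf{Q}'$; this plus Proposition \ref{prop:hereditary-for-toroidal} (hereditary behaviour of the $\widetilde{X}^{bb}_{\mathbf{Q}}$-covers under $\mathbf{Q}_1\succeq \mathbf{Q}_2$) gives the domination.

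\textbf{The hard part.} The real obstacle is \textbf{P2)}: the combinatorics of the Stein factorization. One must carefully track, along the tower $\mathbf{M}_{\mathbf{Q},h}\mathbf{N}_{\mathbf{Q}}\to \cdots$, which finite \'etale cover of $D$ actually appears as $\pi_0(F'/D)$ — this is where the abelian scheme $\mathcal{A}_{\mathbf{Q}}$ and torus-torsor $\mathcal{S}_{\mathbf{Q}}$ structure enters, since the connected fibers of $F'\to D$ are accounted for by the (connected) abelian-scheme and torus-embedding factors, leaving only the \'etale cover $\widetilde{X}^{bb}_{\mathbf{Q}}\to X^{bb}_{\mathbf{Q}}$ and its restriction over $D$. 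Establishing that this \'etale cover is dominated by $(c_j\circ e_j)^{-1}(D)\to D$ requires the compatibility of the extended {\it prpcd} data across the three parabolics $\mathbf{Q}\supseteq \mathbf{P}\rightsquigarrow \mathbf{Q}'$ and the hereditary statement of Proposition \ref{prop:hereditary-for-toroidal}, together with neatness to rule out stacky phenomena; I expect this verification, while not deep, to be the most delicate and notation-heavy portion of the argument.
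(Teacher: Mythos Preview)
Your proposal is correct and follows essentially the same approach as the paper: both derive \textbf{P1)} and the easy parts of \textbf{P2)} directly from the hereditary property (Proposition~\ref{prop:hereditary-for-toroidal}), and both identify the Stein factorization clause in \textbf{P2)} as the only nontrivial point. The paper's proof is slightly more explicit at the end: it identifies $F'$ as a smooth relative torus-embedding over $\widetilde{\mathcal{A}}_{\mathbf{Q},\mathbf{R}}$, reads off the Stein factorization as the iterated cover $\widetilde{X}^{bb}_{\mathbf{Q},\mathbf{R}}=(\widetilde{\widetilde{X}^{bb}_{\mathbf{Q}}})^{bb}_{\mathbf{R}}$, writes down its defining arithmetic subgroup explicitly as $\bigl(\widetilde{\Gamma}(\mathbf{M}_{\mathbf{Q},h})\cap R\bigr)/\bigl(\Gamma(\mathbf{M}_{\mathbf{Q},h})\cap N_R S_R\bigr)\cap M_{R,h}$, and then observes the containment $\Gamma(\mathbf{M_P})\cap M_{P,h}\subset\,$(this subgroup) to get the domination by $\widetilde{X}^{bb}_{\mathbf{P}}$---this is exactly the ``delicate and notation-heavy'' verification you anticipated, and your outline lands on the right target.
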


\begin{proof}
Everything is a direct consequence of Proposition
\ref{prop:hereditary-for-toroidal} except the property concerning
the Stein factorization in \textbf{P2)}, which we now prove. Let
$\mathbf{Q}\subseteq \mathbf{G}$ be a parabolic $\Q$-subgroup
which is maximal or improper. A stratum $E\subset
Y^{tor}_{\mathbf{Q}}$ corresponds to a rational polyhedral cone
$\sigma\in \Sigma_{\mathbf{Q},\mathbf{R}}$. Let $F$ be a connected
component of $c_{\mathbf{Q}}^{-1}(E)$. Then $F$ is a
$\Gamma(\widetilde{\mathbf{M}}_{\mathbf{R},\ell})$-translate of the stratum of
$Z_{\mathbf{Q}}^{tor}$ that corresponds to $\sigma$, so we may
assume that $F$ corresponds
also to $\sigma \in \Sigma_{\mathbf{Q},\mathbf{R}}$. Moreover, the
image of $E$ in $\overline{X}^{bb}$ is the stratum
$X^{bb}_{\mathbf{P}}$ where $\mathbf{P}\subset \mathbf{G}$ is the
maximal or improper parabolic $\Q$-subgroup such that
$\mathbf{M}_{\mathbf{P},h}\simeq \mathbf{M}_{\mathbf{R},h}$.  Let
$F'$ be the closure of $F$ in $(e_{\mathbf{Q}}\circ
c_{\mathbf{Q}})^{-1}(X^{bb}_{\mathbf{P}})$.

That $F'$ is projective over $X^{bb}_{\mathbf{P}}$ is clear.
When $\Sigma_{\mathbf{Q},\mathbf{R}}$ is fine enough,
$F'$ is isomorphic to an irreducible, closed and constructible subset
of $\widetilde{\mathcal{B}}^c_{(\mathbf{Q},\mathbf{R}),\Sigma_{(\mathbf{Q})}}$. This isomorphism is induced by
the canonical projection of $\widetilde{\mathcal{B}}^c_{(\mathbf{Q},\mathbf{R}),\Sigma_{(\mathbf{Q})}}$ to the corner-like $\mathbf{R}$-stratum of the
toroidal compactification
$Z_{\mathbf{Q}}^{tor}$ of the locally symmetric variety
$\widetilde{X}^{bb}_{\mathbf{Q}}$. Here
$\widetilde{\mathcal{B}}^c_{(\mathbf{Q},\mathbf{R}),\Sigma_{(\mathbf{Q})}}$
is for
$\widetilde{X}^{bb}_{\mathbf{Q}}$ what
$\mathcal{B}^c_{(\mathbf{Q},\mathbf{R}),\Sigma_{(\mathbf{Q})}}$
is for $X^{bb}_{\mathbf{Q}}$.
It follows that $F$ is a torsor over
$\widetilde{\mathcal{A}}_{\mathbf{Q},\mathbf{R}}$ under a split $\Q$-torus and
$F'$ is a relative smooth torus-embedding. Here again, $\widetilde{\mathcal{A}}_{
\mathbf{Q},\mathbf{R}}$ is for $\widetilde{X}^{bb}_{\mathbf{Q}}$ what
$\mathcal{A}_{\mathbf{Q},\mathbf{R}}$ is for $X^{bb}_{\mathbf{Q}}$, i.e.,
$\widetilde{\mathcal{A}}_{\mathbf{Q},\mathbf{R}}$ is
an abelian scheme over
$\widetilde{X}^{bb}_{\mathbf{Q},\mathbf{R}}=(\widetilde{\widetilde{X}^{bb}_{\mathbf{Q}}})^{bb}_{\mathbf{R}}$, a Galois \'etale cover of the $\mathbf{R}$-stratum of
the Baily-Borel
compactification of $\widetilde{X}^{bb}_{\mathbf{Q}}$.
It follows that $F'$ is smooth and projective over $X^{bb}_{\mathbf{P}}$ and
its Stein factorization is given by
$\widetilde{X}^{bb}_{\mathbf{Q},\mathbf{R}} \to X^{bb}_{\mathbf{P}}$.
The variety of $\C$-points of
$\widetilde{X}^{bb}_{\mathbf{Q},\mathbf{R}}$
is the quotient of
$e_h(\mathbf{P})$ by the action of the arithmetic subgroup
\begin{equation}
\label{eq:arith-gr-pour-X-Q-R}
\frac{\widetilde{\Gamma}(\mathbf{M}_{\mathbf{Q},h})\cap R}{\Gamma(\mathbf{M}_{\mathbf{Q},h})\cap N_RS_R} \cap M_{R,h}=\frac{\Gamma(\mathbf{M}_{\mathbf{Q}})\cap M_{Q,h} \cap R}{\Gamma(
\mathbf{M}_{\mathbf{Q}}) \cap M_{Q,h} \cap N_{R}S_R} \cap M_{R,h}.
\end{equation}
As $\Gamma(\mathbf{M_P})\cap M_{P,h}$ is clearly contained in
\eqref{eq:arith-gr-pour-X-Q-R}, we see that
$\widetilde{X}^{bb}_{\mathbf{Q},\mathbf{R}}$ is dominated by
$\widetilde{X}^{bb}_{\mathbf{P}}$.
This proves the lemma.
\end{proof}

Next, with the notation of
Theorem \ref{thm:main-thm}, (b),
let $\Sigma'=\{\Sigma'_{\mathbf{Q},\mathbf{R}}\}_{(\mathbf{Q},\mathbf{R})\in \mathcal{M}}$ be an
extended compatible family of {\it prpcd}'s with respect to $\Gamma'$ which we assume to satisfy properties (1) and (2) as in the case of $\Sigma$. After a refinement, if necessary, we may assume that for $(\mathbf{Q},\mathbf{R})\in \mathcal{M}$ the natural isomorphism ${\rm int}(g):U_R \overset{\sim}{\to} U_{gRg^{-1}}$ sends a rational polyhedral cone of $\Sigma'_{\mathbf{Q},\mathbf{R}}$ inside a rational polyhedral cone of $\Sigma_{g\mathbf{Q}g^{-1},g\mathbf{R}g^{-1}}$. We let $e'_i:Y'^{\,tor}_i\to X'^{\,bb}_{\geq i}$
and $c'_i:Z'^{\,tor}_i \to Y'^{\,tor}_i$ denote the morphisms constructed as before.
Then, $g\in \mathbf{G}(\Q)$ induces morphisms
$g:Y'^{\,tor}_i \to Y^{tor}_i$ and $g:Z'^{\,tor}_i \to Z^{tor}_i$ making the diagram analogous to
\eqref{diag:for-naturality-comput-E-X-rajoute}
commutative.
One also checks that the properties at the end of
\S \ref{subsub:setting-for-comput} are satisfied.

We are now in position to apply the results of
\S \ref{subsection:compute-E-X}.
We respectively denote by $T^{tor}$, $\mathcal{X}^{tor}$, $\mathcal{T}^{tor}$ and
$\mathcal{Y}^{tor}$
the diagrams of schemes $T$ (\S\ref{subsub:T}), $\mathcal{X}$ (\S\ref{subsub:X}), $\mathcal{T}$ (\S\ref{subsub:calT})
and $\mathcal{Y}$ (\S\ref{subsub:Y})
associated to the stratified scheme $\overline{X}^{bb}$ ($X$ in
\S \ref{subsection:compute-E-X}) and the
morphisms $e_i:Y^{tor}_i \to X^{bb}_{\geq i}$ and $c_i:Z^{tor}_i\to Y^{tor}_i$.
Likewise, denote by $T'^{\,tor}$, $\mathcal{X}'^{\,tor}$ and
$\mathcal{Y}'^{\,tor}$
the corresponding diagrams of schemes for $\overline{X'}^{bb}$;
these play the role of $\check{T}$, $\check{\mathcal{X}}$ and
$\check{\mathcal{Y}}$ in \S
\ref{subsection:compute-E-X}. We also write $\beta_{\overline{X}^{bb}}$ and
$\beta_{\overline{X'}^{\,bb}}$
instead of $\beta_{\overline{X}^{bb},(X^{bb}_i)_i}$ and
$\beta_{\overline{X'}^{\,bb},(X'^{\,bb}_i)_i}$ (from \S\ref{par:mot-beta-x-s}).
These are motives over $T^{tor}$ and $T'^{\,tor}$ respectively.


\subsubsection{The diagram of schemes $\widetilde{T}^{tor}$}
\label{subsub:wTtor} For $\emptyset \neq I \subset [\![0,r]\!]$,
let $\mathscr{P}(I)$ be the set of pairs $(\mathbf{Q},\mathbf{R})$
with $\mathbf{Q}$ a parabolic $\Q$-subgroup of cotype $\{{\rm
min}(I)\}$ and
$\mathbf{R}$ a parabolic $\Q$-subgroup of
$\mathbf{M}_{\mathbf{Q},h}$ conjugate (as a sub-quotient of
$\mathbf{G}$) to the image of $\mathbf{P}_{[\![1,r]\!]- I}$ in
$\mathbf{M}_{\mathbf{P}_{[\![1,r]\!]- \{{\rm min}(I)\}},h}$. Given
such $(\mathbf{Q},\mathbf{R})$, let $\mathbf{R'}\supset\mathbf{R}$
be the maximal or improper parabolic $\Q$-subgroup of
$\mathbf{M}_{\mathbf{Q},h}$ such that
$\mathbf{M}_{\mathbf{R},h}\simeq \mathbf{M}_{\mathbf{R'},h}$ (i.e., $\mathbf{R}$ is subordinate to $\mathbf{R'}$).
We denote by
$\mathbf{E}_{\mathbf{Q},\mathbf{R}}$ the inverse image of
$\mathbf{R}$ by the natural projection from $\mathbf{Q}$ to (the
quotient by a finite normal subgroup of)
$\mathbf{M}_{\mathbf{Q},h}$. This is a parabolic $\Q$-subgroup of
cotype $I$. It determines the pair $(\mathbf{Q},\mathbf{R})$ as
follows: $\mathbf{Q}$ is the unique maximal or improper parabolic $\Q$-subgroup of
cotype $\{{\rm min}(I)\}$ that contains $\mathbf{E_{Q,R}}$, and
$\mathbf{R}$ is the image of $\mathbf{E_{Q,R}}$ in
$\mathbf{M}_{\mathbf{Q},h}$.\footnote{$\mathscr{P}(I)$ is also the
set of parabolic $\Q$-subgroups $\mathbf{E}$ of cotype $I$,
 for we can associate to such
$\mathbf{E}$ the unique pair $(\mathbf{Q_E},\mathbf{R_E})$ such
that $\mathbf{E}=\mathbf{E_{Q_E,R_E}}$. We feel that our choice is
better suited to the geometry, being adapted to the diagram of
schemes $\widetilde{T}^{tor}(I)$ (constructed below), whose
connected components are naturally indexed by the elements of
$\mathscr{P}(I)$.} Clearly,
$\mathbf{E}_{\mathbf{Q},\mathbf{R}}=\mathbf{N_Q}\mathbf{S_Q}
\widetilde{\mathbf{M}}_{\mathbf{Q},\ell}\mathbf{R}$.\footnote{Strictly
speaking, $\mathbf{N_Q}$ is a subgroup of $\mathbf{G}$ and
$\mathbf{S_Q}\widetilde{\mathbf{M}}_{\mathbf{Q},\ell}\mathbf{R}$
is a subgroup of the Levi quotient $\mathbf{L_Q}$. However, we can
choose a lift $\mathbf{L_Q}(x)\subset\mathbf{Q}$ (i.e., a Levi
subgroup), as in
\S\ref{subsect:The Borel-Serre compactifications}, and define $\mathbf{E_{Q,R}}(x)=\mathbf{N_Q}
\mathbf{S_Q}(x)\widetilde{\mathbf{M}}_{\mathbf{Q},\ell}(x)
\mathbf{R}(x) \subset\mathbf{G}$. But $\mathbf{E_{Q,R}}(x)$ is in
fact independent of the choice of $x$.} Similarly, we put
$\mathbf{K}_{\mathbf{Q},\mathbf{R}}=\mathbf{N_Q}\mathbf{S_Q}
\widetilde{\mathbf{M}}_{\mathbf{Q},\ell}\mathbf{R'_h}$, where
$\mathbf{R}'_h$ is the inverse image of
$\mathbf{M}_{\mathbf{R}',h}$ by the projection of $\mathbf{R}'$ to (the quotient by a finite normal subgroup of)
$\mathbf{M_{R'}}$. We obtain a commutative diagram
\begin{equation}
\label{eq:emodK}
\begin{matrix}
\mathbf{K_{\mathbf{Q,R}}} & \hookrightarrow & \mathbf{E_{Q,R}} & \hookrightarrow & \mathbf{Q} \\
\downarrow && \downarrow && \downarrow \\
\mathbf{R}'_h & \hookrightarrow  & \mathbf{R} & \hookrightarrow  &
\mathbf{M}_{\mathbf{Q},h}
\end{matrix}
\end{equation}
with cartesian squares. In particular, $\mathbf{K}_{\mathbf{Q},
\mathbf{R}}$ is a normal subgroup of
$\mathbf{E}_{\mathbf{Q},\mathbf{R}}$ that is determined by
$\mathbf{R}'$, and
\begin{equation}
\label{eq:EmodK-RmodR'h}
\mathbf{E}_{\mathbf{Q},\mathbf{R}}/\mathbf{K}_{\mathbf{Q},\mathbf{R}}
\simeq\mathbf{R}/\mathbf{R}'_h.
\end{equation}

Let $(\mathbf{Q_1},\mathbf{R_1})$ and $(\mathbf{Q_2},\mathbf{R_2})$
be two elements of $\mathscr{P}(I)$. We set
$$[(\mathbf{Q_1},\mathbf{R_1}),(\mathbf{Q_2},\mathbf{R_2})] =
\{ \gamma\in \mathbf{G}(\Q):
\gamma\mathbf{Q_1}\gamma^{-1}=\mathbf{Q_2}\text{ and }\gamma\mathbf{R_1}
\gamma^{-1}= \mathbf{R_2}\}.$$
This is the set of $\gamma$'s for which
$\gamma \mathbf{E_{Q_1,R_1}}\gamma^{-1}=\mathbf{E_{Q_2,R_2}}$.  For
$\gamma,\gamma'\in[(\mathbf{Q_1},\mathbf{R_1}),(\mathbf{Q_2},\mathbf{R_2})]$,
we write
$\gamma\sim\gamma'$
when there exists $\delta_1\in \mathbf{K_{Q_1,R_1}}(\Q)$
such that $\gamma'=\gamma\delta_1$
(equivalently, when  there exists $\delta_2\in \mathbf{K_{Q_2,R_2}}(\Q)$
such that $\gamma'=\delta_2\gamma$).
This defines an equivalence relation on $[(\mathbf{Q_1},\mathbf{R_1}),
(\mathbf{Q_2},\mathbf{R_2})]$ that is compatible with
multiplication in $\mathbf{G}(\Q)$. We make the set
$\mathscr{P}(I)$ into a groupoid by setting
$$
\hom_{\mathscr{P}(I)}((\mathbf{Q_1},\mathbf{R_1}),(\mathbf{Q_2},\mathbf{R_2}))=
[(\mathbf{Q_1},\mathbf{R_1}),(\mathbf{Q_2},\mathbf{R_2})]/\sim.
$$
As $\mathbf{E_{Q,R}}$ is parabolic, it is its own normalizer.
Thus
$[(\mathbf{Q},\mathbf{R}),(\mathbf{Q},\mathbf{R})]=\mathbf{E_{Q,R}}(\Q)$,
and by construction
\begin{equation}\label{eq:end=EmodK}
{\rm end}_{\mathscr{P}(I)}(\mathbf{Q},\mathbf{R})=\mathbf{E_{Q,R}}(\Q)/
\mathbf{K_{Q,R}}(\Q).\footnote{Though we have the isomorphism \eqref{eq:EmodK-RmodR'h},
the canonical morphism $\mathbf{E_{Q,R}}(\Q)/
\mathbf{K_{Q,R}(\Q)} \to \mathbf{R}(\Q)/\mathbf{R}'_h(\Q)$ need not be an isomorphism.}
\end{equation}
The group $\mathbf{G}(\Q)$ acts on $\mathscr{P}(I)$ by conjugation: an element
$b\in \mathbf{G}(\Q)$ determines an endofunctor ${\rm int}(b)$ of
$\mathscr{P}(I)$, which sends a pair $(\mathbf{Q},\mathbf{R})$ to
$(b\mathbf{Q}b^{-1},b\mathbf{R}b^{-1})$ and a morphism
$\gamma\in \hom_{\mathscr{P}(I)}((\mathbf{Q_1},\mathbf{R_1}),(\mathbf{Q_2},
\mathbf{R_2}))$ to
$b\gamma b^{-1}$.

We will rather be interested in the sub-groupoid
$\mathscr{P}_{\Gamma}(I) \subset \mathscr{P}(I)$. Objects in
$\mathscr{P}_{ \Gamma}(I)$ are the same as in $\mathscr{P}(I)$.
However,
$\hom_{\mathscr{P}_{\Gamma}(I)}((\mathbf{Q_1},\mathbf{R_1}),(\mathbf{Q_2},
\mathbf{R_2}))$ is the set of equivalence classes of $\gamma\in
\Gamma$ such that $\gamma\mathbf{Q_1}\gamma^{-1}=\mathbf{Q_2}$ and
$\gamma\mathbf{R_1}\gamma^{-1}=\mathbf{R_2}$. Immediate from the
construction, one sees:

\begin{lemma}
\label{lemme:P-sub-Gamma}

\begin{enumerate}

\item[(a)] $\mathscr{P}_{\Gamma}(\{i\})$ is a discrete category whose
objects are pairs $(\mathbf{Q},\mathbf{M}_{\mathbf{Q},h})$ with $\mathbf{Q}$ a
parabolic $\Q$-subgroup of $\mathbf{G}$ of cotype $\{i\}$. Two
pairs $(\mathbf{Q},\mathbf{M}_{\mathbf{Q},h})$ and
$(\mathbf{Q'},\mathbf{M}_{\mathbf{Q'},h})$ are linked by an arrow if and only if
$\mathbf{Q}$ and $\mathbf{Q'}$ are conjugate by $\Gamma$. In
particular, $\mathscr{P}_{\Gamma}(\{0\})$ is the terminal
category, with only one object and one arrow.

\item[(b)] For $(\mathbf{Q},\mathbf{R})\in \mathscr{P}_{\Gamma}
(I)$, we have ${\rm
end}_{\mathscr{P}_{\Gamma}(I)}(\mathbf{Q},\mathbf{R})=
\Gamma(\mathbf{E_{Q,R}}/\mathbf{K_{Q,R}}) \simeq
\Gamma(\mathbf{R}/\mathbf{R}'_h)$, where we have set
$\Gamma(\mathbf{R}/\mathbf{R}'_h)=(\Gamma(\mathbf{M}_{\mathbf{Q},h})\cap
R)/ (\Gamma(\mathbf{M}_{\mathbf{Q},h})\cap R'_h)$.
The connected components of the groupoid $\mathscr{P}_{\Gamma}(I)$
are parametrized by the $\Gamma$-conjuguacy classes of parabolic
$\Q$-subgroups of cotype $I$.

\item[(c)] When $g\Gamma'g^{-1}\subset\Gamma$, the automorphism ${\rm int}(g):
\mathscr{P}(I) \to \mathscr{P}(I)$ takes
$\mathscr{P}_{\Gamma'}(I)$ into $\mathscr{P}_{\Gamma}(I)$.

\end{enumerate}

\end{lemma}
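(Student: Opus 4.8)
My plan is to unwind the definition of $\mathscr{P}_\Gamma(I)$; the whole lemma is then formal once three facts are recorded: a parabolic $\Q$-subgroup of $\mathbf{G}$ is its own normalizer; the assignment $(\mathbf{Q},\mathbf{R})\rightsquigarrow\mathbf{E}_{\mathbf{Q},\mathbf{R}}$ is a bijection from the objects of $\mathscr{P}(I)$ onto the set of parabolic $\Q$-subgroups of $\mathbf{G}$ of cotype $I$ (as explained in \S\ref{subsub:wTtor}); and, by \eqref{eq:emodK}, $\mathbf{E}_{\mathbf{Q},\mathbf{R}}$ and $\mathbf{K}_{\mathbf{Q},\mathbf{R}}$ are respectively the full preimages of $\mathbf{R}$ and of $\mathbf{R}'_h$ under the projection $\pi_{\mathbf{Q}}\colon\mathbf{Q}\to\mathbf{M}_{\mathbf{Q},h}$.

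For (a): when $I=\{i\}$ is a singleton, $\mathbf{P}_{[\![1,r]\!]-\{i\}}$ is of cotype $\{i\}$ and maps onto $\mathbf{M}_{\mathbf{P}_{[\![1,r]\!]-\{i\}},h}$, so the subquotient appearing in the definition of the objects of $\mathscr{P}(\{i\})$ is the whole group; hence $\mathbf{R}$ is an improper parabolic of $\mathbf{M}_{\mathbf{Q},h}$, i.e.\ $\mathbf{R}=\mathbf{M}_{\mathbf{Q},h}$, whence $\mathbf{R}'=\mathbf{R}'_h=\mathbf{M}_{\mathbf{Q},h}$ and $\mathbf{E}_{\mathbf{Q},\mathbf{R}}=\mathbf{K}_{\mathbf{Q},\mathbf{R}}=\mathbf{Q}$. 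A morphism $(\mathbf{Q_1},\mathbf{M}_{\mathbf{Q_1},h})\to(\mathbf{Q_2},\mathbf{M}_{\mathbf{Q_2},h})$ in $\mathscr{P}_\Gamma(\{i\})$ is the class of a $\gamma\in\Gamma$ with $\gamma\mathbf{Q_1}\gamma^{-1}=\mathbf{Q_2}$ (the condition on the second components being automatic), and for $\gamma,\gamma'\in\Gamma$ the equivalence $\gamma\sim\gamma'$ means $\gamma^{-1}\gamma'\in\Gamma\cap\mathbf{Q_1}(\Q)$; since $\mathbf{Q_1}$ is self-normalizing, any two such $\gamma,\gamma'$ already satisfy this, so between any two objects there is at most one morphism, nonempty precisely when $\mathbf{Q_1}$ and $\mathbf{Q_2}$ are $\Gamma$-conjugate. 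A groupoid with this property is discrete, i.e.\ equivalent to its set of connected components. Taking $i=0$: the only parabolic $\Q$-subgroup of cotype $\{0\}$ is $\mathbf{G}$ itself and there $\mathbf{E}=\mathbf{K}=\mathbf{G}$, so $\mathscr{P}_\Gamma(\{0\})$ has one object and one (identity) arrow — the terminal category.

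For (b): an element of ${\rm end}_{\mathscr{P}_\Gamma(I)}(\mathbf{Q},\mathbf{R})$ is the class of a $\gamma\in\Gamma$ normalizing both $\mathbf{Q}$ and $\mathbf{R}$, equivalently normalizing $\mathbf{E}_{\mathbf{Q},\mathbf{R}}$, modulo multiplication by $\mathbf{K}_{\mathbf{Q},\mathbf{R}}(\Q)$; as $\mathbf{E}_{\mathbf{Q},\mathbf{R}}$ is parabolic hence self-normalizing this gives ${\rm end}_{\mathscr{P}_\Gamma(I)}(\mathbf{Q},\mathbf{R})=(\Gamma\cap E_{Q,R})/(\Gamma\cap K_{Q,R})=\Gamma(\mathbf{E}_{\mathbf{Q},\mathbf{R}}/\mathbf{K}_{\mathbf{Q},\mathbf{R}})$, by the convention treating $\Gamma$ as a functor on pairs. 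To identify this with $\Gamma(\mathbf{R}/\mathbf{R}'_h)$ I would use \eqref{eq:emodK}: $\pi_{\mathbf{Q}}$ restricts to a surjection $\mathbf{E}_{\mathbf{Q},\mathbf{R}}\to\mathbf{R}$ with kernel $\mathbf{N_Q}\mathbf{S_Q}\widetilde{\mathbf{M}}_{\mathbf{Q},\ell}$ carrying $\mathbf{K}_{\mathbf{Q},\mathbf{R}}$ onto $\mathbf{R}'_h$ with the same kernel, whence the isomorphism \eqref{eq:EmodK-RmodR'h}; since $\mathbf{E}_{\mathbf{Q},\mathbf{R}}$ and $\mathbf{K}_{\mathbf{Q},\mathbf{R}}$ are \emph{full} $\pi_{\mathbf{Q}}$-preimages and $\pi_{\mathbf{Q}}$ maps $\Gamma\cap Q$ onto $\Gamma(\mathbf{M}_{\mathbf{Q},h})$, it maps $\Gamma\cap E_{Q,R}$ onto $\Gamma(\mathbf{M}_{\mathbf{Q},h})\cap R$ and $\Gamma\cap K_{Q,R}$ onto $\Gamma(\mathbf{M}_{\mathbf{Q},h})\cap R'_h$; passing to quotients yields ${\rm end}_{\mathscr{P}_\Gamma(I)}(\mathbf{Q},\mathbf{R})\simeq\Gamma(\mathbf{R}/\mathbf{R}'_h)$. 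It is here that one must heed the footnote to \eqref{eq:end=EmodK}: one stays with these specific arithmetic subgroups rather than the rational points of the quotient groups. Lastly, two objects lie in the same connected component of $\mathscr{P}_\Gamma(I)$ iff some $\gamma\in\Gamma$ conjugates the one pair to the other, iff it conjugates $\mathbf{E}_{\mathbf{Q_1},\mathbf{R_1}}$ to $\mathbf{E}_{\mathbf{Q_2},\mathbf{R_2}}$; through the bijection $(\mathbf{Q},\mathbf{R})\leftrightarrow\mathbf{E}_{\mathbf{Q},\mathbf{R}}$ this identifies the connected components with the $\Gamma$-conjugacy classes of parabolic $\Q$-subgroups of cotype $I$.

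For (c): conjugation by $g$ preserves cotype, so ${\rm int}(g)$ takes objects of $\mathscr{P}_{\Gamma'}(I)$ to objects of $\mathscr{P}_\Gamma(I)$; a morphism represented by $\gamma\in\Gamma'$ is sent to $g\gamma g^{-1}\in g\Gamma'g^{-1}\subseteq\Gamma$, which conjugates $g\mathbf{E}_{\mathbf{Q_1},\mathbf{R_1}}g^{-1}$ to $g\mathbf{E}_{\mathbf{Q_2},\mathbf{R_2}}g^{-1}$, and the equivalence relation is respected because $g\mathbf{K}_{\mathbf{Q},\mathbf{R}}g^{-1}=\mathbf{K}_{g\mathbf{Q}g^{-1},g\mathbf{R}g^{-1}}$; compatibility with composition and identities is immediate, so ${\rm int}(g)$ restricts to a functor $\mathscr{P}_{\Gamma'}(I)\to\mathscr{P}_\Gamma(I)$. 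No serious difficulty is expected anywhere: the argument is entirely definitional, and the only step that calls for attention is the last arithmetic-subgroup identification in (b), which is why I would run it through the cartesian diagram \eqref{eq:emodK} and the preimage description of $\mathbf{E}_{\mathbf{Q},\mathbf{R}}$ and $\mathbf{K}_{\mathbf{Q},\mathbf{R}}$ rather than through the group isomorphism \eqref{eq:EmodK-RmodR'h} alone.
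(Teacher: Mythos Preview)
Your proposal is correct and matches the paper's approach: the paper states the lemma as ``immediate from the construction'' without further proof, and what you have written is exactly the unwinding of definitions the paper has in mind. The only point worth flagging is in (b), where you claim that $\pi_{\mathbf{Q}}$ maps $\Gamma\cap Q$ onto $\Gamma(\mathbf{M}_{\mathbf{Q},h})$; this is true by the paper's definition $\Gamma(\mathbf{M}_{\mathbf{Q},h})=(\Gamma\cap Q)/(\Gamma\cap N_QA_Q\widetilde{M}_{Q,\ell})$ together with neatness of $\Gamma$, so you are fine.
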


\begin{remark}\label{g-gprime}
We establish the convention that whenever ``\,$\Gamma'$\,''
appears in the sequel, it occurs in the context of
$g\Gamma'g^{-1}\subset\Gamma$.
\end{remark}

Next, let $\emptyset \neq J\subset I \subset [\![0,r]\!]$. Given
$(\mathbf{Q},\mathbf{R})\in \mathscr{P}(I)$, there is a unique
$(\mathbf{F},\mathbf{H})\in \mathscr{P}(J)$ such that
$\mathbf{E_{Q,R}}\subset \mathbf{E_{F,H}}$. We then have
$\mathbf{K_{Q,R}}\subset \mathbf{K_{F,H}}$. Also, we have an inclusion
$$
[(\mathbf{Q_1},\mathbf{R_1}),(\mathbf{Q_2},\mathbf{R_2})]\subset
[(\mathbf{F_1},\mathbf{H_1}),(\mathbf{F_2},\mathbf{H_2})]
$$
when there are two such sets of data.  This defines a mapping
$$
\hom_{\mathscr{P}(I)}((\mathbf{Q_1},\mathbf{R_1}),(\mathbf{Q_2},
\mathbf{R_2})) \to \hom_{\mathscr{P}(J)}
((\mathbf{F_1},\mathbf{H_1}),(\mathbf{F_2},\mathbf{H_2})).
$$
Thus, we have a functor
\begin{equation}\label{eq:tJI}
\mathbf{t}_{J\subset I}:\mathscr{P}(I) \to \mathscr{P}(J),
\end{equation}
which takes a pair $(\mathbf{Q},\mathbf{R})$ to the unique
$(\mathbf{F},\mathbf{H})$ such that $\mathbf{E}_{\mathbf{Q},\mathbf{R}}
\subset \mathbf{E}_{\mathbf{F},\mathbf{H}}$.

It is clear that $\mathbf{t}_{J\subset I}$ takes the sub-groupoid
$\mathscr{P}_{\Gamma}(I)$ of $\mathscr{P}(I)$ into
$\mathscr{P}_{\Gamma}(J)$. We also write $\mathbf{t}_{J\subset
I}:\mathscr{P}_{\Gamma}(I) \to \mathscr{P}_{\Gamma}(J)$ for the
induced functor. We leave the verification of the following lemma
to the reader. With $\mathcal{P}^*$ as in \S\ref{subsub:T}:

\begin{lemma}
The family of functors $\{\mathbf{t}_{J\subset
I}:\mathscr{P}_{\Gamma}(I) \to \mathscr{P}_{\Gamma}(J)\}_{J\subset
I}$ defines a functor $\mathscr{P}_{\Gamma}$ from
$\mathcal{P}^*([\![0,r]\!])^{\rm op}$ to the category of groupoids.
Moreover, the family $\{{\rm int}(g):\mathscr{P}_{\Gamma'}(I) \to
\mathscr{P}_{\Gamma}(I)\}_{I}$ defines a natural transformation
${\rm int}(g):\mathscr{P}_{\Gamma'}\to \mathscr{P}_{\Gamma}$.

\end{lemma}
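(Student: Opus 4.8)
The plan is to verify the two assertions directly from the way the functors $\mathbf{t}_{J\subset I}$ and the endofunctors ${\rm int}(g)$ were built in \S\ref{subsub:wTtor}; the only non-formal ingredient is the standard fact from the structure theory of parabolic subgroups that I isolate first. Fix a parabolic $\Q$-subgroup $\mathbf{E}\subset\mathbf{G}$ and let $C$ be its cotype (a non-empty subset of $[\![0,r]\!]$ in the shifted convention of \S\ref{Stage}–\S\ref{subsub:wTtor}). Then for every $\emptyset\neq J\subset C$ there is a \emph{unique} parabolic $\Q$-subgroup of cotype $J$ containing $\mathbf{E}$: conjugating $\mathbf{E}$ to the standard parabolic of its type, the parabolic $\Q$-subgroups containing it are exactly the standard ones of larger type, and these are in inclusion-reversing bijection with the subsets of $C$. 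Via the bijection $(\mathbf{Q},\mathbf{R})\leftrightarrow\mathbf{E}_{\mathbf{Q},\mathbf{R}}$ between $\mathscr{P}(I)$ and the set of parabolic $\Q$-subgroups of cotype $I$ (the footnote in \S\ref{subsub:wTtor}), this says precisely that $\mathbf{t}_{J\subset I}(\mathbf{Q},\mathbf{R})$ is the unique $(\mathbf{F},\mathbf{H})\in\mathscr{P}(J)$ with $\mathbf{E}_{\mathbf{Q},\mathbf{R}}\subset\mathbf{E}_{\mathbf{F},\mathbf{H}}$, consistently with \eqref{eq:tJI}.

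For the first assertion (functoriality of $\mathscr{P}_\Gamma$) I would check the composition laws for $K\subset J\subset I\subset[\![0,r]\!]$. One has $\mathbf{t}_{I\subset I}=\id$ since the unique cotype-$I$ parabolic containing $\mathbf{E}_{\mathbf{Q},\mathbf{R}}$ is $\mathbf{E}_{\mathbf{Q},\mathbf{R}}$ itself. For $\mathbf{t}_{K\subset J}\circ\mathbf{t}_{J\subset I}=\mathbf{t}_{K\subset I}$: writing $\mathbf{E}\subset\mathbf{E}'\subset\mathbf{E}''$ for the parabolics attached to $(\mathbf{Q},\mathbf{R})$, $\mathbf{t}_{J\subset I}(\mathbf{Q},\mathbf{R})$ and $\mathbf{t}_{K\subset J}\mathbf{t}_{J\subset I}(\mathbf{Q},\mathbf{R})$, the subgroup $\mathbf{E}''$ is a cotype-$K$ parabolic containing $\mathbf{E}$, hence equals the parabolic attached to $\mathbf{t}_{K\subset I}(\mathbf{Q},\mathbf{R})$ by the uniqueness above; this gives the agreement on objects. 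On morphisms, both composites are induced by the inclusions of sets of conjugating elements $[(\mathbf{Q}_1,\mathbf{R}_1),(\mathbf{Q}_2,\mathbf{R}_2)]\subset\cdots$, which descend compatibly to the $\sim$-classes because $\mathbf{K}_{\mathbf{Q},\mathbf{R}}\subset\mathbf{K}_{\mathbf{F},\mathbf{H}}\subset\mathbf{K}_{\mathbf{F}',\mathbf{H}'}$ whenever $\mathbf{E}_{\mathbf{Q},\mathbf{R}}\subset\mathbf{E}_{\mathbf{F},\mathbf{H}}\subset\mathbf{E}_{\mathbf{F}',\mathbf{H}'}$ (recorded just before \eqref{eq:tJI}). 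Since $\mathbf{t}_{J\subset I}$ carries $\mathscr{P}_\Gamma(I)$ into $\mathscr{P}_\Gamma(J)$ (noted before Lemma \ref{lemme:P-sub-Gamma}), the same identities hold for the restricted functors.

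For the second assertion (naturality of ${\rm int}(g)$), Lemma \ref{lemme:P-sub-Gamma}(c) already gives that ${\rm int}(g)$ maps $\mathscr{P}_{\Gamma'}(I)$ into $\mathscr{P}_\Gamma(I)$, and ${\rm int}(g)$ is a functor there since ${\rm int}(b)$ is an endofunctor of each $\mathscr{P}(I)$. It remains to show that for $\emptyset\neq J\subset I$ the square with horizontal arrows ${\rm int}(g)$ and vertical arrows $\mathbf{t}_{J\subset I}$ commutes strictly. On objects: ${\rm int}(g)$ sends the parabolic $\mathbf{E}$ attached to $(\mathbf{Q},\mathbf{R})$ to $g\mathbf{E}g^{-1}$, and conjugation by $g$ takes the unique cotype-$J$ parabolic $\mathbf{E}'$ containing $\mathbf{E}$ to a cotype-$J$ parabolic $g\mathbf{E}'g^{-1}$ containing $g\mathbf{E}g^{-1}$, hence, by uniqueness again, to $\mathbf{t}_{J\subset I}({\rm int}(g)(\mathbf{Q},\mathbf{R}))$. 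On morphisms: a $\sim$-class of $\gamma\in\Gamma'$ is sent along either path to the class of $g\gamma g^{-1}$, and these agree because conjugation by $g$ carries $\mathbf{K}_{\mathbf{Q}_1,\mathbf{R}_1}$ onto $\mathbf{K}_{g\mathbf{Q}_1 g^{-1},g\mathbf{R}_1 g^{-1}}$ (and similarly for the enlarged data), so that the passages to $\sim$-classes match.

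I expect no genuine obstacle: the content is bookkeeping with cotypes and with the normal subgroups $\mathbf{K}_{\mathbf{Q},\mathbf{R}}$, all of whose relevant monotonicity and $\mathbf{G}(\Q)$-equivariance properties were recorded in \S\ref{subsub:wTtor}. The one point demanding care — and the natural candidate for the main nuisance — is that morphisms in the groupoids $\mathscr{P}_\Gamma(I)$ are $\sim$-classes rather than actual group elements, so each inclusion and each conjugation used above must be checked to be compatible with $\sim$; as indicated, this reduces in every instance to the behavior of $(\mathbf{Q},\mathbf{R})\mapsto\mathbf{K}_{\mathbf{Q},\mathbf{R}}$, which is already understood.
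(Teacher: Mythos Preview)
Your proposal is correct and is precisely the routine verification the paper has in mind: the paper gives no proof for this lemma, writing just before it ``We leave the verification of the following lemma to the reader.'' Your argument unpacks exactly the ingredients the paper set up in \S\ref{subsub:wTtor} --- the bijection $(\mathbf{Q},\mathbf{R})\leftrightarrow\mathbf{E}_{\mathbf{Q},\mathbf{R}}$, the uniqueness of the cotype-$J$ parabolic containing a given cotype-$I$ parabolic, and the inclusion $\mathbf{K}_{\mathbf{Q},\mathbf{R}}\subset\mathbf{K}_{\mathbf{F},\mathbf{H}}$ --- so there is nothing to compare.
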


\begin{remark}
We gather here some facts about groupoids and their representations. Let
$\mathcal{G}$ be a small groupoid and $\mathcal{C}$ a category. A
\emph{representation} of $\mathcal{G}$ in $\mathcal{C}$ is a functor $F:\mathcal{G} \to \mathcal{C}$.
By the quotient $\mathcal{G}\backslash F$, we mean the colimit (if it exists)
of the functor $F$.
In the case of $\mathcal{G} = \mathscr{P}_{\Gamma}(I)$ and
$\mathcal{C}=\Sch/\C$,
to give a representation is equivalent to
giving a representation of $\Gamma$ on a scheme $W$ and specifying for every
$(\mathbf{Q},\mathbf{R})\in \mathscr{P}_{\Gamma}(I)$
an open and closed subscheme $W(\mathbf{Q},\mathbf{R})$ such that:
\begin{itemize}

\item $W=\coprod_{(\mathbf{Q},\mathbf{R})\in {\rm ob}(\mathscr{P}_{\Gamma}
(I))} W(\mathbf{Q},\mathbf{R})$,

\item the automorphism
$\gamma:W \to W$ takes $W(\mathbf{Q},\mathbf{R})$ to
$W(\gamma\mathbf{Q}\gamma^{-1},\gamma\mathbf{R}\gamma^{-1})$
for every $\gamma\in \Gamma$,

\item the action of $\Gamma(\mathbf{E}_{\mathbf{Q},\mathbf{R}})$ on
$W(\mathbf{Q},\mathbf{R})$ factors through $\Gamma(\mathbf{E_{Q,R}}/
\mathbf{K_{Q,R}})$.
\end{itemize}
When the $W(\mathbf{Q},\mathbf{R})$'s are to be connected (as is
the case for the $\mathcal{B}_I(\mathbf{Q},\mathbf{R})$'s below), they
are uniquely determined. Indeed, $W(\mathbf{Q},\mathbf{R})$ is
then the unique connected component of $W$ having
$\Gamma(\mathbf{E_{Q,R}})$ for stabilizer. In the sequel, we will
often say that $\mathscr{P}_{\Gamma}(I)$ acts on a scheme $W$
without specifying the components $W(\mathbf{Q},\mathbf{R})$
(especially when these schemes are connected).
\end{remark}

Next, we define a diagram of schemes
$\mathcal{B}_I$ indexed by the groupoid $\mathscr{P}_{\Gamma}(I)$, i.e., a representation of that groupoid, as follows.
Given an object $(\mathbf{Q},\mathbf{R})$
of $\mathscr{P}_{\Gamma}(I)$, we let
$\mathcal{B}_I(\mathbf{Q},\mathbf{R})=\mathcal{B}^c_{
(\mathbf{Q},\mathbf{R}),\Sigma_{(\mathbf{Q})}}$ as in Proposition
\ref{prop:hereditary-for-toroidal}. Let $\mathbf{R'}\subset \mathbf{M}_{
\mathbf{Q},h}$ be the maximal or improper parabolic $\Q$-subgroup
containing $\mathbf{R}$ and such that $\mathbf{M}_{\mathbf{R},h}\simeq
\mathbf{M}_{\mathbf{R'},h}$.
The scheme $\mathcal{B}_I(\mathbf{Q},\mathbf{R})$
admits an action of the arithmetic group
$[\Gamma(\mathbf{M}_{\mathbf{Q},h})](\widetilde{\mathbf{M}}_{\mathbf{R'},\ell}\,|\,\mathbf{R})$, given by
\eqref{eq:useful-arthmetic-subgr} with $\Gamma(\mathbf{M}_{\mathbf{Q},h})$ instead of $\Gamma$.
Recall that the latter was defined as
$[\Gamma(\mathbf{M}_{\mathbf{Q},h})](\widetilde{\mathbf{M}}_{\mathbf{R'},\ell})\cap R/N_{R'}A_{R'}M_{R',h}$ with
$[\Gamma(\mathbf{M}_{\mathbf{Q},h})](\widetilde{\mathbf{M}}_{\mathbf{R'},\ell})$
the image of $\Gamma(\mathbf{M}_{\mathbf{Q},h})\cap R'$
by the projection from $R'$ to (the quotient by a finite normal subgroup of)
$\widetilde{M}_{R',\ell}$. (As $\Gamma$ is neat, one may replace
$A_{R'}$ with $S_{R'}$.)
Thus
$[\Gamma(\mathbf{M}_{\mathbf{Q},h})](\widetilde{\mathbf{M}}_{\mathbf{R'},\ell}\,|\,\mathbf{R})$
is simply the image of $\Gamma(\mathbf{E_{Q,R}})$ by the projection
from $E_{Q,R}$ to (the quotient by a finite normal subgroup of)
$\widetilde{M}_{R',\ell}$. This shows that
\begin{equation}
\label{eq:groupoid-acts-B-Q-R}
[\Gamma(\mathbf{M}_{\mathbf{Q},h})](\widetilde{\mathbf{M}}_{\mathbf{R'},\ell}\,|\,\mathbf{R})
\simeq\Gamma(\mathbf{E_{Q,R}}/\mathbf{K_{Q,R}}).
\end{equation}
In other words, the group ${\rm end}_{\mathscr{P}_{\Gamma}(I)}(\mathbf{Q},
\mathbf{R})$ acts on $\mathcal{B}_I(\mathbf{Q},\mathbf{R})$.

Moreover, given two objects $(\mathbf{Q_1},\mathbf{R_1})$
and $(\mathbf{Q_2},\mathbf{R_2})$ of $\mathscr{P}_{\Gamma}(I)$ and
$\gamma\in \Gamma$ such that
$\gamma\mathbf{Q_1}\gamma^{-1}=\mathbf{Q_2}$ and
$\gamma\mathbf{R_1}\gamma^{-1}=\mathbf{R_2}$, there is an induced isomorphism
(also denoted $\gamma$)
$\gamma:\mathcal{B}_I(\mathbf{Q_1},\mathbf{R_1})\to\mathcal{B}_I(\mathbf{Q_2},
\mathbf{R_2})$.
Indeed, $\gamma$ induces an isomorphism $\gamma:X^{bb}_{\mathbf{Q_1}}
\to X^{bb}_{\mathbf{Q_2}}$ which is compatible with the isomorphism
of $\Q$-groups ${\rm int}(\gamma):\mathbf{M}_{\mathbf{Q_1},h}\simeq
\mathbf{M}_{\mathbf{Q_2},h}$.
Our claim follows, as the construction of the toroidal compactification is
canonical with respect to the group, the arithmetic subgroup and the family
of prpcd's. From \eqref{eq:groupoid-acts-B-Q-R}, we see that
$\gamma:\mathcal{B}_I(\mathbf{Q_1},\mathbf{R_1})\to \mathcal{B}_I(\mathbf{Q_2},
\mathbf{R_2})$
depends only on the class of $\gamma$ in $\hom_{\mathscr{P}_{\Gamma}(I)}
((\mathbf{Q_1},\mathbf{R_1}),(\mathbf{Q_2},\mathbf{R_2}))$.

\begin{lemma}
\label{lemma:universal-cover-cal-B-I}
The assignment $(\mathbf{Q},\mathbf{R})\rightsquigarrow\mathcal{B}_I(
\mathbf{Q},\mathbf{R})=\mathcal{B}^c_{(\mathbf{Q},\mathbf{R}),\Sigma_{
(\mathbf{Q})}}$ defines a covariant functor
$\mathcal{B}_I:\mathscr{P}_{\Gamma}(I)\to \Sch/\C$.
Moreover, there is a morphism of diagrams of schemes
$$(\mathcal{B}_I,\mathscr{P}_{\Gamma}(I))\to T^{tor}(I)$$
that identifies
$T^{tor}(I)^0$
with the quotient $\mathscr{P}_{\Gamma}(I)\backslash \mathcal{B}_I$.
\end{lemma}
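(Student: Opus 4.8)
The plan is to dispose of the two assertions separately. The functoriality of $\mathcal{B}_I$ is essentially contained in the discussion preceding the lemma: the value on objects is given, and to a morphism $\gamma\in\hom_{\mathscr{P}_{\Gamma}(I)}((\mathbf{Q_1},\mathbf{R_1}),(\mathbf{Q_2},\mathbf{R_2}))$ one attaches the isomorphism $\gamma:\mathcal{B}^c_{(\mathbf{Q_1},\mathbf{R_1}),\Sigma_{(\mathbf{Q_1})}}\to\mathcal{B}^c_{(\mathbf{Q_2},\mathbf{R_2}),\Sigma_{(\mathbf{Q_2})}}$ induced by ${\rm int}(\gamma)$, using that the toroidal construction is canonical in the triple (algebraic group, arithmetic subgroup, compatible family of cone decompositions) together with the analogue of Proposition \ref{normalizers}(i) for parabolic $\Q$-subgroups of $\mathbf{M}_{\mathbf{Q_1},h}$ in place of those of $\mathbf{G}$. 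By \eqref{eq:groupoid-acts-B-Q-R} this morphism depends only on the image of $\gamma$ in $\hom_{\mathscr{P}_{\Gamma}(I)}$, and compatibility with composition and with identities is immediate, all the maps being induced by conjugation in $\mathbf{G}(\Q)$; hence $\mathcal{B}_I$ is a functor, i.e.\ a representation of the groupoid $\mathscr{P}_{\Gamma}(I)$ in $\Sch/\C$.

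For the second assertion, I would fix a parabolic $\Q$-subgroup $\mathbf{Q}$ of $\mathbf{G}$ of cotype $\{{\rm min}(I)\}$, representing a connected component $Y^{tor}_{\mathbf{Q}}$ of $Y^{tor}_{{\rm min}(I)}$. Since $Y^{tor}_{\mathbf{Q}}$ has only quotient singularities, hence is normal, $e_{\mathbf{Q}}$ factors through the normalization $\overline{(X^{bb}_{\mathbf{Q}})}^{bb}$ of $\overline{X}^{bb}_{\mathbf{Q}}$; combining this with the hereditary property of the Baily-Borel compactification and Property \textbf{P1)}, the part $T^{tor}(I)\cap Y^{tor}_{\mathbf{Q}}$ is the intersection, inside $Y^{tor}_{\mathbf{Q}}=\overline{(X^{bb}_{\mathbf{Q}})}^{tor}_{\Sigma_{(\mathbf{Q})}}$, of the closures of the $e_{\mathbf{Q}}$-preimages of the Baily-Borel strata of $X^{bb}_{\mathbf{Q}}$ of the cotypes determined by $I\setminus\{{\rm min}(I)\}$. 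Applying Proposition \ref{prop:intersection-closure-strata-toroidal-comp} --- together with Proposition \ref{prop:hereditary-for-toroidal} and Remark \ref{rmk:hereditary-for-toroidal} --- to the locally symmetric variety $X^{bb}_{\mathbf{Q}}$ and its arithmetic group $\Gamma(\mathbf{M}_{\mathbf{Q},h})$ (legitimate, the statements of \S\ref{subsect:toroidal-compact} passing from $\Q$-simple to semisimple groups), one identifies $T^{tor}(I)^0\cap Y^{tor}_{\mathbf{Q}}$ --- i.e.\ the part of $T^{tor}(I)\cap Y^{tor}_{\mathbf{Q}}$ lying over the open stratum $X^{bb}_{{\rm max}(I)}$ --- with the quotient of $\bigsqcup_{\mathbf{R}}\mathcal{B}^c_{(\mathbf{Q},\mathbf{R}),\Sigma_{(\mathbf{Q})}}$ by the appropriate arithmetic group. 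Here one uses that each $\mathcal{B}^c_{(\mathbf{Q},\mathbf{R}),\Sigma_{(\mathbf{Q})}}$ does lie over $X^{bb}_{{\rm max}(I)}$, being contained in the torus embedding $\mathcal{S}_{(\mathbf{Q},\mathbf{R}'),\Sigma_{(\mathbf{Q})}}$ which maps to the \'etale cover of the cotype-$\{{\rm max}(I)\}$ Baily-Borel stratum underlying $\mathcal{A}_{(\mathbf{Q},\mathbf{R}')}$. By Proposition \ref{normalizers}(ii),(iii) the stabilizer of $\mathcal{B}^c_{(\mathbf{Q},\mathbf{R}),\Sigma_{(\mathbf{Q})}}$ in that arithmetic group is $[\Gamma(\mathbf{M}_{\mathbf{Q},h})](\widetilde{\mathbf{M}}_{\mathbf{R'},\ell}\,|\,\mathbf{R})\simeq\Gamma(\mathbf{E}_{\mathbf{Q},\mathbf{R}}/\mathbf{K}_{\mathbf{Q},\mathbf{R}})$ (by \eqref{eq:groupoid-acts-B-Q-R}), and distinct translates of the closures are disjoint, so this quotient breaks up as $\bigsqcup_{[\mathbf{R}]}\Gamma(\mathbf{E}_{\mathbf{Q},\mathbf{R}}/\mathbf{K}_{\mathbf{Q},\mathbf{R}})\backslash\mathcal{B}^c_{(\mathbf{Q},\mathbf{R}),\Sigma_{(\mathbf{Q})}}$ over the relevant $\Gamma(\mathbf{M}_{\mathbf{Q},h})$-conjugacy classes $[\mathbf{R}]$. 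The resulting embeddings $\mathcal{B}^c_{(\mathbf{Q},\mathbf{R}),\Sigma_{(\mathbf{Q})}}\hookrightarrow T^{tor}(I)$ are compatible with the morphisms $\gamma$ (again by canonicity of the toroidal construction), and so assemble into the asserted morphism of diagrams $(\mathcal{B}_I,\mathscr{P}_{\Gamma}(I))\to T^{tor}(I)$.

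Finally, letting $\mathbf{Q}$ range over $\Gamma$-conjugacy classes of parabolic $\Q$-subgroups of cotype $\{{\rm min}(I)\}$ and taking the disjoint union, $T^{tor}(I)^0=\bigsqcup_{[\mathbf{Q}]}\bigl(T^{tor}(I)^0\cap Y^{tor}_{\mathbf{Q}}\bigr)$ becomes the disjoint union, over all $\Gamma$-conjugacy classes of pairs $(\mathbf{Q},\mathbf{R})$ --- equivalently, by Lemma \ref{lemme:P-sub-Gamma}(b), over the connected components of $\mathscr{P}_{\Gamma}(I)$ --- of $\mathcal{B}_I(\mathbf{Q},\mathbf{R})\big/{\rm end}_{\mathscr{P}_{\Gamma}(I)}(\mathbf{Q},\mathbf{R})$. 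This is precisely the colimit $\mathscr{P}_{\Gamma}(I)\backslash\mathcal{B}_I$: over one connected component the colimit of a representation of a groupoid is the quotient of its value at any object by the automorphism group of that object, while the ``conjugation'' morphisms of $\mathscr{P}_{\Gamma}(I)$ identify the pieces lying over a single $\Gamma$-conjugacy class. Hence the morphism of diagrams induces an isomorphism from $\mathscr{P}_{\Gamma}(I)\backslash\mathcal{B}_I$ onto the open subscheme $T^{tor}(I)^0$. The main obstacle I anticipate is the bookkeeping: on the one hand matching $\Gamma$-conjugacy classes of pairs $(\mathbf{Q},\mathbf{R})$ to the data appearing in the recursive intersection description of $T^{tor}$, and on the other --- the more delicate point --- verifying that the arithmetic group $\Gamma(\mathbf{E}_{\mathbf{Q},\mathbf{R}}/\mathbf{K}_{\mathbf{Q},\mathbf{R}})$ that acts in the colimit is exactly the stabilizer $[\Gamma(\mathbf{M}_{\mathbf{Q},h})](\widetilde{\mathbf{M}}_{\mathbf{R'},\ell}\,|\,\mathbf{R})$ showing up on the toroidal side, for which \eqref{eq:groupoid-acts-B-Q-R} and Proposition \ref{normalizers} are the key inputs.
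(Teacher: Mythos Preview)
Your proof is correct and follows essentially the same approach as the paper: the functoriality is already established in the discussion preceding the lemma, and the identification of $T^{tor}(I)^0$ with the groupoid quotient reduces to Proposition \ref{prop:intersection-closure-strata-toroidal-comp} applied to each $X^{bb}_{\mathbf{Q}}$ with $\mathbf{Q}$ of cotype $\{{\rm min}(I)\}$. The paper's proof is considerably more terse---it simply recalls the description of $T^{tor}(I)^0$ as $\bigl(\bigcap_{i\in I-\{{\rm max}(I)\}}\overline{e_{{\rm min}(I)}^{-1}(X^{bb}_i)}\bigr)\cap e_{{\rm min}(I)}^{-1}(X^{bb}_{{\rm max}(I)})$ and invokes Proposition \ref{prop:intersection-closure-strata-toroidal-comp} directly---whereas you spell out the passage through the normalization, the matching of stabilizers via \eqref{eq:groupoid-acts-B-Q-R}, and the decomposition of the groupoid colimit over connected components; but the substance is the same.
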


\begin{proof}
We only explain the last claim in the statement.
Recall from \eqref{eq:final-def-T(I)} that
$T^{tor}(I)=\bigcap_{i\in I} \overline{e_{{\rm min}(I)}^{-1}(X^{bb}_i)}$
where $e_{{\rm min}(I)}$ is the projection of $Y^{tor}_{{\rm min}(I)}$ onto $\overline{X}^{bb}_{\geq {\rm min}(I)}$.
Recall also that $T^{tor}(I)^0$ is the inverse image of
$X^{bb}_{{\rm max}(I)}$ along the natural morphism $T^{tor}(I) \to \overline{X}^{bb}$. This is a dense open subscheme of $T^{tor}(I)$ which is given by
$$\left(\bigcap_{i\in I-\{{\rm max}(I)\}}
\overline{e_{{\rm min}(I)}^{-1}(X^{bb}_i)}\right) \bigcap
e_{{\rm min}(I)}^{-1}\left(X^{bb}_{{\rm max}(I)}\right).$$
The claim follows now from Proposition
\ref{prop:intersection-closure-strata-toroidal-comp}.
\end{proof}

For $(\mathbf{Q},\mathbf{R})\in \mathscr{P}_{\Gamma}(I)$, we
denote by $T^{tor}(\mathbf{Q},\mathbf{R})$ the connected component
of $T^{tor}(I)$ that is dominated by
$\mathcal{B}_I(\mathbf{Q},\mathbf{R})$. Of course,
$T^{tor}(\mathbf{Q},\mathbf{R})$ depends only on the connected
component of $(\mathbf{Q},\mathbf{R})$ in
$\mathscr{P}_{\Gamma}(I)$.

We now construct the diagram of schemes $\widetilde{T}^{tor}$. Let
$\emptyset \neq I\subset [\![0,r]\!]$. We bring in the
stratification $\mathcal{R}(I)$ on $Y^{tor}_{{\rm min}(I)}$ from
\S\ref{subsub:setting-for-comput}. A subset $V\subset T^{tor}(I)$
is called an $\mathcal{R}(I)$-\emph{star} if there exists an
$\mathcal{R}(I)$-stratum $E$, called the center of $V$, such that
$V$ is the union of the $\mathcal{R}(I)$-strata $F$ satisfying
$E\subset \overline{F}$.\footnote{This notion makes sense for
every stratified topological space.} We write $V=V(E)$; $E$ is
uniquely determined by $V$, equaling the smallest
$\mathcal{R}(I)$-stratum (with respect to $\preceq$) in $V$.
It is clear that an
$\mathcal{R}(I)$-star is an open $\mathcal{R}(I)$-constructible
subset of $T^{tor}(I)$, and that the latter is covered by
$\mathcal{R}(I)$-stars. Moreover, if the extended compatible
family of {\it prpcd}'s
$\Sigma=\{\Sigma_{\mathbf{Q},\mathbf{R}}\}$ is fine enough, which
we assume, the intersection $V(E_1)\cap V(E_2)$ of two
$\mathcal{R}(I)$-stars, if non-empty, is the $\mathcal{R}(I)$-star
$V(E_{1,2})$, where $E_{1,2}$ is the smallest stratum whose
closure contains both $E_1$ and $E_2$.

It follows from Lemma \ref{lemma:dominance-elements-A-I} that an
$\mathcal{R}(I)$-stratum $F$ in $T^{tor}(I)$ meets the open subset
$T^{tor}(I)^0$, and the intersection $F\cap T^{tor}(I)^0$ is dense
in $F$. For an $\mathcal{R}(I)$-star $V\subset T^{tor}(I)$, the
intersection $V^0 = V\cap T^{tor}(I)^0$ will be called, by abuse
of language, an \emph{$\mathcal{R}(I)$-star} in $T^{tor}(I)^0$. If
$\Sigma$ is fine enough, the inverse image of $V^0$ in
$\mathcal{B}_I$ is a disjoint union of copies $V^0_{\alpha}$ of
$V^0$ which are permuted by the groupoid $\mathscr{P}_{\Gamma}(I)$.
For each copy $V^0_{\alpha}$, choose a copy $V_{\alpha}$ of $V$.
Now, let $V_1, \, V_2 \subset T^{tor}(I)$ be two
$\mathcal{R}(I)$-stars. Assume that $V_3=V_1\cap V_2$ is not
empty, and hence an $\mathcal{R}(I)$-star. Then to each connected
component $V_{1,\alpha}$ corresponds a unique connected component
$V_{2,\alpha}$ such that $V^0_{3,\alpha}=V^0_{1,\alpha} \cap
V^0_{2,\alpha}$ is not empty and hence isomorphic to $V_3^0$.
Gluing the various $V_{1,\alpha}$ and $V_{2,\alpha}$ along
$V_{3,\alpha}$ yields a scheme $\widetilde{T}^{tor}(I)$
on which the groupoid
$\mathscr{P}_{\Gamma}(I)$ acts naturally.
Given
$(\mathbf{Q},\mathbf{R})\in \mathscr{P}_{\Gamma}(I)$, we let $\widetilde{T}^{tor}(\mathbf{Q},\mathbf{R})$ denote the connected
component of $\widetilde{T}^{tor}(I)$ that contains
$\mathcal{B}_I(\mathbf{Q},\mathbf{R})$ as a dense open
subset.

>From the construction,
we have a cartesian square of diagrams of
schemes
$$\xymatrix@C=1.5pc@R=1.5pc{(\mathcal{B}_I,\mathscr{P}_{\Gamma}(I)) \ar[r] \ar[d] &
(\widetilde{T}^{tor}(I),\mathscr{P}_{\Gamma}(I)) \ar[d]^-{u_I} \\
T^{tor}(I)^0 \ar[r] & T^{tor}(I).}$$
Thus, $\widetilde{T}^{tor}(I)$ is a Zariski
locally trivial covering of $T^{tor}(I)$ which extends the
covering $\mathcal{B}_I$ of $T^{tor}(I)^0$.
Using Lemma \ref{lemma:universal-cover-cal-B-I}, we thus have an isomorphism
\begin{equation}
\label{eq:rajoute-quotient-tilde-T-tor=T-tor}
\mathscr{P}_{\Gamma}(I)\backslash \widetilde{T}^{tor}(I)\simeq
{T}^{tor}(I)
\end{equation}
induced by $u_I$. Moreover, we have:

\begin{proposition}
\label{prop:diagram-schemes-I-P-Gamma-tilde-T}

\begin{enumerate}

\item[(a)] The assignment $I \rightsquigarrow (\widetilde{T}^{tor}(I),
\mathscr{P}_{\Gamma}(I))$ extends canonically to a contravariant functor from
$\mathcal{P}^*([\![0,r]\!])$ to $\Dia(\Sch/\C)$.
Moreover, we have a natural morphism in $\Dia(\Dia(\Sch/\C))$:
$$u:\xymatrix@C=1.7pc{(\widetilde{T}^{tor},\mathcal{P}^*([\![0,r]\!])^{\rm op})\ar[r] &
 (T^{tor},\mathcal{P}^*([\![0,r]\!])^{\rm op})}$$
which is the identity on the indexing categories.

\item[(b)] There are canonical
morphisms of diagrams of schemes
$$g:(\widetilde{T}'^{\,tor}(I),\mathscr{P}_{\Gamma'}(I)) \to
(\widetilde{T}^{tor} (I),\mathscr{P}_{\Gamma}(I)),$$
which are given by ${\rm int}(g)$ on the indexing categories and
which are natural in $I\in \mathcal{P}^*([\![0,r]\!])$.
Moreover, we have a commutative square in $\Dia(\Dia(\Sch/\C))$:
$$\xymatrix{(\widetilde{T}'^{\,tor},\mathcal{P}^*([\![0,r]\!])^{\rm op})
\ar[r]^-g \ar[d]_-{u'} & (\widetilde{T}^{tor},\mathcal{P}^*([\![0,r]\!])^{
\rm op}) \ar[d]^-{u} \\
(T'^{tor},\mathcal{P}^*([\![0,r]\!])^{\rm op}) \ar[r]^-g & (T^{tor},
\mathcal{P}^*([\![0,r]\!])^{\rm op}).\!}$$

\end{enumerate}

\end{proposition}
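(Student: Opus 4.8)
The plan is to read the statement as a functoriality check. The contravariant functor $\mathscr{P}_{\Gamma}\colon\mathcal{P}^{*}([\![0,r]\!])^{\rm op}\to(\text{groupoids})$ and the natural transformation ${\rm int}(g)\colon\mathscr{P}_{\Gamma'}\to\mathscr{P}_{\Gamma}$ from the preceding lemmas already supply the ``categorical'' data, so what must be produced are the ``geometric'' data together with their coherences. Concretely, for (a) I would construct, for every inclusion $J\subseteq I$, a natural transformation $\widetilde{T}^{tor}(I)\Rightarrow\widetilde{T}^{tor}(J)\circ\mathbf{t}_{J\subset I}$ of functors $\mathscr{P}_{\Gamma}(I)\to\Sch/\C$ lifting the morphism $T^{tor}(J\subset I)\colon T^{tor}(I)\to T^{tor}(J)$ of Lemma \ref{lemma:T-is-a-funct} (note ${\rm min}(I)\leq{\rm min}(J)$ because $J\subseteq I$), and for (b) a natural transformation $\widetilde{T}'^{\,tor}(I)\Rightarrow\widetilde{T}^{tor}(I)\circ{\rm int}(g)$ lifting the morphism $g\colon T'^{\,tor}(I)\to T^{tor}(I)$ of Lemma \ref{lemma:functorialite-de-T}. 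The morphism $u$ is then read off from \eqref{eq:rajoute-quotient-tilde-T-tor=T-tor}, and the square relating it to $u'$ from the naturality of that quotient identification in $I$ and in $\Gamma$.

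First I would produce these lifts over the dense open subdiagrams $\mathcal{B}_{I}\hookrightarrow\widetilde{T}^{tor}(I)$ of Lemma \ref{lemma:universal-cover-cal-B-I}. Given $(\mathbf{Q},\mathbf{R})\in\mathscr{P}_{\Gamma}(I)$ with image $(\mathbf{F},\mathbf{H})=\mathbf{t}_{J\subset I}(\mathbf{Q},\mathbf{R})$, the inclusion $\mathbf{E}_{\mathbf{Q},\mathbf{R}}\subseteq\mathbf{E}_{\mathbf{F},\mathbf{H}}$ of parabolic $\Q$-subgroups of respective cotypes $I\supseteq J$, together with ${\rm min}(I)\leq{\rm min}(J)$, places us in the situation of Proposition \ref{prop:hereditary-for-toroidal} with $\mathbf{Q}$ in the role of $\mathbf{Q_1}$ and $\mathbf{F}$ in that of $\mathbf{Q_2}$; unwinding the definitions of $\mathbf{E}_{\bullet,\bullet}$, $\mathbf{K}_{\bullet,\bullet}$ and $\mathbf{t}_{J\subset I}$ identifies $\mathbf{H}$ with the subordinate datum appearing there. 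That proposition then gives a morphism $\mathcal{B}_{I}(\mathbf{Q},\mathbf{R})=\mathcal{B}^{c}_{(\mathbf{Q},\mathbf{R}),\Sigma_{(\mathbf{Q})}}\to\mathcal{B}^{c}_{(\mathbf{F},\mathbf{H}),\Sigma_{(\mathbf{F})}}=\mathcal{B}_{J}(\mathbf{F},\mathbf{H})$, equivariant for the arithmetic groups which by \eqref{eq:groupoid-acts-B-Q-R} are the endomorphism groups in $\mathscr{P}_{\Gamma}(I)$, resp.\ $\mathscr{P}_{\Gamma}(J)$, hence compatible with the groupoid actions and with the projections to $T^{tor}$; compatibility with the composition $\mathbf{t}_{K\subset J}\circ\mathbf{t}_{J\subset I}=\mathbf{t}_{K\subset I}$ and with ${\rm int}(\gamma)$ for $\gamma\in\Gamma$ is likewise part of \cite[Props.~6.25 and 7.9]{pink-thesis}. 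This yields the required natural transformation at the level of the $\mathcal{B}$'s; in case (b) the analogous morphisms $\mathcal{B}'_{I}\to\mathcal{B}_{I}\circ{\rm int}(g)$ come instead from the functoriality of toroidal compactifications under $\mathbf{G}(\Q)$ recalled in \S\ref{subsect:toroidal-compact}, which applies because $\Sigma'$ was chosen compatibly with $\Sigma$ through $g$ in \S\ref{Stage}.

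The step I expect to require the most care is the extension of these morphisms from $\mathcal{B}_{I}$ over the toroidal boundary to all of $\widetilde{T}^{tor}(I)$, carried out through the construction of $\widetilde{T}^{tor}(I)$ by gluing copies of $\mathcal{R}(I)$-stars. Property \textbf{P1)} and continuity for the Zariski topology show that $T^{tor}(J\subset I)$ carries each $\mathcal{R}(I)$-star $V(E)$ into the preimage of the $\mathcal{R}(J)$-star $V(E')$, where $E'$ is the $\mathcal{R}(J)$-stratum onto which $E$ maps. Since $\widetilde{T}^{tor}(J)\to T^{tor}(J)$ is a Zariski locally trivial covering that is trivial over each such star --- this is where ``$\Sigma$ fine enough'' is used --- the composite $V_{\alpha}\to T^{tor}(J)$ from a star-copy $V_{\alpha}$ of $\widetilde{T}^{tor}(I)$ admits a unique lift to $\widetilde{T}^{tor}(J)$ agreeing on the dense open $V_{\alpha}^{0}\cap\mathcal{B}_{I}$ with the morphism constructed above, and these lifts glue over intersections of stars precisely because a lift is determined by its restriction to a dense open. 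The resulting morphism of diagrams of schemes is natural in $I$ for the same reason: uniqueness of lifts reduces every required identity to one already checked on the $\mathcal{B}$'s and to the functoriality of $T^{tor}$. This proves part (a); part (b) follows by the verbatim parallel argument, starting from the morphisms $\mathcal{B}'_{I}\to\mathcal{B}_{I}\circ{\rm int}(g)$ and the morphism $g\colon T'^{\,tor}\to T^{tor}$, and the commutativity of the square relating $u$ and $u'$ is once more a consequence of uniqueness of lifts.
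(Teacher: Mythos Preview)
Your proposal is correct and follows essentially the same route as the paper. The only organizational difference is that the paper packages the construction through a strata-level map $\widetilde{\rm s}_{J\subset I}\colon\widetilde{A}(I)\to\widetilde{A}(J)$, defined directly via cone combinatorics (taking $\sigma\in\Sigma^{\circ}_{\mathbf{Q},\mathbf{R}}$ to the cone $\sigma''\in\Sigma^{\circ}_{\mathbf{F},\mathbf{H}}$ open in $\overline{\sigma'}\cap U_{H'}$), and then observes that this determines the morphism $\widetilde{T}^{tor}(J\subset I)$ uniquely because the covers $u_I,u_J$ are Zariski locally trivial and induce isomorphisms on strata; you instead build the morphism first on the dense open $\mathcal{B}_I$ via Proposition~\ref{prop:hereditary-for-toroidal} and then extend by your star-gluing and uniqueness-of-lifts argument. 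Both rely on the same two inputs---the hereditary property of the toroidal boundary and the star-local triviality of $\widetilde{T}^{tor}\to T^{tor}$---so the content is the same.
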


\begin{proof}
We show part (a) and leave the verification of
(b) to the reader. For $\emptyset \neq J \subset I$, we
need to define a morphism of diagrams of schemes
$\widetilde{T}^{tor}(J\subset I)$. On the indexing categories, this
morphism is given by the functor $\mathbf{t}_{J\subset I}$ we have
already defined \eqref{eq:tJI}. We also want this morphism to be compatible with the morphism
$T^{tor}(J\subset I)$ we already defined in
\S \ref{subsub:T}, i.e., that
$u_J\circ \widetilde{T}^{tor}(J\subset I)=T^{tor}(J\subset I)\circ u_I$.

First, note that the
morphism $\widetilde{T}^{tor}(I) \to T^{tor}(I)$, together with
$\mathcal{R}(I)$, gives rise to a stratification
$\widetilde{\mathcal{R}}(I)$ of $\widetilde{T}^{tor}(I)$: a subset
of $\widetilde{T}^{tor}(I)$ is an
$\widetilde{\mathcal{R}}(I)$-stratum if and only if it is a
connected component of the inverse image of an $\mathcal{R}
(I)$-stratum of $T^{tor}(I)$.
Moreover,
$u_I:\widetilde{T}^{tor}(I)\to T^{tor}(I)$ takes an
$\widetilde{\mathcal{R}}(I)$-stratum isomorphically to its image, an
$\mathcal{R}(I)$-stratum of $T^{tor}(I)$. In
\S \ref{subsub:calT}, we introduced the ordered set $A(I)$ of
irreducible, closed and $\mathcal{R}(I)$-constructible subsets of $T^{tor}(I)$. Similarly, let $\widetilde{A}(I)$ be the set of
irreducible, closed and $\widetilde{\mathcal{R}}(I)$-constructible subsets of $\widetilde{T}^{tor}(I)$. (Clearly, every element of $\widetilde{A}(I)$ is the closure of a unique $\widetilde{\mathcal{R}}(I)$-stratum, so
there is a non-decreasing bijection between $\widetilde{A}(I)$ and the set of
$\widetilde{\mathcal{R}}(I)$-strata in $\widetilde{T}^{tor}(I)$.)
As for $A(I)$, elements of $\widetilde{A}(I)$ will be denoted using greek letters $\alpha$, $\beta$, etc, and the corresponding closed subsets will be denoted by
$\widetilde{\mathcal{T}}^{tor}(I,\alpha)$, $\widetilde{\mathcal{T}}^{tor}(I,\beta)$, etc.

Now, for the morphism $T^{tor}(J\subset I)$,
there is a non-decreasing map ${\rm s}_{J\subset I}:A(I) \to A(J)$
such that $T^{tor}(J\subset I)$ maps $\mathcal{T}^{tor}(I,\alpha)$ inside
$\mathcal{T}^{tor}(J,{\rm s}_{J\subset I}(\alpha))$ for all $\alpha \in A(I)$
(see Proposition \ref{prop:for-defining-s-J-I-and-funct}). We will construct a non-decreasing map $\widetilde{\rm s}_{J\subset I}:\widetilde{A}(I) \to \widetilde{A}(J)$ which is compatible with ${\rm s}_{J\subset I}$, i.e., for every $\beta\in \widetilde{A}(I)$ and
$\alpha\in A(I)$ such that $u_I(\widetilde{\mathcal{T}}^{tor}(I,\beta))=\mathcal{T}^{tor}(I,\alpha)$, we have
$u_J(\widetilde{\mathcal{T}}^{tor}(J,\widetilde{\rm s}_{J\subset I}(\beta)))=
\mathcal{T}^{tor}(J,{\rm s}_{J\subset I}(\alpha))$.
As $u_I$ and $u_J$ are Zariski locally trivial covers and induces isomorphisms between strata, it is clear that
${\rm s}_{J\subset I}$ determines a unique morphism
$\widetilde{T}^{tor}(J\subset I)$, compatible with $T^{tor}(J\subset I)$ and which maps $\widetilde{\mathcal{T}}^{tor}(I,\alpha)$ inside
$\widetilde{\mathcal{T}}^{tor}(J,\widetilde{\rm s}_{J\subset I}(\alpha))$
for all $\alpha \in \widetilde{A}(I)$.

The
$\widetilde{ \mathcal{R}}(I)$-strata of $\widetilde{T}^{tor}(I)$
are in a one-to-one correspondence with the rational polyhedral
cones in $\coprod_{(\mathbf{Q},\mathbf{R})\in {\rm
ob}(\mathscr{P}_{\Gamma}(I))}
\Sigma^{\circ}_{\mathbf{Q},\mathbf{R}}$. Let $\sigma\in
\Sigma^{\circ}_{\mathbf{Q},\mathbf{R}}$, and
$(\mathbf{F},\mathbf{H})=\mathbf{t}_{J\subset
I}(\mathbf{Q},\mathbf{R})$. Denote by $\mathbf{R'}$ the maximal
or improper parabolic $\Q$-subgroup of $\mathbf{M}_{\mathbf{Q},h}$ to which
$\mathbf{R}$ is subordinate. Also, let $\mathbf{H'}$ (resp.
$\mathbf{H''}$) denote the maximal or improper
parabolic $\Q$-subgroup of $\mathbf{M}_{\mathbf{F},h}$ to which
$\mathbf{H}$ (resp. the image of $\mathbf{R}$ in
$\mathbf{M}_{\mathbf{F},h}$) is subordinate. Let $\sigma'$ be the
unique rational polyhedral cone of
$\Sigma_{\mathbf{F},\mathbf{H''}}$ that contains the image of
$\sigma$ under $U_{R'}\to U_{H''}$. The morphism
$\widetilde{\rm s}_{J\subset I}$ is determined as follows. It takes the closure of the stratum
corresponding to $\sigma\in
\Sigma^{\circ}_{\mathbf{Q},\mathbf{R}}$ into the closure (in
$Y^{tor}_{\mathbf{F}}$) of the stratum corresponding to the
rational polyhedral cone $\sigma''\in
\Sigma^{\circ}_{\mathbf{F},\mathbf{H}}$ that is open in
$\overline{\sigma'} \cap U_{H'}$.

Clearly, $\widetilde{\rm s}_{J\subset I}$ is equivariant for the action of the groupoid $\mathscr{P}_{\Gamma}(J)$;
the action on the domain being the restriction along
the functor $\mathbf{t}_{J\subset I}$ of the action of
$\mathscr{P}_{\Gamma}(I)$. This shows that
$\widetilde{T}^{tor}(J\subset I)$ is a morphism of diagrams.
Also, $\widetilde{\rm s}_{J\subset I}$ and ${\rm s}_{J\subset I}$ are clearly compatible.
Finally, let $\emptyset \neq K \subset J \subset I$. From the construction and the corresponding property for ``${\rm s}$'', we can show that $\widetilde{\rm s}_{K\subset I}=\widetilde{\rm s}_{K\subset J}\circ
\widetilde{\rm s}_{J\subset I}$. (We leave the details of this to the reader.) It follows that $\widetilde{T}^{tor}(K\subset I)=\widetilde{T}^{tor}(K\subset J)\circ \widetilde{T}^{tor}(J\subset I)$; this finishes the proof of the proposition.
\end{proof}


\subsubsection{The diagram of schemes $\mathcal{V}^{tor}$}
\label{subsub:V}
For $(I_0,I_1)\in \mathcal{P}_2([\![1,r]\!])$, let $J=[\![0,r]\!]-
I_0$, and $\{0\}\bigsqcup I_1=\{i_0<\dots< i_s\}$. We define a
diagram of schemes $\mathcal{V}^{tor}(I_0,I_1)$ as follows. We
recursively construct diagrams of schemes $\mathcal{V}^{
tor}_1(I_0,I_1), \dots, \mathcal{V}^{tor}_{s+1}(I_0,I_1)$ and
morphisms $v_j(I_0,I_1):\mathcal{V}^{tor}_j(I_0,I_1)\to
\widetilde{T}^{tor} (J\cap [\![i_{j-1},i_j]\!])$ ($i_{s+1}$ is
taken to be $r$), and then set
$\mathcal{V}^{tor}(I_0,I_1)=\mathcal{V}^{tor}_{s+1}(I_0,I_1)$ and
$v(I_0,I_1)=v_{s+1}(I_0,I_1)$.

We start by taking $\mathcal{V}^{tor}_1(I_0,I_1)=\widetilde{T}^{tor}(J\cap
[\![i_0,i_1]\!])$ and $v_1(I_0,I_1)$ the identity mapping.
Assume that $\mathcal{V}_j^{tor}(I_0,I_1)$ and $v_j(I_0,I_1)$ have been
defined for some $j\leq s$. The composition
\begin{equation}\label{eq:V-to-Y}
\mathcal{V}^{tor}_j(I_0,I_1) \to \widetilde{T}^{tor}(J\cap
[\![i_{j-1},i_j]\!]) \to Y^{tor}_{i_j}
\end{equation}
makes $\mathcal{V}_j^{tor}(I_0,I_1)$ into a diagram of
$Y^{tor}_{i_j}$-schemes. In particular, we may consider the
diagram of $Y^{tor}_{i_j}$-schemes
$\pi_0(\mathcal{V}^{tor}_j(I_0,I_1)/Y^{tor}_{i_j})$, obtained from
$\mathcal{V}_j^{tor}(I_0,I_1)$ by taking objectwise the Stein
factorization\footnote{Here we use the notion of a Stein
factorization in a broad sense. Given a morphism of schemes $a:P
\to S$, we may consider the $\mathcal{O}_S$-algebra $\mathcal{A}$
of integral elements in $a_*\mathcal{O}_P$. When this algebra is coherent (which is the case here),
$\Spec(\mathcal{A})$ is a finite $S$-scheme which we call the
Stein factorization of $a$.} of the projection to $Y^{tor}_{i_j}$.
We then define
\begin{equation}\label{Vtor-iteration}
\mathcal{V}^{tor}_{j+1}(I_0,I_1)=\pi_0(\mathcal{V}^{tor}_j(I_0,I_1)/Y^{tor}_{
i_j})\times_{Y^{tor}_{i_j}}\widetilde{T}^{tor}(J\cap [\![i_j,i_{j+1}]\!])
\end{equation}
and take $v_{j+1}(I_0,I_1)$ to be the projection to the second
factor. By construction, we obtain a morphism of diagrams
$v(I_0,I_1):\mathcal{V}^{tor}(I_0,I_1)\to
\widetilde{T}^{tor}(\varsigma_r( I_0,I_1))$.
Adapting the argument in the proof of
Proposition \ref{prop:basic-property-diag-cal-Y-gen}, one can see
that the assignment $(I_0,I_1)\rightsquigarrow
\mathcal{V}^{tor}(I_0,I_1)$ extends in a canonical way to a
functor $\mathcal{V}^{tor}$ from $\mathcal{P}_2([\![1,r]\!])$ to
$\Dia(\Sch/\C)$. Moreover, the $v(I_0,I_1)$'s give a morphism in
$\Dia(\Dia( \Sch/\C))$:
$$(v,\varsigma_r):\xymatrix@C=1.7pc{(\mathcal{V}^{tor},\mathcal{P}_2([\![1,r]\!]))
\ar[r] &  (\widetilde{T}^{tor},\mathcal{P}^*([\![0,r]\!])^{\rm
op}).}$$
Taking compositions with $\widetilde{T}^{tor}\to
T^{tor}$ and $\widetilde{T}^{tor}\to
\overline{X}^{bb}$ yields morphisms
$$(w,\varsigma_r):
\xymatrix@C=1.7pc{(\mathcal{V}^{tor},\mathcal{P}_2([\![1,r]\!])) \ar[r] &  (T^{tor},
\mathcal{P}^*([\![0,r]\!])^{\rm op})}\quad
\text{and}\quad \Xi:\mathcal{V}^{tor} \!\xymatrix@C=1.7pc{\ar[r] &}\! \overline{X}^{bb}.$$

\begin{proposition}
\label{prop:comparison-cal-Y-and-cal-V}
With $\beta$ as in
{\em\S\ref{par:mot-beta-x-s}:}

\begin{itemize}

\item [(a)] There are canonical isomorphisms of commutative unitary algebras
$$\EE_{\overline{X}^{bb}}\simeq
\Xi_*(w,\varsigma_r)^* \beta_{\overline{X}^{bb}} \qquad \text{and}
\qquad \An^*(\EE_{\overline{X}^{bb}})\simeq
\Xi^{an}_*(w^{an},\varsigma_r)^* \beta^{an}_{\overline{X}^{bb}}.$$

\item [(b)] Moreover, the following diagram
$$\xymatrix@C=1.5pc@R=1.5pc{g^*(\EE_{\overline{X}^{bb}}) \ar[d]_-{\sim} \ar[rr] & & \EE_{
\overline{X'}^{bb}} \ar[d]^-{\sim} \\
g^*\Xi_* (w,\varsigma_r)^* \beta_{\overline{X}^{bb}} \ar[r]  & \Xi'_*(w',
\varsigma_r)^* g^*\beta_{\overline{X}^{bb}} \ar[r] & \Xi'_* (w',\varsigma_r)^*
\beta_{\overline{X'}^{bb}}\!}$$
is commutative, and likewise for the corresponding diagram in the analytic
context.
\end{itemize}
\end{proposition}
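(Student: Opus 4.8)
The plan is to deduce the proposition from the already-proven computation of $\EE_{\overline{X}^{bb}}$ via the diagram $\mathcal{Y}^{tor}$ (i.e.\ from Theorem \ref{thm:final-form-for-applic-main-thm} and Corollary \ref{cor:not-used-for-main-thm} applied to the stratified scheme $\overline{X}^{bb}$), by showing that the diagram $\mathcal{V}^{tor}$ is merely a different ``resolution'' producing the same cohomological direct image over $\overline{X}^{bb}$. Concretely, I would first establish the isomorphism of commutative unitary algebras $\EE_{\overline{X}^{bb}}\simeq \Xi_*\,\un_{\mathcal{V}^{tor}}$ (the ``$\un$-version''), and then upgrade it to the stated $\beta$-version exactly as in Part B of the proof of Corollary \ref{cor:not-used-for-main-thm}: both $\Xi_*\,\un_{\mathcal{V}^{tor}}$ and $\Xi_*(w,\varsigma_r)^*\beta_{\overline{X}^{bb}}$ are Artin motives (the former by the $\un$-version just proven, hence automatically Artin; the latter because it is isomorphic to it), so it suffices to check that applying $\omega^0$ to the map induced by the unit $\un_{T^{tor}}\to\beta_{\overline{X}^{bb}}$ is invertible. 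Pulling back along $\Xi$ and using Proposition \ref{prop:additional-prop-omega-0-x}~(ii)--(iii), this reduces to the invertibility of $\un_{\widetilde{T}^{tor}}\to\omega^0_{\widetilde{T}^{tor}}\big(u^*\beta_{\overline{X}^{bb}}\big)$, which follows from Lemmas \ref{lemma:omega-0-restriction-direct-image-sncd} and \ref{lemma:applic-of-comput-direct-im-sncd-I} once one observes that the restriction of $u^*\beta_{\overline{X}^{bb}}$ to each $\widetilde{T}^{tor}(I)$ is of the form $\widetilde{s}_I^*\,t_{i_0*}\un$, a direct image along the complement of a simple normal crossing divisor.

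\textbf{The $\un$-version.} By Corollary \ref{cor:not-used-for-main-thm} applied to $\overline{X}^{bb}$ we have $\EE_{\overline{X}^{bb}}\simeq \Upsilon^{tor}_*\,\un_{\mathcal{Y}^{tor}}$, so it remains to produce a canonical isomorphism $\Upsilon^{tor}_*\,\un_{\mathcal{Y}^{tor}}\simeq \Xi_*\,\un_{\mathcal{V}^{tor}}$. I would build a common refinement $\widetilde{\mathcal{Y}}^{tor}$ of the two diagrams, namely the one obtained by running the same recursive ``Stein-factorization'' recipe of \S\ref{subsub:Y}/\S\ref{subsub:V} starting from the closed-cover diagrams $(\widetilde{\mathcal{T}}^{tor}(I),\widetilde{A}(I))$ of the covering schemes $\widetilde{T}^{tor}(I)$, carrying the $\mathscr{P}_\Gamma(I)$-actions. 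There are then two natural morphisms of diagrams of schemes: one $\widetilde{\mathcal{Y}}^{tor}\to\mathcal{V}^{tor}$, forgetting the $\widetilde{A}(I)$-subdivision (i.e.\ applying the closed-cover morphism $(\widetilde{\mathcal{T}}^{tor}(I),\widetilde{A}(I))\to\widetilde{T}^{tor}(I)$ objectwise over $\mathscr{P}_\Gamma(I)$), and one $\widetilde{\mathcal{Y}}^{tor}\to\mathcal{Y}^{tor}$, given by the quotient $(\widetilde{\mathcal{T}}^{tor}(I),\widetilde{A}(I))/\mathscr{P}_\Gamma(I)\simeq(\mathcal{T}^{tor}(I),A(I))$ (cf.\ \eqref{eq:rajoute-quotient-tilde-T-tor=T-tor} and Proposition \ref{prop:diagram-schemes-I-P-Gamma-tilde-T}), both compatible with the projections to $\overline{X}^{bb}$. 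For the first morphism, direct image of $\un$ stays invertible objectwise by Lemma \ref{lemma:form-locality-closed-cover}; for the second, it stays invertible because $\Gamma$ is neat, so that over each connected component $T^{tor}(\mathbf{Q},\mathbf{R})$ the relevant homotopy limit is a homotopy limit over a one-object groupoid with a \emph{finite} automorphism group $\Gamma(\mathbf{E}_{\mathbf{Q},\mathbf{R}}/\mathbf{K}_{\mathbf{Q},\mathbf{R}})$, which with $\Q$-coefficients computes ordinary invariants and hence recovers $\un$ by \cite[Lem.~2.1.165]{ayoub-these-I} (the same mechanism as in Lemma \ref{lem:Phi-invariant-EE-X}).

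\textbf{Propagation through the recursion, and the main obstacle.} The content of the previous paragraph is clean at the level of the single schemes $\widetilde{T}^{tor}(I)$, $T^{tor}(I)$; the real work is showing that the property ``the cohomological direct image of $\un$ to $\overline{X}^{bb}$ is unchanged'' is preserved by the two operations used to build $\mathcal{Y}^{tor}$ and $\mathcal{V}^{tor}$ from the base diagrams, namely fibre product over $Y^{tor}_{i_j}$ and the objectwise Stein factorization $\pi_0(-/Y^{tor}_{i_j})$. I would handle this by induction on the $\Q$-rank $r$: the fibre-product step is controlled by the base-change statement of Proposition \ref{prop-most-gen-proj-base-change}, while the Stein-factorization step is controlled by Proposition \ref{prop:main-computation-omega0-pi-0} (which identifies $\omega^0$ of the direct image of $\un$ with the direct image of $\un$ along $\pi_0$) together with Proposition \ref{prop:additional-prop-omega-0-x}~(iii); here one uses that all the schemes in play have only quotient singularities and are dominated by the smooth covers $Z^{tor}_i$ of \textbf{D3)}, so that Proposition \ref{prop:main-computation-omega0-pi-0} applies objectwise as in Proposition \ref{prop:basic-property-diag-cal-Y-gen}. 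This step — verifying that ``resolution-independence of the $\un$-direct-image'' passes through the iterated Stein factorizations compatibly with the two morphisms out of $\widetilde{\mathcal{Y}}^{tor}$ — is the main obstacle; it is essentially a bookkeeping argument parallel to the proofs of Propositions \ref{prop:comparaison-theta-et-theta-prime}, \ref{prop:comparison-theta-prime-and-h-theta-double} and \ref{prop:comparison-theta-double-et-beta-prime}, and I expect to organize it the same way, checking invertibility after applying $i^*$ for each index and invoking Corollary \ref{cor:very-useful-texnical-cor}.

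\textbf{Functoriality and the analytic statement.} Part (b) follows formally: the diagrams $\widetilde{T}^{tor}$, $\mathcal{V}^{tor}$, and the whole chain of comparison isomorphisms are natural in $g$ by Proposition \ref{prop:diagram-schemes-I-P-Gamma-tilde-T}~(b) (and the $g$-functoriality of $\mathcal{Y}^{tor}$, $\beta$ from Theorem \ref{thm:final-form-for-applic-main-thm}~(b)), so the square commutes by transport of the commuting squares already established. For the analytic version $\An^*(\EE_{\overline{X}^{bb}})\simeq\Xi^{an}_*(w^{an},\varsigma_r)^*\beta^{an}_{\overline{X}^{bb}}$, I would apply $\An^*$ to the algebraic statement, use Corollary \ref{cor:realization-EE-X-gen} for the $\mathcal{Y}^{tor}$-side, and note that $\An^*$ commutes with $\Xi_*$ on the compact motive $(w,\varsigma_r)^*\beta_{\overline{X}^{bb}}$: factoring $\Xi$ into its geometric and categorical parts, the geometric part is handled by \cite{realiz-oper}, and for the categorical part it suffices that the indexing category of $\mathcal{V}^{tor}$ is universal for homotopy limits. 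This holds by Lemma \ref{lemma:permanence-universal-for-holim}, Lemma \ref{texnical-commut-holim} and Proposition \ref{prop:limits-with-respect-ordered-set}, since that indexing category is built (via $\textstyle\int$ and finite products) from the ordered sets $\mathcal{P}_2([\![1,r]\!])$ and the groupoids $\mathscr{P}_\Gamma(I)$, each $\mathscr{P}_\Gamma(I)$ being a finite disjoint union of one-object groupoids with finite automorphism groups, which are universal for homotopy limits with $\Q$-coefficients (a morphism of triangulated derivators commutes with the idempotent $\tfrac1{|G|}\sum_{g\in G}g$ computing the homotopy limit).
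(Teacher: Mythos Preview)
Your overall architecture matches the paper's: introduce the intermediate diagram $\widetilde{\mathcal{Y}}^{tor}$ built from $\widetilde{\mathcal{T}}^{tor}$, with morphisms $\rho_1:\widetilde{\mathcal{Y}}^{tor}\to\mathcal{V}^{tor}$ and $\rho_2:\widetilde{\mathcal{Y}}^{tor}\to\mathcal{Y}^{tor}$, and show that $\id\to\rho_{i*}\rho_i^*$ is invertible. But the mechanisms you invoke for both morphisms are wrong.

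\textbf{The fatal error is the finiteness claim.} The endomorphism groups
\[
{\rm end}_{\mathscr{P}_\Gamma(I)}(\mathbf{Q},\mathbf{R})=\Gamma(\mathbf{E_{Q,R}}/\mathbf{K_{Q,R}})\simeq [\Gamma(\mathbf{M}_{\mathbf{Q},h})](\widetilde{\mathbf{M}}_{\mathbf{R'},\ell}\,|\,\mathbf{R})
\]
(cf.\ \eqref{eq:groupoid-acts-B-Q-R}) are arithmetic subgroups of $\widetilde{M}_{R',\ell}$ and are generally \emph{infinite}. So your argument for $\rho_2$---``a one-object groupoid with a finite automorphism group\dots with $\Q$-coefficients computes ordinary invariants''---does not work, and neither does your argument that the indexing category of $\mathcal{V}^{tor}$ is universal for homotopy limits. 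The paper's route for $\rho_2$ is entirely different: one shows inductively that the relevant groupoid acts \emph{freely} on the set of connected components of the schemes involved (Case~2 of the proof), and then uses a purely categorical equivalence argument (the paper's Lemma~\ref{lemma:texnic-comparison-cal-Y-and-cal-V-2}) to conclude $\id\to\pi_*\pi^*$ is invertible. No averaging projector appears.

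\textbf{A secondary gap for $\rho_1$.} Lemma~\ref{lemma:form-locality-closed-cover} does handle the base morphism $(\widetilde{\mathcal{T}}^{tor}(\mathbf{Q},\mathbf{R}),\widetilde{A}(\mathbf{Q},\mathbf{R}))\to \widetilde{T}^{tor}(\mathbf{Q},\mathbf{R})$, and the paper cites it there. But after the Stein factorizations, the morphism $\varrho_t$ between $\pi_0(\widetilde{\mathcal{Y}}_t^{tor}/Y^{tor}_{i_t})$ and $\pi_0(\mathcal{V}_t^{tor}/Y^{tor}_{i_t})$ is no longer of that form; it is, objectwise over a connected component of the target, a constant diagram indexed by $\prod_{j}\widetilde{A}(J\cap[\![i_j,i_{j+1}]\!])$. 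Showing $\id\to\varrho_{t*}\varrho_t^*$ is invertible then needs the contractibility of the rational cone complexes $\overline{C}_R$ (the paper's Lemma~\ref{lemma:texnic-comparison-cal-Y-and-cal-V-1}), a genuinely geometric input specific to locally symmetric varieties. Your proposed route through Propositions~\ref{prop:main-computation-omega0-pi-0} and~\ref{prop:additional-prop-omega-0-x}(iii) concerns $\omega^0$ of cohomological direct images and does not address invertibility of this unit morphism. Finally, note that the paper works directly with $\beta$ throughout rather than first proving a $\un$-version; once $\id\to\rho_{i*}\rho_i^*$ is established, both versions follow simultaneously, so your detour via Corollary~\ref{cor:not-used-for-main-thm} is unnecessary.
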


\begin{proof}
We prove only the motivic statements. The proof in the analytic
context goes exactly the same way.

We need to introduce another diagram of schemes
$\widetilde{\mathcal{Y}}^{tor}$, one that interpolates between
$\mathcal{Y}^{tor}$ and $\mathcal{V}^{tor}$. First, we bring in the diagram $\widetilde{\mathcal{T}}^{tor}$ introduced in the proof of
Proposition \ref{prop:diagram-schemes-I-P-Gamma-tilde-T}.
Recall that for $\emptyset \neq I\subset [\![0,r]\!]$,
we have a diagram
$\widetilde{\mathcal{T}}^{tor}(I)$ sending
$\alpha \in \widetilde{A}(I)$
to $\widetilde{\mathcal{T}}^{tor}(I,\alpha)$,
a closed, irreducible and
$\widetilde{\mathcal{R}}(I)$-constructible
subset of $\widetilde{T}^{tor}(I)$.
For $(\mathbf{Q},\mathbf{R})\in \mathscr{P}_{\Gamma}(I)$, we denote
$\widetilde{A}(\mathbf{Q},\mathbf{R})\subset \widetilde{A}(I)$
the subset of $\alpha\in \widetilde{A}(I)$ such that $\widetilde{\mathcal{T}}^{tor}(I,\alpha)\subset \widetilde{T}^{tor}(\mathbf{Q},\mathbf{R})$. For such $\alpha$, we write
$\widetilde{\mathcal{T}}^{tor}((\mathbf{Q},\mathbf{R}),\alpha)$
for $\widetilde{\mathcal{T}}^{tor}(I,\alpha)$.
In this way, we may consider $\widetilde{\mathcal{T}}^{tor}(I)$ as an object of $\Dia(\Dia(\Sch/\C))$ sending
$(\mathbf{Q},\mathbf{R})\in \mathscr{P}_{\Gamma}(I)$ to
the diagram $(\widetilde{\mathcal{T}}^{tor}(\mathbf{Q},\mathbf{R}),\widetilde{A}(\mathbf{Q},\mathbf{R}))$. Moreover, this gives a functor from $\mathcal{P}^*([\![0,r]\!])^{\rm op}$ to
$\Dia(\Dia(\Sch/\C))$. As usual, passing to total diagrams, we may view $\widetilde{\mathcal{T}}^{tor}$ as an object of
$\Dia(\Sch/\C)$.

In the same way that $\widetilde{T}^{tor}$ is used in defining
$\mathcal{V}^{tor}$, and $\mathcal{T}^{tor}$ was used in defining
$\mathcal{Y}^{tor}$ (in \S\ref{subsub:Y}), we can use
$\widetilde{\mathcal{T}}^{tor}$ to define a diagram $\widetilde{
\mathcal{Y}}^{tor}$. Specifically, for $(I_0,I_1)\in
\mathcal{P}_2([\![1,r]\!])$, let $J=[\! [0,r]\!]- I_0$, and
$\{0\}\bigsqcup I_1=\{i_0<\dots< i_s\}$ as before. There is a
sequence of diagrams $\widetilde{\mathcal{Y}}^{tor}_1(I_0,I_1),
\dots, \widetilde{\mathcal{Y}}^{tor}_{s+1}(I_0,I_1)$.
It is defined inductively by the formula
\begin{equation}\label{eq:wideYtor-iteration}
\widetilde{\mathcal{Y}}^{tor}_{j+1}(I_0,I_1)=\pi_0(\widetilde{
\mathcal{Y}}^{tor}_j(I_0,I_1)/Y^{tor}_{i_j})\times_{Y^{tor}_{i_j}}
\widetilde{\mathcal{T}}^{tor}(J\cap [\![i_j,i_{j+1}]\!])
\end{equation}
(where
$i_{s+1}$ is taken to be $r$)
and the initial condition
$\widetilde{\mathcal{Y}}^{tor}_1(I_0,I_1)=\widetilde{\mathcal{T}}^{tor}
(J\cap [\![i_0,i_1]\!])$.
We then set
$\widetilde{\mathcal{Y}}^{tor}(I_0,I_1)=
\widetilde{\mathcal{Y}}^{tor}_{s+1}(I_0,I_1)$.
There is a morphism of diagrams
$\widetilde{p}(I_0,I_1):\widetilde{\mathcal{Y}}^{tor}(I_0,I_1)\to
\widetilde{ \mathcal{T}}^{tor}(\varsigma_r(I_0,I_1))$.
Adapting again the
argument in the proof of Proposition
\ref{prop:basic-property-diag-cal-Y-gen}, one can show that the
assignment $(I_0,I_1)\rightsquigarrow
\widetilde{\mathcal{Y}}^{tor}(I_0,I_1)$ extends naturally to a
functor $\widetilde{\mathcal{Y}}^{tor}$ from
$\mathcal{P}_2([\![1,r]\!])$ to $\Dia( \Sch/\C)$ and that we have
a morphism of diagrams $\widetilde{p}:\widetilde{
\mathcal{Y}}^{tor} \to \widetilde{\mathcal{T}}^{tor}\circ
\varsigma_r$.

The morphisms from $\widetilde{\mathcal{T}}^{tor}$ to $\mathcal{T}^{tor}$ and $\widetilde{T}^{tor}$ induce canonical morphisms form $\widetilde{\mathcal{Y}}^{tor}$ to
$\mathcal{Y}^{tor}$ and $\mathcal{V}^{tor}$, yielding the
following commutative diagram in $\Dia(\Dia(\Sch/\C))$:
\begin{equation}
\label{eq:diadia}
\xymatrix{(\widetilde{\mathcal{Y}}^{tor},\mathcal{P}_2([\![1,r]\!]))
\ar[r]^-{\rho_1} \ar[d]_-{\rho_2} &
(\mathcal{V}^{tor},\mathcal{P}_2([\![1,r]\!]))
\ar[d]^-{(w,\varsigma_r)} \ar@/^1.3pc/[ddr]^-{\Xi} & \\
(\mathcal{Y}^{tor},\mathcal{P}_2([\![1,r]\!]))
\ar[r]_-{(h,\varsigma_r)} \ar@/_1.2pc/[drr]_-{\Upsilon} &
(T^{tor},\mathcal{P}^*([\![0,r]\!])^{\rm op}) \ar[dr]^-e & \\
& & \overline{X}^{bb}.\!}
\end{equation}
Using Theorem \ref{thm:final-form-for-applic-main-thm} (and
Corollary \ref{cor:realization-EE-X-gen} for the analytic
version), it suffices to check that the morphism $\id \to
\rho_{i*} \rho_i^*$ is invertible for $i\in \{1,2\}$.
Indeed, we then get a chain of isomorphisms
$$\Xi_*(w,\varsigma_r)^*\simeq e_*(w,\varsigma_r)_*(w,\varsigma_r)^*
\simeq e_*(w,\varsigma_r)_*\rho_{1*}\rho_1^*(w,\varsigma_r)^*$$
$$\simeq e_*(h,\varsigma_r)_*\rho_{2*}\rho_2^*(h,\varsigma_r)^*
\simeq e_*(h,\varsigma_r)_*(h,\varsigma_r)^*\simeq \Upsilon_*(h,\varsigma_r)^*.$$
We deal with the morphisms
$\id \to
\rho_{1*} \rho_1^*$ and $\id \to
\rho_{2*} \rho_2^*$
separately.

\smallskip

\noindent \underbar{Case 1, part A}:
Using Corollary
\ref{cor:very-useful-texnical-cor}, we need to verify that $\id
\to \rho_{1}(I_0,I_1)_*\rho_1(I_0,I_1)^*$ is invertible for every
$(I_0,I_1)\in \mathcal{P}_2([\![1,r]\!])$. As usual, we let
$J=[\![0,r]\!]- I_0$ and $\{0\}\bigsqcup I_1=\{i_0<\dots< i_s\}$.
For $1\leq t \leq s$, we let
$Z^{(t)}=\pi_0(\mathcal{V}^{tor}_t(I_0,I_1)/Y_{i_t}^{tor})$
and
$\mathcal{Z}^{(t)}=\pi_0(\widetilde{\mathcal{Y}}^{tor}_t(I_0,I_1)/Y_{i_t}^{tor})$
(compare with \eqref{Vtor-iteration} and
\eqref{eq:wideYtor-iteration}).
We denote
$\varrho_t:\mathcal{Z}^{(t)}\to Z^{(t)}$ the natural morphism.
In the next part, we will show that the morphisms $\id \to \varrho_{t*}\varrho_t^*$
are universally invertible, i.e., the same is true for any base-change of $\varrho_t$ by morphisms of diagrams of schemes.
The case $t=s$ is used to prove our claim as follows.
There is a commutative diagram
$$\xymatrix@C=1.5pc@R=1.5pc{\widetilde{\mathcal{Y}}^{tor}(I_0,I_1) \ar[rrr] \ar[dd] \ar@/_/[drr]^-{\rho_1(I_0,I_1)} & & & \mathcal{Z}^{(s)} \ar[d]^-{\varrho_s} \\
& & \mathcal{V}^{tor}(I_0,I_1)  \ar[r] \ar[d]
& Z^{(s)} \ar[d] \\
\widetilde{\mathcal{T}}^{tor}(J\cap [\![i_s,r]\!])
\ar[rr]^-{\widetilde{q}(J\cap [\![i_s,r]\!])} & &
\widetilde{T}^{tor}(J\cap [\![i_s,r]\!]) \ar[r] &
Y^{tor}_{i_s},}$$
in which the two rectangular squares are
cartesian. It is rather straightforward that the latter can be
completed, to a diagram of the form
$$\xymatrix@C=1pc@R=1pc{\bullet\ar[r] \ar[d] \ar@/_.1pc/[rd] & \bullet\ar[r] \ar[d]& \bullet
\ar[d] \\
\bullet \ar[r] \ar[d]& \bullet \ar[r] \ar[d]
& \bullet \ar[d] \\
\bullet \ar[r] & \bullet \ar[r] & \bullet}$$
in which all the
rectangular squares are cartesian.
It follows that
$\rho_1(I_0,I_1)$
can be written as a composition of base-changes of $\varrho_s$ and
$\widetilde{q}(J\cap [\![i_s,r]\!])$ (in fact, in two ways).
Using that $\id \to \varrho_{s*}\varrho_s^*$
is universally invertible, we are reduced to showing that
$\id \to \widetilde{q}(J\cap [\![i_s,r]\!])_*
\widetilde{q}(J\cap [\![i_s,r]\!])^*$ is universally invertible.
By Corollary
\ref{cor:very-useful-texnical-cor}, we need to show for
$(\mathbf{Q_s},\mathbf{R_s})\in \mathscr{P}_{\Gamma}(J\cap [\![i_s,r]\!])$
that
$\id\to
\widetilde{q}(\mathbf{Q_s},\mathbf{R_s})_*\widetilde{q}(\mathbf{Q_s},
\mathbf{R_s})^*$ is invertible with
$\widetilde{q}(\mathbf{Q_s},\mathbf{R_s}):
(\widetilde{\mathcal{T}}^{tor}(\mathbf{Q_s},\mathbf{R_s}),\widetilde{A}(\mathbf{Q_s},\mathbf{R_s}))\to \widetilde{T}^{tor}(\mathbf{Q_s},\mathbf{R_s})$ the natural morphism. The proof of
Lemma \ref{lemma:form-locality-closed-cover}
can be easily extended to
show this.

\smallskip

\noindent \underbar{Case 1, Part B}:
Here we show that
$\id \to \varrho_{t*}\varrho_t^*$
is universally invertible (with $1\leq t \leq s$). Using
Corollary
\ref{cor:very-useful-texnical-cor}, we only need to check that
$$\id \to \varrho_t((\mathbf{Q_j},\mathbf{R_j})_{0\leq j\leq t-1})_*
\varrho_t((\mathbf{Q_j},\mathbf{R_j})_{0\leq j\leq t-1})^*$$
is universally invertible for all $(\mathbf{Q_j},\mathbf{R_j})_{0\leq j \leq t-1}\in \prod_{j=0}^{t-1}\mathscr{P}_{\Gamma}(J\cap [\![i_j,i_{j+1}]\!])$, the indexing category of $Z^{(t)}$.
Recursively, one sees that, objectwise, $\varrho_t((\mathbf{Q_j},\mathbf{R_j})_{0\leq j\leq t-1})$ induces an isomorphism from each connected
component of the domain to a connected component of the target.
Indeed, given a stratum $S$ of
$\mathcal{B}^c_{(\mathbf{Q_j,R_j}),\Sigma_{(\mathbf{Q_j})}}$, the
Stein factorizations of the projections of $S$ and $\mathcal{B}^c_{(\mathbf{Q_j,R_j}),\Sigma_{(\mathbf{Q_j})}}$ to $X^{bb}_{\mathbf{Q_{j+1}}}$ are the same, and coincide with the Stein factorization
of $\mathcal{A}_{(\mathbf{Q_j,R_j})} \to
X^{bb}_{\mathbf{Q_{j+1}}}$. A
similar statement holds if we replace $X^{bb}_{\mathbf{Q_j}}$ by
$\widetilde{X}^{bb}_{\mathbf{Q_j}}$, or by any other \'etale cover
of $X^{bb}_{\mathbf{Q_j}}$ dominated by
$\widetilde{X}^{bb}_{\mathbf{Q_j}}$.
Moreover, given a connected component $E$ of
$Z^{(t)}((\mathbf{Q_j},\mathbf{R_j})_{0\leq j\leq t-1})$,
$\varrho_t^{-1}(E)$ is canonically isomorphic to the constant diagram
${\big (}E,\prod_{j=0}^{t-1} \widetilde{A}(J\cap [\![i_j,i_{j+1}]\!]){\big )}$.
This is also proven inductively, and we leave the details to the reader. Now, the result follows from Lemma
\ref{lemma:texnic-comparison-cal-Y-and-cal-V-1} below.

\smallskip

\noindent
\underbar{Case 2}:
Here we show that
$\id \to \rho_{2*} \rho_2^*$ is invertible.
Using Corollary \ref{cor:very-useful-texnical-cor}, we are reduced to checking
that
$\id \to \rho_{2}(\dagger)_*\rho_2(\dagger)^*$ is
invertible for every object $\dagger$ of the indexing category of
$\mathcal{Y}^{tor}$.
Thus, we fix $(I_0,I_1)\in \mathcal{P}_2([\![1,r]\!])$
and let $J=[\![0,r]\!]- I_0$ and
$\{0\}\bigsqcup I_1=\{i_0<\dots< i_s\}$.
Let $(\alpha_j)_{0\leq j \leq s}$ be an object of the indexing category of
$\mathcal{Y}^{tor}(I_0,I_1)$,
that is of $\prod_{j=0}^s
A(J\cap [\![i_j,i_{j+1}]\!])$
(with $i_{s+1}=r$).
We need to show that
\begin{equation}
\label{eq-prop:comparison-cal-Y-and-cal-V-071}
\id\to \rho_2((\alpha_j)_{0\leq j\leq s})_*
\rho_2((\alpha_j)_{0\leq j\leq s})^*
\end{equation}
is invertible.

We show by induction on $1\leq t \leq s+1$ that the groupoid
$\prod_{j=0}^{t-1}\mathscr{P}_{\Gamma}(J\cap [\![i_{j},i_{j+1}]\!])$
(with $i_{s+1}=r$) acts freely on the set of connected components
of $\widetilde{\mathcal{Y}}^{tor}_t((\alpha_j)_{0\leq j \leq t-1})$,
and that the natural morphism
$\varrho'_{t}:\widetilde{\mathcal{Y}}^{tor}_t((\alpha_j)_{0\leq j \leq
t-1})\to \mathcal{Y}^{tor}_t((\alpha_j)_{0\leq j \leq t-1})$ induces an
isomorphism
$$\left(\prod_{j=0}^{t-1}\mathscr{P}_{\Gamma}(J\cap [\![i_{j},i_{j+1}]\!])\right)\backslash \;
\widetilde{\mathcal{Y}}^{tor}_t((\alpha_j)_{0\leq j \leq t-1})\simeq
\mathcal{Y}^{tor}_t((\alpha_j)_{0\leq j \leq t-1}).$$
By Lemma \ref{lemma:texnic-comparison-cal-Y-and-cal-V-2} below, this would imply that the morphisms
$\id \to \varrho'_{t*}\varrho'^*_t$, so in particular
\eqref{eq-prop:comparison-cal-Y-and-cal-V-071}, are invertible.

For $t=1$,
note that
$\mathscr{P}_{\Gamma}(J\cap [\![i_0,i_1]\!])$ acts freely on
the set of connected components of
$\mathcal{T}^{tor}(J\cap [\![i_0,i_1]\!])$. Indeed, this set can be identified with
$\coprod_{(\mathbf{G},\mathbf{R})\in \mathscr{P}_{\Gamma}(J\cap [\![i_0,i_1]\!])} \Sigma^{\circ}_{\mathbf{G},\mathbf{R}}$.
Using \eqref{eq:rajoute-quotient-tilde-T-tor=T-tor},
we see that our claim is true for $t=1$.

Now assume that our claim is true for some $1\leq t \leq s$.
Fix a connected component $E$ of
$\mathcal{Y}^{tor}_t((\alpha_j)_{0\leq j \leq t-1})$.  Let $\mathbf{Q}$ be a
maximal or improper parabolic $\Q$-subgroup of $\mathbf{G}$
for which $E$ dominates $\overline{X}^{bb}_{\mathbf{Q}}$.
Then $\pi_0(E/Y^{tor}_{\mathbf{Q}})$ is isomorphic to the toroidal compactification
$\overline{({^{\diamond}\!X}^{bb}_{\mathbf{Q}})}^{tor}_{\Sigma_{(\mathbf{Q})}}$ of a locally symmetric variety ${^{\diamond}\!X}^{bb}_{\mathbf{Q}}$ which is a finite \'etale cover of $X^{bb}_{\mathbf{Q}}$ dominated by $\widetilde{X}^{bb}_{\mathbf{Q}}$.
Denote
$F=\mathcal{T}^{tor}(J\cap [\![i_t,i_{t+1}]\!], \alpha_t)$
and $\widetilde{F}$ its inverse image in $\widetilde{\mathcal{T}}^{tor}(J\cap [\![i_t,i_{t+1}]\!])$. Using induction, we are reduced to showing that
$\mathscr{P}_{\Gamma}(J\cap [\![i_t,i_{t+1}]\!])$
acts freely on the set of connected components of
$\pi_0(E/Y^{tor}_{\mathbf{Q}})\times_{Y^{tor}_{\mathbf{Q}}} \widetilde{F}$ and that we have an isomorphism
$$\mathscr{P}_{\Gamma}(J\cap [\![i_t,i_{t+1}]\!])\backslash
\left(\pi_0(E/Y^{tor}_{\mathbf{Q}}) \times_{Y^{tor}_{\mathbf{Q}}}
\widetilde{F}\right) \simeq \pi_0(E/Y^{tor}_{\mathbf{Q}}) \times_{Y^{tor}_{\mathbf{Q}}} F.$$
In fact, these properties are already true for $\widetilde{F}$ and the projection $\widetilde{F} \to F$.
This is proved in the same way as for the case $t=1$.
\end{proof}

\begin{lemma}
\label{lemma:texnic-comparison-cal-Y-and-cal-V-1}
Let $\emptyset \neq I \subset [\![0,r]\!]$ and
$(\mathbf{Q},\mathbf{R})\in \mathscr{P}_{\Gamma}(I)$.
We denote by $\varrho$ the projection of
$\widetilde{A}(\mathbf{Q},\mathbf{R})$ to $\mathbf{e}$.
Let $S$ be a noetherian scheme and $M\in \DM(S)$. Then, the canonical
morphism $M \to \varrho_*\varrho^*M$
is invertible.

\end{lemma}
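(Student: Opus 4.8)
The claim is that for $(\mathbf{Q},\mathbf{R})\in\mathscr{P}_\Gamma(I)$ and $\varrho:\widetilde{A}(\mathbf{Q},\mathbf{R})\to\mathbf{e}$ the projection of the indexing category to the terminal category, the unit $M\to\varrho_*\varrho^*M$ is invertible for every noetherian scheme $S$ and every $M\in\DM(S)$. Since $\varrho^*M$ is the constant diagram with value $M$, this is a purely combinatorial statement about the category $\widetilde{A}(\mathbf{Q},\mathbf{R})$: I claim it has an \emph{initial} object, and then the result follows from \cite[Prop.~2.1.41]{ayoub-these-I} (used in exactly this way in the proofs of Lemma \ref{lemma:form-locality-closed-cover} and Proposition \ref{prop:limits-with-respect-ordered-set}), which gives $\varrho_*\simeq x^*$ where $x$ is the initial object, hence $\varrho_*\varrho^*M\simeq x^*\varrho^*M=M$ with the unit being the identity.

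\textbf{Identifying the initial object.} Recall from the proof of Proposition \ref{prop:diagram-schemes-I-P-Gamma-tilde-T} that $\widetilde{A}(\mathbf{Q},\mathbf{R})$ is the set of closed, irreducible, $\widetilde{\mathcal{R}}(I)$-constructible subsets of the connected component $\widetilde{T}^{tor}(\mathbf{Q},\mathbf{R})$ of $\widetilde{T}^{tor}(I)$, ordered by inclusion, and that $u_I$ restricts to an isomorphism $\widetilde{T}^{tor}(\mathbf{Q},\mathbf{R})\to T^{tor}(\mathbf{Q},\mathbf{R})$ onto a connected component of $T^{tor}(I)$. The point of passing to the cover $\widetilde{T}^{tor}$ was precisely that its connected components are the closures $\mathcal{B}^c_{(\mathbf{Q},\mathbf{R}),\Sigma_{(\mathbf{Q})}}$, which by \eqref{corner-like-strata} (when $\Sigma$ is fine enough, as assumed) decompose as a disjoint union of corner-like strata $\mathcal{B}^\circ_{(\mathbf{Q},\mathbf{R}'),\Sigma}$ indexed by parabolics $\mathbf{R}'$ between a minimal one and $\mathbf{R}$, with a unique \emph{smallest} such stratum — namely the one attached to $\mathbf{R}$ itself, which is $\mathcal{B}^\circ_{(\mathbf{Q},\mathbf{R}),\Sigma}$ and lies in the closure of every other stratum of $\mathcal{B}^c_{(\mathbf{Q},\mathbf{R}),\Sigma}$. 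So first I would check that $\widetilde{T}^{tor}(\mathbf{Q},\mathbf{R})$, with its stratification $\widetilde{\mathcal{R}}(I)$, has a unique closed stratum, equivalently a unique minimal element for $\preceq$; its closure $\bar E_0$ is then contained in the closure of every $\widetilde{\mathcal{R}}(I)$-stratum of $\widetilde{T}^{tor}(\mathbf{Q},\mathbf{R})$, hence (since $\widetilde{\mathcal{R}}(I)$-constructible subsets are unions of such closures) $\bar E_0$ is contained in every nonempty $\widetilde{\mathcal{R}}(I)$-constructible subset. As $\bar E_0$ is itself irreducible and closed, $\bar E_0\in\widetilde{A}(\mathbf{Q},\mathbf{R})$ and it is the minimum, hence an initial object of the category.

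\textbf{The main obstacle.} The delicate part is verifying that $\widetilde{T}^{tor}(\mathbf{Q},\mathbf{R})$ really has a \emph{single} minimal stratum. One must trace through the recursive construction of $T^{tor}(I)=\bigcap_{i\in I}\overline{e_{\min(I)}^{-1}(X^{bb}_i)}$ inside $Y^{tor}_{\min(I)}$ and the way $\widetilde{T}^{tor}(I)$ is glued from copies of $\mathcal{R}(I)$-stars, and confirm that on the cover the union-of-strata description \eqref{corner-like-strata} genuinely produces a unique deepest stratum on each connected component. This is where the hypothesis that $\Sigma$ is fine enough is essential (so that Propositions \ref{normalizers} and \ref{prop:intersection-closure-strata-toroidal-comp} apply and the intersection pattern of the $V(E)$'s is the clean one described before Proposition \ref{prop:diagram-schemes-I-P-Gamma-tilde-T}). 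Once uniqueness of the minimal stratum is established, the rest is immediate: the category $\widetilde{A}(\mathbf{Q},\mathbf{R})$ has an initial object, and \cite[Prop.~2.1.41]{ayoub-these-I} finishes the proof, exactly as in the final step of Lemma \ref{lemma:form-locality-closed-cover}.
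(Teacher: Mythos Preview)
Your approach hinges on $\widetilde{A}(\mathbf{Q},\mathbf{R})$ having an initial object, but this is false in general, and no fineness assumption on $\Sigma$ will rescue it. The ordered set $\widetilde{A}(\mathbf{Q},\mathbf{R})$ is in bijection with the set $\Sigma^{\circ}_{\mathbf{Q},\mathbf{R}}$ of rational polyhedral cones (this is stated in the proof of Proposition \ref{prop:diagram-schemes-I-P-Gamma-tilde-T}), and the order on closures is inverse to the face order on cones. For the $\mathbf{Q}$-rank of $\mathbf{G}$ at least $2$ the relevant self-adjoint cone $C_{R'}$ has dimension larger than $1$, so a simplicial polyhedral decomposition has many top-dimensional cones and many minimal ones; consequently $\Sigma^{\circ}_{\mathbf{Q},\mathbf{R}}$ has neither a unique maximal nor a unique minimal element. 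In particular, $\widetilde{T}^{tor}(\mathbf{Q},\mathbf{R})$ is not even of finite type: it is a Zariski-locally trivial cover of $T^{tor}(\mathbf{Q},\mathbf{R})$ with the infinite arithmetic group $\Gamma(\mathbf{E_{Q,R}}/\mathbf{K_{Q,R}})$ acting, and it has infinitely many $\widetilde{\mathcal{R}}(I)$-strata. Your reading of \eqref{corner-like-strata} conflates two different stratifications: the corner-like decomposition by parabolics $\mathbf{R}'\subseteq\mathbf{R}$ is much coarser than the $\widetilde{\mathcal{R}}(I)$-stratification by individual cones, and in any case $\mathcal{B}^{\circ}_{(\mathbf{Q},\mathbf{R})}$ is the dense \emph{open} piece of $\mathcal{B}^c_{(\mathbf{Q},\mathbf{R})}$, not a minimal closed one.

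The paper's argument is different in kind. It first reduces, via the internal $\underline{\mathrm{Hom}}$ adjunction, to showing $\varrho_{\sharp}\varrho^*\un_S\simeq\un_S$, i.e.\ a homotopy \emph{colimit} statement. This is then pushed through a functor $c:\mathbf{Ho}(\mathbf{\Delta}^{\rm op}\mathrm{Set})\to\DM(S)$ to the claim that the homotopy colimit of the constant point-valued diagram on $\widetilde{A}(\mathbf{Q},\mathbf{R})$ is contractible. One replaces $pt$ by the weakly equivalent diagram $L(\alpha)=\overline{\sigma_\alpha}$; this diagram is Reedy cofibrant, so its homotopy colimit is the ordinary colimit, namely $\overline{C}_R$ with the Satake topology, which is contractible because its interior is convex. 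The contractibility of the cone is the essential geometric input replacing the (unavailable) initial-object shortcut.
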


\begin{proof}
By the adjunction formula (cf.~\cite[Lem.~2.1.146]{ayoub-these-I}), we have natural isomorphisms
$$\underline{\rm Hom}(\varrho_{\sharp} \varrho^*\un_S, M)\simeq
\varrho_*\underline{\rm Hom}(\varrho^*\un_S,\varrho^*M)\simeq \varrho_*\varrho^*M$$
for all $M\in \DM(S)$.
Hence, it suffices to show that $\varrho_{\sharp}\varrho^*\un_S \simeq \un_S$. On the other hand, there is a canonical functor
$c:\mathbf{H} \to \DM(S)$, where $\mathbf{H}=\mathbf{Ho}(\mathbf{\Delta}^{\rm op}{\rm Set})$ is the homotopy category of simplicial sets. Given a simiplicial set $X_{\scriptscriptstyle{\bullet}}$, we may form the simplicial abelian group
$\Z X_{\scriptscriptstyle{\bullet}}$ given in degree $d\geq 0$ by the free $\Z$-module generated by the elements of $X_d$. Then $c$ takes
$X_{\scriptscriptstyle{\bullet}}$ to
the $T$-spectrum $\Sigma^{\infty}_T({\rm N}(\Z X))$
where ${\rm N}(\Z X)$ is the Moore complex associated to $\Z X_{\scriptscriptstyle{\bullet}}$ which we consider as a constant sheaf on $\Smooth/S$.
For instance, for the simplicial set $pt$ having one element in each degree, we have $c(pt) = \un_S$.
As the functor $c$ commutes with homotopy colimits,
it then suffices to show that
$\varrho_{\sharp}\varrho^* pt\simeq pt$.
Now, there is a Quillen equivalence between the model category ${\rm Top}$ of topological spaces and that of simplicial sets. In particular,
$\mathbf{H}\simeq \mathbf{Ho}({\rm Top})$,
and it suffices to show that $\varrho_{\sharp}\varrho^* pt\simeq pt$
in $\mathbf{Ho}({\rm Top})$. (Here, of course, $pt$ stands for the topological space with one element.)

We need to compute
the homotopy colimit in the category of topological spaces of the constant functor $pt:\widetilde{A}(\mathbf{Q},\mathbf{R}) \to {\rm Top}$. Recall the bijection between
$\widetilde{A}(\mathbf{Q},\mathbf{R})$ and $\Sigma^{\circ}_{\mathbf{Q},\mathbf{R}}$: it sends an element $\alpha\in
\widetilde{A}(\mathbf{Q},\mathbf{R})$ to the rational polyhedral cone
$\sigma\in \Sigma^{\circ}_{\mathbf{Q},\mathbf{R}}$ that corresponds to the stratum of $\mathcal{B}^{\circ}_{(\mathbf{Q},\mathbf{R}),\Sigma_{(\mathbf{Q})}}$ whose closure in $\widetilde{T}^{tor}(\mathbf{Q},\mathbf{R})$ is $\widetilde{\mathcal{T}}^{tor}(\mathbf{Q},\mathbf{R},\alpha)$. Clearly, sending $\alpha$ to the closure of $\sigma$ in
$U_{R}$ yields a functor $L:\widetilde{A}(\mathbf{Q},\mathbf{R}) \to {\rm Top}$. As $L(\alpha)$ is a contractible topological space for all $\alpha$'s, it suffices to compute the homotopy colimit of the functor $L$. Now, it is easy to see that the diagram $L$ is Reedy cofibrant in the sense of \cite[Ch.~15]{hirschhorn}. Hence, its homotopy colimit is given by its categorical colimit which is
$\overline{C}_R=\bigcup_{\sigma\in \Sigma^{\circ}_{\mathbf{Q},\mathbf{R}}} \overline{\sigma}$, equipped with the Satake topology.
The latter has the homotopy type of its interior which is contractible being a convex subset of $U_R$. This finishes the proof of the lemma.
\end{proof}

The other lemma needed to complete the proof of Proposition \ref{prop:comparison-cal-Y-and-cal-V} is:

\begin{lemma}
\label{lemma:texnic-comparison-cal-Y-and-cal-V-2}
Let $\mathcal{G}$ be a small groupoid and
$\mathcal{P}$ a representation
of $\mathcal{G}$ in the category of locally noetherian schemes. Assume that
$\mathcal{G}$ acts freely on the set of connected components of $\mathcal{P}$, i.e., for each $\alpha\in \mathcal{G}$ the stabilizer in ${\rm end}_{\mathcal{G}}(\alpha)$ of each connected component of $\mathcal{P}(\alpha)$
is trivial. Denote by
$\pi:\mathcal{P} \to \mathcal{G}\backslash \mathcal{P}$
the canonical projection. Then $\id \to \pi_*\pi^*$ is invertible.
\end{lemma}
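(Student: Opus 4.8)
The plan is to reduce the assertion, in three steps, to a ``trivial torsor'' situation and then finish by a Shapiro-lemma computation. First I would reduce to the case where $\mathcal{G}$ is connected. Writing $\mathcal{G}=\coprod_a\mathcal{G}_a$ as a disjoint union of its connected components, the functor $\mathcal{P}$ splits as $\coprod_a(\mathcal{P}|_{\mathcal{G}_a})$, the colimit $\mathcal{G}\backslash\mathcal{P}$ is $\coprod_a\mathcal{G}_a\backslash(\mathcal{P}|_{\mathcal{G}_a})$, and $\pi=\coprod_a\pi_a$. Since motives over a disjoint union of (locally) noetherian schemes decompose as the product over the components (locality, \cite[Cor.~4.5.47]{ayoub-these-II}), and the operations $\pi^*$, $\pi_*$ together with the adjunction unit are computed componentwise, it suffices to treat each $\pi_a$; so we may assume $\mathcal{G}$ connected. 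Choosing an object $\alpha_0$ and setting $G=\mathrm{end}_{\mathcal{G}}(\alpha_0)$, the one-object full subcategory on $\alpha_0$ is equivalent to $\mathcal{G}$; an equivalence of indexing categories induces an equivalence of categories of motives intertwining the $*$-operations and the adjunction units, and colimits along an equivalence agree, so we are reduced to the data of a scheme $P=\mathcal{P}(\alpha_0)$ with a $G$-action such that the stabilizer in $G$ of each connected component of $P$ is trivial, $Y:=G\backslash P$, and $\pi:(P,G)\to Y$ the canonical morphism (geometric part the quotient map, categorical part $G\to\mathbf{e}$).

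Next I would exploit the freeness to make $\pi$ ``trivial''. The $G$-set $\pi_0(P)$ is free, hence a disjoint union of orbits each isomorphic to $G$; picking a connected component $Q_o$ in the orbit $o$, the isomorphisms $g:Q_o\xrightarrow{\sim}gQ_o$ coming from the action (pairwise distinct components, since stabilizers are trivial) exhibit an isomorphism of $G$-schemes $P\cong\coprod_o(Q_o\times G)$, where $G$ acts trivially on each $Q_o$ and by left translation on $G$. Correspondingly $Y=\coprod_o Q_o$ and $\pi=\coprod_o\pi_o$ with $\pi_o:(Q_o\times G,G)\to Q_o$ the morphism whose geometric part is the fold map $\coprod_{g\in G}Q_o\to Q_o$ and whose categorical part is $G\to\mathbf{e}$. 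Applying locality once more reduces us to a single orbit: $\pi:(Q\times G,G)\to Q$, the fold map.

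Then I would compute $\pi_*\pi^*$ by factoring $\pi=b\circ a$, where $a:(Q\times G,G)\to(Q,G)$ has geometric part the fold map and categorical part $\mathrm{id}_G$, and $b:(Q,G)\to(Q,\mathbf{e})=Q$ has geometric part $\mathrm{id}_Q$ and categorical part $p:G\to\mathbf{e}$. Then $\pi^*=a^*b^*$ and $\pi_*=b_*a_*$, so $\pi_*\pi^*M=b_*(a_*a^*b^*M)$. Since $a$ has trivial categorical part, $a_*a^*$ acts objectwise, so $a_*a^*b^*M$ is, as a $G$-equivariant motive over $Q$, the object $(\coprod_G Q\to Q)_*(\coprod_G Q\to Q)^*M\cong\prod_{g\in G}M$ with $G$ permuting the factors by translation; by the axiom \textbf{DerAlg 4'g} of \cite[Rem.~2.4.16]{ayoub-these-I} (computing $i^*i_*$ as a product over the endomorphisms of the object), this coinduced object is canonically $i_*M$, where $i:(Q,\mathbf{e})\to(Q,G)$ has geometric part $\mathrm{id}_Q$ and categorical part the inclusion $\mathbf{e}\hookrightarrow G$. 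Hence $\pi_*\pi^*M\cong b_*i_*M=(b\circ i)_*M=M$, because $b\circ i=\mathrm{id}$. Finally, tracking the adjunction units, the unit $M\to\pi_*\pi^*M$ is the composite of units $M\xrightarrow{\eta_b}b_*b^*M\xrightarrow{b_*\eta_i}b_*i_*i^*b^*M=(b\circ i)_*(b\circ i)^*M$, which is the unit of the identity adjunction, i.e.\ $\mathrm{id}_M$; therefore $\mathrm{id}\to\pi_*\pi^*$ is invertible.

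The main obstacle I expect is the last step, and more precisely pinning down the canonical identification $a_*a^*b^*M\cong i_*M$ at the level of the $2$-categorical formalism of motives over diagrams of schemes, together with its compatibility with the adjunction units, rather than merely observing that both sides have the same underlying object $\prod_G M$; this is essentially bookkeeping with Corollary \ref{cor:very-useful-texnical-cor} and Remark \ref{rem-gen:cor-useful-texnical-to-derivators}, but it is where care is needed. Everything else is a matter of decomposing the situation using the freeness hypothesis and the locality property.
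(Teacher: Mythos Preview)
Your argument is correct. The reductions to the connected-groupoid case and then to a single free $G$-orbit match the paper's, though in a slightly different order (the paper first base-changes to a connected component $C$ of the quotient $\mathcal{G}\backslash\mathcal{P}$, then passes to the unique connected component $\mathcal{G}_0$ of $\mathcal{G}$ on which $\mathcal{P}$ is non-empty).

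The substantive difference is in the endgame. Having reduced to $\pi:(|G|\times Q,\bullet_G)\to Q$, the paper does \emph{not} factor $\pi$ through $(Q,G)$ and invoke a Shapiro-type identification. Instead it introduces the contractible groupoid $\widetilde{G}$ with $\mathrm{ob}(\widetilde{G})=G$ and $\hom_{\widetilde{G}}(g,h)=\{g^{-1}h\}$, together with the diagram $\widetilde{Q}:g\mapsto\{g\}\times Q$. There is a morphism $p:(\widetilde{Q},\widetilde{G})\to(|G|\times Q,\bullet_G)$, and $\pi\circ p=\widetilde{\pi}:(\widetilde{Q},\widetilde{G})\to Q$. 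One shows $\id\to p_*p^*$ is invertible by a direct application of Corollary~\ref{cor:very-useful-texnical-cor} (which reduces it to the discrete inclusion $(\{g\}\times Q)_{g\in|G|}\to|G|\times Q$), and $\id\to\widetilde{\pi}_*\widetilde{\pi}^*$ is invertible simply because $\widetilde{G}$ is equivalent to $\mathbf{e}$. The claim for $\pi$ then follows formally from $\widetilde{\pi}=\pi\circ p$. The advantage is that this route never requires an explicit identification of $a_*a^*b^*M$ with $i_*M$ as $G$-equivariant objects, nor any tracking of how the adjunction units transport under such an identification: the whole computation is packaged into two invertibility statements supplied by the general formalism. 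Your Shapiro-lemma factorization is perfectly valid and arguably more concrete, but the bookkeeping you flag as ``the main obstacle'' is precisely what the paper's contractible-groupoid trick is designed to avoid.
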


\begin{proof}
If $C$ is a connected component of $\mathcal{G}\backslash \mathcal{P}$, denote
by $\pi_C:\mathcal{P}\times_{\mathcal{G}\backslash \mathcal{P}}C\to C$ the
canonical projection. It suffices to show that $\id \to \pi_{C*}\pi_C^*$ is
invertible for every $C$. In other words, we may assume that $\mathcal{G}
\backslash \mathcal{P}$ is connected.
In that case, there is a connected component $\mathcal{G}_0$ of $\mathcal{G}$
such that $\mathcal{P}(\alpha)=\emptyset$ if $\alpha\in {\rm ob}(\mathcal{G})
- {\rm ob}(\mathcal{G}_0)$.
Replacing $\mathcal{G}$ by $\mathcal{G}_0$, we may further assume that
$\mathcal{G}$ is connected. In this case, $\mathcal{G}$ is equivalent to the
category $\bullet_G$ associated to an actual group $G$. (Recall that
$\bullet_{G}$ has only one object, denoted $\bullet$, whose endomorphisms are
given by the elements of $G$.) Thus, it
suffices to consider the case of a group $G$ that is acting on the scheme
$|G|\times Q$, where $|G|$ denotes the discrete set underlying $G$,
and $Q$ a connected noetherian scheme.

Let $\widetilde{G}$ be the category with ${\rm
ob}(\widetilde{G})=G$ and $\hom_{\widetilde{G}}(g,h)=\{g^{-1}h\}$.
Clearly, $\widetilde{G}$ is a groupoid, and is equivalent to the
category $\mathbf{e}$. We also have the
functor $\widetilde{G} \to \bullet_G$, which sends every object
$g$ to $\bullet$ and and is the identity mapping on the set of
arrows. Also, let $(\widetilde{Q},\widetilde{G})$ be the diagram
of schemes sending $g$ in $G = {\rm ob}(\widetilde{G})$ to
$\{g\}\times Q$. We have a morphism in $\Dia(\Sch)$:
$$p:(\widetilde{Q},\widetilde{G}) \to (|G|\times Q,\bullet_G).$$
We claim that $\id \to p_*p^*$ is invertible.
Indeed, we are in the situation of Corollary
\ref{cor:very-useful-texnical-cor}
with $\mathcal{I}=\bullet_G$ and
$((\mathcal{Y},\mathcal{J}),\mathcal{I})$ the diagram taking
$\bullet$ to the diagram $g\in |G| \rightsquigarrow \{g\}\times Q$.
Thus, we are reduced to showing that
$\id \to p'_*p'^*$ is invertible for
$p':(\{{\rm -}\}\times Q,|G|) \to |G|\times Q$ the obvious morphism.
Our claim is now clear.
To end the proof of the lemma, it remains to see that
$\id \to \widetilde{\pi}_*\widetilde{\pi}^*$ is invertible with
$\widetilde{\pi}:(\widetilde{Q},\widetilde{G}) \to Q$ the canonical
projection. But this is clear, as $\widetilde{G}$ is equivalent to $\mathbf{e}$.
\end{proof}

\subsubsection{The diagram $\mathcal{W}^{tor}$: a condensed model
of $\mathcal{V}^{tor}$} By Corollary
\ref{cor:making-the-diag-connected}, we may replace
$\mathcal{V}^{tor}$ by its diagram of connected components
$\mathcal{V}^{\flat, \, tor}$ and the conclusion of Proposition
\ref{prop:comparison-cal-Y-and-cal-V} will still hold. More
precisely, let $\mathcal{V}^{\flat, \, tor}$ be the diagram which
takes an object $\dagger$ of the indexing category of
$\mathcal{V}^{tor}$ to the discrete diagram
$(\mathcal{V}^{\flat,\, tor}( \dagger),\Pi(\dagger))$ of connected
components of $\mathcal{V}^{tor}(\dagger)$. Let $\Xi^{\flat}$ be
the projection of $\mathcal{V}^{\flat, \, tor}$ to
$\overline{X}^{bb}$ and $(w^{\flat},\varsigma_r)$ its projection
to $(T^{tor},\mathcal{P}^*([\![0,r]\!])^{\rm op})$. Then there is
a canonical isomorphism of commutative unitary algebras
$\EE_{\overline{X}^{bb}}\simeq
\Xi^{\flat}_*(w^{\flat},\varsigma_r)^*\beta_{ \overline{X}^{bb}}$,
and similarly in the analytic context. Moreover, there is a
commutative diagram analogous to the one in Proposition
\ref{prop:comparison-cal-Y-and-cal-V}, (b). We will show that the
total diagram associated to $\mathcal{V}^{\flat,\, tor}$ is
equivalent (in the $2$-category of diagrams) to a much smaller
diagram $\mathcal{W}^{tor}$.  We can then reformulate Proposition
\ref{prop:comparison-cal-Y-and-cal-V} in terms of $\mathcal{W}^{
tor}$.

We begin by verifying the following:

\begin{lemma}
\label{lemme:criterion-for-non-emptyness-cal-V}
Let $(I_0,I_1)\in \mathcal{P}_2([\![1,r]\!])$ and put
$J=[\![0,r]\!]- I_0$ and
$\{0\}\bigsqcup I_1=\{i_0<\dots< i_s\}$. Let
$(\mathbf{Q_j},\mathbf{R_j})_{0\leq j \leq s}$ be an object of
$\prod_{j=0}^s \mathscr{P}_{\Gamma}(J\cap [\![i_j,i_{j+1}]\!])$
(with $i_{s+1}=r$). Then
$\mathcal{V}^{tor}((\mathbf{Q_j},\mathbf{R_j})_{0\leq j \leq s})\neq\emptyset$
if and only if there exists a family $(\gamma_j)_{0\leq j\leq s}$
of elements in $\Gamma$ such that
$\bigcap_{j=0}^s \gamma_j\mathbf{E_{Q_j,R_j}}\gamma_j^{-1}$ is a parabolic
$\Q$-subgroup of $\mathbf{G}$.
\end{lemma}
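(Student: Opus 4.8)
The plan is to argue by induction on $s$, peeling off the last factor in the iterated construction of $\mathcal{V}^{tor}$ from \S\ref{subsub:V} (equation \eqref{Vtor-iteration}). If $s=0$, then $\{0\}\bigsqcup I_1=\{0\}$ and $\mathcal{V}^{tor}((\mathbf{Q}_0,\mathbf{R}_0))=\widetilde{T}^{tor}(\mathbf{Q}_0,\mathbf{R}_0)$ is non-empty, while the right-hand side also holds trivially since $\mathbf{E}_{\mathbf{Q}_0,\mathbf{R}_0}$ is already a parabolic $\Q$-subgroup. For the inductive step, write $(\mathbf{Q}_j)$ for $(\mathbf{Q}_j,\mathbf{R}_j)$ and use
$$\mathcal{V}^{tor}((\mathbf{Q}_j)_{0\le j\le s})=\pi_0\big(\mathcal{V}^{tor}_s((\mathbf{Q}_j)_{0\le j\le s-1})/Y^{tor}_{i_s}\big)\times_{Y^{tor}_{i_s}}\widetilde{T}^{tor}(\mathbf{Q}_s,\mathbf{R}_s).$$
The first factor is finite over $Y^{tor}_{i_s}$ and surjects onto the (closed) image of $\mathcal{V}^{tor}_s((\mathbf{Q}_j)_{j<s})$ in $Y^{tor}_{i_s}$, while by Lemma \ref{lemma:universal-cover-cal-B-I} the morphism $\widetilde{T}^{tor}(\mathbf{Q}_s,\mathbf{R}_s)\to Y^{tor}_{i_s}$ is surjective onto the closed subscheme $T^{tor}(\mathbf{Q}_s,\mathbf{R}_s)$. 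Hence the fibre product is non-empty if and only if the image of $\mathcal{V}^{tor}_s((\mathbf{Q}_j)_{j<s})$ in $Y^{tor}_{i_s}$ meets $T^{tor}(\mathbf{Q}_s,\mathbf{R}_s)$. Iterating this observation down the tower shows that $\mathcal{V}^{tor}((\mathbf{Q}_j)_{j\le s})$ is non-empty if and only if there exist points $x_j\in T^{tor}(\mathbf{Q}_j,\mathbf{R}_j)\subseteq Y^{tor}_{i_j}$ for $0\le j\le s$ with $x_{j+1}$ equal to the image of $x_j$ under the morphism $T^{tor}(\mathbf{Q}_j,\mathbf{R}_j)\to Y^{tor}_{i_{j+1}}$ coming from the functoriality of $T^{tor}$ (this morphism is the relevant restriction of the map $e_{i_j,i_{j+1}}$ of Property \textbf{P1)}).

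The next step is to translate this chain condition into group theory. By \eqref{corner-like-strata} and Proposition \ref{prop:intersection-closure-strata-toroidal-comp}, the $\mathcal{R}(I_j)$-strata contained in $T^{tor}(\mathbf{Q}_j,\mathbf{R}_j)$ are in bijection with pairs consisting of a parabolic $\Q$-subgroup contained in $\mathbf{E}_{\mathbf{Q}_j,\mathbf{R}_j}$ (a deeper ``corner'') together with a rational polyhedral cone in the associated $\Sigma^{\circ}$ adapted to $\mathbf{Q}_j$; and Proposition \ref{prop:hereditary-for-toroidal}, together with Remark \ref{rmk:hereditary-for-toroidal}, describes the image of such a stratum under $e_{i_j,i_{j+1}}$ in terms of the image of its cone under the natural map $U_{\bullet}\to U_{\bullet}$, hence in terms of that corner viewed inside the appropriate Levi $\mathbf{M}_{\bullet,h}$. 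Feeding the data of the $x_j$'s through this dictionary and using the explicit formula $\mathbf{E}_{\mathbf{Q},\mathbf{R}}=\mathbf{N_Q}\mathbf{S_Q}\widetilde{\mathbf{M}}_{\mathbf{Q},\ell}\mathbf{R}$ of \S\ref{subsub:wTtor}, one finds --- exactly in the manner of Proposition \ref{prop:intersection-closure-strata-toroidal-comp}, where intersections of closures of boundary strata are governed by parabolic subgroups of the shape $\bigcap_i\gamma_i\mathbf{Q}_i\gamma_i^{-1}$ --- that a compatible chain $(x_j)_{0\le j\le s}$ exists precisely when there are $\gamma_0,\dots,\gamma_s\in\Gamma$ with $\bigcap_{j=0}^s\gamma_j\mathbf{E}_{\mathbf{Q}_j,\mathbf{R}_j}\gamma_j^{-1}$ a parabolic $\Q$-subgroup; one may normalise $\gamma_s=1$, since the chain may be taken to terminate at $\mathbf{E}_{\mathbf{Q}_s,\mathbf{R}_s}$ itself.

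To make the induction run cleanly, the inductive statement must be strengthened: rather than merely recording that $\mathcal{V}^{tor}_j((\mathbf{Q}_l)_{l<j})$ is non-empty, one records the set of $\mathcal{R}(I_{j-1})$-strata of $T^{tor}(\mathbf{Q}_{j-1},\mathbf{R}_{j-1})$ appearing in its image in $Y^{tor}_{i_{j-1}}$, identified via the dictionary above with the set of parabolic $\Q$-subgroups of the form $\bigcap_{l<j}\gamma_l\mathbf{E}_{\mathbf{Q}_l,\mathbf{R}_l}\gamma_l^{-1}$ that are parabolic and contained in $\mathbf{E}_{\mathbf{Q}_{j-1},\mathbf{R}_{j-1}}$. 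The passage from $j-1$ to $j$ then uses the hereditary structure (Proposition \ref{prop:hereditary-for-toroidal}) to compute the image under $e_{i_{j-1},i_j}$ and to intersect it with $T^{tor}(\mathbf{Q}_j,\mathbf{R}_j)$; two such boundary loci meet exactly when the corresponding parabolics acquire, after $\Gamma$-conjugation, a common parabolic intersection, which is the content built into Proposition \ref{prop:intersection-closure-strata-toroidal-comp}. For the converse direction, given $\gamma_0,\dots,\gamma_s$ with $\bigcap_j\gamma_j\mathbf{E}_{\mathbf{Q}_j,\mathbf{R}_j}\gamma_j^{-1}$ parabolic, one produces a compatible chain $(x_j)$ by choosing $x_s$ in the stratum of $T^{tor}(\mathbf{Q}_s,\mathbf{R}_s)$ attached to this intersection and pushing it up the tower. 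I expect the main obstacle to be precisely the translation of the second paragraph: the exact bookkeeping of the correspondence between toroidal boundary strata (with their rational polyhedral cones) and parabolic $\Q$-subgroups along the whole tower $\overline{X}^{tor}\to\overline{X}^{bb}$ and its hereditary analogues, together with the $\Gamma$-conjugacy ambiguities inherent in passing between $\mathscr{P}_{\Gamma}(I)$ and actual parabolic subgroups --- everything else being formal once that dictionary is in place.
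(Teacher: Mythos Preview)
Your chain-of-points reformulation in the first paragraph is correct and is essentially the same unfolding of the iterated fibre product that the paper uses. Where the two arguments diverge is in how the chain condition is translated into group theory, and here you miss the key simplification that makes the paper's proof short.

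The paper observes that once $\mathcal{V}^{tor}_t((\mathbf{Q_j},\mathbf{R_j})_{j<t})$ is non-empty, its map to $Y^{tor}_{i_t}$ is proper and \emph{surjective onto an entire connected component} $Y^{tor}_{\mathbf{F}_t}$, where $\mathbf{F}_t$ is the maximal parabolic to which $\mathbf{E_{Q_{t-1},R_{t-1}}}$ is subordinate. (This comes from the hereditary property: the closure of a corner-like stratum dominates the whole toroidal compactification downstairs.) Since $\widetilde{T}^{tor}(\mathbf{Q_t},\mathbf{R_t})$ lives over the component $Y^{tor}_{\mathbf{Q_t}}$, non-emptiness of $\mathcal{V}^{tor}_{t+1}$ reduces to the purely discrete condition $Y^{tor}_{\mathbf{F}_t}=Y^{tor}_{\mathbf{Q_t}}$, i.e., $\mathbf{F}_t$ and $\mathbf{Q_t}$ are $\Gamma$-conjugate. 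Your chain $(x_j)$ is therefore equivalent to a sequence of component-matchings, and no tracking of individual strata or cones is needed. The paper then closes the induction with a short group-theoretic Lemma~\ref{sub-lemme:for-criterion-for-non-emptyness-cal-V}: for parabolics $\mathbf{P_1},\mathbf{P_2}$ of cotypes $I_1,I_2$ with $\max(I_1)=\min(I_2)$, the intersection $\mathbf{P_1}\cap\mathbf{P_2}$ is parabolic iff $\mathbf{P_2}$ lies in the maximal parabolic to which $\mathbf{P_1}$ is subordinate. This converts ``$\mathbf{H}_t$ parabolic and $\mathbf{H}_t\subset\gamma_t\mathbf{Q_t}\gamma_t^{-1}$'' directly into ``$\mathbf{H}_{t+1}$ parabolic''.

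Your plan to translate the chain via Proposition~\ref{prop:intersection-closure-strata-toroidal-comp} and the stratum--cone dictionary would work in principle, but it is the hard way round: you are keeping track of polyhedral cones and their images under the maps $U_{R'}\to U_{H''}$ when, after the surjectivity observation, only the $\pi_0$ of $Y^{tor}_{i_t}$ matters. The bookkeeping you flag as the main obstacle evaporates once you note that the image at each step is a full component; what remains is exactly Lemma~\ref{sub-lemme:for-criterion-for-non-emptyness-cal-V}, which you would also need in your approach but have not isolated.
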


\begin{proof}
Recall from the construction in \S\ref{subsub:V} that
$\mathcal{V}^{tor}((\mathbf{Q_j},
\mathbf{R_j})_{0\leq j \leq s})$
is the last term in a finite sequence of diagrams
$\{\mathcal{V}^{tor}_t((\mathbf{Q_j},\mathbf{R_j})_{0\leq j\leq t-1})\}_{1
\leq t
\leq s+1}$. We show by induction on $t$ that:

${\rm S}_t$) {\it $\mathcal{V}^{tor}_t((\mathbf{Q_j},
\mathbf{R_j})_{0\leq j \leq t-1})\neq\emptyset$ if and only if
$\mathbf{H_t}(\gamma_0,\dots, \gamma_{t-1})=\bigcap_{j=0}^{t-1}
\gamma_j\mathbf{E_{Q_j,R_j}} \gamma_j^{-1}$ is parabolic for some
$\gamma_0,...,\gamma_{t-1}\in \Gamma$.}

The statement ${\rm S}_1$ is trivial, as
$\mathbf{E_{Q_0,R_0}}=\mathbf{R_0}$ is parabolic and
$\mathcal{V}_1^{tor}(\mathbf{Q_0},\mathbf{R_0})=Y^{tor}_0$ is not
empty. We assume that ${\rm S}_t$ is true for some $1\leq t \leq
s$ and we prove ${\rm S}_{t+1}$. Let $\mathbf{F_t}$ be the maximal
parabolic $\Q$-subgroup containing
$\gamma_{t-1}\mathbf{E_{Q_{t-1},R_{t-1}}}\gamma_{t-1}^{-1}$ and to
which the latter is subordinate. From the formula
$$\mathcal{V}_{t+1}^{tor}((\mathbf{Q_j},\mathbf{R_j})_{0\leq j \leq t})=\pi_0(
\mathcal{V}^{tor}_t((\mathbf{Q_j},\mathbf{R_j})_{0\leq j \leq t-1})/Y_{i_t}^{
tor})\times_{Y_{i_t}^{tor}}
\widetilde{T}^{tor}(\mathbf{Q_t},\mathbf{R_t}),
$$
we deduce that the following conditions are equivalent:
\begin{enumerate}

\item[(i)] $\mathcal{V}_{t+1}^{tor}((\mathbf{Q_j},\mathbf{R_j})_{0\leq j\leq t})\neq \emptyset$,

\item[(ii)] $\mathcal{V}^{tor}_t((\mathbf{Q_j},\mathbf{R_j})_{0\leq j \leq t-1})\neq \emptyset$ and $Y^{tor}_{\mathbf{F_t}}=Y^{tor}_{\mathbf{Q_t}}$.

\end{enumerate}
Indeed, if $\mathcal{V}^{tor}_t((\mathbf{Q_j},\mathbf{R_j})_{0\leq j \leq t-1})$ is not empty,
$\mathcal{V}^{tor}_t((\mathbf{Q_j},\mathbf{R_j})_{0\leq j \leq t-1})\to Y^{tor}_{i_t}$ is proper
and surjective over the connected component $Y^{tor}_{\mathbf{F_t}}$ of $Y^{tor}_{i_t}$. On the other hand,
the image of $\widetilde{T}^{tor}(\mathbf{Q_t}, \mathbf{R_t}) \to Y^{tor}_{i_t}$ is contained
in the connected component $Y^{tor}_{\mathbf{Q_t}}$ of $Y^{tor}_{i_t}$.
By the induction hypothesis, the condition (ii) is also equivalent to:
\begin{enumerate}

\item[(iii)] $\mathbf{H_t}(\gamma_0,\dots, \gamma_{t-1})$ is parabolic and
$\mathbf{F_t}=\gamma_t \mathbf{Q_t} \gamma_t^{-1}$
for some $\gamma_0,...,\gamma_{t}\in \Gamma$.

\end{enumerate}
Now, $\mathbf{F_t}$ and $\gamma_t \mathbf{Q_t}\gamma_t^{-1}$ are
parabolic of the same type and $\mathbf{F_t}$ contains
$\mathbf{H_t}(\gamma_0,\dots, \gamma_{t-1})$. Thus, we may rewrite
(iii) in a slightly different but equivalent form:

\begin{enumerate}

\item[(iii$'$)] $\mathbf{H_t}(\gamma_0,\dots, \gamma_{t-1})$ is parabolic and
is contained in $\gamma_t \mathbf{Q_t} \gamma_t^{-1}$ for some
$\gamma_0,...,\gamma_{t}$.

\end{enumerate}
To prove the statement ${\rm S}_{t+1}$, we verify that
(iii$'$) is equivalent to:
\begin{enumerate}

\item[(iii$''$)] $\mathbf{H_{t+1}}(\gamma_0,\dots, \gamma_{t})$ is parabolic
for some $\gamma_0,...,\gamma_{t}\in \Gamma$.

\end{enumerate}
The implication (iii$''$) $\Rightarrow$ (iii$'$) is clear. Indeed,
if $\mathbf{H_{t+1}}(\gamma_0,\dots, \gamma_{t})$ is parabolic,
then $\mathbf{H_t}(\gamma_0,\dots, \gamma_{t-1})$ and $\gamma_t
\mathbf{Q_t} \gamma_t^{-1}$ are also parabolic. As they are of
cotype $J\cap [\![i_0,i_t]\!]$ and $\{i_t\}$ respectively, we also
have $\mathbf{H_t}(\gamma_0,\dots, \gamma_{t-1}) \subset \gamma_t
\mathbf{Q_t} \gamma_t^{-1}$. The converse implication
(iii$'$) $\Rightarrow$ (iii$''$) follows from Lemma
\ref{sub-lemme:for-criterion-for-non-emptyness-cal-V} below.
\end{proof}

\begin{lemma}
\label{sub-lemme:for-criterion-for-non-emptyness-cal-V}
Let $\mathbf{P_1}$ and $\mathbf{P_2}$ be two parabolic $\Q$-subgroups of cotypes $\emptyset \neq I_1, \, I_2 \subset [\![1,r]\!]$ and assume that ${\rm max}(I_1)={\rm min}(I_2)={s}$. Let $\mathbf{Q}$ be
the maximal parabolic $\Q$-subgroup containing $\mathbf{P_1}$ and of cotype $\{s\}$, i.e., $\mathbf{P_1}$ is subordinate to $\mathbf{Q}$.
Then $\mathbf{P_1} \cap \mathbf{P_2}$ is parabolic if and only if
$\mathbf{P_2}\subset \mathbf{Q}$.
\end{lemma}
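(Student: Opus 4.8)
The plan is to prove Lemma \ref{sub-lemme:for-criterion-for-non-emptyness-cal-V} by reducing to the combinatorics of types of parabolic $\Q$-subgroups, using the standing conventions of \S\ref{subsect:The-Baily-Borel-compactification}: a parabolic $\Q$-subgroup is a $\mathbf{G}(\Q)$-conjugate of a unique standard parabolic $\mathbf{P}_K$ for a proper subset $K\subset \Delta(\mathbf{G},\mathbf{S})=[\![1,r]\!]$, where we follow the paper's convention that ``cotype $I$'' means type $\Delta-I$. One direction is essentially formal: if $\mathbf{P_2}\subset\mathbf{Q}$, then $\mathbf{P_1}\cap\mathbf{P_2}\supseteq \mathbf{P_1}\cap\mathbf{Q}\cap\mathbf{P_2}$, but since $\mathbf{P_1}$ is subordinate to $\mathbf{Q}$ we have $\mathbf{P_1}\subset\mathbf{Q}$, so $\mathbf{P_1}\cap\mathbf{P_2}=\mathbf{P_1}\cap\mathbf{P_2}$ with both sides inside $\mathbf{Q}$; one then invokes the well-known fact (Borel--Tits) that the intersection of two parabolic $\Q$-subgroups containing a common maximal $\Q$-split torus is again parabolic, after conjugating so that $\mathbf{P_1}$ and $\mathbf{P_2}$ share a maximal $\Q$-split torus of $\mathbf{Q}$. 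So the first step is to set up this standard-position reduction carefully.

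For the converse, the key point is the interaction between subordination and the ordering $\beta_1,\dots,\beta_r$ of the simple roots with $\beta_i$ not orthogonal to $\beta_{i+1}$. First I would make $\mathbf{P_1}$ standard; then $\mathbf{Q}$, being the maximal parabolic of cotype $\{s\}$ containing $\mathbf{P_1}$, is also standard, of type $[\![1,r]\!]-\{s\}$. The hypothesis that $\mathbf{P_1}$ is subordinate to $\mathbf{Q}$ means precisely that $\Delta_{I_1,h}$ — the connected component of the type of $\mathbf{P_1}$ containing the distinguished root $\beta_r$ — spans the same system as $\Delta_{\{s\},h}=\{\beta_{s+1},\dots,\beta_r\}$; equivalently, the type of $\mathbf{P_1}$ contains $\{\beta_{s+1},\dots,\beta_r\}$ and no root $\beta_j$ with $j\le s$ that is connected to $\beta_{s+1}$, i.e.\ the type of $\mathbf{P_1}$ has the form $\{\beta_{s+1},\dots,\beta_r\}\sqcup (\text{subset of }\{\beta_1,\dots,\beta_{s-1}\})$. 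Now assume $\mathbf{P_1}\cap\mathbf{P_2}$ is parabolic. Since $\mathbf{P_2}$ has cotype $I_2$ with $\min(I_2)=s$, its type contains $\{\beta_1,\dots,\beta_{s-1}\}$, and I would want to show that after the standardization we actually have $\mathbf{P_2}\subseteq\mathbf{Q}$, equivalently that $\mathbf{P_2}$'s type does not contain $\beta_s$ — which is exactly the statement that $s\in I_2$, already given. The subtlety is that $\mathbf{P_2}$ need only be a $\mathbf{G}(\Q)$-conjugate of a standard parabolic, so I must argue that the conjugating element can be absorbed; the standard trick is: $\mathbf{P_1}\cap\mathbf{P_2}$ parabolic forces $\mathbf{P_1}$ and $\mathbf{P_2}$ to contain a common maximal $\Q$-split torus, and any two parabolics of $\mathbf{G}$ containing a common maximal torus with $\mathbf{P_1}$ standard differ from standard ones only by a Weyl-group element; chasing through, $\mathbf{P_1}\cap\mathbf{P_2}$ parabolic is equivalent to the type of $\mathbf{P_2}$ being ``compatible'' with that of $\mathbf{P_1}$ in the poset of parabolics, and one reads off from the listing $\beta_1,\dots,\beta_r$ and the shape of the type of $\mathbf{P_1}$ that the only way for $\mathbf{P_2}$ (whose type contains $\beta_1,\dots,\beta_{s-1}$ but not $\beta_s$) to have parabolic intersection with $\mathbf{P_1}$ is for it to be $\subseteq\mathbf{Q}$.

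Concretely I would organize the converse as follows: (1) conjugate so $\mathbf{P_1}$ is standard; (2) use $\mathbf{P_1}\cap\mathbf{P_2}$ parabolic together with the conjugacy theorem for maximal $\Q$-split tori in a parabolic to arrange a maximal $\Q$-split torus $\mathbf{S}'$ of $\mathbf{G}$ with $\mathbf{S}'\subset\mathbf{P_1}\cap\mathbf{P_2}$; (3) since $\mathbf{S}'$ and $\mathbf{S}$ are $\mathbf{P_1}(\Q)$-conjugate, replace $\mathbf{S}$ by $\mathbf{S}'$ and thereby assume $\mathbf{P_2}$ is standard too; (4) for two standard parabolics, $\mathbf{P_1}\cap\mathbf{P_2}=\mathbf{P_{K_1\cap K_2}}$ is automatically parabolic, so this step actually gives no extra information — the real content is that the conjugation in step (3) preserves the type of $\mathbf{P_2}$, hence its cotype $I_2$, hence $s\notin(\text{type of }\mathbf{P_2})$, hence $\mathbf{P_2}=\mathbf{P_{K_2}}$ with $\beta_s\notin K_2$, which means $\mathbf{P_2}\subseteq\mathbf{P}_{[\![1,r]\!]-\{s\}}=\mathbf{Q}$. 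The main obstacle is step (3): one must check that when we re-standardize using $\mathbf{S}'$, the subgroup $\mathbf{Q}$ (which was defined as ``the maximal parabolic of cotype $\{s\}$ containing $\mathbf{P_1}$'') remains the same, i.e.\ $\mathbf{Q}$ is intrinsic to $\mathbf{P_1}$ and does not depend on the choice of maximal torus inside $\mathbf{P_1}$ — this follows because $\mathbf{Q}$ is characterized by $\mathbf{M}_{\mathbf{Q},h}\simeq\mathbf{M}_{\mathbf{P_1},h}$ (the unique maximal or improper parabolic with this property, as used repeatedly in \S\ref{subsect:toroidal-compact} and \S\ref{subsection:hereditary-toroidal}), and this characterization is manifestly conjugation-invariant. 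Once that is pinned down, everything else is routine root-combinatorics and the classical facts about parabolic subgroups over $\Q$.
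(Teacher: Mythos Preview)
You have the two directions swapped in difficulty, and your argument for the substantive one has a genuine error.

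The direction you call ``essentially formal'' ($\mathbf{P_2}\subset\mathbf{Q}\Rightarrow\mathbf{P_1}\cap\mathbf{P_2}$ parabolic) is in fact the nontrivial one, and your appeal to Borel--Tits is wrong as stated: it is \emph{not} true that two parabolic $\Q$-subgroups containing a common maximal $\Q$-split torus have parabolic intersection (the standard Borel and its opposite give a counterexample). Nor can you ``conjugate so that $\mathbf{P_1}$ and $\mathbf{P_2}$ share a maximal $\Q$-split torus'': any such conjugation moves the subgroups. The paper's proof is different and uses precisely the hypothesis $\max(I_1)=\min(I_2)=s$: one passes to the images $\mathbf{P_1'},\mathbf{P_2'}$ in $\mathbf{M_Q}$ and exploits the refined decomposition $\mathbf{M_Q}=\widetilde{\mathbf{M}}_{\mathbf{Q},\ell}\cdot\mathbf{M}_{\mathbf{Q},h}$. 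Because $\max(I_1)=s$, the type of $\mathbf{P_1}$ contains $\{\beta_{s+1},\dots,\beta_r\}$, so $\mathbf{M}_{\mathbf{Q},h}\subset\mathbf{P_1'}$; because $\min(I_2)=s$, the type of $\mathbf{P_2}$ contains $\{\beta_1,\dots,\beta_{s-1}\}$, so $\widetilde{\mathbf{M}}_{\mathbf{Q},\ell}\subset\mathbf{P_2'}$. Hence $\mathbf{P_1'}\cap\mathbf{P_2'}=(\mathbf{P_1'}\cap\widetilde{\mathbf{M}}_{\mathbf{Q},\ell})\cdot(\mathbf{P_2'}\cap\mathbf{M}_{\mathbf{Q},h})$ is a product of a parabolic of $\widetilde{\mathbf{M}}_{\mathbf{Q},\ell}$ and a parabolic of $\mathbf{M}_{\mathbf{Q},h}$, hence parabolic in $\mathbf{M_Q}$, and so $\mathbf{P_1}\cap\mathbf{P_2}$ is parabolic in $\mathbf{G}$.

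Conversely, the direction you labor over is the easy one in the paper: if $\mathbf{P_1}\cap\mathbf{P_2}$ is parabolic, it is contained in $\mathbf{P_1}\subset\mathbf{Q}$, so $\mathbf{Q}$ is the unique maximal parabolic of cotype $\{s\}$ containing $\mathbf{P_1}\cap\mathbf{P_2}$; but $s\in I_2$ means $\mathbf{P_2}$ is also contained in some maximal parabolic of cotype $\{s\}$, and that maximal contains $\mathbf{P_1}\cap\mathbf{P_2}$ too, hence equals $\mathbf{Q}$. Your steps (1)--(4) aim at this but are unnecessarily elaborate, and step (3) has its own gap: making both parabolics contain $\mathbf{S}$ does not force $\mathbf{P_2}$ to be \emph{standard}; it could be $w\mathbf{P}_{K_2}w^{-1}$ for $w$ in the Weyl group.
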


\begin{proof}
If $\mathbf{P_1}\cap \mathbf{P_2}$ is parabolic, then $\mathbf{P_2} \subset \mathbf{Q}$ as $I_2$ contains $s$. Conversely, assume that
$\mathbf{P_2}\subset \mathbf{Q}$.
Denote $\mathbf{P_1'}$ and $\mathbf{P_2'}$ the images of $\mathbf{P_1}$ and $\mathbf{P_2}$ by the projection of $\mathbf{Q}$ to (the quotient by a finite normal subgroup of) $\mathbf{M_Q}$.
It suffices to show that $\mathbf{P_1'}\cap \mathbf{P_2'}$ is a
parabolic subgroup of $\mathbf{M_Q}$. Looking at the cotypes of $\mathbf{P_1}$ and $\mathbf{P_2}$,
we see that
$\widetilde{\mathbf{M}}_{\mathbf{Q},\ell}\subset \mathbf{P_2'}$
and $\mathbf{M}_{\mathbf{Q},h}\subset \mathbf{P_1'}$.
As $\mathbf{M_Q}=\widetilde{\mathbf{M}}_{\mathbf{Q},\ell} \cdot \mathbf{M}_{\mathbf{Q},h}$, it follows that
$$\mathbf{P_1'}=(\mathbf{P_1'}\cap  \widetilde{\mathbf{M}}_{\mathbf{Q},\ell})\cdot \mathbf{M}_{\mathbf{Q},h} \qquad
\text{and} \qquad
\mathbf{P_2'}=\widetilde{\mathbf{M}}_{\mathbf{Q},\ell} \cdot (
\mathbf{P_2'}\cap \mathbf{M}_{\mathbf{Q},h}).$$
Thus, $\mathbf{P'_1}\cap \mathbf{P_2'}=
(\mathbf{P_1'}\cap  \widetilde{\mathbf{M}}_{\mathbf{Q},\ell})\cdot
(\mathbf{P_2'}\cap \mathbf{M}_{\mathbf{Q},h})$.
This proves the lemma as the latter factors are parabolic subgroups of
$\widetilde{\mathbf{M}}_{\mathbf{Q},\ell}$ and $\mathbf{M}_{\mathbf{Q},h}$ respectively.
\end{proof}

Though the following construction resembles the one at the
beginning of \S4.2.2, it is not an extension of that. For
$(I_0,I_1)\in \mathcal{P}_2([\![1,r]\!])$, denote by
$\mathscr{Q}(I_0,I_1)$ the set of pairs $(\mathbf{Q},\mathbf{E})$
of parabolic $\Q$-subgroups of $\mathbf{G}$ such that
$\mathbf{E}\subset\mathbf{Q}$, and $\mathbf{E}$ and $\mathbf{Q}$
are of type $I_0$ and cotype $I_1$ respectively. For
$(\mathbf{Q},\mathbf{E})\in \mathscr{Q}(I_0,I_1)$, let
$\mathbf{B_{Q,E}}$ be
the intersection of $\mathbf{Q}$ with the maximal
parabolic $\Q$-subgroup to which $\mathbf{E}$ is subordinate. (When
$(\mathbf{Q},\mathbf{E})=(\mathbf{G},\mathbf{G})$ we take this subgroup to be $\mathbf{G}$ itself.)
This is a parabolic $\Q$-subgroup of $\mathbf{G}$ containing $\mathbf{E}$ and of cotype $I_1\cup \{{\rm max}([\![1,r]\!]-I_0)\}$ (with the convention that $\{{\rm max}(\emptyset)\}=\emptyset$).
We denote by $\mathbf{H_{Q,E}}\subset \mathbf{B_{Q,E}}$ the inverse image of $\mathbf{M}_{\mathbf{B_{Q,E}},h}\subset \mathbf{M_{B_{Q,E}}}$ by the projection of $\mathbf{B_{Q,E}}$ to (the quotient by a finite normal subgroup of) $\mathbf{M_{B_{Q,E}}}$. This is a normal subgroup of $\mathbf{E}$.

Given two pairs $(\mathbf{Q_1},\mathbf{E_1})$ and
$(\mathbf{Q_2},\mathbf{E_2})$ in $\mathscr{Q}(I_0,I_1)$, denote by
$[(\mathbf{Q_1},\mathbf{E_1}),(\mathbf{Q_2},\mathbf{E_2})]$ the
subset of $\mathbf{G}(\Q)$ consisting of elements $\gamma$ such
that $\gamma\mathbf{E_1}\gamma^{-1}=\mathbf{E_2}$ (and thus also,
$\gamma\mathbf{Q_1}\gamma^{-1}=\mathbf{Q_2}$). For
$\gamma, \, \gamma'\in [(\mathbf{Q_1},\mathbf{E_1}),(\mathbf{Q_2},\mathbf{E_2})]$,
we write $\gamma\sim \gamma'$ when
there exists $\delta_1\in \mathbf{H_{Q_1,E_1}}(\Q)$ such that
$\gamma'=\gamma \delta_1$ (equivalently,
when there exists $\delta_2\in \mathbf{H_{Q_2,E_2}}(\Q)$ such that
$\gamma'=\delta_2\gamma$).
This defines an equivalence relation on
$[(\mathbf{Q_1},\mathbf{E_1}),(\mathbf{Q_2},\mathbf{E_2})]$ that
is compatible with multiplication in $\mathbf{G}(\Q)$. We make the set $\mathscr{Q}(I_0,I_1)$ into a groupoid by setting
$$\hom_{\mathscr{Q}(I_0,I_1)}((\mathbf{Q_1},\mathbf{E_1}),(\mathbf{Q_2},
\mathbf{E_2}))=
[(\mathbf{Q_1},\mathbf{E_1}),(\mathbf{Q_2},\mathbf{E_2})]/\sim.$$
We also let $\mathscr{Q}_{\Gamma}(I_0,I_1)$ be the sub-groupoid of
$\mathscr{Q}(I_0,I_1)$ having the same objects, but where
morphisms are the equivalence classes of elements of $\Gamma$.
Given a pair $(\mathbf{Q},\mathbf{E})$ in $\mathscr{Q}(I_0,I_1)$,
we have (cf.~\eqref{eq:end=EmodK} and Lemma \ref{lemme:P-sub-Gamma})
\begin{equation}
\label{eq:endo-dans-groupoid-scr-Q}
{\rm end}_{\mathscr{Q}_{\Gamma}(I_0,I_1)}(\mathbf{Q},\mathbf{E})=\Gamma(
\mathbf{E}/\mathbf{H_{Q,E}}).
\end{equation}

Given another $(I_0',I_1')\in \mathcal{P}_2([\![1,r]\!])$
such that $(I_0,I_1)\subset (I_0',I_1')$,
there is a functor
\begin{equation}
\label{eq:naturalite-grpd-scr-Q}
\xymatrix@C=1.7pc{\mathscr{Q}_{\Gamma}(I_0,I_1) \ar[r] &
\mathscr{Q}_{\Gamma}(I_0',I_1')}
\end{equation}
which sends a pair
$(\mathbf{Q},\mathbf{E})\in\mathscr{Q}_{\Gamma}(I_0,I_1)$ to the
unique pair $(\mathbf{Q'},\mathbf{E'})\in \mathscr{Q}(I_0',I_1')$
satisfying $\mathbf{E'}\supset\mathbf{E}$. The functoriality of
this assignment is clear, as $\mathbf{H_{Q,E}}\subset
\mathbf{H_{Q',E'}}$. Thus, there is a covariant functor
$\mathscr{Q}_{\Gamma}$ from $\mathcal{P}_2([\![1,r]\!])$ to the
category of groupoids.

As $g\Gamma'g^{-1}\subset \Gamma$, conjugation by
the element $g\in \mathbf{G}(\Q)$ induces a morphism of groupoids
${\rm int}(g):\mathscr{Q}_{\Gamma'}(I_0,I_1) \to
\mathscr{Q}_{\Gamma}(I_0,I_1)$. This is natural in $(I_0,I_1)$,
so it defines a morphism of diagrams of groupoids.

\begin{lemma}
\label{lemma:nat-appl-scr-Q-P-grpd}
For $(I_0,I_1)\in
\mathcal{P}_2([\![1,r]\!])$, let $J=[\![0,r]\!]- I_0$ and
$\{0\}\bigsqcup I_1=\{i_0<\dots< i_s\}$.

\begin{enumerate}

\item[(a)] There is a natural morphism of groupoids
\begin{equation}
\label{eq:nat-appl-scr-Q-scr-P-groupoids}
\mathbf{d}(I_0,I_1):\mathscr{Q}_{\Gamma}(I_0,I_1)
\!\xymatrix@C=1.7pc{\ar[r] &}\!
\prod_{j=0}^s \; \mathscr{P}_{\Gamma}(J\cap [\![i_j,i_{j+1}]\!])
\end{equation}
(with $i_{s+1}=r$).
It takes
$(\mathbf{Q},\mathbf{E}) \in \mathscr{Q}_{\Gamma}(I_0,I_1)$
to the family $(\mathbf{Q_j},\mathbf{R_j})_{0\leq j \leq s}$
where:
\begin{itemize}

\item $\mathbf{Q_j}$ is the maximal or improper parabolic $\Q$-subgroup of $\mathbf{G}$ of cotype $\{i_j\}$ that
contains $\mathbf{Q}$.

\item $\mathbf{R_j}$
is the image in $\mathbf{M}_{\mathbf{Q_j},h}$ of the unique
parabolic $\Q$-subgroup $\mathbf{E_j}$ of cotype $J\cap
[\![i_{j},i_{j+1}]\!]$ containing $\mathbf{E}$.

\end{itemize}
Moreover, $\mathbf{Q}=\bigcap_{j=0}^s \mathbf{Q_j}$ and
$\mathbf{E}=\bigcap_{j=0}^s\mathbf{E_j}$.

\item[(b)] The morphism
\begin{equation}
\label{eq-lemma:nat-appl-scr-Q-P-grpd-1}
{\rm end}_{\mathscr{Q}_{\Gamma}(I_0,I_1)}(\mathbf{Q},\mathbf{E}) \!\xymatrix@C=1.7pc{\ar[r] &}\!
\prod_{j=0}^s \; {\rm end}_{\mathscr{P}_{\Gamma}(J\cap [\![i_j,i_{j+1}]\!])}(\mathbf{Q_j},\mathbf{R_j})
\end{equation}
is injective and its image has finite index.

\item[(c)]
The functors $\mathbf{d}(I_0,I_1)$ are natural in
$(I_0,I_1)$ and yield a morphism of diagrams of groupoids from $\mathscr{Q}$
to the diagram of indexing groupoids of $\mathcal{V}^{tor}$.

\item[(d)] The following square commutes:
$$\xymatrix@C=3pc{\mathscr{Q}_{\Gamma'}(I_0,I_1) \ar[r]^-{\mathbf{d}(I_0,I_1)}
\ar[d]_-{{\rm int}(g)}  & \prod_{j=0}^s \mathscr{P}_{\Gamma'}(J\cap
[\![i_j,i_{j+1}]\!]) \ar[d]^-{{\rm int}(g)}\\
\mathscr{Q}_{\Gamma}(I_0,I_1) \ar[r]^-{\mathbf{d}(I_0,I_1)} &
\prod_{j=0}^s \mathscr{P}_{\Gamma}(J\cap
[\![i_j,i_{j+1}]\!]).}$$

\end{enumerate}

\end{lemma}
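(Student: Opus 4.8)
\textbf{Proof plan for Lemma \ref{lemma:nat-appl-scr-Q-P-grpd}.}
The whole statement is bookkeeping about parabolic $\Q$-subgroups and their
conjugacy classes, so the plan is to unwind the definitions carefully and
reduce each of (a)--(d) to elementary facts about the lattice of standard
parabolic $\Q$-subgroups and about neat arithmetic groups. First I would fix
notation: given $(\mathbf{Q},\mathbf{E})\in \mathscr{Q}_\Gamma(I_0,I_1)$, the
subgroup $\mathbf{Q}$ has cotype $I_1$ and $\mathbf{E}$ has cotype $I_1\cup
([\![0,r]\!]- I_0)=[\![0,r]\!]- I_0$ when we include $0$ as the cotype of
$\mathbf{G}$; the key combinatorial input is that, for each $j$, there is a
\emph{unique} parabolic $\Q$-subgroup containing a fixed parabolic and having
a prescribed cotype, and that intersections of parabolics containing a common
parabolic $\mathbf{E}$ correspond to unions of cotypes. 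This gives the
well-definedness of $\mathbf{Q_j}$ and $\mathbf{E_j}$, and the two identities
$\mathbf{Q}=\bigcap_j\mathbf{Q_j}$, $\mathbf{E}=\bigcap_j\mathbf{E_j}$ follow
because $J\cap[\![i_0,i_1]\!],\dots,J\cap[\![i_s,r]\!]$ cover $J$ (with the
overlap exactly at the $i_j$'s) and $[\![0,r]\!]- I_0 = J$. Passing $\mathbf{E_j}$
to its image $\mathbf{R_j}$ in $\mathbf{M}_{\mathbf{Q_j},h}$ is exactly the
recipe defining $\mathscr{P}_\Gamma$ in \S\ref{subsub:wTtor}, so on objects
$\mathbf{d}(I_0,I_1)$ is forced; on morphisms, an element $\gamma\in\Gamma$
conjugating $\mathbf{E_1}$ to $\mathbf{E_2}$ automatically conjugates each
$\mathbf{E_{1,j}}$ to $\mathbf{E_{2,j}}$ and each $\mathbf{Q_{1,j}}$ to
$\mathbf{Q_{2,j}}$ by uniqueness of the cotype-completions, and compatibility
with the equivalence relations is checked by noting
$\mathbf{H_{Q,E}}\subset \mathbf{K_{Q_j,R_j}}$ for every $j$ (both are the
``$h$-part'' subgroups, and $\mathbf{H_{Q,E}}$ sits inside the smallest of
them). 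This proves (a), and (c) is then just the observation that all the
completions are compatible with enlarging $(I_0,I_1)$ to $(I_0',I_1')$, which
is the same uniqueness statement, while (d) is immediate since all the
constructions are equivariant under conjugation by $g$.

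The substantive point is (b): injectivity and finite index of
\begin{equation*}
\Gamma(\mathbf{E}/\mathbf{H_{Q,E}}) \longrightarrow
\prod_{j=0}^s \Gamma(\mathbf{E_j}/\mathbf{K_{Q_j,R_j}})
\simeq \prod_{j=0}^s \Gamma(\mathbf{R_j}/\mathbf{R_j'}_h),
\end{equation*}
using \eqref{eq:endo-dans-groupoid-scr-Q} and Lemma \ref{lemme:P-sub-Gamma}(b).
For injectivity I would argue that the kernel consists of classes of
$\gamma\in \Gamma\cap E$ lying in $\Gamma\cap K_{Q_j,R_j}$ for all $j$;
since $\mathbf{K_{Q_j,R_j}}=\mathbf{N_{Q_j}S_{Q_j}}\widetilde{\mathbf{M}}_{\mathbf{Q_j},\ell}\mathbf{R_j'}_h$
and $\bigcap_j \mathbf{K_{Q_j,R_j}}$ can be computed cotype-by-cotype exactly
as in (a), one gets $\bigcap_j\mathbf{K_{Q_j,R_j}}\cap\mathbf{E}=\mathbf{H_{Q,E}}$,
so $\gamma\in\Gamma\cap H_{Q,E}$, i.e.\ its class is trivial. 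For finite
index, the cleanest route is to pass to $\mathbf{E}_{\mathbf{Q},\mathbf{R}}$-type
language: $\mathbf{E}/\mathbf{H_{Q,E}}$ and each $\mathbf{E_j}/\mathbf{K_{Q_j,R_j}}$
are of the form $\mathbf{R}/\mathbf{R'}_h$ for the appropriate subordinate-maximal
parabolic, and the map on $\Q$-points is \emph{surjective on algebraic groups}
(it is a product of projections whose combined kernel, as just computed, is
$\mathbf{H_{Q,E}}$, and $\mathbf{H_{Q,E}}$ normal in $\mathbf{E}$ with the
quotient embedding as a finite-index-image subgroup of the product of the
$\mathbf{R_j}/\mathbf{R_j'}_h$'s because the reductive ``$\ell$-parts'' that
distinguish the $\mathbf{K_{Q_j,R_j}}$'s intersect in $\mathbf{H_{Q,E}}$).
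Finiteness of the index on arithmetic groups then follows from the Borel--Harish-Chandra
type statement that for an exact sequence of $\Q$-groups with unipotent-by-semisimple
pieces the induced sequence of neat arithmetic subgroups has finite-index image,
together with the fact (built into the neat hypothesis) that $\Gamma$ behaves
functorially on subquotients as in \S\ref{subsect:generalities}.

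The main obstacle I expect is precisely the finite-index claim in (b): one must
identify $\bigcap_{j=0}^s \mathbf{K_{Q_j,R_j}}\cap \mathbf{E}$ with $\mathbf{H_{Q,E}}$
\emph{scheme-theoretically} (not just on $\Q$-points) and then control the cokernel
of the arithmetic-group map, which requires knowing that the relevant
$\mathbf{E}/\mathbf{H_{Q,E}}$ maps onto a subgroup of finite index in
$\prod_j \mathbf{R_j}/\mathbf{R_j'}_h$ and that finite index of the real (or
$\Q$-) groups forces finite index of arithmetic subgroups under the neatness
assumption. All the other assertions are purely formal consequences of the
uniqueness of cotype-completions of parabolics and of the definitions of
$\mathscr{P}_\Gamma$ and $\mathscr{Q}_\Gamma$, so I would dispatch (a), (c),
(d) quickly and concentrate the write-up on the two verifications in (b).
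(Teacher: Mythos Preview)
Your treatment of (a), (c), (d) matches the paper's: these are bookkeeping via uniqueness of cotype-completions, and the paper explicitly leaves the naturality checks to the reader.

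For (b), your approach and the paper's diverge. You try to prove injectivity and finite index separately: injectivity via $\bigcap_j \mathbf{K}_{\mathbf{Q_j},\mathbf{R_j}}\cap\mathbf{E}=\mathbf{H_{Q,E}}$, and finite index by a somewhat vague appeal to Borel--Harish-Chandra--type principles about arithmetic subgroups under surjections of $\Q$-groups. The paper instead proves a single, stronger structural fact: the natural map
\[
\mathbf{E}/\mathbf{H_{Q,E}} \;\longrightarrow\; \prod_{j=0}^s \mathbf{E_j}/\mathbf{K_j}
\]
is an \emph{isomorphism of algebraic $\Q$-groups}. Once you have this, both injectivity and finite index on arithmetic subgroups are immediate (the latter because $\Gamma(\mathbf{E}/\mathbf{H_{Q,E}})$ and $\prod_j\Gamma(\mathbf{E_j}/\mathbf{K_j})$ are two arithmetic subgroups of the same $\Q$-group). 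The mechanism for this isomorphism is the key point you are missing: the type of $\mathbf{B}=\mathbf{B_{Q,E}}$ decomposes into the disjoint intervals $]\!]i_0,i_1[\![,\dots,]\!]i_{s-1},i_s[\![,\,]\!]i_s,m[\![,\,]\!]m,r]\!]$ (with $m=\max(J)$), and this yields an almost direct product decomposition $\widetilde{\mathbf{M}}_{\mathbf{B},\ell}=\widetilde{\mathbf{M}}_{\mathbf{B},\ell}^{(0)}\times\cdots\times\widetilde{\mathbf{M}}_{\mathbf{B},\ell}^{(s)}$ with $\widetilde{\mathbf{M}}_{\mathbf{B},\ell}^{(j)}\simeq\widetilde{\mathbf{M}}_{\mathbf{R_j'},\ell}$. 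The image $\mathbf{F}$ of $\mathbf{E}$ in $\widetilde{\mathbf{M}}_{\mathbf{B},\ell}$ is $\mathbf{E}/\mathbf{H_{Q,E}}$, and it splits accordingly as $\mathbf{F}^{(0)}\times\cdots\times\mathbf{F}^{(s)}$ with $\mathbf{F}^{(j)}\simeq\mathbf{E_j}/\mathbf{K_j}$ via \eqref{eq:EmodK-RmodR'h}.

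Your ``main obstacle'' paragraph correctly identifies where the difficulty lies, but your proposed resolution (asserting finite-index image at the algebraic-group level, then invoking a general principle on arithmetic groups) leaves unproved exactly the assertion that needs work. The interval decomposition of $\widetilde{\mathbf{M}}_{\mathbf{B},\ell}$ is what makes the argument go through cleanly, and it replaces any need for exact-sequence arguments on arithmetic groups.
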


\begin{proof}
We prove only parts (a) and (b), and leave the naturality questions to the reader.

That $\mathbf{E}=\bigcap_{j=0}^s\mathbf{E_j}$ is clear (as is
$\mathbf{Q}=\bigcap_{j=0}^s \mathbf{Q_j}$), so there is a
diagonal embedding $\mathbf{E}\hookrightarrow \mathbf{E_0}\times
\cdots \times\mathbf{E_s}$.
For $\gamma\in \mathbf{G}(\Q)$, $\mathbf{d}(I_0,I_1)$ takes
$(\gamma\mathbf{Q}\gamma^{-1},\gamma\mathbf{E}\gamma^{-1})$ to
$(\gamma\mathbf{Q_j}\gamma^{-1},\gamma\mathbf{R_j}\gamma^{-1})_{0\leq
j \leq s}$.
Thus, to show that \eqref{eq:nat-appl-scr-Q-scr-P-groupoids} is a
morphism of groupoids, it suffices to check that
$\Gamma(\mathbf{H_{Q,E}})$ is in the kernel of
$\Gamma(\mathbf{E}) \to \prod_{j=0}^s \Gamma(\mathbf{E_j}/\mathbf{K_j})$,
where $\mathbf{K_j}=\mathbf{K_{Q_j,R_j}}$ is as in
\S\ref{subsub:wTtor}. In fact, we can show more, namely that there
is an induced isomorphism of algebraic $\Q$-groups:
\begin{equation}
\label{eq:Dmod=Rmod}
\mathbf{E}/\mathbf{H_{Q,E}}\, \simeq
\mathbf{E_0}/\mathbf{K_0} \times \cdots \times \mathbf{E_s}/\mathbf{K_s},
\end{equation}
which will also imply the stated properties of
\eqref{eq-lemma:nat-appl-scr-Q-P-grpd-1}.

Denote $\mathbf{Q_{s+1}}$ the maximal or improper parabolic
$\Q$-subgroup of $\mathbf{G}$ to which $\mathbf{E}$ is subordinate.
Thus, we have $\mathbf{B_{Q,E}}=\bigcap_{j=0}^{s+1}\mathbf{Q_j}$.
To prove \eqref{eq:Dmod=Rmod}, we use that the type of $\mathbf{B}=\mathbf{B_{Q,E}}$ decomposes into a disjoint union of (possibly empty) intervals:
$$]\!]i_0,i_1[\![\; \bigsqcup \dots \bigsqcup \; ]\!]i_{s-1},i_s[\![\; \bigsqcup\; ]\!]i_s, m[\![ \;\bigsqcup\; ]\!]m,r]\!],$$
with
$m={\rm max}(J)$.
This yields an almost direct product decomposition
\begin{equation}
\label{eq-lemma:nat-appl-scr-Q-P-grpd-316}
\widetilde{\mathbf{M}}_{\mathbf{B},\ell}=\widetilde{\mathbf{M}}_{\mathbf{B},\ell}^{(0)} \times \dots \times
\widetilde{\mathbf{M}}_{\mathbf{B},\ell}^{(s)}\,,
\end{equation}
with $\widetilde{\mathbf{M}}^{(j)}_{\mathbf{B},\ell}\simeq
\widetilde{\mathbf{M}}_{\mathbf{R'_j},\ell}$ as sub-quotients of $\mathbf{G}$ for all $0\leq j \leq s$.
Here, as in
\S\ref{subsub:wTtor},
$\mathbf{R_j'}$ denote the maximal or improper parabolic $\Q$-subgroup of $\mathbf{M}_{\mathbf{Q_j},h}$ to which $\mathbf{R_j}$ is subordinate.\footnote{In fact, $\mathbf{R_j'}$ is improper unless $j=s$ and $J\cap [\![i_s,n]\!]=\{i_s\}$.}
Let $\mathbf{F}\simeq
\mathbf{E}/\mathbf{H_{Q,E}}$ be the image of $\mathbf{E}$ by the projection of $\mathbf{B}$ to (the quotient by a finite normal subgroup of) $\widetilde{\mathbf{M}}_{\mathbf{B},\ell}$.
The decomposition
\eqref{eq-lemma:nat-appl-scr-Q-P-grpd-316} induces a decomposition
of $\mathbf{F}$ into an almost direct product
$\mathbf{F}=\mathbf{F}^{(0)}\times \dots \times \mathbf{F}^{(s)}$.
For each $0\leq j \leq s$, $\mathbf{F}^{(j)}$ corresponds to
the image of
$\mathbf{R_j}$ in $\widetilde{\mathbf{M}}_{\mathbf{R'_j},\ell}$
modulo the identification
$\widetilde{\mathbf{M}}_{\mathbf{B},\ell}^{(j)}\simeq
\widetilde{\mathbf{M}}_{\mathbf{R'_j},\ell}$. That image is
naturally isomorphic to $\mathbf{E_j}/\mathbf{K_j}$ by
\eqref{eq:EmodK-RmodR'h}.
This proves the lemma.
\end{proof}

\begin{remark}
The statement of Lemma
\ref{lemme:criterion-for-non-emptyness-cal-V}, can be expressed in
terms of $\mathbf{d}$.  For an object $\dagger$ in
$\prod_{j=0}^s\mathscr{P}_{\Gamma}(J\cap [\![i_j,i_{j+1}]\!])$,
$\mathcal{V}^{tor}(\dagger)$ is non-empty if and only if $\dagger$
is in the essential image of $\mathbf{d}(I_0,I_1)$, i.e.,
isomorphic to an object lying in the image of
$\mathbf{d}(I_0,I_1)$.
\end{remark}

\begin{lemma}
\label{lemme-clef-pour-simplifier-V-tor}
Fix $(I_0,I_1)\in \mathcal{P}_2([\![1,r]\!])$, and let
$J=[\![1,r]\!]- I_0$ and $\{0\}\bigsqcup I_1=\{i_0<\dots <i_s\}$ as usual.

\begin{enumerate}

\item[(a)]
Let $(\mathbf{Q},\mathbf{E})\in \mathscr{Q}_{\Gamma}(I_0,I_1)$
and denote $(\mathbf{Q_j},\mathbf{R_j})_{0\leq j\leq s}$
its image by the functor $\mathbf{d}(I_0,I_1)$.
The group
$\prod_{j=0}^s{\rm end}_{\mathscr{P}_{\Gamma}(J\cap [\![i_j,i_{j+1}]\!])}
(\mathbf{Q_j},\mathbf{R_j})$ permutes transitively the connected components of
$\mathcal{V}^{tor}((\mathbf{Q_j},\mathbf{R_j})_{0\leq j\leq s})$.
Moreover, the latter has a distinguished connected component
$\mathcal{V}^{\star,\, tor}((\mathbf{Q_j},\mathbf{R_j})_{0\leq
j\leq s})$ whose stabilizer is ${\rm
end}_{\mathscr{Q}_{\Gamma}(I_0,I_1)}(\mathbf{Q},\mathbf{E})$,
considered, via the monomorphism
\eqref{eq-lemma:nat-appl-scr-Q-P-grpd-1}, as a subgroup of
$\prod_{j=0}^s{\rm end}_{\mathscr{P}_{\Gamma}(J\cap
[\![i_j,i_{j+1}]\!])} (\mathbf{Q_j},\mathbf{R_j})$.

\item[(b)] Let $(\mathbf{Q^{\sharp}},\mathbf{E^{\sharp}})$ be another object of $\mathscr{Q}_{\Gamma}(I_0,I_1)$ and denote
$(\mathbf{Q_j^{\sharp}},\mathbf{R^{\sharp}_j})_{0\leq j \leq s}$ its image by ${\rm d}(I_0,I_1)$. Let $(\gamma_j)_{0\leq j \leq s}\in \prod_{j=0}^s \hom_{\mathscr{P}_{\Gamma}(J\cap [\![i_j,i_{j+1}]\!])}((\mathbf{Q_j},\mathbf{R_j}),(\mathbf{Q^{\sharp}_j},\mathbf{R^{\sharp}_j}))$.
Assume that the isomorphism
$$\mathcal{V}^{tor}((\mathbf{Q_j},\mathbf{R_j})_{0\leq j \leq s}) \overset{\sim}{\to} \mathcal{V}^{tor}((\mathbf{Q_j^{\sharp}},\mathbf{R_j^{\sharp}})_{0\leq j \leq s}),$$
induced by $(\gamma_j)_{0\leq j \leq s}$,
takes $\mathcal{V}^{\star, \, tor}((\mathbf{Q_j},\mathbf{R_j})_{0\leq j \leq s})$ onto
$\mathcal{V}^{\star, \, tor}((\mathbf{Q_j^{\sharp}},\mathbf{R_j^{\sharp}})_{0\leq j \leq s})$. Then $(\gamma_j)_{0\leq j \leq s}$ is in the image of
$\hom_{\mathscr{Q}_{\Gamma}(I_0,I_1)}((\mathbf{Q},\mathbf{E}),
(\mathbf{Q^{\sharp}},\mathbf{E^{\sharp}}))$ by
$\mathbf{d}(I_0,I_1)$.
\end{enumerate}

\end{lemma}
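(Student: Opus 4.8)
The plan is to prove parts (a) and (b) simultaneously by induction on $s$, peeling off the recursive construction of $\mathcal{V}^{tor}$ in \S\ref{subsub:V} one stage at a time. For $s=0$ one has $I_1=\emptyset$ and $\mathcal{V}^{tor}(I_0,\emptyset)=\widetilde{T}^{tor}(J)$ with $J=[\![0,r]\!]-I_0$, while $\mathbf{d}(I_0,\emptyset)$ is, up to the evident identification, an equivalence $\mathscr{Q}_{\Gamma}(I_0,\emptyset)\simeq\mathscr{P}_{\Gamma}(J)$; so the statement reduces to the assertions that the connected components of $\widetilde{T}^{tor}(J)$ are the $\widetilde{T}^{tor}(\mathbf{Q},\mathbf{R})$, that $\mathscr{P}_{\Gamma}(J)$ permutes them transitively within each orbit, that the stabilizer of $\widetilde{T}^{tor}(\mathbf{Q},\mathbf{R})$ is $\operatorname{end}_{\mathscr{P}_{\Gamma}(J)}(\mathbf{Q},\mathbf{R})$, and that the distinguished component (the one containing $\mathcal{B}_J(\mathbf{Q},\mathbf{R})$) is the one singled out by $\mathbf{E}$. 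All of this is read off from the star-gluing construction of $\widetilde{T}^{tor}(I)$ together with the quotient identification \eqref{eq:rajoute-quotient-tilde-T-tor=T-tor} and Lemma \ref{lemma:universal-cover-cal-B-I}.

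For the inductive step I would apply the induction hypothesis to the truncated data $(I_0',I_1')$ obtained by discarding $i_s$ from $I_1$ (and enlarging $I_0$ accordingly), which controls the connected components of $\mathcal{V}^{tor}_s((\mathbf{Q}_j,\mathbf{R}_j)_{j<s})$ and its distinguished component $\mathcal{V}^{\star,\,tor}_s$, whose stabilizer is the image of $\operatorname{end}_{\mathscr{Q}_{\Gamma}(I_0',I_1')}(\mathbf{Q}',\mathbf{E}')$. One then has $\mathcal{V}^{tor}((\mathbf{Q}_j,\mathbf{R}_j)_{j\le s})=\pi_0(\mathcal{V}^{tor}_s/Y^{tor}_{i_s})\times_{Y^{tor}_{i_s}}\widetilde{T}^{tor}(\mathbf{Q}_s,\mathbf{R}_s)$, and I would analyze the components in two stages. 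First, $\pi_0(\mathcal{V}^{\star,\,tor}_s/Y^{tor}_{\mathbf{Q}_s})$ is, by the argument already used in Case~2 of the proof of Proposition \ref{prop:comparison-cal-Y-and-cal-V}, the toroidal compactification of a connected \'etale cover ${^{\diamond}\!X}^{bb}_{\mathbf{Q}_s}$ of $X^{bb}_{\mathbf{Q}_s}$ dominated by $\widetilde{X}^{bb}_{\mathbf{Q}_s}$, the cover being precisely the one whose deck group is the image in $\Gal(\widetilde{X}^{bb}_{\mathbf{Q}_s}/X^{bb}_{\mathbf{Q}_s})$ (see \eqref{eq:galois-group-tilde-X-bb}) of the truncated stabilizer. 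Second, the fibre product over $Y^{tor}_{\mathbf{Q}_s}$ of this cover with the Zariski-locally-trivial cover $\widetilde{T}^{tor}(\mathbf{Q}_s,\mathbf{R}_s)\to T^{tor}(J\cap[\![i_s,r]\!])$: its set of connected components, and the distinguished one, are computed from the covering-space data by means of the towers \eqref{tower}, the hereditary property (Proposition \ref{prop:hereditary-for-toroidal}) and the normalizer computations of Proposition \ref{normalizers}. The parabolic $\mathbf{E}=\bigcap_j\mathbf{E}_j$ furnished by Lemma \ref{lemma:nat-appl-scr-Q-P-grpd}(a) (it is parabolic by the essential-image criterion of Lemma \ref{lemme:criterion-for-non-emptyness-cal-V}) provides a compatible base point in each factor of every fibre product appearing in the recursion, hence singles out $\mathcal{V}^{\star,\,tor}$, and the stabilizer calculation comes out to $\operatorname{end}_{\mathscr{Q}_{\Gamma}(I_0,I_1)}(\mathbf{Q},\mathbf{E})$ once one feeds the almost-direct-product decomposition \eqref{eq:Dmod=Rmod} from the proof of Lemma \ref{lemma:nat-appl-scr-Q-P-grpd} into the above count.

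Part (b) is the rigidity counterpart of (a): an isomorphism of $\mathcal{V}^{tor}$-terms induced by a tuple $(\gamma_j)_j\in\prod_j\hom_{\mathscr{P}_{\Gamma}}((\mathbf{Q}_j,\mathbf{R}_j),(\mathbf{Q}_j^{\sharp},\mathbf{R}_j^{\sharp}))$ which carries the distinguished component to the distinguished component forces, stage by stage in the same recursion, the $\gamma_j$ to be coherent, i.e., to agree modulo the subgroups $\mathbf{K}_{\mathbf{Q}_j,\mathbf{R}_j}$ on the overlapping intervals; the diagonal embedding $\mathbf{E}\hookrightarrow\mathbf{E}_0\times\cdots\times\mathbf{E}_s$ used in Lemma \ref{lemma:nat-appl-scr-Q-P-grpd}(a) then yields a single $\gamma\in\Gamma$ with $\gamma\mathbf{E}\gamma^{-1}=\mathbf{E}^{\sharp}$ mapping to $(\gamma_j)_j$, i.e., an element of $\hom_{\mathscr{Q}_{\Gamma}(I_0,I_1)}((\mathbf{Q},\mathbf{E}),(\mathbf{Q}^{\sharp},\mathbf{E}^{\sharp}))$.

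I expect the main obstacle to be the connected-component bookkeeping for the fibre products $\pi_0(\,\cdot\,/Y^{tor}_{i_j})\times_{Y^{tor}_{i_j}}\widetilde{T}^{tor}(\mathbf{Q}_j,\mathbf{R}_j)$: showing that the set of components at each stage is precisely a torsor under the finite cokernel of the monomorphism \eqref{eq-lemma:nat-appl-scr-Q-P-grpd-1} restricted to that stage, and that the parabolic $\mathbf{E}$ pins down a canonical element of this torsor. This is the point where the arithmetic of the \'etale covers $\widetilde{X}^{bb}_{\mathbf{Q}}\to X^{bb}_{\mathbf{Q}}$ and of the groups $\Gamma(\widetilde{\mathbf{M}}_{\mathbf{Q},\ell}\,|\,\mathbf{R})$ must be matched against the combinatorics of the groupoids $\mathscr{P}_{\Gamma}$ and $\mathscr{Q}_{\Gamma}$; the remaining verifications (naturality in $(I_0,I_1)$, compatibility with ${\rm int}(g)$) are formal diagram-chasing of the type already carried out in the preceding lemmas.
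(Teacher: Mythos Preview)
Your proposal is correct and follows essentially the same inductive strategy as the paper: induction along the recursive construction of $\mathcal{V}^{tor}_t$, identification of $\pi_0(\mathcal{V}^{\star,\,tor}_t/Y^{tor}_{i_t})$ with the toroidal compactification of an intermediate \'etale cover ${^{\diamond}\!X}^{bb}_{\mathbf{Q_t}}$, and stabilizer computation via the decomposition \eqref{eq:Dmod=Rmod}. The paper's proof differs only in that it makes the ``compatible base point'' you allude to completely explicit: at each step it introduces the auxiliary cover ${^{\diamond}\widetilde{T}}^{tor}(\mathbf{Q_t},\mathbf{R_t})$ sitting in a cartesian square over $\widetilde{T}^{tor}(\mathbf{Q_t},\mathbf{R_t})$ and ${^{\diamond}T}^{tor}(\mathbf{Q_t},\mathbf{R_t})$, and defines $\mathcal{V}^{\star,\,tor}_{t+1}$ as its image; the stabilizer is then pinned down by a lift-and-reduce argument that chases an element $\gamma$ through the factors $\Gamma^{(t)}_{\mathbf{Q,E}}\hookrightarrow\Gamma^{(t)}_\ell$ and the finite quotients ${^{\diamond}\Gamma}^{(t)}_h\backslash\Gamma^{(t)}_h\simeq{^{\diamond}\Gamma}^{(t)}_\ell\backslash\Gamma^{(t)}_\ell$, which is precisely the bookkeeping you flagged as the main obstacle.
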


\begin{proof}
As in the proof of Lemma
\ref{lemma:nat-appl-scr-Q-P-grpd},
we let $\mathbf{E_j}=\mathbf{E_{Q_j,R_j}}$ and
$\mathbf{K_j}=\mathbf{K_{Q_j,R_j}}$ for $0\leq j \leq s$.
We extend the family $(\mathbf{Q_j})_{0\leq j \leq s}$ by taking $\mathbf{Q_{s+1}}$ the maximal or improper parabolic $\Q$-subgroup of $\mathbf{G}$ to which $\mathbf{E}$ is subordinate.
As such, $\mathbf{E_j}$ is
subordinate to $\mathbf{Q_{j+1}}$ for all $0\leq j \leq s$.
(We also use similar notation for $(\mathbf{Q^{\sharp}},\mathbf{E^{\sharp}})$.)

For each $1\leq t \leq s+1$, let $\mathbf{Q(t)} = \mathbf{Q_0}\cap
\cdots \cap \mathbf{Q_{t}}$. Thus, we have
$\mathbf{Q(s+1)}=\mathbf{B_{Q,E}}$.
We also let:
$$
^{\diamond}\Gamma^{(t)}_h=\Gamma(\mathbf{M}_{\mathbf{Q(t)}})\cap M_{Q(t),h}
\quad\text{ and }\quad \Gamma^{(t)}_h=\Gamma(\mathbf{M}_{\mathbf{Q(t)},h});
$$
$$^{\diamond}\Gamma^{(t)}_{\ell}=\Gamma(\mathbf{M}_{\mathbf{Q(t)}})\cap \widetilde{M}_{Q(t),\ell} \quad \text{ and } \quad
\Gamma^{(t)}_{\ell}=\Gamma(\widetilde{\mathbf{M}}_{\mathbf{Q(t)},\ell}).
$$
Then we have canonical
isomorphisms (of finite groups):
$$\frac{\Gamma^{(t)}_h}{^{\diamond}\Gamma^{(t)}_h}\simeq\frac{\Gamma(\mathbf{M_{Q(t)}})}{{^{\diamond}\Gamma^{(t)}_{\ell}}\cdot{^{\diamond}\Gamma^{(t)}_h}}
\simeq
\frac{\Gamma^{(t)}_{\ell}}{^{\diamond}\Gamma^{(t)}_{\ell}}.
$$
Moreover, for each $1\leq t \leq s+1$, let $\mathbf{E(t)}=\mathbf{E_0}\cap \cdots \cap \mathbf{E_{t-1}}$. Then $\mathbf{E(t)}\subset \mathbf{Q(t)}$ and they are both subordinate to $\mathbf{Q_{t}}$. Also, let
$\Gamma^{(t)}_{\mathbf{Q,E}}$ be the intersection
of $\Gamma^{(t)}_{\ell}$ with the image of
$E(t)\subset Q(t)$ by the projection of $Q(t)$ to (the quotient by a finite normal subgroup of) $\widetilde{M}_{Q(t),\ell}$.
In particular, $\Gamma^{(s+1)}_{\mathbf{Q,E}}=\Gamma(\mathbf{E}/\mathbf{H_{Q,E}})={\rm end}_{\mathscr{Q}_{\Gamma}(I_0,I_1)}(\mathbf{Q},\mathbf{E})$.
By Lemma
\ref{lemma:nat-appl-scr-Q-P-grpd}, there is a monomorphism
$\Gamma^{(t)}_{\mathbf{Q,E}} \hookrightarrow \prod_{j=0}^{t-1}\Gamma(\mathbf{E_j}/\mathbf{K_j})$ with finite index.

We show the following properties by induction on $1\leq t \leq s+1$:
\begin{enumerate}

\item[(a$'$)] The group $\prod_{j=0}^{t-1}{\rm end}_{\mathscr{P}_{\Gamma}(J\cap [\![i_j,i_{j+1}]\!])}(\mathbf{Q_j},\mathbf{R_j})$ acts transitively on the set of connected components of
$\mathcal{V}_t^{tor}((\mathbf{Q_j},\mathbf{R_j})_{0\leq j \leq t-1})$.
The latter has a distinguished connected component
$\mathcal{V}^{\star,\, tor}_t((\mathbf{Q_j},\mathbf{R_j})_{0\leq j \leq t-1})$
whose stabilizer is
$\Gamma^{(t)}_{\mathbf{Q,E}}$.
Moreover,
$\pi_0(\mathcal{V}_t^{\star,\, tor}((\mathbf{Q_j},\mathbf{R_j})_{0\leq j \leq t-1})/Y_{i_t}^{tor})$
is canonically isomorphic to the toroidal compactification
$\overline{({^{\diamond}\!X}^{bb}_{\mathbf{Q_t}})}^{tor}_{\Sigma_{(\mathbf{Q_t})}}$
of the scheme $^{\diamond}\!X^{bb}_{\mathbf{Q_t}}$ whose variety of
$\C$-points is ${^{\diamond}\Gamma^{(t)}_h}\backslash
e_h(\mathbf{Q_t})$.

\item[(b$'$)] If
$(\gamma_j)_{0\leq j \leq t-1}:\mathcal{V}^{tor}_t((\mathbf{Q_j},\mathbf{R_j})_{0\leq j \leq t-1}) \overset{\sim}{\to}
\mathcal{V}^{tor}_t((\mathbf{Q_j^{\sharp}},\mathbf{R_j^{\sharp}})_{0\leq j \leq t-1})$
preserves the distinguished connected components in (a$'$), then
there is $\widetilde{\gamma}(t)\in \Gamma$ such that
$\widetilde{\gamma}(t)\,\mathbf{E_j}\,\widetilde{\gamma}(t)^{-1}=\mathbf{E_j^{\sharp}}$ and the class of
$\widetilde{\gamma}(t)$ in
$[(\mathbf{Q_j},\mathbf{R_j}),(\mathbf{Q_j^{\sharp}},\mathbf{R_j^{\sharp}})]/\sim$ is equal to $\gamma_j$
for all $0\leq j \leq t-1$.

\end{enumerate}

When $t=1$, these properties are clear. Indeed, $\mathbf{Q_0}=\mathbf{G}$, $\mathbf{R_0}=\mathbf{E_0}$ and the scheme
$\mathcal{V}^{tor}_1(\mathbf{Q_0},\mathbf{R_0})$ is connected. Also
$\Gamma^{(1)}_{\mathbf{Q,E}}=\Gamma(\widetilde{\mathbf{M}}_{\mathbf{Q_1},\ell}\,|\,\mathbf{R_0})={\rm
end}_{\mathscr{P}_{\Gamma}(J\cap
[\![i_0,i_1]\!])}(\mathbf{Q_0},\mathbf{R_0})$. Thus, (a$'$) and also
(b$'$) hold in this case.
Next we assume that these properties are proven for some $0\leq t
\leq s$, and we prove them for $t+1$.

For the first claim in (a$'$), with
$\mathcal{V}_t^{\star,\, tor}$ already defined, it suffices to
check that
${\rm end}_{\mathscr{P}_{\Gamma}(J\cap [\![i_t,i_{t+1}]\!])}(\mathbf{Q_t},\mathbf{R_t})$
acts transitively on the set of connected components of
\begin{equation}
\label{eq-lemme-clef-pour-simplifier-V-tor-179}
\pi_0(\mathcal{V}_t^{\star,\, tor}((\mathbf{Q_j},\mathbf{R_j})_{0\leq j
\leq t-1})/Y_{\mathbf{Q_t}}^{tor}) \times_{Y^{tor}_{\mathbf{Q_t}}} \widetilde{T}^{tor}(\mathbf{Q_t},\mathbf{R_t}).
\end{equation}
As the left factor above is connected, it suffices to show that
${\rm end}_{\mathscr{P}_{\Gamma}(J\cap [\![i_t,i_{t+1}]\!])}(\mathbf{Q_t},\mathbf{R_t})$
acts transitively on the fibers of the morphism
$\widetilde{T}^{tor}(\mathbf{Q_t},\mathbf{R_t})\to Y^{tor}_{\mathbf{Q_t}}$. This follows from the isomorphism
\eqref{eq:rajoute-quotient-tilde-T-tor=T-tor}.

Next, we specify the connected component $\mathcal{V}^{\star, \,
tor}_{t+1}((\mathbf{Q_j},\mathbf{R_j})_{0\leq j \leq t})$ of the
scheme \eqref{eq-lemme-clef-pour-simplifier-V-tor-179}. Let
${^{\diamond}T}^{tor}(\mathbf{Q_t},\mathbf{R_t})$ be the closure
in
$\overline{({^{\diamond}\!X}^{bb}_{\mathbf{Q_t}})}^{tor}_{\Sigma_{(\mathbf{Q_t})}}$
of the stratum
$({^{\diamond}\!X}^{bb}_{\mathbf{Q_t}})^{tor}_{\mathbf{R_t},\Sigma_{(\mathbf{Q_t})}}$.
Define
${^{\diamond}\widetilde{T}^{tor}}(\mathbf{Q_t},\mathbf{R_t})$ to
be the analogue for
${^{\diamond}T}^{tor}(\mathbf{Q_t},\mathbf{R_t})$ of what
$\widetilde{T}^{tor}(\mathbf{Q_t},\mathbf{R_t})$ is for
$T^{tor}(\mathbf{Q_t},\mathbf{R_t})$, as in Proposition
\ref{prop:diagram-schemes-I-P-Gamma-tilde-T}. Then, there is a Zariski locally trivial cover
${^{\diamond}\widetilde{T}^{tor}}(\mathbf{Q_t}, \mathbf{R_t}) \to
{^{\diamond}T^{tor}}(\mathbf{Q_t},\mathbf{R_t})$
with automorphism group
${^{\diamond}\Gamma^{(t)}_h}
(\mathbf{M}_{\mathbf{R'_t},\ell}\,|\,\mathbf{R_t})$.
(Recall that $\mathbf{R'_t}$ is the maximal or improper parabolic $\Q$-subgroup
of $\mathbf{M}_{\mathbf{Q_t},h}$ to which $\mathbf{R_t}$ is subordinate.) The commutative square
\begin{equation}
\label{eq:for-computing-stab-V-star-raj-001}
\xymatrix@C=1.5pc@R=1.5pc{{^{\diamond}\widetilde{T}^{tor}}(\mathbf{Q_t},\mathbf{R_t}) \ar[d] \ar[r] & \widetilde{T}^{tor}(\mathbf{Q_t},\mathbf{R_t}) \ar[d] \\
{^{\diamond}T^{tor}}(\mathbf{Q_t},\mathbf{R_t}) \ar[r] &
T^{tor}(\mathbf{Q_t},\mathbf{R_t})}
\end{equation}
yields a closed immersion
${^{\diamond}\widetilde{T}^{tor}}(\mathbf{Q_t},\mathbf{R_t})\to
{^{\diamond}T^{tor}}(\mathbf{Q_t},\mathbf{R_t})
\times_{T^{tor}(\mathbf{Q_t},\mathbf{R_t})}
\widetilde{T}^{tor}(\mathbf{Q_t},\mathbf{R_t})$. The target of the latter morphism is a
closed subscheme of
\eqref{eq-lemme-clef-pour-simplifier-V-tor-179}, and we define
$\mathcal{V}^{\star,\, tor}_{t+1}(
(\mathbf{Q_j},\mathbf{R_j})_{0\leq j \leq t})$ to be the image of
${^{\diamond}\widetilde{T}^{tor}}(\mathbf{Q_t},\mathbf{R_t})$.

>From the construction, we have
$\mathcal{V}^{\star,\, tor}_{t+1}(
(\mathbf{Q_j},\mathbf{R_j})_{0\leq j \leq t}) \simeq {^{\diamond}\widetilde{T}^{tor}}(\mathbf{Q_t},\mathbf{R_t})$.
On the other hand,
$\pi_0({^{\diamond}\widetilde{T}^{tor}}(\mathbf{Q_t},\mathbf{R_t})/Y_{\mathbf{Q_{t+1}}}^{tor})$
is canonically isomorphic to the toroidal compactification
$\overline{({^{\diamond}\!X}^{bb}_{\mathbf{Q_{t+1}}})}^{tor}_{\Sigma_{(\mathbf{Q_{t+1}})}}$
of ${^{\diamond}\!X}^{bb}_{\mathbf{Q_{t+1}}}$, the scheme whose
variety of $\C$-points is the quotient of $e_h(\mathbf{Q_{t+1}})$ by the arithmetic group
$\widetilde{{^{\diamond}\Gamma}^{(t)}_{h}}(\mathbf{M}_{\mathbf{R'_{t}},h})$. To obtain the last assertion in (a$'$), we need to
identify the latter arithmetic group with ${^{\diamond}\Gamma}_h^{(t+1)}$, but this is immediate from the definitions.

To verify (a$'$), it remains to compute the stabilizer $S\subset \prod_{j=0}^{t}{\rm end}_{\mathscr{P}_{\Gamma}(J\cap [\![i_j,i_{j+1}]\!])}(\mathbf{Q_j},\mathbf{R_j})$ of the connected component
$\mathcal{V}_{t+1}^{\star,\, tor}((\mathbf{Q_j},\mathbf{R_j})_{0\leq j
\leq t})$. That $S$ contains $\Gamma^{(t+1)}_{\mathbf{Q,E}}$ is easy to see. We show the reverse inclusion. Let $\gamma\in S$.
It decomposes uniquely as a product
$\gamma=\gamma_0\cdots \gamma_t$ with
$\gamma_j\in {\rm end}_{\mathscr{P}_{\Gamma}(J\cap [\![i_j,i_{j+1}]\!])}(\mathbf{Q_j},\mathbf{R_j})$.
We set $\gamma(t)=\gamma_0\cdots \gamma_{t-1}$ so that
$\gamma=\gamma(t)\cdot \gamma_t$.
The morphism
$$\mathcal{V}_{t+1}^{\star,\, tor}((\mathbf{Q_j},\mathbf{R_j})_{0\leq j
\leq t}) \to \pi_0(\mathcal{V}_{t}^{\star,\, tor}((\mathbf{Q_j},\mathbf{R_j})_{0\leq j
\leq t-1})/Y_{\mathbf{Q_t}})$$
being equivariant for the action of $\gamma(t)$, we deduce from the induction hypothesis that $\gamma(t)\in \Gamma^{(t)}_{\mathbf{Q,E}}$.
Moreover, as $\gamma$ acts on the commutative square
\eqref{eq:for-computing-stab-V-star-raj-001}, we deduce that
$\gamma(t)$ stabilizes
${^{\diamond}T^{tor}}(\mathbf{Q_t},\mathbf{R_t})$,
the closure of the $\mathbf{R_t}$-stratum in
$\overline{({^{\diamond}\!X}^{bb}_{\mathbf{Q_t}})}^{tor}_{\Sigma_{(\mathbf{Q_t})}}$.
This shows that $\gamma(t)$ maps to an element of the subgroup
${^{\diamond}\Gamma}^{(t)}_h\backslash \big{(}{^{\diamond}\Gamma}^{(t)}_h\cdot (\Gamma^{(t)}_h\cap R_t)\big{)}$
by the composition
$$\Gamma^{(t)}_{\mathbf{Q,E}} \to \Gamma^{(t)}_{\ell} \to {^{\diamond}\Gamma}^{(t)}_{\ell} \backslash \Gamma^{(t)}_{\ell}
\simeq {^{\diamond}\Gamma}^{(t)}_{h} \backslash \Gamma^{(t)}_{h}.$$
In other words, there exists a lift $\widetilde{\gamma}(t)\in \Gamma(\mathbf{E(t)})$ of $\gamma(t)$ whose class in $\Gamma^{(t)}_h$ lies in the subgroup ${^{\diamond}\Gamma}^{(t)}_h\cdot (\Gamma^{(t)}_h\cap R_t)$. Now, from the construction, every element of ${^{\diamond}\Gamma}^{(t)}_h$ is the class of an element of $\Gamma(\mathbf{E(t)})$ which has the class of the neutral element in $\Gamma^{(t)}_{\mathbf{Q,E}}$.
Thus, replacing our lift if necessary, we may assume that
the class of $\widetilde{\gamma}(t)$ in $\Gamma^{(t)}_h$ lies in the subgroup $\Gamma^{(t)}_h\cap R_t$. We then have
$\widetilde{\gamma}(t)\in \Gamma(\mathbf{E(t+1)})$, and we let
$\gamma'$ be its image in $\Gamma^{(t+1)}_{\mathbf{Q,E}}$. Clearly, we have $\gamma'(t)=\gamma(t)$. (Here we are using, as for $\gamma$, the decomposition $\gamma'=\gamma'(t)\cdot \gamma'_t\,$.) Replacing $\gamma$ by $\gamma\cdot \gamma'^{-1}$, we may assume that
$\gamma(t)=1$, i.e., $\gamma$ lies in the factor
${\rm end}_{\mathscr{P}_{\Gamma}(J\cap [\![i_t,i_{t+1}]\!])}(\mathbf{Q_t},\mathbf{R_t})$. With this new assumption, consider again the action on the square
\eqref{eq:for-computing-stab-V-star-raj-001}:
$\gamma$ acts by $\gamma_t$ on
$\widetilde{T}^{tor}(\mathbf{Q_t},\mathbf{R_t})$, and by identity
on ${^{\diamond}T^{tor}}(\mathbf{Q_t},\mathbf{R_t})$ and $T^{tor}(\mathbf{Q_t},\mathbf{R_t})$. As the vertical arrows in
\eqref{eq:for-computing-stab-V-star-raj-001} are Zariski
locally trivial covers
of automorphism groups
${^{\diamond}\Gamma^{(t)}_h}
(\mathbf{M}_{\mathbf{R'_t},\ell}\,|\,\mathbf{R_t})$ and
$[\Gamma(\mathbf{M}_{\mathbf{Q_t},h})](\mathbf{M}_{\mathbf{R'_t},h}\,|\,\mathbf{R_t})$ respectively,
we see that $\gamma_{t}$ is necessarily in the subgroup
${^{\diamond}\Gamma^{(t)}_h}
(\mathbf{M}_{\mathbf{R'_t},\ell}\,|\,\mathbf{R_t}) \subset
{\rm end}_{\mathscr{P}_{\Gamma}(J\cap [\![i_t,i_{t+1}]\!])}(\mathbf{Q_t},\mathbf{R_t})$.
But clearly, $\{1\}\times
{^{\diamond}\Gamma^{(t)}_h}
(\mathbf{M}_{\mathbf{R'_t},\ell}\,|\,\mathbf{R_t}) \subset \Gamma^{(t+1)}_{\mathbf{Q,E}}$. This finishes the proof of (a$'$).

For (b$'$), we argue as for the determination of the stabilizer
$S$; here each $\gamma_j:(\mathbf{Q_j},\mathbf{R_j}) \to
(\mathbf{Q_j^{\sharp}},\mathbf{R_j^{\sharp}})$ is a morphism
between two distinct objects. We set $\gamma(t)=\gamma_0\cdots
\gamma_{t-1}$. Using induction, we may find
$\widetilde{\gamma}(t)$ as in (b$'$). Using that $\gamma$ induces
a morphism from the commutative square
\eqref{eq:for-computing-stab-V-star-raj-001} to the similar one
associated to $(\mathbf{Q^{\sharp}},\mathbf{E^{\sharp}})$, we
deduce that $\gamma(t)$ maps the $\mathbf{R_t}$-stratum in
$\overline{({^{\diamond}\!X}^{bb}_{\mathbf{Q_t}})}^{tor}_{\Sigma_{(\mathbf{Q_t})}}$
to the $\mathbf{R^{\sharp}_t}$-stratum in
${\overline{({^{\diamond}\!X}^{bb}_{\mathbf{Q_t^{\sharp}}})}^{tor}_{\Sigma}}_{\!\!\!\!\scriptscriptstyle{(\mathbf{Q_t^{\sharp}})}}$.
As in the case of an endomorphism, this can be used to construct
an element $\widetilde{\gamma}'(t+1)\in \Gamma$ satisfying all the
properties of (b$'$) (for $t+1$), except, possibly, that the class
of $\widetilde{\gamma}'(t+1)$ in
$[(\mathbf{Q_t},\mathbf{R_t}),(\mathbf{Q_t^{\sharp}},\mathbf{R_t^{\sharp}})]/\sim$
is equal to $\gamma_t$. Then, multiplying each $\gamma_j$ by the
inverse of (the class of) $\widetilde{\gamma}'(t+1)$, we reduce to
the case where $\mathbf{Q_j}=\mathbf{Q^{\sharp}_j}$ and
$\mathbf{R_j}=\mathbf{R_j^{\sharp}}$ for all $0\leq j \leq t$. We
are then in the case of an endomorphism, and we may use (a$'$) to
finish the proof.
\end{proof}

For $(I_0,I_1)\in \mathcal{P}_2([\![1,r]\!])$
and $(\mathbf{Q},\mathbf{E})\in \mathscr{Q}_{\Gamma}(I_0,I_1)$,
we set
$$\mathcal{W}^{tor}(\mathbf{Q},\mathbf{E})=\mathcal{V}^{\star,\, tor}(\mathbf{d}(I_0,I_1)(\mathbf{Q},\mathbf{E})).$$
The scheme $\mathcal{W}^{tor}(\mathbf{Q},\mathbf{E})$ can be described as follows.
Write $\mathbf{d}(I_0,I_1)=(\mathbf{Q_j},\mathbf{R_j})_{0\leq j \leq s}$ and let ${^{\diamond}\!X}^{bb}_{\mathbf{Q_s}}$ be the scheme
such that
${^{\diamond}\!X}^{bb}_{\mathbf{Q_s}}(\C)={^{\diamond}\Gamma}(\mathbf{M}_{\mathbf{Q(s)},h})\backslash e_h(\mathbf{Q_s})$,
where
$$
{^{\diamond
}\Gamma}(\mathbf{M}_{\mathbf{Q(s)},h})=
\Gamma(\mathbf{M}_{\mathbf{Q(s)}})\cap M_{Q(s),h}.
$$
(This group
was denoted ${^{\diamond}\Gamma}^{(s)}_h$ in the proof of Lemma
\ref{lemme-clef-pour-simplifier-V-tor}.) Let
${^{\diamond}\mathcal{B}}^c_{(\mathbf{Q_s},\mathbf{R_s}),
\Sigma_{(\mathbf{Q_s})}}$
be the scheme used to construct the $\mathbf{R_s}$-stratum in the
toroidal compactification
$\overline{({^{\diamond}\!X}^{bb}_{\mathbf{Q_s}})}^{tor}_{\Sigma_{
(\mathbf{Q_s})}}$, viz.,
\begin{equation}
\label{eq:where-end-Q-Gamma-appears-naturally-rajoute-0129}
({^{\diamond}\!X}^{bb}_{\mathbf{Q_s}})_{\mathbf{R_s},
\Sigma_{(\mathbf{Q_s})}}^{tor}=\left(\left[{^{\diamond}\Gamma}
(\mathbf{M}_{\mathbf{Q(s)},h})\right](\mathbf{M}_{\mathbf{R'_s},\ell}\,|\, \mathbf{R_s})\right)\backslash
{^{\diamond}\mathcal{B}}^c_{(\mathbf{Q_s},\mathbf{R_s}),
\Sigma_{(\mathbf{Q_s})}},
\end{equation}
where
$\mathbf{R'_s}$ is the maximal or improper parabolic subgroup of
$\mathbf{M}_{\mathbf{Q(s)},h}\simeq \mathbf{M}_{\mathbf{Q_s},h}$
to which $\mathbf{R_s}$ is subordinate.
(In the above formula, the arithmetic subgroup is given by
\eqref{eq:useful-arthmetic-subgr} with ${^{\diamond}\Gamma}
(\mathbf{M}_{\mathbf{Q(s)},h})$ instead of $\Gamma$ and
$\mathbf{R_s}$ instead if $\mathbf{R}$.)
Then
$\mathcal{W}^{tor}(\mathbf{Q},\mathbf{E})$ contains
${^{\diamond}\mathcal{B}}^c_{(\mathbf{Q_s},\mathbf{R_s}),\Sigma_{(\mathbf{Q_s})}}$
as an open dense subset, and we have a cartesian square
\begin{equation}
\label{eq:where-end-Q-Gamma-appears-naturally-rajoute-349}
\xymatrix@C=1.5pc@R=1.5pc{{^{\diamond}\mathcal{B}}^c_{(\mathbf{Q_s},\mathbf{R_s}),\Sigma_{(\mathbf{Q_s})}} \ar[r] \ar[d] &
\mathcal{W}^{tor}(\mathbf{Q},\mathbf{E})\ar[d] \\
({^{\diamond}\!X}^{bb}_{\mathbf{Q_s}})_{\mathbf{R_s},\Sigma_{(\mathbf{Q_s})}}^{tor} \ar[r] & {^{\diamond}T}^{tor}(\mathbf{Q_s},\mathbf{R_s})}
\end{equation}
where the vertical arrows are locally trivial Zariski covers.
These properties determines $\mathcal{W}^{tor}(\mathbf{Q},\mathbf{E})$
up to a canonical isomorphism.

The group $\Gamma$ acts on $\coprod_{(\mathbf{Q},\mathbf{E})\in
{\rm ob}(\mathscr{Q}_{\Gamma}(I_0,I_1))}
\mathcal{W}^{tor}(\mathbf{Q},\mathbf{E})$. The stabilizer
of the connected component $\mathcal{W}^{tor}(\mathbf{Q},\mathbf{E})$ acts through its quotient ${\rm
end}_{\mathscr{Q}_{\Gamma}(I_0,I_1)}(\mathbf{Q},\mathbf{E})$.
Hence, we have a diagram of schemes $\mathcal{W}^{tor}(I_0,I_1)$
indexed by $\mathscr{Q}_{\Gamma}(I_0,I_1)$ and a morphism
\begin{equation}
\label{eq:morphi-W-tor-V-tor}
\mathcal{W}^{tor}(I_0,I_1)
\hookrightarrow \mathcal{V}^{tor}(I_0,I_1)
\end{equation}
which is, for each object, the inclusion of a connected component. One
can check that \eqref{eq:morphi-W-tor-V-tor}
are natural in $(I_0,I_1)\in \mathcal{P}_2([\![1,r]\!])$, hence
they yield a morphism of diagrams in $\Dia(\Sch/\C)$
\begin{equation}
\label{eq:morphi-W-tor-V-tor-Dia-Dia-Sch}
(\mathcal{W}^{tor},\mathcal{P}_2([\![1,r]\!])
\hookrightarrow (\mathcal{V}^{tor},\mathcal{P}_2([\![1,r]\!])).
\end{equation}

\begin{proposition}
\label{prop:equival-entre-W-tor-V-tor}
For all $(I_0,I_1)\in \mathcal{P}_2([\![1,r]\!])$, the inclusion
\eqref{eq:morphi-W-tor-V-tor} yields an equivalence of diagrams
between $\mathcal{W}^{tor}(I_0,I_1)$ and
$\mathcal{V}^{\flat,\, tor}(I_0,I_1)$.
\end{proposition}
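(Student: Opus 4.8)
The plan is to reduce the statement to a fact about indexing categories. Recall that $\mathcal{V}^{\flat, \, tor}(I_0,I_1)$ is the total diagram of the functor sending an object $\dagger$ of $\prod_{j=0}^s\mathscr{P}_{\Gamma}(J\cap[\![i_j,i_{j+1}]\!])$ (the indexing category of $\mathcal{V}^{tor}(I_0,I_1)$) to the discrete diagram of connected components of $\mathcal{V}^{tor}(\dagger)$; thus its indexing category $\int\Pi$ has objects $(\dagger,c)$ with $c$ a connected component of $\mathcal{V}^{tor}(\dagger)$, morphisms $(\dagger,c)\to(\dagger',c')$ given by those arrows $\phi:\dagger\to\dagger'$ whose induced morphism of schemes carries $c$ into $c'$, and the scheme attached to $(\dagger,c)$ is $c$ itself. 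On the other side, $\mathcal{W}^{tor}(I_0,I_1)$ is indexed by $\mathscr{Q}_{\Gamma}(I_0,I_1)$ and attaches to $(\mathbf{Q},\mathbf{E})$ the scheme $\mathcal{W}^{tor}(\mathbf{Q},\mathbf{E})=\mathcal{V}^{\star,\, tor}(\mathbf{d}(I_0,I_1)(\mathbf{Q},\mathbf{E}))$, which is a connected component of $\mathcal{V}^{tor}(\mathbf{d}(I_0,I_1)(\mathbf{Q},\mathbf{E}))$. Hence the rule $(\mathbf{Q},\mathbf{E})\rightsquigarrow(\mathbf{d}(I_0,I_1)(\mathbf{Q},\mathbf{E}),\mathcal{W}^{tor}(\mathbf{Q},\mathbf{E}))$ defines a functor $\Phi:\mathscr{Q}_{\Gamma}(I_0,I_1)\to\int\Pi$; its well-definedness on morphisms is precisely the fact, already built into the construction of the diagram $\mathcal{W}^{tor}(I_0,I_1)$ via \eqref{eq:morphi-W-tor-V-tor}, that an arrow of $\mathscr{Q}_{\Gamma}(I_0,I_1)$ induces via $\mathbf{d}(I_0,I_1)$ an isomorphism of $\mathcal{V}^{tor}$'s carrying $\mathcal{W}^{tor}$ of its source to $\mathcal{W}^{tor}$ of its target. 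By construction $\mathcal{V}^{\flat,\, tor}(I_0,I_1)\circ\Phi=\mathcal{W}^{tor}(I_0,I_1)$ on the nose, so it suffices to prove that $\Phi$ is an equivalence of categories: an equivalence of indexing categories under which the scheme-valued functors coincide yields an equivalence $(\mathrm{id},\Phi)$ in the $2$-category $\Dia(\Sch/\C)$ by choosing a quasi-inverse and transporting the scheme functor along the unit and counit isomorphisms.

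For essential surjectivity, let $(\dagger,c)\in\int\Pi$. Since $c$ is an actual connected component, $\mathcal{V}^{tor}(\dagger)\neq\emptyset$, so by Lemma \ref{lemme:criterion-for-non-emptyness-cal-V} there is $(\mathbf{Q},\mathbf{E})\in\mathscr{Q}_{\Gamma}(I_0,I_1)$ with $\dagger$ isomorphic to $\mathbf{d}(I_0,I_1)(\mathbf{Q},\mathbf{E})$; transporting $c$ along such an isomorphism, we may assume $\dagger=\mathbf{d}(I_0,I_1)(\mathbf{Q},\mathbf{E})=(\mathbf{Q_j},\mathbf{R_j})_{0\leq j\leq s}$. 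By Lemma \ref{lemme-clef-pour-simplifier-V-tor}(a) the group $\prod_{j}{\rm end}_{\mathscr{P}_{\Gamma}(J\cap[\![i_j,i_{j+1}]\!])}(\mathbf{Q_j},\mathbf{R_j})$ permutes the connected components of $\mathcal{V}^{tor}(\dagger)$ transitively, so there is $\gamma$ in this group with $\gamma\cdot\mathcal{W}^{tor}(\mathbf{Q},\mathbf{E})=c$; regarded as an automorphism of $\dagger$, $\gamma$ is then an isomorphism $(\dagger,\mathcal{W}^{tor}(\mathbf{Q},\mathbf{E}))\overset{\sim}{\to}(\dagger,c)$ in $\int\Pi$, so $(\dagger,c)\simeq\Phi(\mathbf{Q},\mathbf{E})$.

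For full faithfulness, fix $(\mathbf{Q},\mathbf{E}),(\mathbf{Q^{\sharp}},\mathbf{E^{\sharp}})\in\mathscr{Q}_{\Gamma}(I_0,I_1)$ with $\mathbf{d}$-images $(\mathbf{Q_j},\mathbf{R_j})_{j}$ and $(\mathbf{Q^{\sharp}_j},\mathbf{R^{\sharp}_j})_{j}$. A morphism $\Phi(\mathbf{Q},\mathbf{E})\to\Phi(\mathbf{Q^{\sharp}},\mathbf{E^{\sharp}})$ in $\int\Pi$ is a tuple $(\gamma_j)_j\in\prod_j\hom_{\mathscr{P}_{\Gamma}(J\cap[\![i_j,i_{j+1}]\!])}((\mathbf{Q_j},\mathbf{R_j}),(\mathbf{Q^{\sharp}_j},\mathbf{R^{\sharp}_j}))$ whose induced isomorphism $\mathcal{V}^{tor}((\mathbf{Q_j},\mathbf{R_j})_j)\overset{\sim}{\to}\mathcal{V}^{tor}((\mathbf{Q^{\sharp}_j},\mathbf{R^{\sharp}_j})_j)$ carries $\mathcal{W}^{tor}(\mathbf{Q},\mathbf{E})$ to $\mathcal{W}^{tor}(\mathbf{Q^{\sharp}},\mathbf{E^{\sharp}})$; by Lemma \ref{lemme-clef-pour-simplifier-V-tor}(b) every such $(\gamma_j)_j$ lies in the image of $\mathbf{d}(I_0,I_1)$, which gives fullness. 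Faithfulness follows because $\mathscr{Q}_{\Gamma}(I_0,I_1)$ and $\prod_j\mathscr{P}_{\Gamma}(J\cap[\![i_j,i_{j+1}]\!])$ are groupoids, so a nonempty morphism set is a torsor under the relevant endomorphism group, and $\mathbf{d}(I_0,I_1)$ is injective on endomorphism groups by Lemma \ref{lemma:nat-appl-scr-Q-P-grpd}(b). Thus $\Phi$ is an equivalence, which proves the proposition. The main obstacle here is not any of the displayed steps — all the geometric substance has already been placed in Lemmas \ref{lemme:criterion-for-non-emptyness-cal-V} and \ref{lemme-clef-pour-simplifier-V-tor} — but rather the careful unwinding of the definitions of the total diagram $\int\Pi$ and of the morphism \eqref{eq:morphi-W-tor-V-tor} needed to confirm that $\Phi$ is well defined on arrows and that $\mathcal{V}^{\flat,\, tor}(I_0,I_1)\circ\Phi$ really is $\mathcal{W}^{tor}(I_0,I_1)$.
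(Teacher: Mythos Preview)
Your proof is correct and follows essentially the same approach as the paper: both reduce the equivalence of diagrams to an equivalence of indexing categories, then invoke Lemma~\ref{lemme:criterion-for-non-emptyness-cal-V} together with Lemma~\ref{lemme-clef-pour-simplifier-V-tor}(a) for essential surjectivity and Lemma~\ref{lemme-clef-pour-simplifier-V-tor}(b) for fullness. Your version is slightly more explicit in separating faithfulness (via the injectivity in Lemma~\ref{lemma:nat-appl-scr-Q-P-grpd}(b)) from fullness, whereas the paper bundles both under the reference to Lemma~\ref{lemme-clef-pour-simplifier-V-tor}, but the substance is the same.
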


\begin{proof}
As $\mathcal{W}^{tor}(I_0,I_1)$ is objectwise connected,
\eqref{eq:morphi-W-tor-V-tor} induces a morphism
\begin{equation}
\label{eq-prop:equival-entre-W-tor-V-tor}
\mathcal{W}^{tor}(I_0,I_1) \!\xymatrix@C=1,7pc{\ar[r] & }\! \mathcal{V}^{\flat,\, tor}(I_0,I_1).
\end{equation}
This morphism is objectwise an isomorphism. Hence, it remains to show that
\eqref{eq-prop:equival-entre-W-tor-V-tor}
induces an equivalence on the indexing categories.

Lemma \ref{lemme-clef-pour-simplifier-V-tor}
implies that the functor underlying \eqref{eq:morphi-W-tor-V-tor}
is fully faithful (i.e., induces a bijection from the set of morphisms between two objects and to the set of morphisms between their images).
It remains to check the essential surjectivity.
By Lemma \ref{lemme:criterion-for-non-emptyness-cal-V},
every object of the indexing category of
$\mathcal{V}^{\flat, \, tor}(I_0,I_1)$ is isomorphic to one of the form
$(\mathbf{d}(\mathbf{Q},\mathbf{E}),C)$
where $(\mathbf{Q},\mathbf{E})\in \mathscr{Q}_{\Gamma}(I_0,I_1)$ and
$C$ is a connected component of
$\mathcal{V}^{tor}(\mathbf{d}(\mathbf{Q},\mathbf{E}))$.
On the other hand,
Lemma \ref{lemme-clef-pour-simplifier-V-tor} states that all the connected components $C$
are conjugate to $\mathcal{W}^{tor}(\mathbf{Q},\mathbf{E})$. This finishes the proof.
\end{proof}

Let
$(\varpi,\varsigma_r):(\mathcal{W}^{tor},\mathcal{P}_2([\![1,r]\!]))\to
(T^{tor},\mathcal{P}^*([\![0,r]\!])^{\rm op})$ and
$\Theta:(\mathcal{W}^{tor},\mathcal{P}_2([\![1,r]\!]))\to
\overline{X}^{bb}$ denote the usual morphisms. We deduce from
Propositions \ref{prop:comparison-cal-Y-and-cal-V} and
\ref{prop:equival-entre-W-tor-V-tor} the following result:

\begin{theorem}
\label{thm:final-form-comput-EE-X-bb}
There are canonical isomorphisms of commutative unitary algebras
$$\EE_{\overline{X}^{bb}}\simeq
\Theta_*(\varpi,\varsigma_r)^* \beta_{\overline{X}^{bb}} \qquad \text{and}
\qquad \An^*(\EE_{\overline{X}^{bb}})\simeq
\Theta^{an}_*(\varpi^{an},\varsigma_r)^* \beta^{an}_{\overline{X}^{bb}}.$$
Moreover, the following diagram
$$\xymatrix@C=1.5pc@R=1.5pc{g^*(\EE_{\overline{X}^{bb}}) \ar[d]_-{\sim} \ar[rr] & & \EE_{\overline{X'}^{bb}} \ar[d]^-{\sim} \\
g^*\Theta_* (\varpi,\varsigma_r)^* \beta_{\overline{X}^{bb}} \ar[r]  & \Theta'_*(\varpi',\varsigma_r)^* g^*\beta_{\overline{X}^{bb}} \ar[r] & \Theta'_* (\varpi',\varsigma_r)^* \beta_{\overline{X'}^{bb}}\!}$$
is commutative, as is the analogous diagram in the analytic context.

\end{theorem}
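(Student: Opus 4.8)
The proof of Theorem \ref{thm:final-form-comput-EE-X-bb} is essentially a matter of transporting the content of Proposition \ref{prop:comparison-cal-Y-and-cal-V} across the equivalence of diagrams established in Proposition \ref{prop:equival-entre-W-tor-V-tor}. The plan is to deduce the statements about $\mathcal{W}^{tor}$ from the corresponding statements about $\mathcal{V}^{tor}$ (equivalently $\mathcal{V}^{\flat,\, tor}$), using only the general principle that the homotopy limit (equivalently, the cohomological direct image to the point, or to $\overline{X}^{bb}$) of a motive over a diagram of schemes depends only on the $2$-equivalence class of that diagram of schemes.

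First I would record precisely what Proposition \ref{prop:comparison-cal-Y-and-cal-V}, combined with Corollary \ref{cor:making-the-diag-connected}, gives: replacing $\mathcal{V}^{tor}$ by its diagram of connected components $\mathcal{V}^{\flat,\, tor}$ does not change the direct image along $\Xi$ (this is exactly the paragraph preceding Lemma \ref{lemme:criterion-for-non-emptyness-cal-V}), so there is a canonical isomorphism $\EE_{\overline{X}^{bb}}\simeq \Xi^{\flat}_*(w^{\flat},\varsigma_r)^*\beta_{\overline{X}^{bb}}$ of commutative unitary algebras, together with the analytic analogue and the functoriality square in $g$. Next, by Proposition \ref{prop:equival-entre-W-tor-V-tor}, for each $(I_0,I_1)$ the inclusion $\mathcal{W}^{tor}(I_0,I_1)\hookrightarrow \mathcal{V}^{\flat,\, tor}(I_0,I_1)$ is an equivalence of diagrams of schemes, and these equivalences are natural in $(I_0,I_1)\in \mathcal{P}_2([\![1,r]\!])$, hence assemble (after passing to total diagrams) to an equivalence $(\mathcal{W}^{tor},\mathcal{P}_2([\![1,r]\!]))\to(\mathcal{V}^{\flat,\, tor},\mathcal{P}_2([\![1,r]\!]))$ in the $2$-category $\Dia(\Sch/\C)$. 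Since $\Theta$, $(\varpi,\varsigma_r)$ factor the analogous morphisms for $\mathcal{V}^{\flat,\, tor}$ through this equivalence, and since a $1$-morphism that is an equivalence induces an equivalence on the associated triangulated categories $\DM(-)$ compatible with the six operations (in particular $\Theta_*\simeq \Xi^{\flat}_*$ after pulling back, and $(\varpi,\varsigma_r)^*\beta_{\overline{X}^{bb}}$ corresponds to $(w^{\flat},\varsigma_r)^*\beta_{\overline{X}^{bb}}$), the isomorphism $\EE_{\overline{X}^{bb}}\simeq \Xi^{\flat}_*(w^{\flat},\varsigma_r)^*\beta_{\overline{X}^{bb}}$ transports to $\EE_{\overline{X}^{bb}}\simeq \Theta_*(\varpi,\varsigma_r)^*\beta_{\overline{X}^{bb}}$. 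One checks this is an isomorphism of commutative unitary algebras because all the functors involved ($g^*$, the various $*$-pushforwards, and $\beta_{\overline{X}^{bb}}$ being itself a commutative unitary algebra, cf.~\S\ref{par:mot-beta-x-s}) are pseudo-monoidal, symmetric and pseudo-unitary, so the comparison morphisms are morphisms of algebras. The analytic statement follows verbatim, using that $\An^*$ commutes with the cohomological direct image applied to compact motives and with homotopy limits over the relevant ordered sets (Lemma \ref{lemma:permanence-universal-for-holim}, Proposition \ref{prop:limits-with-respect-ordered-set}), exactly as in the proof of Corollary \ref{cor:realization-EE-X-gen}.

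For the functoriality square in $g$, I would observe that the equivalence $\mathcal{W}^{tor}\hookrightarrow\mathcal{V}^{\flat,\, tor}$ is compatible with the $g$-action: this comes from part (d) of Lemma \ref{lemma:nat-appl-scr-Q-P-grpd} (the square relating $\mathbf{d}$ and $\mathrm{int}(g)$ commutes) together with the fact — built into the construction of $\mathcal{W}^{tor}(\mathbf{Q},\mathbf{E})$ via the cartesian square \eqref{eq:where-end-Q-Gamma-appears-naturally-rajoute-349} and the toroidal functoriality $g^{tor}$ — that $g$ carries the distinguished connected component $\mathcal{V}^{\star,\,tor}$ for $\Gamma'$ into the one for $\Gamma$. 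Granting this, the commutativity of the square in the statement is obtained by pulling back, along $g$, the commutative diagram of Proposition \ref{prop:comparison-cal-Y-and-cal-V}(b) (in its $\mathcal{V}^{\flat,\,tor}$ form) and rewriting each term via the equivalence; the base-change morphisms $g^*\Theta_*\to \Theta'_* g^*$ match up with $g^*\Xi^{\flat}_*\to \Xi'^{\flat}_* g^*$ because base change is compatible with the equivalence of diagrams. The analogous diagram in the analytic context is handled identically.

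The only genuine point requiring care — and hence the main obstacle — is the first clause of the previous paragraph: verifying that the equivalence of diagrams $\mathcal{W}^{tor}\hookrightarrow\mathcal{V}^{\flat,\,tor}$ is genuinely $g$-equivariant, i.e.\ that the choice of distinguished components $\mathcal{V}^{\star,\,tor}(\mathbf{d}(I_0,I_1)(\mathbf{Q},\mathbf{E}))$ is natural with respect to $\mathrm{int}(g)$ in a way strict enough to produce a commuting square in $\Dia(\Dia(\Sch/\C))$ rather than merely a square commuting up to isomorphism. This is a compatibility-of-conventions check: one must track that the inductive construction of $\mathcal{V}^{\star,\,tor}$ in the proof of Lemma \ref{lemme-clef-pour-simplifier-V-tor} (via the closed subschemes ${}^{\diamond}\widetilde{T}^{tor}(\mathbf{Q}_t,\mathbf{R}_t)$ and the canonical identifications of Stein factorizations) is carried by $g^{tor}$ to the analogous construction for $\Gamma'$, using that $g$ induces compatible maps $g:\widetilde{T}'^{\,tor}(I)\to\widetilde{T}^{tor}(I)$ (Proposition \ref{prop:diagram-schemes-I-P-Gamma-tilde-T}(b)) and $g:\mathcal{V}'^{\,tor}\to\mathcal{V}^{tor}$, all of which respect the groupoid actions. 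I would dispatch this by a straightforward induction paralleling Lemma \ref{lemme-clef-pour-simplifier-V-tor}, and leave the bookkeeping to the reader, as is done for the other functoriality verifications in \S\ref{sect:main-comput}.
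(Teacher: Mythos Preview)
Your proposal is correct and follows exactly the route the paper takes: the theorem is stated as an immediate consequence of Propositions~\ref{prop:comparison-cal-Y-and-cal-V} and~\ref{prop:equival-entre-W-tor-V-tor}, with the passage through $\mathcal{V}^{\flat,\,tor}$ via Corollary~\ref{cor:making-the-diag-connected} already spelled out in the paragraph preceding Lemma~\ref{lemme:criterion-for-non-emptyness-cal-V}. Your elaboration of the $g$-equivariance check for the distinguished components is more explicit than what the paper records, but it is precisely the kind of bookkeeping the paper systematically leaves to the reader.
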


\begin{remark}
Using Corollary \ref{cor:not-used-for-main-thm} instead of Theorem
\ref{thm:final-form-for-applic-main-thm} in the above discussion,
one arrives at the conclusion that $\EE_{\overline{X}^{bb}}\simeq
\Theta_*\un_{\mathcal{W}^{tor}}$. However, this will not be needed
in the proof of Theorem \ref{thm:main-thm}.
\end{remark}

\subsubsection{End of the proof}
We now come to the final stage of the proof of Theorem
\ref{thm:main-thm}. We will work only with topological spaces and
complexes of sheaves on them. Thus, to simplify notation, we
identify a scheme with its variety of $\C$-points and use the same
symbol for both. The same applies to diagrams of schemes and
morphisms of diagrams of schemes. With this understood, let
$\vartheta^{tor}_{\Gamma \backslash
D}=(\varpi,\varsigma_r)^*\beta^{an}_{\overline{X}^{bb}}$.

It is clear that Theorem \ref{thm:main-thm} follows from Theorem
\ref{thm:final-form-comput-EE-X-bb} and the next proposition, the
proof of which is the subject of the rest of the article.

\begin{proposition}
\label{prop:last-prop-for-proving-main-thm}
Let $p:\overline{\Gamma \backslash D}^{rbs} \to \overline{\Gamma\backslash D}^{bb}$ be the quotient mapping. There is a canonical isomorphism of commutative unitary algebras
$$p_*\Q_{\overline{\Gamma \backslash D}^{rbs}} \simeq \Theta_*
\vartheta^{tor}_{\Gamma \backslash D}.$$
Moreover, the following diagram
$$\xymatrix@C=1.5pc@R=1.5pc{(g^{bb})^* p_*\Q_{\overline{\Gamma \backslash D}^{rbs}} \ar[r] \ar[d]_-{\sim} & p'_* (g^{rbs})^* \Q_{\overline{\Gamma \backslash D}^{rbs}} \ar[r]^-{\sim} & p'_* \Q_{\overline{\Gamma'\backslash D}^{rbs}} \ar[d]^-{\sim} \\
(g^{bb})^* \Theta_*\vartheta^{tor}_{\Gamma\backslash D} \ar[r] & \Theta'_* g^* \vartheta^{tor}_{\Gamma \backslash D} \ar[r] & \Theta'_* \vartheta^{tor}_{\Gamma'\backslash D} }$$
commutes.
\end{proposition}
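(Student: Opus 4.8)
<br>

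The plan is to construct the isomorphism $p_*\Q_{\overline{\Gamma\backslash D}^{rbs}} \simeq \Theta_*\vartheta^{tor}_{\Gamma\backslash D}$ by comparing both sides stratum-by-stratum over the Baily-Borel Satake compactification $\overline{X}^{bb}$, using the stratification $(X^{bb}_i)_{i\in[\![0,r]\!]}$ set up in \S\ref{Stage}. First I would rewrite the left-hand side: by Proposition \ref{bs-bb}, $p$ factors as $\overline{\Gamma\backslash D}^{rbs}\xrightarrow{}\overline{\Gamma\backslash D}^{bb}$, and the fibers of $p$ over the stratum $(\Gamma\backslash D)^{bb}_{\mathbf{Q}}$ are the reductive Borel-Serre compactifications $\overline{\widehat e(\mathbf{Q})}^{rbs}$ (the hereditary property of \S\ref{subsect:The Borel-Serre compactifications}), so $p_*\Q$ is built inductively from the relative cohomology sheaves of the $\widehat e(\mathbf{Q})$'s. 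The right-hand side, by Lemma \ref{lemma:rajou-concrete-desc-beta-an-gen}, is an explicit iterated composition of pushforwards along locally closed immersions and Godement resolutions of constant sheaves over the diagram of schemes $T^{tor}$ (equivalently, over $\mathcal{W}^{tor}$ after Proposition \ref{prop:equival-entre-W-tor-V-tor} and Theorem \ref{thm:final-form-comput-EE-X-bb}). The key geometric input is \S\ref{subsection:toroidal-vs-rbs}: Proposition \ref{prop:strata-compar-rbs-tor} and Lemma \ref{lemma:two-proper-maps} identify, for each proper parabolic $\Q$-subgroup $\mathbf{P}$, the corner-like $\mathbf{P}$-stratum $\widehat B^{\circ}_{\mathbf{P},\Sigma}$ of the least common modification $\widehat{\overline{\Gamma\backslash D}}_{\Sigma}$ with the fiber product of a Borel-Serre stratum and a toroidal stratum, and the relevant projections are proper maps. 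I would use this to compute the stalks of $\Theta_*\vartheta^{tor}_{\Gamma\backslash D}$ over each point of $\overline{X}^{bb}$ and match them with the stalks of $p_*\Q$.

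The proof should go by induction on the $\Q$-rank $r$ of $\mathbf{G}$ (equivalently, on the depth of the stratification), mirroring exactly the inductive construction of $\beta^{an}$ in \S\ref{subsubsection:conclusion}. Writing $\overline X' {}^{bb}$ for the closure of the strata $X^{bb}_{\geq 1}$, the diagram $T^{tor}$ decomposes via the $\ucarre$-shaped diagram \eqref{eq:for-T-and-T'-relationes} into the part over $X'$ and the part over the open stratum $X^{bb}_0=X$; correspondingly $p_*\Q$ decomposes via the localization triangle $j_!j^* \to \id \to i_*i^*$ for $j:X\hookrightarrow\overline X^{bb}$ and $i:\partial\overline X^{bb}\hookrightarrow\overline X^{bb}$. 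Over the open stratum both sides restrict to $\Q_X$ (since $\overline{\Gamma\backslash D}^{rbs}$ agrees with $\Gamma\backslash D$ there, and $\beta^{an}$ restricts to the constant sheaf $\digamma^0\Q=\Q$ on $T^{tor}(\{0\})$); the induction hypothesis handles the part over $X'$; and the gluing is governed by the explicit formula of Lemma \ref{lemma:rajou-concrete-desc-beta-an-gen} together with the comparison of deleted tubular neighborhoods in Proposition \ref{prop:strata-compar-rbs-tor}. Concretely, for a point $x$ lying in the stratum $X^{bb}_{\mathbf{Q}}$ of cotype $\{i\}$, the stalk of $p_*\Q$ at $x$ is the cohomology of the fiber $\overline{\widehat e(\mathbf{Q})}^{rbs}$, which by the hereditary property and reduction theory is itself an iterated bundle over the corner-like strata indexed by the parabolic $\Q$-subgroups $\mathbf{R}\subset\mathbf{M}_{\mathbf{Q},h}$; this is precisely the combinatorial data recorded by the diagram $\widetilde T^{tor}(I)$ indexed by the groupoid $\mathscr{P}_\Gamma(I)$, and by Lemma \ref{lemma:texnic-comparison-cal-Y-and-cal-V-1} the homotopy colimit over $\widetilde A(\mathbf{Q},\mathbf{R})$ of the constant diagram is contractible (the cone $\overline C_R$), so the combinatorial toroidal pieces contribute nothing beyond the constant sheaf — exactly the weight-zero phenomenon. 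I would make the matching of stalks precise by showing that the Godement-resolution pushforward formula of Lemma \ref{lemma:rajou-concrete-desc-beta-an-gen}, restricted over $T^{tor}(I)$, computes the Leray spectral sequence of the fibration $q^{-1}(x)\to \widehat e(\mathbf{Q})$ and collapses to give $\h^*$ of the reductive Borel-Serre fiber.

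For the functoriality statement (the commuting square with $g$), the strategy is to observe that every object constructed — $T^{tor}$, $\widetilde T^{tor}$, $\mathcal{V}^{tor}$, $\mathcal{W}^{tor}$, the groupoids $\mathscr{P}_\Gamma(I)$ and $\mathscr{Q}_\Gamma(I)$, and the motives $\beta$, $\theta''$, etc. — was defined together with its behavior under conjugation by $g\in\mathbf{G}(\Q)$ with $g\Gamma'g^{-1}\subset\Gamma$ (Proposition \ref{prop:diagram-schemes-I-P-Gamma-tilde-T}(b), Lemma \ref{lemma:nat-appl-scr-Q-P-grpd}(d), Theorem \ref{thm:final-form-comput-EE-X-bb}, and the naturality clauses throughout \S\ref{par:mot-beta-x-s}). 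On the topological side, the morphisms $g^{rbs}$ and $g^{bb}$ of \eqref{eqn:g-extended-to-borel-serre-compact} and \eqref{eq:induced-g-on-baily-borel-compact} are compatible with $p,p'$ by Proposition \ref{bs-bb}, and the base change morphism $(g^{bb})^*p_* \to p'_*(g^{rbs})^*$ is an isomorphism because $p$ is proper. So the square commutes once one checks that the stalk-level identifications above are compatible with the conjugation action, which amounts to the statement that both sides, stalk by stalk, are computed from the same diagram of (finite \'etale covers of) boundary strata with the same groupoid action. The main obstacle I anticipate is precisely this last bookkeeping: carefully verifying that the iterated Godement-pushforward formula of Lemma \ref{lemma:rajou-concrete-desc-beta-an-gen} over the nested closed subschemes $T^{tor}(I_j)$ reproduces the topology of the reductive Borel-Serre fibers $\overline{\widehat e(\mathbf{Q})}^{rbs}$ and their attaching maps — this requires combining the reduction-theory description of $\overline{D}^{rbs}$ from \cite{borel-serre,rbs-1,rbs-2} with the comparison of tubular neighborhoods in \S\ref{subsection:toroidal-vs-rbs}, and keeping the groupoid actions straight throughout the induction. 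Everything else (the localization triangles, the commutation of $\An^*$ with proper pushforward for compact motives, the base change for $g$) is formal given the results already established.
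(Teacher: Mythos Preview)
Your proposal has a genuine gap: you describe how to match stalks of $p_*\Q_{\overline{\Gamma\backslash D}^{rbs}}$ and $\Theta_*\vartheta^{tor}_{\Gamma\backslash D}$ over each Baily--Borel stratum, but you never construct a morphism between these two complexes of sheaves. Stalkwise agreement is not enough; one needs an actual map in $\mathbf{D}(\Shv(\overline{\Gamma\backslash D}^{bb}))$ whose cone one then shows to be zero. Your induction on $r$ via the localization triangle would, at best, reduce the problem to constructing a morphism on the boundary compatible with one already built on the open stratum, and you give no mechanism for this gluing. The diagram $\mathcal{W}^{tor}$ carries no obvious map to or from $\overline{\Gamma\backslash D}^{rbs}$, so there is no tautological comparison morphism available.

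The paper's route is quite different and avoids any induction on $r$ at this stage. It introduces three further diagrams of topological spaces, $\mathcal{W}^{bs}$, $\widehat{\mathcal{W}}$, and later $\mathcal{Z}^{bs}$ and $\mathcal{U}^{bs}$, together with analogous complexes $\vartheta^{bs}$ and $\widehat{\vartheta}$ built by the same iterated-$\digamma$-pushforward recipe. The least common modification enters not to compute stalks but to furnish the diagram $\widehat{\mathcal{W}}$ sitting between $\mathcal{W}^{bs}$ and $\mathcal{W}^{tor}$ via objectwise proper maps $p_1,p_2$; proper base change then gives $\Theta^{tor}_*\vartheta^{tor}\simeq\widehat{\Theta}_*\widehat{\vartheta}\simeq\Theta^{bs}_*\vartheta^{bs}$. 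The decisive simplification, which your sketch misses, is that on the Borel--Serre side the spaces $\mathcal{W}^{bs}(\mathbf{Q},\mathbf{E}_{(0)})$ are manifolds with corners, so the open inclusions in the iterated pushforward are local homotopy equivalences and $\vartheta^{bs}\simeq\Q_{\mathcal{W}^{bs}}$. One is then reduced to $p_*\Q_{\overline{\Gamma\backslash D}^{rbs}}\simeq\Theta^{bs}_*\Q_{\mathcal{W}^{bs}}$, and this is handled by two more intermediate diagrams: $\mathcal{Z}^{bs}(\mathbf{Q},\mathbf{E})=\overline{e(\mathbf{F_0})}\times\mathcal{W}^{bs}(\mathbf{Q},\mathbf{E})$ (the first factor contractible), and $\mathcal{U}^{bs}=\mathscr{Q}_\Gamma\backslash\mathcal{Z}^{bs}$, which maps to $\overline{\Gamma\backslash D}^{rbs}$ by sending $(\mathbf{Q},\mathbf{E})$ to $\overline{\widehat{e}'(\mathbf{E})}$. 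A stratum-by-stratum check, exploiting that $\mathcal{L}'(\mathbf{P})$ has a terminal object $(\mathbf{P},\mathbf{P})$, finishes the argument. Each step supplies an honest morphism of complexes, and the functoriality in $g$ follows because every auxiliary diagram was built $\Gamma$-equivariantly.
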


The first step in the proof consists of bridging the gap between
the toroidal compactification and the Borel-Serre
compactifications. For this, we use the space
$\widehat{\overline{\Gamma \backslash D}}_{\Sigma}$ described in
\S \ref{subsection:toroidal-vs-rbs}. We need to introduce two
diagrams of topological spaces $\mathcal{W}^{bs}$ and
$\widehat{\mathcal{W}}$. These diagrams are, roughly, analogues for
$\overline{\Gamma\backslash D}^{bs}$ and
$\widehat{\overline{\Gamma \backslash D}}_{\Sigma}$ of what
$\mathcal{W}^{tor}$ was for the toroidal compactification
$\overline{X}^{tor}_{\Sigma}$. We present the details of their
construction.

For the construction of $\mathcal{W}^{bs}$, fix $(I_0,I_1)\in
\mathcal{P}_2([\![1,r]\!])$ and let $J=[\![0,r]\!]- I_0$ and
$\{0\}\bigsqcup I_1=\{i_0<\dots <i_s\}$. Let
$(\mathbf{Q},\mathbf{E})\in \mathscr{Q}_{\Gamma}(I_0,I_1)$ and, as
before, denote by $(\mathbf{Q_j},\mathbf{R_j})_{0\leq j \leq s}\in
\prod_{j=0}^s \mathcal{P}_{\Gamma}(J\cap [\![i_j, i_{j+1}]\!])$
(with $i_{s+1}=r$ as usual) its image by $\mathbf{d}(I_0,I_1)$.
Consider the $\mathbf{R_s}$-stratum $e(\mathbf{R_s})$ in the
partial Borel-Serre compactification
$\overline{e_h(\mathbf{Q_s})}^{bs}$. It admits an action of $\Gamma(\mathbf{E})$, and
we set:
$$\mathcal{W}^{bs}(\mathbf{Q},\mathbf{E})=
\Gamma(\mathbf{H_{Q,E}}) \backslash \overline{e(\mathbf{R_s})}.$$
Then $\Gamma$ acts naturally on
$\coprod_{(\mathbf{Q},\mathbf{E})\in \mathscr{Q}_{\Gamma}(I_0,I_1)}
\mathcal{W}^{bs}(\mathbf{Q},\mathbf{E})$.
An element $\gamma\in \Gamma$ takes $\mathcal{W}^{bs}(\mathbf{Q},\mathbf{E})$ isomorphically
onto
$\mathcal{W}^{bs}(\gamma\mathbf{Q}\gamma^{-1},\gamma\mathbf{E}\gamma^{-1})$.
Then $\Gamma(\mathbf{E})$ is the stabilizer in $\Gamma$ of
the connected component
$\mathcal{W}^{bs}(\mathbf{Q},\mathbf{E})$, and its action on
$\mathcal{W}^{bs}(\mathbf{Q},\mathbf{E})$ factors through ${\rm
end}_{\mathscr{Q}_{\Gamma}(I_0,I_1)}(\mathbf{Q},\mathbf{E})=\Gamma(\mathbf{E}/\mathbf{H_{Q,E}})$.
Thus, we get a diagram of topological spaces
$\mathcal{W}^{bs}(I_0,I_1)$ indexed by
$\mathscr{Q}_{\Gamma}(I_0,I_1)$. It is easy to see that the
assignment $(I_0,I_1)\rightsquigarrow \mathcal{W}^{bs}(I_0,I_1)$
defines a functor from $\mathcal{P}_2([\![1,r]\!])$ to
$\Dia({\rm Top})$.

The construction of $\widehat{\mathcal{W}}$ is parallel. Let
${^{\diamond}\widehat{B}}^{\circ}_{(\mathbf{Q_s},\mathbf{R_s}),\Sigma_{(\mathbf{Q_s})}}$
be the subspace of
$$[\Gamma(\mathbf{H_{Q,E}})\backslash e(\mathbf{R_s})]\times {^{\diamond}\mathcal{B}}^{\circ}_{(\mathbf{Q_s},\mathbf{R_s}),\Sigma_{(\mathbf{Q_s})}}$$
whose quotient by ${\rm
end}_{\mathscr{Q}_{\Gamma}(I_0,I_1)}(\mathbf{Q},\mathbf{E})$ is
the (corner-like) $\mathbf{R_s}$-stratum of
$$\widehat{\overline{{^{\diamond}\Gamma}(\mathbf{M}_{\mathbf{Q},h})\backslash e_h(\mathbf{Q_s})}}_{\Sigma_{(\mathbf{Q_s})}}.$$
We then define $\widehat{\mathcal{W}}(\mathbf{Q},\mathbf{E})$ to
be the closure in
$\mathcal{W}^{bs}(\mathbf{Q},\mathbf{E})\times\mathcal{W}^{tor}(\mathbf{Q},\mathbf{E})$
of
${^{\diamond}\widehat{B}}^{\circ}_{(\mathbf{Q_s},\mathbf{R_s}),\Sigma_{(\mathbf{Q_s})}}$.
The group $\Gamma$ acts on
$\coprod_{(\mathbf{Q},\mathbf{E})\in \mathscr{Q}_{\Gamma}(I_0,I_1)}
\widehat{\mathcal{W}}(\mathbf{Q},\mathbf{E})$,
and the stabilizer of the connected component
$\widehat{\mathcal{W}}(\mathbf{Q},\mathbf{E})$ is also
$\Gamma(\mathbf{E})$.  The action of the latter on
$\widehat{\mathcal{W}}(\mathbf{Q},\mathbf{E})$ factors through
${\rm end}_{\mathscr{Q}_{\Gamma}(I_0,I_1)}(\mathbf{Q},\mathbf{E})$.
Thus, we have a diagram of topological spaces
$\widehat{\mathcal{W}}(I_0,I_1)$ indexed by the groupoid
$\mathscr{Q}_{\Gamma}(I_0,I_1)$. Moreover, the assignment
$(I_0,I_1)\rightsquigarrow \widehat{\mathcal{W}}(I_0,I_1)$ gives a
functor from $\mathcal{P}_2([\![1,r]\!])$ to $\Dia({\rm Top})$.
By construction there are canonical morphisms in
$\Dia(\Dia({\rm Top}))$:
\begin{equation}
\label{eq:maps-between-3-diff-W-rajoute}
\xymatrix@C=1.7pc{\mathcal{W}^{bs}  & \widehat{\mathcal{W}} \ar[l]_-{p_1} \ar[r]^-{p_2} & \mathcal{W}^{tor},}
\end{equation}
which are the identity on the indexing categories (cf.
\cite[\S1.1]{rbs-2}). The argument in the proof of Lemma
\ref{lemma:two-proper-maps} shows that these morphisms are objectwise proper mappings. Indeed, over the object $(\mathbf{Q},\mathbf{E})\in \mathscr{Q}_{\Gamma}(I_0,I_1)$,
the arithmetic group in
\eqref{eq:where-end-Q-Gamma-appears-naturally-rajoute-0129}
acts properly discontinuously on the three topological spaces
in \eqref{eq:maps-between-3-diff-W-rajoute} and the induced maps on the quotients are proper.

Next, we construct complexes of sheaves of $\Q$-vector spaces
$\vartheta^{bs}_{\Gamma\backslash D}$ and
$\widehat{\vartheta}_{\Gamma\backslash D}$ on $\mathcal{W}^{bs}$
and $\widehat{\mathcal{W}}$ that are analogues of
$\vartheta^{tor}_{\Gamma \backslash D}$ on $\mathcal{W}^{tor}$.
Since we are now working in the setting of topological spaces and
complexes of sheaves, we can give a direct construction, as
follows. Fix a flasque resolution
$\digamma$ on topological spaces, that
is pseudo-monoidal and natural with respect to morphisms of
topological spaces as in
\S\ref{subsubsection:conclusion}.

Fix $(I_0,I_1)\in \mathcal{P}_2([\![1,r]\!])$ and let
$J=[\![0,r]\!]- I_0$ and $\{0\}\bigsqcup
I_1=\{i_0<\dots< i_s\}$.
Also, let
$K=\varsigma_r(I_0,I_1)=J\cap [\![i_s,r]\!]$ and write
$K=\{l_0<\dots<l_u\}$.
For $0\leq v \leq u$, we let $K_v=\{l_{v+1}<\dots< l_u\}$. (Note that
$K_{u}=\emptyset$ and $l_0\not \in K_v$.)
There is a chain of morphisms of diagrams
$$\mathcal{W}^{bs}(I_0\bigsqcup K_{u},I_1) \to \mathcal{W}^{bs}(I_0\bigsqcup K_{u-1},I_1) \to \dots \to
\mathcal{W}^{bs}(I_0\bigsqcup K_0,I_1),$$
and likewise for $\widehat{\mathcal{W}}$.
Now let $\mathcal{W}^{bs}(I_0\bigsqcup K_v,I_1)^{\circ}$
and $\widehat{\mathcal{W}}(I_0\bigsqcup K_v,I_1)^{\circ}$
denote the inverse images
of $X^{bb}_{l_v}$
in $\mathcal{W}^{bs}(I_0\bigsqcup K_v,I_1)$ and
$\widehat{\mathcal{W}}(I_0\bigsqcup K_v,I_1)$ respectively.
The inclusion
$$\mathcal{W}^{bs}(I_0\bigsqcup K_v,I_1)^{\circ}\hookrightarrow
\mathcal{W}^{bs}(I_0\bigsqcup K_v,I_1)$$
is an objectwise dense open immersion, and the same holds for $\widehat{\mathcal{W}}$.
With this notation we set
$\left(\vartheta^{bs}_{\Gamma \backslash D}\right)_{|\mathcal{W}^{bs}(I_0,I_1)}$ to be following complex of sheaves on $\mathcal{W}^{bs}(I_0,I_1)$:
$$\left([\mathcal{W}^{bs}(I_0\sqcup K_u,I_1)^{\circ} \!\to\! \mathcal{W}^{bs}(I_0\sqcup K_u,I_1)]_*
\digamma [\mathcal{W}^{bs}(I_0\sqcup K_u,I_1)^{\circ} \!\to\! \mathcal{W}^{bs}(I_0\sqcup K_{u-1},I_1)]^*\right)
$$
$$\cdots \left([\mathcal{W}^{bs}(I_0\sqcup K_1,I_1)^{\circ} \!\to\! \mathcal{W}^{bs}(I_0\sqcup K_1,I_1)]_*
\digamma [\mathcal{W}^{bs}(I_0\sqcup K_1,I_1)^{\circ} \!\to\! \mathcal{W}^{bs}(I_0\sqcup K_0,I_1)]^*\right)$$
$$
{\textstyle [\mathcal{W}^{bs}(I_0\sqcup K_0,I_1)^{\circ} \to \mathcal{W}^{bs}(I_0\sqcup K_0,I_1)]_* \digamma \Q_{\mathcal{W}^{bs}(I_0\sqcup K_0,I_1)^{\circ}}}.$$
We define $(\widehat{\vartheta}_{\Gamma \backslash
D})_{|\widehat{\mathcal{W}}(I_0,I_1)}$ analogously by replacing
everywhere the superscript ``$\emph{bs}$'' by a ``hat''. We leave
it to the reader to check that $(\vartheta^{bs}_{\Gamma \backslash
D})_{|\mathcal{W}^{bs}(I_0,I_1)}$ and
$(\widehat{\vartheta}_{\Gamma \backslash
D})_{|\widehat{\mathcal{W}}(I_0,I_1)}$, when $(I_0,I_1)$ varies,
define complexes of sheaves $\vartheta^{bs}_{\Gamma \backslash D}$
and $\widehat{\vartheta}_{\Gamma \backslash D}$ on
$\mathcal{W}^{bs}$ and $\widehat{\mathcal{W}}$ respectively.

\begin{remark}
\label{rem:direct-defn-of-vartheta-for-tor}
The analogue of the
above construction makes sense for $\mathcal{W}^{tor}$. That it
yields $\vartheta^{tor}_{\Gamma \backslash D}$ (up to a
canonical quasi-isomorphism) follows easily from
Lemma \ref{lemma:rajou-concrete-desc-beta-an-gen}
using Corollary \ref{cor:new-for-compar-beta-prime-et-theta-dp}.

\end{remark}

We let $\Theta^{bs}:\mathcal{W}^{bs} \to \overline{\Gamma \backslash D}^{bb}$
and let
$\widehat{\Theta}:\widehat{\mathcal{W}}\to \overline{\Gamma\backslash D}^{bb}$
denote the canonical morphisms.

\begin{lemma}
\label{lemma:bridge-toroidal-rbs-vartheta}
There is a canonical isomorphism of commutative unitary algebras:
\begin{equation}
\label{eq-lemma:bridge-toroidal-rbs-vartheta-1}
\Theta^{tor}_*\vartheta^{tor}_{\Gamma\backslash D} \simeq
\Theta^{bs}_*\vartheta^{bs}_{\Gamma\backslash D}\,.
\end{equation}
Moreover, the following diagram
$$\xymatrix@C=1.5pc@R=1.5pc{g^* \Theta^{tor}_*\vartheta^{tor}_{\Gamma\backslash D} \ar[r]\ar[d]_-{\sim} & \Theta'^{tor}_* g^* \vartheta^{tor}_{\Gamma\backslash D} \ar[r] & \Theta'^{tor}_* \vartheta^{tor}_{\Gamma'\backslash D} \ar[d]^-{\sim}  \\
g^* \Theta^{bs}_*\vartheta^{bs}_{\Gamma\backslash D} \ar[r] & \Theta'^{bs}_* g^* \vartheta^{bs}_{\Gamma\backslash D} \ar[r] & \Theta'^{bs}_* \vartheta^{bs}_{\Gamma'\backslash D}}$$
commutes.
\end{lemma}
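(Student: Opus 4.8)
The plan is to bridge the two sides through the diagram $\widehat{\mathcal{W}}$ and the complex $\widehat{\vartheta}_{\Gamma\backslash D}$, realizing \eqref{eq-lemma:bridge-toroidal-rbs-vartheta-1} as a zig-zag
$$\Theta^{bs}_*\vartheta^{bs}_{\Gamma\backslash D}\overset{\sim}{\longleftarrow}\widehat{\Theta}_*\widehat{\vartheta}_{\Gamma\backslash D}\overset{\sim}{\longrightarrow}\Theta^{tor}_*\vartheta^{tor}_{\Gamma\backslash D}.$$
Since $p_1$ and $p_2$ are the identity on the indexing groupoids and lie over $\overline{\Gamma\backslash D}^{bb}$, one has $\Theta^{bs}\circ p_1=\widehat{\Theta}$ and $\Theta^{tor}\circ p_2=\widehat{\Theta}$, so that $\widehat{\Theta}_*\widehat{\vartheta}_{\Gamma\backslash D}\simeq\Theta^{bs}_*(p_{1*}\widehat{\vartheta}_{\Gamma\backslash D})$ and $\widehat{\Theta}_*\widehat{\vartheta}_{\Gamma\backslash D}\simeq\Theta^{tor}_*(p_{2*}\widehat{\vartheta}_{\Gamma\backslash D})$; it thus suffices to produce, functorially in $g$ and as commutative unitary algebras, canonical isomorphisms $p_{1*}\widehat{\vartheta}_{\Gamma\backslash D}\simeq\vartheta^{bs}_{\Gamma\backslash D}$ and $p_{2*}\widehat{\vartheta}_{\Gamma\backslash D}\simeq\vartheta^{tor}_{\Gamma\backslash D}$. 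Write $\vartheta^{(1)}=\vartheta^{bs}$, $\vartheta^{(2)}=\vartheta^{tor}$, and $\mathcal{W}^{(1)}=\mathcal{W}^{bs}$, $\mathcal{W}^{(2)}=\mathcal{W}^{tor}$.

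The first step is a canonical quasi-isomorphism $p_i^*\vartheta^{(i)}_{\Gamma\backslash D}\simeq\widehat{\vartheta}_{\Gamma\backslash D}$ for $i=1,2$. The three complexes $\widehat{\vartheta}$, $\vartheta^{bs}$, $\vartheta^{tor}$ are produced by one and the same iterated recipe — a composition of functors $[\,\cdot\hookrightarrow\cdot\,]_*\circ\digamma\circ[\,\cdot\hookrightarrow\cdot\,]^*$ built from the open immersions $\mathcal{W}^{(i)}(I_0\sqcup K_v,I_1)^{\circ}\hookrightarrow\mathcal{W}^{(i)}(I_0\sqcup K_v,I_1)$ and the face maps — where for the toroidal case one invokes Remark \ref{rem:direct-defn-of-vartheta-for-tor}. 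It therefore suffices that $p_i^*$ commute with each ingredient: with $[\,\cdot\,]^*$ this is automatic; with $\digamma$ one uses the natural comparison $p_i^*\,\digamma\to\digamma\,p_i^*$, which is a quasi-isomorphism because $p_i^*$ is exact, so both sides resolve $p_i^*(-)$ compatibly with the augmentation; and with $[\,\cdot\,]_*$ one uses base change for open immersions, valid because $p_i$ carries the $\circ$-locus onto the $\circ$-locus (both being the preimage of the stratum $X^{bb}_{l_v}$), so that the relevant squares are cartesian. Composing these identifications along the recipe gives the quasi-isomorphism; running the same argument with $g$ inserted gives its compatibility with Hecke correspondences.

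It remains to show that the unit morphism $\vartheta^{(i)}_{\Gamma\backslash D}\to Rp_{i*}p_i^*\vartheta^{(i)}_{\Gamma\backslash D}$ is a quasi-isomorphism. The morphisms $p_1$ and $p_2$ are objectwise proper (as noted after \eqref{eq:maps-between-3-diff-W-rajoute}, by the argument of Lemma \ref{lemma:two-proper-maps}) and are the identity on the indexing groupoids, so $Rp_{i*}$ is computed objectwise and, by $\digamma$-flasqueness, by the naive direct image. Since $\vartheta^{(i)}_{\Gamma\backslash D}$ is assembled from constant sheaves on the locally closed pieces $\mathcal{W}^{(i)}(\ldots)^{\circ}$ by iterated $[\,\cdot\,]_*\,\digamma\,[\,\cdot\,]^*$, and $p_i$ restricts compatibly to the preimages of all these pieces, a dévissage along the stratification — using the base change identities of the previous step — reduces the claim to a finite list of statements $Rf_*\Q\simeq\Q$, where $f$ runs over the restrictions of $p_1$ and $p_2$ to preimages of individual strata.

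This last point is the crux and the main obstacle. For $p_2:\widehat{\mathcal{W}}\to\mathcal{W}^{tor}$, the fibre of such an $f$ over a point of a toroidal corner-like stratum is, by the description of $\widehat{B}^{\circ}_{\mathbf{P},\Sigma}$ in \S\ref{subsection:toroidal-vs-rbs} and the first cartesian square of Lemma \ref{lemma:two-proper-maps}, a closed slice of the Borel--Serre corner $\Gamma(\mathbf{Q}_h)\backslash\overline{D}(\mathbf{P})$ lying over a fibre of $\mathcal{B}_{\mathbf{Q},\Sigma}\to\widetilde{X}^{bb}_{\mathbf{Q}}$; it is cut out by the geodesic-action limit conditions and is contractible, being a closed piece of the torus-embedding fibre $\overline{A}_P\simeq(0,\infty]^{\Delta_{\mathbf{P}}}$. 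Dually, for $p_1$ the fibre of $f$ over a point of a Borel--Serre stratum is a closed piece of the cone complex $\overline{C}_R$ with the Satake topology, contractible for the same reason as in the proof of Lemma \ref{lemma:texnic-comparison-cal-Y-and-cal-V-1}; this is the assertion that $\widehat{\overline{\Gamma\backslash D}}_{\Sigma}\to\overline{\Gamma\backslash D}^{bs}$ has contractible fibres, cf.\ \cite[\S1]{rbs-2}. The expected work is in upgrading these pointwise contractions to the relative statement $Rf_*\Q\simeq\Q$, which follows from the locally trivial cone/corner structure of the boundary charts. Granting this, the two unit isomorphisms follow, hence \eqref{eq-lemma:bridge-toroidal-rbs-vartheta-1}; the commutative square of the statement is then formal, all of $\widehat{\mathcal{W}}$, $\widehat{\vartheta}$, $p_1$, $p_2$, $\widehat{\Theta}$ being functorial in $\Gamma$ for the morphisms induced by $g$.
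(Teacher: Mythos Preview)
Your overall architecture (bridge through $\widehat{\mathcal{W}}$ and $\widehat{\vartheta}_{\Gamma\backslash D}$, reduce to $\vartheta^{(i)}\simeq p_{i*}\widehat{\vartheta}$) matches the paper. But your execution takes a detour that introduces an unnecessary obstacle and a genuine gap.

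The paper builds the comparison map $\vartheta^{(i)}\to p_{i*}\widehat{\vartheta}$ directly, by pushing $p_{i*}$ through the iterated recipe: $p_{i*}$ commutes with the open-immersion pushforwards $\hat{j}_{v*}$ trivially (composition of pushforwards, since $p_i\circ\hat j_v=j_v\circ p_i'$), and the only nontrivial step is the base change $i_v^{\,*}\,p_{i*}\to p_{i*}'\,\hat{i}_v^{\,*}$ for the pullback part, which is the \emph{standard} direction of topological proper base change ($p_i$ is objectwise proper). The induction bottoms out at $\mathcal{W}^{bs}(I_0\sqcup K_0,I_1)^{\circ}=\widehat{\mathcal{W}}(I_0\sqcup K_0,I_1)^{\circ}=\mathcal{W}^{tor}(I_0\sqcup K_0,I_1)^{\circ}={}^{\diamond}\!X^{bb}_{\mathbf{Q_s}}$, where the map is the identity. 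No fibre-contractibility statement is needed.

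Your route instead factors as $p_i^*\vartheta^{(i)}\simeq\widehat{\vartheta}$ (Step~1) and $\vartheta^{(i)}\simeq Rp_{i*}p_i^*\vartheta^{(i)}$ (Step~2). In Step~1 you claim ``base change for open immersions'' gives $p_i^*\,Rj_{v*}\simeq R\hat{j}_{v*}\,p_i'^*$; but this is the wrong direction of proper base change and does \emph{not} follow from cartesianness plus properness of $p_i$ (a point mapping into an interval already gives a counterexample). You would need something like local triviality of $p_i$ with contractible fibres to justify it --- which is exactly the content you defer to Step~2. In Step~2 you then invoke the geometric input ($Rf_*\Q\simeq\Q$ on each stratum, via contractible fibres of $p_1$, $p_2$) and correctly flag it as the crux. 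That input is true, but the paper's argument sidesteps it entirely by working with $p_{i*}$ rather than $p_i^*$, exploiting that all three towers share the same base ${}^{\diamond}\!X^{bb}_{\mathbf{Q_s}}$.
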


\begin{proof}
As before, we construct only the isomorphism
\eqref{eq-lemma:bridge-toroidal-rbs-vartheta-1} after which
the commutation of the diagram follows. As we have the
commutative diagram
$$\xymatrix@C=1.5pc@R=1.5pc{\mathcal{W}^{bs} \ar@/_/[dr]_-{\Theta^{bs}} &\ar[l]_-{p_1} \widehat{\mathcal{W}} \ar[r]^-{p_2} \ar[d]^-{\widehat{\Theta}} & \mathcal{W}^{tor} \ar@/^/[dl]^-{\Theta^{tor}} \\
& \overline{\Gamma\backslash D}^{bb} & }$$
it suffices to construct isomorphisms of commutative unitary algebras
\begin{equation}
\label{eq-lemma:bridge-toroidal-rbs-vartheta-17}
\vartheta^{bs}_{\Gamma \backslash D} \simeq p_{1*}\widehat{\vartheta}_{\Gamma \backslash D} \qquad \text{and}\qquad
\vartheta^{tor}_{\Gamma\backslash D} \simeq p_{2*}\widehat{\vartheta}_{\Gamma \backslash D}.
\end{equation}
The construction is the same for both isomorphisms, and it relies on the fact that $p_1$ and $p_2$ are objectwise proper maps.
Thus, we will construct only $\vartheta^{bs}_{\Gamma\backslash D}\simeq p_{1*}\widehat{\vartheta}_{\Gamma \backslash D}$; using Remark
\ref{rem:direct-defn-of-vartheta-for-tor}, one repeats the construction to get $\vartheta^{tor}_{
\Gamma \backslash D}\simeq p_{2*}\widehat{\vartheta}_{\Gamma \backslash D}$.

Using the base change morphisms associated to
the commutative squares
\begin{equation}
\label{eq-lemma:bridge-toroidal-rbs-vartheta-29-tor}
\xymatrix@C=1.5pc@R=1.5pc{\widehat{\mathcal{W}}(I_0\sqcup K_v,I_1)^{\circ} \ar[r] \ar[d] & \widehat{\mathcal{W}}(I_0\sqcup K_{v-1},I_1) \ar[d] \\
\mathcal{W}^{bs}(I_0\sqcup K_v,I_1)^{\circ} \ar[r] & \mathcal{W}^{bs}(I_0\sqcup K_{v-1},I_1)}
\end{equation}
for $1\leq v\leq u$, we obtain a morphism
\begin{equation}
\label{eq-lemma:bridge-toroidal-rbs-vartheta-301-tor}
\xymatrix@C=1.7pc{(\vartheta^{bs}_{\Gamma \backslash D})_{|\mathcal{W}^{bs}(I_0,I_1)}\ar[r] &  p_2(I_0,I_1)_*
(\widehat{\vartheta}_{\Gamma \backslash D})_{|\widehat{\mathcal{W}}(I_0,I_1)}.}
\end{equation}
One easily checks that when $(I_0,I_1)$ varies, the morphisms
\eqref{eq-lemma:bridge-toroidal-rbs-vartheta-301-tor} form a
morphism $\vartheta^{bs}_{\Gamma \backslash D} \to p_{2*}
\widehat{\vartheta}_{\Gamma\backslash D}$ of complexes of sheaves
on $\mathcal{W}^{bs}$. We claim that
\eqref{eq-lemma:bridge-toroidal-rbs-vartheta-301-tor} is a
quasi-isomorphism. The vertical arrows in
\eqref{eq-lemma:bridge-toroidal-rbs-vartheta-29-tor} are objectwise
proper maps of topological spaces by Lemma
\ref{lemma:two-proper-maps}. Hence, by the topological base change
theorem for proper morphisms, the base change morphism associated
to \eqref{eq-lemma:bridge-toroidal-rbs-vartheta-29-tor} is
invertible. Our claim follows now as
$\mathcal{W}^{bs}(I_0\sqcup
K_0,I_1)^{\circ}=\widehat{\mathcal{W}}(I_0\sqcup
K_0,I_1)^{\circ}={^{\diamond}\!X}^{bb}_{\mathbf{Q_s}}$.
\end{proof}

Fix $(I_0,I_1)\in \mathcal{P}_2([\![1,r]\!])$ and
$(\mathbf{Q},\mathbf{E})\in \mathscr{Q}_{\Gamma}(I_0,I_1)$. Let
$J=[\![0,r]\!]- I_0$ and $\{0\}\bigsqcup I_1=\{i_0<\dots < i_s\}$.
Let $(\mathbf{Q_j},\mathbf{R_j})_{0\leq j \leq s}$ be the image of
$(\mathbf{Q},\mathbf{E})$ by $\mathbf{d}(I_0,I_1)$. Also write
$K=\{l_0<\dots < l_u\}$ and $K_v=\{l_{v+1},\dots, l_u\}$ for
$0\leq v \leq u$. Let $\mathbf{E}_{(v)}$ be the parabolic
$\Q$-subgroup of type $I_0\sqcup K_v$ containing $E$.
Then $\mathcal{W}^{bs}(\mathbf{Q},\mathbf{E}_{(0)})$ is the
Borel-Serre compactification of
${^{\diamond}\!X}^{bb}_{\mathbf{Q_s}}$, hence is a manifold with
corners. Moreover, for each $1\leq v \leq u$, the morphism
$\mathcal{W}^{bs}(\mathbf{Q},\mathbf{E}_{(v)})\to
\mathcal{W}^{bs}(\mathbf{Q},\mathbf{E}_{(0)})$ is locally
isomorphic to the inclusion of a stratum in the boundary. This
implies that
$$\left([\mathcal{W}^{bs}(\mathbf{Q},\mathbf{E}_{(u)})^{\circ} \to
\mathcal{W}^{bs}(\mathbf{Q},\mathbf{E}_{(u)})]_*\digamma
[\mathcal{W}^{bs}(\mathbf{Q},\mathbf{E}_{(u)})^{\circ} \to
\mathcal{W}^{bs}(\mathbf{Q},\mathbf{E}_{(u-1)})]^*\right)$$
$$\cdots \left([\mathcal{W}^{bs}(\mathbf{Q},\mathbf{E}_{(1)})^{\circ} \to
\mathcal{W}^{bs}(\mathbf{Q},\mathbf{E}_{(1)})]_*\digamma
[\mathcal{W}^{bs}(\mathbf{Q},\mathbf{E}_{(1)})^{\circ} \to
\mathcal{W}^{bs}(\mathbf{Q},\mathbf{E}_{(0)})]^*\right)$$
$$[\mathcal{W}^{bs}(\mathbf{Q},\mathbf{E}_{(0)})^{\circ} \to
\mathcal{W}^{bs}(\mathbf{Q},\mathbf{E}_{(0)})]_*\digamma
\Q_{\mathcal{W}^{bs}(\mathbf{Q},\mathbf{E}_{(0)})^{\circ}}$$ is
canonically quasi-isomorphic to
$\Q_{\mathcal{W}^{bs}(\mathbf{Q},\mathbf{E})}$. In other words,
there is a canonical quasi-isomorphism
$\vartheta^{bs}_{\Gamma\backslash D}\simeq \Q_{\mathcal{W}^{bs}}$.
Thus, it remains to show the following:

\begin{proposition}
\label{prop:remains}
There is a canonical isomorphism of commutative unitary
algebras
$$p_*\Q_{\overline{\Gamma\backslash D}^{rbs}}\simeq
\Theta^{bs}_*\Q_{\mathcal{W}^{bs}}.$$
Moreover, the following diagram
$$\xymatrix@C=1.5pc@R=1.5pc{g^*p_*\Q_{\overline{\Gamma\backslash D}^{rbs}}
\ar[r] \ar[d]_-{\sim} & p'_*g^*\Q_{\overline{\Gamma\backslash D}^{rbs}}
\ar[r]^-{\sim} & p'_*\Q_{\overline{\Gamma'\backslash D}^{rbs}} \ar[d]^-{\sim}\\
g^*\Theta^{bs}_*\Q_{\mathcal{W}^{bs}} \ar[r] & \Theta'^{bs}_*g^*
\Q_{\mathcal{W}^{bs}} \ar[r]^-{\sim} & \Theta'^{bs}_* \Q_{\mathcal{W}'^{bs}} }
$$
commutes.
\end{proposition}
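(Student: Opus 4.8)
The plan is to prove Proposition \ref{prop:remains} by induction on the $\Q$-rank $r$ of $\mathbf{G}$. This is the natural frame, since the diagram $\mathcal{W}^{bs}$ and the complex $\Q_{\mathcal{W}^{bs}}$ (equivalently $\vartheta^{bs}_{\Gamma\backslash D}$) were built by an induction of exactly the same shape as the toroidal side. When $r=0$ the group is anisotropic, $\Gamma\backslash D$ is already compact, $\overline{\Gamma\backslash D}^{bb}=\overline{\Gamma\backslash D}^{rbs}=X$, the map $p$ is the identity, and both $p_*\Q_{\overline{\Gamma\backslash D}^{rbs}}$ and $\Theta^{bs}_*\Q_{\mathcal{W}^{bs}}$ are $\Q_X$; the functoriality statement is then vacuous. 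For $r\ge1$ I shall first produce a canonical morphism between the two complexes of sheaves on $\overline{\Gamma\backslash D}^{bb}$ and then verify that it is a quasi-isomorphism one stratum at a time.

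To construct the morphism I will exhibit, for every $(I_0,I_1)\in\mathcal{P}_2([\![1,r]\!])$ and every $(\mathbf{Q},\mathbf{E})\in\mathscr{Q}_\Gamma(I_0,I_1)$ with $\mathbf{d}(I_0,I_1)(\mathbf{Q},\mathbf{E})=(\mathbf{Q_j},\mathbf{R_j})_{0\le j\le s}$, a proper map $\mathcal{W}^{bs}(\mathbf{Q},\mathbf{E})\to\overline{\Gamma\backslash D}^{rbs}$ lying over both $\Theta^{bs}$ and $p$: it is obtained from the canonical projections of Borel--Serre compactifications onto reductive Borel--Serre compactifications together with the hereditary identifications of these compactifications, taking values in the closure $\overline{\widehat{e}^{\,\prime}(\mathbf{E})}^{rbs}$ of the stratum of $\overline{\Gamma\backslash D}^{rbs}$ attached to $\mathbf{E}$, after passing to the quotients by the arithmetic groups. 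These maps are equivariant for the groupoids $\mathscr{Q}_\Gamma$ and natural in $(I_0,I_1)$, so they are readily checked to assemble into a morphism from the diagram $\mathcal{W}^{bs}$ to the constant diagram $\overline{\Gamma\backslash D}^{rbs}$; pushing forward along this diagram and then along $p$ — the categorical part of the direct image being a homotopy limit over $\int_{\mathcal{P}_2([\![1,r]\!])}\mathscr{Q}_\Gamma$ — yields the desired $p_*\Q_{\overline{\Gamma\backslash D}^{rbs}}\to\Theta^{bs}_*\Q_{\mathcal{W}^{bs}}$. (Alternatively, one may route this through the intermediary diagram $\widehat{\mathcal{W}}$ of \S\ref{subsection:toroidal-vs-rbs}, whose components sit inside the least common modification $\widehat{\overline{\Gamma\backslash D}}_\Sigma$, which maps to $\overline{\Gamma\backslash D}^{rbs}$ through $\overline{\Gamma\backslash D}^{bs}$.) That this is a morphism of commutative unitary algebras, and that it is compatible with the $g$-action, are formal consequences of the naturality in $g$ of all the ingredients, argued exactly as in Proposition \ref{prop:comparison-cal-Y-and-cal-V} and Lemma \ref{lemma:bridge-toroidal-rbs-vartheta}.

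To prove the morphism is a quasi-isomorphism it suffices to test it after $i_{\mathbf{Q}}^*$ for each stratum $i_{\mathbf{Q}}:X^{bb}_{\mathbf{Q}}\hookrightarrow\overline{\Gamma\backslash D}^{bb}$. Since $p$ and every $\mathcal{W}^{bs}(\mathbf{Q},\mathbf{E})\to\overline{\Gamma\backslash D}^{bb}$ is a proper map of topological spaces, topological proper base change identifies $i_{\mathbf{Q}}^*$ of each side with the corresponding direct image of the base change to $X^{bb}_{\mathbf{Q}}$, and $i_{\mathbf{Q}}^*$ commutes with the homotopy limits by Remark \ref{rem-gen:cor-useful-texnical-to-derivators}. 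For $\mathbf{Q}=\mathbf{G}$ (the open stratum $X$) both sides are $\Q_X$ and the morphism is the identity. For $\mathbf{Q}$ a proper parabolic $\Q$-subgroup: on one side, by Zucker's description (\cite[Prop.~2.3.8]{rbs-2}), $p^{-1}(X^{bb}_{\mathbf{Q}})\to X^{bb}_{\mathbf{Q}}$ is a locally trivial fibration with fibre the reductive Borel--Serre compactification $(\overline{\Gamma(\widetilde{\mathbf{M}}_{\mathbf{Q},\ell})\backslash\widetilde D_{Q,\ell}})^{rbs}$ of the locally symmetric variety attached to $\widetilde{\mathbf{M}}_{\mathbf{Q},\ell}$, a group of strictly smaller $\Q$-rank; on the other side, using the hereditary properties of the toroidal and Borel--Serre constructions (Propositions \ref{prop:hereditary-for-toroidal}, \ref{normalizers}, \ref{prop:strata-compar-rbs-tor} and Lemma \ref{lemma:two-proper-maps}), the fact that $\Sigma_{(\mathbf{Q})}$ is a genuine smooth projective compatible family of \emph{prpcd}'s for $\widetilde{\mathbf{M}}_{\mathbf{Q},\ell}$, and the matching of indexing groupoids read off from $\mathbf{d}$ and from \eqref{eq:Dmod=Rmod} of \S\ref{subsub:wTtor}, the diagram $\mathcal{W}^{bs}\times_{\overline{X}^{bb}}X^{bb}_{\mathbf{Q}}$ is identified — compatibly with the fibration to $X^{bb}_{\mathbf{Q}}$ — with the pullback to $X^{bb}_{\mathbf{Q}}$ of the diagram ``$\mathcal{W}^{bs}$'' constructed from $\widetilde{\mathbf{M}}_{\mathbf{Q},\ell}$. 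The inductive hypothesis for $\widetilde{\mathbf{M}}_{\mathbf{Q},\ell}$ then identifies the two fibrewise complexes, compatibly with our morphism, which completes the induction; the functoriality diagram of the statement follows by the same bookkeeping, the fibration description and the identification of diagrams both being natural in $g$.

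The main obstacle will be this last identification: matching, after base change to a boundary stratum and passage to the Zucker fibre, the diagram $\mathcal{W}^{bs}$ for $\mathbf{G}$ with the diagram $\mathcal{W}^{bs}$ for $\widetilde{\mathbf{M}}_{\mathbf{Q},\ell}$ — keeping precise track of which arithmetic subgroups act and of how the families of cone decompositions restrict — and, in particular, reconciling the Borel--Serre compactifications $\overline{e(\mathbf{R_s})}^{bs}$ entering the definition of $\mathcal{W}^{bs}$ with the reductive Borel--Serre fibres of $p$. This is exactly where the properness assertions of \S\ref{subsection:toroidal-vs-rbs} (especially Lemma \ref{lemma:two-proper-maps}) and a topological analogue of the closed-covering/homotopy-limit argument of Lemma \ref{lemma:form-locality-closed-cover} carry the weight. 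Everything else is an unwinding of the homotopy-limit formalism for diagrams, now applied to the derivator of complexes of sheaves on topological spaces in place of $\DM(-)$.
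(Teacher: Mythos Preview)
Your proposal takes a genuinely different route from the paper and contains a structural gap.

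The paper does \emph{not} induct on the $\Q$-rank. Instead it inserts two auxiliary diagrams $\mathcal{Z}^{bs}$ and $\mathcal{U}^{bs}$ between $\mathcal{W}^{bs}$ and $\overline{\Gamma\backslash D}^{rbs}$. For $(\mathbf{Q},\mathbf{E})\in\mathscr{Q}_\Gamma(I_0,I_1)$ with image $\mathbf{F}\subset\mathbf{M_Q}$, one puts $\mathcal{Z}^{bs}(\mathbf{Q},\mathbf{E})=\Gamma(\mathbf{H_{Q,E}})\backslash\overline{e(\mathbf{F})}$. The decomposition $\mathbf{F}=\mathbf{F_0}\times\mathbf{R_s}$ along $\mathbf{M_Q}=\widetilde{\mathbf{M}}_{\mathbf{Q},\ell}\times\mathbf{M}_{\mathbf{Q},h}$ gives $\mathcal{Z}^{bs}(\mathbf{Q},\mathbf{E})=\overline{e(\mathbf{F_0})}\times\mathcal{W}^{bs}(\mathbf{Q},\mathbf{E})$ with $\overline{e(\mathbf{F_0})}$ contractible, so $\Q_{\mathcal{W}^{bs}}\simeq z_*\Q_{\mathcal{Z}^{bs}}$. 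Then $\mathcal{U}^{bs}(I_0,I_1)=\mathscr{Q}_\Gamma(I_0,I_1)\backslash\mathcal{Z}^{bs}(I_0,I_1)$; a properly discontinuous action gives $\Q_{\mathcal{U}^{bs}}\simeq z'_*\Q_{\mathcal{Z}^{bs}}$. Finally $u:\mathcal{U}^{bs}\to\overline{\Gamma\backslash D}^{rbs}$ sends $\Gamma(\mathbf{E})\backslash\overline{e(\mathbf{F})}$ into $\overline{\widehat{e}^{\,\prime}(\mathbf{E})}$, and $\Q_{\overline{\Gamma\backslash D}^{rbs}}\simeq u_*\Q_{\mathcal{U}^{bs}}$ is checked over each stratum $\widehat{e}^{\,\prime}(\mathbf{P})$ by reducing the indexing category to one with a terminal object $(\mathbf{P},\mathbf{P})$. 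Chaining through $\Theta^{bs}\circ z=p\circ u\circ z'$ completes the argument.

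Your plan fails at two points. First, the map $\mathcal{W}^{bs}(\mathbf{Q},\mathbf{E})\to\overline{\widehat{e}^{\,\prime}(\mathbf{E})}$ you describe does not exist: $\mathcal{W}^{bs}(\mathbf{Q},\mathbf{E})=\Gamma(\mathbf{H_{Q,E}})\backslash\overline{e(\mathbf{R_s})}$ is built entirely on the hermitian side $\mathbf{M}_{\mathbf{Q_s},h}$, whereas $\widehat{e}(\mathbf{E})=D_E$ carries an $\ell$-factor coming from $\widetilde{\mathbf{M}}_{\mathbf{Q},\ell}$; there is no way to produce a point in that factor from the data of $\overline{e(\mathbf{R_s})}$. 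Reinstating this missing factor is exactly the purpose of $\mathcal{Z}^{bs}$. Second, and more fundamentally, your inductive hypothesis cannot be invoked for $\widetilde{\mathbf{M}}_{\mathbf{Q},\ell}$: its symmetric space $\widetilde{D}_{Q,\ell}$ is \emph{not hermitian}, so there is no Baily--Borel compactification, no toroidal compactification, and therefore no groupoids $\mathscr{Q}_\Gamma$ (their definition relies on the $h/\ell$ splitting of \S\ref{subsect:The-Baily-Borel-compactification}) and no diagram ``$\mathcal{W}^{bs}$'' for it. Your phrase ``the locally symmetric variety attached to $\widetilde{\mathbf{M}}_{\mathbf{Q},\ell}$'' already signals the problem --- it is a locally symmetric \emph{space}, not a variety. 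Your acknowledged ``main obstacle'' is thus not a bookkeeping difficulty but an obstruction to the induction closing; the paper's contractibility trick via $\overline{e(\mathbf{F_0})}$ is what makes the passage from the hermitian-only $\mathcal{W}^{bs}$ to the full reductive Borel--Serre compactification go through without any hereditary appeal to the linear part.
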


To prove this proposition, we need to introduce a new diagram of
topological spaces $\mathcal{Z}^{bs}$. Let $(I_0,I_1)\in
\mathcal{P}_2([\![1,r]\!])$ and $(\mathbf{Q},\mathbf{E})\in
\mathscr{Q}_{\Gamma}(I_0,I_1)$. We denote by $\mathbf{F}$ the
image of $\mathbf{E}$ by the projection of $\mathbf{Q}$ to (the
quotient by a finite group of) $\mathbf{M_Q}$. Consider
$e(\mathbf{F})$, the $\mathbf{F}$-stratum in the Borel-Serre
partial compactification $\overline{\widehat{e}(\mathbf{Q})}^{bs}$
of $\widehat{e}(\mathbf{Q})$ (a stratum in the reductive
Borel-Serre partial compactification of $D$). We set
$$\mathcal{Z}^{bs}(\mathbf{Q},\mathbf{E})=\Gamma(\mathbf{H_{Q,E}})\backslash
\overline{e(\mathbf{F})}.$$
By construction, the action of
$\Gamma(\mathbf{E})$ on $\mathcal{Z}^{bs}(\mathbf{Q}, \mathbf{E})$ factors
through ${\rm
end}_{\mathscr{Q}_{\Gamma}(I_0,I_1)}(\mathbf{Q},\mathbf{E})$.
Thus, we have a diagram of topological spaces
$\mathcal{Z}^{bs}(I_0,I_1)$ indexed by
$\mathscr{Q}_{\Gamma}(I_0,I_1)$. Moreover, the assignment
$(I_0,I_1)\rightsquigarrow \mathcal{Z}^{bs}(I_0,I_1)$ defines a
functor $\mathcal{Z}^{bs}$ from $\mathcal{P}_2([\![1,r]\!])$ to
$\Dia({\rm Top})$.

The decomposition $\mathbf{M_Q}=\mathbf{M}_{\mathbf{Q},\ell}\times
\mathbf{M}_{\mathbf{Q},h}$ induces a decomposition
$\mathbf{F}=\mathbf{F_0}\times \mathbf{R_s}$. This gives a
decomposition $\overline{e(\mathbf{F})}\simeq
\overline{e(\mathbf{F_0})}\times \overline{e(\mathbf{R_s})}$.
Moreover, the action of $\Gamma(\mathbf{H_{Q,E}})$ respects this
decomposition and acts trivially on the first factor. Hence
$\mathcal{Z}^{bs}(\mathbf{Q},\mathbf{E})=\overline{e(\mathbf{F_0})}\times
\mathcal{W}^{bs}(\mathbf{Q},\mathbf{E})$.
The projection to the second factor yields a morphism
$\mathcal{Z}^{bs}(\mathbf{Q},\mathbf{E})
\to \mathcal{W}^{bs}(\mathbf{Q},\mathbf{E})$.
One immediately checks that these morphisms yield a morphism in $\Dia(\Dia({\rm Top}))$:
\begin{equation}
\label{eq:Z-bs-to-W-bs-prime}
z:\xymatrix@C=1.7pc{\mathcal{Z}^{bs} \ar[r] & \mathcal{W}^{bs}.}
\end{equation}
Now, note that $\overline{e(\mathbf{F_0})}$ is the closure of a
stratum in the Borel-Serre  partial compactification of
the symmetric space associated to
$\mathbf{M}_{\mathbf{Q},\ell}$. In particular,
$\overline{e(\mathbf{F_0})}$ is contractible and
\eqref{eq:Z-bs-to-W-bs-prime} is objectwise
a homotopy equivalence. We have proved
the following result.

\begin{lemma}
The canonical morphism $\Q_{\mathcal{W}^{bs}} \to z_*\Q_{\mathcal{Z}^{bs}}$ is invertible.
\end{lemma}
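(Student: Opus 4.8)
The plan is to reduce the statement to an elementary, objectwise assertion about the projection off a contractible factor. First, a morphism in $\mathbf{D}(\Shv(\mathcal{W}^{bs}))$ is invertible if and only if $i^*$ of it is invertible for every object $i$ of the indexing category of the total diagram $\mathcal{W}^{bs}$: on sheaves of $\Q$-vector spaces over a diagram of topological spaces the restriction functors $i^*$ are exact and jointly conservative, since by construction such a sheaf is a compatible family of ordinary sheaves on the constituents $\mathcal{W}^{bs}(i)$ and a morphism between two of them is an isomorphism as soon as it is one on every $\mathcal{W}^{bs}(i)$. Next, the categorical part of $z$ is the identity — $z$ is purely geometric in the terminology of \S\ref{subsect:motives over a diagram} — so $z_*$ is computed objectwise: for each $i$ there is a canonical identification $i^*z_*\Q_{\mathcal{Z}^{bs}}\simeq z(i)_*\,i^*\Q_{\mathcal{Z}^{bs}}=z(i)_*\Q_{\mathcal{Z}^{bs}(i)}$, compatible with units of adjunction, under which $i^*$ of the morphism of the lemma becomes the canonical morphism $\Q_{\mathcal{W}^{bs}(i)}\to z(i)_*\Q_{\mathcal{Z}^{bs}(i)}$. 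This base-change identity is part of the standard formalism of sheaves over diagrams of topological spaces (cf.~\cite[Sect.~4]{realiz-oper}) and would merely be cited.

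It then remains to treat the objectwise statement. Writing $i=(\mathbf{Q},\mathbf{E})$, we have already recorded the product decomposition $\mathcal{Z}^{bs}(\mathbf{Q},\mathbf{E})=\overline{e(\mathbf{F_0})}\times\mathcal{W}^{bs}(\mathbf{Q},\mathbf{E})$, with $z(\mathbf{Q},\mathbf{E})$ the second projection and $\overline{e(\mathbf{F_0})}$ — a closure of a Borel--Serre stratum, hence a contractible manifold with corners. So I am reduced to the assertion: if $F$ is a contractible manifold with corners and $B$ is locally contractible, then the canonical morphism $\Q_B\to{\rm R}\,{\rm pr}_*\Q_{F\times B}$ is an isomorphism, where ${\rm pr}:F\times B\to B$ is the projection. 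For this I would argue on stalks: the stalk of ${\rm R}^q{\rm pr}_*\Q_{F\times B}$ at a point $b$ is $\varinjlim_{U\ni b}{\rm H}^q(F\times U;\Q)$, and letting $U$ run through a cofinal family of contractible open neighbourhoods of $b$ — available since, in our situation, $\mathcal{W}^{bs}(\mathbf{Q},\mathbf{E})=\Gamma(\mathbf{H_{Q,E}})\backslash\overline{e(\mathbf{R_s})}$ is a properly discontinuous (in fact free, as $\Gamma$ is neat) quotient of a manifold with corners — this colimit equals ${\rm H}^q(F;\Q)$, which is $\Q$ for $q=0$ and $0$ otherwise because $F$ is contractible. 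The canonical morphism thus induces an isomorphism on every stalk, hence is an isomorphism; this proves the objectwise statement and, by the reduction above, the lemma.

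The one genuine obstacle is that $\overline{e(\mathbf{F_0})}$ is in general noncompact, so one cannot directly invoke the proper base change theorem for ${\rm pr}$; this is precisely why the argument is carried out at the level of stalks, using only the local contractibility of the base and the acyclicity of the (topologically benign) fibre, neither of which requires properness. Everything else — exactness and joint conservativity of the $i^*$, the objectwise computation of $z_*$ for a morphism that is the identity on indexing categories, and the contractibility of $\overline{e(\mathbf{F_0})}$ already established just before the lemma — belongs to the standard toolkit and would only be invoked.
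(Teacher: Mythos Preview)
Your proof is correct and follows the same approach as the paper. The paper disposes of the lemma in one sentence just before stating it: since $\overline{e(\mathbf{F_0})}$ is contractible, \eqref{eq:Z-bs-to-W-bs-prime} is objectwise a homotopy equivalence, and the lemma follows. You unpack this in more detail---the objectwise computation of $z_*$ via the identity on indexing categories, the local contractibility of the base, the stalk computation, and the observation that properness of the projection is not available---but the underlying idea is identical.
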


For every $(I_0,I_1)\in \mathcal{P}_2([\![1,r]\!])$, let
$\mathcal{U}^{bs}(I_0,I_1)$ be the quotient of
$\mathcal{Z}^{bs}(I_0,I_1)$ by the groupoid $\mathscr{Q}_{\Gamma}(I_0,I_1)$:
$$\mathcal{U}^{bs}(I_0,I_1) =
\mathscr{Q}_{\Gamma}(I_0,I_1)\backslash
\mathcal{Z}^{bs}(I_0,I_1).$$
Note that from the definitions we have:
\begin{equation}
\label{eqn:ubs-added-289}
\mathcal{U}^{bs}(\mathbf{Q},\mathbf{E})=
\Gamma(\mathbf{E})\backslash \overline{e(\mathbf{F})}
\end{equation}
where $\mathbf{F}$ is the
image of $\mathbf{E}$ by the projection of $\mathbf{Q}$ to (the
quotient by a finite group of) $\mathbf{M_Q}$.

We then have a diagram of topological
spaces $\mathcal{U}^{bs}$ indexed by $\mathcal{P}_2([\![1,r]\!])$
and a natural projection
\begin{equation}
\label{eq:Z-bs-to-U-bs-prime}
z':\mathcal{Z}^{bs} \!\xymatrix@C=1.7pc{\ar[r] &}\!
\mathcal{U}^{bs}.
\end{equation}
Note that for every $(\mathbf{Q},\mathbf{E})\in
\mathscr{Q}_{\Gamma}(I_0,I_1)$, the group ${\rm
end}_{\mathscr{Q}_{\Gamma}(I_0,I_1)}(\mathbf{Q},\mathbf{E})$ acts
properly discontinuously on the manifold with corners
$\mathcal{Z}^{bs}(\mathbf{Q},\mathbf{E})$. We obtain from this the
following:

\begin{lemma}
The canonical morphism $\Q_{\mathcal{U}^{bs}} \to z'_*\Q_{\mathcal{Z}^{bs}}$ is invertible.
\end{lemma}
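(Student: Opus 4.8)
The plan is to carry out the usual dévissage for diagrams of topological spaces so as to reduce the assertion to a single Galois covering, and then to evaluate the relevant pushforward by Shapiro's lemma. \emph{First I would reduce to one covering.} Since $z'$ is the identity on the outer indexing category $\mathcal{P}_2([\![1,r]\!])$ and the target $\mathcal{U}^{bs}$ has trivial inner indexing category, the topological avatar of Corollary \ref{cor:very-useful-texnical-cor} (valid in the triangulated derivator $\mathbf{D}(\Shv(-))$ of sheaves of $\Q$-vector spaces on diagrams of topological spaces, by the argument of Remark \ref{rem-gen:cor-useful-texnical-to-derivators}) gives $(I_0,I_1)^*z'_*\Q_{\mathcal{Z}^{bs}}\simeq z'(I_0,I_1)_*\Q_{\mathcal{Z}^{bs}(I_0,I_1)}$ for each $(I_0,I_1)$, so it suffices to show $\Q_{\mathcal{U}^{bs}(I_0,I_1)}\to z'(I_0,I_1)_*\Q_{\mathcal{Z}^{bs}(I_0,I_1)}$ is invertible. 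Decomposing $\mathscr{Q}_{\Gamma}(I_0,I_1)$ into connected components, the diagram $\mathcal{Z}^{bs}(I_0,I_1)$ and the space $\mathcal{U}^{bs}(I_0,I_1)$ break up compatibly with $z'$, and pushforward respects this decomposition; so I may restrict to the component through a fixed object $(\mathbf{Q},\mathbf{E})$. That component is equivalent to $\bullet_G$ with $G=\mathrm{end}_{\mathscr{Q}_{\Gamma}(I_0,I_1)}(\mathbf{Q},\mathbf{E})=\Gamma(\mathbf{E}/\mathbf{H_{Q,E}})$; since colimits and the derivator are invariant under equivalences, the piece of $z'$ over it becomes the morphism of diagrams $(Z_0,\bullet_G)\to Z_0/G$ attached to the $G$-space $Z_0=\mathcal{Z}^{bs}(\mathbf{Q},\mathbf{E})=\Gamma(\mathbf{H_{Q,E}})\backslash\overline{e(\mathbf{F})}$, where by \eqref{eqn:ubs-added-289} one has $Z_0/G=\mathcal{U}^{bs}(\mathbf{Q},\mathbf{E})=\Gamma(\mathbf{E})\backslash\overline{e(\mathbf{F})}$ ($\mathbf{F}$ being the image of $\mathbf{E}$ in $\mathbf{M_Q}$). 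The action of $G$ on $Z_0$ is properly discontinuous by hypothesis, and in fact free: $\mathbf{H_{Q,E}}$ is normal in $\mathbf{E}$, so $G=\Gamma(\mathbf{E})/\Gamma(\mathbf{H_{Q,E}})$, and $\Gamma(\mathbf{E})$ — being arithmetic and neat, as $\Gamma$ is — acts without fixed points on the Borel--Serre-type compactification $\overline{e(\mathbf{F})}$ (cf.~\S\ref{subsect:The Borel-Serre compactifications}). Thus $q\colon Z_0\to Z_0/G$ is a Galois covering with group $G$, and $z'$ is identified with the corresponding morphism of diagrams.

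\emph{Then I would do the computation.} Using the factorization \eqref{eq:factorization-1-morph-diag-C} of $(Z_0,\bullet_G)\to Z_0/G$ into its geometric part $q$ and its categorical part $\bullet_G\to\mathbf{e}$, one gets $z'_*\Q_{Z_0}\simeq\mathrm{R}\Gamma(\bullet_G,\,q_*\Q_{Z_0})$, the derived $G$-invariants of the $G$-equivariant complex $q_*\Q_{Z_0}$ on $Z_0/G$. Since $q$ is a Galois covering and $Z_0/G$ is locally connected, $q_*\Q_{Z_0}$ is locally constant with ``fibre'' the permutation module $\mathrm{Maps}(G,\Q)$, i.e.\ the coinduced $\Q[G]$-module $\mathrm{coInd}_{\{1\}}^G\Q$; by Shapiro's lemma, $\mathrm{R}\Gamma(\bullet_G,\mathrm{coInd}_{\{1\}}^G\Q)\simeq\mathrm{R}\Gamma(\bullet_{\{1\}},\Q)\simeq\Q$ concentrated in degree $0$, and the natural map $\Q\to\mathrm{R}\Gamma(\bullet_G,\mathrm{coInd}_{\{1\}}^G\Q)$ is precisely this identification. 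Hence the canonical morphism $\Q_{Z_0/G}\to z'_*\Q_{Z_0}$ is an isomorphism locally on $Z_0/G$, and therefore an isomorphism. This is the topological counterpart of Lemma \ref{lemma:texnic-comparison-cal-Y-and-cal-V-2}, but with the ``free on $\pi_0$'' hypothesis there replaced by a free action on the space itself, so that $q_*\Q_{Z_0}$ is a genuine (non-trivial) local system rather than one of the form $|G|\times(-)$.

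\emph{The hard part will be} the Shapiro-lemma identity $\mathrm{R}\Gamma(\bullet_G,q_*\Q_{Z_0})\simeq\Q_{Z_0/G}$ for the infinite discrete arithmetic group $G$ — this is why one must descend to the explicit covering picture rather than argue purely formally in the style of Lemma \ref{lemma:texnic-comparison-cal-Y-and-cal-V-2}. Everything else is routine: the availability of Corollary \ref{cor:very-useful-texnical-cor} and Remark \ref{rem-gen:cor-useful-texnical-to-derivators} for the topological derivator $\mathbf{D}(\Shv(-))$, the freeness of the action via neatness, and the compatibility of pushforward with the disjoint-union decompositions. Should one prefer to use only the literal ``properly discontinuous'' hypothesis, freeness can be dropped at the cost of a local model $G\times_{G_x}V$ near a point of $Z_0/G$, with $G_x$ finite and $V$ a contractible $G_x$-stable neighbourhood; Shapiro's lemma then reduces the computation to $\mathrm{R}\Gamma(\bullet_{G_x},p_*\Q_V)$, which equals $(p_*\Q_V)^{G_x}\simeq\Q_{V/G_x}$ because $\Q$-coefficient cohomology is insensitive to finite quotients — and here the use of rational coefficients is essential.
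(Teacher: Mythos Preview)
Your proof is correct and is essentially a fully unpacked version of what the paper leaves as a one-sentence justification: the remark immediately preceding the lemma (that ${\rm end}_{\mathscr{Q}_{\Gamma}(I_0,I_1)}(\mathbf{Q},\mathbf{E})$ acts properly discontinuously on $\mathcal{Z}^{bs}(\mathbf{Q},\mathbf{E})$) is the paper's entire argument, and you have supplied the details of why this suffices. Your reduction via Corollary~\ref{cor:very-useful-texnical-cor} and the groupoid decomposition, your verification of freeness from neatness (which is indeed the reason the paper assumes $\Gamma$ neat), and your Shapiro-lemma computation are all sound; the identification of $q_*\Q_{Z_0}$ with the coinduced module is the right one since pushforward along a covering gives products over fibres.

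One small comment: once you have reduced to the free properly discontinuous action $G \curvearrowright Z_0$, the question becomes local on $Z_0/G$, and over a trivialising open $U$ one has $Z_0|_U \simeq U \times |G|$ with $G$ permuting the copies. This is exactly the situation of Lemma~\ref{lemma:texnic-comparison-cal-Y-and-cal-V-2} (with its schematic derivator replaced by the topological one), so you could invoke that lemma directly rather than going through Shapiro. Your Shapiro argument is of course the content of that lemma's proof in this special case, so the two routes coincide. Your closing remark about finite isotropy and rational coefficients is also correct and is the genuinely robust form of the argument, not requiring freeness.
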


There is a morphism of diagrams of topological spaces
\begin{equation}
\label{eq:U-bs-to-D-rbs-prime} u:\mathcal{U}^{bs}\to
\overline{\Gamma \backslash D}^{rbs},
\end{equation}
which sends $\mathcal{U}^{bs}(\mathbf{Q},\mathbf{E})=
\Gamma(\mathbf{E})\backslash \overline{e(\mathbf{F})}$ to
$\overline{\widehat{e}^{\, \prime}(\mathbf{E})}$.

\begin{lemma}
The canonical morphism
$\Q_{\overline{\Gamma\backslash D}^{rbs}} \to u_*\Q_{\mathcal{U}^{bs}}$ is invertible.
\end{lemma}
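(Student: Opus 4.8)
The statement is the topological counterpart, for the reductive Borel--Serre compactification, of Lemma~\ref{lemma:form-locality-closed-cover}: the diagram $\mathcal{U}^{bs}$ is a ``closed cover by closures of strata'' of $\overline{\Gamma\backslash D}^{rbs}$, merely re-indexed by the finite ordered set $\mathcal{P}_2([\![1,r]\!])$. The plan is to mimic the proof of that lemma. First I would record the formal inputs in the topological setting (they hold because the derivator machinery of \cite{ayoub-these-I,ayoub-these-II} applies verbatim to diagrams of topological spaces and complexes of sheaves, as is used throughout \S\ref{sect:main-comput}): the locality axiom for sheaves on a stratified space (analogue of \cite[Cor.~4.5.47]{ayoub-these-II}), the base change of Proposition~\ref{prop-most-gen-proj-base-change}, and Corollary~\ref{cor:making-the-diag-connected}. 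I would also use that $u$ is objectwise a \emph{proper} map: over $(I_0,I_1)$ the space $\mathcal{U}^{bs}(I_0,I_1)$ is the finite disjoint union $\bigsqcup_{[\mathbf{Q},\mathbf{E}]}\Gamma(\mathbf{E})\backslash\overline{e(\mathbf{F})}$, each summand being (by the hereditary property of \S\ref{subsect:The Borel-Serre compactifications}, cf.\ \cite[\S1]{rbs-2}) the compact space $\overline{\widehat{e}^{\,\prime}(\mathbf{E})}^{rbs}$, which $u$ carries homeomorphically onto the closure of the stratum $\widehat{e}^{\,\prime}(\mathbf{E})$ inside the compact space $\overline{\Gamma\backslash D}^{rbs}$.

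Granting this, the argument runs as follows. Since $\overline{\Gamma\backslash D}^{rbs}$ is the disjoint union of its strata $\widehat{e}^{\,\prime}(\mathbf{P})$, by the locality axiom it suffices to show that $i_0^*\Q_{\overline{\Gamma\backslash D}^{rbs}}\to i_0^*u_*\Q_{\mathcal{U}^{bs}}$ is invertible for every stratum, with $i_0\colon\widehat{e}^{\,\prime}(\mathbf{P}_0)\hookrightarrow\overline{\Gamma\backslash D}^{rbs}$. As $\mathcal{P}_2([\![1,r]\!])$ is a finite ordered set, hence universal for homotopy limits (Proposition~\ref{prop:limits-with-respect-ordered-set}), $i_0^*$ commutes with the homotopy limit defining $u_*$; combining this with objectwise proper base change (Proposition~\ref{prop-most-gen-proj-base-change} applied to the objectwise proper $u$ and to $i_0$) gives $i_0^*u_*\Q_{\mathcal{U}^{bs}}\simeq u'_*\Q_{\mathcal{U}'}$, where $\mathcal{U}'=\mathcal{U}^{bs}\times_{\overline{\Gamma\backslash D}^{rbs}}\widehat{e}^{\,\prime}(\mathbf{P}_0)$ and $u'\colon\mathcal{U}'\to\widehat{e}^{\,\prime}(\mathbf{P}_0)$ is the base change. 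Over $(I_0,I_1)$ the space $\mathcal{U}'(I_0,I_1)$ equals $\bigsqcup_{[\mathbf{Q},\mathbf{E}]}\bigl(\overline{\widehat{e}^{\,\prime}(\mathbf{E})}^{rbs}\cap\widehat{e}^{\,\prime}(\mathbf{P}_0)\bigr)$; since $\widehat{e}^{\,\prime}(\mathbf{P}_0)$ is a single stratum and each $\overline{\widehat{e}^{\,\prime}(\mathbf{E})}^{rbs}$ is closed and constructible for the stratification, each summand is a copy of $\widehat{e}^{\,\prime}(\mathbf{P}_0)$ when $\widehat{e}^{\,\prime}(\mathbf{P}_0)\preceq\widehat{e}^{\,\prime}(\mathbf{E})$, and is empty otherwise. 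Applying Corollary~\ref{cor:making-the-diag-connected} to replace $\mathcal{U}'$ by its diagram of connected components, we are reduced to showing that the homotopy limit over $\int_{\mathcal{P}_2([\![1,r]\!])}\Pi$ of the constant sheaf $\Q_{\widehat{e}^{\,\prime}(\mathbf{P}_0)}$ is $\Q_{\widehat{e}^{\,\prime}(\mathbf{P}_0)}$, compatibly with the canonical morphism, where $\Pi(I_0,I_1)=\pi_0\bigl(\mathcal{U}'(I_0,I_1)\bigr)$.

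The combinatorial point is that $\Pi(I_0,I_1)$ has at most one element: a parabolic $\Q$-subgroup of type $I_0$ containing a given $\Gamma$-conjugate $\gamma\mathbf{P}_0\gamma^{-1}$ is unique, whence its $\Gamma$-conjugacy class equals that of the parabolic $\mathbf{E}_0$ of type $I_0$ containing $\mathbf{P}_0$; so $\Pi(I_0,I_1)$ is a point when $\mathrm{type}(\mathbf{P}_0)\subseteq I_0$ and empty otherwise. Thus $\int_{\mathcal{P}_2([\![1,r]\!])}\Pi$ is isomorphic to the full sub-ordered-set $\{(I_0,I_1)\in\mathcal{P}_2([\![1,r]\!])\;:\;\mathrm{type}(\mathbf{P}_0)\subseteq I_0\}$, which has the smallest element $(\mathrm{type}(\mathbf{P}_0),\emptyset)$. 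An ordered set with a smallest element has contractible nerve, so by \cite[Prop.~2.1.41]{ayoub-these-I} (exactly as in the proof of Lemma~\ref{lemma:form-locality-closed-cover}) the homotopy limit of the constant sheaf over it is that sheaf, and the canonical morphism is invertible; this finishes the proof. I expect the only genuine work to lie in the geometric inputs of the first paragraph — identifying each connected component of $\mathcal{U}^{bs}(I_0,I_1)$ with the closure of a stratum and checking objectwise properness of $u$ — together with the routine transcription of the locality axiom, of the base change, and of Corollary~\ref{cor:making-the-diag-connected} to the topological setting; the combinatorics collapses precisely because each set $\Pi(I_0,I_1)$ is empty or a single point.
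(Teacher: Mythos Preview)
Your overall plan---reduce to a single stratum by proper base change, pass to connected components, and finish with a contractibility argument for the resulting indexing category---is the same as the paper's, but there is a genuine error in your identification of the spaces $\mathcal{U}^{bs}(\mathbf{Q},\mathbf{E})$. You write that each summand $\Gamma(\mathbf{E})\backslash\overline{e(\mathbf{F})}$ equals the reductive Borel--Serre compactification $\overline{\widehat{e}^{\,\prime}(\mathbf{E})}^{rbs}$ and that $u$ carries it homeomorphically onto the closure of the stratum $\widehat{e}^{\,\prime}(\mathbf{E})$ in $\overline{\Gamma\backslash D}^{rbs}$. This is false: $e(\mathbf{F})\simeq N_F\times D_F$ with $N_F=N_E/N_Q$, so $\Gamma(\mathbf{E})\backslash\overline{e(\mathbf{F})}$ is the closure of a stratum in the (non-reductive) Borel--Serre compactification of $\widehat{e}^{\,\prime}(\mathbf{Q})$, carrying a nilmanifold factor that depends on $\mathbf{Q}$. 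The map $u$ collapses this factor; it is proper but not a homeomorphism onto its image.

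Consequently your claim that each non-empty fibre $\mathcal{U}'(I_0,I_1)$ over $\widehat{e}^{\,\prime}(\mathbf{P}_0)$ is ``a copy of $\widehat{e}^{\,\prime}(\mathbf{P}_0)$'' fails: the paper computes this fibre to be the stratum in $\overline{\widehat{e}^{\,\prime}(\mathbf{Q})}^{bs}$ corresponding to the image of $\mathbf{P}_0$ in $\mathbf{M_Q}$, a space that varies with $\mathbf{Q}$ (its nilmanifold factor is $\Gamma(\mathbf{N}_{\mathbf{P}_0}/\mathbf{N_Q})\backslash(N_{P_0}/N_Q)$). So the base-changed diagram is \emph{not} constant, and the reduction to the homotopy limit of a constant sheaf over a poset with smallest element does not apply. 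The paper instead proceeds in two stages: it first observes that the fibre depends only on $\mathbf{Q}$, so the transition maps for fixed $\mathbf{Q}$ and varying $\mathbf{E}$ are isomorphisms, which allows (via $(i_{\mathbf{P}})_*$ and axiom \textbf{DerAlg~4$'$g}) a reduction to the sub-poset $\mathcal{L}'(\mathbf{P}_0)=\{(\mathbf{Q},\mathbf{P}_0)\}$; this poset has a \emph{terminal} object $(\mathbf{P}_0,\mathbf{P}_0)$, at which the fibre is exactly $\widehat{e}^{\,\prime}(\mathbf{P}_0)$, and evaluation at a terminal object computes the homotopy limit.
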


\begin{proof}
As $u$ is objectwise a proper mapping, this can be checked locally
over each stratum of $\overline{\Gamma\backslash D}^{rbs}$. Let
$\mathbf{P}$ be a parabolic subgroup of $\mathbf{G}$, and form the
cartesian square
$$\xymatrix@C=1.5pc@R=1.5pc{\mathcal{U}^{bs}_{\mathbf{P}} \ar[r] \ar[d]_-{u_{\mathbf{P}}} & \mathcal{U}^{bs} \ar[d]^-u \\
\widehat{e}^{\, \prime}(\mathbf{P}) \ar[r] & \overline{\Gamma
\backslash D}^{rbs}.}$$ We need to show that
$\Q_{\widehat{e}^{\,\prime}(\mathbf{P})} \to
(u_{\mathbf{P}})_*\Q_{\mathcal{U}^{bs}_{\mathbf{P}}}$ is
invertible.

Note that $\mathcal{U}^{bs}_{\mathbf{P}}(\mathbf{Q},\mathbf{E})$
is non-empty if and only if $\mathbf{E}$ is $\Gamma$-conjugate to
a parabolic $\Q$-subgroup containing $\mathbf{P}$. Let
$\mathcal{L}(\mathbf{P})$ be the set of pairs of parabolic
subgroups $(\mathbf{Q},\mathbf{E})$ such that $\mathbf{P}\subset
\mathbf{E} \subset \mathbf{Q}$. We endow $\mathcal{L}(\mathbf{P})$
with the order given by
$$(\mathbf{Q},\mathbf{E})\leq (\mathbf{Q'},\mathbf{E'})
\qquad \Leftrightarrow \qquad \mathbf{E}\subset \mathbf{E'}\subset \mathbf{Q'}\subset \mathbf{Q}.$$
We then have a fully faithful inclusion
$\mathcal{L}(\mathbf{P}) \hookrightarrow
\int_{\mathcal{P}_2([\![1,n]\!])} \mathscr{Q}_{\Gamma}$
sending $(\mathbf{Q},\mathbf{E})$ to
$((I_0,I_1),(\mathbf{Q},\mathbf{E}))$
where $I_0$ is the type of $\mathbf{E}$ and
$I_1$ is the cotype of $\mathbf{Q}$. Denote by
$\mathcal{U}_{\mathbf{P}}^{\flat}$ the restriction of
$\mathcal{U}^{bs}_{\mathbf{P}}$ to $\mathcal{L}(\mathbf{P})$ along this inclusion.
Also, let
$$u^{\flat}_{\mathbf{P}}:\xymatrix@C=1.7pc{(\mathcal{U}_{\mathbf{P}}^{\flat},
\mathcal{L}(\mathbf{P})) \ar[r] &
\widehat{e}^{\,\prime}(\mathbf{P})}$$
be the natural
projection. From the previous discussion, we deduce a canonical isomorphism
$(u_{\mathbf{P}})_*\Q_{\mathcal{U}^{bs}_{\mathbf{P}}} \simeq
(u^{\flat}_{\mathbf{P}})_*\Q_{\mathcal{U}^{\flat}_{\mathbf{P}}}$.
Thus, we are reduced to show that
$\Q_{\widehat{e}^{\,\prime}(\mathbf{P})} \to
(u^{\flat}_{\mathbf{P}})_*\Q_{\mathcal{U}^{\flat}_{\mathbf{P}}}$
is invertible.

Now, consider two elements
$(\mathbf{Q},\mathbf{E})$ and $(\mathbf{Q},\mathbf{E'})$
in $\mathcal{L}(\mathbf{P})$
with $\mathbf{E}\subset \mathbf{E'}$. Denote
by $\mathbf{F}$ and $\mathbf{F'}$ the images of
$\mathbf{E}$ and $\mathbf{E'}$
by the projection of $\mathbf{Q}$ to (the quotient by a finite group of) $\mathbf{M_Q}$.
Then,
$\Gamma(\mathbf{E})\backslash \overline{e(\mathbf{F})}$
and
$\Gamma(\mathbf{E'})\backslash \overline{e(\mathbf{F'})}$ are
the closures of the $\mathbf{F}$-stratum
and the $\mathbf{F'}$-stratum in the Borel-Serre compactification
of $\Gamma(\mathbf{Q})\backslash
\widehat{e}(\mathbf{Q})$. In particular, one has an isomorphism
$$\Gamma(\mathbf{E'})\backslash \overline{e(\mathbf{F'})} \times_{
\overline{\Gamma\backslash D}^{rbs}} \widehat{e}^{\,\prime}(\mathbf{P})
\simeq
\Gamma(\mathbf{E})\backslash \overline{e(\mathbf{F})} \times_{
\overline{\Gamma\backslash D}^{rbs}}
\widehat{e}^{\,\prime}(\mathbf{P}).$$
In fact, both sides can be identified with the stratum in
the Borel-Serre compactification
of $\Gamma(\mathbf{Q})\backslash
\widehat{e}(\mathbf{Q})$ corresponding to the image of
$\mathbf{P}$
by the projection of $\mathbf{Q}$ to (the quotient by a finite group of) $\mathbf{M_Q}$. In particular, we have shown that
the maps
$$\mathcal{U}^{\flat}_{\mathbf{P}}(\mathbf{Q},\mathbf{E})
\to \mathcal{U}^{\flat}_{\mathbf{P}}(\mathbf{Q},\mathbf{E'})$$
are isomorphisms (cf.~\eqref{eqn:ubs-added-289}). Thus, letting $i_{\mathbf{P}}:\mathcal{L}'(\mathbf{P})\hookrightarrow \mathcal{L}(\mathbf{P})$
be the inclusion of the ordered subset consisting of pairs of the form $(\mathbf{Q},\mathbf{P})$, one gets an isomorphism
$$(i_{\mathbf{P}})_*\Q_{\mathcal{U}_{\mathbf{P}}^{\flat} \circ i_{\mathbf{P}}} \simeq \Q_{\mathcal{U}_{\mathbf{P}}^{\flat}}.$$
(Use axiom \textbf{DerAlg 4'g} in \cite[Rem.~2.4.16]{ayoub-these-I}.)
Now, let
$$u'^{\,\flat}_{\mathbf{P}}:\xymatrix@C=1.7pc{(\mathcal{U}^{\flat}_{\mathbf{P}}\circ i_{\mathbf{P}},\mathcal{L}'(\mathbf{P})) \ar[r] &
\widehat{e}^{\,\prime}(\mathbf{P})}$$
be the natural projection. We are reduced to show that
$\Q_{\widehat{e}^{\,\prime}(\mathbf{P})} \to
(u'^{\,\flat}_{\mathbf{P}})_*\Q_{\mathcal{U}^{\flat}_{\mathbf{P}}\circ i_{\mathbf{P}} }$
is invertible.
But, $\mathcal{L}'(\mathbf{P})$ has a terminal
object, namely $(\mathbf{P},\mathbf{P})$. It follows that
$$(u'^{\,\flat}_{\mathbf{P}})_*\Q_{\mathcal{U}^{\flat}_{\mathbf{P}}\circ i_{\mathbf{P}}}
\simeq \{\mathcal{U}_{\mathbf{P}}(\mathbf{P},\mathbf{P})\to
\widehat {e}^{\,
\prime}(\mathbf{P})\}_*\Q_{\mathcal{U}_{\mathbf{P}}(\mathbf{P},\mathbf{P})}.$$
The lemma now follows, as
$\mathcal{U}_{\mathbf{P}}(\mathbf{P},\mathbf{P})= \widehat{e}^{\,
\prime}(\mathbf{P})$.
\end{proof}

Using the three lemmas above and the following commutative
diagram,
$$\xymatrix@C=1.5pc@R=1.5pc{& \mathcal{Z}^{bs} \ar[d]^-{z'} \ar@/_.4pc/[dl]_-{z} \\
\mathcal{W}^{bs} \ar[d]_-{\Theta^{bs}} & \mathcal{U}^{bs} \ar[d]^-{u} \\
\overline{\Gamma \backslash D}^{bb} & \overline{\Gamma \backslash D}^{rbs} \ar[l]_-{p}}$$
we can see that the proof of Proposition
\ref{prop:remains} is finished.
This completes the
proof of Proposition \ref{prop:last-prop-for-proving-main-thm} and
hence the proof of Theorem \ref{thm:main-thm}.

\end{document}